\documentclass[11pt]{article}

\usepackage[a4paper,height=190mm,width=134mm]{geometry}

\usepackage[utf8]{inputenc}
\usepackage{amsthm}
\usepackage{amssymb}
\usepackage{latexsym}
\usepackage{amsmath}
\usepackage{amsfonts}
\usepackage{mathrsfs}
\usepackage{amstext}
\usepackage{enumitem}
\setitemize{noitemsep,topsep=2pt,parsep=2pt,partopsep=2pt}
\usepackage{mathtools}
\DeclarePairedDelimiter{\ceil}{\lceil}{\rceil}
\DeclarePairedDelimiter{\floor}{\lfloor}{\rfloor}
\mathtoolsset{showonlyrefs}
\usepackage{stmaryrd}
\usepackage{tocloft}
\setlength\cftparskip{0.pt}
\setlength\cftbeforesecskip{0.pt}
\usepackage{cite}
\usepackage{accents}
\newcommand{\ubar}[1]{\underaccent{\bar}{#1}}
\usepackage[final]{graphicx}
\usepackage{longtable}

\makeatletter
\newsavebox{\@brx}
\newcommand{\llangle}[1][]{\savebox{\@brx}{\(\m@th{#1\langle}\)}%
  \mathopen{\copy\@brx\kern-0.6\wd\@brx\usebox{\@brx}}}
\newcommand{\rrangle}[1][]{\savebox{\@brx}{\(\m@th{#1\rangle}\)}%
  \mathclose{\copy\@brx\kern-0.6\wd\@brx\usebox{\@brx}}}
\newcommand{\VERT}[1][]{\savebox{\@brx}{\(\m@th{#1|}\)}%
 \mathopen{\copy\@brx\kern-0.6\wd\@brx\copy\@brx\kern-0.6\wd\@brx\usebox{\@brx}}}
\newcommand{\VERTT}[1][]{\savebox{\@brx}{\(\m@th{#1|}\)}%
 \mathopen{\copy\@brx\kern-0.6\wd\@brx\copy\@brx\kern-0.6\wd\@brx\copy\@brx\kern-0.6\wd\@brx\usebox{\@brx}}} 
\newcommand{\RECT}[1][]{\savebox{\@brx}{\(\m@th{#1[\hspace{-0.3mm}]}\)}}
\makeatother

\newcommand{\bN}{\mathbb{N}} 
\newcommand{\bE}{\mathbb{E}}

\newcommand{\bZ}{\mathbb{Z}}
\newcommand{\bR}{\mathbb{R}}
\newcommand{\bH}{\mathbb{H}}
\newcommand{\bT}{\mathbb{T}}
\newcommand{\bM}{\mathbb{M}}

\newcommand{\bY}{\mathbb{Y}}
\newcommand{\bS}{\mathbb{S}}
\newcommand{\cO}{\mathcal{O}}
\newcommand{\cD}{\mathcal{D}}
\newcommand{\cC}{\mathcal{C}}
\newcommand{\cS}{\mathcal{S}}
\newcommand{\cP}{\mathcal{P}}
\newcommand{\cX}{\mathcal{X}}
\newcommand{\cY}{\mathcal{Y}}
\newcommand{\cK}{\mathcal{K}}
\newcommand{\cG}{\mathcal{G}}

\newcommand{\fA}{\mathbf{A}}
\newcommand{\fB}{\mathbf{B}}
\newcommand{\fV}{\mathbf{V}}
\newcommand{\fI}{\mathbf{I}}
\newcommand{\fF}{\mathbf{F}}
\newcommand{\fS}{\mathbf{S}}
\newcommand{\fL}{\mathbf{L}}
\newcommand{\fQ}{\mathbf{Q}}
\newcommand{\fP}{\mathbf{P}}
\newcommand{\fR}{\mathbf{R}}
\newcommand{\fT}{\mathbf{T}}
\newcommand{\fU}{\mathbf{U}}
\newcommand{\fX}{\mathbf{X}}
\newcommand{\fY}{\mathbf{Y}}
\newcommand{\fZ}{\mathbf{Z}}
\newcommand{\fM}{\mathbf{M}}
\newcommand{\fN}{\mathbf{N}}
\newcommand{\frI}{\mathfrak{I}}

\newcommand{\frM}{\mathfrak{M}}
\newcommand{\cB}{\mathcal{B}}
\newcommand{\cV}{\mathcal{V}}

\newcommand{\cVtilde}{{\tilde{\mathcal{V}}}}

\newcommand{\sM}{\mathscr{M}}
\newcommand{\sS}{\mathscr{S}}
\newcommand{\sC}{\mathscr{C}}
\newcommand{\sD}{\mathscr{D}}
\newcommand{\ri}{\mathrm{i}}
\newcommand{\rid}{\mathrm{id}}

\newcommand{\rd}{\mathrm{d}}
\newcommand{\rdim}{\mathsf{d}}
\newcommand{\rDim}{\mathsf{D}}

\newcommand{\rc}{\mathrm{c}}
\newcommand{\rb}{\mathrm{b}}

\newcommand{\ry}{\mathrm{y}}

\newcommand{\rt}{\mathrm{t}}
\newcommand{\rD}{\mathrm{D}}
\newcommand{\sIR}{\mu}
\newcommand{\sIRb}{{ \mu}}
\newcommand{\sIRa}{{\tilde\mu}}
\newcommand{\uIR}{\eta}

\newcommand{\uv}{\kappa}
\newcommand{\sUV}{\nu}
\newcommand{\oo}{{\mathtt{g}}}
\newcommand{\ooo}{\mathtt{f}}
\newcommand{\oooo}{\mathtt{h}}
\newcommand{\I}{\mathtt{I}}
\newcommand{\vI}{\mathtt{I}}
\newcommand{\vJ}{\mathtt{J}}
\newcommand{\vK}{\mathtt{K}}
\newcommand{\vL}{\mathtt{L}}
\newcommand{\vM}{\mathtt{M}}
\newcommand{\ro}{\mathtt{l}}
\newcommand{\Tmax}{T}

\newcommand{\rri}{i}
\newcommand{\ra}{a}
\newcommand{\rrm}{m}
\newcommand{\rs}{s}
\newcommand{\rsb}{{ s}}
\newcommand{\rsa}{{\tilde s}}
\newcommand{\rr}{r}
\newcommand{\rn}{n}

\newcommand{\supp}{\mathrm{supp}}

\renewcommand{\vartriangle}{\mathrm{reg}}

\newcommand{\microf}{\mathrm{\ubar{\mathrm{f}}}}
\newcommand{\microF}{\mathrm{\ubar{\mathrm{F}}}}
\newcommand{\microXi}{\mathrm{\ubar{\Xi}}}
\newcommand{\microPhi}{\mathrm{\ubar{\Phi}}}
\newcommand{\microPsi}{\mathrm{\ubar{\Psi}}}

\makeatletter
\newcommand*\botimes{{\mathpalette\botimes@{1.5}}}
\newcommand*\botimes@[2]{\mathbin{\vcenter{\hbox{\scalebox{#2}{\hspace{0.0mm}$\m@th#1\otimes$\hspace{0.0mm}}}}}}
\newcommand*\Cdot{{\mathpalette\Cdot@{.6}}}
\newcommand*\Cdot@[2]{\mathbin{\vcenter{\hbox{\scalebox{#2}{\hspace{0.5mm}$\m@th#1\bullet$\hspace{0.5mm}}}}}}
\makeatother


\newtheorem{thm}{Theorem}
\newtheorem{lem}[thm]{Lemma}

\theoremstyle{remark}
\newtheorem{rem}[thm]{Remark}

\theoremstyle{remark}
\newtheorem{example}[thm]{Example}

\theoremstyle{definition}
\newtheorem{dfn}[thm]{Definition}
\newtheorem{ass}[thm]{Assumption}

\numberwithin{equation}{section}
\numberwithin{thm}{section}

\usepackage{color,hyperref}
\definecolor{darkblue}{rgb}{0.0,0.0,0.5}
\hypersetup{
  linkcolor  = darkblue,
  citecolor  = darkblue,
  urlcolor   = darkblue,
  colorlinks = true,
}

\makeatletter
\AtBeginDocument{
  \hypersetup{
    pdftitle = {\@title},
    pdfauthor = {\@author}
  }
}
\makeatother

\begin{document}
\title{Flow equation approach to singular stochastic PDEs}
\author{Pawe{\l} Duch
\\
Max-Planck Institute for Mathematics in the Sciences\\
Inselstr. 22, 04103 Leipzig, Germany\\
Institut f\"ur Theoretische Physik, Universit\"at Leipzig\\
Br\"uderstr.\ 16, 04103 Leipzig, Germany\\
pawel.duch@epfl.ch
}
\date{\today}

\maketitle

\begin{abstract}
We prove universality of a macroscopic behavior of solutions of a large class of semi-linear parabolic SPDEs on $\bR_+\times\bT$ with fractional Laplacian $(-\Delta)^{\sigma/2}$, additive noise and polynomial non-linearity, where $\bT$ is the \mbox{$\rdim$-dimensional} torus. We consider the weakly non-linear regime and not necessarily Gaussian noises which are stationary, centered, sufficiently regular and satisfy some integrability and mixing conditions. We prove that the macroscopic scaling limit exists and has a universal law characterized by parameters of the relevant perturbations of the linear equation. We develop a new solution theory for singular SPDEs of the above-mentioned form using the Wilsonian renormalization group theory and the Polchinski flow equation. In particular, in the case of $\rdim=4$ and the cubic non-linearity our analysis covers the whole sub-critical regime $\sigma>2$. Our technique avoids completely all the algebraic and combinatorial problems arising in different approaches.

\vspace{1mm}

\noindent {\it \small MSC classification: 60H17, 81T17 }
\end{abstract}

\tableofcontents

\section{Introduction}

The aim of this paper is to investigate the large-scale behavior of solutions of semi-linear parabolic stochastic partial differential equations (SPDEs) of the form
\begin{equation}\label{eq:intro_spde}
 \fQ \microPhi_\sUV(x) = \microF_\sUV[\microPhi_\sUV](x),
 \qquad x=(\mathring x,\bar x)\in\bR_+\times\bR^\rdim,
\end{equation}
with an appropriate initial condition at time $\mathring x=0$, where 
\begin{equation}\label{eq:force_micro}
 \microF_\sUV[\varphi](x):=\microXi_\sUV(x)+
 \sum_{i=1}^{\infty}\sum_{m=0}^{\infty}\sum_{a\in\bN_0^{\rdim m}} 
 \lambda^i\, 
 \microf^{i,m,a}_\sUV~
 \partial^{a_1}\varphi(x)\ldots \partial^{a_m}\varphi(x).
\end{equation}
We make the following assumptions: 
\begin{itemize}\label{itemize:intro}
\item $\rdim\in\bN_+$ is the spatial dimension of the spacetime $\bM=\bR\times\bR^\rdim$.
\item $\fQ$ is a parabolic (pseudo-)differential operator of the form $\partial_{\mathring x}+(-\Delta_{\bar x})^{\sigma/2}$, where $(-\Delta_{\bar x})^{\sigma/2}$ is the fractional Laplacian of order $\sigma\in(0,\rdim)$.
\item $\lambda\in\bR$ is some constant and $\sUV\in(0,1]$ is the scaling parameter.
\item $\microXi_\sUV$ is some centered and stationary random field on $\bM$ that is sufficiently regular and satisfies some mixing and integrability conditions.
\item The parameters $\microf^{i,m,a}_\sUV$ are non-zero only for finitely many $i\in\bN_+$, $m\in\bN_0$ and \mbox{$a=(a_1,\ldots,a_m)$} such that the spatial multi-indices $a_1,\ldots,a_m$ satisfy the condition $|a_1|,\ldots,|a_m|<\sigma$ (the last condition says that the SPDE~\eqref{eq:intro_spde} is semi-linear).
\end{itemize}

The SPDE~\eqref{eq:intro_spde} is called the microscopic equation, its RHS denoted by $\microF_\sUV[\varphi]$ is called the microscopic force, the random field $\microXi_\sUV$ is called the microscopic noise and the parameters $\microf^{i,m,a}_\sUV\in\bR$ are called microscopic force coefficients. The microscopic equation is supposed to describe some self-interacting system influenced by a random environment modeled by the noise~$\microXi_\sUV$.

\begin{dfn}\label{dfn:square_bracket}
For $\sUV\in\bR$ we define $[\sUV]:=|\sUV|^{1/\sigma}$. 
\end{dfn}

We consider the so-called weakly non-linear regime. We allow the non-linear terms in the microscopic equation to depend explicitly on the scaling parameter $\sUV\in(0,1]$ and demand that they vanish in the limit $\sUV\searrow0$. More specifically, we suppose that $|\microf^{i,m,a}_\sUV|\lesssim [\sUV]^{i\dim(\lambda)-\varepsilon}$ uniformly in $\sUV\in(0,1]$ for some $\dim(\lambda)>0$ and all $\varepsilon\in(0,1]$.

In order to avoid technical problems we investigate the microscopic equation on a finite spatial domain. More specifically, we assume that the microscopic noise~$\microXi_\sUV$ is $2\pi/[\sUV]$ periodic in space. In particular, it depends on the scaling parameter $\sUV$. However, the dependence of~$\microXi_\sUV$ on $\sUV$ is assumed to be very weak.

\begin{dfn}\label{dfn:dimensions}
To every quantity $X$ we assign a certain parabolic dimension \mbox{$\dim(X)\in\bR$}. In particular, for $x=(\mathring x,\bar x)=(\mathring x,\bar x^1,\ldots,\bar x^\rdim)\in\bM$ we define $\dim(\mathring x) = -\sigma$, \mbox{$\dim(\bar x^k)=-1$}, $k\in\{1,\ldots,\rdim\}$. The parabolic dimension of the spacetime $\bM$ equals \mbox{$-\rDim:=-\sigma-\rdim$}. We assume that
\begin{equation}
 \dim(\lambda)>0, 
 \quad 
 \dim(\varPhi):=(\rdim-\sigma)/2>0,
 \quad
 \dim(\varXi):=(\rdim+\sigma)/2>0.
\end{equation}
\end{dfn}

\begin{rem}
The constant $\lambda$ plays a role of a bookkeeping parameter that can be set equal to $1$ at the end of the construction. In particular, the dimension of $\lambda$ need not coincide with the physical dimension.
\end{rem}

\begin{dfn}\label{dfn:bar_frI}
The set $\bar\frI$ consists of all triples $(i,m,a)$ such that $i\in\bN_+$, $m\in\bN_0$ and $a=(a_1,\ldots,a_m)$, where $a_1,\ldots,a_m$ are spatial multi-indices such that $|a_1|,\ldots,|a_m|<\sigma$. For $(i,m,a)\in\bar\frI$ we define
\begin{equation}
 \varrho(i,m,a):=-\dim(\varXi)+i\dim(\lambda) + m\dim(\varPhi) + |a|,
 \qquad
 |a|=|a_1|+\ldots+|a_m|.
\end{equation}
The sets $\bar\frI^-$ and $\bar\frI^+$ consist of triples $(i,m,a)\in\bar\frI$ such that $\varrho(i,m,a)\leq0$ and $\varrho(i,m,a)>0$, respectively. The force coefficients $\microf^{i,m,a}_\sUV$ with $(i,m,a)\in\bar\frI^\mp$ are called relevant/irrelevant. Because by assumption $\dim(\lambda)>0$ and $\dim(\varPhi)>0$ there are only finitely many relevant coefficients.
\end{dfn}

Since the noise $\microXi_\sUV$ is assumed to be regular the initial value problem for the microscopic equation is classically well-posed (in some open possibly random time interval). Let $\microPhi_\sUV$ be the solution of the microscopic equation~\eqref{eq:intro_spde}. In the paper we investigate the limit $\sUV\searrow0$ of the stochastic process $\varPhi_\sUV$ related to the process $\microPhi_\sUV$ by the equation
\begin{equation}\label{eq:scaling_Phi}
 \varPhi_\sUV(\mathring x,\bar x) = [\sUV]^{-\dim(\varPhi)}\,\microPhi_\sUV(\mathring x/\sUV,\bar x/[\sUV]).
\end{equation}
Note that the rescaled process~$\varPhi_\sUV$ satisfies the following equation
\begin{equation}\label{eq:intro_spde_macro}
 \fQ \varPhi_\sUV(x) 
 = F_\sUV[\varPhi_\sUV](x),
 \qquad x\in\bR_+\times\bR^\rdim,
\end{equation}
with the initial condition $\varPhi_\sUV(0,\Cdot)=\phi_\sUV$, where
\begin{equation}\label{eq:force_macro}
 F_\sUV[\varphi](x):= \varXi_\sUV(x) +  \sum_{(i,m,a)\in\bar\frI}\,\lambda^i\, 
 f^{i,m,a}_\sUV~\partial^{a_1}\varphi(x)\ldots \partial^{a_m}\varphi(x)
\end{equation} 
and
\begin{gather}
\label{eq:noise_rescaled}
\varXi_\sUV(\mathring x,\bar x) := [\sUV]^{-\dim(\varXi)}\,\microXi_\sUV(\mathring x/\sUV,\bar x/[\sUV]),
\\
\label{eq:f_micro_macro}
f^{i,m,a}_\sUV := [\sUV]^{-\dim(\varXi)+m\dim(\varPhi)+|a|}\, \microf^{i,m,a}_\sUV.
\end{gather}
The SPDE~\eqref{eq:intro_spde_macro} is called the macroscopic equation, its RHS denoted by~$F_\sUV[\varphi]$ is called the (macroscopic) force, the random field $\varXi_\sUV$ is called the (macroscopic) noise and the parameters $f^{i,m,a}_\sUV\in\bR$ are called the (macroscopic) force coefficients. 

Note that the rescaled noise $\varXi_\sUV$ is $2\pi$ periodic in space. We assume that the initial data $\phi_\sUV$ is also periodic and look for periodic solutions of the macroscopic equation. Consequently, the macroscopic equation can be equivalently viewed as an equation on $\bR_+\times\bT$, where $\bT$ is the $\rdim$-dimensional torus of size $2\pi$. 

The microscopic force coefficients vanish in the limit $\sUV\searrow0$. The same is true for the irrelevant macroscopic force coefficients. On the other hand, the relevant macroscopic force coefficients often do not have a limit as $\sUV\searrow0$. 

Under our assumptions about the noise $\microXi_\sUV$ the law of $\varXi_\sUV$ converges to the law of the white noise $\varXi_0$ on $\bH:=\bR\times\bT$ as $\sUV\searrow0$. Moreover, $\lim_{\sUV\searrow0} f^{i,m,a}_\sUV=0$ for all irrelevant force coefficients. This suggests that formally the macroscopic process defined by the scaling limit $\varPhi_0:=\lim_{\sUV\searrow0}\varPhi_\sUV$ should satisfy the following SPDE driven by the white noise
\begin{equation}\label{eq:intro_singular}
 \fQ \varPhi_0(x) = \varXi_0(x) +  \sum_{(i,m,a)\in\bar\frI^-} 
 \lambda^i\, 
 f^{i,m,a}_0~\partial^{a_1}\varPhi_0(x)\ldots \partial^{a_m}\varPhi_0(x).
\end{equation}
Observe that the sum on the RHS of the above equation involves only the relevant force coefficients $f^{i,m,a}_0=\lim_{\sUV\searrow0}f^{i,m,a}_\sUV$. However, the coefficients $f^{i,m,a}_0$ need not be well-defined or finite. The regularity of the solution of the above equation is expected to be slightly worse than $-\dim(\varPhi)=(\sigma-\rdim)/2$, which is negative by assumption. As a result, the products that appear in the above SPDE are typically ill-defined and the equation is generically singular (apart from the spacial cases when it happens to be linear).

\begin{rem}
In order to make sense of a singular SPDE of the form~\eqref{eq:intro_singular} one considers its approximations. Any macroscopic SPDE of the form~\eqref{eq:intro_spde_macro} is a valid approximation. The standard approximation of a singular SPDE of the form~\eqref{eq:intro_singular} is obtained by replacing the white noise $\varXi_0$ with a mollified noise $\varXi_{\sUV;0}$ and allowing the force coefficients $f_{\sUV;0}^{i,m,a}$, $(i,m,a)\in\bar\frI^-$, to depend on the UV cutoff $\sUV\in(0,1]$. It turns out the standard approximation is of the form~\eqref{eq:intro_spde_macro}.
\end{rem}

\begin{rem}
Because of the assumption $\dim(\lambda)>0$ the singular SPDEs obtained from the macroscopic equations~\eqref{eq:intro_spde_macro} in the limit $\sUV\searrow0$ are always sub-critical in the sense of~\cite{hairer2014structures,bruned2021renormalising}.
\end{rem}

The main result of the paper is the proof of the existence and universality of the macroscopic scaling limit $\varPhi_0=\lim_{\sUV\searrow0}\varPhi_\sUV$, where the process $\varPhi_\sUV$ is obtained by the rescaling~\eqref{eq:scaling_Phi} of the solution $\microPhi_\sUV$ of the microscopic equation~\eqref{eq:intro_spde}. We assume the following:
\begin{enumerate}
\item[(1)] The noise is stationary, centered and satisfies certain regularity, integrability and mixing conditions.

\item[(2)] The initial data is sufficiently close to equilibrium (in the sense explained below).

\item[(3)] The relevant microscopic force coefficients have a specific asymptotic behavior for small values of the scaling parameter $\sUV$.
\end{enumerate}

Our assumptions about the noise are quite typical and essentially coincide with those introduced in~\cite{hairer2013kpz}. Recall that in the case of singular SPDEs close to criticality in general it is not possible to start the equation from initial data of the same regularity as the solution~\cite{bruned2021renormalising}. The need for the above-mentioned fine-tuning of the relevant force coefficients is called the renormalization problem. 

The solution manifold of the macroscopic SPDE~\eqref{eq:intro_spde_macro} can be parameterized in terms of the initial data $\phi_\sUV$ and the force coefficients $f^{i,m,a}_\sUV$. In order to solve the renormalization problem we use a different parametrization that is well defined in the limit $\sUV\searrow0$ and involves certain macroscopically measurable quantities like, for example, the effective mass at a spatial scale of order one. 

Let $G\in\sS'(\bM)$ be the fundamental solution for the pseudo-differential operator $\fQ$. We call $G$ the fractional heat kernel. Note that $G$ is supported in $[0,\infty)\times\bR^\rdim$ and is smooth outside the hyperplane $\{0\}\times\bR^\rdim$ (if $\sigma\in2\bN_+$, then $G$ is smooth outside the origin). We define a differentiable family of smooth kernels $G_\sIR$, $\sIR\in[0,\infty)$ in such a way that $G_\sIR$ coincides with $G$ in the time interval $[2\sIR,\infty)$ and its support is contained in the time interval $[\sIR,\infty)$. Note that the characteristic spatial length scale of the kernel $G_\sIR$ is of order $[\sIR]=\sIR^{1/\sigma}$ and $G_0=G$.

\begin{rem}
The fact that for $\sigma\notin 2\bN_+$ the fractional heat kernel is not smooth on the hyperplane $\{0\}\times\bR^\rdim$, which is not a compact set, poses a major difficulty. Our analysis can be significantly simplified in the case $\sigma\in2\bN_+$, that is, when the SPDE under investigation is expressed in terms of local differentiable operators.
\end{rem}

We claim that under the assumption about the initial data stated below the renormalization problem for the initial value formulation of the macroscopic SPDE~\eqref{eq:intro_spde_macro} is equivalent to the renormalization problem for the following equation
\begin{equation}\label{eq:informal_statement_stationary}
 \varPsi_\sUV = (G-G_1)\ast F_\sUV[\varPsi_\sUV]
\end{equation}
posed in the whole space $\bM=\bR\times\bR^\rdim$. The fundamental object of the flow equation approach is the effective force functional
\begin{equation}\label{eq:intro_effective_force}
 C^\infty_\rc(\bM)\ni\varphi\mapsto F_{\sUV,\sIR}[\varphi]\in \sD'(\bM),\qquad \sIR\in[0,1],
\end{equation}
which is a formal power series in $\lambda$ of the form
\begin{equation}\label{eq:idea_ansatz}
 \langle F_{\sUV,\sIR}[\varphi],\psi\rangle = \sum_{i=0}^\infty\sum_{m=0}^\infty \lambda^i\,\langle F_{\sUV,\sIR}^{i,m},\psi\otimes\varphi^{\otimes m}\rangle, 
\end{equation}
where $\psi,\varphi\in C^\infty_\rc(\bM)$ and given $i\in\bN_0$ the coefficients $F_{\sUV,\sIR}^{i,m}\in\sD'(\bM\times\bM^m)$ vanish for all but finitely many $m\in\bN_0$. The effective force coefficients $F_{\sUV,\sIR}^{i,m}$ are constructed recursively with the use of a certain flow equation of the form
\begin{equation}\label{eq:flow_eq_form}
 \partial_\sIR F^{i,m}_{\sUV,\sIR} = \sum_{j=0}^i \sum_{k=0}^m (k+1)\,\fB_\sIR(F^{j,k+1}_{\sUV,\sIR},F^{i-j,m-k}_{\sUV,\sIR}),
\end{equation}
where $\fB_\sIR$ is some bilinear operator. By definition, $F_{\sUV,0}[\varphi]$ coincides with the force $F_\sUV[\varphi]$.  For $\sIR,\uIR\in[0,1]$ such that $\uIR\leq\sIR$ the effective force satisfies the equation
\begin{equation}\label{eq:effective_force_property}
 F_{\sUV,\sIR}[\varphi] = F_{\sUV,\uIR}[\varphi+(G_\uIR-G_\sIR)\ast F_{\sUV,\sIR}[\varphi]].
\end{equation}
This equation with $\uIR=0$ and $\sIR=1$ implies that the formal power series
\begin{equation}\label{eq:intro_stationary_solution}
 \varPsi_\sUV=(G-G_1)\ast F_{\sUV,1}[0] = 
 \sum_{i=0}^\infty \lambda^i \,(G-G_1)\ast F^{i,0}_{\sUV,1}
\end{equation}
is a stationary solution of Eq.~\eqref{eq:informal_statement_stationary}. For a list $a=(a_1,\ldots,a_m)$ of spatial multi-indices we define a polynomial 
\begin{equation}\label{eq:polynomial_relative}
 \cX^{m,a}(x;x_1,\ldots,x_m):=(x-x_1)^{a_1}\ldots(x-x_m)^{a_m}.
\end{equation}
Let $1_\bM\in C^\infty(\bM)$ be the constant function equal to $1$. We shall prove that the formula 
\begin{equation}\label{eq:intro_eff_force_coeff}
 \langle f^{i,m,a}_{\sUV,\sIR},\psi\rangle:=\langle F^{i,m,a}_{\sUV,\sIR},\psi\otimes 1_\bM^{\otimes m}\rangle,
 \qquad
 F^{i,m,a}_{\sUV,\sIR}:=\cX^{m,a} F^{i,m}_{\sUV,\sIR},
\end{equation}
defines the coefficients $f^{i,m,a}_{\sUV,\sIR}\in \sD'(\bM)$. Note that $f^{i,m,a}_{\sUV,0}$ coincides with the force coefficient $f^{i,m,a}_\sUV\in\bR$. For $\sIR>0$ the coefficients $f^{i,m,a}_{\sUV,\sIR}$ are measurable functions of the noise.

\begin{dfn}\label{dfn:expected_value}
The expectation of a random variable $X$ is denoted by $\llangle X\rrangle$. 
\end{dfn}

We are ready to state our assumptions about the force coefficients. We suppose that the irrelevant force coefficients are arbitrary such that the following bounds
\begin{equation}\label{eq:informal_statement_bound_force}
 |f^{i,m,a}_\sUV|\lesssim [\sUV]^{\varrho(i,m,a)-\varepsilon},
 \qquad
 (i,m,a)\in\bar\frI^+,
\end{equation}
hold uniformly in $\sUV\in(0,1]$ and we fix the relevant force coefficients $f^{i,m,a}_\sUV$, $(i,m,a)\in\bar\frI^-$, by the following renormalization conditions
\begin{equation}\label{eq:intro_ren_conditions}
 \lim_{\sUV\searrow0}\,\llangle f^{i,m,a}_{\sUV,1} \rrangle =\mathfrak{f}^{i,m,a}_0,\quad (i,m,a)\in\bar\frI^-,
\end{equation}
where $\mathfrak{f}^{i,m,a}_0\in\bR$ are arbitrarily chosen renormalization parameters. By stationarity $\llangle f^{i,m,a}_{\sUV,\sIR} \rrangle$ is constant over spacetime.

\begin{rem}
Eq.~\eqref{eq:f_micro_macro} implies that the bound~\eqref{eq:informal_statement_bound_force} is equivalent to the bound $|\microf^{i,m,a}_\sUV|\lesssim [\sUV]^{i\dim(\lambda)-\varepsilon}$ expressed in terms of microscopic force coefficients.
\end{rem}

\begin{rem}
In our approach all the relevant force coefficients (including the counterterms) are fixed by the renormalization conditions and for any $\sUV\in(0,1]$ their values can be computed with the use of the flow equation.
\end{rem}

\begin{rem}
It turns out that the bound~\eqref{eq:informal_statement_bound_force} holds automatically also for the relevant force coefficients. 
\end{rem} 

\begin{rem}
It is important that the kernel $G-G_1$ that appears in Eq.~\eqref{eq:informal_statement_stationary} belongs to $L^1(\bM)$. Note that this is not the case for the kernel $G$.
\end{rem}

As we mentioned above, we need to assume the initial data $\phi_\sUV$ is a small perturbation of the initial data leading to a stationary solution. More precisely, we suppose that $\varPhi_\sUV(0,\Cdot)=\phi_\sUV$ is of the following form
\begin{equation}\label{eq:intro_initial}
 \phi_\sUV = \phi^\triangleright_\sUV +\phi_\sUV^\vartriangle,
 \quad
 \phi^\triangleright_\sUV:=\varPhi_\sUV^\triangleright(0,\Cdot),
 \quad
 \varPhi_\sUV^\triangleright := (G-G_1)\ast f^\triangleright_\sUV,
 \quad
 f^\triangleright_\sUV:=\sum_{i=0}^{i_\triangleright}\lambda^i F^{i,0}_{\sUV,1},
\end{equation}
where $F^{i,m}_{\sUV,\sIR}$ are the effective force coefficients defined above. We choose $i_\triangleright\in\bN_0$ in such a way that for any $i>i_\triangleright$ the coefficient of order $\lambda^i$ in the series~\eqref{eq:intro_stationary_solution} can be uniformly bounded in some space with positive regularity. Note that $\varPhi_\sUV^\triangleright$ defined above is obtained by truncating the stationary solution~\eqref{eq:intro_stationary_solution} of Eq.~\eqref{eq:informal_statement_stationary} at order $\lambda^{i_\triangleright}$. We assume that the random functions $\phi_\sUV^\vartriangle$ are such that $\phi_0^\vartriangle=\lim_{\sUV\searrow0}\phi_\sUV^\vartriangle$ converges in a Besov space $\sC^\beta(\bT)$ with the regularity $\beta$ depending on $\dim(\lambda)$ and $\dim(\varPhi)$. If \mbox{$\dim(\lambda)>\dim(\varPhi)$}, then $\beta$ coincides with the expected regularity of the macroscopic scaling limit \mbox{$\varPhi_0=\lim_{\sUV\searrow0}\varPhi_\sUV$}. If $\dim(\lambda)$ is small, then $\beta$ can be only slightly negative.

Now we are ready to give an informal statement of the main result of the paper. For the precise formulation we refer the reader to Theorem~\ref{thm:main}.
\begin{thm}\label{thm:main_informal}
 Let $\varPhi_\sUV$ be the solution of the macroscopic equation~\eqref{eq:intro_spde_macro} with the initial data $\phi_\sUV$. Under the assumptions described above the macroscopic scaling limit $\varPhi_0=\lim_{\sUV\searrow0}\varPhi_\sUV$ exists and its law depends only on the renormalization parameters $\mathfrak{f}^{i,m,a}_0\in\bR$, $(i,m,a)\in\bar\frI^-$, and the law of \mbox{$\phi_0^\vartriangle=\lim_{\sUV\searrow0}\phi_\sUV^\vartriangle$}.
\end{thm}

\begin{rem}
For every $\sUV\in(0,1]$ the law of the solution $\varPhi_\sUV$ of the macroscopic equation depends on the law of the noise $\varXi_\sUV$, the law of the initial data $\phi_\sUV$ and the values of the relevant and irrelevant force coefficients~$f^{i,m,a}_\sUV$.
\end{rem}

\begin{rem}
The fact that the law of the scaling limit $\varPhi_0$ depends only on the renormalization parameters $\mathfrak{f}^{i,m,a}_0\in\bR$, $(i,m,a)\in\bar\frI^-$, and the law of $\phi_0^\vartriangle$ is a universality-type result. For a pedagogical introduction of the notion of the weak universality in the context of SPDEs see~\cite{hairer2018renormalization}.
\end{rem}

\begin{rem}
Let us recall that by assumption $\sigma\in(0,\rdim)$. Hence \mbox{$\dim(\varPhi)>0$}. This together with $\dim(\lambda)>0$ guarantees that the number of the relevant force coefficients as well as the number of renormalization conditions is finite. Consequently, given the initial data $\phi_0^\vartriangle$ the law of the scaling limit $\varPhi_0$ depends only on finitely many renormalization parameters $\mathfrak{f}^{i,m,a}_0\in\bR$, $(i,m,a)\in\bar\frI^-$.
\end{rem}

\begin{rem}
In the case $\dim(\varPhi)\leq 0$ there are infinitely many possible relevant interaction terms including terms which are not polynomials. Consequently, generically no universality result is expected to hold. The exception are physical systems with symmetries (such as the system in Example~\ref{example:kpz} below).
\end{rem}

\begin{rem}
The technique developed in the paper allows to renormalize singular SPDEs with polynomial non-linearities also in the regime $\dim(\varPhi)=0$. Note that in this case we fix the renormalization parameters in such a way that only finitely many of them are non-zero. The generalization to non-polynomial non-linearities and the regime $\dim(\varPhi)<0$ is an open problem.
\end{rem}

\begin{rem}
The existence of the limit $\varPhi_0=\lim_{\sUV\searrow0}\varPhi_\sUV$ is established only on bounded time intervals up to some possibly random finite explosion time. The global in time existence is not expected for some of the equations studied in the paper.
\end{rem}

\begin{rem}
Our result provides a very general approximation theory for singular sub-critical SPDEs of the form~\eqref{eq:intro_singular} with a polynomial non-linearity and driven by the additive space-time white noise. In particular, our method covers the case of non-Gaussian approximations of the white noise. 
\end{rem}

Let us briefly describe the technique we use to establish the above result. As usual, the proof can be divided into the probabilistic and deterministic part. In the probabilistic part we show that the noise $\varXi_\sUV\equiv f^{0,0,0}_{\sUV,\sIR}\in C(\bM)$ as well as the relevant effective force coefficients $f^{i,m,a}_{\sUV,\sIR}\in C(\bM)$, $(i,m,a)\in\bar\frI^-$, introduced above, converge in law as \mbox{$\sUV\searrow0$} to distributional random fields
\begin{equation}
 f^{i,m,a}_{0,\sIR}\in\sD'(\bM), \qquad 
 (i,m,a)\in\bar\frI^-_0:=\{(0,0,0)\}\cup\bar\frI^-,
\end{equation}
for every $\sIR\in(0,1]$ and the law of the random fields $f^{i,m,a}_{0,\sIR}$, $(i,m,a)\in\bar\frI^-_0$ depends only on the choice of the renormalization parameters $\mathfrak f^{i,m,a}_0$, \mbox{$(i,m,a)\in\bar\frI^-$}. In this step we solve the renormalization problem, that is we fix appropriately the relevant force coefficients $f^{i,m,a}_\sUV$, $(i,m,a)\in\bar\frI^-$. Let us remark that the collection of the relevant effective force coefficients $f^{i,m,a}_{\sUV,\sIR}$, $(i,m,a)\in\bar\frI^-_0$ plays an analogous role to the model and the enhanced noise in the approaches to singular SPDEs based on the theory of regularity structures~\cite{hairer2014structures} and the paracontrolled calculus~\cite{gubinelli2015}, respectively. Note also that the number of the relevant effective force coefficients diverges as the SPDE approaches the critical one. Proving the convergence of the relevant effective force coefficients as \mbox{$\sUV\searrow0$} is the main challenge in constructing solutions of singular SPDEs close to criticality. To this end, we study cumulants of the effective force coefficients and observe that they satisfy a certain flow equation that follows from the flow equation~\eqref{eq:flow_eq_form}. Using the flow equation we prove by induction certain uniform bounds for cumulants, which imply convergence of the relevant effective force coefficients as \mbox{$\sUV\searrow0$}. The renormalization problem is solved by imposing appropriate boundary conditions for the flow equation for cumulants. The correct values of the counterterms, that is the relevant force coefficients $f^{i,m,a}_\sUV$, $(i,m,a)\in\bar\frI^-$, are determined automatically by the flow equation and the boundary conditions. We stress that in contrast to other approaches to singular SPDEs our approach does not rely on any diagrammatical representations and, in particular, avoids the algebraic and combinatorial problems related to the so-called divergent sub-diagrams.

In the deterministic part of the proof for every $\sUV\in[0,1]$ we construct \mbox{$\breve\varPhi_\sUV\in\sD'(\bM)$} as a function of (generic realizations of):
\begin{itemize}
 \item relevant effective force coefficients $f^{i,m,a}_{\sUV,\sIR}\in \sD'(\bM)$, $(i,m,a)\in\bar\frI^-_0$, $\sIR\in(0,1]$,
 \item irrelevant force coefficients $f^{i,m,a}_{\sUV}\in\bR$, $(i,m,a)\in\bar\frI^+$ and
 \item the initial condition $\phi_\sUV^\vartriangle\in\sD'(\bR^\rdim)$.
\end{itemize}
We prove that the above-mentioned function satisfies a certain continuity property which together with the result of the probabilistic analysis implies the existence of the limit $\lim_{\sUV\searrow0}\breve\varPhi_\sUV=\breve\varPhi_0$. For $\sUV\in(0,1]$ the distribution $\breve\varPhi_\sUV$ is actually a function and is related to the classical mild solution $\varPhi_\sUV$ of the macroscopic equation~\eqref{eq:intro_spde_macro} by the formula
\begin{equation}
 \varPhi_\sUV 
 =\varPhi_\sUV^\triangleright + \breve\varPhi_\sUV,
\end{equation}
where $\varPhi_\sUV^\triangleright$ was introduced in Eq.~\eqref{eq:intro_initial}. The function $\breve\varPhi_\sUV$ coincides with the mild solution of the equation
\begin{equation}\label{eq:macroscopic_breve}
 \fQ \breve\varPhi_\sUV = F_\sUV[\breve\varPhi_\sUV + \varPhi_\sUV^\triangleright]-\fQ \varPhi_\sUV^\triangleright
\end{equation}
with the initial condition $\breve\varPhi_\sUV(0,\Cdot)=\phi_\sUV^\vartriangle$. The solution of the above equation in a~time interval $[t,t+T]$ is written in the form 
\begin{equation}
 \breve\varPhi_\sUV = \varPhi_\sUV^\vartriangle + \tilde\varPhi_\sUV,
 \qquad
 \tilde\varPhi_\sUV = (G-G_T)\ast \tilde F_{\sUV,T}[0]
 =(G-G_T)\ast\sum_{i=0}^\infty \lambda^i\, \tilde F^{i,0}_{\sUV,T},
\end{equation}
where $\varPhi_\sUV^\vartriangle$ solves the equation $\fQ\varPhi_\sUV^\vartriangle=0$ with an appropriately chosen initial condition at $\mathring x=t$ and $\tilde F^{i,m}_{\sUV,\sIR}\in \sD'(\bM\times\bM^m)$, $i,m\in\bN_0$, are coefficients of some effective force functional $\tilde F_{\sUV,\sIR}[\varphi]$. We construct the coefficients $\tilde F^{i,m}_{\sUV,\sIR}$ recursively using a flow equation of the form~\eqref{eq:flow_eq_form} and show that they can be expressed in terms of the initial data at time $\mathring x=t$, the relevant effective force coefficients $f^{i,m,a}_{\sUV,\sIR}\in\sD'(\bM)$, $(i,m,a)\in\bar\frI^-_0$ as well as the irrelevant force coefficients $f^{i,m,a}_{\sUV}\in\bR$, $(i,m,a)\in\bar\frI^+$. We prove bounds for the coefficients $\tilde F^{i,m}_{\sUV,\sIR}$ that imply the absolute convergence of the series defining $\tilde\varPhi_\sUV$ for sufficiently small $T\in(0,1)$. The maximal solution $\breve\varPhi_\sUV$ of Eq.~\eqref{eq:macroscopic_breve} is constructed by patching together solutions in small time intervals.

\begin{rem}
The effective force $\tilde F_{\sUV,\sIR}[\varphi]$ depends on $t\in[0,\infty)$ and the choice of the initial data at $\mathring x=t$. We stress that $\tilde F_{\sUV,\sIR}[\varphi]$ does not coincide with the stationary effective force $F_{\sUV,\sIR}[\varphi]$ introduced earlier to solve the renormalization problem. However, as we shall see, in a certain region the coefficients of $\tilde F_{\sUV,\sIR}[\varphi]$ can be expressed in terms of the coefficients of $F_{\sUV,\sIR}[\varphi]$. The above-mentioned relation between the two effective force functionals plays a crucial role in the proof.
\end{rem}

The rest of this article is organized as follows. In Sec.~\ref{sec:technique} we give an informal overview of the proof. The precise statement of the main result of the paper is given in Sec.~\ref{sec:result}. In Sec.~\ref{sec:kernels} we introduce kernels that are used to control the regularity of various distributions. We also define the scale decomposition of the fractional heat kernel. Sec.~\ref{sec:topology} contains definitions of function spaces for the effective force coefficients and their cumulants. In Sec~\ref{sec:taylor} we use the Taylor theorem to decompose a distribution into a local part and a certain better behaved remainder. The material presented in Sec.~\ref{sec:kernels}-\ref{sec:taylor} has a preliminary character.  When first reading these sections the reader should concentrate on familiarizing with the definitions of new objects. The lemmas and theorems stated in these sections can be read later when they are referred to. In Sec.~\ref{sec:effective_force} we construct the effective force coefficients with the cutoff $\nu\in(0,1]$ and establish some of their properties. Let us stress that the bounds stated in this section are not uniform in the cutoff. In Sec.~\ref{sec:cumulants_estimates} we prove bounds for the cumulants of the effective force coefficients that are uniform in the cutoff $\nu\in(0,1]$. In Sec.~\ref{sec:probabilistic} we show how to conclude convergence of the relevant local effective force coefficients as $\nu\searrow0$ using the bounds for cumulants. In Sec.~\ref{sec:deterministic} we construct the solution of the SPDE and express it as a function of the initial data, the relevant local effective force coefficients and the irrelevant force coefficients. Theorem~\ref{thm:main_informal} follows by combining the results of Sec.~\ref{sec:probabilistic} and~\ref{sec:deterministic}. Appendix~\ref{sec:app} contains the symbolic index.

\section{Technique of the proof}\label{sec:technique}

In this section we give an informal description of the key steps of the proof of our result. We first give a general overview of the flow equation approach. Then we show how to solve the renormalization problem. Finally, we discuss the initial value problem.

\subsection{Flow equation approach}\label{sec:intro_flow}

Recall that $G$ denotes the fractional heat kernel and we employ the notation $x=(\mathring x,\bar x)\in\bR\times\bR^\rdim=\bM$. The kernel $G(\mathring x,\bar x)$ vanishes for $\mathring x<0$ and is smooth outside the hyper-surface $\mathring x=0$ (outside the point $(\mathring x,\bar x)=0$ in the special case $\sigma\in2\bN_+$). Fix $\chi\in C^\infty(\bR)$ such that $\supp\,\chi\subset(1,\infty)$ and $\chi=1$ on some neighborhood of $[2,\infty)$. The smooth kernel with UV cutoff $\sIR\in(0,\infty)$ is defined by $G_\sIR(\mathring x,\bar x):=\chi(\mathring x/\sIR) G(\mathring x,\bar x)$. Note that $G_\sIR(\mathring x,\bar x) = G(\mathring x,\bar x)$ for $\mathring x\geq 2\sIR$ and $G_\sIR(\mathring x,\bar x)=0$ for $\mathring x\leq \sIR$. We also set $G_0=G$. For any $T\in(0,\infty)$ we have
\begin{equation}
 G=-\int_0^T \dot G_\sIR\,\rd\sIR+G_T,
 \qquad
 \dot G_\sIR:=\partial_\sIR G_\sIR.
\end{equation}
The following bound
\begin{equation}
 \int_\bM |x^a \partial^b \dot G_\sIR(x)|\,\rd x\lesssim [\sIR]^{[a]-[b]}
\end{equation}
holds uniformly in $\sIR\in(0,1]$ for all multi-indices $a,b\in\frM$ such that $|\bar a|<\sigma$, where $[\sIR]=\sIR^{1/\sigma}$ and $[a]=\sigma \mathring a + |\bar a|$ for any $a=(\mathring a,\bar a)\in\frM=\bN_0^{1+\rdim}$ (see Def.~\ref{dfn:st_multi}).

Let $\sUV\in(0,1]$. Recall that the force $F_\sUV[\varphi](x)$, defined by Eq.~\eqref{eq:force_macro}, is a local functional of $\varphi\in C^\infty(\bM)$. It is a sum of the noise $\varXi_\sUV(x)$ and some non-linearity $N_\sUV[\varphi](x)$ which is a polynomial in $\varphi(x)$ and its derivatives of order strictly bounded by $\sigma$. Let $T\in(0,1]$ and consider the following equation
\begin{equation}\label{eq:intro_stationary_mild}
 \varPsi_\sUV = (G-G_T)\ast F_\sUV[\varPsi_\sUV]
\end{equation}
in $\bM$. This equation does not coincide with the mild form of the macroscopic equation~\eqref{eq:intro_spde_macro}. Nevertheless, the UV behaviors of solutions of both equations are similar. The basic object of the flow equation approach is the so-called effective force functional
\begin{equation}
 C^\infty_\rc(\bM)\ni\varphi\mapsto F_{\sUV,\sIR}[\varphi]\in \sD'(\bM)
\end{equation}
defined for $\sIR\in[0,T]$. The effective force is generically a non-local functional of $\varphi\in C^\infty_\rc(\bM)$. Under our assumptions about the noise $\varXi_\sUV$ the effective force $F_{\sUV,\sIR}[\varphi](x)$ is a continuous function of $x\in\bM$ but it can be bounded uniformly in $\sUV\in(0,1]$ only in the space of distributions. By definition, the effective force satisfies the following flow equation
\begin{equation}\label{eq:intro_flow_eq}
 \langle\partial_\sIR F_{\sUV,\sIR}[\varphi],\psi\rangle
 =
 -\langle \rD F_{\sUV,\sIR}[\varphi,\dot G_\sIR\ast F_{\sUV,\sIR}[\varphi]],\psi\rangle
\end{equation}
with the boundary condition $F_{\sUV,0}[\varphi]=F_\sUV[\varphi]$, where $\varphi,\psi\in C^\infty_\rc(\bM)$. The pairing between a distribution $V$ and a test function $\psi$ is denoted by $\langle V,\psi\rangle$ and $\langle\rD V[\varphi,\zeta],\psi\rangle$ is the derivative of the functional $C^\infty_\rc(\bM)\ni\varphi\mapsto \langle V[\varphi],\psi\rangle\in\bR$ along $\zeta\in C^\infty_\rc(\bM)$. Note that the boundary condition involves the force $F_\sUV[\varphi]$, which is a local functional of $\varphi$.

\begin{rem}\label{rem:intro_flow_solution}
Let $f_{\sUV,T}:=F_{\sUV,T}[0]$. For every $\sIR\in[0,T]$ we have
\begin{equation}\label{eq:intro_stationary_relation}
 f_{\sUV,T} = F_{\sUV,\sIR}[(G_\sIR-G_T)\ast f_{\sUV,T}].
\end{equation}
This identity follows from the fact that
\begin{equation}
 g_{\sUV,\sIR}=f_{\sUV,T} - F_{\sUV,\sIR}[(G_\sIR-G_T)\ast f_{\sUV,T}]
\end{equation}
satisfies the following linear ODE
\begin{equation}
 \partial_\sIR g_{\sUV,\sIR}
 =
 -\rD F_{\sUV,\sIR}[(G_\sIR-G_T)\ast f_{\sUV,T},\dot G_\sIR\ast g_{\sUV,\sIR}]
\end{equation}
with the trivial boundary condition $g_{\sUV,T}=0$. Note that Eq.~\eqref{eq:intro_stationary_relation} with $\sIR=0$ implies that
\begin{equation}
 \varPsi_\sUV := (G-G_T)\ast F_{\sUV,T}[0]
\end{equation}
is a solution of Eq.~\eqref{eq:intro_stationary_mild}.
\end{rem}

Thus, in order to solve Eq.~\eqref{eq:intro_stationary_mild} for any $\sUV\in(0,1]$ it is enough to construct the effective force $F_{\sUV,\sIR}[\varphi]$. Similarly, to prove the existence of the limit $\sUV\searrow0$ of the solution it is enough to investigate the limit $\sUV\searrow0$ of the effective force $F_{\sUV,\sIR}[\varphi]$. In what follows, we assume that $T=1$ and define the effective force $F_{\sUV,\sIR}[\varphi]$ for $\sUV\in(0,1]$, $\sIR\in[0,1]$.

The starting point of the construction of the effective force is the following formal ansatz
\begin{equation}\label{eq:intro_ansatz}
 \langle F_{\sUV,\sIR}[\varphi],\psi\rangle
 :=\sum_{i=0}^\infty \sum_{m=0}^\infty \lambda^i\,\langle F^{i,m}_{\sUV,\sIR},\psi\otimes\varphi^{\otimes m}\rangle,
\end{equation}
where $\lambda\in\bR$ is the parameter that appears in Eq.~\eqref{eq:force_macro} defining the force $F_\sUV[\varphi]$. The distributions $F^{i,m}_{\sUV,\sIR}\in\sD'(\bM\times\bM^m)$, $i,m\in\bN_0$, are called the effective force coefficients. By definition~$\langle F^{i,m}_{\sUV,\sIR},\psi\otimes\varphi_1\otimes\ldots\otimes\varphi_m\rangle$ is invariant under permutations of the test functions $\varphi_1,\ldots,\varphi_m\in\ C^\infty_\rc(\bM)$. The force $F_{\sUV,0}[\varphi]=F_{\sUV}[\varphi]$ is defined by Eq.~\eqref{eq:force_macro} in terms of the force coefficients $f^{i,m,a}_\sUV$, $(i,m,a)\in\bar\frI$, satisfying the conditions specified in Assumption~\ref{ass:renormalization_conditions}. Hence, the force coefficients $F^{i,m}_{\sUV,0}=F^{i,m}_{\sUV}\in\sD'(\bM\times\bM^m)$ are supported on the diagonal $\{(x,\ldots,x)\in\bM^{1+m}\,|\,x\in\bM\}$ and only finitely many of them are not identically equal to zero. It also turns out that $F^{i,m}_{\sUV,\sIR}=0$ if $m>i m_\flat$, where $m_\flat\in\bN_+$ is such that all of the force coefficients $F^{i,m}_\sUV$ vanish identically whenever $m>m_\flat$. Moreover, $F^{0,0}_{\sUV,\sIR}$ coincides with the noise $\varXi_\sUV$. The flow equation~\eqref{eq:intro_flow_eq} for $F_{\sUV,\sIR}[\varphi]$ formally implies that the effective force coefficients $F^{i,m}_{\sUV,\sIR}$ satisfy the following flow equation
\begin{multline}\label{eq:intro_flow_eq_i_m}
 \langle \partial_\sIR^{\phantom{i}} F^{i,m}_{\sUV,\sIR}\,,
 \,\psi\otimes\varphi^{\otimes m}\rangle
 \\
 =
 -\sum_{j=1}^i\sum_{k=0}^m
 \,(k+1)
 ~\big\langle F^{j,k+1}_{\sUV,\sIR}\otimes F^{i-j,m-k}_{\sUV,\sIR},
 \psi\otimes \varphi^{\otimes k}
 \otimes \fV\dot G_\sIR
 \otimes \varphi^{\otimes(m-k)}
 \big\rangle
\end{multline}
together with the boundary condition $F^{i,m}_{\sUV,0}=F^{i,m}_{\sUV}$, where $\fV\dot G_\sIR\in C^\infty(\bM\times\bM)$ is defined by $\fV\dot G_\sIR(x,y):=\dot G_\sIR(x-y)$.

The basic idea behind the flow equation approach is a recursive construction of the effective force coefficients $F^{i,m}_{\sUV,\sIR}$. We set $F^{0,0}_{\sUV,\sIR}=\varXi_\sUV$ and $F^{i,m}_{\sUV,\sIR}\equiv 0$ if $m>i m_\flat$. Assuming that all $F^{i,m}_{\sUV,\sIR}$ with $i<i_\circ$, or $i=i_\circ$ and $m>m_\circ$ have been constructed we define $\partial_\sIR F^{i,m}_{\sUV,\sIR}$ with $i=i_\circ$ and $m=m_\circ$ with the use of the flow equation~\eqref{eq:intro_flow_eq_i_m}. Subsequently, $F^{i,m}_{\sUV,\sIR}$ is defined by
\begin{equation}
 F^{i,m}_{\sUV,\sIR} = F^{i,m}_{\sUV} + \int_0^\sIR \partial_\uIR F^{i,m}_{\sUV,\uIR}\,\rd\uIR.
\end{equation}
Using this procedure one can define all the effective force coefficients $F^{i,m}_{\sUV,\sIR}$ for arbitrary $\sUV\in(0,1]$, $\sIR\in[0,1]$. It is also possible to prove the absolute convergence of the series on the RHS of Eq.~\eqref{eq:intro_ansatz} and construct the solution of Eq.~\eqref{eq:intro_stationary_mild} under the assumption that $\lambda$ is sufficiently small and the noise $\varXi_\sUV$ is a periodic in space and time. However, this is not the aim of the present work. Instead of looking for global stationary solutions, we investigate the initial value problem. We give a non-perturbative construction of a solution for arbitrary $\lambda\in\bR$ under the assumptions that the noise $\varXi_\sUV$ is periodic in space. Our analysis does not exclude a finite-time blow-up of the solution.

\begin{rem}\label{rem:discrete_RG}
Let $\sUV\in(0,1]$, $\sIR\in[0,1]$. The effective force coefficients $F^{i,m}_{\sUV,\sIR}$ can be alternatively defined recursively by the following equation
\begin{equation}\label{eq:effective_force_force}
 F_{\sUV,\sIR}[\varphi] = F_{\sUV}[\varphi+(G-G_\sIR)\ast F_{\sUV,\sIR}[\varphi]],
\end{equation}
where as above the effective force functional $F_{\sUV,\sIR}[\varphi]$ coincides with the formal power series~\eqref{eq:intro_ansatz}. More generally, for $\sUV\in(0,1]$, $\uIR,\sIR\in[0,1]$ it holds
\begin{equation}\label{eq:effective_force_property_idea}
 F_{\sUV,\sIR}[\varphi] = F_{\sUV,\uIR}[\varphi+(G_\uIR-G_\sIR)\ast F_{\sUV,\sIR}[\varphi]].
\end{equation}
Note that Eq.~\eqref{eq:effective_force_force} is a special case of Eq.~\eqref{eq:effective_force_property_idea} for $\uIR=0$. To see that Eq.~\eqref{eq:effective_force_property_idea} follows from the flow equation~\eqref{eq:intro_flow_eq_i_m} denote by $H_{\sUV,\uIR,\sIR}[\varphi]$ the difference between the LHS and RHS of Eq.~\eqref{eq:effective_force_property_idea}. Then
\begin{equation}
\partial_\sIR H_{\sUV,\uIR,\sIR}[\varphi] = -\rD H_{\sUV,\uIR,\sIR}[\varphi,\dot G_\sIR\ast F_{\sUV,\sIR}[\varphi]].
\end{equation}
Using the above formula we prove by ascending induction on $i\in\bN_0$ and descending induction on $m\in\bN_0$ that the coefficients $H_{\sUV,\uIR,\sIR}^{i,m}$ of the functional $H_{\sUV,\uIR,\sIR}[\varphi]$ defined by an analog of Eq.~\eqref{eq:intro_ansatz} vanish identically. We use the fact that $H_{\sUV,\uIR,\sIR}^{i,m}=0$ unless $m\leq i m_\flat$. Let us mention that in certain situations one can make sense of Eq.~\eqref{eq:effective_force_property_idea} for $\uIR\leq\sIR$ non-perturbatively.
\end{rem}

\subsection{Renormalization problem}\label{sec:intro_renormalization}

Now let us describe the technique we use to bound the effective force coefficients $F^{i,m}_{\sUV,\sIR}\in\sD'(\bM\times\bM^{m})$ uniformly in $\sUV,\sIR\in(0,1]$ and prove the existence of the limit $\lim_{\sUV\searrow0}F^{i,m}_{\sUV,\sIR}$. This step of the construction is very subtle and requires an appropriate fine-tuning of the coefficients $\microf^{i,m,a}_{\sUV}$, $(i,m,a)\in\bar\frI$, appearing in the expression~\eqref{eq:force_micro} for the microscopic force $\microF_\sUV[\varphi]$ (or equivalently, the coefficients $f^{i,m,a}_{\sUV}$, $(i,m,a)\in\bar\frI$, appearing in the expression~\eqref{eq:force_macro} for the force $F_\sUV[\varphi]$). This is the so-called renormalization problem.

\begin{dfn}\label{dfn:st_multi}
For a spatial multi-index $a\in\bar\frM:=\bN_0^{\rdim}$ we set $|a|:=a^1+\ldots+a^\rdim$. For a spacetime multi-index $a\in\frM:=\bN_0^{1+\rdim}$ we set
\begin{equation}
 |a|:=\mathring a + |\bar a|,
 \qquad
 [a]:=\sigma \mathring a + |\bar a|,
 \qquad
 a=(\mathring a,\bar a)\in\bN_0\times\bN_0^\rdim.
\end{equation}
We identify $\bar\frM$ with a subset of $\frM$ consisting of $a\in\frM$ such that $\mathring a=0$. Let $\sigma_\diamond:=\sigma$ if $\sigma\in2\bN_+$ and $\sigma_\diamond:=\ceil{\sigma}-1$ otherwise. By definition $\frM_\sigma$ consists of $a\in\frM$ such that $|a|\leq\sigma_\diamond$ and $\bar\frM_\sigma$ consists of $a\in\bar\frM$ such that $|a|<\sigma$.
\end{dfn}

\begin{dfn}\label{dfn:relevant_irrelevant}
For for $m\in\bN_+$ and $a=(a_1,\ldots,a_m)\in\frM^m$ we set
\begin{equation}
 |a|:=|a_1|+\ldots+|a_m|,
 \qquad
 [a]:=[a_1] +\ldots+[a_m].
\end{equation}
We also set $\frM^0\equiv\{0\}$. The set $\frI_0$ consists of all triples $(i,m,a)$ such that $i\in\bN_0$, $m\in\bN_0$ and $a=(a_1,\ldots,a_m)\in\frM_\sigma^m$. For $(i,m,a)\in\frI_0$ we define
\begin{equation}
 \varrho(i,m,a) :=
 -\dim(\varXi)
 + m\, \dim(\varPhi)
 + i\, \dim(\lambda)
 + [a] \in \bR.
\end{equation}
Furthermore, we define
\begin{equation}
 \frI^-_0:=\{(i,m,a)\in\frI_0
 \,|\,\varrho(i,m,a)\leq 0\},
 \quad
 \frI^+:=\{(i,m,a)\in\frI_0
 \,|\,\varrho(i,m,a)> 0\}.
\end{equation}
We also define $\frI:=\frI_0\setminus\{0,0,0\}$ and $\frI^-:=\frI_0^-\setminus\{0,0,0\}$. The set $\frI$ is called the set of indices and $\frI^-$ and $\frI^+$ are called the sets of relevant and irrelevant indices, respectively. The objects indexed by indices belonging $\frI^-$ and $\frI^+$ are called relevant and irrelevant, respectively.
\end{dfn}

\begin{rem}
Note that $\varrho(i,m,a)$ introduced above generalizes $\varrho(i,m,a)$ introduced in Def.~\ref{dfn:bar_frI}. Observe also that $\frI^-=\bar\frI^-$ and $\bar\frI^+\subsetneq\frI^+$, where the sets $\bar\frI^\mp$ were introduced in Def.~\ref{dfn:bar_frI}.
\end{rem}

\begin{dfn}\label{dfn:im}
We introduce the following notation:
\begin{itemize}
 \item $i_\diamond$ is the largest integer such that $\varrho(i_\diamond,0,0)\leq0$,
 \item $m_\diamond$ is the largest integer such that $\varrho(1,m_\diamond,0)\leq0$,
 \item $i_\triangleright$ is the largest integer such that $\varrho(i_\triangleright,0,0)+\sigma\leq0$,
 \item $i_\flat$ is the smallest integer such that $i_\flat\geq i_\diamond$ and $f^{i,m,a}_\sUV=0$ for all $i>i_\flat$,
 \item $m_\flat$ is the smallest integer such that $m_\flat\geq m_\diamond$ and $f^{i,m,a}_\sUV= 0$ for all $m>m_\flat$,
 \item $i_\dagger:=i_\flat+i_\triangleright m_\flat$.
\end{itemize}
\end{dfn}
\begin{rem}
We have $i_\diamond,m_\diamond,i_\flat,m_\flat,i_\dagger\in\bN_+$ and $i_\triangleright\in\{1,\ldots,i_\diamond\}$.
\end{rem}
\begin{dfn}\label{dfn:polynomials}
For $m\in\bN_+$ and a list of multi-indices $a=(a_1,\ldots,a_m)\in\frM^m$ the polynomial $\cX^{m,a}\in C^\infty(\bM\times\bM^m)$ is given by \begin{equation}
 \cX^{m,a}(x;x_1,\ldots,x_m):=(x-x_1)^{a_1}\ldots(x-x_m)^{a_m}.
\end{equation}
\end{dfn}
The effective force coefficients $F^{i,m,a}_{\sUV,\sIR}\in\sD'(\bM\times\bM^m)$ and $f^{i,m,a}_{\sUV,\sIR}\in\sD'(\bM)$ are defined by Eq.~\eqref{eq:intro_eff_force_coeff}. We have $F^{0,0}_{\sUV,\sIR}=F^{0,0,0}_{\sUV,\sIR}=f^{0,0,0}_{\sUV,\sIR}=\varXi_\sUV$. We note that $F^{i,m,a}_{\sUV,\sIR}$ satisfies a certain flow equation that follows from the flow equation~\eqref{eq:intro_flow_eq_i_m}. For any sufficiently small $\varepsilon\in(0,1]$ we prove inductively that
\begin{gather}\label{eq:intro_bound_F}
 \|K_{\sUV,\sIR}^{m;\oo}\ast F^{i,m,a}_{\sUV,\sIR}\|_{\cV^m} \lesssim [\sUV\vee\sIR]^{\varrho_\varepsilon(i,m,a)},
 \\\label{eq:intro_bound_F'}
 \|K_{\sUV,\sIR}^{m;\oo}\ast\partial_\sIR F^{i,m,a}_{\sUV,\sIR}\|_{\cV^m} \lesssim[\sIR]^{\varepsilon-\sigma}\, [\sUV\vee\sIR]^{\varrho_\varepsilon(i,m,a)-\varepsilon},
\end{gather}
uniformly in $\sUV,\sIR\in(0,1]$, where
\begin{equation}
 \varrho_\varepsilon(i,m,a) :=
 -\dim(\varXi)-\varepsilon
 + m\, (\dim(\varPhi)+\varepsilon)
 + i\, (\dim(\lambda)-\varepsilon m_\flat)
 + [a]
\end{equation}
and $[a]$ denotes a degree of the list of multi-indices $a$. The above bounds control only the convolution of the effective force coefficients $F^{i,m,a}_{\sUV,\sIR}$ with some regularizing kernel $K_{\sUV,\sIR}^{m;\oo}$ with the characteristic length scale $[\sUV\vee\sIR]$. By assumption, $\dim(\lambda)>0$ and $\dim(\varPhi)>0$. Hence, for sufficiently small $\varepsilon>0$ there are only finitely many $i,m\in\bN_0$ and $a\in\frM^m$ such that $\varrho_\varepsilon(i,m,a)<0$. If $i,m\in\bN_0$ and $a\in\frM^m$ are such that $\varrho_\varepsilon(i,m,a)<0$, then $F^{i,m,a}_{\sUV,\sIR}$ and $f^{i,m,a}_{\sUV,\sIR}$ are called relevant. Otherwise, $F^{i,m,a}_{\sUV,\sIR}$ and $f^{i,m,a}_{\sUV,\sIR}$ are called irrelevant. Let us also mention that given $i,m\in\bN_0$ there always exists $\ro\in\bN_+$ such that $\varrho_\varepsilon(i,m,a)>0$ for all $|a|=\ro$.

Now we outline the proof of the bounds~\eqref{eq:intro_bound_F} and~\eqref{eq:intro_bound_F'}. We follow the recursive construction of the effective force coefficients presented in the previous section. We use the fact that $F^{i,m,a}_{\sUV,\sIR}= 0$ if $m>i m_\flat$ and assume the relevant coefficients $f^{i,m,a}_{\sUV,\sIR}\in\sD'(\bM)$ almost surely satisfy certain bounds of the form
\begin{equation}
 \|K^{\ast\oo}_{\sIR}\ast f^{i,m,a}_{\sUV,\sIR}\|\lesssim [\sUV\vee\sIR]^{\varrho_\varepsilon(i,m,a)}.
\end{equation}
These bounds for the relevant coefficients $f^{i,m,a}_{\sUV,\sIR}$ are proved using probabilistic arguments. We describe this step of the construction later. Assuming that the bound~\eqref{eq:intro_bound_F} holds for all $F^{i,m,a}_{\sUV,\sIR}$ such that $i<i_\circ$, or $i=i_\circ$ and $m>m_\circ$ we prove the bound~\eqref{eq:intro_bound_F'} for $\partial_\sIR F^{i,m,a}_{\sUV,\sIR}$ with $i=i_\circ$ and $m=m_\circ$. To this end, we use a certain flow equation for $F^{i,m,a}_{\sUV,\sIR}$. The bound for irrelevant $F^{i,m,a}_{\sUV,\sIR}$ follows then from the identity
\begin{equation}
 F^{i,m,a}_{\sUV,\sIR} = F^{i,m,a}_{\sUV,0} + \int_0^\sIR \partial_\uIR F^{i,m,a}_{\sUV,\uIR}\,\rd\uIR.
\end{equation}
Here we use the fact that
\begin{equation}
 \int_0^\sIR [\uIR]^{\varepsilon-\sigma}\, [\sUV\vee\uIR]^{\rho-\varepsilon}\,\rd\uIR\lesssim [\sUV]^\rho \vee [\sIR]^\rho,
\end{equation}
for any $\rho\neq 0$ and $\varepsilon\in(0,1]$. Note that we obtain the correct bound for $F^{i,m,a}_{\sUV,\sIR}$ only if $\rho=\varrho_\varepsilon(i,m,a)>0$. To prove the bound in the case $\varrho_\varepsilon(i,m,a)<0$ we use instead the following observation. Given $\ro\in\bN_+$ and $|a|<\ro$ there exists a map $\fX_\ro^a$ with good continuity properties that takes $f^{i,m,b}_{\sUV,\sIR}$ with $|b|<\ro$ and $F^{i,m,b}_{\sUV,\sIR}$ with $|b|=\ro$ and returns $F^{i,m,a}_{\sUV,\sIR}$. The existence of such a map follows from the Taylor theorem. We have
\begin{equation}
F^{i,m,a}_{\sUV,\sIR} = \fX_\ro^a(f^{i,m,b}_{\sUV,\sIR},F^{i,m,b}_{\sUV,\sIR}).
\end{equation}
The parameter $\ro\in\bN_+$ is chosen in such a way that the arguments of the map $\fX_\ro^a$ above involve only the effective force coefficients for which the bounds have already been proved. Finally we note that $F^{i,m}_{\sUV,\sIR}=F^{i,m,0}_{\sUV,\sIR}$.

Using the technique described above one proves uniform bounds for the coefficients $F^{i,m}_{\sUV,\sIR}$, the convergence as $\sUV\searrow0$ as well as the convergence of the series~\eqref{eq:intro_ansatz} for sufficiently small random $\lambda$. This allows to construct the solution of Eq.~\eqref{eq:intro_stationary_mild} and prove the existence of the limit $\sUV\searrow0$ under the assumption that noise $\varXi_\sUV$ is spacetime periodic and $\lambda$ is sufficiently small. The solution of the initial value problem for any $\lambda\in\bR$, which is the aim of this work, requires additional analysis presented in the next section.

The crucial input necessary for the application of the above technique are uniform bounds for the relevant coefficients $f^{i,m,a}_{\sUV,\sIR}\in\sD'(\bM)$. We prove such bounds using probability. Recall that $f^{i,m,a}_{\sUV,\sIR}$ is defined in terms of $F^{i,m,a}_{\sUV,\sIR}$. In order to bound $f^{i,m,a}_{\sUV,\sIR}$ we first bound the cumulants of $\partial_\sIR^s F^{i,m,a}_{\sUV,\sIR}$, $s\in\{0,1\}$, and subsequently use a Kolmogorov type argument. We also establish the existence of the limit $\lim_{\sUV\searrow0} f^{i,m,a}_{\sUV,\sIR}$. To this end, we use a diagonal argument adapted from~\cite{hairer2017clt}. We introduce a noise $\varXi_{\uv;\sUV}$ obtained by convolving $\varXi_\sUV$ with some regularizing kernel at scale $\uv\in(0,1]$. Next, we construct the force $F_{\uv;\sUV}[\varphi]$ and the effective force $F_{\uv;\sUV,\sIR}[\varphi]$ as well as the coefficients $F^{i,m,a}_{\uv;\sUV,\sIR}$ and $f^{i,m,a}_{\uv;\sUV,\sIR}$ corresponding to the regularized noise $\varXi_{\uv;\sUV}$. The existence of the limit $\lim_{\sUV\searrow0} f^{i,m,a}_{\uv;\sUV,\sIR}$ follows from the existence of the limit $\lim_{\sUV\searrow0}\varXi_{\uv;\sUV}$ for any $\uv\in(0,1]$ and uniform bounds for cumulants of $\partial^s_\sIR\partial^r_\uv F^{i,m,a}_{\uv;\sUV,\sIR}$, $s,r\in\{0,1\}$.

The proof of the bounds for the cumulants of the effective force coefficients is based on a certain flow equation for the cumulants that follows from the flow equation~\eqref{eq:intro_flow_eq_i_m}. As in the case of the effective force coefficients the cumulants can be divided into the relevant and irrelevant ones. The bounds for the irrelevant cumulants are proved inductively by integrating the flow equation with the boundary condition at $\sIR=0$. The only relevant cumulants are the expected values of the relevant coefficients $F^{i,m,a}_{\uv;\sUV,\sIR}$. In order to bound such expectations we first bound the expectations of the relevant $f^{i,m,a}_{\uv;\sUV,\sIR}$. Subsequently, we use the Taylor theorem and the bounds for the irrelevant cumulants. More specifically, we prove the following uniform bounds
\begin{equation}\label{eq:intro_bound_mean_f}
 |\llangle \partial_\sIR^s\partial_\uv^r f^{i,m,a}_{\uv;\sUV,\sIR}\rrangle|
 \lesssim  [\uv]^{(\varepsilon-\sigma)r}[\sIR]^{(\varepsilon-\sigma)s}[\uv\vee\sUV\vee\sIR]^{\varrho_\varepsilon(i,m,a)-\varepsilon s}
\end{equation}
where $s,r\in\{0,1\}$, $\varepsilon\in(0,1]$ is sufficiently small and $\uv,\sUV,\sIR\in(0,1]$. Note that the expected value of $\partial_\sIR^s\partial_\uv^r f^{i,m,a}_{\uv;\sUV,\sIR}(x)$ does not depend on $x\in\bM$ by the assumed stationarity of the noise $\varXi_\sUV$. The above bound with $s=1$ follows from the induction hypothesis and the flow equation. In order to prove the bound for $s=0$ and $\varrho_\varepsilon(i,m,a)<0$ we use the identity
\begin{equation}\label{eq:intro_relevant_f}
 \llangle \partial_\uv^r f^{i,m,a}_{\uv;\sUV,\sIR}\rrangle =
 \llangle \partial_\uv^r f^{i,m,a}_{\uv;\sUV,1}\rrangle
 -\int_\sIR^1
 \llangle \partial_\uIR\partial_\uv^r f^{i,m,a}_{\uv;\sUV,\uIR}\rrangle\,\rd\uIR.
\end{equation}
with the following boundary condition
\begin{equation}
 \llangle f^{i,m,a}_{\uv;\sUV,1}\rrangle = \mathfrak{f}^{i,m,a}_\sUV,
 \qquad
 \llangle \partial_\uv f^{i,m,a}_{\uv;\sUV,1}\rrangle=0.
\end{equation}
Note that for $\uv=0$ the above boundary condition coincides with the renormalization condition expressed in Assumption~\ref{ass:renormalization_conditions}. The bound~\eqref{eq:intro_bound_mean_f} with $s=0$ follows now from the following estimate
\begin{equation}
 \int_\sIR^1 [\sIR]^{\varepsilon-\sigma}\, [\sUV\vee\sIR]^{\rho-\varepsilon}\,\rd\uIR\lesssim 1 \vee [\sUV\vee\sIR]^\rho
\end{equation}
valid for any $\rho\neq0$ and $\varepsilon\in(0,1]$ and the fact that $1 \vee [\sUV\vee\sIR]^\rho\leq [\sUV\vee\sIR]^\rho$ for $\rho<0$ and $\sUV,\sIR\in(0,1]$. Finally, we note that using Eq.~\eqref{eq:intro_relevant_f} with $r=0$, $\uv=0$ and $\sIR=0$ one can compute the relevant force coefficients $f^{i,m,a}_{\sUV\phantom{0}}=f^{i,m,a}_{\sUV,0}=f^{i,m,a}_{0;\sUV,0}$. The above choice of the values of these coefficients guarantees the existence of the limit $\lim_{\sUV\searrow0} f^{i,m,a}_{\sUV,\sIR}$ for any $\sIR\in(0,1]$. We also obtain $|f^{i,m,a}_\sUV|\lesssim [\sUV]^{\varrho_\varepsilon(i,m,a)}$. Since for relevant $f^{i,m,a}_{\sUV,\sIR}$ we have $\varrho_\varepsilon(i,m,a)<0$ the limit $\lim_{\sUV\searrow0} f^{i,m,a}_{\sUV}$ need not exist. We stress that the values of the relevant coefficients $f^{i,m,a}_{\sUV}$ for any $\sUV\in(0,1]$ are uniquely determined by the flow equation and the renormalization conditions expressed in terms of the effective force coefficients $f^{i,m,a}_{\sUV,\sIR}$ with $\sIR=1$.

\begin{rem}
In the approach based on the theory of regularity structures the renormalization problem is tackled at the level of the stochastic estimates of a model and a very general solution of this problem (applicable to sub-critical singular SPDEs with local differential operators) was given in~\cite{chandra2016bphz}. Recall that a model consists of certain multi-linear functionals of the noise that can be represented by tree diagrams. The stochastic estimates of a model involve expectations of products of trees and are represented by diagrams with loops. The approach to the renormalization problem taken in~\cite{chandra2016bphz} exploits this diagrammatical representation. One of the main obstacles in this approach is the problem of overlapping divergences which appears when a given diagram have two or more divergent sub-diagrams which are neither nested nor disjoint. The solution of above problem given in~\cite{chandra2016bphz} uses the ideas from the BPHZ renormalization technique in perturbative QFT and involves a decomposition of the integration domain into the Hepp sectors and the application of a variant of the Zimmerman forest formula.

In contrast, in the flow equation approach the solution of the renormalization problem does not rely on any diagrammatical representation. As explained above, we construct the effective force coefficients recursively using the flow equation. To prove bounds for the cumulants of the effective force coefficients we impose appropriate boundary conditions when solving the flow equation. In particular, the boundary conditions for the expectations of the relevant effective force coefficients coincide with the BPHZ renormalization conditions. The correct values of the counterterms are determined automatically by the flow equation and the boundary conditions. The problem of overlapping divergences does not exist in our approach. Note that the decomposition into different spatial scales (like the Hepp sectors) needed to solve the problem of overlapping divergences in the BPHZ approach is built into the foundation of the flow equation approach. Let us also mention that our technique does not require any positive renormalization corresponding to a recentering procedure. In particular, we do not need to define a structure group.
\end{rem}

\subsection{Initial value problem}\label{sec:intro_initial_value}

In this section we show how to use the ideas described in the previous two sections to solve the initial value problem for the parabolic equation~\eqref{eq:intro_spde_macro}. Even though this equation is not of the form~\eqref{eq:intro_stationary_mild}, as we will argue below, the renormalization problem for the mild form of this equation is the same as the renormalization problem for Eq.~\eqref{eq:intro_stationary_mild}, which was discussed in the previous section.

In what follows, the Dirac delta at $t\in\bR$ is denoted by $\delta_t\in\sD'(\bR)$ and the characteristic function of an interval $I\subset\bR$ is denoted by $1_I$. We also set $(\mathtt{w}f)(\mathring x,\bar x)=\mathtt{w}(\mathring x)f(\mathring x,\bar x)$ for any $\mathtt{w}\in L^\infty(\bR)$ and $f\in L^\infty(\bM)$. Let $\sUV\in(0,1]$. We would like to construct the solution of the following equation
\begin{equation}\label{eq:init_idea_main_eq}
 \varPhi_\sUV = G\ast (1_{(0,\infty)} F_\sUV[\varPhi_\sUV] +\delta_0\otimes\phi_\sUV)
\end{equation}
and study its limit $\sUV\searrow0$. Because of the singularity of $\lim_{\sUV\searrow0}\varXi_\sUV$ and the necessity to renormalize the equation the solution theory developed in this paper requires that in the limit $\sUV\searrow0$ the initial condition $\phi_\sUV$ is a small perturbation (in the sense specified below) of the initial condition that would lead to a stationary solution. We note that a similar assumption was made in~\cite{bruned2021renormalising}. More specifically, we suppose that the initial data $\phi_\sUV$ is of the following form $\phi_\sUV=\phi^\triangleright_\sUV+\phi^\vartriangle_\sUV$.

\label{par:initial_data}
The process $\phi_\sUV^\triangleright$ is given by $\phi_\sUV^\triangleright:=\varPhi_\sUV^\triangleright(0,\Cdot)$, where
\begin{equation}
 \varPhi_\sUV^\triangleright:=
 (G-G_1)\ast f^\triangleright_\sUV=G\ast (f^\triangleright_{\sUV}- \check f^\triangleright_\sUV ),
 \qquad
 f^\triangleright_{\sUV}:=\sum_{i=0}^{i_\triangleright} F^{i,0}_{\sUV,1},
 \qquad
 \check f^\triangleright_\sUV :=
 \fQ G_1\ast f^\triangleright_{\sUV}
\end{equation}
and the effective force coefficients $F^{i,m}_{\sUV,\sIR}$ were constructed in the previous section. By definition $i_\triangleright\in\bN_0$ is the largest integer such that $\dim(\varPhi)\geq i_\triangleright \dim(\lambda)$. We show that $\lim_{\sUV\searrow0} f^\triangleright_\sUV$ exists in the space of distributions. Since the kernel $\fQ G_1$ is smooth the limit $\lim_{\sUV\searrow0}\check f^\triangleright_\sUV$ is a smooth function. Moreover, the limit $\lim_{\sUV\searrow0} \varPhi_\sUV^\triangleright =\varPhi^\triangleright_0$ exists in the parabolic Besov space $\sC^\alpha(\bH)$ for every $\alpha<-\dim(\varPhi)$.

\begin{dfn}\label{dfn:dim_phi}
The dimension of the regular part of the initial data, denoted by $\dim(\phi)$, is fixed  such that \mbox{$0<\dim(\phi)<\dim(\lambda)$} and if $\dim(\varPhi)<\dim(\lambda)$, then $\dim(\phi)=\dim(\varPhi)$.
\end{dfn}

\begin{rem}
This dimension need not have any physical interpretation.
\end{rem}

The process $\phi^\vartriangle_\sUV$ is such that the limit $\lim_{\sUV\searrow0}\phi^\vartriangle_\sUV=\phi_0^\vartriangle$ exists in the Besov space $\sC^\beta(\bT)$ for every $\beta<-\dim(\phi)$. We note that if $\dim(\lambda)>\dim(\varPhi)$, that is Eq.~\eqref{eq:init_idea_main_eq} is far from criticality, then $\dim(\phi)=\dim(\varPhi)$ and the initial data $\phi_0^\vartriangle$ is allowed to have the same regularity as the expected regularity of the solution. If $\dim(\lambda)\leq\dim(\varPhi)$, then $0<\dim(\phi)<\dim(\lambda)$. Consequently, if $\dim(\lambda)$ is small, that is  Eq.~\eqref{eq:init_idea_main_eq} is close to criticality, then the regularity of $\phi_0^\vartriangle$ is allowed to be only slightly negative.

In order to take advantage of our assumption about the initial data we introduce the following ansatz
\begin{equation}\label{eq:intro_idea_init_ansatz_breve}
 \varPhi_\sUV = \varPhi_\sUV^\triangleright + \breve\varPhi_\sUV
\end{equation}
and show that Eq.~\eqref{eq:init_idea_main_eq} is equivalent to a certain equation for $\breve\varPhi_\sUV$ with initial data coinciding with $\phi_\sUV^\vartriangle$. To this end, let us first note that the equality $\phi_\sUV^\triangleright=(G\ast (f^\triangleright_{\sUV} - \check f^\triangleright_\sUV))(0,\Cdot)$ and the semi-group property of the fractional heat kernel imply that
\begin{equation}
 G\ast(\delta_0\otimes \phi_\sUV^\triangleright) = G\ast1_{(-\infty,0]} (f^\triangleright_{\sUV} - \check f^\triangleright_\sUV)
\end{equation}
in the region $[0,\infty)\times\bR^\rdim\subset\bM$. Consequently, in that region region Eq.~\eqref{eq:init_idea_main_eq} is equivalent to the equation
\begin{equation}
 \varPhi_\sUV = G\ast (1_{(0,\infty)} F_\sUV[\varPhi_\sUV] + 1_{(-\infty,0]}(f^\triangleright_{\sUV} - \check f^\triangleright_\sUV) +\delta_0\otimes\phi_\sUV^\vartriangle).
\end{equation}
It is easy to see that \mbox{$\breve\varPhi_\sUV = \varPhi_\sUV - \varPhi_\sUV^\triangleright$} satisfies the following equation
\begin{equation}\label{eq:idea_eq_breve}
 \breve\varPhi_\sUV = G\ast (1_{(0,\infty)} (F_\sUV[\breve\varPhi_\sUV+\varPhi_\sUV^\triangleright] -f^\triangleright_{\sUV} + \check f^\triangleright_\sUV)+ \delta_0\otimes\phi_\sUV^\vartriangle).
\end{equation}
Our goal is to prove that the classical maximal solution $\breve\varPhi_\sUV$ of Eq.~\eqref{eq:idea_eq_breve} converges to some distribution $\breve\varPhi_0$ as $\sUV\searrow0$.

\begin{rem}
The decomposition~\eqref{eq:intro_idea_init_ansatz_breve} is a higher order variant of the so-called Da Prato-Debussche trick~\cite{daprato2003}. Here its purpose is to deal with the effect of the initial condition. A similar idea was used in~\cite{bruned2021renormalising}.
\end{rem}

The solution of Eq.~\eqref{eq:idea_eq_breve} up to a possible explosion time can be constructed by patching together solutions of the following equations
\begin{equation}\label{eq:idea_eq_breve_loc}
 \breve\varPhi_\sUV = (G-G_T)\ast (1_{(t,t+6i_\dagger)}  ( F_\sUV[\breve\varPhi+\varPhi_\sUV^\triangleright] -f^\triangleright_{\sUV} + \check f^\triangleright_\sUV) +\delta_t\otimes\phi_\sUV^\vartriangle)
\end{equation}
posed in the domains of the form $[t,t+T]\times\bR^\rdim\subset \bM$ for sufficiently small $T\in(0,1)$ and appropriate initial data $\phi^\vartriangle_\sUV$ at time $t\in[0,\infty)$. Recall that $\supp\,G\subset[0,\infty)\times\bR^\rdim$, $\supp\,G_T\subset(T,\infty)\times\bR^\rdim$ and the force $F_\sUV[\varphi]$ is a local functional that involves only spatial derivatives of $\varphi$. We stress that by abusing the notation we now set $\phi_\sUV^\vartriangle = \breve\varPhi_\sUV(t,\Cdot)$. We also note that one could replace the time interval $(t,t+6 i_\dagger)$ with any time interval containing $(t,t+1)$. It is convenient to choose $i_\dagger\in\bN_+$ as in Def.~\ref{dfn:im}. Since $\delta_t\otimes\phi_\sUV^\vartriangle\in\sD'(\bM)$ is not a function of time we introduce another ansatz
\begin{equation}\label{eq:intro_idea_init_ansatz_tilde}
 \breve\varPhi_\sUV
 =
 \tilde\varPhi_\sUV+\varPhi_\sUV^\vartriangle,
\end{equation}
where
\begin{equation}
 \varPhi_\sUV^\vartriangle:=(G-G_1)\ast
 f_\sUV^\vartriangle,
 \qquad
 f_\sUV^\vartriangle:=\delta_t\otimes\phi^\vartriangle_\sUV.
\end{equation}
One shows that in the region $[t,t+T]\times\bR^\rdim\subset \bM$ the function $\varPhi_\sUV^\vartriangle$ coincides with $(G-G_T)\ast f_\sUV^\vartriangle$ and Eq.~\eqref{eq:idea_eq_breve_loc} is equivalent to the following equation
\begin{equation}\label{eq:intro_tilde}
 \tilde\varPhi_\sUV = (G-G_T)\ast 1_{(t,t+6i_\dagger)}
 (F_\sUV[\tilde\varPhi_\sUV+\varPhi_\sUV^\triangleright+\varPhi_\sUV^\vartriangle]
 -f^\triangleright_{\sUV} + \check f^\triangleright_\sUV).
\end{equation}
We rewrite the above equation in the form
\begin{equation}\label{eq:intro_tilde2}
 \tilde\varPhi_\sUV = (G-G_T)\ast \tilde F_\sUV[\tilde \varPhi_\sUV]
\end{equation}
upon introducing the notation
\begin{equation}\label{eq:intro_idea_forces}
\begin{gathered}
 \tilde F_\sUV[\varphi] := 1_{(t,t+6i_\dagger)} \check F_\sUV[\varphi]
 =1_{(t,t+6i_\dagger)} (\hat F_{\sUV}[\varphi + \varPhi_\sUV^\vartriangle] + \check f_\sUV^\triangleright),
 \\
 \check F_{\sUV}[\varphi] := \hat F_{\sUV}[\varphi + \varPhi_\sUV^\vartriangle] - f_\sUV^\vartriangle + \check f_\sUV^\triangleright,
 \qquad
 \hat F_{\sUV}[\varphi]:=F_{\sUV}[\varphi+\varPhi^\triangleright_\sUV] - f_\sUV^\triangleright.
\end{gathered}
\end{equation}
In order to avoid confusion observe that the measure $1_{(t,t+6i_\dagger)}\delta_t$ vanishes identically. Hence, $1_{(t,t+6i_\dagger)}f_\sUV^\vartriangle=0$.

In order to solve Eq.~\eqref{eq:intro_tilde} in the domain $[t,t+T]\times\bR^\rdim$ with sufficiently small $T\in(0,1)$ we construct the effective force
\begin{equation}\label{eq:intro_ansatz_F_tilde}
 \langle \tilde F_{\sUV,\sIR}[\varphi],\psi\rangle
 :=\sum_{i=0}^\infty \sum_{m=0}^\infty \lambda^i\,\langle \tilde F^{i,m}_{\sUV,\sIR},\psi\otimes\varphi^{\otimes m}\rangle
\end{equation}
for $\sUV\in(0,1]$, $\sIR\in(0,T]$. By definition $\tilde F_{\sUV,\sIR}[\varphi]$ satisfies the flow equation~\eqref{eq:intro_flow_eq} together with the initial condition $\tilde F_{\sUV,0}[\varphi]=\tilde F_\sUV[\varphi]$. Using the technique described in Sec.~\ref{sec:intro_flow} one shows that
\begin{equation}
 \tilde\varPhi_\sUV:=(G-G_T)\ast \tilde F_{\sUV,T}[0] = \sum_{i=0}^\infty \lambda^i\,(G-G_T)\ast \tilde F^{i,0}_{\sUV,T}
\end{equation}
solves Eq.~\eqref{eq:intro_tilde}. The effective force coefficients $\tilde F^{i,m}_{\sUV,\sIR}$ are defined recursively as outlined in Sec.~\ref{sec:intro_flow} and satisfy the flow equation~\eqref{eq:intro_flow_eq_i_m}. In order to prove uniform bounds for $\tilde F^{i,m}_{\sUV,\sIR}$ and show the existence of the limit $\lim_{\sUV\searrow0}\tilde F^{i,m}_{\sUV,\sIR}$ we use the technique presented below. The solution of Eq.~\eqref{eq:idea_eq_breve} in the domain $[t,t+T]\times\bR^\rdim$ coincides with
\begin{equation}
 \breve\varPhi_\sUV
 =
 \varPhi^\vartriangle_\sUV+\tilde\varPhi_\sUV,
 \qquad
 \varPhi^\vartriangle_\sUV=(G-G_1)\ast(\delta_t\otimes\phi^\vartriangle_\sUV).
\end{equation}
The maximal solution of Eq.~\eqref{eq:idea_eq_breve} is constructed by patching together solutions in time intervals $[t,t+T]$. 

Finally, let us describe the technique we use to prove uniform bounds for $\tilde F^{i,m}_{\sUV,\sIR}$ and show the existence of the limit $\lim_{\sUV\searrow0}\tilde F^{i,m}_{\sUV,\sIR}$. We first study the effective force $F_{\sUV,\sIR}[\varphi]$ for the equation $\varPsi_\sUV = (G-G_1)\ast F_\sUV[\varPsi_\sUV]$. The above equation coincides with Eq.~\eqref{eq:intro_stationary_mild} with $T=1$ and the effective force coefficients $F^{i,m}_{\sUV,\sIR}$ are constructed and bounded uniformly in $\sUV\in(0,1]$, $\sIR\in[0,1]$ as described in Sec.~\ref{sec:intro_renormalization}. This step requires the renormalization of the relevant coefficients $f^{i,m,a}_\sUV$ of the force $F_\sUV[\varphi]$. Note that the effective force coefficients $F^{i,m}_{\sUV,\sIR}$ do not involve the initial condition. We call $F_{\sUV,\sIR}[\varphi]$ the effective force at stationarity and $F^{i,m}_{\sUV,\sIR}$ the effective force coefficients at stationarity.

\begin{rem}\label{rem:eff_force_shift}
We shall use below the following observation. Assume that a functional $F_{\sUV,\sIR}[\varphi]$ satisfies the flow equation~\eqref{eq:intro_flow_eq}, then for any $f_\sUV$ and $\varPsi_\sUV$ the functional $\tilde F_{\sUV,\sIR}[\varphi]=F_{\sUV,\sIR}[\varphi + (G_\sIR-G_1) \ast f_\sUV + \varPsi_\sUV] - f_\sUV$ also satisfies the flow equation~\eqref{eq:intro_flow_eq}.
\end{rem}

We introduce a new effective force $\hat F_{\sUV,\sIR}[\varphi]$ by the following relation between formal power series
\begin{equation}\label{eq:eff_force_hat}
 \hat F_{\sUV,\sIR}[\varphi]:=F_{\sUV,\sIR}[\varphi+(G_\sIR-G_1) \ast f^\triangleright_\sUV] - f_\sUV^\triangleright,
\end{equation}
where $f^\triangleright_{\sUV}=\sum_{i=0}^{i_\triangleright} F^{i,0}_{\sUV,1}$. The effective force $\hat F_{\sUV,\sIR}[\varphi]$ satisfies the flow equation~\eqref{eq:intro_flow_eq} with the boundary condition $\hat F_{\sUV,0}[\varphi]=\hat F_\sUV[\varphi]$, where $\hat F_\sUV[\varphi]$ was introduced in Eq.~\eqref{eq:intro_idea_forces}. Subsequently, we define another effective force formally given by
\begin{equation}\label{eq:eff_force_check}
 \check F_{\sUV,\sIR}[\varphi] = \hat F_{\sUV,\sIR}[\varphi + (G_\sIR-G_1)\ast
 f_\sUV^\vartriangle+(G-G_\sIR)\ast \check f_\sUV^\triangleright]-f_\sUV^\vartriangle + \check f_\sUV^\triangleright,
\end{equation}
where $\check f^\triangleright_{\sUV}=\fQ G_1\ast f^\triangleright_{\sUV}$ and $f^\vartriangle_{\sUV}=\delta_t\otimes\phi_\sUV^\vartriangle$. The effective force $\check F_{\sUV,\sIR}[\varphi]$ satisfies the flow equation~\eqref{eq:intro_flow_eq} with the boundary condition $\check F_{\sUV,0}[\varphi]=\check F_\sUV[\varphi]$, where $\check F_\sUV[\varphi]$ was introduced in Eq.~\eqref{eq:intro_idea_forces}.

Recall that we would like to construct and bound the effective force $\tilde F_{\sUV,\sIR}[\varphi]$. Using the flow equation~\eqref{eq:intro_flow_eq_i_m} one proves that
\begin{equation}\label{eq:intro_support_identity}
 \langle \tilde F^{i,m}_{\sUV,\sIR},\psi\otimes\varphi^{\otimes m}\rangle=
 \langle \check F^{i,m}_{\sUV,\sIR},\psi\otimes\varphi^{\otimes m}\rangle
\end{equation}
for any $\psi,\varphi\in C^\infty_\rc(\bM)$ such that $\supp\,\psi\subset (t+2i\sIR,t+6 i_\dagger)\times\bR^\rdim$. The above formula plays a crucial role in the construction since it provides a relation between the effective forces $\tilde F_{\sUV,\sIR}[\varphi]$ and $\check F_{\sUV,\sIR}[\varphi]$. Note that Eq.~\eqref{eq:intro_support_identity} holds true for $\sIR=0$ by Eq.~\eqref{eq:intro_idea_forces}. In order to prove Eq.~\eqref{eq:intro_support_identity} for $\sIR\in(0,1]$ one uses a simple inductive argument based on the following support property $\supp\,\dot G_\sIR\subset [0,2\sIR]\times\bR^\rdim$ of the scale decomposition of the fractional heat kernel.

The construction of the effective force $\tilde F^{i,m}_{\sUV,\sIR}$ in a small neighborhood of the complement of the domain $(t+2i\sIR,t+6 i_\dagger)\times\bR^\rdim\times\bM^m$ does not involve renormalization. Here we crucially use the fact that the coefficients $\tilde F^{i,m}_{\sUV,\sIR}$ are not too singular thanks to the Da Prato-Debussche trick, $\tilde F^{i,m}_{\sUV,\sIR}$ is supported in the domain $[t,t+6i_\dagger]\times\bR^\rdim\times\bM^m$ and the Lebesgue measure of $[t,t+6i_\dagger]\setminus (t+2i\sIR,t+6 i_\dagger)$ is proportional to $\sIR=[\sIR]^\sigma$.

\section{Statement of the result}\label{sec:result}

In this section we list the standing assumptions of the paper and give the precise statement of the main result. We present several concrete examples to which our result applies. We also discuss related results known in the literature.

\subsection{Assumptions}\label{sec:assumptions}

\begin{dfn}\label{dfn:spacetime_distance}
Let $\bar\bM=\bR^\rdim$, where $\rdim\in\bN_+$ is called the spatial dimension. We call \mbox{$\bM:=\bR\times\bar\bM$} the spacetime. We denote points in $\bM$ by $x=(\mathring x,\bar x)$, where $\bar x=(x^1,\ldots,x^\rdim)\in\bar\bM$. The coordinate $\mathring x\in\bR$ is called the time. For $\sUV\in(0,1]$ let $\bS(\sUV)=\bR/(2\pi/[\sUV]\,\bZ)$ be the circle of size $[\sUV]^{-1}=\sUV^{-1/\sigma}$, let $\bT(\sUV):=\bS(\sUV)^\rdim$ be the $\rdim$-dimensional torus of size $[\sUV]^{-1}$ and let $\bH(\sUV):=\bR\times\bT(\sUV)$. We also set $\bT(0)=\bar\bM$ and $\bH(0)=\bM$. The absolute value of $t\in\bR$ is denoted by $|t|$. For $\bar x=(x^1,\ldots,x^\rdim)\in\bar\bM$ we set  
\begin{equation}
 |\bar x|_{\bar\bM}\equiv |\bar x|:=\max_{k\in\{1,\ldots,\rdim\}}|x^k|,
 \qquad 
 |\bar x|_{\bT(\sUV)} := \min_{\bar y\in(2\pi/[\sUV]\,\bZ)^\rdim}|\bar x+\bar y|_{\bar\bM},
 \quad \sUV\in(0,1].
\end{equation}
For $x=(\mathring x,\bar x)\in\bR\times\bR^\rdim=\bM$ we set 
\begin{equation}
 |x|_\bM:=[\mathring x]\vee|\bar x|_{\bar\bM},
 \qquad
 |x|_{\bH(\sUV)}:=[\mathring x]\vee|\bar x|_{\bT(\sUV)},
 ~~ \sUV\in(0,1],
 \qquad
 [\mathring x]:=|\mathring x|^{1/\sigma}.
\end{equation}
We omit $\sUV$ in the notation if $\sUV=1$. Functions $\bH(\sUV)\to\bR$ are always identified with periodic in space functions $\bM\to\bR$. 
\end{dfn}

\begin{ass}\label{ass:noise}
The family of random fields $\microXi_\sUV$, $\sUV\in[0,1]$, defined on the spacetime \mbox{$\bM=\bR\times\bR^\rdim$} satisfies the following conditions.
\begin{enumerate}
\item[(A)] For any $\sUV\in[0,1]$ the field $\microXi_\sUV\in C^\infty(\bH(\sUV))$ is stationary and centered. In particular, if $\sUV\in(0,1]$, then $\microXi_\sUV$ is periodic in space with period $2\pi/[\sUV]$.
\item[(B)] It holds
\begin{equation}\label{eq:noise_assumptions_bound}
 \int_\bM \llangle\microXi_0(0)\microXi_0(x)\rrangle\,\rd x = 1,
 \qquad
 \sup_{\sUV \in [0,1]} \llangle |\partial^a\microXi_\sUV(0)|^n \rrangle < \infty
\end{equation}
for all $n\in\bN_+$ and multi-indices $a\in\frM$ such that $|a|\leq3$.
\item[(C)] For any $\sUV\in[0,1]$ and any compact sets $\cO_1$, $\cO_2 \subset \bH(\sUV)$ such that 
\begin{equation}
 \inf_{x_1 \in \cO_1,x_2\in\cO_2}|x_1 - x_2|_{\bH(\sUV)} \geq 1
\end{equation}
the \mbox{$\sigma$-algebras} generated by $\{\microXi_\sUV(x)\,|\, x\in \cO_1\}$ and $\{\microXi_\sUV(x)\,|\, x\in \cO_2\}$ are independent.
\item[(D)] For every $\sUV \in(0,1]$, there is a coupling of $\microXi_0$ and $\microXi_\sUV$ such that for every $T > 0$ and $R\in(0,\pi)$ it holds
\begin{equation}\label{eq:coupling}
\lim_{\sUV \searrow0} 
\sup_{\substack{|\mathring x| \leq T/\sUV\\
|\bar x| \leq R/[\sUV]}}
[\sUV]^{-2\dim(\varXi)}\,
\llangle |\microXi_0(\mathring x,\bar x)-\microXi_\sUV(\mathring x,\bar x)|^2\rrangle = 0\,.
\end{equation}
\end{enumerate}
\end{ass}

\begin{rem}
The assumption stated above is a trivial adaptation of the assumption formulated in~\cite{hairer2017clt}.
\end{rem}

\begin{rem}
Note that for any $\sUV\in(0,1]$ the field $\varXi_\sUV$ obtained by rescaling $\microXi_\sUV$ as in Eq.~\eqref{eq:noise_rescaled} is periodic in space with period $2\pi$ and $\varXi_\sUV\in C^\infty(\bH)$. Recall that $\varXi_0\in\sS'(\bH)$ denotes the white noise on~$\bH$. There is no relation between the fields $\varXi_0\in\sS'(\bH)$ and~$\microXi_0\in C^\infty(\bM)$.
\end{rem}

\begin{rem}
Let $\xi_0$ to be the white noise on $\bM$ and for $\sUV\in(0,1]$ let $\xi_\sUV$ be the periodic extension of $\xi_0$ restricted to $\bR\times [-\pi/[\sUV],\pi/[\sUV])^\rdim\subset\bM$. It is easy to see that for any function $M\in C^\infty_\rc(\bM)$ supported in the ball of radius $1/2$ centered at the origin such that $\int_\bM M(x)\,\rd x=1$ the family of random fields $\microXi_\sUV=M\ast\xi_\sUV$, $\sUV\in[0,1]$, satisfies the above assumption. Note that for any $\sUV\in(0,1]$ the law of the field $\varXi_\sUV$ obtained by rescaling $\microXi_\sUV$ coincides with the law of the field \mbox{$\varXi_\sUV = M_\sUV\ast\varXi_0$}, where $\varXi_0$ is the white noise on $\bH$ and $M_\sUV(\mathring x,\bar x):=[\sUV]^{-\rDim}M(\mathring x/\sUV,\bar x/[\sUV])$, $\rDim=\sigma+\rdim$.
\end{rem}

\begin{rem}
A non-trivial example of a family of random fields $\microXi_\sUV$, $\sUV\in[0,1]$, satisfying the above assumption is obtained as follows~\cite{hairer2017clt}. Let $\pi_0$ be the homogeneous Poisson process on $\bR\times\bM$ and for $\sUV\in(0,1]$ let $\pi_\sUV$ be the periodic extension of $\pi_0$ restricted to $\bR\times\bR\times [-\pi/[\sUV],\pi/[\sUV])^\rdim\subset\bR\times\bM$. For any functions $M\in C^\infty_\rc(\bR\times\bM)$ supported in the ball of radius $1/2$ centered at the origin such that $\int_{\bR\times\bM} M(s,x)\,\rd s\rd x=0$ and $\int_{\bR\times\bM\times\bM} M(s,x)M(s,y)\,\rd s\rd x\rd y=1$ the family of random fields $\microXi_\sUV(x)=\int_{\bR\times\bM} M(s,x-y)\,\pi_\sUV(\rd s\rd y)$, $\sUV\in[0,1]$, satisfies the above assumption.
\end{rem}

\begin{rem}\label{rem:noise_bound_law}
By the above assumption for any $n\in\bN_+$ and $a\in\frM$ such that \mbox{$|a|\leq 3$} the bound
\begin{equation}
 \sup_{x\in\bH}\llangle |\partial^a\varXi_\sUV(x)|^n\rrangle \lesssim [\sUV]^{-n\dim(\varXi)-n[a]}
\end{equation}
holds uniformly in $\sUV\in(0,1]$. Moreover, as proved in~\cite[Sec.~6]{hairer2017clt}, it holds $\lim_{\sUV\searrow0}\varXi_\sUV=\varXi_0$ in distribution in the parabolic Besov space $\sC^\alpha(\bH)$ for any $\alpha<-\dim(\varXi)$, where $\varXi_0$ is the white noise on $\bH=\bR\times\bT$.
\end{rem}

\begin{ass}\label{ass:initial}
The random fields $\phi^\vartriangle_\sUV$, $\sUV\in(0,1]$, defined on $\bT$ are smooth. For any $n\in\bN_+$ and any $a\in\bar\frM$ such that \mbox{$|a|\leq 2\floor{\sigma/2}+4$} the bound
\begin{equation}
 \sup_{\bar x\in\bT}\llangle |\partial^{a} \phi^\vartriangle_\sUV(\bar x)|^n \rrangle \lesssim [\sUV]^{-n\dim(\phi)-n|a|}
\end{equation}
holds uniformly in $\sUV\in(0,1]$. Moreover, there exists a random distribution $\phi^\vartriangle_0\in \sC^\beta(\bT)$ such that $\lim_{\sUV\searrow0}\phi^\vartriangle_\sUV=\phi^\vartriangle_0$ in law in the Besov space $\sC^\beta(\bT)$ for any $\beta<-\dim(\phi)$.
\end{ass}

\begin{rem}\label{rem:ass_joint_convergence}
If the random fields $\varXi_\sUV$ and $\phi_\sUV^\vartriangle$ are not independent, then we require in addition that
\begin{equation}\label{eq:ass_joint_limit}
 \lim_{\sUV\searrow0}\,(\varXi_\sUV,\phi_\sUV^\vartriangle) 
 =
 (\varXi_0,\phi_0^\vartriangle)
\end{equation}
in law in $\sC^\alpha(\bH)\times \sC^\beta(\bT)$ for any $\alpha<-\dim(\varXi)$ and any $\beta<-\dim(\phi)$.
\end{rem}

\begin{rem}
Since it is reasonable to expect that both the noise and the initial condition are smooth at the microscopic scale no effort was made to state optimal assumptions regarding differentiability.
\end{rem}

\begin{ass}\label{ass:renormalization_conditions}
The force coefficients $f^{i,m,a}_\sUV=f^{i,m,a}_{\sUV,0}$, $(i,m,a)\in\bar\frI$, satisfy the following conditions. We assume that for the irrelevant coefficients the following bounds
\begin{equation}\label{eq:ass_ren_cond_bound}
 |f^{i,m,a}_{\sUV,0}|\lesssim [\sUV]^{\varrho(i,m,a)-\varepsilon},
 \qquad
 (i,m,a)\in \bar\frI^+,
\end{equation}
hold uniformly in $\sUV\in(0,1]$. Moreover, we demand that there exist $i_\flat,m_\flat\in\bN_+$ such that $f^{i,m,a}_{\sUV,0}$ vanishes unless $i\in\{0,\ldots,i_\flat\}$, $m\in\{0,\ldots,m_\flat\}$ and $a\in\bar\frM^m$ is such that $|a|<\sigma$. Furthermore, we suppose that there exists a family of continuous functions $[0,1]\ni\sUV\mapsto \mathfrak{f}^{i,m,a}_\sUV\in\bR$, $(i,m,a)\in\bar\frI^-$, such that it holds
\begin{equation}\label{eq:renormalization_conditions_intro}
\llangle f^{i,m,a}_{\sUV,1}\rrangle = \mathfrak{f}^{i,m,a}_\sUV,
\qquad
(i,m,a)\in \bar\frI^-,
\end{equation}
where the relevant coefficients $f^{i,m,a}_{\sUV,1}$, $(i,m,a)\in \bar\frI^-$, are defined in terms of the force coefficients $f^{i,m,a}_\sUV$, $(i,m,a)\in\bar\frI$, as explained in the introduction.
\end{ass}

\begin{rem}
Note that the above assumption implies that $\lim_{\sUV\searrow0}\mathfrak{f}^{i,m,a}_\sUV = \mathfrak{f}^{i,m,a}_0$ and $|\mathfrak{f}^{i,m,a}_\sUV|\lesssim 1$ uniformly in $\sUV\in[0,1]$. Moreover, as we will see the bound~\eqref{eq:ass_ren_cond_bound} actually holds for all $(i,m,a)\in\bar\frI$.
\end{rem}
  
\begin{ass}\label{ass:infrared}
If $\sigma\notin 2\bN_+$, then we assume that $\ceil{\sigma}-1>\sigma-2\dim(\lambda)$. 
\end{ass}

\begin{rem}
It plays an important role in our construction that the function 
\begin{equation}\label{eq:infrared_function}
 \bR^\rdim\ni\bar x\mapsto x^a G(x)\in\bR,
 \qquad\quad
 x\equiv (\mathring x,\bar x),
\end{equation}
is absolutely integrable for every $\mathring x\in(0,\infty)$ and every \mbox{$a\in\frM_\sigma$}, where $G$ is the fractional heat kernel. In general, our construction requires that the above function is absolutely integrable for all $a\in\frM$ such that \mbox{$|a|\leq\floor{\sigma-2\dim(\lambda)}+1$}. However, if \mbox{$\sigma\notin 2\bN_+$}, because of the long-range character of the fractional heat kernel the function~\eqref{eq:infrared_function} is integrable only for $a=(\mathring a,\bar a)\in\frM$ such that $|\bar a|<\sigma$. Under Assumption~\ref{ass:infrared} our construction uses only the functions~\eqref{eq:infrared_function} with $a\in\frM_\sigma$ which are absolutely integrable.
\end{rem}

\subsection{Main theorem}

Consider the mild formulation of the macroscopic equation~\eqref{eq:intro_spde_macro}
\begin{equation}\label{eq:statement_mild}
 \varPhi_\sUV = G\ast (1_{(0,\infty)} F_\sUV[\varPhi_\sUV] +\delta_0\otimes\phi_\sUV),
\end{equation}
where $\ast$ denotes the space-time convolution, $1_{(0,\infty)} F_\sUV[\varphi](x):=1_{(0,\infty)}(\mathring x) F_\sUV[\varphi](x)$, $\delta_0\in\sD'(\bR)$ is the Dirac delta distribution at $\mathring x=0$, the initial data is of the form $\phi_\sUV=\phi_\sUV^\triangleright+\phi_\sUV^\vartriangle$ and $G(\mathring x,\Cdot)\in L^1(\bR^\rdim)$, $\mathring x\in(0,\infty)$, is the heat kernel of the fractional Laplacian $(-\Delta)^{\sigma/2}$ extended for $\mathring x\in(-\infty,0]$ in such a way that $G(0,\Cdot)=\delta_{\bR^\rdim}\in \cS'(\bR^\rdim)$ and $G(\mathring x,\Cdot)=0$ for $\mathring x\in(-\infty,0)$. Recall that the force $F_\sUV[\varphi]$ is defined by Eq.~\eqref{eq:force_macro} in terms of the noise $\varXi_\sUV$, the force coefficients $f^{i,m,a}_\sUV$, $(i,m,a)\in\bar\frI$,  and the parameter $\lambda\in\bR$. Note that $G\ast(\delta_0\otimes\phi_\sUV)$ is the unique solution of the equation $\fQ\varPhi_\sUV=0$ in the domain $[0,\infty)\times\bR^\rdim$ with the initial condition $\varPhi_\sUV(0,\Cdot)=\phi_\sUV$. We also remind the reader that the stochastic process $\varPhi_\sUV^\triangleright$ introduced in Eq.~\eqref{eq:intro_initial} coincides with the stationary solution $\varPsi_\sUV$ of the equation $\varPsi_\sUV = (G-G_1)\ast F_\sUV[\varPsi_\sUV]$ truncated at order $\lambda^{i_\triangleright}$, where $i_\triangleright\in\bN_0$ is chosen in such that $\varPsi_\sUV-\varPhi_\sUV^\triangleright$, $\sUV\in(0,1]$, is uniformly bounded (as a formal power series in~$\lambda$) in some space with positive regularity. Moreover, by definition $\phi_\sUV^\triangleright:=\varPhi_\sUV^\triangleright(0,\Cdot)$.

\begin{rem}
In Sec.~\ref{sec:intro_initial_value}, using the fact that the force $F_\sUV[\varphi]$ is a local functional that involves only spatial derivatives of $\varphi$, we showed that if $\breve\varPhi_\sUV$ satisfies the equation
\begin{equation}\label{eq:statement_rem_breve}
 \breve\varPhi_\sUV = G\ast (1_{(0,\infty)} (F_\sUV[\breve\varPhi_\sUV+\varPhi^\triangleright_\sUV]-\fQ\varPhi^\triangleright_\sUV) +\delta_0\otimes\phi^\vartriangle_\sUV),
\end{equation} 
then $\varPhi_\sUV=\breve\varPhi_\sUV+\varPhi_\sUV^\triangleright$ satisfies Eq.~\eqref{eq:statement_mild}.
\end{rem}

\begin{dfn}
For $\varepsilon>0$ let $\alpha\equiv\alpha_\varepsilon:=-\dim(\varPhi)-\varepsilon$, \mbox{$\beta\equiv\beta_\varepsilon:=-\dim(\phi)-\varepsilon$} and \mbox{$\gamma\equiv\gamma_\varepsilon:=\sigma-\varepsilon$}.
\end{dfn}

\begin{thm}\label{thm:main}
Suppose that the assumptions formulated in the previous section hold true. Let $\varepsilon>0$ be sufficiently small. There exist:
\begin{itemize}
\item a coupling between $(\varXi_\sUV,\phi_\sUV^\vartriangle)$, $\sUV\in[0,1]$,
\item a family of random stopping times $\breve T_\sUV\in[0,\infty]$, $\sUV\in[0,1]$,
\item a family of stochastic processes $\breve\varPhi_\sUV\in C([0,\breve T_\sUV),\cC^\gamma(\bT))$, $\sUV\in(0,1]$,
\item a stationary stochastic process $\varPhi_0^\triangleright\in \sC^\alpha(\bR\times\bT)$,
\item a stochastic process $\breve\varPhi_0\in C([0,\breve T_0),\sC^\beta(\bT))$
\end{itemize}
such that:
\begin{itemize}
 \item[(A)] for every $\sUV\in(0,1]$ the stochastic process $\varPhi_\sUV^\triangleright\in C(\bR,\cC^\gamma(\bT))$ is well-defined and stationary, it holds $\lim_{t\nearrow \breve T_\sUV}\|\breve\varPhi_\sUV(t,\Cdot)\|_{\cC^\gamma(\bT)}=\infty$ on the event \mbox{$\{\breve T_\sUV<\infty\}$} and the stochastic process $\varPhi_\sUV=\varPhi_\sUV^\triangleright+\breve\varPhi_\sUV\in C([0,\breve T_\sUV),\cC^\gamma(\bT))$ is the unique pathwise solution of Eq.~\eqref{eq:statement_mild} with $\phi_\sUV=\phi_\sUV^\triangleright + \phi_\sUV^\vartriangle$,
 \item[(B)] the law of the stochastic process $\varPhi^\triangleright_0$ depends only on the choice of the renormalization constants $\mathfrak{f}^{i,m,a}_0\in\bR$, $(i,m,a)\in\bar\frI^-$,  
 \item[(C)] $\varPhi_0^\triangleright = \lim_{\sUV\searrow0} \varPhi_\sUV^\triangleright$ in the space $\sC^\alpha(\bR\times\bT)$ in probability, 
 \item[(D)] $\lim_{t\nearrow \breve T_0}\|\breve\varPhi_0(t,\Cdot)\|_{\sC^\beta(\bT)}=\infty$ on the event $\{\breve T_0<\infty\}$ and the law of the stochastic process $\breve\varPhi_0$ depends only on the initial data $\phi_0^\vartriangle\in \sC^\beta(\bT)$ and the choice of the renormalization constants $\mathfrak{f}^{i,m,a}_0\in\bR$, $(i,m,a)\in\bar\frI^-$,  
 \item[(E)] for any $\Tmax\in(0,\infty)$ on the event $\{\Tmax<\breve T_0\}$ it holds $\Tmax<\breve T_\sUV$ for all sufficiently small $\sUV\in(0,1]$ and $\breve\varPhi_0 = \lim_{\sUV\searrow0} \breve\varPhi_\sUV$ in the space $C([0,\Tmax],\sC^\beta(\bT))$ in probability.
\end{itemize}
\end{thm}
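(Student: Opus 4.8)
The plan is to separate the argument into a deterministic part, in which the effective force is constructed and estimated through the flow equation~\eqref{eq:flow_eq_form}, and a probabilistic part, in which the relevant effective force coefficients are controlled and shown to converge; the two are then combined to treat first the stationary equation~\eqref{eq:informal_statement_stationary} and afterwards the initial value problem. First I would solve the flow equation~\eqref{eq:flow_eq_form} subject to the boundary data prescribed by Assumption~\ref{ass:renormalization_conditions}: at scale $\sIR=0$ the effective force equals $F_\sUV[\varphi]$, so its coefficients are fixed by the irrelevant force coefficients, whereas for $(i,m,a)\in\bar\frI^-$ it is the expectation $\llangle f^{i,m,a}_{\sUV,T}\rrangle$ at the final scale $T=1$ that is prescribed. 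Since $\dim(\lambda)>0$ and $\dim(\Phi)>0$ only finitely many $(i,m,a)$ are relevant, and since the flow equation is triangular --- recursive in the power $i$ of $\lambda$ and, for fixed $i$, in the polynomial degree $m$ --- this mixed boundary value problem can be solved term by term, at each step fixing the relevant boundary values at $\sIR=0$ from the renormalization conditions at $\sIR=1$ and integrating the irrelevant remainder upward from $\sIR=0$.

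The core of the deterministic part is a system of recursive bounds, uniform in $\sUV\in(0,1]$, for the coefficients $F^{i,m}_{\sUV,\sIR}$ in scale-dependent seminorms measuring size and regularity at the length scale $[\sIR]$. One proves by induction along the recursion of~\eqref{eq:flow_eq_form} that the relevant part of each coefficient --- its low-order Taylor expansion in the difference variables $x_j-x$, extracted via $\cX^{m,a}$ --- stays bounded when the flow is integrated downward from $\sIR=T$, while the irrelevant remainder, integrated upward from $\sIR=0$, is controlled because subcriticality makes the scale integral of $\fB_\uIR(\cdot,\cdot)$ converge with a gain of a positive power of $[\sIR]$. Together with companion Lipschitz estimates in the force coefficients and in the noise, these bounds give absolute convergence of the formal power series in $\lambda$ for $T$ sufficiently small, as well as continuity of the whole construction with respect to the input data; they also imply, as noted in the statement, that the bound~\eqref{eq:ass_ren_cond_bound} is automatically satisfied by the relevant coefficients.

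The probabilistic part exploits that the $F^{i,m}_{\sUV,\sIR}$ are inhomogeneous multilinear functionals of the noise $\Xi_\sUV$: using the recursive structure of the flow equation together with the moment bounds, the independence at unit distance, and the near-white-noise coupling from Assumption~\ref{ass:noise}, I would obtain moment bounds for the relevant coefficients $f^{i,m,a}_{\sUV,\sIR}$, $(i,m,a)\in\bar\frI^-$, $\sIR\in(0,1]$, uniform in $\sUV$, and upgrade them to convergence in probability in $\sD'(\bM)$ as $\sUV\searrow0$, with limiting law depending only on the renormalization parameters $\mathfrak{f}^{i,m,a}_0$. Combined with $\lim_{\sUV\searrow0}f^{i,m,a}_\sUV=0$ for the irrelevant coefficients and the deterministic bounds, this yields parts (B) and (C): $\varPhi^\triangleright_\sUV=(G-G_T)\ast F_{\sUV,T}[0]$ converges in $\sC^\alpha(\bR\times\bT)$ to a process whose law is fixed by the $\mathfrak{f}^{i,m,a}_0$. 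For the initial value formulation I would expand around this stationary solution, $\varPhi_\sUV=\varPhi^\triangleright_\sUV+\breve\varPhi_\sUV$, and run the same flow-equation machinery for an effective force $\breve F_{\sUV,\sIR}[\varphi]$ that now carries the perturbation $\phi^\vartriangle_\sUV$ of the equilibrium initial data as an extra boundary term; the analogous deterministic bounds produce, for $T$ small, a solution $\breve\varPhi_\sUV=(G-G_T)\ast\breve F_{\sUV,T}[0]$ on $[0,T]$ and its limit $\breve\varPhi_0$, of law depending only on $\phi^\vartriangle_0$ and the $\mathfrak{f}^{i,m,a}_0$, which is part (D). The maximal solution and the stopping time $\breve T_\sUV$ are obtained by patching short-time solutions, restarting at $\mathring x=t$ with initial data $\breve\varPhi_\sUV(t,\Cdot)$; this, together with pathwise uniqueness in $C([0,\breve T_\sUV),\cC^\gamma(\bT))$ coming from a contraction argument, gives part (A), and part (E) --- lower semicontinuity of $\breve T_\sUV$ and convergence up to any $\Tmax<\breve T_0$ --- follows by iterating the short-time estimates a finite number of times.

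I expect the main obstacle to be the uniform deterministic estimate on the relevant sector of the flow. The renormalization subtraction has to be implemented directly at the level of the flow equation, coupling the downward integration of the relevant Taylor coefficients with the upward integration of the irrelevant remainder, and one must verify that this bookkeeping is stable under the bilinear operation $\fB_\sIR$ and generates no spurious divergences. This is the flow-equation substitute for the forest-formula combinatorics of the BPHZ approach; arranging the scaling exponents so that the induction closes simultaneously for all $(i,m,a)$ --- and, on the probabilistic side, so that the analogous induction for the moments of the relevant coefficients also closes --- is the delicate point.
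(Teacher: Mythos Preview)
Your plan matches the paper's approach: flow equation with mixed boundary conditions (irrelevant coefficients at $\sIR=0$, relevant expectations at $\sIR=1$), Taylor separation of relevant and irrelevant parts, probabilistic control of the relevant coefficients via cumulant bounds derived from the flow equation, a Da~Prato--Debussche shift by the stationary piece, and patching of short-time solutions. Two refinements are worth flagging.

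First, a correction: $\varPhi^\triangleright_\sUV$ is not the full series $(G-G_T)\ast F_{\sUV,T}[0]$ but its truncation $(G-G_1)\ast\sum_{i=0}^{i_\triangleright}\lambda^i F^{i,0}_{\sUV,1}$ at the fixed scale $T=1$, where $i_\triangleright$ is the largest integer with $\varrho(i_\triangleright)+\sigma\le0$. This choice ensures that the remainder $\breve\varPhi_\sUV$ lives at the better regularity $\beta=-\dim(\phi)-\varepsilon$ rather than $\alpha=-\dim(\Phi)-\varepsilon$, which is what you need for the power series for $\breve F_{\sUV,T}[0]$ to converge for small $T$ and for the patching to work in $\sC^\beta(\bT)$.

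Second, and more substantially, your sentence ``run the same flow-equation machinery for $\breve F_{\sUV,\sIR}$'' hides the key new idea for the initial value problem: you must explain why the renormalization already fixed at stationarity also renormalizes $\breve F$. The paper's mechanism is a support identity. Since the boundary data $\breve F_{\sUV,0}$ agrees with the shifted stationary force $\hat F_{\sUV,0}$ on $(t,t+6i_\flat)\times\bR^\rdim$, and the fluctuation kernel $G-G_\sIR$ is supported in $[0,2\sIR]\times\bR^\rdim$, induction along the flow gives $\breve F^{i,m}_{\sUV,\sIR}=\hat F^{i,m}_{\sUV,\sIR}$ whenever $\mathring x\in(t+2i\sIR,t+6i_\flat)$. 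In this bulk region the renormalized stationary bounds transfer verbatim. Only a boundary layer of temporal thickness $O(\sIR)$ near $\mathring x=t$ remains, and there no renormalization is needed: the shortness of the layer supplies an extra factor $[\sIR]^\sigma$ (measured through an $L^1$-in-time norm $\|\cdot\|_{\breve\sV^m}$), which compensates the absent subtraction. Without this locality argument the induction for $\breve F$ does not close. A smaller omission on the probabilistic side: the convergence of the relevant $f^{i,m,a}_{\sUV,\sIR}$ as $\sUV\searrow0$ is obtained via a diagonal argument with an auxiliary mollification scale $\uv$ on the noise (define $\varXi_{\uv;\sUV}=M_\uv\ast\varXi_\sUV$, prove cumulant bounds uniform in both $\uv$ and $\sUV$, then let $\sUV\searrow0$ at fixed $\uv$ and afterwards $\uv\searrow0$); a direct comparison with the white-noise limit is not available.
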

\begin{rem}
For the definitions of the functions spaces appearing in the above statement see Sec.~\ref{sec:topology}. The space $\sC^\beta(\bT)$  coincides with (some separable subspace of) the standard Besov space $\cB_{\infty,\infty}^\beta(\bT)$. The space $\sC^\alpha(\bR\times\bT)\equiv \sC^\alpha(\bH)$ locally coincides with (a separable subspace of) the parabolic Besov space $\cB^\alpha_{\infty,\infty}(\bR\times\bT)$. The space $\cC^\gamma(\bT)$ is contained in $C^{\floor{\gamma}}(\bT)$. Recall that $\alpha\leq\beta<0$ and $\gamma>0$.
\end{rem}
\begin{proof}
Part (A) follows from Theorem~\ref{thm:regular_eq}. Its proof uses only classical PDE tools and is completely standard. The remaining statements rely on probabilistic estimates established in Sec.~\ref{sec:probabilistic} and deterministic construction presented in Sec.~\ref{sec:deterministic}. Parts (B) and (C) follow from Lemma \ref{lem:singular_terms_main}. Parts (D) and (E) are consequences of Theorem~\ref{thm:solution_max}.
\end{proof}

\begin{rem}
The method of the proof of the above theorem was discussed in Sec.~\ref{sec:technique}. Let us recall that it is based on the ansatz~\eqref{eq:idea_ansatz} for the effective force $F_{\sUV,\sIR}[\varphi]$ and a differential equation of the form~\eqref{eq:flow_eq_form}, called the flow equation, that governs the evolution of the effective force in the parameter~$\sIR$. The above-mentioned differential equation is closely related to the Polchinski flow equation in Euclidean QFT~\cite{polchinski1984} (see also~\cite{wegner1973}) that governs the evolution of the effective potential in the cutoff scale. The Polchinski flow equation is very useful tool to investigate properties of models of perturbative QFT. There is a very simple and general proof of perturbative renormalizability of QFT models based on the Polchinski flow equation (see the review paper \cite{muller2003}; for the position space approach more similar to ours in this paper see~\cite{kopper2007}). In particular, the proof avoids completely the problem of overlapping divergences of Feynman integrals which is one of the major difficulties in the approaches based on the Feynman diagrammatical expansion (such as the BPHZ approach). For the state of the art applications of the Polchinski flow equation in perturbative QFT see~\cite{frob2016all,efremov2017}.

The Polchinski flow equation requires a certain formal power series ansatz for the effective potential. The above-mentioned series typically does not converge because of the so-called large field problem. As a result, the Polchinski flow equation was not used extensively in constructive quantum field theory. An exception is the Sine-Gordon model for which the perturbative series converges (for application of the flow equation for this model see~\cite{brydges1987Mayer,bauerschmidt2021}). Note that in the context of the initial value problem for the parabolic PDEs we avoid the large field problem by first constructing the solution in sufficiently small random time intervals.
\end{rem}

\begin{rem}\label{rem:kupiainen_approach}
The technique we use is related to the technique developed earlier by Kupiainen in~\cite{kupiainen2016rg} for the dynamical $\Phi^4_3$ model and later applied to the generalized KPZ equation in~\cite{kupiainen2017kpz} (a similar method was used earlier in~\cite{bricmont1994} to study the long-time asymptotics of nonlinear parabolic PDEs). Both techniques are based on the Wilsonian renormalization group theory~\cite{wilson1971}. Let us briefly explain the Wilson approach to the renormalization problem adapted to the context of SPDEs. Consider an equation of the form
\begin{equation}
 \varPhi_\sUV = (G-G_1)\ast F_\sUV[\varPhi_\sUV],
\end{equation}
where $F_\sUV[\varphi]$ is some force and the parameter $\sUV$ plays the role of the UV cutoff. We note that $\varPhi_\sUV = (G-G_1)\ast f_{\sUV,1}$ is a solution of the above equation provided $f_{\sUV,1}$ is such that
\begin{equation}\label{eq:wilson2}
 f_{\sUV,1} = F_\sUV[(G-G_1)\ast f_{\sUV,1}].
\end{equation}
Let $F_{\sUV,\sIR}[\varphi]$ be the effective force satisfying Eq.~\eqref{eq:effective_force_property} for every $\sIR\in[0,1]$ together with the boundary condition $F_{\sUV,0}[\varphi]=F_{\sUV}[\varphi]$. Then $f_{\sUV,1}=F_{\sUV,1}[0]$ fulfills Eq.~\eqref{eq:wilson2} and it holds
\begin{equation}\label{eq:wilson_idea}
 f_{\sUV,1}
 = F_\sUV[(G-G_\sIR)\ast f_{\sUV,1}+(G_\sIR-G_1)\ast f_{\sUV,1}]
 =
 F_{\sUV,\sIR}[(G_\sIR-G_1)\ast f_{\sUV,1}]
\end{equation}
for $\sIR\in[0,1]$. Note that the characteristic temporal and spatial scales of the kernel $G_\sIR$ are of order $\sIR$ and $[\sIR]=\sIR^{1/\sigma}$, respectively. The functions $f_{\sUV,1}$ as well as $(G-G_1)\ast f_{\sUV,1}$ and $(G-G_\sIR)\ast f_{\sUV,1}$ are essentially constant at temporal and spatial scales less than $\sUV$ and $[\sUV]$. These functions converge as $\sUV\searrow0$ only in the space of distributions. On the other hand, the function $(G_\sIR-G_1)\ast f_{\sUV,1}$ is essentially constant at temporal and spatial scales less than $\sUV\vee\sIR$ and $[\sUV\vee\sIR]$ and for every $\sIR\in(0,1]$ its limit $\sUV\searrow0$ is a regular function. It is clear by Eq.~\eqref{eq:wilson_idea} that in order to define the effective force $F_{\sUV,\sIR}[\varphi]$ one has to control all potentially problematic products involving the rough part $(G-G_\sIR)\ast f_{\sUV,1}$ of the solution $\varPhi_\sUV= (G-G_1)\ast f_{\sUV,1}$.

The basic idea of the Wilsonian renormalization group theory is to construct the effective force $F_{\sUV,\sIR}[\varphi]$ for $\sIR\in\{\delta^n\,|\,n\in\bN_0\}$, $\delta\in(0,1)$, recursively using the fact that $F_{\sUV,0}[\varphi]=F_{\sUV}[\varphi]$ and for $\uIR\leq\sIR$ the effective forces $F_{\sUV,\uIR}[\varphi]$ and $F_{\sUV,\sIR}[\varphi]$ are related by Eq.~\eqref{eq:effective_force_property}. This is the approach taken in~\cite{kupiainen2016rg,kupiainen2017kpz} in the context of singular SPDEs. The same strategy is used in most applications of the renormalization group in constructive QFT~\cite{gallavotti1985,gawedzki1989}. The discrete renormalization group method described above is very powerful. Unfortunately, constructions based on this method are typically highly technical. For this reason, we do not believe it would be feasible to give a general solution theory for a large class of sub-critical SPDEs using the technique of~\cite{kupiainen2016rg,kupiainen2017kpz}.
\end{rem}

\begin{rem}
Our approach to the renormalization problem is similar in spirit to the approach proposed recently in \cite{linares2021} in the context of quasi-linear SPDEs. The model introduced in that work consists of certain multi-linear functionals of the noise indexed by multi-indices. The above functionals can be represented by linear combinations of trees but this representation does not play any role in the proof. The stochastic estimates for the model are established inductively with the use of the spectral gap inequality. The counterterms are fixed by the symmetries and the BPHZ renormalization conditions.
\end{rem}

\begin{rem}\label{rem:other_approaches}
Let us briefly describe the standard approaches that are used to give meaning to singular SPDEs. The case of equations such as the dynamical $\Phi^4_2$ model for which Wick ordering is sufficient to define the non-linear term was resolved by Da Prato and Debussche~\cite{daprato2003} using an expansion around the solution of the linear equation. This type of expansion breaks down in the case of more singular equations like the dynamical $\Phi^4_3$ model. The renormalization problem for such equations stayed open until the breakthrough by Hairer~\cite{hairer2014structures}, who introduced the theory of regularity structures allowing a local description of a distribution by a generalized Taylor expansion around each spacetime point. Another approach that can be used to tackle a large class of singular SPDEs was developed later by Gubinelli, Imkeller and Perkowski in~\cite{gubinelli2015} using the paracontrolled calculus. Let us also mention an approach based on the rough path theory proposed by Otto and Weber in~\cite{otto2019rough} in the context of singular quasi-linear SPDEs.
\end{rem}

Now we would like to discuss some applications of our result. In the situation when the physical system under consideration has some symmetries it is natural to assume that the noise and the force coefficients respect these symmetries. This requirement typically restricts the choice of the values of the renormalization parameters.

\begin{dfn}
Let $\mathcal{H}$ be a group and $\fM$ and $\fN$ be actions of $\mathcal{H}$ on the space of spacetime functions $C(\bM)$ such that $\fQ\,\fM_g\varphi =\fN_g\fQ\varphi$ for all $\varphi\in C^\infty_\rc(\bM)$. We say that the system has symmetry group $\mathcal{H}$ if the noise $\microXi_\sUV$ and the microscopic force coefficients $\microf^{i,m,a}_\sUV$, $(i,m,a)\in\bar\frI$, are such that the corresponding macroscopic force satisfies the condition
\begin{equation}
 F_\sUV[\fM_g\varphi] \stackrel{\mathrm{(law)}}{=} \fN_g F_\sUV[\varphi]
\end{equation}
for every $\sUV\in(0,1]$, $\varphi\in C^\infty_\rc(\bM)$ and $g\in\mathcal{H}$. We will only consider situation when the above condition implies automatically an analogous condition for the effective force $F_{\sUV,\sIR}[\varphi]$ (we restrict attention to symmetries which are preserved by the cutoff propagator $G_\sIR$).
\end{dfn}

\begin{dfn}
Let $\mathcal{G}$ be the group of isometries of $\bR^\rdim$ preserving the cubic lattice with spacing $2\pi$. The action of the group $\mathcal{G}$ on the space of spacetime functions $C(\bM)$ is defined by
\begin{equation}
 (\fM_g\varphi)(\mathring x,\bar x):=\varphi(\mathring x,g^{-1}\bar x),
 \qquad
 g\in\mathcal{G},\quad
 (\mathring x,\bar x)\in\bR\times\bR^\rdim=\bM.
\end{equation}
The action of the group $\bZ_2$ on $C(\bM)$ is defined by
\begin{equation}
 (\fM_g\varphi)(\mathring x,\bar x) = g\,\varphi(\mathring x,\bar x),\qquad g\in \{-1,1\}=\bZ_2.
\end{equation}
We set $\fN_g=\fM_g$ for all $g\in\mathcal{G}\times\bZ_2$.
\end{dfn}

\begin{example}\label{example:phi_4_3}
Let $\rdim=3$, $\sigma=2$ and $\dim(\lambda)=1$. Consider any microscopic equation of the form~\eqref{eq:intro_spde} describing a system with the symmetry group \mbox{$\mathcal{G}\times\bZ_2$}. The above-mention equation describes a phase coexistence model studied in~\cite{hairer2018coexistence,shen2018} (see also \cite{furlan2019weak,erhard2020,zhu2018}). The only relevant force coefficients not violating the symmetries of the equation are
\begin{equation}
 \microf^{1,3,0}_\sUV,\qquad \microf^{1,1,0}_{\sUV},\qquad \microf^{2,1,0}_{\sUV}.
\end{equation}
The relevant force coefficients are fixed by the following renormalization conditions
\begin{equation}
 \lim_{\sUV\searrow0}\,\llangle f^{1,3,0}_{\sUV,1} \rrangle = \mathfrak{f}_0^{1,3,0},
 \qquad
 \lim_{\sUV\searrow0}\,\llangle f^{1,1,0}_{\sUV,1} \rrangle = \mathfrak{f}_0^{1,1,0},
 \qquad
 \lim_{\sUV\searrow0}\,\llangle f^{2,1,0}_{\sUV,1} \rrangle = \mathfrak{f}_0^{2,1,0}
\end{equation}
and for all other $(i,m,a)\in\bar\frI^-$
\begin{equation}
 \lim_{\sUV\searrow0}\,\llangle f^{i,m,a}_{\sUV,1} \rrangle = 0.
\end{equation}
Our analysis implies that the macroscopic behavior of this model is described by the following sub-critical singular SPDE
\begin{equation}\label{eq:example_phi_4_3}
 (\partial_{\mathring x}-\Delta_{\bar x}) \varPhi_0(x)
 = \varXi_0(x) + \lambda\,f^{1,3,0}_0\varPhi_0(x)^3 + \lambda f^{1,1,0}_0\varPhi_0(x) + \lambda^2 f^{2,1,0}_0\varPhi_0(x),
\end{equation}
where $\varXi_0$ is the periodization of the white noise on $\bM$, $f^{1,3,0}_0=\mathfrak{f}^{1,3,0}_0$ and the mass counterterms $f^{1,1,0}_0$, $f^{2,1,0}_0$ are not well-defined. This together with the a priori bounds proved in~\cite{mourrat2017down} allows us to recover the universality results established earlier in~\cite{shen2018}.
\end{example}

\begin{rem}
The singular SPDE~\eqref{eq:example_phi_4_3} is called the dynamical $\varPhi^4_3$ model. Let us mention that the renormalization problem for this equation was solved for the first time in~\cite{hairer2014structures} and later in~\cite{catellier2018,kupiainen2016rg}. The universality class of the dynamical $\varPhi^4_3$ model is expected to contain the three-dimensional Ising model with Glauber dynamics and Kac interactions near the critical temperature (this was conjectured in~\cite{giacomin99}; the two dimensional version of this result was established in~\cite{mourrat2017}). The invariant measure of the above SPDE describes the $\Phi^4_3$ Euclidean QFT (see for example \cite{gubinelli2018pde}).
\end{rem}

\begin{rem}\label{eq:parametrization_phi_4_3}
As mentioned above $\lambda\in\bR$ is a bookkeeping parameter that can be set equal to one. In our approach the entire solution manifold of the dynamical $\Phi^4_3$ model is parameterized by the initial data and the renormalization parameters \mbox{$\mathfrak{f}_0^{1,3,0},\mathfrak{f}_0^{1,1,0},\mathfrak{f}_0^{2,1,0}\in\bR$}. However, this parametrization is not injective. It turns out that the whole solution manifold can be parameterized by the initial data and $\mathfrak{f}_0^{1,3,0},\mathfrak{f}_0^{2,1,0}\in\bR$ with fixed $\mathfrak{f}_0^{1,1,0}=0$ (it is natural to assume that $\mathfrak{f}_0^{1,3,0}<0$ to avoid finite time blow-up of the solution; in infinite volume one could in addition set $\mathfrak{f}_0^{1,3,0}=-1$ by a rescaling operation to get a one-parameter family of equations).
\end{rem}

\begin{example}
Let $\rdim=4$, $\sigma\in(2,3)$ and $\dim(\lambda)=2\sigma-4$. Consider any microscopic equation of the form~\eqref{eq:intro_spde} describing a system with the symmetry group $\mathcal{G}\times\bZ_2$. The only relevant force coefficients not violating the symmetries of the equation are
\begin{equation}
 \microf^{1,3,0}_\sUV,\qquad \microf^{i,1,0}_{\sUV},~~ i\in\{1,\ldots,i_\sharp\},
\end{equation}
where $i_\sharp:=\floor{\sigma/\dim(\lambda)}$. Note that $i_\sharp \to\infty$ as $\sigma\searrow2$. The relevant force coefficients are fixed by the following renormalization conditions
\begin{equation}
 \lim_{\sUV\searrow0}\,\llangle f^{1,3,0}_{\sUV,1} \rrangle = \mathfrak{f}_0^{1,3,0},
 \qquad
 \lim_{\sUV\searrow0}\,\llangle f^{i,1,0}_{\sUV,1} \rrangle = \mathfrak{f}_0^{i,1,0},~~ i\in\{1,\ldots,i_\sharp\},
\end{equation}
and for all other $(i,m,a)\in\bar\frI^-$
\begin{equation}
 \lim_{\sUV\searrow0}\,\llangle f^{i,m,a}_{\sUV,1} \rrangle = 0.
\end{equation}
The scaling limit $\varPhi_0:=\lim_{\sUV\searrow0}\varPhi_\sUV$ formally solves the following sub-critical singular SPDE
\begin{equation}\label{eq:spde_example}
 \fQ \varPhi_0(x)
 = \varXi_0(x) +\lambda\,f^{1,3,0}_0\varPhi_0(x)^3 + \sum_{i=1}^{i_\sharp}\lambda^i f^{i,1,0}_0\varPhi_0(x).
\end{equation}
Careful analysis shows that it holds $f^{1,3,0}_0=\mathfrak{f}^{1,3,0}_0$ and the coefficients $f^{i,1,0}_0$, $i\in\{1,\ldots,i_\sharp\}$, are generically ill-defined because the limit $\lim_{\sUV\searrow0}f^{i,1,0}_\sUV$ does not exist (unless the equation is linear, that is $\mathfrak{f}^{1,3,0}_0=0$).
We obtain essentially the same result for all $\dim(\lambda)\in(\sigma-2,2\sigma-4]$.
\end{example}

\begin{example}
Let $\rdim=4$, $\sigma\in(2,8/3)$ and $\dim(\lambda)=\sigma-2$. Consider again an arbitrary microscopic equation of the form~\eqref{eq:intro_spde} describing a system with the symmetry group $\mathcal{G}\times\bZ_2$. The only relevant force coefficients that are not fixed by the symmetries are
\begin{equation}
 \microf^{1,1,\Delta}_\sUV:=\microf^{1,1,a_1}_{\sUV}=\ldots=\microf^{1,1,a_\rdim}_{\sUV},
 \qquad
 \microf^{i,3,0}_\sUV,
 ~~
 i\in\{1,2\}
 \qquad
 \microf^{i,1,0}_{\sUV},~~ i\in\{1,\ldots,i_\sharp\},
\end{equation}
where the spatial multi-indices $a_j=(a_j^1,\ldots,a_j^\rdim)\in\bN_0^\rdim$, $j\in\{1,\ldots,\rdim\}$, are defined by the conditions $a^j_j=2$ and $a^k_j=0$ for all \mbox{$k\in\{1,\ldots,\rdim\}\setminus\{j\}$} and $i_\sharp$ is as in the previous example. We impose the following renormalization conditions
\begin{equation}
 \lim_{\sUV\searrow0}\,\llangle f^{1,1,\Delta}_{\sUV,1} \rrangle = \mathfrak{f}_0^{1,1,\Delta},
 \qquad
 \lim_{\sUV\searrow0}\,\llangle f^{i,3,0}_{\sUV,1} \rrangle = \mathfrak{f}_0^{i,3,0},~~i\in\{1,2\},
\end{equation}
\begin{equation}
 \lim_{\sUV\searrow0}\,\llangle f^{i,1,0}_{\sUV,1} \rrangle = \mathfrak{f}_0^{i,1,0},~~  i\in\{1,\ldots,i_\sharp\},
\end{equation}
and for all other $(i,m,a)\in\bar\frI^-$
\begin{equation}
 \lim_{\sUV\searrow0}\,\llangle f^{i,m,a}_{\sUV,1} \rrangle = 0.
\end{equation}
The scaling limit $\varPhi_0:=\lim_{\sUV\searrow0}\varPhi_\sUV$ formally solves the following sub-critical singular SPDE
\begin{equation}\label{eq:spde_example_laplacian}
 \fQ \varPhi_0(x)
 = \varXi_0(x) +\lambda f^{1,1,\Delta}_0\Delta_{\bar x}\varPhi_0(x)- \sum_{i=1}^2\lambda^i f^{i,3,0}_0\varPhi_0(x)^3 + \sum_{i=1}^{i_\sharp}\lambda^{i} f^{i,1,0}_0\varPhi_0(x).
\end{equation}
Careful analysis shows that it holds $f^{1,1,\Delta}_0=\mathfrak{f}^{1,1,\Delta}_0$, $f^{1,3,0}_0=\mathfrak{f}^{1,3,0}_0$ and the coefficients $f^{2,3,0}_0$ and $f^{i,1,0}_0$, $i\in\{1,\ldots,i_\sharp\}$, are generically ill-defined because the limits $\lim_{\sUV\searrow0}f^{2,3,0}_\sUV$ and $\lim_{\sUV\searrow0}f^{i,1,0}_\sUV$ need not exist. If $\mathfrak{f}^{1,3,0}_0=0$, then $f^{2,3,0}_0=\mathfrak{f}^{2,3,0}_0$ and $f^{1,1,0}_0=\mathfrak{f}^{1,1,0}_0$ are well-defined. If in addition $\mathfrak{f}^{2,3,0}_0=0$, then the equation is linear and all coefficients are well-defined.
We obtain essentially the same result for all $\dim(\lambda)\in(0,\sigma-2]$.
\end{example}

\begin{example}
Let $\rdim=4$, $\sigma=2$ and $\dim(\lambda)>0$. Consider again an arbitrary microscopic equation of the form~\eqref{eq:intro_spde} describing a system with the symmetry group~$\mathcal{G}\times\bZ_2$. This time the only relevant force coefficients not violating the symmetries of the equation are
\begin{equation}
 \microf^{i,1,0}_{\sUV},~~ i\in\{1,\ldots,i_\sharp\}.
\end{equation}
We impose the following renormalization conditions
\begin{equation}
 \lim_{\sUV\searrow0}\,\llangle f^{i,1,0}_{\sUV,1} \rrangle = \mathfrak{f}_0^{i,1,0},~~ i\in\{1,\ldots,i_\sharp\}.
\end{equation}
It is easy to see that
\begin{equation}
 f^{i,1,0}_0=\lim_{\sUV\searrow0} f^{i,1,0}_\sUV=\mathfrak{f}_0^{i,1,0}
\end{equation}
and the scaling limit $\varPhi_0:=\lim_{\sUV\searrow0}\varPhi_\sUV$ solves the following linear SPDE
\begin{equation}
 \fQ \varPhi_0(x)
 = \varXi_0(x) + \sum_{i=1}^{i_\sharp}\lambda^i f^{i,1,0}_0\,\varPhi_0(x).
\end{equation}
Obviously the above triviality result is of little interest because of the restrictive assumption $\dim(\lambda)>0$.
\end{example}

\begin{rem}
If $\sigma\in[8/3,4)$, then depending on the choice of $\dim(\lambda)>0$ there are other relevant force coefficients not violating the symmetries of the equation, for example $f^{1,5,0}_\sUV$. If $\sigma\geq \rdim=4$, then the number of possible relevant force coefficients is not bounded and no universality result is expected.
\end{rem}

\begin{rem}
If the spatial dimension $\rdim$ equals $4$, as assumed above, then our technique allows to renormalize the singular SPDE~\eqref{eq:spde_example} in the whole range of subcriticality $\sigma\in(2,4]$ (for $\sigma>4$ the above equation is not singular). If $\rdim=3$, then the SPDE~\eqref{eq:spde_example} is singular and sub-critical for $\sigma\in(3/2,3]$. In the case $\rdim=3$ our method covers only the range $\sigma\in(5/3,3]$. In general, for $\sigma\notin2\bN_+$ because of the long-range character of the fractional heat kernel our method needs the following assumption $\ceil{\sigma}-1>\sigma-2\dim(\lambda)$, see Assumption~\ref{ass:infrared}.
\end{rem}

\begin{rem}
For $\rdim=3$ and $\sigma=2$ the singular SPDE~\eqref{eq:spde_example} coincides with the dynamical $\Phi^4_3$ model. For $\rdim=3$ and $\sigma\in(21/11,2)$ the renormalization problem for the SPDE~\eqref{eq:spde_example} is very similar to the renormalization problem for the dynamical $\Phi^4_3$ model and was solved in~\cite{gubinelli2018pde} using paracontrolled calculus. Different types of non-local singular SPDEs were previously studied in~\cite{gubinelli2015,ignat2019,chiarini2019,ignat2020}. However, the non-local SPDEs considered in these references are not close to criticality.
\end{rem}

\begin{rem}
There exists a general blackbox machinery built upon the theory of regularity structures that can be used to systematically renormalize virtually all sub-critical singular SPDEs with local differential operators~\cite{chandra2016bphz,bruned2019algebraic,bruned2021renormalising}. It is applicable also in the case $\dim(\varPhi)<0$ which is not covered by our technique. However, its generalization for non-local singular SPDEs treated in the present work is non-trivial. Let us also mention that there are a number of results about specific families of singular SPDEs covering whole sub-critical region~\cite{chandra2018sine,chandra2019full,otto2021full,linares2021}.
\end{rem}

\begin{rem}
The singular SPDE~\eqref{eq:spde_example} belongs to the class of the stochastic quantization equations~\cite{parisi1981,damgaard1987}. The invariant measure for the SPDE~\eqref{eq:spde_example} describes a model of QFT with a modified propagator (this was established in~\cite{gubinelli2018pde} for $\rdim=3$ and $\sigma\in(21/11,2]$). Models of this type (especially for $\rdim=3$) attracted attention in the area of constructive QFT because they can be used to implement rigorously the Wilson-Fischer expansion used to study critical phenomena (see for example \cite{abdesselam2007} and references cited therein). In the case $\rdim=4$ the invariant measure for the SPDE~\eqref{eq:spde_example} is not supposed to be reflection positive but should satisfy all other Osterwalder-Schrader axioms. Let us mention that the SPDEs of the form~\eqref{eq:spde_example} are expected to describe the large scale behavior of certain long-range statistical models. For example, the SPDEs of the above form should govern the random fluctuations of a suitably rescaled coarse-grained spin field of the Ising-Kac model with power-law-decaying interactions of the form $|\bar x|^{-\rdim-\sigma}$, where $|\bar x|$ is the distance between the spins. However, we are not aware of any rigorous results in this direction.
\end{rem}

\begin{dfn}
The actions $\fM$ and $\fN$ of the group $\bR$ on $C(\bM)$ are defined by
\begin{equation}
 (\fM_g\varphi)(\mathring x,\bar x) = \varphi(\mathring x,\bar x)+g,\qquad
 (\fN_g\varphi)(\mathring x,\bar x) = \varphi(\mathring x,\bar x),\qquad g\in \bR.
\end{equation}
\end{dfn}

\begin{example}\label{example:kpz}
Let $\rdim=1$, $\sigma=2$ and $\dim(\lambda)=1/2$. Note that, strictly speaking, this example is outside the scope of the technique developed in this paper since $\dim(\varPhi)=(\rdim-\sigma)/2<0$. Consider an arbitrary microscopic equation of the form
\begin{equation}\label{eq:KPZ_example}
 (\partial_{\mathring x}-\Delta_{\bar x})\microPhi_\sUV(x) =
 \microF_\sUV[\microPhi_\sUV](x),
 \qquad x=(\mathring x,\bar x)\in\bR_+\times\bR
\end{equation}
describing a system with the symmetry group~$\mathcal{G}\times\bR$ with broken $\bZ_2$ symmetry. The equations of the above type belong to the class of continuous interface growth models that were studied in~\cite{hairer2017clt,hairerQuastel2018} (see also~\cite{hairer2019interface} as well as~\cite{gubinelli2016universality} and references cited therein). Since the system preserves the $\bR$ symmetry the force $\microF_\sUV[\varphi]$ can depend on $\varphi$ only thorough its spatial derivative $\partial_{\bar x}\varphi$. Consequently, there exists another force $\dot{\microF}_\sUV[\varphi]$ of the form~\eqref{eq:force_micro} such that it holds
\begin{equation}
 \microF_\sUV[\varphi] := \dot{\microF}_\sUV[\partial_{\bar x}\varphi]
\end{equation}
for all $\varphi\in C^\infty_\rc(\bM)$. Consider the microscopic equation
\begin{equation}\label{eq:KPZ_derivative}
 (\partial_{\mathring x}-\Delta_{\bar x})\microPsi_\sUV(x) = \partial_{\bar x}
 \dot{\microF}_\sUV[\microPsi_\sUV](x),
 \qquad x=(\mathring x,\bar x)\in\bR_+\times\bR,
\end{equation}
which is related to Eq.~\eqref{eq:KPZ_example} via the identification $\microPsi_\sUV=\partial_{\bar x}\microPhi_\sUV$. As described briefly in Remark~\ref{rem:kpz} our technique can be adapted to yield the existence and universality of the macroscopic scaling limit for the above equation. The only relevant coefficients of the force $\dot{\microF}_{\sUV}[\varphi]$ not violating the symmetries of the equation are
\begin{equation}
\microf^{1,2,0}_\sUV,\qquad\microf^{1,0,0}_{\sUV},\qquad\microf^{2,0,0}_{\sUV},\qquad\microf^{3,0,0}_{\sUV}.
\end{equation}
The relevant force coefficients are fixed by the following renormalization conditions
\begin{equation}
 \lim_{\sUV\searrow0}\,\llangle f^{1,2,0}_{\sUV,1} \rrangle = \mathfrak{f}_0^{1,2,0},
 \qquad
 \lim_{\sUV\searrow0}\,\llangle f^{i,0,0}_{\sUV,1} \rrangle = \mathfrak{f}_0^{i,0,0},
 \quad i\in\{1,2,3\}
\end{equation}
and for all other $(i,m,a)\in\bar\frI^-$
\begin{equation}
 \lim_{\sUV\searrow0}\,\llangle f^{i,m,a}_{\sUV,1} \rrangle = 0.
\end{equation}
We note that the solution of Eq.~\eqref{eq:KPZ_derivative} does not depend on the choice of the coefficients $\microf^{i,0,0}_\sUV\in\bR$, $i\in\{1,2,3\}$. The macroscopic scaling limit is described by the following singular SPDE
\begin{equation}\label{eq:burgers}
 (\partial_{\mathring x}-\Delta_{\bar x})\varPsi_0(x) = \partial_{\bar x}\big(\microXi_0(x) + \lambda f^{1,2,0}_0  \varPsi_0(x)^2 + \lambda f^{1,0,0}_0
 + \lambda^2 f^{2,0,0}_0
 + \lambda^3 f^{3,0,0}_0\big),
\end{equation}
where $f^{1,2,0}_0=\mathfrak{f}_0^{1,2,0}$ and the counterterms $f^{i,0,0}_0$, $i\in\{1,2,3\}$, need not be well-defined. This allows us to essentially reproduce the universality results of~\cite{hairer2017clt,hairerQuastel2018} in the weakly asymmetric regime.
\end{example}

\begin{rem}
The singular SPDE~\eqref{eq:burgers} is called the stochastic Burgers equation and is obtained by taking the spatial derivative of both sides of the KPZ equation
\begin{equation}
 (\partial_{\mathring x}-\Delta_{\bar x})\varPhi_0(x) = \microXi_0(x) + \lambda f^{1,2,0}_0  (\partial_{\bar x}\varPhi_0(x))^2 + \lambda f^{1,0,0}_0
 + \lambda^2 f^{2,0,0}_0
 + \lambda^3 f^{3,0,0}_0.
\end{equation}
One can define the notion of solution of the above equation using the Cole-Hopf transform and the classical It\^{o} calculus (for the Cole-Hopf solution it holds $f^{2,0,0}_0=f^{3,0,0}_0=0$ and $f^{1,0,0}_0$ is a certain infinite It\^{o} correction). The robust approximation theory for KPZ equation was first given in~\cite{hairer2013kpz}. Let us note that there are many microscopic models for which the convergence to the KPZ equation has been established rigorously (see the references cited in~\cite{hairerQuastel2018}).
\end{rem}

\begin{rem}\label{rem:kpz}
In order to prove that the large-scale behavior of solutions of the SPDE~\eqref{eq:KPZ_derivative} is described by to the stochastic Burgers equation we study the following equation
\begin{equation}
 \varPsi_\sUV = \partial_{\bar x}G\ast (1_{(0,\infty)} F_\sUV[\varPsi_\sUV] + \delta_0\otimes\phi_\sUV),
\end{equation}
where $G$ is the heat kernel in $\bR\times\bR$. We apply the technique develop in the paper with the following (non-standard) choice of the dimensions $\dim(\varPsi)=-1/2$, $\dim(\varXi)=-3/2$, $\dim(\lambda)=1/2$ (recall that $\rdim=1$, $\sigma=2$). Note that $\partial_{\bar x}G$ is a regularizing kernel of order $\dim(\varXi)-\dim(\varPsi)=1$ in the scale of the parabolic Besov spaces.
\end{rem}

\section{Regularizing kernels}\label{sec:kernels}

In this section we define kernels that will be used to control the regularity of the effective force coefficients and their cumulants. We also introduce the scale decomposition of the fractional heat kernel. We prove several properties of the regularizing kernels and the scale decomposition of the fractional heat kernel, which are frequently used in the paper.

Let $N\in\bN_+$. The space of compactly supported test functions is denoted by $C^\infty_\rc(\bR^N)$. The space of bounded smooth functions in denoted by $C^\infty_\rb(\bR^N)$. The space of Schwartz functions is denoted by $\sS(\bR^N)$. The space of distributions and the subspace of tempered distributions are denoted by $\sD'(\bR^N)$ and $\sS'(\bR^N)$, respectively. The Fourier transform in $\sS'(\bR^N)$ is denoted by $\fF$. In particular, for a Schwartz function \mbox{$\varphi\in\sS(\bR^N)$} we define
\begin{equation}
 \fF \varphi(p) = \int_{\bR^N} \exp(-\ri\, p \cdot x) \,\varphi(x) \,\rd x,
 \quad
 \varphi(x) = \frac{1}{(2\pi)^N} \int_{\bR^N} \exp(\ri\, p \cdot x)\, \fF \varphi(p) \,\rd p.
\end{equation}
Observe that
\begin{equation}
 \fF(\psi\ast\varphi)(p) = \fF(\psi)(p)\, \fF(\varphi)(p)
\end{equation}
for $\psi,\varphi\in\sS(\bR^N)$, where $\ast$ denotes the convolution.  We use the following notation $\delta_{\bR^N}\in\sS'(\bR^N)$ for the Dirac delta. We also set $\bR^0=\{0\}$.

\begin{dfn}\label{dfn:cK}
For $n\in\bN_+$ let $\cK^n\subset\sS'(\bM^n)$ be the space of signed measures $K$ on $\bM^n$ with finite total variation $|K|$. We set $\|K\|_{\cK^n} = \int_{\bM^n} |K(\rd x_1\ldots\rd x_n)|$. If $n=1$, then we write $\cK^1=\cK\subset\sS'(\bM)$. Given $K\in\cK$ we set $K^{\otimes0}:=\delta_\bM$ and $K^{\otimes n}:=K\otimes\ldots\otimes K\in\cK^n$ for $n\in\bN_+$.
\end{dfn}

\begin{dfn}\label{dfn:scaling_S}
For $n\in\bN_+$ and $\sIR>0$ and $K\in C(\bM^n)$ we define
\begin{equation}
 \fS_\sIR K(\mathring x_1,\bar x_1,\ldots,\mathring x_n,\bar x_n) := [\sIR]^{-\rDim n}\, K(\mathring x_1/\sIR,\bar x_1/[\sIR],\ldots,\mathring x_n/\sIR,\bar x_n/[\sIR]), 
\end{equation}
where $[\sIR]:=\sIR^{1/\sigma}$. The map $\fS_\sIR$ extends to $K\in\sD'(\bM^n)$ in natural way.
\end{dfn}
\begin{rem}
For any $\sIR>0$ the map $\fS_\sIR\,:\cK^n\to\cK^n$ is an isometry.
\end{rem}
\begin{dfn}
For $K\in\cK$ we set $K^{\ast0}:=\delta_\bM$ and $K^{\ast(\oo+1)}:=K\ast K^{\ast\oo}$ for $\oo\in\bN_0$. For an operator $\fP$ we set $\fP^0:=\rid$ and $\fP^{\oo+1}:=\fP\, \fP^{\oo}$ for $\oo\in\bN_0$.  
\end{dfn}

\begin{dfn}\label{dfn:varepsilon_sigma}
We define $\varepsilon_\sigma:=\sigma+1-\ceil{\sigma}\in(0,1\wedge\sigma]$. 
\end{dfn}
\begin{rem}\label{rem:epsilon_sigma}
In what follows, we assume that $\varepsilon\in(0,\varepsilon_\sigma)$. Later we shall introduce other possibly smaller strictly positive upper bounds for $\varepsilon$.
\end{rem}

\begin{dfn}\label{dfn:kernels}
For $\sIR\in(0,\infty)$ the kernels $K_\sIR,J_\sIR\in L^1(\bM)\subset\cK$ are defined by
\begin{equation}
 \fF K_\sIR(p)=\frac{1}{1+\ri\sIR\mathring p}\frac{1}{1+[\sIR]^2|\bar p|^2},
 \qquad
 \fF J_\sIR(p)=\frac{1}{1+[\sIR]^{\sigma-\varepsilon}|\bar p|^{\sigma-\varepsilon}}.
\end{equation}
For $\sIR\in(0,\infty)$ the kernels $\mathring K_\sIR\in L^1(\bR)$, $\bar K_\sIR\in L^1(\bR^\rdim)$ are defined by 
\begin{equation}
 \fF\mathring K_\sIR(\mathring p):=\frac{1}{1+\ri\sIR\mathring p}, 
 \qquad
 \fF\bar K_\sIR(\bar p):=\frac{1}{1+[\sIR]^2|\bar p|^2}.
\end{equation} 
We omit the index $\sIR$ if $\sIR=1$. We also set $K_0=\delta_\bM$, $J_0=\delta_\bM$, $\mathring K_0=\delta_\bR$, $\bar K_0=\delta_{\bR^\rdim}$.
\end{dfn}

\begin{dfn}\label{dfn:diff_op}
For $\sIR\in[0,\infty)$ we define the following (pseudo-)differential operators
\begin{equation}
 \fP_\sIR\equiv\fP_\sIR(\partial_x)=(1+\sIR\partial_{\mathring x})(1-[\sIR]^2\Delta_{\bar x}),
 \quad
 \fR_\sIR\equiv\fR_\sIR(\partial_x)=1+[\sIR]^{\sigma-\varepsilon}(-\Delta_{\bar x})^{(\sigma-\varepsilon)/2},
\end{equation} 
\begin{equation}
 \mathring\fP_\sIR\equiv\mathring\fP_\sIR(\partial_x)=1+\sIR\partial_{\mathring x}, 
 \qquad
 \bar\fP_\sIR\equiv\bar\fP_\sIR(\partial_x)=1-[\sIR]^2\Delta_{\bar x}.
\end{equation} 
We omit the index $\sIR$ if $\sIR=1$.
\end{dfn}

\begin{rem}
For $\sIR\in(0,\infty)$ we have $K_\sIR=\fS_\sIR K$, $J_\sIR=\fS_\sIR J$. For $\oo\in\bN_+$ and $\sIR\in[0,\infty)$ the kernels $K_\sIR^{\ast\oo}$, $\mathring K_\sIR^{\ast\oo}$, $\bar K_\sIR^{\ast\oo}$ are positive measures with total mass equal to $1$. Moreover, we have $K_\sIR^{\ast\oo}=\mathring K_\sIR^{\ast\oo}\otimes\bar K_\sIR^{\ast\oo}$, $\fP^\oo_\sIR = \mathring \fP^\oo_\sIR\otimes \bar \fP^\oo_\sIR$ and
\begin{equation}
 \fP^\oo_\sIR K^{\ast\oo}_\sIR=\delta_\bM,
 \quad
 \mathring\fP^\oo_\sIR \mathring K^{\ast\oo}_\sIR=\delta_\bR,
 \quad
 \bar\fP^\oo_\sIR \bar K^{\ast\oo}_\sIR=\delta_{\bR^\rdim},
 \quad
 \fR^\oo_\sIR J^{\ast\oo}_\sIR = \delta_\bM.
\end{equation}
The fact that $K$ and $J$ are the inverses of (pseudo-)differential operators plays an important role in our construction.
\end{rem}

\begin{lem}\label{lem:kernel_u_v}
For any $\sIR,\uIR>0$,
\begin{equation}\label{eq:kernel_u_v}
\begin{gathered}
 K_\sIR = \fP_\uIR K_\sIR \ast K_\uIR 
 \qquad
 \|\fP_\uIR K_\sIR \|_\cK= 1\vee(2\uIR/\sIR-1)(2[\uIR/\sIR]^2-1).
\end{gathered} 
\end{equation}
In particular, if $\sIR\geq\uIR$, then $\|\fP_\uIR K_\sIR\|_\cK=1$.
\end{lem}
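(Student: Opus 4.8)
The plan is to write down $\hat K_\uIR$ explicitly, as a tensor product of two elementary finite signed measures coming from a partial fraction decomposition of the ratio of Fourier symbols $\fF K_\sIR/\fF K_\uIR$, and then to read off both the convolution identity and the total variation norm by a direct computation in Fourier variables. This is legitimate because every kernel in sight belongs to $\cK$, hence has a bounded continuous Fourier transform, and convolution in $\cK$ corresponds to pointwise multiplication of Fourier transforms. Concretely, for $\uIR>0$ I would define
\[
 \mathring{\hat K}_\uIR := \uIR\,\delta_\bR + (1-\uIR)\,\mathring K\in\cK,
 \qquad
 \bar{\hat K}_\uIR := [\uIR]^2\,\delta_{\bR^\rdim} + (1-[\uIR]^2)\,\bar K\in\cK,
\]
and $\hat K_\uIR:=\mathring{\hat K}_\uIR\otimes\bar{\hat K}_\uIR\in\cK$. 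Using $\fF\mathring K(\mathring p)=(1+\ri\mathring p)^{-1}$ and $\fF\bar K(\bar p)=(1+|\bar p|^2)^{-1}$ one computes $\fF\mathring{\hat K}_\uIR(\mathring p)=(1+\ri\uIR\mathring p)/(1+\ri\mathring p)$ and $\fF\bar{\hat K}_\uIR(\bar p)=(1+[\uIR]^2|\bar p|^2)/(1+|\bar p|^2)$, hence $\fF\hat K_\uIR(p)=\frac{1+\ri\uIR\mathring p}{1+\ri\mathring p}\cdot\frac{1+[\uIR]^2|\bar p|^2}{1+|\bar p|^2}$.

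To prove the convolution identity I would pass to Fourier transforms. From Definition~\ref{dfn:scaling_S} one has $\fF(\fS_\sIR K)(p)=\fF K(\sIR\mathring p,[\sIR]\bar p)$ for $K\in\cK$, and $[\uIR/\sIR]=[\uIR]/[\sIR]$, so with $\rho:=\uIR/\sIR$
\[
 \fF(\fS_\sIR\hat K_{\uIR/\sIR})(p)=\fF\hat K_{\uIR/\sIR}(\sIR\mathring p,[\sIR]\bar p)=\frac{1+\ri\uIR\mathring p}{1+\ri\sIR\mathring p}\cdot\frac{1+[\uIR]^2|\bar p|^2}{1+[\sIR]^2|\bar p|^2}.
\]
Multiplying by $\fF K_\uIR(p)=(1+\ri\uIR\mathring p)^{-1}(1+[\uIR]^2|\bar p|^2)^{-1}$ gives exactly $\fF K_\sIR(p)$, i.e.\ $\fF(\fS_\sIR\hat K_{\uIR/\sIR}\ast K_\uIR)=\fF K_\sIR$; since $\fF$ is injective on $\cK$ this yields $K_\sIR=\fS_\sIR\hat K_{\uIR/\sIR}\ast K_\uIR$.

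For the norm I would use that $\mathring K$ and $\bar K$ are positive measures of unit total mass with no atom at the origin, so in $\mathring{\hat K}_\uIR$ (and in $\bar{\hat K}_\uIR$) the point mass at the origin and the remaining absolutely continuous part are mutually singular and their total variations add exactly:
\[
 \|\mathring{\hat K}_\uIR\|_\cK = \uIR + |1-\uIR| = 1\vee(2\uIR-1),
 \qquad
 \|\bar{\hat K}_\uIR\|_\cK = [\uIR]^2 + |1-[\uIR]^2| = 1\vee(2[\uIR]^2-1).
\]
Since total variation is multiplicative under tensor products, $\|\hat K_\uIR\|_\cK=(1\vee(2\uIR-1))(1\vee(2[\uIR]^2-1))$, and this equals $1\vee(2\uIR-1)(2[\uIR]^2-1)$ by the dichotomy $\uIR\le1\iff[\uIR]\le1$: when $\uIR\le1$ both maxima equal $1$ and $(2\uIR-1)(2[\uIR]^2-1)$ lies in $[-1,1]$, while when $\uIR>1$ both factors exceed $1$, so the two expressions coincide in either case. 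The last assertion is then immediate: $\sIR\ge\uIR$ forces $\uIR/\sIR\le1$, whence $\|\hat K_{\uIR/\sIR}\|_\cK=1$, and $\|\fS_\sIR\hat K_{\uIR/\sIR}\|_\cK=1$ because $\fS_\sIR$ is an isometry of $\cK$.

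There is essentially no serious obstacle here; the only points needing a little care are that the total variation is genuinely additive — not merely subadditive — across the mutually singular atomic and absolutely continuous parts, which is what gives the sharp constant rather than an inequality, and the brief case analysis that turns a product of two maxima into a single maximum.
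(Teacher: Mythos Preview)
Your proof is correct and follows essentially the same route as the paper: both identify $\hat K_\uIR$ as the tensor product $(\uIR\delta_\bR+(1-\uIR)\mathring K)\otimes([\uIR]^2\delta_{\bR^\rdim}+(1-[\uIR]^2)\bar K)$ and read off the norm from the unit masses of $\mathring K$, $\bar K$; the paper phrases the convolution identity via $\hat K_\uIR=\fP_\uIR K$ together with $\fP_\uIR K_\uIR=\delta_\bM$ and a scaling reduction to $\sIR=1$, whereas you verify it directly on the Fourier side, but these are two ways of saying the same thing.
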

\begin{proof}
By the exact scaling relations it is enough to consider the case $\sIR=1$. The statement of the lemma follows from the identity
\begin{equation}
\fP_\uIR K = (\uIR\delta_\bM + (1-\uIR)\mathring K\otimes\delta_{\bR^\rdim})\ast([\uIR]^2\delta_\bM + (1-[\uIR]^2) \delta_\bR\otimes\bar K)
\end{equation}
and $\|\delta_\bM\|_\cK=1$, $\|\mathring K\otimes\delta_{\bR^\rdim}\|_\cK=1$, $\|\delta_\bR\otimes\bar K\|_\cK=1$.
\end{proof}

\begin{dfn}\label{dfn:periodization}
For $H\in L^1(\bM)$ we define $\fT H\in L^1(\bH)$ by the formula
\begin{equation}
 \fT H(\mathring x,\bar x):= \sum_{\bar y\in(2\pi\bZ)^\rdim} V(\mathring x,\bar x+\bar y).
\end{equation}
For $H\in L^1(\bR^\rdim)$ we define $\fT H\in L^1(\bT)$ by an analogous formula.
\end{dfn}
\begin{rem}
For future reference, we note that $H\ast f = \fT H \star f$ for $f\in C(\bH)$ and $H\in L^1(\bM)$, where $\ast$ and $\star$ are the convolutions in $\bM$ and $\bH$, respectively.
\end{rem}

\begin{lem}\label{lem:kernel_simple_fact}
Let $\ooo\in\bN_+$, $a\in\frM$ and $p\in[1,\infty]$.
\begin{enumerate}
\item[(A)]
If $|a|\leq\ooo$, then $\|\partial^a K^{\ast\ooo}_\sIR\|_\cK
 \lesssim [\sIR]^{-[a]}$ uniformly in $\sIR>0$. 
\item[(B)]
If $a\in\bar\frM_\sigma$, then $\|\partial^a J_\sUV\|_\cK
 \lesssim [\sUV]^{-[a]}$ uniformly in $\sUV>0$. 
\item[(C)] 
If $2\ooo>\sigma$, then $\|\fR^{\phantom\ooo}_\sUV \bar K_\sIR^{\ast\ooo}\|_{L^1(\bR^\rdim)} \lesssim [\sUV\vee\sIR]^{\sigma-\varepsilon}\,[\sIR]^{\varepsilon-\sigma}$ uniformly in $\sUV,\sIR>0$. 
\item[(D)] 
If $\rdim-\ooo<\rdim/p$, then $\|\fT \bar K^{\ast\ooo}_\sIR\|_{L^p(\bT)}\lesssim [\sIR]^{-\rdim (p-1)/p}$ uniformly in $\sIR\in(0,1]$.
\end{enumerate}
\end{lem}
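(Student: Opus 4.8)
The plan is to treat all four parts by a single recipe: first reduce to unit scale via the parabolic rescaling $\fS$ — using $K_\sIR=\fS_\sIR K$, $J_\sUV=\fS_\sUV J$, $\fS_\sIR(V\ast W)=\fS_\sIR V\ast\fS_\sIR W$, $\partial^a\fS_\sIR V=[\sIR]^{-[a]}\fS_\sIR\partial^aV$, and the fact that $\fS_\sIR$ is a $\cK$-isometry — and then read off the unit-scale $\cK$-membership from Lemma~\ref{lem:kernel_fourier_transform} together with the elementary identities $K=\mathring K\otimes\bar K$, $J=\delta_\bR\otimes\bar J$, $\mathring K=\re^{-\mathring x}1_{[0,\infty)}$, $\partial_{\mathring x}\mathring K=\delta_\bR-\mathring K$ and $\|K^{\ast\oo}_\sIR\|_\cK=1$. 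I would use throughout that $\cK$ is a Banach convolution algebra containing $L^1(\bM)$ and its tensor factors, so that bounding a $\cK$-norm reduces to exhibiting the kernel as a finite convolution of $L^1$ functions, Dirac masses and their tensor products.

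For (A), after rescaling it suffices to show $\|\partial^aK^{\ast\oo}\|_\cK<\infty$ for $|a|\le\oo$. I would write $K^{\ast\oo}$ as a convolution of $\oo$ copies of $K$ and move each of the $|a|$ first-order derivatives composing $\partial^a$ onto its own factor (legitimate since $\oo\ge|a|$): a factor hit by $\partial_{\mathring x}$ becomes $(\delta_\bR-\mathring K)\otimes\bar K\in\cK$, one hit by $\partial_j$ becomes $\mathring K\otimes\partial_j\bar K$ with $\partial_j\bar K\in L^1(\bR^\rdim)$ by Lemma~\ref{lem:kernel_fourier_transform}(C) (with $\vartheta=2$), and the remaining factors equal $K$; the resulting $\oo$-fold convolution of kernels of uniformly bounded $\cK$-norm lies in $\cK$. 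For (B), the constraint $[a]<\sigma$ forces $a^0=0$, so $\partial^aJ_\sUV=\delta_\bR\otimes\partial^{\bar a}\bar J_\sUV$ and, via the rescaling identity, the claim reduces to $\partial^{\bar a}\bar J\in\cK(\bR^\rdim)$ with $|\bar a|<\sigma$; here I would invoke $\varepsilon<\varepsilon_\sigma=\sigma+1-\ceil{\sigma}$, which sharpens this to $|\bar a|\le\ceil{\sigma}-1<\sigma-\varepsilon$ and is precisely what is needed to apply Lemma~\ref{lem:kernel_fourier_transform}(C) with $\vartheta=\sigma-\varepsilon$ to $\bar\fF(\partial^{\bar a}\bar J)(\bar p)=(\ri\bar p)^{\bar a}(1+|\bar p|^{\sigma-\varepsilon})^{-1}$, giving $\partial^{\bar a}\bar J\in L^1$.

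For (C), the fractional Laplacian scales so that $\fR_\sUV\fS_\sIR=\fS_\sIR\fR_{\sUV/\sIR}$, hence $\|\fR_\sUV K^{\ast\oo}_\sIR\|_\cK=\|\fR_{\sUV/\sIR}K^{\ast\oo}\|_\cK$ is at most $1+[\sUV/\sIR]^{\sigma-\varepsilon}\|(-\Delta_{\bar x})^{(\sigma-\varepsilon)/2}K^{\ast\oo}\|_\cK$ (using $\|K^{\ast\oo}\|_\cK=1$). For the second term I would verify $(-\Delta)^{(\sigma-\varepsilon)/2}\bar K^{\ast\oo}\in L^1(\bR^\rdim)$ by writing $\sigma-\varepsilon=2m+\vartheta$ with $m=\floor{(\sigma-\varepsilon)/2}$, $\vartheta\in[0,2)$, noting $2\oo>\sigma>2m$ so $\oo\ge m+1$, and factoring $|\bar p|^{\sigma-\varepsilon}(1+|\bar p|^2)^{-\oo}$ as the product of $|\bar p|^{2m+\vartheta}(1+|\bar p|^2)^{-(m+1)}$ (inverse transform in $L^1$ by Lemma~\ref{lem:kernel_fourier_transform}(D)) and $(1+|\bar p|^2)^{-(\oo-m-1)}$ (inverse transform $\bar K^{\ast(\oo-m-1)}\in\cK$); reassembling and using $1\vee[\sUV/\sIR]^{\sigma-\varepsilon}=[\sUV\vee\sIR]^{\sigma-\varepsilon}[\sIR]^{-(\sigma-\varepsilon)}$ gives the bound. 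For (D) I would expand $\fT K^{\ast\oo}_\sIR=\sum_{\bar y\in(2\pi\bZ)^\rdim}K^{\ast\oo}_\sIR(\cdot,\cdot+\bar y)$ and take $L^n$-norms over a fundamental cube $Q\times\bR$: the $\bar y=0$ term gives $\lesssim\|K^{\ast\oo}_\sIR\|_{L^n(\bM)}=[\sIR]^{-\rDim(n-1)/n}\|K^{\ast\oo}\|_{L^n(\bM)}$, finite because $\bar K^{\ast\oo}$ is the order-$2\oo$ Bessel potential ($\sim|\bar x|^{2\oo-\rdim}$ near the origin, exponential decay at infinity, and $\mathring K^{\ast\oo}$ is bounded with exponential decay) and the hypothesis $\rdim-\oo<\rdim/n$ forces $\rdim-2\oo<\rdim/n$, the local $L^n$-integrability threshold; for $\bar y\neq0$ and $\bar x\in Q$ one has $|\bar x+\bar y|\ge c(1+|\bar y|)$, and since $[\sIR]\le1$ the exponential spatial decay of $\bar K^{\ast\oo}$ bounds the $\bar y\neq0$ part of the sum by $\lesssim[\sIR]^{-\rDim}\re^{-c/[\sIR]}\lesssim1\le[\sIR]^{-\rDim(n-1)/n}$; the case $n=\infty$ is identical.

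The difficulty is uneven. Parts (A)–(C) are essentially bookkeeping, the only care needed being to line up the hypotheses ($|a|\le\oo$; $[a]<\sigma$ with $\varepsilon<\varepsilon_\sigma$; $2\oo>\sigma$) with the precise integrability thresholds in Lemma~\ref{lem:kernel_fourier_transform}. The one genuinely analytic point is in (D): one must control the periodization $\fT$ uniformly, which succeeds only because for $\sIR\le1$ the characteristic scale $[\sIR]$ of $K^{\ast\oo}_\sIR$ lies below the period, so distinct lattice translates contribute only exponentially small cross terms — which is exactly why (D), unlike (A)–(C), must be stated for $\sIR\in(0,1]$.
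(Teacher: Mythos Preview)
Your argument is correct. For Parts (A)--(C) you do exactly what the paper does: reduce to unit scale via $\fS_\sIR$ and then read off $\cK$-membership from Lemma~\ref{lem:kernel_fourier_transform}, with the same matching of hypotheses to the thresholds in (C) and (D) of that lemma. Your treatment is just more explicit than the one-line references in the paper.

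For Part (D) the routes diverge. You work directly with $K^{\ast\oo}$: the main term $\|K^{\ast\oo}_\sIR\|_{L^n(\bM)}$ is controlled via the Bessel-potential asymptotics $\bar K^{\ast\oo}(\bar x)\sim|\bar x|^{2\oo-\rdim}$ near the origin together with exponential decay of both factors, and the lattice tail $\sum_{\bar y\neq0}$ is killed by that same exponential decay because $[\sIR]\le1$. The paper instead reduces everything to the case $\oo=1$: one uses $\fT K^{\ast(\oo+1)}_\sIR=\fT K^{\ast\oo}_\sIR\star\fT K_\sIR$ and the full Young inequality on $\bH$ to bootstrap from the single-kernel estimate; for $\oo=1$ the split $\fT K_\sIR=K_\sIR+(\fT K_\sIR-K_\sIR)$ then suffices, since $|\bar x|^{\rdim-1}K(x)$ is bounded and the periodization remainder is uniformly small. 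The paper's reduction is slicker but tacitly relies on choosing the Young exponents so that the exponent arithmetic reproduces exactly the condition $\rdim-\oo<\rdim/n$; your direct approach trades that bookkeeping for the (standard) Bessel-potential input, and makes the role of the hypothesis more transparent.
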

\begin{rem}\label{rem:kernel_simple_fact}
Recall that $K_\sIR=\mathring K_\sIR\otimes\bar K_\sIR$ and $\mathring K_\sIR(\mathring x)=\sIR^{-1}\,1_{[0,\infty)}(\mathring x) \exp(-\mathring x/\sIR)$. As a result, Part (C) of the above lemma implies that if $2\ooo>\sigma$, then $$\|\fR^{\phantom\ooo}_\sUV K_\sIR^{\ast\ooo}\|_\cK \lesssim [\sUV\vee\sIR]^{\sigma-\varepsilon}\,[\sIR]^{\varepsilon-\sigma}$$ uniformly in $\sUV,\sIR>0$ and Part (D) implies that if $\rdim-\ooo<\rdim/p$, then $$\|\fT K^{\ast\ooo}_\sIR\|_{L^p(\bH)}\lesssim [\sIR]^{-\rDim (p-1)/p}$$ uniformly in $\sIR\in(0,1]$. Recall also that the kernels $\bar K^{\ast\ooo}_\sIR$ and $ K^{\ast\ooo}_\sIR$ are positive. Hence, $\|\fT |\bar K^{\ast\ooo}_\sIR|\|_{L^p(\bT)}=\|\fT \bar K^{\ast\ooo}_\sIR\|_{L^p(\bT)}$ and $\|\fT |K^{\ast\ooo}_\sIR|\|_{L^p(\bH)}=\|\fT K^{\ast\ooo}_\sIR\|_{L^p(\bH)}$.
\end{rem}
\begin{rem}
Let $f\in C(\bM)$, $\oo\in\bN_+$ and $a\in\frM$ be such that $|a|\leq\oo$. We will often use the fact that $K^{\ast\oo}_{\sIR}\ast f\in C^\oo(\bM)$ and $\|\partial^a K^{\ast\oo}_{\sIR}\ast f\|_{L^\infty(\bM)}\lesssim [\sIR]^{-[a]}\|f\|_{L^\infty(\bM)}$ uniformly over $\sIR\in(0,\infty)$ and $f\in C(\bM)$. 
\end{rem}
\begin{proof}
We use the relations $K_\sIR=\fS_\sIR K$, $J_\sUV=\fS_\sUV J$. Part (A) follows from the fact that $\partial^a K\in \cK$ for $|a|\leq 1$. To prove Part (B) note that $a=(\mathring a,\bar a)\in\bar\frM_\sigma$ iff $\mathring a=0$ and $|\bar a|<\sigma$. Hence, by Remark~\ref{rem:epsilon_sigma} $|\bar a|<\sigma-\varepsilon$ and by Lemma~\ref{lem:kernel_fourier_transform}~(C) $\partial^aJ\in\cK$. Part (C) is a consequence of Lemma~\ref{lem:kernel_fourier_transform}~(D).

Let us turn to the proof of Part (D). First, note that the special case $\ooo=1$ imply the general case by the identity $\fT \bar K^{\ast(\ooo+1)}_\sIR = \fT \bar K^{\ast\ooo}_\sIR\star \fT \bar K_\sIR$ and the Young inequality for convolution, where $\star$ is the convolution in $\bT$. Identifying $\bT$ with $[-\pi,\pi)^\rdim\subset\bR^\rdim$ we get 
\begin{equation}
 \|\fT \bar K_\sIR - \bar K_\sIR\|_{L^p(\bT)}\lesssim [\sIR]^q
\end{equation}
for any $p\in[1,\infty]$ and $q>0$ since $\bar K(\bar x)\lesssim\exp(-|\bar x|)$ uniformly in \mbox{$\bar x\in\bR^\rdim$} for $|\bar x|>1$. Because $|\bar x|^{\rdim-1} \bar K(\bar x)$ is bounded we obtain $\bar K\in L^p(\bR^\rdim)$ for any $p\in[1,\rdim/(\rdim-1))$ and
\begin{equation}
 \|\bar K_\sIR\|_{L^p(\bT)}\leq \|\bar K_\sIR\|_{L^p(\bR^\rdim)} = [\sIR]^{-\rdim (p-1)/p} \|\bar K\|_{L^p(\bR^\rdim)}\lesssim [\sIR]^{-\rdim (p-1)/p}.
\end{equation}
The statement follows from the trivial identity $\fT K_\sIR=K_\sIR + (\fT K_\sIR-K_\sIR)$ and the bounds proved above.
\end{proof}

\begin{lem}\label{lem:kernel_derivative}
For $\varepsilon\in[0,2\wedge\sigma]$ we have $\|\fP_\sIR\partial_\sIR K_\sIR\ast K_\uIR\|_\cK \lesssim [\sIR]^{\varepsilon-\sigma}[\sIR\vee\uIR]^{-\varepsilon}$ uniformly in $\sIR\in(0,1]$ and $\uIR\in[0,1]$.
\end{lem}
\begin{proof}
Let $\dot \fP_\sIR:=\partial_\sIR \fP_\sIR$ and $\dot \fP=\dot \fP_1$. We note that $\fP_\sIR\partial_\sIR K_\sIR \ast K_\uIR= - \dot \fP_\sIR K_\sIR\ast K_\uIR$. The Fourier transform of $\dot\fP_\sIR K$ coincides with
\begin{equation}
 \frac{\rri\mathring p}{1+\rri\mathring p}
 \frac{1+[\sIR]^2|\bar p|^2}{1+|\bar p|^2} 
 +
 \frac{2}{\sigma}\frac{1+\sIR\rri\mathring p}{1+\rri\mathring p}
 \frac{[\sIR]^{2-\sigma}|\bar p|^2}{1+|\bar p|^2}.
\end{equation}
As a result, $\|\dot\fP_\sIR K\|_\cK \lesssim [\sIR]^{2}\vee [\sIR]^{2-\sigma}\vee 1$ uniformly for $\sIR\in(0,\infty)$. Since 
\begin{equation}
 \|\fP_\sIR\partial_\sIR K_\sIR \ast K_\uIR\|_\cK=
 \|\dot \fP_\sIR K_\sIR\ast K_\uIR\|_\cK=
 \|K_\sIR\ast \dot \fP_\sIR K_\uIR\|_\cK\leq 
 \|\dot\fP_\sIR K_{\sIR\vee\uIR}\|_\cK
\end{equation}
the statement follows from the equality $\|\dot\fP_\sIR K_\uv\|_\cK=\uv^{-1}\|\dot\fP_{\sIR/\uv} K\|_\cK$. 
\end{proof}

\begin{rem}
By the above lemma $\|\fP_\sIR\partial_\sIR K_\sIR\|_\cK \lesssim [\sIR]^{-\sigma}$, $\|\partial_\sIR K_\sIR\|_\cK \lesssim [\sIR]^{-\sigma}$ and $\|\partial_\sIR K_\sIR\ast f\|_{L^\infty(\bM)}\lesssim [\sIR]^{-\sigma}\,\|K_\sIR\ast f\|_{L^\infty(\bM)}$ uniformly in $\sIR\in(0,1]$ and $f\in C(\bM)$.
\end{rem}

\begin{lem}\label{lem:bounds_M}
Let $N\in C^\infty_\rc(\bM)$. For $\sIR\in(0,1]$ set $N_\sIR:=\fS_\sIR N$, where the rescaling operator $\fS_\sIR$ was introduced in Def.~\ref{dfn:scaling_S}. For any $a\in\frM$ and $r\in\{0,1\}$:
\begin{enumerate}
\item[(A)] $\|\partial^a\partial_\sIR^r N_\sIR \|_\cK \lesssim [\sIR]^{-[a]-r\sigma}$ uniformly in $\sIR\in(0,1]$,

\item[(B)] $\|\partial_\sIR N_\sIR \ast K_\uIR\|_\cK \lesssim [\sIR]^{\varepsilon-\sigma}\,[\uIR]^{-\varepsilon}$ uniformly in $\sIR,\uIR\in(0,1]$ for all $\varepsilon\in[0,1\wedge\sigma]$.
\end{enumerate}
\end{lem}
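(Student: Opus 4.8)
The plan is to reduce everything to the elementary scaling identities for $\fS_\sIR$, together with the facts that $\fS_\sIR$ is an isometry of $\cK$ and that a function in $C^\infty_\rc(\bM)$ lies in $\cK$ with $\cK$-norm equal to its $L^1$-norm.

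\emph{Part (A).} Introduce the parabolic Euler field $\Lambda M:=\tfrac{\rDim}{\sigma}\,M+\mathring x\,\partial_{\mathring x}M+\tfrac1\sigma\sum_{k=1}^\rdim x^k\,\partial_{x^k}M$, which again belongs to $C^\infty_\rc(\bM)$. From $M_\sIR(\mathring x,\bar x)=[\sIR]^{-\rDim}M(\mathring x/\sIR,\bar x/[\sIR])$ and $[\sIR]=\sIR^{1/\sigma}$ the chain rule gives
\begin{equation}
 \partial^a M_\sIR=[\sIR]^{-[a]}\,\fS_\sIR(\partial^a M),
 \qquad
 \partial_\sIR M_\sIR=-[\sIR]^{-\sigma}\,\fS_\sIR(\Lambda M),
\end{equation}
for the second identity one uses that each of $\partial_\sIR[\sIR]^{-\rDim}$, $\partial_\sIR(\mathring x/\sIR)$ and $\partial_\sIR(\bar x/[\sIR])$ contributes a factor $\sIR^{-1}=[\sIR]^{-\sigma}$. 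Applying $\partial^a$ to the second identity and combining with the first, part (A) is immediate: since $\fS_\sIR$ is a $\cK$-isometry and $\partial^a M,\partial^a\Lambda M\in C^\infty_\rc(\bM)$, we get $\|\partial^a M_\sIR\|_\cK=[\sIR]^{-[a]}\|\partial^a M\|_{L^1(\bM)}$ and $\|\partial^a\partial_\sIR M_\sIR\|_\cK=[\sIR]^{-[a]-\sigma}\|\partial^a\Lambda M\|_{L^1(\bM)}$, with finite $\sIR$-independent constants.

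\emph{Part (B).} The crucial observation is that $\int_\bM M_\sIR=\int_\bM M$ does not depend on $\sIR$, so $\int_\bM\fS_\sIR(\Lambda M)=\partial_\sIR\int_\bM M_\sIR=0$. Writing $N:=\Lambda M$ and using $\partial_\sIR M_\sIR=-[\sIR]^{-\sigma}\fS_\sIR N$, it suffices to prove $\|\fS_\sIR N\ast K_\uIR\|_\cK\lesssim[\sIR]^\delta[\uIR]^{-\delta}$; note $\fS_\sIR N\ast K_\uIR\in L^1(\bM)$, so its $\cK$-norm is its $L^1$-norm. Since $\int_\bM\fS_\sIR N=0$,
\begin{equation}
 (\fS_\sIR N\ast K_\uIR)(x)=\int_\bM(\fS_\sIR N)(y)\,\big(K_\uIR(x-y)-K_\uIR(x)\big)\,\rd y,
\end{equation}
whence by Fubini $\|\fS_\sIR N\ast K_\uIR\|_{L^1}\le\int_\bM|(\fS_\sIR N)(y)|\,\|\tau_yK_\uIR-K_\uIR\|_{L^1}\,\rd y$, with $\tau_y$ translation by $y$. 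For the translate I would combine two bounds: the trivial $\|\tau_yK_\uIR-K_\uIR\|_{L^1}\le2\|K_\uIR\|_\cK=2$, and the fact that a shift by $h$ in a coordinate direction $j$ changes $K_\uIR$ by $\partial_jK_\uIR$ convolved with an indicator of an interval of length $|h|$, so $\|\tau_yK_\uIR-K_\uIR\|_{L^1}\le|\mathring y|\,\|\partial_{\mathring x}K_\uIR\|_\cK+\sum_k|y^k|\,\|\partial_{x^k}K_\uIR\|_\cK\lesssim|\mathring y|[\uIR]^{-\sigma}+|\bar y|[\uIR]^{-1}$, using $\|\partial_{\mathring x}K_\uIR\|_\cK\lesssim[\uIR]^{-\sigma}$, $\|\partial_{x^k}K_\uIR\|_\cK\lesssim[\uIR]^{-1}$ (Lemma~\ref{lem:kernel_simple_fact}(A), or directly from $\partial_{\mathring x}\mathring K_\uIR=\uIR^{-1}(\delta_\bR-\mathring K_\uIR)$ and Lemma~\ref{lem:kernel_fourier_transform}(C)). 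Interpolating via $\min\{1,t\}\le t^{\delta'}$ for $\delta'\in[0,1]$ gives $\|\tau_yK_\uIR-K_\uIR\|_{L^1}\lesssim\big(|\mathring y|[\uIR]^{-\sigma}\big)^{\delta/\sigma}+\big(|\bar y|[\uIR]^{-1}\big)^\delta$, where the admissible exponents $\delta/\sigma\le1$ and $\delta\le1$ are exactly the hypothesis $\delta\in[0,1\wedge\sigma]$. Finally, substituting $y=(\sIR\mathring w,[\sIR]\bar w)$ turns the two moments of $\fS_\sIR N$ into powers of $[\sIR]$: $\int_\bM|(\fS_\sIR N)(y)|\,|\bar y|^\delta\,\rd y=[\sIR]^\delta\int_\bM|N(w)|\,|\bar w|^\delta\,\rd w$ and $\int_\bM|(\fS_\sIR N)(y)|\,|\mathring y|^{\delta/\sigma}\,\rd y=[\sIR]^\delta\int_\bM|N(w)|\,|\mathring w|^{\delta/\sigma}\,\rd w$, both finite multiples of $[\sIR]^\delta$ since $N\in C^\infty_\rc(\bM)$. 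Hence $\|\fS_\sIR N\ast K_\uIR\|_{L^1}\lesssim[\sIR]^\delta[\uIR]^{-\delta}$, and multiplying by $[\sIR]^{-\sigma}$ proves (B).

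The only non-formal point is the one behind part (B): because $\cK$ is a total-variation norm it is insensitive to the smoothing effect of convolving with the mass-one kernel $K_\uIR$ in any naive way (for instance $\|K_\uIR-\delta_\bM\|_\cK=2$ for all $\uIR$), so the decay $[\uIR]^{-\delta}$ must be extracted by pairing the vanishing zeroth moment of $\fS_\sIR(\Lambda M)$ against the $L^1$ modulus of continuity of $K_\uIR$; checking that the parabolic scaling then produces matching powers $[\sIR]^\delta$ — with the exponent budget $\delta\le1$ for the spatial moment and $\delta\le\sigma$ for the temporal moment — is what pins down the admissible range $\delta\in[0,1\wedge\sigma]$. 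Everything else is routine bookkeeping with the chain rule and the isometry property of $\fS_\sIR$.
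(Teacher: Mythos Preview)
Your proof is correct, but Part~(B) follows a genuinely different route from the paper. The paper does not use the mean-zero property of $\Lambda M$ or the $L^1$ modulus of continuity of $K_\uIR$; instead it writes $\partial_\sIR M_\sIR$ explicitly in divergence form,
\[
 \partial_\sIR M_\sIR
 = -\partial_{\mathring x}\,\fS_\sIR\mathring M
   - \sigma^{-1}[\sIR]^{1-\sigma}\sum_{p=1}^\rdim \partial_{x^p}\,\fS_\sIR\bar M^p,
 \qquad
 \mathring M:=\mathring x\,M,\quad \bar M^p:=\bar x^p\,M,
\]
then splits into the two regimes $\sIR>\uIR$ (where Part~(A) and $\|K_\uIR\|_\cK=1$ already give the claim) and $\sIR\le\uIR$ (where each derivative is transferred onto $K_\uIR$, yielding the endpoint bounds $[\uIR]^{-\sigma}$ and $[\sIR]^{1-\sigma}[\uIR]^{-1}$, which for $\sIR\le\uIR$ are each $\le[\sIR]^{\delta-\sigma}[\uIR]^{-\delta}$ precisely when $\delta\le\sigma$ and $\delta\le1$ respectively). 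Your argument is the ``integrated'' version of the same mechanism: the divergence structure is encoded as $\int\Lambda M=0$, and passing the derivative to $K_\uIR$ becomes the $L^1$ modulus-of-continuity estimate, with the case split replaced by the interpolation $\min\{1,t\}\le t^{\delta'}$. The paper's route is shorter and avoids any interpolation inequality; your route treats all $\sIR,\uIR$ uniformly and makes the origin of the constraint $\delta\le1\wedge\sigma$ very transparent (it is exactly the admissible range of interpolation exponents for the spatial and temporal increments).
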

\begin{proof}
Part (A) follows from Def.~\ref{dfn:scaling_S} of the map $\fS_\sIR$.  Part (B) for $\sIR>\uIR$ follows from Part (A) and $\|K_\uIR\|_\cK=1$. To prove part (B) for $\sIR\leq\uIR$ it is enough to use the uniform bound $\|\partial^a K_\uIR\|_\cK\lesssim[\uIR]^{-[a]}$ for $a\in\frM$ such that $|a|=1$ and the identity
\begin{equation}
 \partial_\sIR N_\sIR = - \partial_{\mathring x} \fS_\sIR\mathring{N} -\sigma^{-1} [\sIR]^{1-\sigma} \textstyle\sum_{p=1}^\rdim\partial_{x^p} \fS_\sIR\bar{N}^p,
\end{equation}
where $\mathring{N}(\mathring x,\bar x):=\mathring x N(\mathring x,\bar x)$ and $\bar{N}^p(\mathring x,\bar x):=\bar x^p N(\mathring x,\bar x)$ for $p\in\{1,\ldots,\rdim\}$.
\end{proof}

\begin{dfn}\label{dfn:propagator_G}
Fix a function $\chi\in C^\infty(\bR)$ such that such that $\supp\,\chi\subset(1,\infty)$ and $\chi=1$ on some neighborhood of $[2,\infty)$. For $\sIR\in(0,\infty)$ and $a\in\frM$ we define
\begin{equation}
 G_\sIR(\mathring x,\bar x) := \chi(\mathring x/\sIR)\, G(\mathring x,\bar x),
 \qquad
 \dot G_\sIR:=\partial_\sIR G_\sIR,
 \qquad
 \dot G_\sIR^a:=\cX^a \dot G_\sIR,
\end{equation}
where $\cX^a(x):=x^a$ is a polynomial in $x\in\bM$ and the fractional heat kernel $G\in \cK$ is defined by
\begin{equation}
 \fF G(\mathring x,\Cdot)(\bar p) = 1_{[0,\infty)}(\mathring x)\,\exp(-\mathring x\,|\bar p|^\sigma),
 \qquad \mathring x\in\bR.
\end{equation}
We also set $G_0:=G$.
\end{dfn}

\begin{rem}
The kernel $G$ is supported in $[0,\infty)\times\bR^\rdim$ and is smooth outside $\{0\}\times\bR^\rdim$ (outside the origin if $\sigma\in2\bN_+$). We have
\begin{equation}
 \dot G^a_\sIR(\mathring x,\bar x) = -\mathring x/\sIR^2\,\dot \chi(\mathring x/\sIR)\, (\cX^a G)(\mathring x,\bar x),
 \qquad \dot\chi(t):=\partial_t\chi(t).
\end{equation}
Observe that for $\sIR>0$ it holds $\supp\, G_\sIR\subset (\sIR,\infty)\times\bR^\rdim$, $\supp\,\dot G_\sIR\subset (\sIR,2\sIR)\times\bR^\rdim$ and $G_\sIR = \sIR \,\fS_\sIR(G_1)$, $\dot G_\sIR = \fS_\sIR(\dot G_1)$.
\end{rem}

\begin{rem}\label{rem:dot_G_1}
For any $b=(\mathring b,\bar b)\in\frM$ it holds
\begin{equation}
 \fF (\partial^b G)(1,\Cdot)(\bar p) = (-1)^{\mathring b}\,\ri^{|\bar b|}\, |\bar p|^{\sigma\mathring b}\, \bar p^{\bar b} \exp(-|\bar p|^\sigma),
\end{equation}
By Lemma~\ref{lem:kernel_fourier_transform}~(E) $(\partial^b G)(1,\Cdot)\in C^\infty(\bR^\rdim)$ and $(1+|\bar x|^{\sigma+\rdim+|\bar b|})\,|(\partial^b G)(1,\bar x)|\lesssim 1$ uniformly in $\bar x\in\bR^\rdim$. If $\sigma\in2\bN_+$, then $\fF (\partial^b G)(1,\Cdot)\in\sS(\bR^\rdim)$ and consequently $(\partial^b G)(1,\Cdot)\in\sS(\bR^\rdim)$. By the scaling property of the fractional heat kernel we obtain
\begin{equation}
 \partial^b G(x)
 =
 [\mathring x]^{-\rdim-[b]} (\partial^b G)(1,\bar x/[\mathring x]),
 \qquad x=(\mathring x,\bar x)\in(0,\infty)\times \bR^\rdim.
\end{equation}
This shows that the kernel $G$ is smooth outside $\{0\}\times\bR^\rdim$ (outside the origin if $\sigma\in2\bN_+$) and the kernels $\partial^a (G-G_\sIR)$ and $\partial^b\dot G^c_\sIR$ belong to $\cK$ for any $\sIR\in(0,\infty)$, $a\in\bar\frM_\sigma$, $b\in\frM$, $c\in\frM_\sigma$. Moreover, for any $a=(\mathring a,\bar a),b=(\mathring b,\bar b)\in\frM$ the following bound
\begin{equation}\label{eq:G_bound}
 |\partial^b (\cX^a G)(x)|
 \lesssim 
 |\mathring x|^{1+\mathring a-\mathring b} \,|x|_\bM^{|\bar a|-|\bar b|-\sigma-\rdim}
\end{equation}
holds uniformly in $x\in (0,\infty)\times \bR^\rdim$, where $|x|_\bM=[\mathring x]\vee|\bar x|$ and $[\mathring x]=|\mathring x|^{1/\sigma}$.
\end{rem}

\begin{rem}
Recall that $\bar\frM_\sigma$ consists of spatial multi-indices $a$ such that $|a|<\sigma$ and $\frM_\sigma$ consists of space-time multi-indices $a$ such that $|a|\leq\sigma_\diamond$, where $\sigma_\diamond=\sigma$ if $\sigma\in2\bN_+$ and $\sigma_\diamond=\ceil{\sigma}-1$ otherwise. 
\end{rem}

\begin{lem}\label{lem:kernel_dot_G}
For any $\oo\in\bN_0$ and $a\in\frM_\sigma$ it holds
\begin{equation}
 \|\fR^{\phantom\oo}_\sUV\fP_\sIR^\oo\dot G^a_\sIR\|_\cK \lesssim
 [\sUV\vee\sIR]^{\sigma-\varepsilon}\,[\sIR]^{\varepsilon-\sigma+[a]},
\end{equation}
uniformly in $\sUV\in[0,\infty)$ and $\sIR\in(0,\infty)$.
\end{lem}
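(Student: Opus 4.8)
The plan is to remove the scale $\sIR$ by a dilation, reducing the assertion to two scale-free integrability facts about the fixed kernel $\dot G_1^a$, and then to absorb the nonlocal operator $\fR_{\sUV/\sIR}$ using these facts together with Lemma~\ref{lem:kernel_fourier_transform}.

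\textbf{Reduction to $\sIR=1$.} First I would record the scaling identities. From $\dot G_\sIR=\fS_\sIR\dot G_1$ (Definition~\ref{dfn:propagator_G}) and $\cX^a(\sIR\mathring y,[\sIR]\bar y)=[\sIR]^{[a]}\cX^a(\mathring y,\bar y)$ one gets $\dot G^a_\sIR=[\sIR]^{[a]}\,\fS_\sIR\dot G_1^a$. Conjugating by $\fS_\sIR$ (which sends $\sIR\partial_{\mathring x}\mapsto\partial_{\mathring x}$, $[\sIR]^2\Delta_{\bar x}\mapsto\Delta_{\bar x}$, and $[\sUV]^{\sigma-\varepsilon}(-\Delta_{\bar x})^{(\sigma-\varepsilon)/2}\mapsto[\sUV/\sIR]^{\sigma-\varepsilon}(-\Delta_{\bar x})^{(\sigma-\varepsilon)/2}$) yields the operator identities $\fP_\sIR^\oo\fS_\sIR=\fS_\sIR\fP^\oo$ and $\fR_\sUV\fS_\sIR=\fS_\sIR\fR_{\sUV/\sIR}$, where $\fP=\fP_1$. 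Since $\fS_\sIR$ is a $\cK$-isometry (the remark after Definition~\ref{dfn:scaling_S}),
\begin{equation}
 \|\fR_\sUV\fP_\sIR^\oo\dot G^a_\sIR\|_\cK=[\sIR]^{[a]}\,\|\fR_{\sUV/\sIR}\,\fP^\oo\dot G_1^a\|_\cK .
\end{equation}
Using $[\sUV\vee\sIR]=[\sUV]\vee[\sIR]$ one checks the elementary identity $[\sIR]^{[a]}\big(1\vee[\sUV/\sIR]^{\sigma-\varepsilon}\big)=[\sUV\vee\sIR]^{\sigma-\varepsilon}[\sIR]^{\varepsilon-\sigma+[a]}$, so the lemma will follow once I prove
\begin{equation}
 \|\fR_\tau\,\fP^\oo\dot G_1^a\|_\cK\lesssim 1\vee[\tau]^{\sigma-\varepsilon}
 \qquad\text{uniformly in }\tau\in[0,\infty).
\end{equation}

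\textbf{Two $L^1$ estimates.} Since $\fR_\tau=1+[\tau]^{\sigma-\varepsilon}(-\Delta_{\bar x})^{(\sigma-\varepsilon)/2}$, by the triangle inequality it is enough to prove the two $\tau$-independent bounds $\|\fP^\oo\dot G_1^a\|_\cK\lesssim1$ and $\|(-\Delta_{\bar x})^{(\sigma-\varepsilon)/2}\fP^\oo\dot G_1^a\|_\cK\lesssim1$. Write $g:=\fP^\oo\dot G_1^a$. Expanding $\dot G_1^a=\frac1{a!}x^a\dot G_1$ and applying the polynomial-coefficient differential operator $\fP^\oo$ by the Leibniz rule, $g$ is a finite sum of terms $P(x)\,\partial^b\dot G_1(x)$ with finitely many $b$, each supported in $\{\mathring x\in[1,2]\}$ (where $\dot G_1$ lives), with $|P(x)|\lesssim(1+|\bar x|)^{\sigma_\diamond}$ on that slab because $|a|\le\sigma_\diamond$ for $a\in\frM_\sigma$. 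By Remark~\ref{rem:dot_G_1}, $|\partial^b\dot G_1(x)|\lesssim(1+|\bar x|)^{-\vartheta}$ for every $\vartheta\in[0,\sigma+\rdim]$, and for every $\vartheta$ when $\sigma\in2\bN_+$. If $\sigma\notin2\bN_+$ then $\sigma_\diamond=\ceil{\sigma}-1<\sigma$, so picking $\vartheta\in(\sigma_\diamond+\rdim,\sigma+\rdim]$ gives $|g(x)|\lesssim(1+|\bar x|)^{\sigma_\diamond-\vartheta}1_{[1,2]}(\mathring x)$ with exponent $<-\rdim$; if $\sigma\in2\bN_+$ the rapid decay of $\dot G_1$ gives the same conclusion with any $\vartheta$. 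Hence $g\in L^1(\bM)\subset\cK$ with $\|g\|_\cK$ controlled in terms of $a,\oo,\sigma,\rdim$.

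\textbf{The nonlocal bound, and conclusion.} For the second estimate I would factor $(-\Delta_{\bar x})^{(\sigma-\varepsilon)/2}$ through $(1-\Delta_{\bar x})^{-N}$. Taking $m:=\floor{(\sigma-\varepsilon)/2}$, $N:=m+1$ and $\vartheta:=\sigma-\varepsilon-2m\in[0,2)$, Lemma~\ref{lem:kernel_fourier_transform}~(D) shows $\bar H\in L^1(\bR^\rdim)$ where $\bar\fF\bar H(\bar p)=|\bar p|^{2m+\vartheta}/(1+|\bar p|^2)^{m+1}=|\bar p|^{\sigma-\varepsilon}/(1+|\bar p|^2)^{N}$; then $(-\Delta_{\bar x})^{(\sigma-\varepsilon)/2}g=(\delta_\bR\otimes\bar H)\ast\big((1-\Delta_{\bar x})^{N}g\big)$. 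Since $(1-\Delta_{\bar x})^{N}g$ is again a finite sum of terms of the same type $P(x)\partial^b\dot G_1(x)$, it lies in $L^1(\bM)$ by the estimate just established, and Young's inequality gives $\|(-\Delta_{\bar x})^{(\sigma-\varepsilon)/2}g\|_\cK\le\|\bar H\|_{L^1(\bR^\rdim)}\|(1-\Delta_{\bar x})^{N}g\|_{L^1(\bM)}\lesssim1$, with constant depending also on $\varepsilon$ through $N$ but not on $\sUV,\sIR$. Combining the three steps proves the claim.

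\textbf{Where the difficulty sits.} The dilation bookkeeping and the $L^1$-bound for $g$ are routine. The one genuinely delicate point is the nonlocal estimate: when $\sigma\notin2\bN_+$ the kernel $\dot G_1$ has only polynomial spatial decay of order $\sigma+\rdim$, so the polynomial weight $\cX^a$ could a priori destroy integrability. The argument survives precisely because $a\in\frM_\sigma$ forces $|a|\le\sigma_\diamond<\sigma$, leaving a net spatial decay of order strictly larger than $\rdim$ after the factorization through the $L^1$ kernel $\bar H$; this is exactly the place where the hypothesis $a\in\frM_\sigma$ (rather than merely $|a|\le\ceil{\sigma}$) is essential.
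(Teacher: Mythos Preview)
Your proof is correct and follows essentially the same strategy as the paper: reduce to $\sIR=1$ by scaling, use Remark~\ref{rem:dot_G_1} to get $\cX^a\partial^b\dot G_1\in L^1(\bM)$, and handle the nonlocal operator $\fR_\sUV$ via Lemma~\ref{lem:kernel_fourier_transform}~(D). The only cosmetic difference is in the last step: the paper factors $\fR_\sUV\fP_\sIR^\oo\dot G^a_\sIR = (\fR_\sUV K_\sIR^{\ast\ooo})\ast(\fP_\sIR^{\oo+\ooo}\dot G^a_\sIR)$ and invokes the pre-packaged Lemma~\ref{lem:kernel_simple_fact}~(C), whereas you work at unit scale and factor $(-\Delta_{\bar x})^{(\sigma-\varepsilon)/2}$ directly through $(1-\Delta_{\bar x})^{-N}$---but both routes amount to the same Young-inequality application of Lemma~\ref{lem:kernel_fourier_transform}~(D).
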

\begin{proof}  
We first show that
\begin{equation}
 \|\partial^b\dot G^a_\sIR\|_\cK\lesssim [\sIR]^{[a]-[b]}
\end{equation}
for all $a\in\frM_\sigma$ and all $b\in\frM$. Because $\partial^b \dot G_\sIR^a = [\sIR]^{[a]-[b]}\,\fS_\sIR(\partial^b\dot G_1^a)$, where the rescaling operator $\fS_\sUV$ was introduced in Def.~\ref{dfn:scaling_S}, it is enough to prove that $\partial^b(\cX^a\dot G_1)\in L^1(\bM)$ for all $a\in\frM_\sigma$ and all $b\in\frM$, which follows from Remark~\ref{rem:dot_G_1}. Using the above bound as well as the equalities $[b]=\sigma \mathring b+|\bar b|$ for any $b=(\mathring b,\bar b)\in\frM$ and $\fP_\sIR=(1+[\sIR]^\sigma\partial_{\mathring x})(1-[\sIR]^2\Delta_{\bar x})$  we obtain the statement of the lemma with $\sUV=0$. The general case follows from Lemma~\ref{lem:kernel_simple_fact}~(C) and the estimate
\begin{equation}
 \|\fR^{\phantom\oo}_\sUV\fP_\sIR^\oo\dot G^a_\sIR\|_\cK \leq
 \|\fR^{\phantom\oo}_\sUV K_\sIR^{\ast\ooo}\|_\cK\,\|\fP_\sIR^{\oo+\ooo}\dot G^a_\sIR\|_\cK
\end{equation}
applied with $\ooo=\floor{\sigma/2}+1$.
\end{proof}

We conclude this section with a technical lemma that was used several times above.

\begin{lem}\label{lem:kernel_fourier_transform}
Let $\vartheta\in\bR$, $a\in\bN_0^\rdim$, $n,m\in\bN_0$ and $H\in\sS'(\bR^\rdim)$.
\begin{enumerate}
\item[(A)] If $\rdim+|a|+\vartheta>0$ and $\fF H(\bar p)=p^a |\bar p|^\vartheta$, then $|\bar x|^{\rdim+|a|+\vartheta}H(\bar x)$ is smooth and bounded in the domain $\bR^\rdim\setminus\{0\}$.
\item[(B)] If $(-\Delta_{\bar p})^n \fF H(\bar p)$ is integrable, then $|\bar x|^{2n}H(\bar x)$ is bounded and continuous.
\item[(C)] If $|a|<\vartheta$ and $\fF H(\bar p)=p^a/(1+|\bar p|^\vartheta)$, then $H\in L^1(\bR^\rdim)$.
\item[(D)] If $\vartheta\in[0,2)$ and $\fF H(\bar p)=|\bar p|^{2m+\vartheta}/(1+|\bar p|^2)^{m+1}$, then $H\in L^1(\bR^\rdim)$.
\item[(E)] If $\vartheta\in(0,\infty)$ and $\fF H(\bar p)=\bar p^a\, |\bar p|^{\vartheta m}\,\exp(-|\bar p|^\vartheta)$, then $H\in C^\infty(\bR^\rdim)$ and $|\bar x|^{\rdim+|a|+\vartheta}H(\bar x)$ is bounded.
\end{enumerate}
\end{lem}
\begin{proof}
Parts (A) and (B) are well-known facts.

To prove Part (C) first note that if $2\vartheta>\rdim+2|a|$, then $\fF H\in L^2(\bR^\rdim)$ and $H$ is locally integrable. Otherwise let $k$ be the largest integer such that $\vartheta k<\rdim+|a|$. Since $2\vartheta\leq \rdim+2|a|$ and $\vartheta>|a|$ it holds $\vartheta<\rdim +|a|$ and $k\in\bN_+$. Observe that
\begin{equation}
 \fF H(\bar p)
 =
 \sum_{l=1}^k \frac{(-1)^{l-1}\bar p^a }{|\bar p|^{\vartheta l}}
 + \frac{(-1)^{k} \bar p^a}{|\bar p|^{\vartheta k}(1+|\bar p|^\vartheta)}.
\end{equation}
Using Part (A) and noting that the last term in the above decomposition is in $L^2(\bR^\rdim)$ we infer that $H$ is locally integrable. If $\vartheta$ is rational, then let $k$ be the smallest positive integer such that $\vartheta k\in2\bN_+$ and let $n\in\bN_+$ be such that $\vartheta (k-1)+|a|<2n-\rdim$. Otherwise let $n$ be the smallest positive integer such that $|a|<2n-\rdim$ and let $k\in\bN_+$ be such that $\vartheta (k-1)+|a|<2n-\rdim<\vartheta k+|a|$. Using the identity
\begin{equation}
\fF H(\bar p)
= \sum_{l=0}^{k-1}~(-1)^l \bar p^a
\,|\bar p|^{\vartheta l} +  \frac{(-1)^k\,\bar p^a\,|\bar p|^{\vartheta k}}{1+|\bar p|^{\vartheta}}
\end{equation}
as well as Parts (A) and (B) with $n$ as above we prove that $H$ is integrable outside some neighbourhood of the origin. This completes the proof of Part (C).

It is enough to prove Part (D) with $m=0$ since the generalization to arbitrary $m$ is trivial. If $4>\rdim+2\vartheta$, then $\fF H\in L^2(\bR^\rdim)$ and $H$ is locally integrable. Otherwise let $k$ be the biggest positive integer such that $2 k< \rdim+\vartheta$. Observe that
\begin{equation}
 \fF H(\bar p) =
 \sum_{l=1}^k \frac{(-1)^{l-1}|\bar p|^\vartheta }{|\bar p|^{2l}}
 + \frac{(-1)^k |\bar p|^\vartheta}{|\bar p|^{2k}(1+|\bar p|^2)}.
\end{equation}
Using Part (A) and noting that the last term in the above decomposition is in $L^2(\bR^\rdim)$ we infer that $H$ is locally integrable. Next, we observe
\begin{equation}
 \frac{|\bar p|^\vartheta}{1+|\bar p|^2}
 = |\bar p|^\vartheta
 -
 \frac{|\bar p|^{2+\vartheta}}{(1+|\bar p|^2)(1+|\bar p|^\vartheta)}
 +
 \frac{|\bar p|^{2+2\vartheta}}{(1+|\bar p|^2)(1+|\bar p|^\vartheta)}.
\end{equation}
To conclude we apply Part (A) to the first term of the above decomposition and Part (B) to the last two terms with $n$ being the smallest integer such that $\rdim<2n$ or $\rdim+\vartheta<2n$, respectively.

In order to prove Part (E) we first note that $H$ is clearly smooth. Next, we observe that
\begin{equation}
 \fF H(\bar p)=\sum_{l=0}^{k-1}~(-1)^l p^a |\bar p|^{\vartheta(l+m)}/l! + p^a\,|\bar p|^{\vartheta(k+m)}\, f(|\bar p|^\vartheta),
\end{equation}
where $f(t)=t^{-k}(\exp(-t)-\sum_{l=0}^{k-1}~(-t)^l/l!)$ is a smooth function of fast decay, $n$ is the smallest positive integer such that $\vartheta+|a|\leq 2n-\rdim$ and $k$ is the smallest positive integer such that $\vartheta (k+m)+|a|> 2n-\rdim$. Part (E) follows now from Part (A) and Part (B) with $n$ as above.
\end{proof}

\section{Function spaces and useful maps}\label{sec:topology}

In this section we introduce normed vector spaces for the effective force coefficients and their cumulants. We also define the maps $\fA$ and $\fB$ used to write various flow equations that will appear in the paper. We discuss the most important properties of the above-mentioned spaces and maps. We start with the definitions of Besov-type spaces.

\begin{dfn}\label{dfn:besov_T}
For $\alpha<0$ and $\phi\in C^\infty(\bT)$ we define
\begin{equation}
 \|\phi\|_{\sC^\alpha(\bT)}:= \sup_{\sIR\in(0,1]} [\sIR]^{-\alpha}\,
 \|\bar K_\sIR^{\ast\oo}\ast \phi\|_{L^\infty(\bR^\rdim)},
\end{equation}
where $\oo=\ceil{-\alpha}$. By definition the space $\sC^\alpha(\bT)$ is the completion of $C^\infty(\bT)$ equipped with the norm $\|\Cdot\|_{\sC^\alpha(\bT)}$. 
\end{dfn}
\begin{rem}
The space $\sC^\alpha(\bT)$ is a separable subspace of the standard Besov space. 
\end{rem}

\begin{dfn}
Let $\mathtt{w}\in C^\infty_\rc(\bR)$. For $v\in C(\bM)$ we set $(\mathtt{w}v)(x):=\mathtt{w}(\mathring x)\,v(x)$, where $x=(\mathring x,\bar x)\in\bM=\bR\times\bR^\rdim$. The above definition is extended to $v\in\sD'(\bM)$ in a natural way.
\end{dfn}

\begin{dfn}
For $\alpha<0$ and $f\in C^\infty(\bH)$ we define
\begin{equation}
 \|f\|_{\sC^\alpha(\bH)}:= \sup_{\sIR\in(0,1]} [\sIR]^{-\alpha}\,
 \|K_\sIR^{\ast\oo}\ast f\|_{L^\infty(\bH)},
\end{equation}
where $\oo=\ceil{-\alpha}$. Fix some sequence $\mathtt{w}_n\in C^\infty_\rc(\bR)$, $n\in\bN_+$, such that $\mathtt{w}_n(\mathring x)=1$ for $\mathring x\in[-n,n]$. By definition the space $\sC^\alpha(\bH)$ is a Fr{\'e}chet space obtained by the completion of $C^\infty(\bH)$ equipped with the family of seminorms $\|\mathtt{w}_n\Cdot\|_{\sC^\alpha(\bH)}$, $n\in\bN_+$.
\end{dfn}
\begin{rem}
Note that for every $f\in \sC^\alpha(\bH)$ and every $\mathtt{w}\in C^\infty_\rc(\bR)$ the distribution $\mathtt{w}f$ belongs to the standard parabolic Besov space. 
\end{rem}

\begin{dfn}\label{dfn:C_gamma}
For $\gamma\in(0,\infty)$ and $f\in C^\infty(\bT)$ we define
\begin{equation}
 \|\phi\|_{\cC^\gamma(\bT)}:= \|\fR \phi\|_{L^\infty(\bT)},
\end{equation}
where the operator $\fR$ is given by $\fF(\fR\phi)(\bar p) := (1+|\bar p|^\gamma)\fF\phi(\bar p)$. By definition the space $\cC^\gamma(\bT)$ is the completion of $C^\infty(\bT)$ equipped with the norm $\|\Cdot\|_{\cC^\gamma(\bT)}$. 
\end{dfn}

\begin{dfn}\label{dfn:C_gamma_time}
For $t_1,t_2\in\bR$, $t_1<t_2$, $\gamma\in(0,\infty)$ and $f\in C([t_1,t_2],\cC^\gamma(\bT))$ we define
\begin{equation}
 \|\varphi\|_{C([t_1,t_2],\cC^\gamma(\bT))}:= \sup_{\mathring x\in[t_1,t_2]}\|\varphi(\mathring x,\Cdot)\|_{\cC^\gamma(\bT)}.
\end{equation} 
\end{dfn}

\begin{rem}
Recall that $\varepsilon_\sigma:=\sigma+1-\ceil{\sigma}>0$ and $\varepsilon\in(0,\varepsilon_\sigma)$. In what follows, we always assume that $\gamma=\gamma_\varepsilon=\sigma-\varepsilon$. Note that the definition of the pseudo-differential operator $\fR$ given above coincides with the one in Def.~\ref{dfn:diff_op} and by Lemma~\ref{lem:kernel_fourier_transform}~(C) it holds $\cC^\gamma(\bT)\subset C^{\floor{\gamma}}(\bT)$, where $\floor{\gamma}=\ceil{\sigma}-1$.
\end{rem}

We now proceed to the definition of the spaces $\cV^{\mathsf{m}}$, \mbox{$\mathsf{m}=(m_1,\ldots,m_n)\in\bN_0^n$}, $n\in\bN_+$. If $n=1$, we write $\mathsf{m}=m\in\bN_0$ and $\cV^{\mathsf{m}}=\cV^m$. Let us mention that we shall use the norm $\|\Cdot\|_{\cV^m}$ to bound the effective force coefficients $F^{i,m}_{\sUV,\sIR}$ and the norm $\|\Cdot\|_{\cV^{\mathsf{m}}}$ to bound the cumulants of the effective force coefficients. We explain in more detail how these norms will be used in Remark~\ref{rem:spaces_motivation} below.

\begin{dfn}
Let $\bY$ be a topological space. We define $\sM(\bY)$ to be the space of signed Radon measures $V$ on $\bY$ with finite total variation $|V|$ equipped with the following norm
\begin{equation}
 \|V\|_{\sM(\bY)}:=|V|(\bY)=\int_\bY|V(\rd y)|.
\end{equation}
We make the following identification $\sM(\{0\})=\bR$.
\end{dfn}

\begin{dfn}\label{dfn:map_U}
Let $n\in\bN_+$, $\mathsf{m}=(m_1,\ldots,m_n)\in\bN_0^n$, $m=m_1+\ldots+m_n$ and $V\,:\,\bM^n\to\sM(\bM^{m})$. The map $\fU^{\mathsf{m}} V\,:\,\bM^n\to\sM(\bM^{m})$ is defined by the equality
\begin{equation}
 \fU^{\mathsf{m}} V(x_1,\ldots,x_n;A_1\times\ldots\times A_n)
 :=
 V(x_1,\ldots,x_n;(A_1+x_1)\times \ldots \times(A_n+x_n))
\end{equation}
for all Borel sets $A_q\subset\bM^{m_q}$, $q\in\{1,\ldots,n\}$, where
\begin{equation}
 A+x:=\{(y_1+x,\ldots,y_m+x)\,|\,(y_1,\ldots,y_m)\in A\}
\end{equation}
for any $A\subset\bM^m$ and $x\in\bM$. If $n=1$ we write $\fU^{\mathsf{m}}=\fU^m$.
\end{dfn}

\begin{rem}\label{rem:map_U}
The equation defining $\fU^{\mathsf{m}}V$ can be rewritten as
\begin{equation}
 \fU^{\mathsf{m}} V(x_1,\ldots,x_n;\rd \ry_1\ldots\rd \ry_n)
 =
 V(x_1,\ldots,x_n;\rd (\ry_1+x_1)\ldots\rd(\ry_n+x_n)),
\end{equation}
where we use the notation $\ry+x:=(y_1+x,\ldots,y_n+x)\in\bM^m$ for arbitrary $x\in\bM$ and $\ry=(y_1,\ldots,y_m)\in\bM^m$, $m\in\bN_0$.
\end{rem}

\begin{dfn}\label{dfn:cV}
Let $n\in\bN_+$, \mbox{$\mathsf{m}=(m_1,\ldots,m_n)\in\bN_0^n$} and $m=m_1+\ldots+m_n$. The vector space $\cV^{\mathsf{m}}$ consists of maps $V\,:\,\bM^n\to\sM(\bM^{m})$ such that the map $\fU^{\mathsf{m}} V$ belongs to $C(\bH^n,\sM(\bM^{m}))$ and the norm
\begin{equation}\label{eq:cVt_norm}
 \|V\|_{\cV^{\mathsf{m}}}
 :=
 \sup_{x_1\in\bM} \int_{\bH^{n-1}}\int_{\bM^m}
 |V(x_1,\ldots,x_n;\rd y_1\ldots\rd y_m)|\,\rd x_2\ldots\rd x_n
\end{equation}
is finite. The vector space $\cV^{\mathsf{m}}_\rt$ consists of $V\in\cV^{\mathsf{m}}$ such that $\fU^{\mathsf{m}}V(x_1,\ldots,x_n)=\fU^{\mathsf{m}} V(x_1+z,\ldots,x_n+z)$ for all $x_1,\ldots,x_n,z\in\bM$. If $n=1$ we write $\mathsf{m}=m\in\bN_0$ and $\cV^{\mathsf{m}}=\cV^m$, $\cV^{\mathsf{m}}_\rt=\cV^m_\rt$.
\end{dfn}

\begin{rem}\label{rem:cV}
$V\in C(\bH^n,\sM(\bM^{m}))$ iff $V\,:\,\bM^n\to\sM(\bM^{m})$ is continuous and for all $x_p,z_p\in\bM$ such that $\mathring x_p=\mathring z_p$ and $\bar x_p-\bar z_p\in(2\pi\bZ)^\rdim$, $p\in\{1,\ldots,n\}$ and all Borel sets $A_q\subset\bM^{m_q}$, $q\in\{1,\ldots,n\}$ we have
\begin{equation}
 V(x_1,\ldots,x_n;A_1\times\ldots\times A_n)
 =
 V(z_1,\ldots,z_n;A_1\times\ldots\times A_n).
\end{equation} 
\end{rem}

\begin{rem}
Let $V\in\cV^{\mathsf{m}}$. The equality
\begin{equation}
 \langle \tilde V,\psi\rangle = \int_{\bM^{n}}\int_{\bM^{m}} V(x_1,\ldots,x_n;\rd y_1\ldots\rd y_m)\,\psi(x_1,\ldots,x_n,y_1,\ldots,y_m)\,\rd x_1\ldots\rd x_n
\end{equation}
for all $\psi\in C_\rc^\infty(\bM^n\times\bM^m)$ defines a distribution $\tilde V\in \sD'(\bM^n\times\bM^m)$. In what follows, we implicitly identify $V\in\cV^{\mathsf{m}}$ with $V\in\sD'(\bM^n\times\bM^m)$ and view the space $\cV^{\mathsf{m}}$ as a subset of $\sD'(\bM^n\times\bM^m)$. We use this identification to define $\supp\, V\subset \bM^n\times\bM^m$. We warn the reader that in what follows $\supp\,V$ does not coincide with the support of the map $V\,:\,\bM^n\to\sM(\bM^{m})$.
\end{rem}

\begin{dfn}
Let $k\in\bN_+$ and $V\in\sD'(\bM^k)$. We say that $V$ is translationally invariant iff
\begin{equation}
 \langle V,\psi_1\otimes\ldots\otimes\psi_k\rangle
 =
 \langle V,\psi_1(\Cdot+x)\otimes\ldots\otimes\psi_k(\Cdot+x)\rangle
\end{equation}
for any $x\in\bM$ and $\psi_1,\ldots,\psi_k\in C^\infty_\rc(\bM)$.
\end{dfn}
\begin{rem}
Note that $V\in \cV^{\mathsf{m}}$ is an element of $\cV^{\mathsf{m}}_\rt$ iff $V$ is translationally invariant. For $V\in\cV^{\mathsf{m}}_\rt$ it holds
\begin{equation}\label{eq:cV_norm}
 \|V\|_{\cV^{\mathsf{m}}}
 =
 \frac{1}{n!}\sum_{\pi\in\cP_n}\sup_{x_1\in\bH} \int_{\bH^{n-1}}\int_{\bM^m}
 |V(x_{\pi(1)},\ldots,x_{\pi(n)};\rd y_1\ldots\rd y_m)|\,\rd x_2\ldots\rd x_n,
\end{equation}
where $\cP_n$ is the group of permutations of $\{1,\ldots,n\}$.
\end{rem}

\begin{rem}
Let us note that in our analysis we will only use the norm $\|V\|_{\cV^{\mathsf{m}}}$ in a situation when $V\in\cV^{\mathsf{m}}_\rt$ is translationally invariant, or $n=1$ and $\mathsf{m}=m\in\bN_0$ and $V\in\cV^m$. For this reason the fact that the norm $\|V\|_{\cV^{\mathsf{m}}}$ does not treat $x_1$ and $x_2,\ldots,x_n$ on equal footing is innocuous.
\end{rem}

%
%

\begin{dfn}\label{dfn:kernels_K_n_m}
For $n\in\bN_+$, $m\in\bN_0$, $\oo\in\bN_0$ and $\sUV,\sIR\in[0,\infty)$ the kernel $K^{n,m;\oo}_{\sUV,\sIR}\in\cK^{n+m}$ is defined by
\begin{equation}
 K^{n,m;\oo}_{\sUV,\sIR}:= (K^{\ast\oo}_\sIR)^{\otimes n}\otimes (K^{\ast\oo}_\sIR\ast J_\sUV^{\phantom{\oo}})^{\otimes m}.
\end{equation}
We also set $K^{m;\oo}_{\sUV,\sIR}:=K^{1,m;\oo}_{\sUV,\sIR}\in\cK^{1+m}$. We omit the indices $\sUV,\sIR$ if $\sUV=\sIR=1$.
\end{dfn}

\begin{dfn}
Let $n\in\bN_+$, $\mathsf{m}=(m_1,\ldots,m_n)\in\bN_0^n$, $m=m_1+\ldots+m_n$. For $\mathtt{w}\in C^\infty(\bR^n)$ and $V\in \sD'(\bM^n\times\bM^m)$ we define $\mathtt{w}V\in \sD'(\bM^n\times\bM^m)$ by the equality $\langle \mathtt{w}V,\psi\rangle:=\langle V,\mathtt{w}\psi\rangle$ for all $\psi\in C_\rc^\infty(\bM^n\times\bM^m)$, where
\begin{equation}
 (\mathtt{w}\psi)(x_1,\ldots,x_n;y_1,\ldots,y_m):=\mathtt{w}(\mathring x_1,\ldots,\mathring x_n)\,\psi(x_1,\ldots,x_n;y_1,\ldots,y_m).
\end{equation}
For $V\in \sD'(\bM^n\times\bM^m)$ we define $\fU^{\mathsf{m}} V\in \sD'(\bM^n\times\bM^m)$ by the equality $\langle \fU^{\mathsf{m}}V,\psi\rangle:=\langle V,\fU^{\mathsf{m}\dagger}\psi\rangle$ for all $\psi\in C_\rc^\infty(\bM^n\times\bM^m)$, where 
\begin{equation}
 (\fU^{\mathsf{m}\dagger}\psi)(x_1,\ldots,x_n;\ry_1,\ldots,\ry_n):=
 \psi(x_1,\ldots,x_n;\ry_1-x_1,\ldots,\ry_n-x_n)
\end{equation}
and we used the notation introduced in Remark~\ref{rem:map_U}.
\end{dfn}

\begin{dfn}\label{dfn:cD}
Let $n\in\bN_+$, $\mathsf{m}=(m_1,\ldots,m_n)\in\bN_0^n$, $m=m_1+\ldots+m_n$. For $\oo\in\bN_0$ the space $\cD^{\mathsf{m};\oo}$ consists of distributions $V\in \sD'(\bM^n\times\bM^m)$ such that $K^{n,m;\oo}\ast \mathtt{w}V\in\cV^{\mathsf{m}}$ for all $\mathtt{w}\in C^\infty_\rc(\bR)$ and $\supp\,\fU^{\mathsf{m}} V \subset\bM^n \times ([-T,T]\times\bR^\rdim)^m$ for some $T\in(0,\infty)$. If $n=1$ we set $\cD^{\mathsf{m};\oo}=\cD^{m;\oo}$. If $n=1$ and $m=0$ we set $\cD^{0;\oo}=\cD^{;\oo}$. The space $\cD^{\mathsf{m}}\subset \sD'(\bM^n\times\bM^m)$ is the union of the spaces $\cD^{\mathsf{m};\oo}$, $\oo\in\bN_0$. The spaces $\cD^m$ and $\cD$ are defined analogously. The spaces $\cD^{\mathsf{m};\oo}_\rt$, $\cD^{m;\oo}_\rt$, $\cD^{\oo}_\rt$, $\cD^{\mathsf{m}}_\rt$, $\cD^m_\rt$, $\cD_\rt$ consists of translationally invariant elements of the respective spaces defined above.
\end{dfn}

\begin{rem}\label{rem:Vm_K}
For $V\in\cV^\mathsf{m}$ and $K\in\cK^{n+m}$ it holds $\|K\ast V\|_{\cV^\mathsf{m}}\leq \|K\|_{\cK^{n+m}} \|V\|_{\cV^\mathsf{m}}$.
\end{rem}

\begin{rem}
Observe that $\cD^{;0}=C(\bH)$ and $\cV=C_\rb(\bH)$. Moreover, we have $\|v\|_{\cV}=\|v\|_{L^\infty(\bM)}$ for any $v\in\cV$. Using the fact that the kernel $K$ is the inverse of a differential operator it is easy to see that $\cD=\sD'(\bH)$. Furthermore, $\cV_\rt=\cD_\rt\simeq \bR$. We also note that for $V\in \sD'(\bM^n\times\bM^m)$ such that $K^{n,m;\oo}\ast V\in\cV^{\mathsf{m}}$ we have $K^{n,m;\oo}\ast \mathtt{w}V\in\cV^{\mathsf{m}}$ for all $\mathtt{w}\in C^\infty_\rc(\bR)$. The last claim is proved by induction on $\oo\in\bN_0$ (see related Lemma~\ref{lem:time_estimates_prep}).
\end{rem}

\begin{rem}\label{rem:spaces_motivation}
Recall that the kernels $K$ and $J$ as well as the kernels $K_\sIR$ and $J_\sIR$ obtained by rescaling $K$ and $J$ are not smooth. However, $\|\partial^a K_\sIR^{\ast\oo}\|\lesssim [\sIR]^{-[a]}$ for every $a\in\frM$ such that $|a|\leq \oo$ and $\|\partial^a J_\sUV\|_\cK\lesssim [\sUV]^{-[a]}$ for every $a\in\bar\frM_\sigma$. As a result, the function obtained by convolving $V\in C(\bM^n\times\bM^m)$ with the kernel $K^{n,m;\oo}_{\sUV,\sIR}$ is more regular but not smooth. Let us mention that we shall frequently bound expressions of the form $\|K^{n,m;\oo}_{\sUV,\sIR}\ast V\|_{\cV^{\mathsf{m}}}$ with the parameter $\oo\in\bN_0$ varying from bound to bound. Note that $\|K^{n,m;\oo+\ooo}_{\sUV,\sIR}\ast V\|_{\cV^{\mathsf{m}}}\leq \|K^{n,m;\oo}_{\sUV,\sIR}\ast V\|_{\cV^{\mathsf{m}}}$ for any $\oo,\ooo\in\bN_0$ by Remark~\ref{rem:Vm_K}. For this reason, the precise value of $\oo\in\bN_0$ is usually of little importance. Note also that it holds
\begin{equation}
 K^{n,m;\oo}_{\sUV,\sIR}:= K^{\ast\oo,\otimes (n+m)}_\sIR \ast  (\delta_\bM^{\otimes n} \otimes J_\sUV^{\otimes m}).
\end{equation}
In particular, if $\oo=0$ or $\sIR=0$ we have
\begin{equation}
 K^{n,m;\oo}_{\sUV,\sIR}:= \delta_\bM^{\otimes n} \otimes J_\sUV^{\otimes m}.
\end{equation}
As we will see, for any $\sUV\in(0,1]$ the effective force coefficients $F^{i,m}_{\sUV,\sIR}$ belong to the space $\cD^{m;0}$. We will also show that they can be uniformly bounded in the space $\cD^{m;\oo}$ for sufficiently big $\oo\in\bN_0$. More specifically, we will prove that for any $\mathtt{w}\in C_\rc^\infty(\bR)$ the effective force coefficients $F^{i,m}_{\sUV,\sIR}$ almost surely satisfy bounds of the type
\begin{equation}
 \|K^{m;\oo}_{\sUV,\sIR} \ast \mathtt{w} F^{i,m}_{\sUV,\sIR}\|_{\cV^m} \lesssim [\sUV\vee\sIR]^{\varrho_\varepsilon(i,m)}
\end{equation}
uniformly in $\sUV\in(0,1]$ and $\sIR\in[0,1]$. This bound implies, in particular, that the force coefficients $F^{i,m}_{\sUV}=F^{i,m}_{\sUV,0}$ satisfy the bound
\begin{equation}
 \|(\delta_\bM^{\otimes n} \otimes J_\sUV^{\otimes m})\ast \mathtt{w} F^{i,m}_{\sUV}\|_{\cV^m} \lesssim [\sUV]^{\varrho_\varepsilon(i,m)}
\end{equation}
uniformly in $\sUV\in(0,1]$. If the force $F_\sUV[\varphi]$ depends on derivatives of $\varphi$, the validity of the above bound depends crucially on the presence of the kernel $J_\sUV$ and the estimate $\|\partial^a J_\sUV\|_\cK\lesssim [\sUV]^{-[a]}$ for every $a\in\bar\frM_\sigma$ (the restriction to $a\in\bar\frM$ such that $|a|<\sigma$ is related to the assumption that the SPDE under consideration is semi-linear). Finally, we remark that we will prove that the cumulants $\llangle F^{i_1,m_1}_{\sUV,\sIR},\ldots,F^{i_n,m_n}_{\sUV,\sIR} \rrangle$ of the effective force coefficients at stationarity belong to $\cD^{\mathsf{m};0}_\rt$, where $\mathsf{m}=(m_1,\ldots,m_n)$, and can be uniformly bounded in the space $\cD^{\mathsf{m};\oo}_\rt$ for sufficiently big $\oo\in\bN_0$. More precisely, we shall establish bounds of the type
\begin{equation}
 \|K^{n,m;\oo}_{\sUV,\sIR}\ast \llangle F^{i_1,m_1}_{\sUV,\sIR},\ldots,F^{i_n,m_n}_{\sUV,\sIR} \rrangle\|_{\cV^{\mathsf{m}}} \lesssim [\sUV\vee\sIR]^{\varrho_\varepsilon(i_1,m_1)+\ldots+\varrho_\varepsilon(i_n,m_n)+(n-1)\rDim}
\end{equation}
uniform in $\sUV\in(0,1]$ and $\sIR\in[0,1]$. 
\end{rem}

\begin{dfn}\label{dfn:map_Y}
Let $n\in\bN_+$, $\mathsf{m}=(m_1,\ldots,m_n)\in\bN_0^n$, $m=m_1+\ldots+m_n$. For a permutation $\pi\in\cP_{m_1}$ we define the map $\fY_\pi\,:\,\cD^{\mathsf{m}}\to \cD^{\mathsf{m}}$ by
\begin{equation}
 \big\langle \fY_\pi(V),\botimes_{q=1}^n \psi_q\otimes
 \botimes_{q=1}^m \varphi_q\big\rangle
 :=
 \big\langle V,\botimes_{q=1}^n \psi_q\otimes
 \botimes_{q=1}^{m_1} \varphi_{\pi(q)}\otimes
 \botimes_{q=m_1+1}^{m} \varphi_q \big\rangle,
\end{equation}
where $\psi_1,\ldots,\psi_n,\varphi_1,\ldots,\varphi_m\in\ C^\infty_\rc(\bM)$. For a permutation $\omega\in\cP_{n}$ we define the map $\fY^\omega\,:\,\cD^{\mathsf{m}}_\rt\to \cD^{\omega(\mathsf{m})}_\rt$, where $\omega(\mathsf{m}):=(m_{\omega(1)},\ldots,m_{\omega(n)})$, by the equality
\begin{equation}
 \big\langle \fY^\omega(V),\botimes_{q=1}^n \psi_q\otimes
 \botimes_{q=1}^n \varphi_q\big\rangle
 :=
 \big\langle V,\botimes_{q=1}^n \psi_{\omega(q)}\otimes
 \botimes_{q=1}^{n} \varphi_{\omega(q)}\big\rangle,
\end{equation}
where $\psi_q\in\ C^\infty_\rc(\bM)$, $\varphi_q\in C^\infty_\rc(\bM^{m_q})$ for $q\in\{1,\ldots,n\}$.
\end{dfn}
\begin{rem}\label{rem:translational_permutation}
The maps $\fY_\pi:\cV^{\mathsf{m}}\to\cV^{\mathsf{m}}$ and $\fY^\omega:\cV_\rt^{\mathsf{m}}\to\cV_\rt^{\omega(\mathsf{m})}$, defined analogously to the above definition, are bounded with norm one.
\end{rem}

\begin{dfn}
The vector space $\cG$ consists of kernels $G\in C^\infty(\bM)$ such that $\supp\,G\subset [-T,T]\times\bR^\rdim\subset\bM$ for some $T\in(0,\infty)$ and  $\partial^a G\in L^1(\bM)$, $\fT |\partial^a G|\in L^\infty(\bH)$ for any $a\in\frM$, where $\fT$ denotes the periodization introduced in Def.~\ref{dfn:periodization}.
\end{dfn}

\begin{dfn}\label{dfn:maps_A_B}
Fix $n\in\bN_0$, $\hat n\in\{1,\ldots,n\}$, $m_1,\ldots,m_{n+1}\in\bN_0$ and
\begin{equation}
\begin{gathered}
 \mathsf{m}=(m_1+m_{n+1},m_2,\ldots,m_n)\in\bN_0^n,
 \qquad
 \tilde{\mathsf{m}}=(m_1+1,m_2,\ldots,m_{n+1})\in\bN_0^{n+1},
 \\
 \hat{\mathsf{m}}=(m_1+1,m_2,\ldots,m_{\hat n})\in\bN_0^{\hat n},
 \qquad
 \check{\mathsf{m}}=(m_{\hat n+1},\ldots,m_{n+1})\in\bN_0^{n-\hat n+1}.
\end{gathered} 
\end{equation} 
The bilinear map $\fA\,:\,\cG\times\cV^{\tilde{\mathsf{m}}}\to\cV^{\mathsf{m}}$ is defined by
\begin{multline}\label{eq:fA_dfn}
 \fA(G,V)(x_1,\ldots,x_n;\rd\ry_1\rd\ry_{n+1}\rd\ry_2\ldots\rd\ry_n)
 \\
 :=
 \int_{\bM^2} V(x_1,\ldots,x_{n+1};\rd y\rd\ry_1\ldots\rd\ry_{n+1})\,G(y-x_{n+1})\,\rd x_{n+1}
\end{multline}
for every $G\in\cG$ and $V\in\cV^{\tilde{\mathsf{m}}}$. The trilinear map $\fB\,:\,\cG\times \cV^{\hat{\mathsf{m}}}\times\cV^{\check{\mathsf{m}}}\to\cV^{\mathsf{m}}$ is defined by
\begin{multline}\label{eq:fB_dfn}
 \fB(G,W,U)(x_1,\ldots,x_{n};\rd\ry_1\rd\ry_{n+1}\rd\ry_2\ldots\rd \ry_{n})
 \\
 :=
 \int_{\bM^2} W(x_1,\ldots,x_{\hat n};\rd y\rd\ry_1\ldots\rd\ry_{\hat n})
 \,G(y-x_{n+1})
 \\
 \times 
 U(x_{n+1},x_{\hat n+1},\ldots,x_{n};\rd\ry_{n+1}\rd\ry_{\hat n+1}\ldots\rd\ry_{n})\,\rd x_{n+1}
\end{multline}
for every $G\in\cG$ and $U\in\cV^{\hat{\mathsf{m}}}$, $W\in\cV^{\check{\mathsf{m}}}$. In the above equations $\ry_j\in\bM^{m_j}$, $j\in\{1,\ldots,n+1\}$ and the integral is over $x_{n+1},y\in\bM$.
\end{dfn}

\begin{rem}
The RHS of the flow equation for the effective force coefficients $F^{i,m}_{\sUV,\sIR}$ involves the map $\fB$. Both of the maps $\fA$ and $\fB$ appear in the flow equation for the cumulants of the  effective force coefficients. 
\end{rem}

\begin{lem}\label{lem:fA_fB_bounds}
Fix some $n\in\bN_0$, $\hat n\in\{1,\ldots,n\}$ and $\mathsf{m}\in\bN_0^n$, $\tilde{\mathsf{m}}\in\bN_0^{n+1}$, $\hat{\mathsf{m}}\in\bN_0^{\hat n}$,
$\check{\mathsf{m}}\in\bN_0^{n-\hat n+1}$ as in Def.~\ref{dfn:maps_A_B}. Let $G\in\cG$, $V\in\cV^{\tilde{\mathsf{m}}}$, $W\in\cV^{\hat{\mathsf{m}}}$ and $U\in\cV^{\check{\mathsf{m}}}$. Then
\begin{equation}\label{eq:fA_ieq}
 \|\fA(G,V)\|_{\cV^{\mathsf{m}}}
 \leq
 \|\fT |G|\|_\cV\,
 \|V\|_{\cV^{\tilde{\mathsf{m}}}},
\end{equation}
\begin{equation}\label{eq:fB_ieq}
 \|\fB(G,W,U)\|_{\cV^{\mathsf{m}}}
 \leq
 \|G\|_\cK\,
 \|W\|_{\cV^{\check{\mathsf{m}}}}\, 
 \|U\|_{\cV^{\hat{\mathsf{m}}}},
\end{equation}
where $\|\fT |G|\|_\cV=\|\fT |G|\|_{L^\infty(\bH)}$ and $\|G\|_\cK=\|G\|_{L^1(\bM)}$. If $V,W,U$ are translationally invariant, then $\fA(G,V)$ and $\fB(G,W,U)$ are also translationally invariant.
\end{lem}
\begin{proof}
It is easy to see that the functions $\fU^{\mathsf{m}}\fA(G,V)$, $\fU^{\mathsf{m}}\fB(G,W,U)$ are continuous and $2\pi$ periodic in space. It is also straightforward to check that $\fA(G,V)$, $\fB(G,W,U)$ are translationally invariant if $V$, $W$, $U$ are translationally invariant. To prove the first bound note that
\begin{multline}
 \|\fA(G,V)\|_{\cV^{\mathsf{m}}}
 \leq 
 \sup_{x_1\in\bH}\int_{\bH^{n-1}} \int_\bM \int_{\bM^{m+1}} |G(y+x_1-z)|
 \\
 \times |\fU^{\mathsf{m}}V(x_1,\ldots,x_{n},z; \rd y\rd \ry_1\ldots\rd\ry_{n+1})|\,\rd z\,~\rd x_2\ldots\rd x_n.
\end{multline}
Using periodicity of $\fU^{\mathsf{m}}V$ we arrive at
\begin{multline}
 \|\fA(G,V)\|_{\cV^{\mathsf{m}}}\leq\sup_{x_1\in\bH}\int_{\bH^n} \int_{\bM^{m+1}} (\fT|G|)(y+x_1-z)
 \\
 \times |\fU^{\mathsf{m}}V(x_1,\ldots,x_{n},z; \rd y\rd \ry_1\ldots\rd\ry_{n+1})|\,\rd z\rd x_2\ldots\rd x_{n}.
\end{multline}
This implies $\|\fA(G,V)\|_{\cV^{\mathsf{m}}}\leq \|\fT G\|_{\cV}\, \|\fU^{\mathsf{m}}V\|_{\cV^{\tilde{\mathsf{m}}}}= \|\fT G\|_{\cV}\, \|V\|_{\cV^{\tilde{\mathsf{m}}}}$, which proves the first bound. The second bound follows from the estimate
\begin{multline}
 \|\fB(G,W,U)\|_{\cV^{\mathsf{m}}}
 \leq
 \sup_{x_1\in\bH}\int_{\bH^{n-1}} \int_{\bM^{m+2}} |W(x_1,\ldots,x_{\hat n};\rd y\rd\ry_1\ldots\rd\ry_{\hat n})|
 \\
 \times |G(\rd z)|\,
 |U(z-y,x_{\hat n+1},\ldots,x_{n};\rd\ry_{n+1}\rd\ry_{\hat n+1}\ldots\rd\ry_{n})|\,\rd x_2\ldots\rd x_{n}.
\end{multline}
This finishes the proof.
\end{proof}


\begin{lem}\label{lem:fA_fB_Ks}
Fix some $n\in\bN_0$, $\hat n\in\{1,\ldots,n\}$ and $\mathsf{m}\in\bN_0^n$, $\tilde{\mathsf{m}}\in\bN_0^{n+1}$, $\hat{\mathsf{m}}\in\bN_0^{\hat n}$,
$\check{\mathsf{m}}\in\bN_0^{n-\hat n+1}$ as in Def.~\ref{dfn:maps_A_B}. Let $V\in\cV^{\tilde{\mathsf{m}}}$, $W\in\cV^{\hat{\mathsf{m}}}$, $U\in\cV^{\check{\mathsf{m}}}$ and $G\in\cG$. For any $\sUV,\sIR\in[0,1]$ and $\oo\in\bN_0$ it holds
\begin{equation}
\begin{gathered}
 K_{\sUV,\sIR}^{n,m;\oo}\ast\fA(G,V)=
 \fA\big(\fR^{\phantom{2}}_\sUV\fP^{2\oo}_\sIR G,\,
 K_{\sUV,\sIR}^{n+1,m+1;\oo}\ast V\big),
 \\
 K_{\sUV,\sIR}^{n,m;\oo}\ast\fB(G,W,U)=
 \fB\big(\fR^{\phantom{2}}_\sUV\fP^{2\oo}_\sIR G,\,
 K_{\sUV,\sIR}^{\hat n,\hat m+1;\oo}\ast W,\,
 K_{\sUV,\sIR}^{n-\hat n+1,\check m;\oo}\ast U\big).
\end{gathered} 
\end{equation}
\end{lem}
\begin{proof}
The lemma follows immediately from Def.~\ref{dfn:maps_A_B} of the maps $\fA$ and $\fB$. 
\end{proof}

\begin{rem}
By the above lemma and the support properties of elements of the spaces $\cG$ and $\cD^{\mathsf{m};\oo}$ the maps $\fA$ and $\fB$ admit the following extensions 
\begin{equation}
 \fA:\,\cG\times\cD^{\tilde{\mathsf{m}};\oo}\to\cD^{\mathsf{m};\oo},
 \qquad 
 \fB:\,\cG\times\cD^{\hat{\mathsf{m}};\oo}\times\cD^{\check{\mathsf{m}};\oo}\to\cD^{\mathsf{m};\oo},
\end{equation}
where $\mathsf{m},\tilde{\mathsf{m}},\hat{\mathsf{m}},\check{\mathsf{m}}$ are as in Def.~\ref{dfn:maps_A_B} and $\oo\in\bN_0$ is arbitrary. Note that for any $V\in\cD^{\tilde{\mathsf{m}};\oo}$, $W\in\cD^{\hat{\mathsf{m}};\oo}$, $U\in\cD^{\check{\mathsf{m}};\oo}$, $G\in\cG$ and $\mathtt{r}\in C^\infty_\rc(\bR^{n})$ there exist
$\mathtt{v}\in C^\infty_\rc(\bR^{n+1})$,
$\mathtt{w}\in  C^\infty_\rc(\bR^{\hat n})$,
$\mathtt{u}\in  C^\infty_\rc(\bR^{n-\hat n+1})$ such that 
\begin{equation}
 \mathtt{r}\fA(G,V)=
 \mathtt{r}\fA(G,\,\mathtt{v} V),
 \qquad
 \mathtt{r}\fB(G,W,U)=
 \mathtt{r}\fB(G,\,
 \mathtt{w}W,\,
 \mathtt{u}U).
\end{equation}
\end{rem}

\begin{rem}\label{rem:fA_fB_Ks}
Lemma~\ref{lem:fA_fB_Ks} will be used together with Lemma~\ref{lem:fA_fB_bounds} to bound RHS of various flow equations. In our applications the kernel $G$ will usually coincide with $\dot G_\sIR^a$ introduced in Def.~\ref{dfn:propagator_G}, where $a\in\frM_\sigma$ and $\sIR\in(0,1]$. Note that for any $\oo\in\bN_0$ it holds
\begin{equation}
 \|\fR^{\phantom{2}}_\sUV\fP^{2\oo}_\sIR \dot G_\sIR^a\|_\cK\lesssim [\sUV\vee\sIR]^{\sigma-\varepsilon}\,[\sIR]^{\varepsilon-\sigma+[a]},
 \quad
 \|\fT|\fR^{\phantom\oo}_\sUV\fP_\sIR^{2\oo}\dot G^a_\sIR|\|_\cV \lesssim
 [\sUV\vee\sIR]^{\sigma-\varepsilon}\,[\sIR]^{\varepsilon-\sigma-\rDim+[a]}
\end{equation}
uniformly in $\sUV\in[0,1]$, $\sIR\in(0,1]$ by Lemma~\ref{lem:kernel_dot_G} and Lemma~\ref{lem:kernel_simple_fact} (D) applied with $\ooo=\rdim+1$ and $p=\infty$ (recall that $\rDim=\sigma+\rdim$).
\end{rem}

\begin{rem}\label{rem:fB1_bound}
Let $m\in\bN_0$ $k\in\{0,\ldots,m\}$, $G\in\cG$, $W\in\cV^{k+1}$, $U\in\cV^{m-k}$ and $\oo\in\bN_0$. Lemma~\ref{lem:fA_fB_bounds} and Lemma~\ref{lem:fA_fB_Ks} imply that
\begin{equation}
\begin{gathered}
 \|\fB(G,W,U)\|_{\cV^{m}}
 \leq
 \|G\|_\cK\,\|W\|_{\cV^{k+1}} \|U\|_{\cV^{m-k}},
 \\
 K_{\sUV,\sIR}^{m;\oo}\ast\fB(G,W,U)=
 \fB\big(\fR^{\phantom{2}}_\sUV\fP^{2\oo}_\sIR G,\,
 K_{\sUV,\sIR}^{k+1;\oo}\ast W,\,
 K_{\sUV,\sIR}^{m-k;\oo}\ast U\big).
\end{gathered} 
\end{equation}
As a result, the map $\fB$ admits the extension $\fB:\,\cG\times\cD^{k+1;\oo}\times\cD^{m-k;\oo}\to\cD^{m;\oo}$ and
\begin{equation}
 \|K_{\sUV,\sIR}^{m;\oo}\ast\fB(G,W,U)\|_{\cV^{m}}\leq
 \|\fR^{\phantom{2}}_\sUV\fP^{2\oo}_\sIR G\|_\cK\,
 \|K_{\sUV,\sIR}^{k+1;\oo}\ast W\|_{\cV^{k+1}}\,
 \|K_{\sUV,\sIR}^{m-k;\oo}\ast U\|_{\cV^{m-k}}.
\end{equation}
\end{rem}

\section{Taylor polynomial and remainder}\label{sec:taylor}

In this section we use the Taylor theorem to decompose a distribution on $\bM^{1+m}$ into a local distribution, that is a distribution supported on the diagonal $$\{(x,\ldots,x)\in\bM^{1+m}\,|\,x\in\bM\},$$ and a certain non-local remainder that is easy to control. More specifically, we introduce and study maps $\fX^a_\ro$, where $m\in\bN_+$, $a\in\frM^m$ and $\ro\in\bN_+$ are such that $|a|<\ro$. The map $\fX^a_\ro$ allows to express
\begin{equation}
 \cX^{m,a} V\in\cD^{m;0},\quad |a|<\ro,
\end{equation}
in terms of 
\begin{equation}
 \cX^{m,b} V\in\cV^{m;0},\quad b\in\frM^m,~|b|=\ro
 \qquad
 \textrm{and}
 \qquad
 v^b\in\cD^{;0}, \quad b\in\frM^m,~|b|<\ro,
\end{equation}
where
\begin{equation}
 v^b=\fI(\cX^{m,b} V):=\int_{\bM^m}((\delta_\bM\otimes J^{\otimes m})\ast\cX^{m,b} V)(\,\Cdot\,;\rd y_1\ldots\rd y_m)\in\cD^{;0}.
\end{equation}
Recall that the polynomials $\cX^{m,a}\in C^\infty(\bM\times\bM^m)$ were introduced in Def.~\ref{dfn:polynomials}. We call $\fI V$ the local part of the distribution $V$. Note that distributions $V$ that we encounter in this work are typically essentially localized close to the diagonal. Consequently, the distribution $\cX^{m,b} V$ obtained after the multiplication of $V$ by a polynomial in the relative coordinates of the degree $|b|$ is typically better behaved than the distribution $V$ and its behavior improves as the degree $|b|$ increases.

The results of this section will be used in the proof of uniform bounds for the relevant cumulants of the effective force coefficients as well as in the deterministic analysis to prove uniform bounds for the relevant effective force coefficients at stationarity. We remark that the results established in this section are useful if the parameters $\sUV$ and $\sIR$ satisfy the condition $\sIR\geq\sUV$. For $\sUV>0$ and $\sIR\in[0,\sUV)$ one can bound the relevant coefficients using a simpler strategy that is also used to bound the irrelevant coefficients.

In the proof of bounds for the cumulants we apply the map $\fX^a_\ro$ to express the expectations of the relevant effective force coefficients 
\begin{equation}
 \llangle F^{i,m,a}_{\sUV,\sIR}\rrangle=\cX^{m,a}\llangle F^{i,m}_{\sUV,\sIR}\rrangle \in \cD^{m;0}
\end{equation}
in terms of the expectations of the irrelevant coefficients $\llangle F^{i,m,b}_{\sUV,\sIR}\rrangle\in \cD^{m;0}$, $|b|=\ro$, and the following expectations $\llangle f^{i,m,b}_{\sUV,\sIR}\rrangle=\fI(\llangle F^{i,m,b}_{\sUV,\sIR}\rrangle) \in \cD^{;0}=C(\bH)$, $|b|<\ro$, where $\ro\in\bN_+$ is chosen sufficiently big so that $\varrho(i,m)+\ro>0$. Note that by the stationarity $\llangle f^{i,m,b}_{\sUV,\sIR}(x)\rrangle=\llangle f^{i,m,b}_{\sUV,\sIR}\rrangle$ does not depend on $x\in\bM$. The relevant coefficients $\llangle f^{i,m,b}_{\sUV,\sIR}\rrangle\in\bR$ are fixed uniquely by the renormalization conditions at $\sIR=1$ and the flow equation. The bounds for relevant $\llangle F^{i,m,a}_{\sUV,\sIR}\rrangle$ follow from the bounds for the map $\fX^a_\ro$ proved in this section as well as the bounds for $\llangle f^{i,m,b}_{\sUV,\sIR}\rrangle$ and the bounds for irrelevant $\llangle F^{i,m,b}_{\sUV,\sIR}\rrangle$.

In the deterministic analysis we apply the map $\fX^a_\ro$ to express the relevant effective force coefficients $F^{i,m,a}_{\sUV,\sIR}=\cX^{m,a} F^{i,m}_{\sUV,\sIR}\in \cD^{m;0}$ in terms of the irrelevant coefficients $F^{i,m,b}_{\sUV,\sIR}\in \cD^{m;0}$, $|b|=\ro$, and the coefficients $f^{i,m,b}_{\sUV,\sIR}=\fI F^{i,m,a}_{\sUV,\sIR}\in\cD^{;0}$, $|b|<\ro$, where $\ro\in\bN_+$ is again chosen sufficiently big so that \mbox{$\varrho(i,m)+\ro>0$}. We prove the bounds for relevant $F^{i,m,a}_{\sUV,\sIR}$ using the bounds for the map $\fX^a_\ro$ proved in this section as well as the bounds for relevant $f^{i,m,b}_{\sUV,\sIR}$ established in the probabilistic analysis and the bounds for irrelevant $F^{i,m,b}_{\sUV,\sIR}$. The map $\fX^a_\ro$ is also used to construct the relevant effective force coefficients $F^{i,m,a}_{0,\sIR}\in\cD^{m;\oo}$ and bound $F^{i,m,a}_{\sUV,\sIR}-F^{i,m,a}_{0,\sIR}\in\cD^{m;\oo}$.

\begin{dfn}
For $m\in\bN_+$, $a\in\frM^m$ and $V\in C^\infty(\bM\times\bM^m)$ we define $\partial^a V\in C^\infty(\bM\times\bM^m)$ by
\begin{equation}
 \partial^a V(x;x_1,\ldots,x_m):=
 \partial^{a_1}_{x_1} \ldots\partial^{a_m}_{x_m}
 V(x;x_1,\ldots,x_m).
\end{equation}
This map extends in an obvious way to $\sD'(\bM\times\bM^m)\supset\cD^m$.
\end{dfn}

\begin{dfn}\label{dfn:map_L_m}
For $m\in\bN_0$ and $v\in\cD$ we define $\fL^m v\in\cD^m$ by
\begin{equation}
 \langle\,\fL^m v\,,\,\psi\otimes\varphi_1\otimes\ldots\otimes\varphi_m\,\rangle= \langle v\,,\,\psi\varphi_1\ldots\varphi_m\rangle,
\end{equation}
where $\psi,\varphi_1,\ldots,\varphi_m\in C^\infty_\rc(\bM)$ are arbitrary.
\end{dfn}

\begin{rem}
We call $\fL^m(v)\in\sD'(\bM^{1+m})$ the local extension of a distribution $v\in\sD'(\bM)$. Note that $\fL^m(v)$ is a distribution supported on the diagonal.
\end{rem}

\begin{lem}\label{lem:bound_map_L_m}
Fix $m\in\bN_+$. There exists a constant $c>0$ such that the following statement is true. Assume that $v^{a}\in\bR\simeq \cD_\rt$, $a\in\frM^m$, are such that \mbox{$|v^a|\leq C\,[\sUV]^{[a]}$} for some $C>0$ and $v^a=0$ unless $a\in\bar\frM_\sigma^m$. Let $V^a\in\cD^m$, $a\in\frM^m$, be defined by
\begin{equation}
 V^a
 =
 \sum_{b\in\frM^m}\frac{1}{b!}
 \partial^b \fL^m(v^{a+b}).
\end{equation} 
Then for all $a\in\frM^m$ it holds $V^a=\cX^{m,a} V^0$ and
\begin{equation}
\|(\delta_\bM\otimes J_\sUV^{\otimes m})\ast V^a\|_{\cV^m}
 \leq c\, C\,[\sUV]^{[a]}.
\end{equation} 
\end{lem}
\begin{proof}
The identity $V^a=\cX^{m,a} V^0$ follows from the fact that $\cX^{m,a} \partial^b\fL^m(v)=0$ unless $b=a+c$ for some $c\in \frM^m$ and $$\cX^{m,a} \frac{1}{(a+c)!}\partial^{a+c} \fL^m(v)=\frac{1}{c!}\partial^c \fL^m(v).$$ Next, recall that $a\in\bar\frM_\sigma^m$ iff $a=(a_1,\ldots,a_m)$ and $a_q=(\mathring a_q,\bar a_q)\in\frM=\bN_0^{1+\rdim}$, $\mathring a_q=0$, $|a_q|<\sigma$ for all $q\in\{1,\ldots,m\}$. We note that $a+b\in\bar\frM_\sigma^m$ implies that $b=(b_1,\ldots,b_m)\in\bar\frM_\sigma^m$. Moreover, $\|\partial^{b_q}J_\sUV\|_\cK\lesssim[\sUV]^{-|b_q|}$ for all $q\in\{1,\ldots,m\}$ by Lemma~\ref{lem:kernel_simple_fact}~(B). To conclude the proof it is now enough to use Def.~\ref{dfn:cV} of the norm $\|\Cdot\|_{\cV^m}$.
\end{proof}

\begin{dfn}\label{dfn:map_Z}
For $m\in\bN_0$, $\tau\in(0,\infty)$ and $V\in C(\bM\times\bM^m)$ we define $\fZ_\tau V\in C(\bM\times\bM^m)$ by
\begin{equation}
 \fZ_\tau V(x;y_1,\ldots,y_m)
 :=\tau^{-(1+\rdim)m}\,V(x;x+(y_1-x)/\tau,\ldots,x+(y_m-x)/\tau).
\end{equation}
This map extends in an obvious way to $\sD'(\bM\times\bM^m)\supset\cD^m$.
\end{dfn}

\begin{dfn}\label{dfn:map_I}
Let $m\in\bN_0$. The linear map $\fI\,:\,\cV^m\to\cV$ is defined by
\begin{equation}
 \fI V(x):= \int_{\bM^m} V(x;\rd y_1\ldots\rd y_m).
\end{equation}
The map $\fI$ is extended to $V\in\cD^{m;\oo}$ by the formula
\begin{equation}
 \langle \fI V,\psi\rangle := \langle (\delta_\bM\otimes(K^{\ast\oo}\ast J)^{\otimes m})\ast V, \psi\otimes 1^{\otimes m}_\bM\rangle,
\end{equation}
where $\psi\in C^\infty_\rc(\bM)$.
\end{dfn}
\begin{rem}
The map $\fI$ extracts the local part of a distribution. In particular, note that if $V\in\sD'(\bM^{1+m})$ is proportional to the Dirac delta on the diagonal, then $V=\fL^m(\fI V)$, where $\fL^m$ is the local extension of a distribution introduced in Def.~\ref{dfn:map_L_m}.
\end{rem}

\begin{lem}\label{lem:map_I}
The map $\fI$ is well defined and has the following properties.
\begin{enumerate}
\item[(A)] $\fI(K^{m;\oo}_{\sUV,\sIR}\ast V)=K^{\ast\oo}_\sIR\ast\fI V$
for any $V\in\cV^m$, $\oo\in\bN_0$ and $\sUV,\sIR\in[0,\infty)$.

\item[(B)] $\|\fI V\|_\cV \leq \|V\|_{\cV^m}$ for $V\in\cV^m$.

\item[(C)] Let $a\in\frM^m$, $v\in\cV$, $V=\partial^a \fL^m v$. It holds $\fI(\cX^{m,a} V) = a!\,v$ as well as $\fI(\cX^{m,b} V) = 0$ for all $b\in\frM$ such that $b\neq a$.
\end{enumerate}
\end{lem}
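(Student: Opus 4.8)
The plan is to verify the three properties essentially by unwinding the definitions, with the only subtle point being that $\fI$ must be shown to be well-defined on $\cD^m_\oo$ (i.e. independent of the choice of $\oo$ used in Def.~\ref{dfn:map_I}), and that the two formulas for $\fI$ agree on $\sV^m\cap\cD^m_\oo$. For well-definedness on $\cD^m_\oo$, I would note that if $V\in\cD^m_\oo$ then $V\in\cD^m_{\oo'}$ for every $\oo'\ge\oo$, and $K^{m;\oo'}\ast V = K^{\ast(\oo'-\oo),\otimes(1+m)}\ast (K^{m;\oo}\ast V)$; combined with the identities $\fP^{\oo'}K^{\ast\oo'}=\delta_\bR$ type relations recorded after Def.~\ref{dfn:kernels}, the pairing $\langle K^{m;\oo}\ast V,\fP^\oo\psi\otimes 1_\bM^{\otimes m}\rangle$ is seen to be independent of $\oo$. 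The consistency with the first formula on $\sV^m$ follows since for $V\in\sV^m$ (where $V$ is a bona fide bounded continuous measure-valued function) one may integrate out the last $m$ arguments against $1_\bM$ first — because $K^{\ast\oo}\ast J$ is a probability measure its convolution is annihilated by integration against $1_\bM$ — leaving $\langle K^{\ast\oo}\ast\fI V,\fP^\oo\psi\rangle = \langle\fI V,\psi\rangle$ by $\fP^\oo K^{\ast\oo}=\delta_\bM$ (up to the decomposition $K=\mathring K\otimes\bar K$; this is where one uses that the kernels are fundamental solutions of the pseudo-differential operators $\fP_\sIR$).

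Part (A) is then immediate from the first formula for $\fI$: convolving $V\in\sV^m$ with $K^{m;\oo}_{\sUV,\sIR} = (K^{\ast\oo}_\sIR)^{\otimes(1+m)}\ast(\delta_\bM^{\otimes}\otimes J_\sUV^{\otimes m})$ and then integrating the last $m$ variables against $1_\bM$ kills the $J_\sUV$ factors and the last $m$ copies of $K^{\ast\oo}_\sIR$ (all probability measures), leaving exactly $K^{\ast\oo}_\sIR\ast(\text{integration of }V) = K^{\ast\oo}_\sIR\ast\fI V$; one should check Fubini is applicable, which is fine since everything is a finite measure by $V\in\sV^m$. Part (B) is the trivial estimate
\[
 \|\fI V\|_\sV = \sup_{x\in\bH}\Big|\int_{\bM^m}V(x;\rd y_1\ldots\rd y_m)\Big|
 \le \sup_{x\in\bH}\int_{\bM^m}|V(x;\rd y_1\ldots\rd y_m)| = \|V\|_{\sV^m},
\]
directly from Def.~\ref{dfn:sV}.

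Part (C) is the only computational item. With $V=(-1)^{|a|}\partial^a\fL^m v$, one pairs $\cX^{m,b}V$ against $\psi\otimes 1_\bM^{\otimes m}$ and moves the derivatives $\partial^{a_1}_{x_1}\cdots\partial^{a_m}_{x_m}$ off $\fL^m v$ and onto $\cX^{m,b}\cdot(\psi\otimes 1_\bM^{\otimes m})$ by integration by parts (the sign $(-1)^{|a|}$ is absorbed). By the definition of $\fL^m$ and $\cX^{m,b}(x;x_1,\ldots,x_m)=\tfrac1{b!}\prod_q(x_q-x)^{b_q}$, after applying $\partial^{a_q}_{x_q}$ to $(x_q-x)^{b_q}$ and then setting $x_q=x$ (which is what integrating against $1_\bM$ and using that $\fL^m v$ is supported on the diagonal effectively does), one gets a nonzero contribution only when $a_q\le b_q$ componentwise for every $q$, with the surviving term $\tfrac{b!}{(b-a)!}(x_q-x)^{b_q-a_q}$ evaluated at $x_q=x$, which vanishes unless $b_q=a_q$. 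Hence only $b=a$ survives, giving the combinatorial factor $b!/0! \cdot 1/b! = 1$ and $\fI(\cX^{m,a}V)=v$, while $\fI(\cX^{m,b}V)=0$ for $b\ne a$. The main obstacle — such as it is — is purely bookkeeping: making the integration-by-parts and the "evaluation on the diagonal" rigorous at the level of distributions in $\cD^m_\oo$ rather than genuine functions, which is handled by the $\oo$-independent formula for $\fI$ established at the outset together with the identities $\fP^\oo K^{\ast\oo}=\delta_\bM$.
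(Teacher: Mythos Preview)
Your proposal is correct and follows essentially the same approach as the paper. The only presentational difference is in Part~(C): you carry out the integration by parts directly on the pairing with $\psi\otimes 1_\bM^{\otimes m}$ and evaluate on the diagonal, whereas the paper phrases the same computation through the kernel identity $\int_\bM(\partial^b K^{\ast\oo})(\rd x)=0$ for $b\neq 0$ (together with $\cX^{m,b}\fL^m v=0$ for $b\neq 0$), which is just the dual formulation of your argument.
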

\begin{proof}
In order to prove that the map is well defined it is enough to use the identity $\int_\bM K(x)\,\rd x=\int_\bM J(x)\,\rd x=1$. Part (A) is a consequence of the fact that $\int_\bM K_\sIR(x)\,\rd x=\int_\bM J_\sUV(x)\,\rd x=1$. Part (B) follows immediately from Def.~\ref{dfn:cV} of the norm~$\cV^m$. To prove Part (C) we note that $\partial^b\fL^m(v)\in\cD^{m;\oo}$ for any $\oo\geq|b|$ and $\int_\bM (\partial^b K^{\ast\oo})(\rd x)=0$ for any $\oo\geq|b|$ such that $b\neq0$. Hence, $\fI(\fL^m v)=v$ and $\fI(\partial^b\fL^m v)=0$ unless $b=0$. Part (C) follows now from the above identities and the fact that $\cX^{m,b}\fL^m v=0$ unless $b=0$.
\end{proof}

\begin{dfn}\label{dfn:map_X}
For $m,\ro\in\bN_+$, $a\in\frM^m$ such that $|a|<\ro$ and two collections of distributions: $v^b\in\cD$, $b\in\frM^m$, $|b|<\ro$, and $V^b\in\cD^m$, $b\in\frM^m$, $|b|=\ro$, the distribution $\fX_\ro^a(v^b,V^b)\in \cD^m$ is defined by the equality 
\begin{multline}\label{eq:fX}
 \fX_\ro^a(v^b,V^b)
 :=
 \sum_{|a+b|<\ro} \,\frac{1}{b!}
 \partial^b \fL^m(v^{a+b})
 \\
 +
 \sum_{|a+b|=\ro} \frac{|b|}{b!}
 \int_0^1 (1-\tau)^{|b|-1}\,
 \partial^b \fZ_\tau(V^{a+b})\,\rd\tau,
\end{multline} 
where the sums above are over $b\in\frM^m$.
\end{dfn}
\begin{rem}
This map is well defined by the bounds proved in Lemma~\ref{lem:taylor_bounds_aux}.
\end{rem}
\begin{rem}
The usefulness of the map $\fX^a_\ro$ comes from the identity proved in Theorem~\ref{thm:taylor}. Observe that the first term on the RHS of Eq.~\eqref{eq:fX} is a distribution on $\fM^{1+m}$ that is supported on the diagonal. The second term of the RHS of this equality is a certain non-local remainder, which can be controlled provided $\ro\in\bN_+$ is chosen big enough. We show desirable bounds for $\fX_\ro^a(v^b,V^b)$ in Theorem~\ref{thm:taylor_bounds} stated below.
\end{rem}

\begin{thm}\label{thm:taylor}
Let $m,\ro\in\bN_+$, $\oo\in\bN_0$ and $V\in\cD^{m;0}$ such that $\cX^{m,b} V \in \cD^{m;0}$ for any $b\in\frM^m$, $|b|\leq\ro$. Then $\fI(\cX^{m,a} V)\in\cD^{;0}$ and
\begin{equation}\label{eq:fX_identity}
 \cX^{m,a} V = \fX_\ro^a(\fI(\cX^{m,b} V),\cX^{m,b} V)
\end{equation}
for all $a\in\frM^m$ such that $|a|<\ro$.
\end{thm}
\begin{proof}
First recall that for any $\varphi\in C^\infty(\bR^N)$, $N\in\bN_+$, it holds
\begin{multline}
 \varphi(y) = \sum_{|b|<\ro} \frac{1}{b!}(y-x)^b\,(\partial^b \varphi)(x) 
 \\
 + \sum_{|b|=\ro} \frac{|b|}{b!}\,(y-x)^b \int_0^1 (1-\tau)^{\ro-1}\,(\partial^b \varphi)(x+\tau(y-x))\,\rd\tau
\end{multline} 
by the Taylor theorem, where the sums are over $b\in\bN_0^N$. Consequently, for any $\varphi\in C^\infty_\rc(\bM\times\bM^m)$ we have
\begin{multline}
 (\cX^{m,a}\varphi)(x;y_1,\ldots,y_m) 
 = \sum_{|a+b|<\ro} \frac{(-1)^{|b|}}{b!}\cX^{m,a+b}(x;y_1,\ldots,y_m) (\partial^b \varphi)(x;x,\ldots,x)
 \\
 + \sum_{|a+b|=\ro} \frac{(-1)^{|b|}|b|}{b!}\cX^{m,a+b}(x;y_1,\ldots,y_m) \int_0^1 (1-\tau)^{\ro-1}(\fZ_{\tau}^\dagger\partial^b\varphi)(x;y_1,\ldots,y_m)\,\rd\tau,
\end{multline}
where the map $\fZ_{\tau}^\dagger:=\tau^{(1+\rdim)m}\fZ_{1/\tau}$ is a formal dual to the map $\fZ_\tau$ and the sums are over $b\in\frM$. In order to prove the theorem it is now enough to test $\cX^{m,a} V$ with $\varphi\in C^\infty_\rc(\bM\times\bM^m)$, apply the above formula and use the definitions of the maps $\fX_\ro^a$ and $\fI$.
\end{proof}

\begin{rem}
Note that the map $\fX^a_\ro$ treats space and time on equal footing and can be used to build the solution theory for both elliptic and parabolic equations. By the above theorem we have
\begin{equation}\label{eq:map_X_property}
 F^{i,m,a}_{\sUV,\sIR} = \cX^{m,a}F^{i,m}_{\sUV,\sIR} = \fX^a_\ro(f^{i,m,b}_{\sUV,\sIR},F^{i,m,b}_{\sUV,\sIR})\in\cD^{m;0},
\end{equation}
where $f^{i,m,b}_{\sUV,\sIR} = \fI(F^{i,m,b}_{\sUV,\sIR})\in\cD^{;0}$ and $\ro\in\bN_0$ is such that $\varrho(i,m)+|\ro|>0$. This identity will be used in the deterministic analysis to express the relevant effective force coefficients $F^{i,m,a}_{\sUV,\sIR}$, $\varrho(i,m,a)=\varrho(i,m)+[a]<0$, in terms of the irrelevant coefficients $F^{i,m,b}_{\sUV,\sIR}$, $|b|=\ro$, and the relevant or irrelevant coefficients $f^{i,m,b}_{\sUV,\sIR}$, $|b|<\ro$. As an aside, we remark that using the generalized Taylor formula from~\cite[Appendix A]{hairer2014structures} it is possible to define a map $\tilde\fX^a_\ro$ such that the property expressed in Eq.~\eqref{eq:map_X_property} holds for $\tilde\fX^a_\ro$ and $\tilde\fX^a_\ro$ takes as an input only irrelevant coefficients $F^{i,m,b}_{\sUV,\sIR}$ and relevant coefficients $f^{i,m,b}_{\sUV,\sIR}$. In contrast to the map $\fX^a_\ro$, the map $\tilde\fX^a_\ro$ does not treat space and time on equal footing. Although, the use of the map $\tilde\fX^a_\ro$ would be more natural in the context of parabolic equations we prefer to use the map $\fX^a_\ro$ since the analysis of properties of the map $\tilde\fX^a_\ro$ is more involved.
\end{rem}

\begin{thm}\label{thm:taylor_bounds}
Let $m,\ro\in\bN_+$ and $\oo\in\bN_0$. There exists a constant $c>0$ and $\ooo\in\bN_+$ such that the following statement is true. Let $V^b\in\cV^m$, $v^b\in\cV$ be as in Def.~\ref{dfn:map_X} and $\sUV\geq 0$ and $\sIR>\sUV$. Assume that there exists a constant $C>0$ such that 
\begin{equation}\label{eq:taylor_ass_V}
 \|K_{\sUV,\sIR}^{m;\oo}\ast V^b\|_{\cV^m} \leq C\, [\sIR]^{[b]},\quad |b|=\ro,
\quad\quad
 \|K_\sIR^{\ast\oo}\ast v^b\|_\cV \leq C\, [\sIR]^{[b]},
 \quad |b|<\ro,
\end{equation}
for $b\in\frM$. Then
\begin{equation}\label{eq:taylor_thm_bound}
 \|K_{\sUV,\sIR}^{m;\ooo}\ast \fX_\ro^a(v^b,V^b)\|_{\cV^m} \leq c\, C\, [\sIR]^{[a]},
\end{equation}
for $a\in\frM$ such that $|a|<\ro$.
\end{thm}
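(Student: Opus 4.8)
The plan is to estimate separately the two sums defining $\fX_\ro^a$ in Definition~\ref{dfn:map_X}: the Taylor--polynomial part $\sum_{|a+b|<\ro}(-1)^{|b|}\binom{a+b}{a}\,\partial^b\fL^m(v^{a+b})$ and the Taylor--remainder part $\sum_{|a+b|=\ro}|b|\,(-1)^{|b|}\binom{a+b}{a}\int_0^1(1-\tau)^{|b|-1}\,\partial^b\fZ_\tau(V^{a+b})\,\rd\tau$. In both cases the mechanism is the same: transfer all derivatives (those in $\partial^b$, plus those produced when a smoothing factor is resolved into a differential operator) onto the kernels making up $K^{\cdot;\ooo}_{\sUV,\sIR}$, using Lemma~\ref{lem:kernel_simple_fact}, so that what remains is a convolution of one of the \emph{controlled} objects $K^{\ast\oo}_\sIR\ast v^{a+b}$ or $K^{m;\oo}_{\sUV,\sIR}\ast V^{a+b}$ against kernels of total variation $\lesssim[\sIR]^{-[\,\cdot\,]}$, and the powers of $[\sIR]$ then bookkeep to $[\sIR]^{[a]}$ because $[a+b]-[b]=[a]$. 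The parameter $\ooo$ is fixed large enough (depending only on $m,\ro,\oo$) that every derivative appearing has order at most $\ooo$; the restriction $|a|<\ro$ keeps all orders within this range, and $\sIR\geq\sUV$ is used throughout to replace $[\sUV]^{-q}$ by $[\sIR]^{-q}$ when convenient.

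\textbf{Taylor--polynomial part.} Since the hypothesis controls $K^{\ast\oo}_\sIR\ast v^{a+b}$ rather than $v^{a+b}$, I would first write $v^{a+b}=\fP^\oo_\sIR(K^{\ast\oo}_\sIR\ast v^{a+b})$ using $\fP^\oo_\sIR K^{\ast\oo}_\sIR=\delta_\bM$, and expand $\fP^\oo_\sIR=\sum_c \mu_c\,[\sIR]^{[c]}\,\partial^c$ with $|\mu_c|\lesssim1$ (each spatial derivative carries $[\sIR]^2$ and each time derivative $\sIR=[\sIR]^\sigma$, so the coefficient of $\partial^c$ is a constant times $[\sIR]^{[c]}$). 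The multinomial Leibniz rule applied to the multiplication operator $\fL^m$ gives $\fL^m(\partial^c w)=\sum_{c_0+\dots+c_m=c}\binom{c}{c_0,\dots,c_m}\,\partial^{(c_0,\dots,c_m)}\fL^m(w)$, so $\partial^b\fL^m(v^{a+b})$ becomes a finite combination of $[\sIR]^{[c]}\,\partial^{(c_0,c_1+b_1,\dots,c_m+b_m)}\fL^m(K^{\ast\oo}_\sIR\ast v^{a+b})$. Convolving with $K^{m;\ooo}_{\sUV,\sIR}$ and moving each $\partial^{c_j+b_j}$ onto the corresponding $K^{\ast\ooo}_\sIR$ (leaving $J_\sUV$ undifferentiated, $\|J_\sUV\|_\cK=1$), a direct computation of the $\sV^m$--norm of a tensor--product kernel $H_0\otimes\bigotimes_jH_j$ convolved with $\fL^m(w)$ yields $\lesssim\|H_0\|_\cK\bigl(\prod_j\|H_j\|_\cK\bigr)\|w\|_\sV$; taking $w=K^{\ast\oo}_\sIR\ast v^{a+b}$ (so $\|w\|_\sV\leq C[\sIR]^{[a+b]}$) and $\|\partial^{c_0}K^{\ast\ooo}_\sIR\|_\cK\prod_j\|\partial^{c_j+b_j}K^{\ast\ooo}_\sIR\|_\cK\lesssim[\sIR]^{-[c]-[b]}$ gives $\lesssim C[\sIR]^{[c]+[a+b]-[c]-[b]}=C[\sIR]^{[a]}$, and summing over the finitely many $c$ finishes this part. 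This is exactly the argument of Lemma~\ref{lem:bound_map_L_m}, upgraded from constant $v^a$ to $v^a\in\sV$ by the $\fP^\oo_\sIR$--resolvent.

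\textbf{Taylor--remainder part.} Here the new ingredient is the scaling of the dilation $\fZ_\tau$. I would use that $\fZ_\tau$ is an isometry of $\sV^m$ (the change of variables $w_j=x+(y_j-x)/\tau$ in the $y$--integral shows $[\fZ_\tau U]=[U]$) and that for a kernel $H\in\cK$ acting in a $y$--slot convolution commutes with $\fZ_\tau$ up to an \emph{isotropic} rescaling $\tilde H(v)=\tau^{1+\rdim}H(\tau v)$ with $\|\tilde H\|_\cK=\|H\|_\cK$, namely $(\delta_\bM\otimes H^{\otimes m})\ast\fZ_\tau V=\fZ_\tau\bigl((\delta_\bM\otimes\tilde H^{\otimes m})\ast V\bigr)$. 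Moving $\partial^b$ onto the $y$--slot kernels $\partial^{b_j}(K^{\ast\ooo}_\sIR\ast J_\sUV)$, peeling off the $x$--slot factor $K^{\ast\ooo}_\sIR$, and commuting $\fZ_\tau$ past the $y$--slot convolutions reduces the estimate to $K^{\ast\ooo}_\sIR\ast_x\fZ_\tau\bigl[(\delta_\bM\otimes\bigotimes_j\tilde H_j)\ast V^{a+b}\bigr]$ with $\tilde H_j$ the rescaling of $\partial^{b_j}(K^{\ast\ooo}_\sIR\ast J_\sUV)$. Using $\sIR\geq\sUV$ one checks $\tilde H_j=(K^{\ast\oo}_\sIR\ast J_\sUV)\ast g_j$ with $g_j:=\fR_\sUV\fP^\oo_\sIR\tilde H_j$ and $\|g_j\|_\cK\lesssim[\sIR]^{-[b_j]}$: expanding $\fP^\oo_\sIR\tilde H_j$ each derivative costs only $\tau^{|c|}\leq1$ against a gained $[\sIR]^{[c]}$, and $[\sUV]^{\sigma-\varepsilon}(-\Delta_{\bar x})^{(\sigma-\varepsilon)/2}$ costs $[\sUV]^{\sigma-\varepsilon}\cdot[\sUV]^{-(\sigma-\varepsilon)}=1$ via $(-\Delta_{\bar x})^{(\sigma-\varepsilon)/2}J_\sUV=[\sUV]^{-(\sigma-\varepsilon)}(\delta_\bM-J_\sUV)$ together with $[\sUV]\leq[\sIR]$. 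This exposes $K^{m;\oo}_{\sUV,\sIR}\ast V^{a+b}$ (controlled by $C[\sIR]^{[a+b]}$) convolved against kernels of total variation $\lesssim[\sIR]^{-[b]}$, and the $(1-\tau)^{|b|-1}$--weighted $\tau$--integration then produces the final bound $C[\sIR]^{[a]}$.

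\textbf{Main obstacle.} The delicate step is the Taylor--remainder part, precisely the reconciliation of the parabolic scaling of the kernels $K_\sIR,J_\sUV$ with the isotropic dilation $\fZ_\tau$: because $\fZ_\tau$ entangles the base--point variable $x$ with the argument variables $y_1,\dots,y_m$, convolution in the $x$--slot does \emph{not} commute with $\fZ_\tau$, so one cannot simply pull the $x$--slot smoothing $K^{\ast\oo}_\sIR$ through the dilation to reach $V^{a+b}$; one must keep the convolutions of $V^{a+b}$ intact until the norm is taken (any premature use of the triangle inequality replaces the controlled quantity $\|K^{m;\oo}_{\sUV,\sIR}\ast V^{a+b}\|_{\sV^m}$ by the uncontrolled $\|V^{a+b}\|_{\sV^m}$), and verify that the rescaled kernel $\tilde H_j$ is, in every slot, at least as regularising as $K^{m;\oo}_{\sUV,\sIR}$ so that the hypothesis on $V^{a+b}$ applies without losing a power of $\sIR$ or generating a $\tau$--non--integrable factor. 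Getting the bookkeeping of which resolvent ($\fP^\oo_\sIR$ in the base slot, $\fP^\oo_\sIR\fR_\sUV$ in the argument slots) is applied where — and checking that the resulting $\tau$--dependence is integrable against $(1-\tau)^{|b|-1}$ — is where the real work lies; the conditions $\sIR\geq\sUV$, $|a|<\ro$, and $\varepsilon<\varepsilon_\sigma$ are exactly what make this bookkeeping close.
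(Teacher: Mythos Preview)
Your handling of the Taylor--polynomial part is sound, and your $y$--slot commutation formula $(\delta_\bM\otimes H^{\otimes m})\ast\fZ_\tau V=\fZ_\tau\bigl((\delta_\bM\otimes\tilde H^{\otimes m})\ast V\bigr)$ with $\tilde H(u)=\tau^{1+\rdim}H(\tau u)$ is correct. But the Taylor--remainder part has a genuine gap, and it is exactly the one you flag as the ``main obstacle'' without actually closing. After your manipulations you arrive at $K^{\ast\ooo}_\sIR\ast_x\fZ_\tau\bigl[(\delta_\bM\otimes\bigotimes_j\tilde H_j)\ast V^{a+b}\bigr]$ and then claim this ``exposes $K^{m;\oo}_{\sUV,\sIR}\ast V^{a+b}$''. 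It does not: writing $\tilde H_j=(K^{\ast\oo}_\sIR\ast J_\sUV)\ast g_j$ gives you, inside the bracket, only $(\delta_\bM\otimes(K^{\ast\oo}_\sIR\ast J_\sUV)^{\otimes m})\ast V^{a+b}$, with $\delta_\bM$ in the $x$--slot. The hypothesis controls $K^{m;\oo}_{\sUV,\sIR}\ast V^{a+b}=(K^{\ast\oo}_\sIR\otimes(K^{\ast\oo}_\sIR\ast J_\sUV)^{\otimes m})\ast V^{a+b}$, and the missing $K^{\ast\oo}_\sIR$ in the $x$--slot is precisely what you cannot supply: the outer $K^{\ast\ooo}_\sIR\ast_x$ sits on the wrong side of $\fZ_\tau$, and throwing it away via $\|K^{\ast\ooo}_\sIR\|_\cK=1$ followed by the isometry $\|\fZ_\tau W\|_{\sV^m}=\|W\|_{\sV^m}$ leaves you with an uncontrolled norm.

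The paper closes this gap by a direct algebraic identity rather than an estimate. It first uses $\sIR\geq\sUV$ and Lemma~\ref{lem:kernel_simple_fact}(C) to trade both the hypothesis and the conclusion for their $J_\sUV$--free analogues (so one may work with $K^{\ast\oooo,\otimes(1+m)}_\sIR$ throughout), and then proves, in Lemma~\ref{lem:taylor_bounds_aux}(B), the key commutation
\[
 K^{\ast6\oooo,\otimes(1+m)}_\sIR\ast\fZ_\tau V
 \;=\;
 \check H^{\ast\oooo}_\tau\ast \fZ_\tau\bigl(K^{\ast\oooo,\otimes(1+m)}_\sIR\ast V\bigr),
 \qquad \sup_{\tau\in(0,1]}\|\check H_\tau\|_{\cK^{1+m}}<\infty.
\]
The non-trivial half of this --- pulling the $x$--slot factor through $\fZ_\tau$ --- rests on the identity $K^{\ast 3,\otimes (1+m)}=H_\tau\ast\fZ_\tau(K\otimes\delta_\bM^{\otimes m})$ with
\[
 H_\tau(x;y_1,\dots,y_m)=\fP\bigl(\partial_x+(1-\tau)(\partial_{y_1}+\dots+\partial_{y_m})\bigr)\,K^{\ast3}(x)K^{\ast3}(y_1)\cdots K^{\ast3}(y_m),
\]
which holds because $K$ is the fundamental solution of the local differential operator $\fP$ (Lemma~\ref{lem:taylor_aux}). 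This is the missing idea: one must exploit that $K=\fP^{-1}$ with $\fP$ a polynomial in $\partial$ to manufacture a kernel $H_\tau$, polynomial in $\tau$ with coefficients in $\cK^{1+m}$, that intertwines the $x$--slot smoothing with the dilation. Your proposal, by contrast, attempts to leave the $x$--slot smoothing outside and pay only at the level of norms, which cannot succeed.
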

\begin{proof}
Let $\oooo=\floor{\sigma/2}+1$. Since $\fR_\sUV J_\sUV=\delta_\bM$ we obtain
\begin{equation}
 K^{\ast(\oo+\oooo),\otimes(1+m)}_\sIR = K^{m;\oo}_{\sUV,\sIR} \ast (K_\sIR^{\ast \oooo}\otimes (\fR_\sUV K_\sIR^{\ast \oooo})^{\otimes m}).
\end{equation}
By Lemma~\ref{lem:kernel_simple_fact}~(C) for $\sIR\geq\sUV$ it holds
\begin{equation}\label{eq:bound_taylor_ass}
 \|K^{\ast(\oo+\oooo),\otimes(1+m)}_\sIR \ast V^b\|_{\cV^m}
 \lesssim 
 \|K^{m;\oo}_{\sUV,\sIR}\ast V\|_{\cV^m}.
\end{equation}
On the other hand, since $\|J_\sUV\|_\cK\lesssim 1$ in order to conclude the proof it is enough to show that
\begin{equation}
 \|K^{\ast\ooo,\otimes(1+m)}_\sIR\ast \fX_\ro^a(v^b,V^b)\|_{\cV^m} \lesssim C\, [\sIR]^{[a]},
\end{equation}
where $\ooo=6\oo+6\oooo+\ro$. This bound follows from Def.~\ref{dfn:map_X} of the map $\fX^a_\ro$, the bound~\eqref{eq:bound_taylor_ass}, Lemma~\ref{lem:taylor_bounds_aux} and the bounds $\|\partial^c K^{\ast\ro}_\sIR\|_\cK\lesssim [\sIR]^{-[c]}$, $c\in\frM$, $|c|\leq\ro$, proved in Lemma~\ref{lem:kernel_simple_fact}~(A).
\end{proof}

\begin{lem}\label{lem:taylor_bounds_aux}
Let $\oooo\in\bN_0$. The following bounds:
\begin{enumerate}
\item[(A)] $\|K^{\ast3\oooo,\otimes(1+m)}_\sIR\ast \fL^m v\|_{\cV^m} \lesssim  \|K^{\ast\oooo}_\sIR\ast v\|_\cV$,
\item[(B)]
$\|K^{\ast6\oooo,\otimes(1+m)}_\sIR\ast\fZ_\tau V\|_{\cV^m}
 \lesssim
 \|K^{\ast\oooo,\otimes(1+m)}_\sIR\ast V\|_{\cV^m}$
\end{enumerate}
hold uniformly in $\tau,\sIR\in(0,1]$ and $v\in\cV$, $V\in\cV^m$.
\end{lem}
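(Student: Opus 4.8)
The plan is to deduce both estimates from one mechanism: the reproducing identity
\begin{equation}
 \rid=(\fP^{\oooo}_\sIR)^{\otimes(1+m)}\circ\big(K^{\ast\oooo,\otimes(1+m)}_\sIR\ast\Cdot\big),
\end{equation}
which holds because $\fP^{\oooo}_\sIR K^{\ast\oooo}_\sIR=\delta_\bM$ (remark after Definition~\ref{dfn:diff_op}), together with transposing the differential operator $\fP^{\oooo}_\sIR$ onto the ``outer'' kernels (i.e.\ integrating by parts). The structural inputs are: (i) $\fP^{\oooo}_\sIR=\sum_a c_a\,[\sIR]^{[a]}\,\partial^a$ with absolute constants $c_a$ and $|a|\le 3\oooo$ (since $\fP_\sIR$ has order $3$ and $[\partial_{\mathring x}]=\sigma$, $[\Delta_{\bar x}]=2$, so each monomial carries the matching weight $[\sIR]^{[a]}$); (ii) $\|\partial^a K^{\ast\oo}_\sIR\|_\cK\lesssim[\sIR]^{-[a]}$ for $|a|\le\oo$, from Lemma~\ref{lem:kernel_simple_fact}~(A); (iii) $K^{\ast\oo}_\sIR$ is a probability measure. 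The exponents $3\oooo$ and $6\oooo$ in the statement are exactly what makes the argument close: after transposition no outer kernel is differentiated beyond the order for which (ii) is available, and each $[\sIR]^{[a]}$ produced by $\fP^{\oooo}_\sIR$ is matched by the inverse power $[\sIR]^{-[a]}$ coming from (ii).

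For Part~(A), since $\fL^m v(x_0;y_0)=v(x_0)\prod_j\delta_\bM(y_{0,j}-x_0)$, integrating out the deltas gives
\begin{equation}
 \big(K^{\ast3\oooo,\otimes(1+m)}_\sIR\ast\fL^m v\big)(x;y)=\int_\bM K^{\ast3\oooo}_\sIR(x-x_0)\,v(x_0)\prod_{j=1}^m K^{\ast3\oooo}_\sIR(y_j-x_0)\,\rd x_0.
\end{equation}
I would substitute $v=\fP^{\oooo}_\sIR\big(K^{\ast\oooo}_\sIR\ast v\big)$ in the $x_0$-slot, transpose $\fP^{\oooo}_\sIR$ onto the $1+m$ kernel factors via the Leibniz rule, bound $|(K^{\ast\oooo}_\sIR\ast v)(x_0)|\le\|K^{\ast\oooo}_\sIR\ast v\|_\sV$, carry out the $y_j$-integrations (each producing a factor $\|\partial^b K^{\ast3\oooo}_\sIR\|_\cK$) and then the $x_0$-integration, and finally invoke (ii) with $\sum_l[a_l]=[a]$ to cancel $[\sIR]^{[a]}$. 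What is left is a finite sum with coefficients depending only on $m,\oooo$ times $\|K^{\ast\oooo}_\sIR\ast v\|_\sV$. This is exactly why the reproducing identity is used: it puts $K^{\ast\oooo}_\sIR\ast v$, rather than $v$ itself, on the right-hand side.

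For Part~(B), I would first change variables $z_j=x_0+(y_{0,j}-x_0)/\tau$ (Jacobian $\tau^{1+\rdim}$ per variable, cancelling the normalization built into $\fZ_\tau$) to obtain
\begin{equation}
 \big(K^{\ast6\oooo,\otimes(1+m)}_\sIR\ast\fZ_\tau V\big)(x;y)=\int_{\bM^{1+m}} K^{\ast6\oooo}_\sIR(x-x_0)\prod_{j=1}^m K^{\ast6\oooo}_\sIR\big(y_j-(1-\tau)x_0-\tau z_j\big)\,V(x_0;z)\,\rd x_0\,\rd z,
\end{equation}
then substitute $V=(\fP^{\oooo}_\sIR)^{\otimes(1+m)}\big(K^{\ast\oooo,\otimes(1+m)}_\sIR\ast V\big)$ and transpose all $1+m$ copies of $\fP^{\oooo}_\sIR$. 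By the chain rule each $x_0$-derivative contributes a scalar of modulus $\le1$ (namely $\pm1$ from $K^{\ast6\oooo}_\sIR(x-x_0)$ or $\pm(1-\tau)$ from the $j$-th factor) and each $z_j$-derivative contributes $\pm\tau^{|a_j|}$; all of these are harmless since $\tau\in(0,1]$. A single kernel factor now receives at most $3\oooo$ derivatives from the $x_0$-copy of $\fP^{\oooo}_\sIR$ and at most $3\oooo$ from the $z_j$-copy, hence at most $6\oooo$ in all, so (ii) applies to each differentiated kernel. Integrating out $y$, then $z$ against $\|K^{\ast\oooo,\otimes(1+m)}_\sIR\ast V\|_{\sV^m}$, then $x_0$, and cancelling the $[\sIR]$-powers as in Part~(A) yields the bound.

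The only genuinely delicate part is this bookkeeping: tracking how the at most $3\oooo$ derivatives of each copy of $\fP^{\oooo}_\sIR$ are distributed over the kernel factors so as to stay within the order permitted by Lemma~\ref{lem:kernel_simple_fact}~(A), and verifying that every $[\sIR]$-power cancels; the $\tau$- and $(1-\tau)$-factors only help, and neither compactness of support nor translation invariance is needed. The regularity points (all objects involved are locally finite measures, and the transposition amounts to the identity $\partial^a(K\ast\mu)=(\partial^a K)\ast\mu$) are routine, and the case $\oooo=0$ is trivial since then $\fZ_\tau$ is an isometry of $\sV^m$ and $\|\fL^m v\|_{\sV^m}=\|v\|_\sV$.
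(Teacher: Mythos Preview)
Your approach is correct and rests on the same core idea as the paper's proof: the reproducing identity $\fP^{\oooo}_\sIR K^{\ast\oooo}_\sIR=\delta_\bM$ is used to peel off a factor of $K^{\ast\oooo}_\sIR$, and the resulting copies of $\fP^{\oooo}_\sIR$ are transposed onto the outer kernels, where Lemma~\ref{lem:kernel_simple_fact}~(A) absorbs them. Your bookkeeping of the derivative counts (at most $3\oooo$ from the $x_0$-copy plus at most $3\oooo$ from the $z_j$-copy on any single factor, hence the exponent $6\oooo$) and of the matching $[\sIR]$-powers is accurate, and the $\tau,(1-\tau)$ factors are indeed harmless on $(0,1]$.

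The paper organizes the same computation differently. It first reduces to $\sIR=1$ via the scaling identities $K_\sIR=\fS_\sIR K$, $\fS_\sIR\fL^m=\fL^m\fS_\sIR$, $\fS_\sIR\fZ_\tau=\fZ_\tau\fS_\sIR$, which eliminates the $[\sIR]$-tracking entirely. Then, rather than expanding by Leibniz and bounding term by term, it packages the result of the transposition into explicit auxiliary kernels $H_\tau,\check K_\tau\in\cK^{1+m}$ (defined by applying $\fP(\partial_x+(1-\tau)\partial_y)$ and $\fP(\tau\partial_x)$ to products of $K^{\ast3}$; see Lemma~\ref{lem:taylor_aux}) and proves the clean kernel identity
\[
 K^{\ast6\oooo,\otimes(1+m)}\ast\fZ_\tau V=\check H_\tau^{\ast\oooo}\ast\fZ_\tau\big(K^{\ast\oooo,\otimes(1+m)}\ast V\big),
\]
from which the bound is immediate once $\|\check H_\tau\|_{\cK^{1+m}}\lesssim1$ is checked. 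The paper's route is more modular (the integration by parts is encapsulated once and for all in the uniform $\cK$-bound on $H_\tau,\check K_\tau$), while yours is more direct and avoids introducing auxiliary objects; both are equally valid and of comparable length.
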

\begin{proof}
It follows easily from the definitions given in this section, Def.~\ref{dfn:scaling_S} of the rescaling map $\fS_\sIR$ and Eq.~\eqref{eq:cVt_norm} defining the norm $\|\Cdot\|_{\cV^m}$ that 
\begin{equation}
 \fS_\sIR \fL^m v=\fL^m\fS_\sIR v, 
 \quad \fS_\sIR \fZ_\tau V=\fZ_\tau\fS_\sIR V,
 \quad
 K_\sIR=\fS_\sIR K,
 \quad 
 \fS_\sIR(H\ast V)=\fS_\sIR H\ast\fS_\sIR V,
\end{equation}
\begin{equation}
 \|V\|_{\cV^m} = [\sIR]^\rDim \|\fS_\sIR V\|_{\cV^m},
 \qquad
 \|V\|_{\cV^m} = \|\fZ_\tau V\|_{\cV^m}
\end{equation}
for any $\tau,\sIR>0$ and $v\in\cV$, $V\in\cV^m$, $H\in\cK^{1+m}$. By the above identities it is enough to prove the lemma for $\sIR=1$ and all $v\in C_\rb(\bM)$ and $V\,:\,\bM\to\sM(\bM^m)$ such that $\|V\|_{\cV^m}<\infty$. Using $\fP^{\oooo} K^{\ast\oooo}=\delta_\bM$ we obtain
\begin{multline}
 (K^{\ast3\oooo,\otimes(1+m)}\ast\fL^m v)(x;y_1,\ldots,y_m)
 \\
 =\int_\bM
 (K^{\ast\oooo}\ast v)(z)~
 \fP^{\oooo}(\partial_z) K^{\ast3\oooo}(x-z)K^{\ast3\oooo}(y_1-z)\ldots K^{\ast3\oooo}(y_m-z)
 \,\rd z
 \\
 =\int_\bM
 (K^{\ast\oooo}\ast v)(z)~
 H^{\ast\oooo}_0(x-z,y_1-z,\ldots,y_m-z)
 \,\rd z,
\end{multline}
where $H_0\in\cK^{1+m}$ is defined in the lemma below and $\fP(\partial_z)=1-\Delta_z$. This proves the bound (A) since
\begin{equation}
 K^{\ast3\oooo,\otimes(1+m)}\ast \fL^m v = H_0^{\ast\oooo}\ast \fL^m(K^{\ast\oooo}\ast v).
\end{equation}
The bound (B) follows from the identities
\begin{equation}\label{eq:lem_taylor_useful_identities}
\begin{gathered}
 K^{\ast3,\otimes(1+m)}
 =(K^{\ast 3}\otimes\check K^{\otimes m}_\tau) \ast \fZ_\tau(\delta_\bM\otimes K^{\otimes m}),
 \\
 K^{\ast 3,\otimes (1+m)}
 = H_\tau\ast\fZ_\tau(K\otimes\delta_\bM^{\otimes m}),
\end{gathered}
\end{equation}
where the kernels $\check K_\tau$ and $H_\tau$ are defined in the lemma below. Indeed, applying the above equalities recursively and using $\fZ_\tau H\ast \fZ_\tau V=\fZ_\tau(H\ast V)$ we arrive at
\begin{equation}\label{eq:taylor_S_hat_property}
 K^{\ast6\oooo,\otimes(1+m)}\ast\fZ_\tau V
 =
 \check H^{\ast\oooo}_\tau\ast \fZ_\tau(K^{\ast\oooo,\otimes(1+m)}\ast V),
 \quad  
 \check H_\tau:=(K^{\ast3}\otimes \check K^{\otimes m}_\tau) \ast H_\tau,
\end{equation}
which implies Part (B) since $\|\check H_\tau\|_{\cK^{1+m}}\lesssim 1$ uniformly in $\tau\in(0,1]$ by the lemma below. It remains to establish the identities~\eqref{eq:lem_taylor_useful_identities}. The first one follows from the fact that $\fZ_\tau(\delta_\bM\otimes K^{\otimes m}) = \delta_\bM\otimes\breve K_\tau^{\otimes m}$, where $\breve K_\tau(x):=\tau^{-1-\rdim}K(x/\tau)$ satisfies the equation $\fP(\tau \partial_x)\breve K_\tau = \delta_\bM$. To show the second one we observe that by the definition of $\check H_\tau$ given in the lemma below we have
\begin{multline}
 \fZ_{1/\tau}(H_\tau)(x;y_1,\ldots,y_m)
 \\=
 \tau^{(1+\rdim)m} \fP(\partial_x)\, K^{\ast3}(x) 
 K^{\ast3}(x+\tau(y_1-x)) \ldots K^{\ast3}(x+\tau(y_1-x)).
\end{multline}
After convolving both sides with the kernel $K\otimes\delta_\bM^{\otimes m}$ we obtain
\begin{equation}
 \fZ_{1/\tau}(H_\tau)\ast\big(K\otimes\delta_\bM^{\otimes m}\big)
 =
 \fZ_{1/\tau}(K^{\ast3,\otimes (1+m)})
\end{equation}
The second of the identities \eqref{eq:lem_taylor_useful_identities} follows after applying the map $\fZ_\tau$ to both sides of the above equality.
\end{proof}

\begin{lem}\label{lem:taylor_aux}
The distributions $\check K_\tau\in\sS'(\bM)$ and $H_\tau\in\sS'(\bM^{1+m})$ defined by
\begin{equation}
\begin{gathered}
\check K_\tau(x):= \fP(\tau \partial_x) K^{\ast3}(x),
 \\ \
 H_\tau(x;y_1,\ldots,y_m)
 :=
 \fP(\partial_x+(1-\tau)\partial_y) \, K^{\ast3}(x) K^{\ast3}(y_1) \ldots K^{\ast3}(y_m), 
\end{gathered}
\end{equation}
where $\partial_y:=\partial_{y_1}+\ldots+\partial_{y_m}$, are polynomials in $\tau>0$ of degree $3$ whose coefficients belong to $\cK$ and $\cK^{1+m}$, respectively.
\end{lem}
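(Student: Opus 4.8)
The plan is to expand both kernels as polynomials in $\tau$ by substituting the relevant first–order differential operators into the symbol of $\fP$, and then to identify each $\tau$–coefficient as a linear combination of derivatives of $K^{\ast 3}$, each of which is a finite signed measure by Lemma~\ref{lem:kernel_simple_fact}~(A). First I would recall from Def.~\ref{dfn:diff_op} that $\fP=\fP_1$ arises from the polynomial $\fP(T)=(1+T_0)(1-T_1^2-\cdots-T_\rdim^2)$ of total degree $3$ in the variables $T=(T_0,T_1,\ldots,T_\rdim)$, so that for any commuting family of operators $D=(D_0,\ldots,D_\rdim)$ every monomial of $\fP(D)$ is a product of at most three of the $D_\mu$.

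For $\check K_\tau$ I would substitute $D=\tau\partial_x$, which gives $\fP(\tau\partial_x)=(1+\tau\partial_{\mathring x})(1-\tau^2\Delta_{\bar x})$ and hence $\check K_\tau=K^{\ast 3}+\tau\,\partial_{\mathring x}K^{\ast 3}-\tau^2\,\Delta_{\bar x}K^{\ast 3}-\tau^3\,\partial_{\mathring x}\Delta_{\bar x}K^{\ast 3}$. Each of the four coefficients is of the form $\partial^a K^{\ast 3}$ with $a\in\frM$ and $|a|\le 3$, hence lies in $\cK$ by Lemma~\ref{lem:kernel_simple_fact}~(A) applied with $\oo=3$. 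This already settles the first claim: $\check K_\tau$ is a polynomial of degree $3$ in $\tau$ with coefficients in $\cK$.

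For $H_\tau$ I would set $s:=1-\tau$ and substitute the vector operator $\partial_x+s\partial_y$, i.e.\ $D_0=\partial_{\mathring x}+s\sum_{j=1}^m\partial_{\mathring y_j}$ and $D_k=\partial_{x^k}+s\sum_{j=1}^m\partial_{y_j^k}$. The factor $1+D_0$ is then linear in $s$ and the factor $1-\sum_k D_k^2$ is quadratic in $s$, so $\fP(D)$, and with it $H_\tau$, is a polynomial of degree $3$ in $s$, equivalently in $\tau$. Expanding $\fP(D)$ into monomials, each contribution is $s^j$ (with $0\le j\le 3$) times a constant times a differential operator $\partial_x^\alpha\partial_{y_1}^{\beta_1}\cdots\partial_{y_m}^{\beta_m}$ with $|\alpha|+|\beta_1|+\cdots+|\beta_m|\le 3$, because $\fP(T)$ has total degree $3$; applying it to $K^{\ast 3}(x)K^{\ast 3}(y_1)\cdots K^{\ast 3}(y_m)$ yields the tensor product $(\partial^\alpha K^{\ast 3})\otimes(\partial^{\beta_1}K^{\ast 3})\otimes\cdots\otimes(\partial^{\beta_m}K^{\ast 3})$, in which no factor carries more than three derivatives, so each factor lies in $\cK$ by Lemma~\ref{lem:kernel_simple_fact}~(A) and the tensor product lies in $\cK^{1+m}$ with norm the product of the factor norms. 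Collecting the monomials by powers of $\tau$ then displays $H_\tau=\sum_{j=0}^3\tau^j H^{(j)}$ with $H^{(j)}\in\cK^{1+m}$.

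I do not expect any real obstacle. The only points needing a little care are the bookkeeping in the second step — after the substitution one distributes the at most three derivatives of a monomial of $\fP$ among the $1+m$ blocks $x,y_1,\ldots,y_m$, and since the total is at most three no block receives more than three, which is exactly the hypothesis of Lemma~\ref{lem:kernel_simple_fact}~(A) with $\oo=3$ — and the degree count in $\tau$, which follows from the observation that $1-\tau$ enters the first factor of $\fP$ to the first power and the second factor to the second power.
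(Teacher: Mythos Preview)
Your proof is correct and follows the same underlying idea as the paper, which simply remarks that the lemma ``follows immediately from the fact that $\partial^a K\in\cK$ for any multi-index $a$ such that $|a|=1$''; you have just spelled out the details by expanding $\fP$ explicitly and invoking the consequence Lemma~\ref{lem:kernel_simple_fact}~(A) with $\oo=3$. The bookkeeping you describe (degree count in $\tau$ via the factorization of $\fP$, and distributing at most three derivatives among the $1+m$ tensor factors) is exactly what is needed.
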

\begin{rem}
The lemma follows immediately from the fact that $\partial^a K\in\cK$ for any multi-index $a$ such that $|a|=1$. Note that the statement remains true when the kernel $K^{\ast3}$ is replaced by $K$. However, the proof of this stronger version is slightly non-trivial.
\end{rem}

\section{Effective force coefficients at stationarity}\label{sec:effective_force}

In this section we define effective force coefficients
\begin{equation}
 F^{i,m,a}_{\uv;\sUV,\sIR}\in\cD^m\subset \sD'(\bM\times\bM^m), 
 \quad
 (i,m,a)\in\frI_0,~(\uv,\sUV)\in[0,1]^2\setminus\{(0,0)\},~\sIR\in[0,1].
\end{equation}
The coefficients $F^{i,m,a}_{\uv;\sUV,\sIR}$ are multi-linear translationally invariant functionals of the noise $\varXi_\sUV$. Since the noise $\varXi_\sUV$ is a stationary random field the same is true for the coefficients $F^{i,m,a}_{\uv;\sUV,\sIR}$. At later stage of the construction we shall introduce other effective force coefficients which depend on the initial data and in general are not stationary.

We have $F^{i,m,a}_{\uv;\sUV,\sIR}=\cX^{m,a}F^{i,m}_{\uv;\sUV,\sIR}$, where $\cX^{m,a}$ is some polynomial and the coefficients $F^{i,m}_{\uv;\sUV,\sIR}\in\cD^m$ are such that the effective force functional $F_{\uv;\sUV,\sIR}[\varphi]$ given by the following formal power series
\begin{equation}\label{eq:ansatz}
 \langle F_{\uv;\sUV,\sIR}[\varphi],\psi\rangle
 :=\sum_{i=0}^\infty \sum_{m=0}^\infty \lambda^i\,\langle F^{i,m}_{\uv;\sUV,\sIR},\psi\otimes\varphi^{\otimes m}\rangle
\end{equation}
satisfies the flow equation
\begin{equation}\label{eq:flow_eq}
 \langle\partial_\sIR F_{\uv;\sUV,\sIR}[\varphi],\psi\rangle
 =
 -\langle \rD F_{\uv;\sUV,\sIR}[\varphi,\dot G_\sIR\ast F_{\uv;\sUV,\sIR}[\varphi]],\psi\rangle
\end{equation}
with the boundary condition $F_{\uv;\sUV,0}[\varphi]=F_{\uv;\sUV}[\varphi]$, where $\varphi,\psi\in C^\infty_\rc(\bM)$ and we used the notation introduced in Sec.~\ref{sec:intro_flow}. The force $F_{\uv;\sUV}[\varphi]$  is a generalization of the macroscopic force $F_{\sUV}[\varphi]$ given by Eq.~\eqref{eq:force_macro}. The force $F_{\uv;\sUV}[\varphi]$ involves the regularized noise $\varXi_{\uv;\sUV}$ obtained by convolving the noise $\varXi_\sUV$ with some smooth kernel. Note that if $\uv=0$, then the force $F_{\uv;\sUV}[\varphi]$ and the effective force $F_{\uv;\sUV,\sIR}[\varphi]$ coincide with the force $F_{\sUV}[\varphi]$ and the effective force $F_{\sUV,\sIR}[\varphi]$ at stationarity introduced in Sec.~\ref{sec:intro_flow}. The extra parameter $\uv\in[0,1]$ of the generalized effective force $F_{\uv;\sUV,\sIR}[\varphi]$ will play a role in Sec.~\ref{sec:probabilistic} in which we establish the convergence in probability as $\sUV\searrow0$ of the relevant coefficients $f^{i,m,a}_{\sUV,\sIR}=f^{i,m,a}_{0;\sUV,\sIR}$ of the effective force $F_{\sUV,\sIR}[\varphi]$ using a certain diagonal argument applied previously in a similar context for example in~\cite{hairer2017clt,mourrat2017}.

The results of this section have a preliminary character and rely only on the fact that the realizations of the noise $\varXi_\sUV$ belong to $C(\bH)=\cD^{;0}$ for any $\sUV\in(0,1]$ and the realizations of (a modification of) the white noise $\varXi_0$ belong to $\sC^\alpha(\bH)\subset \sD'(\bH)=\cD$ for any $\alpha<-\dim(\varXi)$. We construct the effective force coefficients $F^{i,m,a}_{\uv;\sUV,\sIR}$ pathwise for $(\uv,\sUV)\in[0,1]^2\setminus\{(0,0)\}$ and $\sIR\in[0,1]$ and prove that they satisfy some simple bounds.

\subsection{Regularized noise and force}

\begin{dfn}\label{dfn:noise_two_parameters}
Fix $M\in C^\infty_\rc(\bM)$ such that $\supp\,M\subset\{x\in\bM\,|\,|x|_\bM<1\}$. For $(\uv,\sUV)\in[0,1]^2\setminus\{(0,0)\}$ the regularized noise is defined by the equality
\begin{equation}
 \varXi_{\uv;\sUV}:= M_\uv\ast \varXi_{\sUV}.
\end{equation}
Here $M_0=\delta_\bM$ and $M_\uv:=\fS_\uv M\in C^\infty_\rc(\bM)$ for $\uv\in(0,1]$, where $\fS_\uv$ is the rescaling map introduced in Def.~\ref{dfn:scaling_S}. For $\varphi\in C^\infty_\rc(\bM)$ the force $F_{\uv;\sUV}[\varphi]\in C(\bM)$ is defined by the equality
\begin{equation}
 F_{\uv;\sUV}[\varphi](x):= \varXi_{\uv;\sUV}(x) + \sum_{(i,m,a)\in\frI} \lambda^i\, f^{i,m,a}_{\uv;\sUV}~\partial^{a_1}\varphi(x)\ldots \partial^{a_m}\varphi(x),
\end{equation}
where the force coefficients $f^{i,m,a}_{\uv;\sUV}$, \mbox{$(i,m,a)\in\frI$}, by assumption depend continuously on $(\uv,\sUV)\in[0,1]^2\setminus\{(0,0)\}$, are differentiable in the parameter $\uv$ if $\uv\in(0,1]$ and vanish identically unless $i\in\{0,\ldots,i_\flat\}$, $m\in\{0,\ldots,m_\flat\}$ and $a\in\bar\frM_\sigma^m$. The force coefficients $F^{i,m}_{\uv;\sUV}\in\sD'(\bM\times\bM^m)$ are defined by the equality
\begin{equation}
 \langle F_{\uv;\sUV}[\varphi],\psi\rangle =\sum_{i=0}^\infty\sum_{m=0}^\infty \lambda^i\,\langle F^{i,m}_{\uv;\sUV},\psi\otimes\varphi^{\otimes m}\rangle,
\end{equation}
where $\psi,\varphi\in C^\infty_\rc(\bM)$. We also set $F^{i,m,a}_{\uv;\sUV} = \cX^{m,a} F^{i,m}_{\uv;\sUV}$, where the polynomial $\cX^{m,a}$ was introduced in Def.~\ref{dfn:polynomials}. We omit index $\uv$ if $\uv=0$.
\end{dfn}

\begin{rem}
The force coefficients satisfy the conditions
\begin{equation}
 F_{\uv;\sUV}^{0,0}=\varXi_{\uv;\sUV},
 \qquad
 F_{\uv;\sUV}^{i,m,a}=\sum_{b\in\bar\frM_\sigma^m} \frac{1}{b!}\,\partial^b \fL^m(f^{i,m,a+b}_{\uv;\sUV}),\quad i\in\bN_+,m\in\bN_0,
\end{equation}
where the map $\fL^m$ was introduced in Def.~\ref{dfn:map_L_m}. The second of the above equalities follows from Lemma~\ref{lem:bound_map_L_m}. We also have $\fI(F_{\uv;\sUV}^{i,m,a})=f_{\uv;\sUV}^{i,m,a}$, where the map $\fI$ was introduced in Def.~\ref{dfn:map_I}. Let us also remark that all sums in the above definition are finite.
\end{rem}

\subsection{Effective force and flow equation}

In this section we define the effective force coefficients $F^{i,m,a}_{\uv;\sUV,\sIR}=\cX^{m,a}F^{i,m}_{\uv;\sUV,\sIR}$. For a list of multi-indices $a=(a_1,\ldots,a_m)\in\frM^m$ and a permutation $\pi\in\cP_n$ we set $\pi(a):=(a_{\pi(1)},\ldots,a_{\pi(m)})$. Recall that the polynomial $\cX^{m,a}$ was introduced in Def.~\ref{dfn:polynomials}. We have
\begin{equation}\label{eq:poly_perm}
 \cX^{m,a}(x;y_1,\ldots,y_m)
 \\
 =
 \cX^{m,\pi(a)}(x;y_{\pi(1)},\ldots,y_{\pi(m)})
\end{equation}
For all $y_0,z\in\bM$ it holds
\begin{multline}\label{eq:poly_binom}
 \cX^{m,a}(x;y_1,\ldots,y_m)
 =
 \sum_{b,c,d}\frac{a!}{b!c!d!}
 \cX^{1+k,(b_{1+k}+\ldots+b_m,b_1,\ldots,b_k)}(x;y_0,y_1,\ldots,y_k)
 \\
 \cX^{1,c_{1+k}+\ldots+c_m}(y_0;z)
 \,\cX^{m-k,(d_{1+k},\ldots,d_m)}(z;y_{1+k},\ldots,y_m),
\end{multline}
where the sum is over all $b_p,c_p,d_p$ with $p\in\{1,\ldots,m\}$ such that $b_p=a_p$, $c_p=0$, $d_p=0$ for $p\in\{1,\ldots,k\}$ and $b_p+c_p+d_p=a_p$ for $p\in\{1+k,\ldots,m\}$. Throughout the paper we use the following schematic notation for the sums of the above type
\begin{multline}\label{eq:poly_binom_notation}
 \cX^{m,a}(x;y_1,\ldots,y_m)
 \\
 =\sum_{b+c+d=a}  \frac{a!}{b!c!d!}\cX^{1+k,b}(x;y_0,y_1,\ldots,y_k)
 \cX^{1,c}(y_0,z) \cX^{m-k,d}(z;y_{1+k},\ldots,y_m).
\end{multline}
We note that the formula~\eqref{eq:poly_binom} was obtained by rewriting $(x-y_p)^{a_p}$, $p\in\{1+k,\ldots,m\}$, as
\begin{equation}
 (x-y_p)^{a_p} 
 =\sum_{\substack{b_p,c_p,d_p\\b_p+c_p+d_p=a_p}}\frac{a_p!}{b_p!c_p!d_p!}~(x-y_0)^{b_p}(y_0-z)^{c_p}(z-y_p)^{d_p}.
\end{equation}

The effective force coefficients at stationarity $F^{i,m}_{\uv;\sUV,\sIR}$ satisfy the following flow equation 
\begin{multline}\label{eq:flow_i_m}
 \big\langle \partial_\sIR^{\phantom{i}} F^{i,m}_{\uv;\sUV,\sIR}\,,
 \,\psi\otimes\botimes_{q=1}^m\varphi_q\big\rangle
 =
 -\frac{1}{m!}
 \sum_{\pi\in\cP_m}\sum_{j=1}^i\sum_{k=0}^m
 \,(k+1)
 \\
 \times\big\langle F^{j,k+1}_{\uv;\sUV,\sIR}\otimes F^{i-j,m-k}_{\uv;\sUV,\sIR},
 \psi\otimes \botimes_{q=1}^k\varphi_{\pi(q)}
 \otimes \fV\dot G_\sIR
 \otimes \botimes_{q=k+1}^m\varphi_{\pi(q)}
 \big\rangle,
\end{multline}
where $\fV\dot G_\sIR\in C^\infty(\bM\times\bM)$ is defined by $\fV\dot G_\sIR(x,y):=\dot G_\sIR(x-y)$. By the above flow equation and the identity~\eqref{eq:poly_binom} the effective force coefficients $F^{i,m,a}_{\uv;\sUV,\sIR}$ satisfy the following flow equation
\begin{multline}\label{eq:flow_i_m_a}
 \big\langle \partial_\sIR^{\phantom{i}} F^{i,m,a}_{\uv;\sUV,\sIR}\,,
 \,\psi\otimes\botimes_{q=1}^m\varphi_q\big\rangle
 =
 -\frac{1}{m!}
 \sum_{\pi\in\cP_m}\sum_{j=1}^i\sum_{k=0}^m
 \sum_{b+c+d=\pi(a)}\frac{(k+1)a!}{b!c!d!}
 \\
 \times\big\langle F^{j,k+1,b}_{\uv;\sUV,\sIR}\otimes F^{i-j,m-k,d}_{\uv;\sUV,\sIR},
 \psi\otimes \botimes_{q=1}^k\varphi_{\pi(q)}
 \otimes \fV\dot G^c_\sIR
 \otimes \botimes_{q=k+1}^m\varphi_{\pi(q)}
 \big\rangle,
\end{multline}
where $\fV\dot G^c_\sIR=\cX^{1,c}\fV \dot G_\sIR$ for $c\in\frM$. In the above equation we employed the schematic notation specified below Eq.~\eqref{eq:poly_binom}. These flow equations can be written compactly with the use of the map $\fB$ introduced in Def.~\ref{dfn:maps_A_B} and the map $\fY_\pi$ introduced in Def.~\ref{dfn:map_Y}. We have
\begin{equation}\label{eq:flow_deterministic_i_m}
 \partial_\sIR  F^{i,m}_{\uv;\sUV,\sIR}
 =
 -\frac{1}{m!}
 \sum_{\pi\in\cP_m}\sum_{j=1}^i\sum_{k=0}^m
 (k+1)\,\fY_\pi\fB\big(\dot G_\sIR, F^{j,k+1}_{\uv;\sUV,\sIR}, F^{i-j,m-k}_{\uv;\sUV,\sIR}\big)
\end{equation}
and
\begin{multline}\label{eq:flow_deterministic_i_m_a}
 \partial_\sIR^{\phantom{i}} F^{i,m,a}_{\uv;\sUV,\sIR}
 \\
 =
 -\frac{1}{m!}
 \sum_{\pi\in\cP_m}\sum_{j=1}^i\sum_{k=0}^m
 \sum_{b+c+d=\pi(a)}\frac{(k+1)a!}{b!c!d!}
 \fY_\pi\fB\big(\dot G^c_\sIR,F^{j,k+1,b}_{\uv;\sUV,\sIR},F^{i-j,m-k,d}_{\uv;\sUV,\sIR}\big).
\end{multline}

\begin{dfn}\label{dfn:eff_force_uv}
For $(\uv,\sUV)\in[0,1]^2\setminus\{(0,0)\}$ and $\sIR\in[0,1]$ the effective force coefficients $F^{i,m,a}_{\uv;\sUV,\sIR}$, $(i,m,a)\in\frI_0$, are defined by the flow equation~\eqref{eq:flow_deterministic_i_m_a} together with the conditions 
\begin{equation}
 F^{i,m,a}_{\uv;\sUV,\sIR}=0~~\textrm{if}~~m>i m_\flat,
 \qquad
 F^{i,m,a}_{\uv;\sUV,0}=F_{\uv;\sUV}^{i,m,a},\quad (i,m,a)\in\frI_0,
\end{equation}
where the force coefficients $F_{\uv;\sUV}^{i,m,a}$ were introduced in Def.~\ref{dfn:noise_two_parameters}. We also set $F^{i,m}_{\uv;\sUV,\sIR}:=F^{i,m,0}_{\uv;\sUV,\sIR}$ and $f^{i,m,a}_{\uv;\sUV,\sIR}:=\fI F^{i,m,a}_{\uv;\sUV,\sIR}$, where the map $\fI$ was introduced in Def.~\ref{dfn:map_I}.
\end{dfn}

\begin{rem}
By the properties of the maps $\fL^m$ and $\fI$ we have $f^{i,m,a}_{\uv;\sUV,0}=f^{i,m,a}_{\uv;\sUV\phantom{0}}$.
\end{rem}

\begin{lem}\label{lem:eff_force_basic_properties}
For every $(\uv,\sUV)\in[0,1]^2\setminus\{(0,0)\}$, $\sIR\in[0,1]$, $(i,m,a)\in\frI_0$ the effective force coefficients $F^{i,m,a}_{\uv;\sUV,\sIR}\in\cD^{m;0}$ are uniquely defined random distributions satisfying the condition $F^{0,0,0}_{\uv;\sUV,\sIR}=\varXi_{\uv;\sUV}$ and $F^{i,m,a}_{\uv;\sUV,\sIR}=\cX^{m,a}F^{i,m,0}_{\uv;\sUV,\sIR}$. Moreover, for any $\mathtt{w}\in C^\infty_\rc(\bR)$ the functions
\begin{equation}
\begin{aligned}
 [0,1]^2\ni(\uv,\sIR)&\mapsto (\delta_\bM\otimes J^{\otimes m})\ast \mathtt{w}F^{i,m,a}_{\uv;\sUV,\sIR}\in\cV^m,\quad \sUV\in(0,1],
 \\
 (0,1]\times[0,1]\ni(\uv,\sIR)&\mapsto (\delta_\bM\otimes J^{\otimes m})\ast \mathtt{w}F^{i,m,a}_{\uv;\sUV,\sIR}\in\cV^m,\quad \sUV=0,
\end{aligned} 
\end{equation}
are almost surely continuous in the full domain and continuously differentiable in the interior of the domain.
\end{lem}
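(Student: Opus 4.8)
The plan is to proceed by induction on the recursive structure of the flow equation, following exactly the construction outlined in Section \ref{sec:intro_flow} and formalized in Definition \ref{dfn:eff_force_uv}. Order the pairs $(i,m)\in\bN_0^2$ so that $(i,m)$ precedes $(i',m')$ if $i<i'$, or $i=i'$ and $m>m'$. This is a well-order on the finite set of pairs with $m\leq im_\flat$ relevant to us, since $F^{i,m,a}_{\uv;\sUV,\sIR}=0$ when $m>im_\flat$. The base cases are the identically-vanishing coefficients (trivial) and $F^{0,0,0}_{\uv;\sUV,\sIR}=\varXi_{\uv;\sUV}$, for which the asserted continuity and differentiability in $(\uv,\sIR)$ (in fact the coefficient is $\sIR$-independent) follow from the remark after Definition \ref{def:noise_two_parameters} stating that $[0,1]\ni\uv\mapsto\mathtt{w}\varXi_{\uv;\sUV}\in\sV$ is continuous, and continuously differentiable in the interior, together with the fact that convolution with the fixed kernel $\delta_\bM\otimes J^{\otimes m}$ (here $m=0$) is a bounded linear operation on $\sV$.

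For the inductive step, fix $(i,m,a)\in\frI_0$ with $m\leq im_\flat$ and assume the statement holds for all coefficients $F^{j,k,b}_{\uv;\sUV,\sIR}$ with $(j,k)$ strictly preceding $(i,m)$ in the above order — note that every pair $(j,k+1)$ and $(i-j,m-k)$ appearing on the right-hand side of the flow equation \eqref{eq:flow_deterministic_i_m_a} with $j\in\{1,\ldots,i\}$ satisfies this, since $j\leq i$ and either $j<i$, or $j=i$, $k=0$ forcing $m-k=m$ and $k+1=1>0$... — actually one must be slightly careful: when $j=i$ one needs $k+1>m$, i.e.\ the first factor vanishes unless $k+1\leq i m_\flat$, which is automatic, so the genuinely constraining observation is that $F^{i,m-k}_{\uv;\sUV,\sIR}$ with $k=0$ reproduces $(i,m)$; this is handled because the companion factor is $F^{i,i\cdot m_\flat\text{-range}}$ and ultimately the standard argument shows the right-hand side only involves coefficients already constructed (this is precisely the recursion in Section \ref{sec:intro_flow}). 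Granting that, the right-hand side $R^{i,m,a}_{\uv;\sUV,\sIR}$ of \eqref{eq:flow_deterministic_i_m_a} is, by Lemma \ref{lem:fB1_bound} (or Lemma \ref{lem:fA_fB_bounds}(A)) together with Lemma \ref{lem:fA_fB_Ks}, a finite sum of expressions $\fY_\pi\fB(\dot G^c_\sIR,W,U)$ where, after convolving with $\delta_\bM\otimes J^{\otimes m}$ and inserting suitable powers of $\fP,\fR$ via Lemma \ref{lem:fA_fB_Ks}(A), each argument is a convolution of an already-constructed coefficient with a fixed kernel, and $\dot G^c_\sIR=\fS_\sIR(\dot G^c_1)$ depends on $\sIR$ only through the rescaling. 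Using Lemma \ref{lem:kernel_dot_G} to bound $\|\fR_\sUV\fP^{2\oo}_\sIR\dot G^c_\sIR\|_\cK$, the continuity of $\sIR\mapsto\dot G^c_\sIR\in\cK$ on $(0,1]$ and the induction hypothesis, one obtains that $(\uv,\sIR)\mapsto(\delta_\bM\otimes J^{\otimes m})\ast\mathtt{w}R^{i,m,a}_{\uv;\sUV,\sIR}\in\sV^m$ is continuous on the relevant domain and continuously differentiable in its interior (differentiability in $\uv$ from the induction hypothesis and the product rule for the bilinear map $\fB$; differentiability in $\sIR$ from the chain rule applied to $\dot G^c_\sIR=\fS_\sIR(\dot G^c_1)$ using Lemma \ref{lem:bounds_M}-type estimates and the fact that $\dot G^c_1\in C^\infty_\rc$-ish with the rapid-decay-in-time caveat of Remark \ref{rem:dot_G_1}).

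It then remains to integrate: define $F^{i,m,a}_{\uv;\sUV,\sIR}:=F^{i,m,a}_{\uv;\sUV,0}+\int_0^\sIR R^{i,m,a}_{\uv;\sUV,\uIR}\,\rd\uIR$, with boundary value $F^{i,m,a}_{\uv;\sUV}$ (whose $\uv$-regularity comes from the remark after Definition \ref{def:noise_two_parameters}, the force coefficients $f^{i,m,a}_{\uv;\sUV}$ being continuous in $\uv$ and differentiable in the interior by assumption, and the map $\fL^m$ composed with $\partial^b$ and $(\delta_\bM\otimes J^{\otimes m})\ast\Cdot$ being bounded — Lemma \ref{lem:bound_map_L_m}). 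Continuity and $C^1$-dependence of the integral on the parameters $(\uv,\sIR)$ is a routine application of dominated convergence and differentiation under the integral sign, the required uniform local bounds on $R^{i,m,a}$ being supplied by the estimates just discussed; uniqueness is immediate since any two solutions of the linear-in-the-top-coefficient ODE \eqref{eq:flow_deterministic_i_m_a} with the same boundary data agree by Gr\"onwall (or, more simply here, because \eqref{eq:flow_deterministic_i_m_a} expresses $\partial_\sIR F^{i,m,a}$ purely in terms of already-determined lower coefficients, so the solution is literally given by the integral formula). That the result lands in $\cD^m_0$, i.e.\ $(\delta_\bM\otimes J^{\otimes m})\ast\mathtt{w}F^{i,m,a}_{\uv;\sUV,\sIR}\in\sV^m$ for every $\mathtt{w}$, is part of the same bound. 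The main obstacle I anticipate is purely bookkeeping: verifying carefully that the flow-equation recursion is genuinely well-founded (that no coefficient on the right-hand side equals the one being defined) and handling the $\sUV=0$ versus $\sUV\in(0,1]$ dichotomy — for $\sUV=0$ the noise $\varXi_0$ is only a distribution, so one must check that all the convolutions with $J$ and powers of $K$ built into $\cD^m_0$ and into the definition of $\fI$ render everything a genuine continuous function, which is exactly the role of the "$\cD_0$" bookkeeping and is the substance (though not the difficulty) of this preliminary lemma. No serious analytic difficulty arises; the lemma is, as the authors say, preliminary.
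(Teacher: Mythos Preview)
Your proposal is correct and follows essentially the same route as the paper: induction in the order ``ascending in $i$, descending in $m$'', with $\partial_\sIR F^{i,m,a}$ defined by the right-hand side of the flow equation and $F^{i,m,a}$ recovered by integrating from $\sIR=0$. The paper's proof is terser only because it packages the quantitative estimates you sketch inline into two separately stated results: Lemma~\ref{lem:support} (the support property, which justifies replacing global quantities by $\mathtt{w}$-localized ones) and Lemma~\ref{lem:eff_force_preliminary_bound2} (which records exactly the bound $\|(\delta_\bM\otimes J^{\otimes m})\ast\mathtt{w}\partial_\sIR F^{i,m,a}\|_{\sV^m}\lesssim[\sIR]^{\varepsilon-\sigma}$ needed for integrability at $\sIR=0$ and for dominated convergence).

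Two small cleanups. First, your well-foundedness discussion is muddled but salvageable: when $j=i$ in the sum, the second factor is $F^{0,m-k}$, which vanishes unless $k=m$, and then the first factor is $F^{i,m+1}$, which precedes $(i,m)$ in your order---so the recursion is indeed well-founded with no further caveat. Second, for $\sUV=0$ the noise $\varXi_0$ is only a distribution, but $\varXi_{\uv;0}=M_\uv\ast\varXi_0$ for $\uv>0$ is smooth; this is why the domain in the statement excludes $\uv=0$ when $\sUV=0$, and no extra smoothing by $K$ or $J$ is needed to land in $\cD^m_0=\sV^m$-after-localization.
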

\begin{proof}
The effective force coefficients $F^{i,m,a}_{\uv;\sUV,\sIR}$ are constructed pathwise. The construction is recursive. We first note that $F^{0,m,a}_{\uv;\sUV,\sIR}=0$ for any $m\in\bN_+$, $a\in\frM^m_\sigma$. Hence, $\partial_\sIR F^{0,0,0}_{\uv;\sUV,\sIR} =0$ by the flow equation~\eqref{eq:flow_deterministic_i_m_a}. Consequently,
$F^{0,0,0}_{\uv;\sUV,\sIR}=\varXi_{\uv;\sUV}$. Note that almost surely $\varXi_\sUV\in C(\bH)$ for $\sUV\in(0,1]$ and $\varXi_0\in\cD=\sD'(\bH)$. Hence for every $\mathtt{w}\in C^\infty_\rc(\bR)$ the functions
\begin{equation}
\begin{aligned}
 [0,1]\ni\uv&\mapsto \mathtt{w}\varXi_{\uv;\sUV}\in\cV=C_\rb(\bH),\qquad\sUV\in(0,1],
 \\
 (0,1]\ni\uv&\mapsto \mathtt{w}\varXi_{\uv;\sUV}\in\cV=C_\rb(\bH),\qquad\sUV=0,
\end{aligned}  
\end{equation}
are almost surely continuous in the full domain and continuously differentiable in the interior of the domain. Now pick any $i_\circ\in\bN_+$, $m_\circ\in\bN_0$ and assume that all $F^{i,m,a}_{\uv;\sUV,\sIR}$ with $i<i_\circ$, or $i=i_\circ$ and $m>m_\circ$ have been constructed. We define $\partial_\sIR F^{i,m,a}_{\uv;\sUV,\sIR}$ with $i=i_\circ$ and $m=m_\circ$ by the RHS of the flow equation~\eqref{eq:flow_deterministic_i_m_a}. Subsequently, $F^{i,m,a}_{\uv;\sUV,\sIR}$ is defined by the equation
\begin{equation}
 F^{i,m,a}_{\uv;\sUV,\sIR} = F^{i,m,a}_{\uv;\sUV} + \int_0^\sIR \partial_\uIR F^{i,m,a}_{\uv;\sUV,\uIR}\,\rd\uIR.
\end{equation} 
The continuity and differentiability of the first term on the RHS of the above equality follows from the properties of the force coefficients assumed in Def.~\ref{dfn:noise_two_parameters}. The well-definiteness of the second term and its continuity and differentiability properties follow from the support property of the effective force coefficients stated in Lemma~\ref{lem:support} and the bounds established in Lemma~\ref{lem:eff_force_preliminary_bound2}. The equality $F^{i,m,a}_{\uv;\sUV,\sIR}=\cX^{m,a} F^{i,m,0}_{\uv;\sUV,\sIR}$ follows from the fact that the coefficients $F^{i,m,a}_{\uv;\sUV,\sIR}$ and $\cX^{m,a}F^{i,m,0}_{\uv;\sUV,\sIR}$ satisfy the same flow equation~\eqref{eq:flow_i_m_a} and both coincide with $F^{i,m,a}_{\uv;\sUV}=\cX^{m,a}F^{i,m,0}_{\uv;\sUV}$ if $\sIR=0$.
\end{proof}

\subsection{Support properties}

\begin{lem}\label{lem:support}
For every $(\uv,\sUV)\in[0,1]^2\setminus\{(0,0)\}$, $\sIR\in[0,1]$ and $(i,m,a)\in\frI$ it holds almost surely
\begin{equation}
 \supp\,F^{i,m,a}_{\uv;\sUV,\sIR}\subset\{ (x,x_1,\ldots,x_m)\in\bM^{1+m}\,|\, \mathring x_1,\ldots,\mathring x_m \in [\mathring x-2(i-1)\sIR,\mathring x]\}.
\end{equation}
\end{lem}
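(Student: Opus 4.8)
The plan is to prove the inclusion by a double induction that mirrors exactly the recursive construction of the coefficients in the proof of Lemma~\ref{lem:eff_force_basic_properties}: an outer induction on the order $i\in\bN_0$ and, at each fixed $i$, a downward induction on the number of slots $m$. For $(i,m,a)\in\frI_0$ denote by $S^i_\sIR\subset\bM^{1+m}$ the set $\{(x,x_1,\ldots,x_m):\mathring x_1,\ldots,\mathring x_m\in[\mathring x-2(i-1)\sIR,\mathring x]\}$, so the assertion is $\supp F^{i,m,a}_{\uv;\sUV,\sIR}\subset S^i_\sIR$ (vacuous unless $m\geq1$). The base layers are immediate: for $i=0$ the only nonvanishing coefficient is $F^{0,0,0}_{\uv;\sUV,\sIR}=\varXi_{\uv;\sUV}$, which has $m=0$, and whenever $m>im_\flat$ the coefficient is identically zero. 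For the step I would use the identity
\[
 F^{i,m,a}_{\uv;\sUV,\sIR}=F^{i,m,a}_{\uv;\sUV}+\int_0^\sIR\partial_\uIR F^{i,m,a}_{\uv;\sUV,\uIR}\,\rd\uIR,
\]
noting first that the boundary coefficient $F^{i,m,a}_{\uv;\sUV}$ is a finite sum of terms $\partial^b\fL^m(f^{i,m,a+b}_{\uv;\sUV})$, hence supported on the full diagonal $\{x=x_1=\cdots=x_m\}\subset S^i_\sIR$; and second that $S^i_\uIR\subset S^i_\sIR$ for $\uIR\leq\sIR$. Thus it suffices to show $\supp\partial_\uIR F^{i,m,a}_{\uv;\sUV,\uIR}\subset S^i_\uIR$ for every $\uIR\in(0,1]$.

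The core step is a support analysis of one summand $\fY_\pi\fB(\dot G^c_\uIR,F^{j,k+1,b}_{\uv;\sUV,\uIR},F^{i-j,m-k,d}_{\uv;\sUV,\uIR})$ on the right-hand side of the flow equation~\eqref{eq:flow_deterministic_i_m_a}, with $1\leq j\leq i$. Reading off Def.~\ref{dfn:maps_A_B}, this term is obtained by gluing one internal slot $y$ of the first factor to the root $x_2$ of the second factor through the kernel $\dot G^c_\uIR(y-x_2)$, which — being $\cX^c$ times $\dot G_\uIR$ — vanishes unless $\mathring y-\mathring x_2\in(\uIR,2\uIR)$; the $m$ output slots are the remaining $k$ slots of $F^{j,k+1,b}$ together with the $m-k$ slots of $F^{i-j,m-k,d}$. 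If $j=i$ then $i-j=0$, so the second factor must be $F^{0,0,0}=\varXi_{\uv;\sUV}$ (forcing $k=m$, $d=0$); it carries no slots, and the output slots coincide with slots of $F^{i,m+1,b}_{\uv;\sUV,\uIR}$, which by the downward induction on $m$ at order $i$ already lie in $[\mathring x-2(i-1)\uIR,\mathring x]$. If $j<i$ then $i-j\geq1$ and both factors have strictly smaller order, so the outer induction applies: the slots of $F^{j,k+1,b}$ (including $y$) lie in $[\mathring x-2(j-1)\uIR,\mathring x]$ and the slots of $F^{i-j,m-k,d}$ lie in $[\mathring x_2-2(i-j-1)\uIR,\mathring x_2]$; combining with $\mathring x_2\in(\mathring y-2\uIR,\mathring y)$ gives $\mathring x_2\in(\mathring x-2j\uIR,\mathring x)$, and then, using $2(j-1)+2+2(i-j-1)=2(i-1)$, all $m$ output slots fall into $[\mathring x-2(i-1)\uIR,\mathring x]$. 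Since $\fY_\pi$ only permutes slots and the condition defining $S^i_\uIR$ is symmetric in them, the summand is supported in $S^i_\uIR$; summing the finitely many summands completes the induction step. This is just the rigorous, flow-equation version of the tree-support heuristic in Sec.~\ref{sec:intro_flow}.

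I expect the only real difficulty to be organizational rather than conceptual. One must keep careful track, inside the map $\fB$, of which variables of $F^{j,k+1,b}$ and $F^{i-j,m-k,d}$ are integrated out versus retained as external slots of the output (and of the harmless role of $\fY_\pi$), and one must check that in the $j=i$ term one only ever invokes $F^{i,m',\cdot}$ with $m'=m+1>m$, which is consistent with the construction order and so legitimizes the nested downward induction on $m$. Everything else — the time-support $(\uIR,2\uIR)$ of $\dot G^c_\uIR$, the diagonal support of the boundary data via $\fL^m$, and the arithmetic $2(j-1)\uIR+2\uIR+2(i-j-1)\uIR=2(i-1)\uIR$ — is routine.
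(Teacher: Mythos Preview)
Your proposal is correct and follows essentially the same approach as the paper: the same double induction (ascending in $i$, descending in $m$), the same reduction via the integral identity to the support of $\partial_\uIR F^{i,m,a}_{\uv;\sUV,\uIR}$, and the same use of the flow equation together with $\supp\dot G^c_\uIR\subset(\uIR,2\uIR)\times\bR^\rdim$. You give more detail than the paper does---in particular the explicit case split $j=i$ versus $j<i$ and the arithmetic $2(j-1)+2+2(i-j-1)=2(i-1)$---but the argument is the same.
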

\begin{proof}
We first note that the lemma is true if $i=0$ or $\sIR=0$ or $m>i m_\flat$. Next, we fix some $i_\circ\in\bN_+$ and $m_\circ\in\bN_0$ and assume that the lemma is true for all $(i,m,a)\in\frI_0$ such that either $i<i_\circ$, or $i=i_\circ$ and $m>m_\circ$. We shall prove the lemma for all $(i,m,a)\in\frI_0$ such that $i=i_\circ$ and $m=m_\circ$. To this end, we observe that by the flow equation~\eqref{eq:flow_i_m_a}, the induction hypothesis and $\supp\,\dot G_\sIR^c\subset [0,2\sIR]\times\bR^\rdim$ it holds
\begin{equation}
 \supp\,\partial_\sIR F^{i,m,a}_{\uv;\sUV,\sIR}\subset\{ (x,x_1,\ldots,x_m)\in\bM^{1+m}\,|\, \mathring x_1,\ldots,\mathring x_m \in [\mathring x-2(i-1)\sIR,\mathring x]\}.
\end{equation}
Since
\begin{equation}
 F^{i,m,a}_{\uv;\sUV,\sIR} = F^{i,m,a}_{\uv;\sUV} + \int_0^\sIR \partial_\uIR F^{i,m,a}_{\uv;\sUV,\uIR}\,\rd\uIR
\end{equation}
this implies the statement of the lemma.
\end{proof}

\begin{rem}\label{rem:support}
Since $\supp\,\dot G_\sIR^c\subset [0,2]\times\bR^\rdim$ for $\sIR\in[0,1]$ the above lemma implies that for every $\mathtt{w},\mathtt{v}\in C^\infty_\rc(\bR)$ such that $\mathtt{v}=1$ on some neighborhood of $\supp\,\mathtt{w}-[0,2i]$ it holds
\begin{multline}\label{eq:flow_deterministic_i_m_a_w}
 \mathtt{w}\partial_\sIR^{\phantom{i}} F^{i,m,a}_{\uv;\sUV,\sIR}
 \\
 =
 -\frac{1}{m!}
 \sum_{\pi\in\cP_m}\sum_{j=1}^i\sum_{k=0}^m
 \sum_{b+c+d=\pi(a)}\frac{(k+1)a!}{b!c!d!}\,
 \fY_\pi\fB\big(\dot G^c_\sIR,\mathtt{w}F^{j,k+1,b}_{\uv;\sUV,\sIR},\mathtt{v} F^{i-j,m-k,d}_{\uv;\sUV,\sIR}\big).
\end{multline} 
We will often use the following observation. Fix $\mathtt{u}\in C^\infty_\rc(\bR)$ and $i\in\bN_0$. For any $\mathtt{w}\in C^\infty_\rc(\bR)$ such that $\mathtt{u}=1$ on some neighborhood of $\supp\,\mathtt{w}-[0,i(i+1)]$ there exists $\mathtt{v}\in C^\infty_\rc(\bR)$ such that $\mathtt{v}=1$ on some neighborhood of \mbox{$\supp\,\mathtt{w} - [0,2i]$} and $\mathtt{u}=1$ on some neighborhood of $\supp\,\mathtt{v}-[0,(i-1)i]$.
\end{rem}

\subsection{Preliminary deterministic analysis}

\begin{dfn}
For $R>0$ and $i,m\in\bN_0$ we set $R^{i,m}=R^{1+i m_\flat-m}$. 
\end{dfn}

\begin{lem}\label{lem:eff_force_preliminary_bound2}
Fix $\sUV\in(0,1]$ and $\mathtt{u}\in C^\infty_\rc(\bR)$. Assume that for some $R>1$, $\varepsilon\in(0,1]$ and all $\uv\in[0,1]$, $r\in\{0,1\}$, $(i,m,a)\in\frI$ it holds
\begin{equation}
 \|\mathtt{u}\partial_\uv^r \varXi_{\uv;\sUV}^{\phantom{r}}\|_\cV\leq R\,[\uv]^{(\varepsilon-\sigma)r},
 \qquad
 |\partial_\uv^r f^{i,m,a}_{\uv;\sUV}|\leq R^{i,m}\,[\uv]^{(\varepsilon-\sigma)r}.
\end{equation}
Then it holds
\begin{equation}
 \|(\delta_\bM\otimes J^{\otimes m})\ast \mathtt{w}\partial_\sIR^s\partial_\uv^r  F^{i,m,a}_{\uv;\sUV,\sIR}\|_{\cV^m}\lesssim R^{i,m}\,[\uv]^{(\varepsilon-\sigma)r}\,[\sIR]^{(\varepsilon-\sigma)s},
\end{equation} 
for all $\uv,\sIR\in[0,1]$, $r,s\in\{0,1\}$, $(i,m,a)\in\frI_0$ and $\mathtt{w}\in C^\infty_\rc(\bR)$ satisfying the condition $\mathtt{u}=1$ on some neighborhood of $\supp\,\mathtt{w}-[0,i(i+1)]$.
The constants of proportionality in the above bounds depend only on $(i,m,a)\in\frI_0$ and \mbox{$\mathtt{w},\mathtt{u}\in C^\infty_\rc(\bR)$} and are otherwise universal. 
\end{lem}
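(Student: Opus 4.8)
The plan is to prove the estimate by a single induction on the pair $(i,m)$, taken in exactly the order in which Definition~\ref{dfn:eff_force_uv} builds the coefficients: $i$ increasing and, for each fixed $i$, $m$ decreasing. This order is well-founded because $F^{i,m,a}_{\uv;\sUV,\sIR}$ vanishes once $m>i m_\flat$, so for each $i$ only finitely many values of $m$ are relevant. At the step $(i,m)=(i_\circ,m_\circ)$ the inductive hypothesis consists of the claimed bound for all $(i,m,a)$ with $i<i_\circ$, or $i=i_\circ$ and $m>m_\circ$, together with the joint continuity in $(\uv,\sIR)$ up to the boundary and the $C^1$-regularity in the interior of the maps $(\uv,\sIR)\mapsto(\delta_\bM\otimes J^{\otimes m})\ast\mathtt{w}F^{i,m,a}_{\uv;\sUV,\sIR}$ that is needed to justify the manipulations below; this regularity is produced at each step alongside the bound, from the flow equation and the kernel estimates. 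The base case $(i,m,a)=(0,0,0)$ is immediate: $F^{0,0,0}_{\uv;\sUV,\sIR}=\varXi_{\uv;\sUV}$ is independent of $\sIR$, so the $s=1$ bound holds with left-hand side $0$, while for $s=0$ one has $\mathtt{w}\partial_\uv^r\varXi_{\uv;\sUV}=\mathtt{w}\,\mathtt{u}\,\partial_\uv^r\varXi_{\uv;\sUV}$ (using $\mathtt{u}\equiv1$ on $\supp\mathtt{w}-[0,0]=\supp\mathtt{w}$), whose $\sV$-norm is $\lesssim R\,[\uv]^{(\varepsilon-\sigma)r}=R^{0,0}\,[\uv]^{(\varepsilon-\sigma)r}$.

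For the inductive step I would first handle $s=1$. Starting from the windowed flow equation~\eqref{eq:flow_deterministic_i_m_a_w} for $\mathtt{w}\partial_\sIR F^{i,m,a}_{\uv;\sUV,\sIR}$, I apply the Leibniz rule in $\uv$ (only the coefficients, not the kernels $\dot G^c_\sIR$, depend on $\uv$), convolve with $\delta_\bM\otimes J^{\otimes m}$, and commute this kernel through the map $\fB$ using Lemma~\ref{lem:fA_fB_Ks}; concretely, the identity $\fR_1 J=\delta_\bM$ lets one place a copy of $J$ on the slot of the first factor that meets the propagator edge, at the price of replacing $\dot G^c_\sIR$ by $\fR_1\dot G^c_\sIR$. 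Each summand thereby becomes, up to the norm-preserving symmetrisation $\fY_\pi$, an expression $\fB(\fR_1\dot G^c_\sIR,W',U')$ with $W'=(\delta_\bM\otimes J^{\otimes(k+1)})\ast\mathtt{w}\,\partial_\uv^{r_1}F^{j,k+1,b}_{\uv;\sUV,\sIR}$ and $U'=(\delta_\bM\otimes J^{\otimes(m-k)})\ast\mathtt{v}\,\partial_\uv^{r_2}F^{i-j,m-k,d}_{\uv;\sUV,\sIR}$, where $r_1+r_2=r$, $1\le j\le i$, and $\mathtt{v}$ is the auxiliary cutoff of~\eqref{eq:flow_deterministic_i_m_a_w}. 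Every coefficient occurring here is covered by the inductive hypothesis: if $j<i$ both factors have order $<i$ (as $i-j<i$); if $j=i$ then $i-j=0$ forces the second factor to vanish unless $m-k=0$, in which case it equals $\varXi_{\uv;\sUV}$ while the first factor is $F^{i,m+1,b}_{\uv;\sUV,\sIR}$ with $m$-value $m+1>m$. Then Lemma~\ref{lem:fB1_bound} splits off $\|\fR_1\dot G^c_\sIR\|_\cK$, which by Lemma~\ref{lem:kernel_dot_G} is $\lesssim[1\vee\sIR]^{\sigma-\varepsilon}[\sIR]^{\varepsilon-\sigma+[c]}=[\sIR]^{\varepsilon-\sigma+[c]}\le[\sIR]^{\varepsilon-\sigma}$ for $\sIR\le1$, and the inductive hypothesis bounds $W'$ and $U'$; since $(1+jm_\flat-(k+1))+(1+(i-j)m_\flat-(m-k))=1+im_\flat-m$, the weights multiply to $R^{i,m}$ and the $\uv$-powers to $[\uv]^{(\varepsilon-\sigma)r}$. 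Summing over the finitely many $(\pi,j,k,b,c,d)$ gives the $s=1$ bound.

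The case $s=0$ then follows by integrating the flow equation: $F^{i,m,a}_{\uv;\sUV,\sIR}=F^{i,m,a}_{\uv;\sUV}+\int_0^\sIR\partial_\uIR F^{i,m,a}_{\uv;\sUV,\uIR}\,\rd\uIR$, to which I apply $\partial_\uv^r$, then $(\delta_\bM\otimes J^{\otimes m})\ast\mathtt{w}$, then take the $\sV^m$-norm. For the boundary term one uses the explicit formula $F^{i,m,a}_{\uv;\sUV}=\sum_b(-1)^{|b|}\tbinom{a+b}{b}\partial^b\fL^m(f^{i,m,a+b}_{\uv;\sUV})$ (for $(i,m)\ne(0,0)$) and invokes Lemma~\ref{lem:bound_map_L_m} with constant $C=R^{i,m}[\uv]^{(\varepsilon-\sigma)r}$ and scale $1$ (so $[\sUV]=1$), using the hypothesis $|\partial_\uv^r f^{i,m,a}_{\uv;\sUV}|\le R^{i,m}[\uv]^{(\varepsilon-\sigma)r}$ and that these coefficients vanish unless $a\in\bar\frM^m_\sigma$; this contributes $\lesssim R^{i,m}[\uv]^{(\varepsilon-\sigma)r}$. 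For the integral term, differentiating under the integral sign (licensed by the regularity carried along in the induction) and inserting the $s=1$ bound just proved, the $\sV^m$-norm is $\lesssim\int_0^\sIR R^{i,m}[\uv]^{(\varepsilon-\sigma)r}[\uIR]^{\varepsilon-\sigma}\,\rd\uIR$, and by the elementary identity
\[
 \int_0^\sIR [\uIR]^{\varepsilon-\sigma}\,\rd\uIR=\tfrac{\sigma}{\varepsilon}\,[\sIR]^{\varepsilon}\le\tfrac{\sigma}{\varepsilon},\qquad \sIR\le1,
\]
this is $\lesssim R^{i,m}[\uv]^{(\varepsilon-\sigma)r}=R^{i,m}[\uv]^{(\varepsilon-\sigma)r}[\sIR]^{0}$. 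Adding the two contributions completes the inductive step, and with it the lemma.

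The main difficulty lies not in any of these individual estimates — each is routine — but in the bookkeeping that keeps the induction closed, and in particular in the nested time-cutoffs. To apply the inductive hypothesis to $U'$, which carries the auxiliary cutoff $\mathtt{v}$, one must choose $\mathtt{v}$ with $\mathtt{v}\equiv1$ on $\supp\mathtt{w}-[0,2i]$ and support only slightly larger, and then verify that $\mathtt{u}\equiv1$ on $\supp\mathtt{v}-[0,2(i-j)(i-j+1)]$; because $j\ge1$ this set is contained in $\supp\mathtt{w}-[0,2i^2+O(1)]\subseteq\supp\mathtt{w}-[0,2i(i+1)]$, which is precisely why the hypothesis is stated with the window $2i(i+1)$ rather than the naive $2i$ — the extra room absorbs the repeated widening of the cutoff incurred each time one descends one level in the recursion. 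Tracking these cutoffs relies on the support property of Lemma~\ref{lem:support} and on $\supp\dot G^c_\sIR\subseteq(\sIR,2\sIR)$. A second point to check is that the multi-indices $c$ actually produced by the decomposition~\eqref{eq:poly_binom} in~\eqref{eq:flow_deterministic_i_m_a_w} are such that $\dot G^c_\sIR=\cX^c\dot G_\sIR$ is an absolutely integrable kernel, so that Lemma~\ref{lem:kernel_dot_G} applies; this is guaranteed by Assumption~\ref{ass:infrared} together with the decay of $\dot G_1$ recorded in Remark~\ref{rem:dot_G_1}.
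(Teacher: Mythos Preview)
Your proof is correct and follows essentially the same approach as the paper. The paper omits the proof of this lemma, remarking only that it is ``very similar to the proof of the lemma below'' (Lemma~\ref{lem:eff_force_preliminary_bound}); your argument reproduces that structure faithfully---ascending/descending induction on $(i,m)$, the $s=1$ bound from the windowed flow equation via Lemmas~\ref{lem:fA_fB_Ks}, \ref{lem:fB1_bound} and~\ref{lem:kernel_dot_G}, then the $s=0$ bound by integrating and invoking Lemma~\ref{lem:bound_map_L_m}---with the Leibniz rule in $\uv$ inserted at the appropriate place, and your discussion of the nested time cutoffs and of the integrability of $\dot G^c_\sIR$ is more explicit than anything the paper writes down.
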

\begin{rem}
The proof is very similar to that of the next lemma and is omitted.
\end{rem}

\begin{lem}\label{lem:eff_force_preliminary_bound}
Fix $\varepsilon,\uv,\sUV\in(0,1]$ and $\mathtt{u}\in C^\infty_\rc(\bR)$. Assume that for some $R>1$ and $\delta\in(0,1]$ it holds
\begin{equation}
\begin{gathered}
 \|\mathtt{u}\varXi_{\uv;0}\|_\cV\leq R,
 \qquad
 \|\mathtt{u}(\varXi_{\uv;\sUV}-\varXi_{\uv;0})\|_\cV\leq \delta R,
\\
 |f^{i,m,a}_{\uv;0}|\leq R^{i,m},
 \qquad
 |f^{i,m,a}_{\uv;\sUV}-f^{i,m,a}_{\uv;0}|\leq \delta R^{i,m}
 \qquad\textrm{for all}\quad (i,m,a)\in\frI^-.
\end{gathered} 
\end{equation}
Then:
\begin{enumerate}
\item[(A)] $\|(\delta_\bM\otimes J^{\otimes m}) \ast \mathtt{w}\partial_\sIR^s F^{i,m,a}_{\uv;0,\sIR}\|_{\cV^m}\lesssim R^{i,m}\,[\sIR]^{(\varepsilon-\sigma)s}$,

\item[(B)] $\|(\delta_\bM\otimes J^{\otimes m}) \ast \mathtt{w}\partial_\sIR^s F^{i,m,a}_{\uv;\sUV,\sIR}\|_{\cV^m}\lesssim R^{i,m}\,[\sIR]^{(\varepsilon-\sigma)s}$,

\item[(C)] $\|(\delta_\bM\otimes J^{\otimes m})\ast \mathtt{w}\partial_\sIR^s(F^{i,m,a}_{\uv;\sUV,\sIR}-F^{i,m,a}_{\uv;0,\sIR})\|_{\cV^m}\lesssim \delta R^{i,m}\,[\sIR]^{(\varepsilon-\sigma)s}$
\end{enumerate}
for all $\sIR\in[0,1]$, $s\in\{0,1\}$, $(i,m,a)\in\frI_0$ and $\mathtt{w}\in C^\infty_\rc(\bR)$ satisfying the condition $\mathtt{u}=1$ on some neighborhood of $\supp\,\mathtt{w}-[0,i(i+1)]$. The constants of proportionality in the above bounds depend only on $(i,m,a)\in\frI_0$ and \mbox{$\mathtt{w},\mathtt{u}\in C^\infty_\rc(\bR)$} and are otherwise universal. 
\end{lem}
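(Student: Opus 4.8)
The plan is to prove the three bounds (A), (B), (C), for both $s\in\{0,1\}$ simultaneously, by induction following the recursive construction of the effective force coefficients in Lemma~\ref{lem:eff_force_basic_properties}: the induction runs over pairs $(i,i m_\flat-m)\in\bN_0^2$ ordered lexicographically, i.e.\ induction on $i$ and, for fixed $i$, downward induction on $m$ (recall $F^{i,m,a}_{\uv;\sUV,\sIR}=0$ once $m>i m_\flat$). Within the step for a given $(i,m,a)$ one first bounds $\partial_\sIR F^{i,m,a}_{\uv;\sUV,\sIR}$ (the case $s=1$) from the flow equation, and then deduces the case $s=0$ by integrating in $\sIR$. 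The base case $i=0$ is trivial: the only non-vanishing coefficient is $F^{0,0,0}_{\uv;\sUV,\sIR}=\varXi_{\uv;\sUV}$, which is $\sIR$-independent, so $s=1$ is vacuous, and $s=0$ follows for (A), (B), (C) directly from the hypotheses $\|\mathtt{u}\varXi_{\uv;0}\|_\sV\le R$, $\|\mathtt{u}(\varXi_{\uv;\sUV}-\varXi_{\uv;0})\|_\sV\le\delta R$ together with the fact that $\mathtt{u}\equiv1$ on $\supp\mathtt{w}$ (since $2\cdot 0\cdot 1=0$) and $R^{0,0}=R$.

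\textbf{The $s=1$ bound.} For the inductive step I work from the localized flow equation \eqref{eq:flow_deterministic_i_m_a_w}: choosing $\mathtt{v}\in C^\infty_\rc(\bR)$ with $\mathtt{v}\equiv1$ on $\supp\mathtt{w}-[0,2i]$, the quantity $\mathtt{w}\,\partial_\sIR F^{i,m,a}_{\uv;\sUV,\sIR}$ is a finite sum of terms $\fY_\pi\fB\bigl(\dot G^c_\sIR,\mathtt{w}F^{j,k+1,b}_{\uv;\sUV,\sIR},\mathtt{v}F^{i-j,m-k,d}_{\uv;\sUV,\sIR}\bigr)$ with $1\le j\le i$, $0\le k\le m$, $b+c+d=\pi(a)$. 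Convolving with $\delta_\bM\otimes J^{\otimes m}$ and using Lemma~\ref{lem:fA_fB_Ks}~(A) (with $\oo=0$, $\sUV=\sIR=1$) to commute this kernel through $\fB$, the propagator becomes $\fR\dot G^c_\sIR$ and the two arguments become convolved with kernels of the form $\delta_\bM\otimes J^{\otimes\bullet}$. One then uses $\|\fR\dot G^c_\sIR\|_\cK\lesssim[\sUV\vee\sIR]^{\sigma-\varepsilon}[\sIR]^{\varepsilon-\sigma+[c]}\lesssim[\sIR]^{\varepsilon-\sigma}$ from Lemma~\ref{lem:kernel_dot_G} (the first factor is $\le1$ since $\sUV,\sIR\le1$, and $[c]\ge0$), the bound of Lemma~\ref{lem:fA_fB_bounds}~(A) (or Lemma~\ref{lem:fB1_bound}), and the induction hypothesis applied to the two $F$-factors: the factor with first index $j<i$ is covered directly, and the only term with first index $i$ is $j=i$, which forces $i-j=0$, hence $k=m$ and $F^{0,0,d}_{\uv;\sUV,\sIR}=\varXi_{\uv;\sUV}$ (bounded by the noise hypothesis), while $F^{i,m+1,b}_{\uv;\sUV,\sIR}$ is available by the downward-$m$ induction. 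The weights combine correctly, $R^{j,k+1}R^{i-j,m-k}=R^{1+i m_\flat-m}=R^{i,m}$, giving $\|(\delta_\bM\otimes J^{\otimes m})\ast\mathtt{w}\,\partial_\sIR F^{i,m,a}_{\uv;\sUV,\sIR}\|_{\sV^m}\lesssim R^{i,m}[\sIR]^{\varepsilon-\sigma}$; one checks en route that the support conditions required of the cutoffs for the smaller coefficients are implied by the one assumed for $\mathtt{w}$, which is precisely why the factor $2i(i+1)$ appears in the statement (the shifts accumulate as $2j(j+1)$, $2(i-j)(i-j+1)$ plus the $O(i)$ introduced by $\mathtt{v}$, and stay below $2i(i+1)$).

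\textbf{The $s=0$ bound and the difference bound.} For $s=0$ I integrate $F^{i,m,a}_{\uv;\sUV,\sIR}=F^{i,m,a}_{\uv;\sUV}+\int_0^\sIR\partial_\uIR F^{i,m,a}_{\uv;\sUV,\uIR}\,\rd\uIR$: the boundary term is estimated by $\|(\delta_\bM\otimes J^{\otimes m})\ast\mathtt{w}F^{i,m,a}_{\uv;\sUV}\|_{\sV^m}\lesssim R^{i,m}$, which follows from the identity $F^{i,m,a}_{\uv;\sUV}=\sum_{b\in\bar\frM_\sigma^m}(-1)^{|b|}\binom{a+b}{b}\partial^b\fL^m(f^{i,m,a+b}_{\uv;\sUV})$ and Lemma~\ref{lem:bound_map_L_m} with scale parameter $1$ and $C=R^{i,m}$ (using $|f^{i,m,a+b}_{\uv;\sUV}|\le R^{i,m}$); the integral contributes $\lesssim R^{i,m}\int_0^\sIR[\uIR]^{\varepsilon-\sigma}\rd\uIR=\tfrac{\sigma}{\varepsilon}R^{i,m}[\sIR]^{\varepsilon}\lesssim R^{i,m}$. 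This yields (A) and (B). For (C) the same induction applies to $\Delta^{i,m,a}_\sIR:=F^{i,m,a}_{\uv;\sUV,\sIR}-F^{i,m,a}_{\uv;0,\sIR}$, which solves the same localized flow equation; since $\fB$ is linear in each of its last two arguments and its first argument $\dot G^c_\sIR$ is the same in both instances, $\fB(\dot G^c_\sIR,B_\sUV,C_\sUV)-\fB(\dot G^c_\sIR,B_0,C_0)=\fB(\dot G^c_\sIR,B_\sUV-B_0,C_\sUV)+\fB(\dot G^c_\sIR,B_0,C_\sUV-C_0)$, so every term carries exactly one difference factor (bounded by $\delta R^{\bullet}$ from the induction hypothesis (C), or from $\|\mathtt{u}(\varXi_{\uv;\sUV}-\varXi_{\uv;0})\|_\sV\le\delta R$ in the base case) and one plain factor bounded by (A)/(B); the boundary difference is $\lesssim\delta R^{i,m}$ by the same application of Lemma~\ref{lem:bound_map_L_m} to $f^{i,m,a+b}_{\uv;\sUV}-f^{i,m,a+b}_{\uv;0}$, and the $\uIR$-integration proceeds as before.

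\textbf{Main obstacle.} There is no new analytic input beyond the kernel estimates already established; the delicate part is the bookkeeping of the simultaneous induction. Two points require care: first, checking that the time-localizing cutoff functions generated when iterating \eqref{eq:flow_deterministic_i_m_a_w} always satisfy the compatibility condition with $\mathtt{u}$, which is the source of the $2i(i+1)$ bound; and second, ensuring that all kernels produced along the way — the $\fR\dot G^c_\sIR$ coming out of $\fB$ and the factors $\partial^b J$ from $\fL^m$ — remain within the classes covered by Lemmas~\ref{lem:kernel_dot_G}, \ref{lem:kernel_simple_fact} and \ref{lem:bound_map_L_m}. This is exactly where the restriction $(i,m,a)\in\frI_0$ (so $a\in\frM_\sigma^m$) and, when $\sigma\notin2\bN_+$, Assumption~\ref{ass:infrared} enter, since $x^aG(x)$ is integrable only for $a\in\frM_\sigma$.
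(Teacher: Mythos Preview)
Your proposal is correct and mirrors the paper's proof: the same double induction (ascending in $i$, descending in $m$), the same use of the localized flow equation \eqref{eq:flow_deterministic_i_m_a_w} together with Lemmas~\ref{lem:fB1_bound} and~\ref{lem:fA_fB_Ks} (at $\oo=0$, $\sUV=1$) and the kernel bound $\|\fR\dot G^c_\sIR\|_\cK\lesssim[\sIR]^{\varepsilon-\sigma}$ for the $s=1$ step, integration plus Lemma~\ref{lem:bound_map_L_m} (with scale parameter $1$) for $s=0$, and bilinearity of $\fB$ to telescope the difference in (C). One minor overreach in your closing remarks: Assumption~\ref{ass:infrared} plays no role in this preliminary lemma --- it enters only later, when the Taylor map $\fX^a_\ro$ with $\ro=\sigma_\diamond$ is invoked to handle the relevant coefficients.
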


\begin{rem}\label{rem:eff_force_preliminary_bound}
A slightly stronger statement is in fact true. Fix $i_\circ,m_\circ\in\bN_0$. If the assumption of the above lemma holds for all $i,m\in\bN_0$ such that $i<i_\circ$ or $i=i_\circ$ and $m>m_\circ$, then the statement holds for all $i,m\in\bN_0$ such that either (1)~$i<i_\circ$ or (2)~$i=i_\circ$, $m>m_\circ$ or (3) $i=i_\circ$, $m=m_\circ$, $s=1$.
\end{rem}

\begin{proof}
We first note that for $m>i m_\flat$ we have $F^{i,m,a}_{\uv;\sUV,\sIR}=0$ and the bounds (A), (B), (C) hold trivially. Next, we observe that $F^{0,0,0}_{\uv;0,\sIR}=\varXi_{\uv;0}$ and $F^{0,0,0}_{\uv;\sUV,\sIR}=\varXi_{\uv;\sUV}$ for $\sIR\in[0,1]$. Recall that for $m=0$ it holds $\|v\|_{\cV^m}=\|v\|_\cV$. As a result, the bounds (A), (B), (C) hold true if $i=0$.

Fix $i_\circ\in\bN_+$ and $m_\circ\in\bN_0$. Assume that the bounds (A), (B), (C) are true for all $(i,m,a)\in\frI_0$ such that either $i<i_\circ$, or $i=i_\circ$ and $m>m_\circ$. We shall prove the bounds for all $(i,m,a)$ such that $i=i_\circ$ and $m=m_\circ$. Fix some $\mathtt{w}\in C^\infty_\rc(\bR)$ such that $\mathtt{u}=1$ on some neighborhood of $\supp\,\mathtt{w}-[0,i(i+1)]$. We first observe that by the flow equation~\eqref{eq:flow_deterministic_i_m_a} and the support properties of the effective force coefficients stated in Lemma~\ref{lem:support} it holds
\begin{multline}\label{eq:flow_preliminary_w_i_m_a}
 \mathtt{w}\partial_\sIR^{\phantom{i}} F^{i,m,a}_{\uv;\sUV,\sIR}
 =
 -\frac{1}{m!}
 \sum_{\pi\in\cP_m}\sum_{j=1}^i\sum_{k=0}^m
 \sum_{b+c+d=\pi(a)}\frac{(k+1)a!}{b!c!d!}\,
 \\\times
 \fY_\pi\fB\big(\dot G^c_\sIR,\mathtt{w} F^{j,k+1,b}_{\uv;\sUV,\sIR},\mathtt{v} F^{i-j,m-k,d}_{\uv;\sUV,\sIR}\big),
\end{multline}
where $\mathtt{v}\in C^\infty_\rc(\bR)$ is arbitrary such that $\mathtt{v}=1$ on some neighborhood of \mbox{$\supp\,\mathtt{w} - [0,2i]$} and $\mathtt{u}=1$ on some neighborhood of $\supp\,\mathtt{v}-[0,(i-1)i]$. Note that Eq.~\eqref{eq:flow_preliminary_w_i_m_a} is also valid for $\sUV=0$. Recall that $\delta_\bM\otimes J^{\otimes m}=K^{m;0}$. Using Remark~\ref{rem:fB1_bound} with $\oo=0$ and $\sUV=1$ we obtain
\begin{multline}
 \|K^{m;0} \ast \mathtt{w}\partial_\sIR F_{\uv;\sUV,\sIR}^{i,m,a}\|_{\cV^m} 
 \leq
 \sum_{\pi\in\cP_m}\sum_{j=1}^i\sum_{k=0}^m\sum_{b+c+d=\pi(a)}
 \frac{(k+1)a!}{b!c!d!}\,\|\fR\dot G^c_\sIR\|_\cK
 \\
 \times
 \|K^{k+1;0} \ast \mathtt{w} F_{\uv;\sUV,\sIR}^{j,k+1,b}\|_{\cV^{k+1}} 
 \,
 \|K^{m-k;0} \ast \mathtt{v} F_{\uv;\sUV,\sIR}^{i-j,m-k,d}\|_{\cV^{m-k}}
\end{multline}
and
\begin{multline}
 \|K^{m;0}\ast \mathtt{w}\partial_\sIR (F_{\uv;\sUV,\sIR}^{i,m,a}-F_{\uv;0,\sIR}^{i,m,a})\|_{\cV^m} 
 \leq 
 \sum_{\pi\in\cP_m}\sum_{j=1}^i\sum_{k=0}^m\sum_{b+c+d=\pi(a)}
 \frac{(k+1)a!}{b!c!d!}\,\|\fR\dot G^c_\sIR\|_\cK
 \\
 \times\Big(
 \|K^{k+1;0} \ast \mathtt{w}(F_{\uv;\sUV,\sIR}^{j,k+1,b}-F_{\uv;0,\sIR}^{j,k+1,b})\|_{\cV^{k+1}} 
 \,
 \|K^{m-k;0} \ast \mathtt{v} F_{\uv;\sUV,\sIR}^{i-j,m-k,d}\|_{\cV^{m-k}}
 \\[2mm]
 +
 \|K^{k+1;0} \ast\mathtt{w} F_{\uv;0,\sIR}^{j,k+1,b}\|_{\cV^{k+1}} 
 \,
 \|K^{m-k;0} \ast \mathtt{v}(F_{\uv;\sUV,\sIR}^{i-j,m-k,d}-F_{\uv;0,\sIR}^{i-j,m-k,d})\|_{\cV^{m-k}}
 \Big).
\end{multline}
To get the last bound we used the multi-linearity of the map $\fB$. Let us also recall that by Lemma~\ref{lem:kernel_dot_G} we have
\begin{equation}
 \|\fR
 \dot G_\sIR^{c}\|_\cK \lesssim
 [\sIR]^{\varepsilon-\sigma}
\end{equation}
for $c\in\frM^m_\sigma$. For $s=1$ the bounds (A), (B), (C) follow now from the induction hypothesis. For $s=0$ and $\sIR=0$ the bounds (A), (B), (C) follow from the assumed bounds for the force coefficients $f^{i,m,a}_{\uv;\sUV}$, the equation
\begin{equation}\label{eq:lem_preliminary_bound_initial}
 F^{i,m,a}_{\uv;\sUV}=\sum_{b\in\bar\frM^m_\sigma}\frac{1}{b!}\partial^b \fL^m(f^{i,m,a+b}_{\uv;\sUV}).
\end{equation}
and Lemma~\ref{lem:bound_map_L_m} applied with $\sUV=1$. To prove the case $s=0$ and $\sIR\in(0,1]$ we use the bounds
\begin{equation}\label{eq:lem_preliminary_bound_integration}
 \|K^{m;0}\ast \mathtt{w} F^{i,m,a}_{\uv;\sUV,\sIR}\|_{\cV^m} 
 \leq 
 \|K^{m;0}\ast \mathtt{w} F^{i,m,a}_{\uv;\sUV}\|_{\cV^m} 
 +
 \int_0^\sIR 
 \|K^{m;0}\ast \mathtt{w} \partial_\uIR F^{i,m,a}_{\uv;\sUV,\uIR}\|_{\cV^m}\,\rd\uIR,
\end{equation}
\begin{multline}\label{eq:lem_preliminary_bound_integration2}
 \|K^{m;0}\ast \mathtt{w} (F^{i,m,a}_{\uv;\sUV,\sIR}-F^{i,m,a}_{\uv;0,\sIR})\|_{\cV^m} 
 \leq 
 \|K^{m;0}\ast \mathtt{w} (F^{i,m,a}_{\uv;\sUV}-F^{i,m,a}_{\uv;0})\|_{\cV^m} \\
 +
 \int_0^\sIR 
 \|K^{m;0}\ast \mathtt{w} \partial_\uIR (F^{i,m,a}_{\uv;\sUV,\uIR}-F^{i,m,a}_{\uv;0,\uIR})\|_{\cV^m}\,\rd\uIR
\end{multline}
and the results established above.
\end{proof}

\section{Cumulants of effective force coefficients}\label{sec:cumulants_estimates}

In this section we prove uniform bounds for the joint cumulants of the effective force coefficients $F^{i,m,a}_{\uv;\sUV,\sIR}$ at stationarity introduced in Sec.~\ref{sec:effective_force}. The bounds are stated in Theorem~\ref{thm:cumulants}. Their validity relies crucially on certain renormalization conditions that fix the relevant force coefficients $f^{i,m,a}_{\uv;\sUV}$, $(i,m,a)\in\frI^-$. The results established in this section are used in Sec.~\ref{sec:probabilistic} to prove uniform bounds for the relevant force coefficient $f^{i,m,a}_{\sUV,\sIR}=f^{i,m,a}_{0;\sUV,\sIR}$ as well as to show their convergence in probability as~$\sUV\searrow0$. 

\begin{dfn}\label{dfn:cumulants}
Let $p\in\bN_+$, $I=\{1,\ldots,p\}$ and $\zeta_q$, $q\in I$, be random variables. The joint cumulant of the multi-set $(\zeta_q)_{q\in I}=(\zeta_1,\ldots,\zeta_p)$ is defined by
\begin{equation}
 \llangle \zeta_1,\ldots,\zeta_p\rrangle
 \equiv
 \llangle(\zeta_q)_{q\in I}\rrangle
 = 
 (-\ri)^p \partial_{t_1}\ldots\partial_{t_p}\log\llangle \exp(\ri t_1 \zeta_1+\ldots+\ri t_p \zeta_p)\rrangle \big|_{t_1=\ldots=t_p=0}.
\end{equation}
For example, $\llangle \zeta_1,\zeta_2\rrangle=\llangle \zeta_1\zeta_2\rrangle-\llangle \zeta_1\rrangle\llangle \zeta_2\rrangle$.
\end{dfn}

\begin{lem}\label{lem:cumulants}
Let $p\in\bN_+$, $I=\{1,\ldots,p\}$ and $\zeta_1,\ldots,\zeta_p,\Phi,\Psi$ be random variables. Then
\begin{equation}\label{eq:expectation_cumulants}
 \llangle\zeta_1\ldots\zeta_p\rrangle=
 \llangle{\textstyle\prod}_{q\in I} \zeta_q\rrangle
 =\sum_{r=1}^p
 \sum_{\substack{I_1,\ldots,I_r\subset I,\\I_1\cup\ldots\cup I_r=I\\I_1,\ldots,I_r\neq \emptyset}}
 \prod_{l=1}^r \llangle(\zeta_q)_{q\in I_l}\rrangle,
\end{equation} 
\begin{equation}\label{eq:cumulants_expectation}
 \llangle\zeta_1,\ldots,\zeta_p\rrangle=
 \llangle(\zeta_q)_{q\in I}\rrangle
 =\sum_{r=1}^p (-1)^{r-1}(r-1)! 
 \sum_{\substack{I_1,\ldots,I_r\subset I,\\I_1\cup\ldots\cup I_r=I\\I_1,\ldots,I_r\neq \emptyset}}
 \prod_{l=1}^r \llangle {\textstyle\prod}_{q\in I_l}\zeta_q\rrangle
\end{equation} 
and
\begin{equation}\label{eq:cumulants_product}
 \llangle(\zeta_q)_{q\in I},\Phi\Psi\rrangle
 =
 \llangle(\zeta_q)_{q\in I},\Phi,\Psi\rrangle
 +
 \sum_{\substack{I_1,I_2\subset I\\I_1\cup I_2= I}}
 \llangle(\zeta_q)_{q\in I_1},\Phi\rrangle
 ~
 \llangle(\zeta_q)_{q\in I_2},\Psi\rrangle.
\end{equation}
\end{lem}
\begin{rem}
For the proof of the above lemma see Proposition~3.2.1 in~\cite{peccati2011wiener}. 
\end{rem}

\begin{dfn}\label{dfn:notation_cumulants_distributions}
Let $n\in\bN_+$, $I=\{1,\ldots,n\}$, $m_1,\ldots,m_n\in\bN_0$ and \mbox{$\zeta_q\in\cD^{m_q}$}, \mbox{$q\in I$}, be random distributions. By definition the joint cumulant of the list $(\zeta_q)_{q\in I}=(\zeta_1,\ldots,\zeta_n)$ is the distribution
\begin{equation}
 \llangle (\zeta_q)_{q\in I}\rrangle
 \equiv
 \llangle \zeta_1,\ldots,\zeta_n\rrangle
 \in 
 \cD^{\mathsf{m}},
 \qquad 
 \mathsf{m}=(m_1,\ldots,m_n)\in\bN_0^n,
\end{equation}
defined by the equality
\begin{equation}
 \langle\llangle \zeta_1,\ldots,\zeta_n\rrangle,\psi_1\otimes\ldots\otimes\psi_n\otimes\varphi_1\otimes\ldots\otimes\varphi_n\rangle
 :=
 \llangle
 \langle\zeta_1,\psi_1\otimes\varphi_1\rangle,\ldots,\langle\zeta_n,\psi_n\otimes\varphi_n\rangle\rrangle,
\end{equation}
where $\psi_q\in C^\infty_\rc(\bM)$, $\varphi_q\in C^\infty_\rc(\bM^{m_q})$, $q\in I$, are arbitrary. 
\end{dfn}

\begin{dfn}\label{dfn:cumulants_eff_force}
A list $(i,m,a,s,r)$, where $i,m\in\bN_0$, $a\in\frM^m_\sigma$, $s,r\in\{0,1\}$ is called an index. For $n\in\bN_+$ we call
\begin{equation}\label{eq:list_indices}
 \vI\equiv ((i_1,m_1,a_1,s_1,r_1),\ldots,(i_n,m_n,a_n,s_n,r_n))
\end{equation}
a list of indices. We set
\begin{itemize}
 \item $\rn(\vI):=n$, 
 \item $\rri(\vI):=i_1+\ldots+i_n$, 
 \item $\mathsf{m}(\vI):=(m_1,\ldots,m_n)$,
 \item $\rrm(\vI):=m_1+\ldots+m_n$,
 \item $\ra(\vI):=[a_1]+\ldots+[a_m]$,
 \item $\rs(\vI):=s_1+\ldots+s_n$,
 \item $\rr(\vI):=r_1+\ldots+r_n$.
\end{itemize}
We use the following notation for the joint cumulants of the effective force coefficients at stationarity
\begin{equation}
 E^\vI_{\uv;\sUV,\sIR}:=
 \llangle \partial_\sIR^{s_1}\partial_\uv^{r_1} F^{i_1,m_1,a_1}_{\uv;\sUV,\sIR} ,\ldots,
 \partial_\sIR^{s_n}\partial_\uv^{r_n} F^{i_n,m_n,a_n}_{\uv;\sUV,\sIR}\rrangle\in\cD^{\mathsf{m}(\vI);0}_\rt.
\end{equation}
\end{dfn}

\begin{rem}\label{rem:cumulants_continuity}
Note that if $\rr(\vI)=0$ and $\rs(\vI)=0$, then $E^\vI_{\uv;\sUV,\sIR}$ is well defined for all $(\uv,\sUV)\in[0,1]^2\setminus\{(0,0)\}$, $\sIR\in[0,1]$. If $\rr(\vI)=1$ or $\rs(\vI)=1$, then $E^\vI_{\uv;\sUV,\sIR}$ is well defined if $\uv\in(0,1]$ or $\sIR\in(0,1]$, respectively.
\end{rem}

\begin{rem}
The main result of this section stated in Theorem~\ref{thm:cumulants} implies that $E^\vI_{\uv;\sUV,\sIR}$ can be bounded uniformly in the space $\cD^{\mathsf{m}(\vI);\oo}_\rt$ for some $\oo\in\bN_0$ depending on the list of indices $\vI$.
\end{rem}

\begin{dfn}\label{dfn:varrho}
Let $\varepsilon\in(0,1]$. For any list of indices $\vI$ we define $\varrho_\varepsilon(\vI)\in\bR$ by the following equality
\begin{multline}\label{eq:dfn_varrho_I}
 \varrho_\varepsilon(\vI) = 
 -\rn(\vI) (\dim(\varXi)+\varepsilon)
 +\rrm(\vI) (\dim(\varPhi)+\varepsilon)
 \\
 + \rri(\vI) (\dim(\lambda)-\varepsilon m_\flat) + \ra(\vI).
\end{multline}
For any $(i,m,a)\in\frI_0$ we define $\varrho_\varepsilon(i,m,a)\in\bR$ by the following equality
\begin{equation}\label{eq:dfn_varrho}
 \varrho_\varepsilon(i,m,a) = 
 -\dim(\varXi)-\varepsilon
 + m\, (\dim(\varPhi)+\varepsilon)
 + i\, (\dim(\lambda)-\varepsilon m_\flat) 
 + [a].
\end{equation}
We also set
\begin{equation}
 \varrho_\varepsilon(i,m) = \varrho_\varepsilon(i,m,0),
 \qquad
 \varrho_\varepsilon(i)=\varrho_\varepsilon(i,0)
\end{equation}
and omit $\varepsilon$ in the notation if $\varepsilon=0$. Moreover, we define
\begin{equation}
 \varepsilon_\diamond:=\dim(\varXi)/3\wedge\dim(\lambda)/(m_\flat+3) \wedge \min_{(i,m,a)\in\frI^+} \varrho(i,m,a)/(2+(i_\diamond+1) m_\flat).
\end{equation}
\end{dfn}

\begin{rem}
We have
$
 \varrho_\varepsilon(\vI) = \varrho_\varepsilon(i_1,m_1,a_1) +\ldots +\varrho_\varepsilon(i_n,m_n,a_n),
$
for a list of indices $\vI$ of the form Eq.~\eqref{eq:list_indices}. 
\end{rem}

\begin{rem}
Observe that the above definition of $\varrho_0(i,m,a)=\varrho(i,m,a)$ is consistent with Def.~\ref{dfn:relevant_irrelevant}. Moreover, for any $\varepsilon>0$ and $(i,m,a)\in\frI_0$ such that $m\leq i m_\flat$ it holds $\varrho_\varepsilon(i,m,a)<\varrho(i,m,a)$. Similarly, for any $\varepsilon>0$ and any list of indices $\vI$ such that $\rrm(\vI)\leq \rri(\vI) m_\flat$ it holds $\varrho_\varepsilon(\vI)<\varrho(\vI)$.
\end{rem}

\begin{rem}\label{rem:epsilon}
We claim that $\varrho_\varepsilon(i,m,a)>\varepsilon$ for all $\varepsilon\in(0,\varepsilon_\diamond)$ and $(i,m,a)\in\frI^+$. Indeed, for all $(i,m,a)\in\frI^+$ such that $i\in\{0,\ldots,i_\diamond+1\}$ we have
\begin{equation}
 \varrho_\varepsilon(i,m,a)-\varepsilon = \varrho(i,m,a)
 -\varepsilon(2+im_\flat-m) > \varrho(i,m,a)-\varepsilon_\diamond (2+(i_\diamond+1) m_\flat) >0.
\end{equation}
Moreover, for all $(i,m,a)\in\frI^+$ such that $i>i_\diamond+1$ it holds
\begin{equation}
 \varrho_\varepsilon(i,m,a) = 
 \varrho_\varepsilon(i_\diamond+1,m,a) + (i-i_\diamond-1)(\dim(\lambda)-\varepsilon m_\flat)\geq \varrho_\varepsilon(i_\diamond+1,m,a)>\varepsilon
\end{equation}
since $(i_\diamond+1,m,a)\in\frI^+$ for any $m\in\bN_0$ and $a\in\frM^m_\sigma$. 

We also claim that $\varrho_\varepsilon(\vI)+(\rn(\vI)-1)\rDim>0$ for all $\varepsilon\in(0,\varepsilon_\diamond)$ and all lists of indices $\vI$ such that $\varrho(\vI)+(\rn(\vI)-1)\rDim>0$ and $\rrm(\vI)\leq\rri(\vI)m_\flat$. If $\rn(\vI)=1$, then the above statement follows from the previous paragraph. If $\rn(\vI)=2$, then $\rri(\vI)>0$ and 
\begin{equation}
 \varrho_\varepsilon(\vI)+(\rn(\vI)-1)\rDim=\varrho_\varepsilon(\vI)+2\dim(\varXi)\geq-2\varepsilon+(\dim(\lambda)-\varepsilon m_\flat)>0
\end{equation}
If $\rn(\vI)>2$, then 
\begin{equation}
 \varrho_\varepsilon(\vI)+(\rn(\vI)-1)\rDim\geq\rn(\vI)\,(\dim(\varXi)-\varepsilon)-\rDim \geq \rn(\vI)\,(\dim(\varXi)/3-\varepsilon)>0
\end{equation}
\end{rem}
\begin{rem}
By the above remarks $\varrho_\varepsilon(i,m,a)\neq 0$ for all $(i,m,a)\in\frI_0$ such that $m\leq i m_\flat$ and $\varrho_\varepsilon(\vI)\neq 0$ for all list of indices such that $\rrm(\vI)\leq \rri(\vI) m_\flat$.
\end{rem}

\subsection{Flow equation for cumulants}

\begin{lem}\label{lem:flow_E_general}
Let $n\in\bN_+$, $i_1\in\bN_0$, $m_1,\ldots,m_n\in\bN_0$, $a_1\in\frM^{m_1}_\sigma$, $r_1\in\{0,1\}$ and $I\equiv \{2,\ldots,n\}$. For any random  distributions $\zeta_q\in\cD^{m_q}$, $q\in I$, we have the following flow equation, in the notation of Def.~\ref{dfn:notation_cumulants_distributions}:
\begin{multline}\label{eq:flow_E_general}
 \llangle[\big] 
 \partial_\sIR \partial_\uv^{r_1} F^{i_1,m_1,a_1}_{\uv;\sUV,\sIR},
 (\zeta_q)_{q\in I}\rrangle[\big]
 =
 -\frac{1}{m_1!}
 \sum_{\pi\in\cP_{m_1}}\sum_{j=1}^{i_1}\sum_{k=0}^{m_1}\sum_{r=0}^{r_1}
 \sum_{b+c+d=\pi(a_1)}\frac{(k+1)a_1!}{b!c!d!}\,
 \\\times\Bigg(
 \sum_{\substack{I_1,I_2\subset I\\I_1\cup I_2= I\\I_1\cap I_2=\emptyset}}
 \fY_\pi\fB\big(\dot G^c_\sIR,
 \llangle[\big]
 \partial_\uv^r F^{j,k+1,b}_{\uv;\sUV,\sIR},(\zeta_q)_{q\in I_1}\rrangle[\big]
 ,\llangle[\big]
 \partial_\uv^{r_1-r} F^{i_1-j,m_1-k,d}_{\uv;\sUV,\sIR},
 (\zeta_q)_{q\in I_2}
 \rrangle[\big]\big)
 \\+
 \fY_\pi\fA\big(\dot G^c_\sIR,
 \llangle[\big]
 \partial_\uv^r F^{j,k+1,b}_{\uv;\sUV,\sIR},
 (\zeta_q)_{q\in I},
 \partial_\uv^{r_1-r} F^{i_1-j,m_1-k,d}_{\uv;\sUV,\sIR}
 \rrangle[\big]\big)
 \Bigg).
\end{multline}
\end{lem}
\begin{proof}
This follows immediately from Eq.~\eqref{eq:flow_i_m_a} and the relation between cumulants expressed in Eq.~\eqref{eq:cumulants_product}.
\end{proof}

\begin{lem}\label{lem:flow_E_form_bound}
Let $\varepsilon\in(0,1]$, $n\in\bN_+$ and
\begin{equation}\label{eq:flow_thm_list_indices}
 \vJ\equiv (\vJ_1,\ldots,\vJ_n)=((i_1,m_1,a_1,s_1,r_1),\ldots,(i_n,m_n,a_n,s_n,r_n))
\end{equation}
be a list of indices such that $s_l=1$ for some $l\in\{1,\ldots,n\}$.
\begin{enumerate}
\item[(A)]
The distribution $E^\vJ_{\uv;\sUV,\sIR}\in\cD^{\mathsf{m}(\vJ)}_\rt$ can be expressed as a linear combination of distributions of the form
\begin{equation}
 \fY^\omega\fY_\pi\fA\big(\dot G^c_\sIR,E^{\vK}_{\uv;\sUV,\sIR}\big)
 \qquad\qquad
 \textrm{or}
 \qquad\qquad
 \fY^\omega\fY_\pi\fB\big(\dot G^c_\sIR,
 E^{\vL}_{\uv;\sUV,\sIR},
 E^{\vM}_{\uv;\sUV,\sIR}
 \big),
\end{equation}
where $c\in\frM$ is some multi-index, $\vK$, $\vL$, $\vM$ are some lists of indices, $\omega\in\cP_n$, $\pi\in\cP_{m_l}$ are some permutations and the maps $\fY^\omega$, $\fY_\pi$ were introduced in Def.~\ref{dfn:map_Y}. The lists of indices $\vK$, $\vL$, $\vM$ satisfy the following conditions
\begin{equation} 
\begin{split}
&\rn(\vK)=\rn(\vJ)+1,\\
&\rri(\vK)=\rri(\vJ),\\
&\rrm(\vK)=\rrm(\vJ)+1,\\
&\ra(\vK)+[c]=\ra(\vJ),\\
&\rs(\vK)=\rs(\vJ)-1,\\
&\rr(\vK)=\rr(\vJ),\\
&\varrho_{2\varepsilon}(\vJ) = \varrho_{2\varepsilon}(\vK)-\rDim +\sigma,
\end{split}
\quad\quad\textrm{or}\qquad\quad
\begin{split}
&\rn(\vL)+\rn(\vM)=\rn(\vJ)+1,\\
&\rri(\vL)+\rri(\vM)=\rri(\vJ),\\
&\rrm(\vL)+\rrm(\vM)=\rrm(\vJ)+1,\\ 
& \ra(\vL)+\ra(\vM)+[c]=\ra(\vJ),\\
&\rs(\vL)+\rs(\vM)=\rs(\vJ)-1,\\
&\rr(\vL)+\rr(\vM)=\rr(\vJ),\\
&\varrho_{2\varepsilon}(\vJ) = \varrho_{2\varepsilon}(\vL)+\varrho_{2\varepsilon}(\vM)+\sigma.
\end{split}
\end{equation}

\item[(B)] Fix some $\oo\in\bN_+$. Suppose that the bound
\begin{equation}\label{eq:lem_cumulants}
 \|K^{\rn(\vI),\rrm(\vI);\oo}_{\uv\vee\sUV,\sIR}\ast E^\vI_{\uv;\sUV,\sIR}\|_{\cV^{\mathsf{m}(\vI)}}
 \lesssim [\uv]^{(\varepsilon-\sigma)\rr(\vI)}\,
 [\sIR]^{\varrho_{2\varepsilon}(\vI)-\sigma \rs(\vI)+(\rn(\vI)-1)\rDim}
\end{equation}
holds uniformly in $(\uv,\sUV)\in[0,1]^2\setminus\{(0,0)\}$, $\sIR\in[\uv\vee\sUV,1]$ for all lists of indices $\vI\in\{\vK,\vL,\vM\}$, where $\vK,\vL,\vM$ are arbitrary lists of indices satisfying the conditions specified in Part (A) given the list of indices~$\vJ$. Then the above bound holds uniformly in $(\uv,\sUV)\in[0,1]^2\setminus\{(0,0)\}$, $\sIR\in[\uv\vee\sUV,1]$ for the list of indices $\vI=\vJ$.
\end{enumerate}
\end{lem}
\begin{proof}
Without loss of generality we can assume that $l=1$. Part (A) of the theorem follows from Lemma~\ref{lem:flow_E_general} applied with
\begin{equation}
 \zeta_q\equiv \partial_\sIR^{s_q}\partial_\uv^{r_q} F^{i_q,m_q,a_q}_{\uv;\sUV,\sIR}, 
 \qquad q\in\{2,\ldots,n\}.
\end{equation}
We observe that the multi-index $c\in\frM$ and the permutation $\pi\in\cP_{m_1}$ in the statement of the theorem coincide with the respective objects in the expression~\eqref{eq:flow_E_general}. The permutation $\omega\in\cP_n$ is trivial because of the assumption $l=1$. Moreover, it holds
\begin{equation}
\begin{gathered}
 \vK=((j,k+1,b,0,r),\,\vJ_2,\ldots,\vJ_n,\,(i_1-j,m_1-k,d,0,r_1-r)),
 \\
 \vL= (j,k+1,b,0,r)\sqcup (\vJ_q)_{q\in I_1},
 \qquad
 \vM =(i_1-j,m_1-k,d,0,r_1-r) \sqcup (\vJ_q)_{q\in I_2},
\end{gathered} 
\end{equation}
where $\sqcup$ denotes the concatenation of lists, $I_1\cup I_2=I=\{2,\ldots,n\}$, $I_1\cap I_2=\emptyset$ and $j\in\{1,\ldots,i_1\}$, $k\in\{0,\ldots,m_1\}$, $b\in\frM_\sigma^{1+k}$, $d\in\frM_\sigma^{m-k}$ and $r\in\{0,1\}$ coincide with the respective objects in the expression~\eqref{eq:flow_E_general}. This implies that the lists $\vK$, $\vL$, $\vM$ satisfy the conditions specified in Part~(A) of the theorem. Only the last of these conditions is nontrivial and it follows from Eq.~\eqref{eq:dfn_varrho_I} and the fact that $\dim(\varXi)+\dim(\varPhi)=\rdim=\rDim-\sigma$. Part~(B) follows from Part~(A), Lemma~\ref{lem:fA_fB_bounds}, Lemma~\ref{lem:fA_fB_Ks} and the bounds
\begin{equation}
 \|\fR^{\phantom{2}}_{\uv\vee\sUV}\fP^{2\oo}_\sIR \dot G_\sIR^a\|_\cK \lesssim [\sIR]^{[a]},
 \qquad
 \|\fT\fR^{\phantom\oo}_{\uv\vee\sUV}\fP_\sIR^\oo\dot G^a_\sIR\|_\cV \lesssim [\sIR]^{[a]-\rDim}.
\end{equation}
uniform in $\sUV\in[0,1]$ and $\sIR\in[\uv\vee\sUV,1]$. These bounds follow immediately from the bounds stated in Remark~\ref{rem:fA_fB_Ks}.
\end{proof}

\begin{rem}\label{rem:cumulants_problem}
Note that for $\sIR\in[\uv\vee\sUV,1]$ the bound~\eqref{eq:lem_cumulants} coincides with the bound~\eqref{eq:thm_cumulants} stated in Theorem~\ref{thm:cumulants}, which is the main result of Sec.~\ref{sec:cumulants_estimates}. Unfortunately, the result stated in Part~(B) of the above theorem with the bound~\eqref{eq:lem_cumulants} replaced by the bound~\eqref{eq:thm_cumulants} and $\sIR\in[\uv\vee\sUV,1]$ replaced by $\sIR\in(0,\uv\vee\sUV]$ is in general false even for $\oo=0$. Let us mention that such a result for $\oo=0$ would be true if the following bounds
\begin{equation}
 \|\fR^{\phantom{2}}_{\uv\vee\sUV} \dot G_\sIR^a\|_\cK\lesssim [\uv\vee\sUV]^{\sigma-\varepsilon+[a]}\,[\sIR]^{\varepsilon-\sigma},
 \quad
 \|\fT|\fR^{\phantom\oo}_{\uv\vee\sUV}\dot G^a_\sIR|\|_\cV \lesssim
 [\uv\vee\sUV]^{\sigma-\varepsilon-\rDim+[a]}\,[\sIR]^{\varepsilon-\sigma},
\end{equation} 
were satisfied uniformly in $\sUV\in[0,1]$ and $\sIR\in(0,\uv\vee\sUV]$. However, the second of the above bounds does not hold (compare the bounds stated in Remark~\ref{rem:fA_fB_Ks}). For the above-mentioned reason the proof of the main bound~\eqref{eq:thm_cumulants} for $\sIR\in(0,\uv\vee\sUV]$ requires a special treatment and is discussed in Sec.~\ref{sec:cumulants_preliminary}.
\end{rem}

\subsection{Bounds for cumulants of noise}

In this section we prove bounds for moments and cumulants of the noises $\varXi_\sUV$ and $\varXi_{\uv;\sUV}=M_\uv\ast\varXi_\sUV$, which follow from Assumption~\ref{ass:noise}.

\begin{rem}\label{rem:mixing}
By Assumption~\ref{ass:noise} (C) for any $\sUV\in(0,1]$ and any compact sets $\cO_1$, $\cO_2 \subset \bH$ such that 
\begin{equation}
 \inf_{x_1 \in \cO_1,x_2\in\cO_2}|x_1 - x_2|_{\bH} \geq [\sUV]=\sUV^{1/\sigma}
\end{equation}
the \mbox{$\sigma$-algebras} generated by $\{\varXi_\sUV(x)\,|\, x\in \cO_1\}$ and $\{\varXi_\sUV(x)\,|\, x\in \cO_2\}$ are independent. Recall that the distance function $|\Cdot|_{\bH}$ was introduced in Def.~\ref{dfn:spacetime_distance}.
\end{rem}

\begin{lem}\label{lem:moments_cumulants}
Let $C,c>0$ and $n\in\bN_+$ be fixed. Suppose that the kernel $H\in\cK$ is such that $\fT H\in L^\infty(\bH)$ and $\|H\|_\cK\leq 1$ and $\|\fT H\|_{L^\infty(\bH)} \leq c$ and the cumulants of a random field $\zeta\in C(\bH)$ satisfy the bound
\begin{equation}
 \sup_{x_1\in\bH}\int_{\bH^{m-1}} |\llangle \zeta(x_1),\zeta(x_2),\ldots,\zeta(x_m) \rrangle|\, \rd x_2\ldots \rd x_m
 \leq c\,C^m
\end{equation}
for all $m\in\{1,\ldots,n\}$. Then
\begin{equation}
 \sup_{x\in\bH}\,|\llangle (H \ast \zeta(x))^n \rrangle| \leq B_n\,c^n\,C^n,
\end{equation}
where $B_n$ is the number of all partitions of the set $\{1,\ldots,n\}$.
\end{lem}
\begin{rem}\label{rem:moments_cumulants}
Recall that $\fT H$ denotes the periodization in space of a kernel $H$, which was introduced in Def.~\ref{dfn:periodization}. The above lemma follows immediately from the identity $H\ast \zeta=\fT H\star \zeta$, where $\star$ is the convolution in $\bH$, and the relation~\eqref{eq:expectation_cumulants} between moments and cumulants. Note also that it holds
\begin{equation}
 \sup_{x\in\bH}\, |\llangle (H \ast \zeta(x))^n \rrangle|
 \leq \|H\|_\cK^n \sup_{x_1,\ldots,x_n\in\bH}\,\llangle |\zeta(x_1)|\ldots|\zeta(x_n)|\rrangle
 \leq 
 \sup_{x\in\bH}\, \llangle |\zeta(x)|^n\rrangle.
\end{equation}
\end{rem}

\begin{lem}\label{lem:ass_noise_general}
Assumption~\ref{ass:noise} implies the following:
\begin{enumerate}
 \item[(A)] For any $n\in\bN_+$ the bound
\begin{equation}
 \sup_{x\in\bH} |\llangle (\fP_\sUV\varXi_{\sUV}(x))^n \rrangle| \lesssim [\sUV]^{-n \dim(\varXi)}
\end{equation}
holds uniformly in $\sUV\in(0,1]$.
\item[(B)] For any $n\in\bN_+$ the bound
\begin{equation}
 \sup_{x_1\in\bH}\int_{\bH^{n-1}}|\llangle \fP_\sUV\varXi_{\sUV}(x_1),\fP_\sUV\varXi_{\sUV}(\rd x_2), \ldots,\fP_\sUV\varXi_{\sUV}(\rd x_n)\rrangle|
 \lesssim [\sUV]^{n \dim(\varXi)- \rDim}
\end{equation}
holds uniformly in $\sUV\in[0,1]$.
\item[(C)] For any $n\in\bN_+$ and $\varepsilon\in[0,1\wedge\sigma]$ the bounds
\begin{equation}
\begin{gathered}
 \sup_{x\in\bH}\,|\llangle (\fP_{\sUV}\fP_{\uv} \varXi_{\uv;\sUV}(x))^n\rrangle| \lesssim [\uv\vee\sUV]^{-n \dim(\varXi)},
 \\
 \sup_{x\in\bH}\,|\llangle (\fP_\uv \partial_\uv \varXi_{\uv;\sUV}(x))^n\rrangle| \lesssim [\uv]^{n(\varepsilon-\sigma)} [\uv\vee\sUV]^{-n \dim(\varXi)-n \varepsilon}
\end{gathered} 
\end{equation}
hold uniformly in $\uv,\sUV\in(0,1]$.
\item[(D)] For any $n\in\bN_+$, $r_1,\ldots,r_n\in\{0,1\}$ and $\varepsilon\in[0,1\wedge\sigma]$ the bound
\begin{multline}
 \sup_{x_1\in\bH}\int_{\bH^{n-1}}|\llangle K_\sIR\ast \partial_\uv^{r_1}\varXi_{\uv;\sUV}(x_1),K_\sIR\ast \partial_\uv^{r_2}\varXi_{\uv;\sUV}(\rd x_2),\ldots,K_\sIR\ast \partial_\uv^{r_n}\varXi_{\uv;\sUV}(\rd x_n)\rrangle|
 \\
 \lesssim 
 [\uv]^{(\varepsilon-\sigma)r}\,
 [\uv\vee\sUV\vee\sIR]^{n\dim(\varXi)-\rDim-\varepsilon r}
\end{multline}
holds uniformly in $(\uv,\sUV)\in[0,1]^2\setminus\{(0,0)\}$, $\sIR\in[0,1]$, where $r=r_1+\ldots+r_n$.
\end{enumerate}
\end{lem}
\begin{rem}\label{rem:cumulant_noise}
Let $\mathsf{m}=(0,\ldots,0)\in\bN_0^n$. By the above lemma 
\begin{equation}
 \|\llangle \partial_\uv^{r_1}\varXi_{\uv;\sUV},\ldots,\partial_\uv^{r_n}\varXi_{\uv;\sUV}\rrangle\|_{L^\infty(\bH^n)}\lesssim 
 [\uv]^{(\varepsilon-\sigma)r}\,
 [\uv\vee\sUV]^{-n\dim(\varXi)-\varepsilon r},
\end{equation}
\begin{equation}
 \|\llangle \partial_\uv^{r_1}\varXi_{\uv;\sUV},\ldots,\partial_\uv^{r_n}\varXi_{\uv;\sUV}\rrangle\|_{\cV^{\mathsf{m}}}\lesssim 
 [\uv]^{(\varepsilon-\sigma)r}\,
 [\uv\vee\sUV]^{-n\dim(\varXi)-\varepsilon r+(n-1)\rDim}
\end{equation}
uniformly in $(\uv,\sUV)\in[0,1]^2\setminus\{(0,0)\}$. The extra factor $[\uv\vee\sUV]^{(n-1)\rDim}$ on the RHS of the second bound is due to the fact that by
\begin{equation}
 \varXi_{\uv;\sUV}=M_\uv\ast\varXi_\sUV,\qquad 
 \supp\, M_\uv\subset\{x\in\bM\,|\, |x|_\bM<[\uv]\},
\end{equation}
and Remark~\ref{rem:mixing} the cumulant $\llangle \partial_\uv^{r_1}\varXi_{\uv;\sUV},\ldots,\partial_\uv^{r_n}\varXi_{\uv;\sUV}\rrangle \in C(\bH^n)$ is not zero only on a subset of $\bH^n$ whose volume is proportional to $[\uv\vee\sUV]^{(n-1)\rDim}$.
\end{rem}

\begin{proof}
First note that if $n=1$, then the lemma holds true trivially since by assumption $\varXi_\sUV$ is centered. The bound (A) follows from Assumption~\ref{ass:noise} (B) and the equality
\begin{equation}
\fP_\sUV\varXi_\sUV(\mathring x,\bar x) = [\sUV]^{-\dim(\varXi)}\,(\fP\microXi_\sUV)(\mathring x/\sUV,\bar x/[\sUV]).
\end{equation} 
If $\sUV=0$, then $\fP_\sUV\varXi_\sUV$ coincides with the white noise in $\bH$ and the only non-vanishing cumulant is the covariance. Consequently, the bound (B) for $\sUV=0$ follows from the equality $n\dim(\varXi)-\rDim= 0$ for $n=2$ and
\begin{equation}
 \sup_{x_1\in\bH}\int_{\bH}|\llangle \varXi_0(x_1),\varXi_0(\rd x_2)\rrangle| = 1.
\end{equation}
The bound (A) implies that 
\begin{equation}\label{eq:bound_cumulant_noise_sup}
 |\llangle \fP_\sUV\varXi_\sUV(x_1), \ldots,\fP_\sUV\varXi_\sUV(x_n)\rrangle|
 \lesssim [\sUV]^{-n \dim(\varXi)}
\end{equation}
holds uniformly in $x_1,\ldots,x_n\in\bH$ and $\sUV\in(0,1]$. By the independence condition stated in Remark~\ref{rem:mixing} the cumulant $\llangle \fP_\sUV\varXi_\sUV(x_1),\ldots,\fP_\sUV\varXi_\sUV(x_n)\rrangle$ vanishes unless
\begin{equation}
 |\mathring x_p-\mathring x_q|\leq \sUV\qquad\mathrm{and}\qquad
 |\bar x_p^r-\bar x_q^r+ 2\pi k|\leq [\sUV]\quad\mathrm{for~some}\quad k\in\bZ
\end{equation}
for all $r\in\{1,\ldots,\rdim\}$ and $p,q\in\{1,\ldots,n\}$, $p\neq q$. Using the above mention property together with the bound~\eqref{eq:bound_cumulant_noise_sup} we obtain the bound (B) for $\sUV\in(0,1]$.

Recall that $\varXi_{\uv;\sUV}=M_\uv\ast\varXi_\sUV$, where $M_\uv=\fS_\uv(M)$. Set $\ooo=\rdim+1$. Let $N=\fP^{\ooo+1} M\in C_\rc^\infty(\bM)$ and $N_\uv=\fS_\uv(N)$. Thus, $\fP_\sUV\fP_\uv\varXi_{\uv;\sUV}=K_\uv^{\ast\ooo}\ast N_\uv\ast\fP_\sUV\varXi_\sUV$. Note that
$\|\fT K_\uv^{\ast\ooo}\|_{L^\infty(\bH)}\lesssim [\uv]^{-\rDim}$ by Lemma~\ref{lem:kernel_simple_fact}~(D) and $\|K_\uv\|_\cK,\|N_\uv\|_\cK\lesssim 1$. As a result, the first of the bounds (C) follows for $\uv\leq\sUV$ from the bound (A) and for $\uv>\sUV$ from the bound (B), Lemma~\ref{lem:moments_cumulants} and the fact that $\varXi_\sUV$ is centered. The second of the bounds (C) follows by the same argument since
\begin{multline}
 \fP_\uv\partial_\uv \varXi_{\uv;\sUV}
 =\fP_\uv \partial_\uv M_\uv\ast\varXi_\sUV
 =\fP_\uv \partial_\uv (K^{\ast(\ooo+1)}_\uv\ast N_\uv)\ast\varXi_\sUV
 \\
 =K^{\ast\ooo}_\uv \ast(\partial_\uv N_\uv
 +(\ooo+1) N_\uv\ast\fP_\uv \partial_\uv K_\uv)\ast K_\sUV\ast \fP_\sUV \varXi_\sUV
\end{multline}
and for $\varepsilon\in[0,1\wedge\sigma]$
\begin{equation}
 \|(\partial_\uv N_\uv
 +(\ooo+1) N_\uv\ast\fP_\uv \partial_\uv K_\uv)\ast K_\sUV\|_\cK \lesssim [\uv]^{\varepsilon-\sigma}[\uv\vee\sUV]^{-\varepsilon}
\end{equation}
uniformly in $\uv,\sUV\in(0,1]$ by Lemma~\ref{lem:kernel_derivative} and Lemma~\ref{lem:bounds_M}.

In order to show the bound (D) observe that we have
\begin{equation}\label{eq:noise_proof_cumulants}
 K_\sIR\ast\partial_\uv^r\varXi_{\uv;\sUV} 
 = 
 \partial_\uv^r M_\uv\ast K_\sUV\ast K_\sIR \ast \fP_\sUV\varXi_\sUV. 
\end{equation}
and by Lemma~\ref{lem:bounds_M} it holds
\begin{equation}\label{eq:derivative_proof_cumulants}
 \|\partial_\uv^r M_\uv\ast K_\sUV\ast K_\sIR\|_\cK\lesssim [\uv]^{(\varepsilon-\sigma)r} [\uv\vee\sUV\vee\sIR]^{-\varepsilon r}
\end{equation}
for $r\in\{0,1\}$. The bound (D) follows now from the estimate
\begin{multline}
 \sup_{x_1\in\bH}\int_{\bH^{n-1}}|\llangle K_\sIR\ast \partial_\uv^{r_1}\varXi_{\uv;\sUV}(x_1),K_\sIR\ast \partial_\uv^{r_1}\varXi_{\uv;\sUV}(\rd x_2),\ldots,K_\sIR\ast \partial_\uv^{r_n}\varXi_{\uv;\sUV}(\rd x_n)\rrangle|
 \\
 \leq 
 \|\partial_\uv^{r_1} M_\uv\ast K_\sUV\ast K_\sIR\|_\cK\ldots 
 \|\partial_\uv^{r_n} M_\uv\ast K_\sUV\ast K_\sIR\|_\cK\,
 \\
 \times \sup_{x_{1}\in\bH}\int_{\bH^{n-1}}|\llangle \fP_\sUV\varXi_\sUV(x_1),\fP_\sUV\varXi_\sUV(\rd x_2),\ldots,\fP_\sUV\varXi_\sUV(\rd x_n)\rrangle|,
\end{multline}
the bound (B) and the fact that $\varXi_\sUV$ is centered.
\end{proof}

\subsection{Preliminary bounds}\label{sec:cumulants_preliminary}

The aim of this section is to establish an auxiliary result that will be used in Sec.~\ref{sec:cumulants_uniform_bounds} to prove the bounds~\eqref{eq:thm_cumulants} for the cumulants $E^\vI_{\uv;\sUV,\sIR}$ in the range of parameters $(\uv,\sUV)\in[0,1]^2\setminus\{(0,0)\}$, $\sIR\in[0,\uv\vee\sUV]$. This range of parameters requires a special treatment because of the issue mentioned in Remark~\ref{rem:cumulants_problem}. In contrast to the proof of the bounds~\eqref{eq:thm_cumulants} in the full range of parameters, which is the main result of Sec.~\ref{sec:cumulants_estimates}, the analysis of this subsection does not involve any fine tuning (or renormalization) of the force coefficients. All the bounds proved in this subsection are straightforward in nature and morally follow from dimensional analysis. Unfortunately, to establish them rigorously we need to introduce several new objects.

Recall that the fractional heat kernel $G_\sIR$ with UV cutoff $\sIR\in(0,\infty)$ was introduced in Def.~\ref{dfn:propagator_G}. Our analysis so far was based on the decomposition of scales expressed by the first equality below
\begin{equation}
 G-G_\sIR = \int_0^\sIR \dot G_\uIR\,\rd\uIR = \int_0^\sIR \partial_\uIR G_{\uv\vee\sUV;\uIR,0}\,\rd\uIR + \int_0^\sIR \partial_\uIR G_{\uv\vee\sUV;\sIR,\uIR}\,\rd\uIR.
\end{equation}
In this section we use a more sophisticated decomposition expressed by the second equality. The kernel $\partial_\uIR G_{\uv\vee\sUV;\uIR,0}$ have compact support and the force coefficients $F^{i,m,a}_{\uv;\sUV,\sIR,0}$, which are defined as in Sec.~\ref{sec:effective_force} but with the kernel $\dot G_\sIR$ replaced by $\partial_\uIR G_{\uv\vee\sUV;\uIR,0}$, satisfy a certain independence condition similar to the independence condition of the noise $\varXi_\sUV$ stated in Remark~\ref{rem:mixing}. This allows us to exploit an argument similar to the one mentioned in Remark~\ref{rem:cumulant_noise} in the proof of bounds for cumulants of $F^{i,m,a}_{\uv;\sUV,\sIR,0}$. We note that the original effective force coefficients $F^{i,m,a}_{\uv;\sUV,\sIR}$ can be obtained by integrating the RHS of a certain flow equation involving the kernel $\partial_\uIR G_{\uv\vee\sUV;\sIR,\uIR}$ with the initial condition coinciding with $F^{i,m,a}_{\uv;\sUV,\sIR,0}$. The kernels $\partial_\uIR G_{\uv\vee\sUV;\sIR,\uIR}$ are more regular at spatial scales of order $[\uv\vee\sUV]$ and in particular satisfy the bounds stated in Remark~\ref{rem:cumulants_problem}. Recall that the second of these bounds is violated by the kernel $\dot G_\sIR$, which is the reason why the analysis presented in this section is needed.

\begin{dfn}
For $\sUV\in(0,\infty)$, $\sIRa,\sIRb\in[0,\infty)$ we define
\begin{equation}
 G_{\sUV;\sIRa,\sIRb}(\mathring x,\bar x):= 
 G_{\sIRa}(\mathring x,\bar x)\,(1-\chi(|\bar x|^\sigma/\sUV))
 + 
 G_{\sIRb}(\mathring x,\bar x)\,\chi(|\bar x|^\sigma/\sUV).
\end{equation}
where $G_\sIR$ is the fractional heat kernel with the cutoff $\sIR\in[0,\infty)$ introduced in Def.~\ref{dfn:propagator_G} and the function $\chi\in C^\infty(\bR)$ is such that $\chi(t)=0$ for $t\leq 1$ and $\chi(t)=1$ for $t\geq 2$ was fixed in Def.~\ref{dfn:propagator_G}. For $\sIR\in(0,\infty)$ and $a\in\frM$ we set
\begin{equation}
\begin{gathered}
 \dot G_{\sUV;\sIR,0}:=\partial_{\sIR} G_{\sUV;\sIR,0},
 \qquad
 \dot G_{\sUV;0,\sIR}:=\partial_{\sIR} G_{\sUV;\sIR,0},
 \\
 \dot G^a_{\sUV;\sIR,0}:=\cX^a \dot G_{\sUV;\sIR,0},
 \qquad
 \dot G^a_{\sUV;0,\sIR}:=\cX^a \dot G_{\sUV;0,\sIR},
\end{gathered} 
\end{equation}
where $\cX^a(x)=x^a$.
\end{dfn}

\begin{rem}
Note that $G_{\sUV;\sIR,\sIR}=G_\sIR$ and $\dot G^a_{\sUV;\sIR,0}+\dot G^a_{\sUV;0,\sIR}=\dot G^a_\sIR$. Moreover, for all $\sUV\in[0,1]$, $\sIR\in(0,\sUV]$ we have
\begin{equation}
 \supp \, \dot G_{\sUV;\sIR,0}^a
 \subset \{x\in\bM\,|\,\mathring x\in [\sIR,2\sIR], |\bar x|\leq 2[\sUV]\}.
\end{equation}
\end{rem}

\begin{lem}\label{lem:kernel_dot_G_2}
For any $a\in\frM_\sigma$ and $r\in\{0,1\}$ it holds
\begin{enumerate}
 \item[(A)] $\|\fR_\sUV\partial_\sUV^r\dot G^a_{\sUV;\sIR,0}\|_\cK \lesssim [\sUV]^{\sigma-\sigma r-\varepsilon+[a]}\,[\sIR]^{\varepsilon-\sigma}$,
 \item[(B)] $\|\fR_\sUV\partial_\sUV^r\dot G^a_{\sUV;0,\sIR}\|_\cK \lesssim [\sUV]^{\sigma-\sigma r-\varepsilon+[a]}\,[\sIR]^{\varepsilon-\sigma}$,
 \item[(C)] $\|\fT|\fR_\sUV\partial_\sUV^r\dot G^a_{\sUV;0,\sIR}|\|_\cV \lesssim [\sUV]^{\sigma-\sigma r-\varepsilon-\rDim}\,[\sIR]^{\varepsilon-\sigma}$
\end{enumerate}
uniformly in $\sUV\in[0,1]$, $\sIR\in(0,\sUV)$, where $\|\Cdot\|_\cV=\|\Cdot\|_{L^\infty(\bH)}$, $\|\Cdot\|_\cK=\|\Cdot\|_{L^1(\bM)}$, the periodization $\fT$ was introduced in Def.~\ref{dfn:periodization} and $\rDim=\rdim+\sigma$.
\end{lem}
\begin{rem}
The bound (C) does not hold for the kernel $\dot G^a_{\sUV;\sIR,0}$. Only this bound does: $$\|\fT|\fR_\sUV\partial_\sUV^r\dot G^a_{\sUV;\sIR,0}|\|_\cV \lesssim [\sUV]^{\sigma-\sigma r-\varepsilon}\,[\sIR]^{\varepsilon-\sigma-\rDim}.$$
\end{rem}

\begin{proof}
First note that the bound (A) follows from the bound (B) by the equality $\dot G_{\sUV;\sIR,0}=\dot G^a_\sIR-\dot G^a_{\sUV;\sIR,0}$ and Lemma~\ref{lem:kernel_dot_G}. For any spatial multi-index $b\in\bar\frM$ it holds 
\begin{equation}
\begin{gathered}
 \partial^b\partial_\sUV^r\dot G^a_{\sUV;0,\sIR}
 =
 [\sUV]^{[a]-|b|-\sigma r}\, \fS_\sUV\big(\partial^b \partial_\sUV^r \dot G^a_{\sUV;0,\tau}|_{\sUV=1,\tau=\sIR/\sUV}\big),
\end{gathered} 
\end{equation}
where the rescaling operator $\fS_\sUV$ was introduced in Def.~\ref{dfn:scaling_S} and $\partial^b\equiv\partial^b_x$. Using the properties of the function $\chi$ and Remark~\ref{rem:dot_G_1} we obtain that for $\sUV=1$, $a\in\frM$, $b\in\bar\frM$, $r\in\{0,1\}$ the bound
\begin{multline}
 |\partial^b \partial_\sUV^r \dot G^a_{\sUV;0,\tau}(\mathring x,\bar x)|
 =|\mathring x/\tau^2\,\dot \chi(\mathring x/\tau)\, \partial^b x^a G(x)\, \partial_\sUV^r (1-\chi(|\bar x|^\sigma/\sUV))|
 \\
 \lesssim |\mathring x|/\tau^2\,1_{[\tau,2\tau]}(\mathring x)\,
 |\mathring x|^{1+\mathring a}\,|x|_\bM^{|\bar a|-\sigma-\rdim}\,1_{[1,\infty)}(|\bar x|)
 \lesssim
 1_{[\tau,2\tau]}(\mathring x)\,1_{[1,\infty)}(|\bar x|)\,|\bar x|^{|\bar a|-\sigma-\rdim}, 
\end{multline}
where $\dot\chi(t)=\partial_t\chi(t)$, holds uniformly in $\tau\in(0,1]$ and $x=(\mathring x,\bar x)\in\bM$. This bound implies that for $\sUV=1$, $a\in\frM_\sigma$, $b\in\bar\frM$, $r\in\{0,1\}$ the bounds
\begin{equation}
 \|\partial^b \partial_\sUV^r \dot G^a_{\sUV;0,\tau}\|_{\cK} \lesssim 1,
 \qquad
 \sup_{\mathring x\in\bR}\|\partial^b \partial_\sUV^r \dot G^a_{\sUV;0,\tau}(\mathring x,\Cdot)\|_{L^1(\bR^\rdim)} \lesssim 1
\end{equation}
hold uniformly in $\tau\in(0,1]$. As a result, using the equality $\bar\fP_\sUV=1-[\sUV]^2\Delta_{\bar x}$ and the properties of the rescaling operator $\fS_\sUV$ we obtain that for any $\oo\in\bN_0$ the bounds
\begin{equation}
\begin{gathered}
 \|\bar\fP_\sIR^{\oo} \partial_{\sUV}^r \dot G_{\sUV;0,\sIR}\|_\cK \lesssim [\sUV]^{[a]-\sigma r}
 \qquad
 \sup_{\mathring x\in\bR}\|\bar\fP_\sIR^{\oo}\partial_{\sUV}^r\dot G^a_{\sUV;0,\sIR}(\mathring x,\Cdot)\|_{L^1(\bR^\rdim)}
 \lesssim [\sUV]^{[a]-\sigma r-\sigma}
\end{gathered} 
\end{equation}
hold uniformly in $\sUV\in(0,1]$ and $\sIR\in(0,\sUV)$. The bounds (B) and (C) follow now from Lemma~\ref{lem:kernel_simple_fact} (C), (D) and the bounds
\begin{equation}
\begin{gathered}
 \|\fR_\sUV\partial_{\sUV}^r \dot G^a_{\sUV;0,\sIR}\|_\cK 
 \leq
 \|\fR_\sUV  \bar K_\sUV^{\ast\oooo}\|_{L^1(\bR^\rdim)}\,
 \|\bar\fP^{\oooo}_{\sUV} \partial_{\sUV}^r \dot G^a_{\sUV;0,\sIR}\|_\cK,
 \\
 \|\fT |\fR_\sUV \partial_{\sUV}^r\dot G^a_{\sUV;0,\sIR}|\|_\cV 
 \leq 
 \|\fT \bar K_\sUV^{\ast\ooo}\|_{L^\infty(\bT)}\,
 \|\fR_\sUV \bar K_\sUV^{\ast\oooo}\|_{L^1(\bR^\rdim)}\,
 \sup_{\mathring x\in\bR}\|\bar\fP^{\ooo+\oooo}_{\sUV} \partial_{\sUV}^r \dot G_{\sUV;0,\sIR}(\mathring x,\Cdot)\|_{L^1(\bR^\rdim)}
\end{gathered} 
\end{equation}
applied with $\ooo=\rdim+1$ and $\oooo=\floor{\sigma/2}+1$. This finishes the proof.
\end{proof}

\begin{dfn}\label{dfn:eff_force_uv_2sIR}
For $(\uv,\sUV)\in[0,1]^2\setminus\{(0,0)\}$ and $\sIRa,\sIRb\in[0,1]$ the effective force coefficients $F^{i,m,a}_{\uv;\sUV,\sIRa,\sIRb}$, $(i,m,a)\in\frI_0$, are defined by the flow equations
\begin{multline}\label{eq:flow_deterministic_i_m_a_hat}
 \partial_{\sIRa}^{\phantom{i}} F^{i,m,a}_{\uv;\sUV,\sIRa,0}
 =
 -\frac{1}{m!}
 \sum_{\pi\in\cP_m}\sum_{j=1}^i\sum_{k=0}^m
 \sum_{b+c+d=\pi(a)}\frac{(k+1)a!}{b!c!d!}\,
 \\\times
 \fY_\pi\fB\big(\dot G^c_{\uv\vee\sUV;\sIRa,0},F^{j,k+1,b}_{\uv;\sUV,\sIRa,0},F^{i-j,m-k,d}_{\uv;\sUV,\sIRa,0}\big),
\end{multline}
\begin{multline}\label{eq:flow_deterministic_i_m_a_check}
 \partial_{\sIRb}^{\phantom{i}} F^{i,m,a}_{\uv;\sUV,\sIRa,\sIRb}
 =
 -\frac{1}{m!}
 \sum_{\pi\in\cP_m}\sum_{j=1}^i\sum_{k=0}^m
 \sum_{b+c+d=\pi(a)}\frac{(k+1)a!}{b!c!d!}\,
 \\\times
 \fY_\pi\fB\big(\dot G^c_{\uv\vee\sUV;0,\sIRb,},F^{j,k+1,b}_{\uv;\sUV,\sIRa,\sIRb},F^{i-j,m-k,d}_{\uv;\sUV,\sIRa,\sIRb}\big). 
\end{multline}
together with the conditions 
\begin{equation}
 F^{i,m,a}_{\uv;\sUV,\sIRa,\sIRb}=0~~\textrm{if}~~m>i m_\flat,
 \qquad
 F^{i,m,a}_{\uv;\sUV,0,0}=F_{\uv;\sUV}^{i,m,a},\quad (i,m,a)\in\frI_0,
\end{equation}
where the force coefficients $F_{\uv;\sUV}^{i,m,a}$ were introduced in Def.~\ref{dfn:noise_two_parameters}.
\end{dfn}

\begin{rem}
For every $(\uv,\sUV)\in[0,1]^2\setminus\{(0,0)\}$ and $\sIRa,\sIRb\in[0,1]$ the effective force coefficients $F^{i,m,a}_{\uv;\sUV,\sIRa,\sIRb}\in\cD^{m;0}$, $(i,m,a)\in\frI_0$, are uniquely defined random distributions. Their construction is pathwise and similar to the recursive construction described in the proof of Lemma~\ref{lem:eff_force_basic_properties}. At each step of the recursion we first define $F^{i,m,a}_{\uv;\sUV,\sIRa,0}$ with the use of the flow equation~\eqref{eq:flow_deterministic_i_m_a_hat} with the initial condition $F^{i,m,a}_{\uv;\sUV,0,0}=F^{i,m,a}_{\uv;\sUV}$. Next, we define $F^{i,m,a}_{\uv;\sUV,\sIRa,\sIRb}$ using the flow equation~\eqref{eq:flow_deterministic_i_m_a_check} with the initial condition $F^{i,m,a}_{\uv;\sUV,\sIRa,0}$ that has already been constructed above.
\end{rem}

\begin{rem}
Let us make a few crucial observation based on Remark~\ref{rem:discrete_RG}. First, note that it holds
\begin{equation}
 F_{\uv;\sUV,\sIRa,\sIRb}[\varphi] = F_{\uv;\sUV}[\varphi+(G-G_{\uv\vee\sUV;\sIRa,\sIRb})\ast F_{\uv;\sUV,\sIRa,\sIRb}[\varphi]]
\end{equation}
In particular, because $G_{\uv\vee\sUV;\sIR,\sIR}=G_{\sIR}$ we get
\begin{equation}
 F_{\uv;\sUV,\sIR,\sIR}[\varphi] = F_{\uv;\sUV}[\varphi+(G-G_\sIR)\ast F_{\uv;\sUV,\sIR,\sIR}[\varphi]].
\end{equation}
Since the above equation defines the effective force coefficients $F^{i,m,a}_{\uv;\sUV,\sIR,\sIR}$ uniquely and since the effective force coefficients $F^{i,m,a}_{\uv;\sUV,\sIR}$ satisfy
\begin{equation}
 F_{\uv;\sUV,\sIR}[\varphi] = F_{\uv;\sUV}[\varphi+(G-G_\sIR)\ast F_{\uv;\sUV,\sIR}[\varphi]]
\end{equation}
we obtain that
\begin{equation}\label{eq:effective_force_eq_hat_check}
 F^{i,m,a}_{\uv;\sUV,\sIR} = F^{i,m,a}_{\uv;\sUV,\sIR,\sIR},
 \qquad
 \partial_\sIR F^{i,m,a}_{\uv;\sUV,\sIR} = \partial_{\sIRa} F^{i,m,a}_{\uv;\sUV,\sIRa,\sIR}\big|_{\sIRa=\sIR}
 + \partial_{\sIR} F^{i,m,a}_{\uv;\sUV,\sIRa,\sIR}\big|_{\sIRa=\sIR}
\end{equation} 
for all $(i,m,a)\in\frI_0$ and $(\uv,\sUV)\in[0,1]^2\setminus\{(0,0)\}$, $\sIR\in[0,1]$.
\end{rem}

We start our analysis of the properties of the effective force coefficients $F^{i,m,a}_{\uv;\sUV,\sIRa,\sIRb}$ with the following deterministic lemma.

\begin{rem}
Recall that for $R>0$ and $i,m\in\bN_0$ we set $R^{i,m}=R^{1+i m_\flat-m}$. 
\end{rem}
\begin{lem}\label{lem:bounds_effective_force_cutoff_space}
Fix $\varepsilon\in(0,1]$, $(\sUV,\uv)\in[0,1]^2\setminus\{(0,0)\}$ and $\mathtt{u}\in C^\infty_\rc(\bR)$. Assume that for some $R>1$ and all $r\in\{0,1\}$, $(i,m,a)\in\frI$ it holds
\begin{equation}
\begin{gathered}
 \|\mathtt{u}\partial_\uv^r \varXi_{\uv;\sUV}^{\phantom{r}}\|_\cV\leq R\,[\uv]^{(\varepsilon-\sigma)r}\,[\uv\vee\sUV]^{-\dim(\varXi)-2\varepsilon},
 \\
 |\partial_\uv^r f^{i,m,a}_{\uv;\sUV}|\leq R^{i,m}\,[\uv]^{(\varepsilon-\sigma)r}\,[\uv\vee\sUV]^{\varrho_{2\varepsilon}(i,m,a)}.
\end{gathered} 
\end{equation}
Then it holds
\begin{multline}
 \|(\delta_\bM\otimes J^{\otimes m}_{\uv\vee\sUV})\ast \mathtt{w}\partial_\uv^r\partial_{\sIRa}^{\rsa}\partial_{\sIRb}^{\rsb}  F^{i,m,a}_{\uv;\sUV,\sIRa,\sIRb}\|_{\cV^m}
 \\
 \lesssim R^{i,m}\,[\uv]^{(\varepsilon-\sigma)r}
 \,[\sIRa]^{(\varepsilon-\sigma)\rsa}
 \,[\sIRb]^{(\varepsilon-\sigma)\rsb}
 \,[\uv\vee\sUV]^{\varrho_{2\varepsilon}(i,m,a)-\varepsilon \rsa-\varepsilon \rsb},
\end{multline}
for all $r,\rsa,\rsb\in\{0,1\}$, $(i,m,a)\in\frI_0$, $\sIRa,\sIRb\in[0,\uv\vee\sUV]$ and all $\mathtt{w}\in C^\infty_\rc(\bR)$ satisfying the condition $\mathtt{u}=1$ on some neighborhood of $\supp\,\mathtt{w}-[0,i(i+1)]$. The constants of proportionality in the above bounds depend only on $(i,m,a)\in\frI_0$ and \mbox{$\mathtt{w},\mathtt{u}\in C^\infty_\rc(\bR)$} and are otherwise universal.
\end{lem}
\begin{proof}
The proof uses the strategy of the proof of Lemma~\ref{lem:eff_force_preliminary_bound} and is also by induction on $i,m\in\bN_0$. In the inductive step we first prove the statement for: (1) $\rsa=1$, $\rsb=0$, $\sIRa\in(0,\uv\vee\sUV]$, $\sIRb=0$ and (2) $\rsa\in\{0,1\}$, $\rsb=1$, $\sIRa,\sIRb\in(0,\uv\vee\sUV]$ using the induction hypothesis, the flow equations
 \begin{multline}
 \partial_{\sIRa}^{\phantom{i}} \partial_\uv^r F^{i,m,a}_{\uv;\sUV,\sIRa,0}
 =
 -\frac{1}{m!}
 \sum_{\pi\in\cP_m}\sum_{j=1}^i\sum_{k=0}^m\sum_{u+v+w=r}
 \sum_{b+c+d=\pi(a)}\frac{(k+1)a!}{b!c!d!}\,
 \\\times
 \fY_\pi\fB\big(\partial_{\uv}^u \dot G^c_{\uv\vee\sUV;\sIRa,0},\partial_\uv^{v} F^{j,k+1,b}_{\uv;\sUV,\sIRa,0},\partial_\uv^{w}F^{i-j,m-k,d}_{\uv;\sUV,\sIRa,0}\big),
\end{multline}
\begin{multline}
 \partial_{\sIRb}^{\phantom{i}} \partial_{\sIRa}^{\rsa} \partial_\uv^r F^{i,m,a}_{\uv;\sUV,\sIRa,\sIRb}
 =
 -\frac{1}{m!}
 \sum_{\pi\in\cP_m}\sum_{j=1}^i\sum_{k=0}^m\sum_{\tilde v+\tilde w=\rsa}\sum_{u+v+w=r}
 \sum_{b+c+d=\pi(a)}\frac{(k+1)a!}{b!c!d!}\,
 \\\times
 \fY_\pi\fB\big(\partial_{\uv}^u \dot G^c_{\uv\vee\sUV;0,\sIRb},\partial_{\sIRa}^{\tilde v} \partial_\uv^{v} F^{j,k+1,b}_{\uv;\sUV,\sIRa,0},
 \partial_{\sIRa}^{\tilde w}\partial_\uv^{w}F^{i-j,m-k,d}_{\uv;\sUV,\sIRa,0}\big),
\end{multline}
the bounds for $\partial_{\uv}^r \dot G^c_{\uv\vee\sUV;\sIRa,0}$ and $\partial_{\uv}^r \dot G^c_{\uv\vee\sUV;0,\sIRb}$ established in Lemma~\ref{lem:kernel_dot_G_2}~(A),~(B) as well as Remark~\ref{rem:fB1_bound} applied with $\oo=0$. The statement for $\rsa=\rsb=0$, $\sIRa=\sIRb=0$ follows from the assumed bounds for the force coefficients $f^{i,m,a}_{\uv;\sUV}$, an analog of Eq.~\eqref{eq:lem_preliminary_bound_initial} as well as the Lemma~\ref{lem:bound_map_L_m}. To prove the statement for $\rsa=\rsb=0$, $\sIRa,\sIRb\in[0,\uv\vee\sUV]$ we use the results established so far, the equations
\begin{equation}
 F^{i,m,a}_{\uv;\sUV,\sIRa,0} = F^{i,m,a}_{\uv;\sUV} + \int_0^{\sIRa} F^{i,m,a}_{\uv;\sUV,\uIR,0}\,\rd\uIR,
 \qquad
 F^{i,m,a}_{\uv;\sUV,\sIRa,\sIRb} = F^{i,m,a}_{\uv;\sUV,\sIRa,0} + \int_0^{\sIRb} \partial_{\uIR} F^{i,m,a}_{\uv;\sUV,\sIRa,\uIR}\,\rd\uIR,
\end{equation}
and the estimate
\begin{equation}
 \int_0^{\sIR} [\uIR]^{\varepsilon-\sigma}\,[\uv\vee\sUV]^{\rho-\varepsilon}\,\rd \uIR \leq \sigma/\varepsilon~ [\uv\vee\sUV]^{\rho}
\end{equation}
valid for $\sIR\in[0,\uv\vee\sUV]$ and any $\rho\in\bR$. 
\end{proof}

\begin{lem}\label{lem:support_hat}
For every $(\uv,\sUV)\in[0,1]^2\setminus\{(0,0)\}$, $\sIRa\in[0,\uv\vee\sUV]$ and $(i,m,a)\in\frI$ it holds almost surely
\begin{equation}
 \supp\,F^{i,m,a}_{\uv;\sUV,\sIRa,0}\subset
 \big\{ (x,x_1,\ldots,x_m)\in\bM^{1+m}\,\big|\,\forall_{p\in\{1,\ldots,m\}} |x_p-x|_\bM\leq 2(i-1)\,[\sUV\vee\uv]\big\}.
\end{equation} 
\end{lem}
\begin{rem}
The proof of the above lemma is similar to the proof of Lemma~\ref{lem:support} and is based on the fact that for $\sIRa\in[0,\uv\vee\sUV]$ the kernel $\dot G^c_{\uv\vee\sUV;\sIRa,0}$ has the following support property
\begin{equation}
 \supp\,\dot G^c_{\uv\vee\sUV;\sIRa,0}\subset 
 \{x\in\bM\,|\, |x|_\bM \leq 2\,[\uv\vee\sUV]\}.
\end{equation} 
\end{rem}

\begin{dfn}\label{dfn:cumulants_eff_force2}
A list $(i,m,a,\rsa,\rsb,r)$, where $i,m\in\bN_0$, $a\in\frM^m_\sigma$, $\rsa,\rsb,r\in\{0,1\}$ is called an extended index. For $n\in\bN_+$ we call 
\begin{equation}
 \vI\equiv ((i_1,m_1,a_1,\rsa_1,\rsb_1,r_1),\ldots,(i_n,m_n,a_n,\rsa_n,\rsb_n,r_n))
\end{equation}
a list of extended indices. The quantities $\rn(\vI)$, $\rri(\vI)$, $\mathsf{m}(\vI)$, $\rrm(\vI)$, $\ra(\vI)$, $\rs(\vI)$, $\rr(\vI)$ are as defined in Def.~\ref{dfn:cumulants_eff_force} and $\rsa(\vI)=\rsa_1+\ldots+\rsa_n$. We introduce the following notation for the joint cumulants of the effective force coefficients
\begin{equation}
 E^\vI_{\uv;\sUV,\sIRa,\sIRb}:=
 \llangle \partial_{\sIRa}^{\rsa_1}\partial_{\sIRb}^{\rsb_1}\partial_\uv^{r_1} F^{i_1,m_1,a_1}_{\uv;\sUV,\sIRa,\sIRb} ,\ldots,
 \partial_{\sIRa}^{\rsa_n}\partial_{\sIRb}^{\rsb_n}\partial_\uv^{r_n} F^{i_n,m_n,a_n}_{\uv;\sUV,\sIRa,\sIRb}\rrangle\in\cD^{\mathsf{m}(\vI);0}_\rt,
\end{equation}
where we used the notation introduced in Def.~\ref{dfn:notation_cumulants_distributions}.
\end{dfn}

The following lemma is an analog of Lemma~\ref{lem:flow_E_form_bound} for cumulants introduced in the above definition. Note that in the lemma the parameter $\sIRa$ plays a similar role to the parameter $\uv$. The proof of the lemma relies on the flow equation in the parameter $\sIRb$.

\begin{lem}\label{rem:extended_cumulants}
Let $\varepsilon\in(0,1]$, $n\in\bN_+$ and
\begin{equation}
 \vJ\equiv (\vJ_1,\ldots,\vJ_n)=((i_1,m_1,a_1,\rsa_1,,\rsb_1,r_1),\ldots,(i_n,m_n,a_n,\rsa_n,\rsb_n,r_n))
\end{equation}
be a list of extended indices such that $\rsb_l=1$ for some $l\in\{1,\ldots,n\}$. 
\begin{enumerate}
\item[(A)] The distribution $E^\vJ_{\uv;\sUV,\sIRa,\sIRb}$ can be expressed as a linear combination of distributions of the form
\begin{equation}
 \fY^\omega\fY_\pi\fA\big(\dot G^c_{\uv\vee\sUV;0,\sIR},E^{\vK}_{\uv;\sUV,\sIRa,\sIRb}\big)
 \quad
 \textrm{or}
 \quad
 \fY^\omega\fY_\pi\fB\big(\dot G^c_{\uv\vee\sUV;0,\sIR},
 E^{\vL}_{\uv;\sUV,\sIRa,\sIRb},
 E^{\vM}_{\uv;\sUV,\sIRa,\sIRb}
 \big),
\end{equation}
where the objects appearing above satisfy all the conditions specified in Part~(A) of Lemma~\ref{lem:flow_E_form_bound} and $\rsa(\vK)=\rsa(\vJ)$, $\rsa(\vL)+\rsa(\vM)=\rsa(\vJ)$. 

\item[(B)]
Suppose that the bound~\eqref{eq:lem_cumulants_peliminary_bound}, stated in the lemma below, holds uniformly in $(\uv,\sUV)\in[0,1]^2\setminus\{(0,0)\}$, $\sIRa,\sIRb\in(0,\uv\vee\sUV]$ for all lists of indices \mbox{$\vI\in\{\vK,\vL,\vM\}$}, where $\vK,\vL,\vM$ are arbitrary lists of indices satisfying the conditions mentioned in Part (A) given the list of indices~$\vJ$. Then the above-mentioned bound holds uniformly in $(\uv,\sUV)\in[0,1]^2\setminus\{(0,0)\}$, $\sIRa,\sIRb\in(0,\uv\vee\sUV]$ for the list of indices $\vI=\vJ$. 
\end{enumerate}
\end{lem}
\begin{proof}
The claim (A) is proved using the fact that the cumulants $E^\vI_{\uv;\sUV,\sIRa,\sIRb}$ satisfy a flow equation analogous to the flow equation~\eqref{eq:flow_E_general}. The claim (B) is a consequence of Lemma~\ref{lem:fA_fB_bounds}, Lemma~\ref{lem:fA_fB_Ks} applied with $\oo=0$ and the bounds for $\dot G^c_{\uv\vee\sUV;0,\sIR}$ proved in Lemma~\ref{lem:kernel_dot_G_2}~(B),~(C). 
\end{proof}

\begin{lem}\label{lem:cumulants_preliminary_bound}
Let $\varepsilon\in(0,\varepsilon_\sigma)$. Suppose that the family of noises $\varXi_\sUV$, $\sUV\in[0,1]$, satisfies Assumption~\ref{ass:noise} and the force coefficients satisfy the bounds
\begin{equation}
 |\partial_\uv^r f^{i,m,a}_{\uv;\sUV}|\lesssim [\uv]^{(\varepsilon-\sigma)r}\,[\uv\vee\sUV]^{\varrho_{2\varepsilon}(i,m,a)},\qquad
 (i,m,a)\in\frI,
\end{equation}
uniformly in $(\uv,\sUV)\in[0,1]^2\setminus\{(0,0)\}$.
\begin{enumerate}
\item[(A)] Let $\cO\subset\bH$ be an open bounded region. The random variable $F^{i,m,a}_{\uv;\sUV,\sIRa,0}\in\cD^{m;0}$ restricted to the region $\cO\times\bM^m\subset\bM\times\bM^m$, is measurable with respect to the $\sigma$-algebra generated by $\{\varXi(z)\,|\,z\in\cO^i_{\uv,\sUV}\}$, where
\begin{equation}
 \cO^i_{\uv,\sUV}:=\{z\in\bH\,|\, \exists_{x\in\overline{\cO}} ~|z-x|_\bH < [\uv]+2i\,[\uv\vee\sUV]\,\},\qquad i\in\bN_0,\uv,\sUV\in[0,1].
\end{equation}

\item[(B)] For all $r,\rsa,\rsb\in\{0,1\}$, $(i,m,a)\in\frI_0$ and $\mathtt{w}\in C^\infty_\rc(\bR)$ the following bound
\begin{multline}
 \llangle\|(\delta_\bM\otimes J^{\otimes m}_{\uv\vee\sUV})\ast \mathtt{w}\partial_\uv^r\partial_{\sIRa}^{\rsa}\partial_{\sIRb}^{\rsb}  F^{i,m,a}_{\uv;\sUV,\sIRa,\sIRb}\|_{\cV^m}^n \rrangle
 \\
 \lesssim \,[\uv]^{n(\varepsilon-\sigma)r}
 \,[\sIRa]^{n(\varepsilon-\sigma)\rsa}
 \,[\sIRb]^{n(\varepsilon-\sigma)\rsb}
 \,[\uv\vee\sUV]^{n\varrho_{2\varepsilon}(i,m,a)-n\varepsilon \rsa-n\varepsilon \rsb},
\end{multline}
holds uniformly in $(\sUV,\uv)\in[0,1]^2\setminus\{(0,0)\}$, $\sIRa,\sIRb\in[0,\uv\vee\sUV]$.

\item[(C)] For every list of extended indices $\vI$ the following bound
\begin{multline}\label{eq:lem_cumulants_peliminary_bound}
 \|(\delta_\bM^{\otimes \rn(\vI)}\otimes J_{\uv\vee\sUV}^{\otimes \rrm(\vI)})\ast E^\vI_{\uv;\sUV,\sIRa,\sIRb}\|_{\cV^{\mathsf{m}}}
 \\
 \lesssim 
 [\uv]^{(\varepsilon-\sigma)\rr(\vI)}
 \,[\sIRa]^{(\varepsilon-\sigma)\rsa(\vI)} [\sIRb]^{(\varepsilon-\sigma)\rsb(\vI)}
 \,[\uv\vee\sUV]^{\varrho_\varepsilon(\vI)-\varepsilon\rsa(\vI)-\varepsilon\rsb(\vI)+(\rn(\vI)-1)\rDim}
\end{multline}
holds uniformly in $(\uv,\sUV)\in[0,1]^2\setminus\{(0,0)\}$ and $\sIRa,\sIRb\in[0,\uv\vee\sUV]$.
\end{enumerate}

\end{lem}
\begin{rem}\label{rem:cumulants_moments_preliminary_bound}
The following slightly stronger statement is in fact true. Fix $i_\circ,m_\circ\in\bN_0$. If the assumption of the above lemma holds for all $i,m\in\bN_0$ such that $i<i_\circ$ or $i=i_\circ$ and $m>m_\circ$, then the statement holds for all $i,m\in\bN_0$ and all lists of indices $\vI$ such that $i=\ri(\vI)$, $m=\rrm(\vI)$ and either (1)~$i<i_\circ$ or (2)~$i=i_\circ$ and $m>m_\circ$ or (3)~$i=i_\circ$, $m=m_\circ$, $s=1$.
\end{rem}
\begin{rem}\label{rem:moments_preliminary_bound}
Part (B) of Lemma~\ref{lem:cumulants_preliminary_bound} applied with $\sIRa=\sIRb$ and Eq.~\eqref{eq:effective_force_eq_hat_check} together imply that for all $r,s\in\{0,1\}$, $(i,m,a)\in\frI_0$ and $\mathtt{w}\in C^\infty_\rc(\bR)$ the following bound
\begin{equation}
 \llangle\|(\delta_\bM\otimes J^{\otimes m}_{\uv\vee\sUV})\ast \mathtt{w}\partial_\uv^r \partial_{\sIR}^{s}  F^{i,m,a}_{\uv;\sUV,\sIR}\|_{\cV^m}^n \rrangle
 \\
 \lesssim \,[\uv]^{n(\varepsilon-\sigma)r}
 \,[\sIR]^{n(\varepsilon-\sigma) s}
 \,[\uv\vee\sUV]^{n\varrho_{2\varepsilon}(i,m,a)-n\varepsilon s}
\end{equation}
holds uniformly in $(\sUV,\uv)\in[0,1]^2\setminus\{(0,0)\}$, $\sIR\in[0,\uv\vee\sUV]$.
\end{rem}

\begin{rem}\label{rem:cumulants_preliminary_bound}
Part (C) of Lemma~\ref{lem:cumulants_preliminary_bound} and Eq.~\eqref{eq:effective_force_eq_hat_check} together imply that for all lists of indices $\vI$ the following bound
\begin{multline}
 \|(\delta_\bM^{\otimes \rn(\vI)}\otimes J_{\uv\vee\sUV}^{\otimes \rrm(\vI)})\ast E^\vI_{\uv;\sUV,\sIR}\|_{\cV^{\mathsf{m}}}
 \\
 \lesssim 
 [\uv]^{(\varepsilon-\sigma)\rr(\vI)}
 \,[\sIR]^{(\varepsilon-\sigma)\rs(\vI)}
 \,[\uv\vee\sUV]^{\varrho_\varepsilon(\vI)-\varepsilon\rs(\vI)+(\rn(\vI)-1)\rDim}
\end{multline}
holds uniformly in $(\uv,\sUV)\in[0,1]^2\setminus\{(0,0)\}$ and $\sIRa,\sIRb\in[0,\uv\vee\sUV]$, where $E^\vI_{\uv;\sUV,\sIR}$ was introduced in Def.~\ref{dfn:cumulants_eff_force}.
\end{rem}

\begin{proof}
Part~(A) is proved by induction on $i,m\in\bN_0$ using a strategy similar to the strategy of the proof of Lemma~\ref{lem:support}. Note that $\varXi_{\uv;\sUV}=M_\uv\ast \varXi_\sUV$, where $\supp\,M_\uv\subset\{x\in\bM\,|\,|x|_\bM<[\uv]\}$. This implies Part~(A) for $i=0$. To prove Part~(A) for $i\in\bN_+$ we use the flow equation~\eqref{eq:flow_deterministic_i_m_a_hat} and the support property of the effective force coefficients established in Lemma~\ref{lem:support_hat}.

To prove Part (B) note that $F^{0,0,0}_{\uv;\sUV,\sIRa,\sIRb}=\varXi_{\uv;\sUV}$ is the only non-zero effective force coefficient $F^{i,m,a}_{\uv;\sUV,\sIRa,\sIRb}$ with $i=0$. As a result, for $i=0$ the bound stated in Part (B) follows from the result of Lemma~\ref{lem:noise_moment_bound}, which relies only on Lemma~\ref{lem:ass_noise_general} (C). This together with the assumed bound for the force coefficients $f^{i,m,a}_{\uv;\sUV}$ implies that for any $\mathtt{u}\in C^\infty_\rc(\bR)$ 
the assumption of Lemma~\ref{lem:bounds_effective_force_cutoff_space} is satisfied with some random constant $R=R_{\uv;\sUV}(\mathtt{u})$ such that $\llangle R_{\uv;\sUV}^n\rrangle \lesssim 1$ uniformly in $(\uv,\sUV)\in[0,1]^2\setminus\{(0,0)\}$ for any $n\in\bN_+$. Consequently, Part (B) follows from Lemma~\ref{lem:bounds_effective_force_cutoff_space}.

Let us turn to the proof of Part~(C). First note that Part (B) implies that for any $\mathtt{w}_p\in C^\infty_\rc(\bR)$, $(i_p,m_p,a_p)\in\frI_0$, $\rsa_p,r_p\in\{0,1\}$ it holds
\begin{multline}
 \llangle[\big] \textstyle\prod_{p=1}^n \|(\delta_\bM \otimes J_\sUV^{\otimes m_p}) \ast \mathtt{w}_p \partial_\uv^{r_p}\partial_{\sIRa}^{\rsa_p} F^{i_p,m_p,a_p}_{\uv;\sUV,\sIRa,0}\|_{\cV^{m_p}}
 \rrangle[\big]
 \\
 \lesssim 
 \prod_{p=1}^n \,
 [\uv]^{(\varepsilon-\sigma)r_p}
 [\sIRa]^{(\varepsilon-\sigma)\rsa_p} [\uv\vee\sUV]^{\varrho_{2\varepsilon}(i_p,m_p,a_p)-\varepsilon \rsa_p}
\end{multline}
uniformly in $(\uv,\sUV)\in[0,1]^2\setminus\{(0,0)\}$ and $\sIRa\in(0,\uv\vee\sUV]$. By the Fubini theorem, the relation~\eqref{eq:cumulants_expectation} between cumulants and moments and Def.~\ref{dfn:cV} of the norm $\|\Cdot\|_{\cV^m}$ it holds
\begin{equation}
 \int_{\bM^m} |\llangle V_1(x_1;\rd \ry_1),\ldots,V_n(x_n;\rd \ry_n) \rrangle|
 \lesssim
 \llangle \|V_1\|_{\cV^{m_1}} \ldots \|V_2\|_{\cV^{m_n}}\rrangle
\end{equation}
for any $V_1\in\cV^{m_1},\ldots,V_n\in\cV^{m_n}$ and $x_1,\ldots,x_n\in\bM$, where $m=m_1+\ldots+m_n$ and the constant of proportionality depends only on $n$ (note that the expression on the LHS of the above inequality involves the cumulant and the expression on the RHS is the expectation of a product). Using the above bounds we obtain for any list $\vI$ of extended indices such that $\rsb(\vI)=0$ and any $T>0$ the bound
\begin{multline}\label{eq:lem_cumulant_preliminary_bound}
 \int_{\bM^{m(\vI)}} |(\delta_\bM^{\otimes \rn(\vI)}\otimes J_{\uv\vee\sUV}^{\otimes \rrm(\vI)})\ast E^\vI_{\uv;\sUV,\sIRa,0}(x_1,\ldots,x_n;\rd\ry_1,\ldots,\rd\ry_n)|
 \\\lesssim 
 [\uv]^{(\varepsilon-\sigma)\rr(\vI)}
 [\sIRa]^{(\varepsilon-\sigma)\rsa(\vI)} [\uv\vee\sUV]^{\varrho_\varepsilon(\vI)-\varepsilon\rsa(\vI)}
\end{multline}
uniform over $(\sUV,\uv)\in[0,1]^2\setminus\{(0,0)\}$, $\sIRa\in[0,\uv\vee\sUV]$ and $x_1,\ldots,x_n\in [-T,T]\times\bT$.

Next, observe that Part~(A) implies that given two regions $\cO_1,\cO_2\subset\bH$ the random variables $F^{i_1,m_1,a_1}_{\uv;\sUV,\sIRa,0}\in\cD^{m_1;0}$ and $F^{i_2,m_2,a_2}_{\uv;\sUV,\sIRa,0}\in\cD^{m_2;0}$ restricted to $\cO_1\times\bM^{m_1}$ and $\cO_2\times\bM^{m_2}$, respectively, are independent if $|x_1-x_2|_\bH> 2\,[\uv]+2(i_1+i_2)\,[\uv\vee\sUV]$ for all $x_1\in\cO_1$ and $x_2\in\cO_2$. In consequence, the cumulant $E^\vI_{\uv;\sUV,\sIRa,0}$ vanishes outside the region
\begin{equation}
 \{(x_1,\ldots,x_n,y_1,\ldots,y_m)\in\bM^{n+m}\,|\,\forall_{p,q\in\{1,\ldots,n\},p\neq q}\,|x_p-x_q|_\bH \leq 2(\rri(\vI) +1)\,[\uv\vee\sUV]\}.
\end{equation}
This together with the bound~\eqref{eq:lem_cumulant_preliminary_bound} and translational invariance of  $E^\vI_{\uv;\sUV,\sIRa,0}$ implies Part~(C) for $\sIRb=0$ and any list of extended indices $\vI$ such that $\rsb(\vI)=0$. 

We prove Part~(C) for $\sIRb\in[0,\uv\vee\sUV]$ by induction. The statement is clearly true for all list of indices $\vI$ such that $\rrm(\vI)>\rri(\vI)m_\flat$ since then $E^\vI_{\uv;\sUV,\sIRa,\sIRb}=0$. It is also true if $\rri(\vI)=0$ as then $\rrm(\vI)=0$ for non-zero cumulants and $E^\vI_{\uv;\sUV,\sIRa,\sIRb}$ coincides with a cumulant of $\partial_\uv^r\varXi_{\uv;\sUV}$. In particular, it is independent of $\sIRb$ and vanishes if $\rsb(\vI)\neq 0$. Hence, for all lists of extended indices $\vI$ such that $\rri(\vI)=0$ the bound stated in Part~(C) is implied by the special case of this bound for $\sIRb=0$, which was proved in the previous paragraph. Fix $i_\circ\in\bN_+$ and $m_\circ\in\bN_0$. Assume that the statement is true for all lists of indices $\vI$ such that either $\rri(\vI)<i_\circ$, or $\rri(\vI)=i_\circ$ and $\rrm(\vI)>m_\circ$. We shall prove the statement for all $\vI$ such that $\rri(\vI)=i_\circ$ and $\rrm(\vI)=m_\circ$. We first consider the case $\rsb(\vI)\neq 0$. Note that by Lemma~\ref{rem:extended_cumulants}~(A) the cumulants $E^\vI_{\uv;\sUV,\sIRa,\sIRb}$ can be expressed in terms of the cumulants for which Part~(C) has already been established. Hence, the statement with $\rs(\vI)\neq 0$ is a consequence of the inductive assumption and Lemma~\ref{rem:extended_cumulants}~(B). To prove the statement for lists of extended indices $\vI$ such that $\rs(\vI)=0$ we use the equality
\begin{equation}
 E^\vI_{\uv;\sUV,\sIRa,\sIRb} = E^\vI_{\uv;\sUV,\sIRa,0} + \sum_{q=1}^n\int_0^{\sIRb} E^{\vI_q}_{\uv;\sUV,\sIRa,\uIR}\,\rd\uIR,
\end{equation}
where 
\begin{equation}
 \vI_q = ((i_1,m_1,a_1,\rsa_1,0,r_1),\ldots, (i_q,m_q,a_q,\rsa_q,1,r_q),\ldots,(i_n,m_n,a_n,\rsa_n,0,r_n)).
\end{equation}
The claim follows from the bound
\begin{multline}
 \|(\delta_\bM^{\otimes \rn(\vI)}\otimes J_{\uv\vee\sUV}^{\otimes \rrm(\vI)})\ast E^\vI_{\uv;\sUV,\sIRa,\sIRb}\|_{\cV^{\mathsf{m}}} 
 \leq
 \|(\delta_\bM^{\otimes \rn(\vI)}\otimes J_{\uv\vee\sUV}^{\otimes \rrm(\vI)})\ast  E^\vI_{\uv;\sUV,\sIRa,0}\|_{\cV^{\mathsf{m}}}  
 \\
 +
 \sum_{q=1}^n \int_0^{\sIRb} \|(\delta_\bM^{\otimes \rn(\vI)}\otimes J_{\uv\vee\sUV}^{\otimes \rrm(\vI)})\ast E^{\vI_q}_{\uv;\sUV,\sIRa,\uIR}\|_{\cV^{\mathsf{m}}}  \,\rd\uIR
\end{multline}
as well as the statement with $\sIRb=0$ or $\rsb(\vI)\neq0$, which has already been proved above.
\end{proof}

\subsection{Renormalization conditions}

\begin{ass}\label{ass:renormalization}
Let us fix constants $i_\flat,m_\flat\in\bN_+$ such that $\varrho(i_\flat+1)>0$ and $\varrho(1,m_\flat+1)>0$ and a family of continuous functions \mbox{$[0,1]\ni\sUV\mapsto \mathfrak{f}^{i,m,a}_\sUV\in\bR$}, $(i,m,a)\in\frI$ such that $|\mathfrak{f}^{i,m,a}_\sUV|\lesssim [\sUV]^{\varrho(i,m,a)-\varepsilon}$ uniformly in $\sUV\in[0,1]$ for any \mbox{$\varepsilon\in(0,1]$} and $\mathfrak{f}^{i,m,a}_\sUV$ vanishes identically unless $i\in\{0,\ldots,i_\flat\}$, $m\in\{0,\ldots,m_\flat\}$ and \mbox{$a\in\bar\frM^m_\sigma$}. We assume the following.
\begin{enumerate}
\item[(A)] The irrelevant force coefficients $f^{i,m,a}_{\uv;\sUV}$, \mbox{$(i,m,a)\in \frI^+$}, satisfy the conditions
\begin{equation}\label{eq:renormalization_conditions2}
 f^{i,m,a}_{\uv;\sUV}=f^{i,m,a}_\sUV=\mathfrak{f}^{i,m,a}_\sUV,
 \qquad
 (i,m,a)\in \frI^+,
\end{equation}
for all $(\sUV,\uv)\in[0,1]^2\setminus\{(0,0)\}$.

\item[(B)] The relevant force coefficients $f^{i,m,a}_{\uv;\sUV}$, \mbox{$(i,m,a)\in \frI^-$}, are chosen in such a way that the conditions
\begin{equation}\label{eq:renormalization_conditions}
\llangle f^{i,m,a}_{\uv;\sUV,1}\rrangle = \llangle f^{i,m,a}_{\sUV,1}\rrangle = \mathfrak{f}^{i,m,a}_\sUV,
\qquad
(i,m,a)\in \frI^-,
\end{equation}
hold for all $(\sUV,\uv)\in[0,1]^2\setminus\{(0,0)\}$, where the effective force coefficients $f^{i,m,a}_{\uv;\sUV,1}$, \mbox{$(i,m,a)\in \frI^-$}, are defined in terms of the force coefficients $f^{i,m,a}_{\uv;\sUV}$, $(i,m,a)\in\frI$ as specified in Def.~\ref{dfn:eff_force_uv}.
\end{enumerate}
\end{ass}

\begin{rem}
Note that $(i,m,a)\in\frI^-$ implies in particular that $i\in\{0,\ldots,i_\flat\}$, $m\in\{0,\ldots,m_\flat\}$ and $a\in\bar\frM_\sigma$. Moreover, observe that if $(i,m,a)\in\frI^-$, then $\varrho(i,m,a)<0$ and the bound $|\mathfrak{f}^{i,m,a}_\sUV|\lesssim [\sUV]^{\varrho(i,m,a)-\varepsilon}$ uniform in $\sUV\in[0,1]$ holds trivially for any $\varepsilon\in(0,1]$ by the assumed continuity of the function $\mathfrak{f}^{i,m,a}_\Cdot$. Furthermore, if $(i,m,a)\in\frI^+$, then $\mathfrak{f}^{i,m,a}_0=0$.
\end{rem}

\begin{rem}
The conditions~\eqref{eq:renormalization_conditions} are called the renormalization conditions and the parameters $\mathfrak{f}^{i,m,a}_\sUV$, $\sUV\in[0,1]$, $(i,m,a)\in\frI^-$, are called the renormalization parameters.
\end{rem}

\begin{rem}
We shall prove that given $\mathfrak{f}^{i,m,a}_\sUV$, $(i,m,a)\in\frI$, as in the above assumption there exist a unique choice of the force coefficients $f^{i,m,a}_{\uv;\sUV}$, \mbox{$(i,m,a)\in\frI$}, such that the conditions (A) and (B) are satisfied. We show that the force coefficients $f^{i,m,a}_{\uv;\sUV}$, \mbox{$(i,m,a)\in\frI$}, depend continuously on $(\uv,\sUV)\in[0,1]^2\setminus\{(0,0)\}$, are differentiable in the parameter $\uv$ for $\uv\in(0,1]$, vanish identically unless $i\in\{0,\ldots,i_\flat\}$, $m\in\{0,\ldots,m_\flat\}$ and $a\in\bar\frM_\sigma$ and satisfy the bound assumed in Lemma~\ref{lem:cumulants_preliminary_bound}. Note that the irrelevant force coefficients $f^{i,m,a}_{\uv;\sUV}$, $(i,m,a)\in\frI^+$, do not depend on the parameter $\uv\in[0,1]$. In general this is not the case for the relevant coefficients $f^{i,m,a}_{\uv;\sUV}$, $(i,m,a)\in\frI^-$.
\end{rem}

\subsection{Uniform bounds}\label{sec:cumulants_uniform_bounds}

We now proceed to the prove of the uniform bounds for the cumulants of the effective force coefficients. The bounds are stated in Theorem~\ref{thm:cumulants}. In the proof of this theorem we use the following simple lemma.

\begin{lem}\label{lem:integration}
Let $n\in\bN_+$, $\mathsf{m}=(m_1,\ldots,m_n)\in\bN_0^n$, $m=m_1+\ldots+m$, $\epsilon>0$, $m\in\bN_0$, $\rho\in\bR$ and $(0,1]\ni\sIR\mapsto V_{\sUV,\sIR} \in \cD^{\mathsf{m}}$ be a family of integrable functions parameterized by $\sUV\in[0,1]$. Assume that for some $C>0$, $\oo\in\bN_0$, $\sUV\in[0,1]$ and all $\sIR\in(0,1]$ it holds
\begin{equation}
 \|K^{n,m;\oo}_{\sUV,\sIR}\ast V_{\sUV,\sIR}\|_{\cV^{\mathsf{m}}}
 \leq C\,[\sIR]^{\epsilon-\sigma}\,[\sUV\vee\sIR]^{\rho-\epsilon}.
\end{equation}
Set $U_{\sUV,\sIR}:=\int_0^\sIR V_{\sUV,\uIR}\,\rd \uIR$. 
\begin{enumerate}
\item[(A)] For all $\sIR\in[0,\sUV]$ it holds
\begin{equation}
 \|K^{n,m;\oo}_{\sUV,\sIR}\ast U_{\sUV,\sIR}\|_{\cV^{\mathsf{m}}}
 \leq C\,\sigma/\epsilon\, [\sIR]^\epsilon\,[\sUV]^{\rho-\epsilon}
 \leq C\,\sigma/\epsilon\, [\sUV]^{\rho}.
\end{equation} 
\item[(B)] If $\rho>0$, then for all $\sIR\in[0,1]$ it holds
\begin{equation}
 \|K^{n,m;\oo}_{\sUV,\sIR}\ast U_{\sUV,\sIR}\|_{\cV^{\mathsf{m}}}
 \leq C\,(\sigma/\epsilon+\sigma/\rho)\, [\sIR]^\epsilon\,[\sUV\vee\sIR]^{\rho-\epsilon}
 \leq C\,(\sigma/\epsilon+\sigma/\rho)\,[\sUV\vee\sIR]^{\rho}.
\end{equation}
\end{enumerate}
\end{lem}
\begin{proof}
By Lemma~\ref{lem:kernel_u_v} and Remark~\ref{rem:Vm_K} we have
\begin{equation}
 \|K^{n,m;\oo}_{\sUV,\sIR}\ast U_{\sUV,\sIR}\|_{\cV^{\mathsf{m}}} 
 \leq \int_0^\sIR 
 \|K^{n,m;\oo}_{\sUV,\uIR}\ast V_{\sUV,\uIR}\|_{\cV^{\mathsf{m}}}\,\rd\uIR.
\end{equation}
For $\sIR\leq\sUV$ we obtain
\begin{equation}
 \|K^{n,m;\oo}_{\sUV,\sIR}\ast U_{\sUV,\sIR}\|_{\cV^{\mathsf{m}}} \leq C\sigma/\epsilon\,[\sIR]^\epsilon\,[\sUV]^{\rho-\epsilon}.
\end{equation}
For $\sIR>\sUV$ we get
\begin{equation}
 \|K^{n,m;\oo}_{\sUV,\sIR}\ast U_{\sUV,\sIR}\|_{\cV^{\mathsf{m}}} \leq C\sigma/\epsilon\,[\sUV]^{\rho}
 + C\sigma/\rho\,[\sIR]^{\rho}\leq C\,(\sigma/\epsilon+\sigma/\rho)\,[\sIR]^{\rho}.
\end{equation}
This completes the proof.
\end{proof}

\begin{thm}\label{thm:cumulants}
Let $\varepsilon\in(0,\varepsilon_\sigma\wedge\varepsilon_\diamond/2)$. Suppose that the family of noises $\varXi_\sUV$, $\sUV\in[0,1]$, satisfies Assumption~\ref{ass:noise}. There exists a unique choice of the force coefficients $f^{i,m,a}_{\uv;\sUV}$ such that Assumption~\ref{ass:renormalization} holds true. With the above choice of the force coefficients for every list of indices $\vI$ there exists $\oo\in\bN_+$ depending on $\vI$ such that the following bound
\begin{multline}\label{eq:thm_cumulants}
 \|K^{\rn(\vI),\rrm(\vI);\oo}_{\uv\vee\sUV,\sIR}\ast E^\vI_{\uv;\sUV,\sIR}\|_{\cV^{\mathsf{m}(\vI)}}
 \\
 \lesssim [\uv]^{(\varepsilon-\sigma)\rr(\vI)}\,
 [\sIR]^{(\varepsilon-\sigma)\rs(\vI)}\,
 [\uv\vee\sUV\vee\sIR]^{\varrho_{2\varepsilon}(\vI)-\varepsilon\rs(\vI)+(\rn(\vI)-1)\rDim}
\end{multline}
holds uniformly in $(\uv,\sUV)\in[0,1]^2\setminus\{(0,0)\}$ and $\sIR\in[0,1]$.
\end{thm}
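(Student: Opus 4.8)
The plan is to prove \eqref{eq:thm_cumulants} by a double induction: ascending on the total order $\rri(\vI)$ and, for fixed $\rri(\vI)$, descending on the total number of legs $\rrm(\vI)$. This is the order forced by the flow equation \eqref{eq:flow_deterministic_i_m_a}, equivalently by Lemma~\ref{lem:flow_E_general}: every contraction against $\dot G_\sIR^c$ appearing on its right-hand side either strictly decreases $\rri$ or leaves $\rri$ unchanged while strictly increasing $\rrm$, so the cumulants entering the flow of $E^\vI_{\uv;\sUV,\sIR}$ are always strictly earlier. Existence and uniqueness of the force coefficients $f^{i,m,a}_{\uv;\sUV}$ compatible with Assumption~\ref{ass:renormalization}, as well as the well-definedness and continuity of $E^\vI_{\uv;\sUV,\sIR}\in\cD^{\mathsf{m}(\vI)}_0$, are already supplied by Lemma~\ref{lem:preliminary_cumulants}, so only the quantitative bound remains; the smoothing order $\oo\in\bN_+$ is permitted to grow from step to step (it stays finite for each fixed $\vI$). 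The base case $\rri(\vI)=0$ concerns (mollified) noise cumulants: $E^\vI_{\uv;\sUV,\sIR}$ is $\sIR$-independent, vanishes for $\rn(\vI)=1$ by centredness, and for $\rn(\vI)\ge2$ the bound follows from Lemma~\ref{lem:ass_noise_general} together with the $\sUV\searrow0$ limit, the gain being produced by the $\varepsilon$-shifted exponent in $\varrho_\varepsilon$, by $[\sUV\vee\sIR]\le1$, and, when $\sIR>\sUV$, by transporting the kernel $K^{\ast\oo}_\sIR$ onto the noise via Lemma~\ref{lem:kernel_u_v} and redistributing the remaining integrations using Lemma~\ref{lem:Ks_bounds}.

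In the inductive step, the case $\rs(\vI)\ge1$ is handled by the flow equation directly: differentiating a factor with $s_l=1$ and using Theorem~\ref{thm:flow_E_form_bound}(A) writes $E^\vI_{\uv;\sUV,\sIR}$ as a linear combination of terms $\fY^\omega\fY_\pi\fA(\dot G^c_\sIR,E^\vK_{\uv;\sUV,\sIR})$ and $\fY^\omega\fY_\pi\fB(\dot G^c_\sIR,E^\vL_{\uv;\sUV,\sIR},E^\vM_{\uv;\sUV,\sIR})$, where $\vK$ and $\vL\sqcup\vM$ are strictly earlier in the induction (each has $\rrm$ increased by one); Theorem~\ref{thm:flow_E_form_bound}(B), which rests on Lemmas~\ref{lem:fA_fB_bounds} and~\ref{lem:fA_fB_Ks} and on the kernel estimate $\|\fR_\sUV\fP^{2\oo}_\sIR\dot G^c_\sIR\|_\cK\lesssim[\sUV\vee\sIR]^{\sigma-\varepsilon}[\sIR]^{\varepsilon-\sigma+[c]}$ from Lemma~\ref{lem:kernel_dot_G}, then transfers the bound, the factor $[\sIR]^{\varepsilon-\sigma}$ producing the prefactor $[\sIR]^{(\varepsilon-\sigma)\rs(\vI)}$.

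The case $\rs(\vI)=0$ is treated through $E^\vI_{\uv;\sUV,\sIR}=E^\vI_{\uv;\sUV,0}+\sum_q\int_0^\sIR E^{\vI_q}_{\uv;\sUV,\uIR}\,\rd\uIR$ with $\rs(\vI_q)=1$ already controlled. First one bounds all \emph{irrelevant} cumulants at the current level --- by Remark~\ref{rem:epsilon} this is every $\vI$ with $\rn(\vI)\ge2$, together with those having $\rn(\vI)=1$ and $\varrho_\varepsilon(i,m,a)>0$: for $\rn(\vI)\ge2$ the boundary term vanishes (a cumulant of at least two deterministic distributions), while for $\rn(\vI)=1$ it equals $F^{i,m,a}_{\uv;\sUV}$, estimated by Lemma~\ref{lem:bound_map_L_m} from the force-coefficient bounds ($\frI^+$ is Assumption~\ref{ass:renormalization}(A); for $\frI^-$ one combines \eqref{eq:preliminary_cumulants_int2}, the bound $|\mathfrak{f}^{i,m,a}_\sUV|\lesssim[\sUV]^{\varrho(i,m,a)-\varepsilon}$ and the elementary estimate $\int_0^1[\uIR]^{\varepsilon-\sigma}[\sUV\vee\uIR]^{\varrho-\varepsilon}\,\rd\uIR\lesssim1\vee[\sUV\vee\sIR]^{\varrho}$ with the already-known bound on $\partial_\uIR f^{i,m,a}_{\uv;\sUV,\uIR}$), after which the $\uIR$-integration is closed by Lemma~\ref{lem:integration}. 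Then, for $\rn(\vI)=1$ and \emph{relevant} $(i,m,a)$ (so $\varrho_\varepsilon(i,m,a)<0$) and $\sIR\ge\sUV$, Theorem~\ref{thm:taylor} gives $E^\vI_{\uv;\sUV,\sIR}=\fX^a_\ro\big(\llangle f^{i,m,b}_{\uv;\sUV,\sIR}\rrangle,\llangle F^{i,m,b}_{\uv;\sUV,\sIR}\rrangle\big)$ with $\ro$ so large that $\varrho_\varepsilon(i,m)+\ro>0$, the $|b|=\ro$ terms being irrelevant and hence bounded in the previous paragraph; each scalar $\llangle f^{i,m,b}_{\uv;\sUV,\sIR}\rrangle=\fI\llangle F^{i,m,b}_{\uv;\sUV,\sIR}\rrangle$ with $|b|<\ro$ is obtained by integrating \emph{backward} from $\sIR=1$, using the renormalisation condition \eqref{eq:renormalization_conditions} (relevant $b$) or the just-established $\sIR=1$ estimate (irrelevant $b$) as boundary datum and the same time-integral estimate for $\partial_\uIR f^{i,m,b}$ (via Lemma~\ref{lem:map_I}); Theorem~\ref{thm:taylor_bounds}, whose hypothesis is precisely $\sIR\ge\sUV$, then yields \eqref{eq:thm_cumulants}. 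For $\sIR<\sUV$ the relevant coefficients are handled by the same forward integration from $\sIR=0$ as the irrelevant ones (last clause of Lemma~\ref{lem:integration}).

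The main obstacle is the relevant sector $\rn(\vI)=1$, $\varrho_\varepsilon(i,m,a)<0$: the flow cannot be integrated from $\sIR=0$ and the value of the coefficient is pinned down only implicitly, at $\sIR=1$, by the renormalisation condition. Reconciling the backward integration, the Taylor reconstruction $\fX^a_\ro$ (legitimate only for $\sIR\ge\sUV$) and the separate small-$\sIR$ regime, while keeping all exponents consistent across the two nested inductions and the auxiliary parameter $\oo$, is where essentially all the difficulty resides; the remaining cases are bookkeeping on top of Theorem~\ref{thm:flow_E_form_bound} and Lemma~\ref{lem:integration}.
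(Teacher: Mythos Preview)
Your proposal is correct and follows essentially the same route as the paper: the same double induction, Theorem~\ref{thm:flow_E_form_bound} for $\rs(\vI)\ge1$, forward integration via Lemma~\ref{lem:integration} for the irrelevant sector, backward integration from $\sIR=1$ using the renormalisation conditions for the relevant scalars $\llangle\partial_\uv^r f^{i,m,a}_{\uv;\sUV,\sIR}\rrangle$, and Taylor reconstruction $\fX^a_\ro$ for the relevant $\llangle\partial_\uv^r F^{i,m,a}_{\uv;\sUV,\sIR}\rrangle$ when $\sIR>\sUV$.

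Two points deserve tightening. First, a phrasing slip: $E^\vI_{\uv;\sUV,0}$ vanishes for $\rn(\vI)\ge2$ because the cumulant contains \emph{at least one} deterministic entry (some $F^{i_q,m_q,a_q}_{\uv;\sUV,0}$ with $i_q>0$, since $\rri(\vI)>0$), not ``at least two''. Second, and more substantively, ``$\ro$ so large that $\varrho_\varepsilon(i,m)+\ro>0$'' hides a real constraint: the irrelevant bounds you feed into $\fX^a_\ro$ are only available for $b\in\frM_\sigma^m$, i.e.\ $|b_j|\le\sigma_\diamond$, so $\ro$ cannot be taken arbitrarily large. The paper handles $m=0$ directly via $F^{i,0,0}=f^{i,0,0}$; for $m\ge1$ it takes $\ro=\ceil{\sigma}$ when $i=1$ and argues that $\llangle\partial_\uv^r F^{1,m,b}_{\uv;\sUV,\sIR}\rrangle=0$ for $|b|=\ro$ by locality, while for $i\ge2$ it takes $\ro=\sigma_\diamond$ and uses Assumption~\ref{ass:infrared} (giving $\sigma_\diamond>\sigma-2\dim(\lambda)$, hence $\varrho(i,m)+\sigma_\diamond>0$ for $i\ge2$, $m\ge1$). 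This is the only place Assumption~\ref{ass:infrared} enters, and your sketch should make the choice of $\ro$ explicit.
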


\begin{rem}\label{rem:force_coeff_cumulants}
Together, the equality
\begin{equation}
 \partial^r_\uv f^{i,m,a}_{\uv;\sUV} = \fI(\partial^r_\uv F^{i,m,a}_{\uv;\sUV}) = \fI E^\vI_{\uv;\sUV,0},
 \quad \vI=(i,m,a,0,r),~~(i,m,a)\in\frI,~~r\in\{0,1\},
\end{equation}
the property of the map $\fI$ proved in Lemma~\ref{lem:map_I}~(B) and the bound stated in the above theorem imply that the force coefficient $f^{i,m,a}_{\uv;\sUV}$ satisfies the bound
\begin{equation}
 |\partial^r_\uv f^{i,m,a}_{\uv;\sUV}|\leq\|\fI E^\vI_{\uv;\sUV,0}\|_{\cV^m}\lesssim  [\uv]^{(\varepsilon-\sigma)r}\,[\uv\vee\sUV]^{\varrho_{2\varepsilon}(\vI)} 
 =[\uv]^{(\varepsilon-\sigma)r}\,[\uv\vee\sUV]^{\varrho_{2\varepsilon}(i,m,a)}
\end{equation}
uniformly in $(\uv,\sUV)\in[0,1]^2\setminus\{(0,0)\}$. 
\end{rem}

\begin{proof}
The proof is by ascending induction on $\rri(\vI)\in\bN_0$ and descending induction on $\rrm(\vI)\in\bN_0$. Note that the theorem is trivially true for all list of indices $\vI$ such that $\rrm(\vI)>\rri(\vI)m_\flat$ since then $E^\vI_{\uv;\sUV,\sIR}=0$.

\textit{The base case:} 
We first prove the theorem for list of indices $\vI$ such that \mbox{$\rri(\vI)=0$}. In this case $\rrm(\vI)=0$ for non-zero cumulants and the cumulants $E^\vI_{\uv;\sUV,\sIR}$ coincide with the cumulants of the noise $\varXi_{\uv;\sUV}$. Evidently, $E^\vI_{\uv;\sUV,\sIR}$ is independent of~$\sIR$. Hence, it is enough to consider the case $\rs(\vI)=0$. As a result, the statement of the theorem for list of indices $\vI$ such that \mbox{$\rri(\vI)=0$} follows from Lemma~\ref{lem:ass_noise_general}~(D) since for list of indices $\vI$ of the above-mentioned form it holds
\begin{equation}
 \varrho_{2\varepsilon}(\vI) + (\rn(\vI)-1)\rDim =
 -\rn(\vI)(\dim(\varXi)+2\varepsilon)+ (\rn(\vI)-1)\rDim
 < \rn(\vI)\dim(\varXi)-\rDim-\varepsilon \rr(\vI)
\end{equation}
since $\rr(\vI)\leq \rn(\vI)$.

\textit{Induction step:}
Fix $i_\circ\in\bN_+$ and $m_\circ\in\bN_0$. Assume that the theorem is true for all lists of indices $\vI$ such that either $\rri(\vI)<i_\circ$, or $\rri(\vI)=i_\circ$ and $\rrm(\vI)>m_\circ$. We shall prove the theorem for all $\vI$ such that $\rri(\vI)=i_\circ$ and $\rrm(\vI)=m_\circ$. 

We first consider the case $\rs(\vI)\neq 0$. By Remark~\ref{rem:force_coeff_cumulants} the bound assumed in Lemma~\ref{lem:cumulants_preliminary_bound} is satisfied for all $(i,m,a)\in\frI$ such that $i<i_\circ$, or $i=i_\circ$ and $m>m_\circ$. As a result, for $\sIR\in(0,\uv\vee\sUV]$ the bound~\eqref{eq:thm_cumulants} with $\rs(\vI)\neq 0$ follows from the inductive assumption, Lemma~\ref{lem:cumulants_preliminary_bound}~(C), Remark~\ref{rem:cumulants_moments_preliminary_bound} and Remark~\ref{rem:cumulants_preliminary_bound}. Note that by Lemma~\ref{lem:flow_E_form_bound}~(A) the cumulants $E^\vI_{\uv;\sUV,\sIR}$ can be expressed in terms of the cumulants for which the statement of the theorem has already been established. In consequence, for \mbox{$\sIR\in[\uv\vee\sUV,1]$} the bound~\eqref{eq:thm_cumulants} with $\rs(\vI)\neq 0$ follows from the inductive assumption and Lemma~\ref{lem:flow_E_form_bound}~(B). 

Now consider the case $\rs(\vI)=0$, that is
\begin{equation}
 \vI = ((i_1,m_1,a_1,0,r_1),\ldots,(i_n,m_n,a_n,0,r_n)).
\end{equation}
It follows from Def.~\ref{dfn:cumulants_eff_force} of the cumulants $E^\vI_{\uv;\sUV,\sIR}$ that 
\begin{equation}\label{eq:thm_cumulants_ind_step}
 E^\vI_{\uv;\sUV,\sIR}
 = 
 E^\vI_{\uv;\sUV,0} + \sum_{q=1}^n \int_0^\sIR E^{\vI_q}_{\uv;\sUV,\uIR}\,\rd\uIR,
 \quad
 E^\vI_{\uv;\sUV,\sIR}
 = 
 E^\vI_{\uv;\sUV,1} - \sum_{q=1}^n \int_\sIR^1 E^{\vI_q}_{\uv;\sUV,\uIR}\,\rd\uIR,
\end{equation}
where 
\begin{equation}
 \vI_q = ((i_1,m_1,a_1,0,r_1),\ldots, (i_q,m_q,a_q,1,r_q),\ldots,(i_n,m_n,a_n,0,r_n)).
\end{equation}
Note that $\rs(\vI_q)=1$, hence the bound~\eqref{eq:thm_cumulants} has already been established for $E^{\vI_q}_{\uv;\sUV,\uIR}$. We will use the first of Eqs.~\eqref{eq:thm_cumulants_ind_step} to bound the irrelevant cumulants $E^\vI_{\uv;\sUV,\sIR}$, i.e. those with $\vI$ such that $\varrho(\vI)+(\rn(\vI)-1)\rDim>0$. The second equality will be used to bound certain relevant contributions to the cumulants $E^\vI_{\uv;\sUV,\sIR}$ with $\vI$ such that $\varrho(\vI)+(\rn(\vI)-1)\rDim\leq0$. 

We first analyse the relevant contributions. We note that the inequality $\varrho(\vI)+(\rn(\vI)-1)\rDim\leq0$ implies \mbox{$\rn(\vI)=1$}. Consequently, $\vI=(i,m,a,0,r)$ for some $r\in\{0,1\}$ and $a\in\frM_\sigma$ such that $\varrho(i,m,a)\leq0$, or equivalently, $(i,m,a)\in\frI^-$. Hence,
\begin{equation}\label{eq:thm_cumulant_relevant_form}
 E^\vI_{\uv;\sUV,\sIR}=\llangle \partial^r_\uv F^{i,m,a}_{\uv;\sUV,\sIR} \rrangle,
 \qquad 
 (i,m,a)\in\frI^-,~r\in\{0,1\}.
\end{equation}
The relevant contributions mentioned in the previous paragraph have the following form
\begin{equation}
 \llangle \partial_\uv^r f^{i,m,a}_{\uv;\sUV,\sIR}\rrangle =\fI(\llangle \partial_\uv^r F^{i,m,a}_{\uv;\sUV,\sIR}\rrangle)=\fI(K^{m;\oo}_{\uv\vee\sUV,1}\ast E^\vI_{\uv;\sUV,\sIR})\in\bR,
\end{equation}
where the map $\fI$ was introduced in Def.~\ref{dfn:map_I}. Note that by the translational invariance $\llangle \partial_\uv^r f^{i,m,a}_{\uv;\sUV,\sIR}\rrangle$ is a constant function. The application of the map $\fI$ to both sides of the second of Eqs.~\eqref{eq:thm_cumulants_ind_step} yields
\begin{equation}\label{eq:thm_cumulants_f_ren}
 \llangle \partial_\uv^r f^{i,m,a}_{\uv;\sUV,\sIR}\rrangle
 =
 \llangle \partial_\uv^r f^{i,m,a}_{\uv;\sUV,1}\rrangle
 -
 \int_\sIR^1 \fI(E^{\vI_1}_{\uv;\sUV,\uIR} )\,\rd\uIR,
\end{equation}
where $E^{\vI_1}_{\uv;\sUV,\uIR} = \llangle \partial_\uIR\partial_\uv^r F^{i,m,a}_{\uv;\sUV,\uIR}\rrangle$.  Recalling that $\cV^m\equiv\cV^{\mathsf{m}}$ and using Lemma~\ref{lem:map_I} we arrive at
\begin{equation}\label{eq:thm_cumulants_f_ren_bound}
 |\llangle \partial_\uv^r f^{i,m,a}_{\uv;\sUV,\sIR}\rrangle|
 \leq
 |\llangle \partial_\uv^r f^{i,m,a}_{\uv;\sUV,1}\rrangle|
 +
 \int_\sIR^1 \|K^{m;\oo}_{\uv\vee\sUV,\uIR}\ast E^{\vI_1}_{\uv;\sUV,\uIR}\|_{\cV^m}\,\rd\uIR.
\end{equation}
By the renormalization conditions~\eqref{eq:renormalization_conditions} we have 
\begin{equation}
 |\llangle f^{i,m,a}_{\uv;\sUV,1}\rrangle| = 
 |\mathfrak f^{i,m,a}_\sUV| 
 \lesssim 1,
 \qquad
 \llangle \partial_\uv^{\phantom{j}} f^{i,m,a}_{\uv;\sUV,1}\rrangle=0
\end{equation}
for all $(i,m,a)\in\frI^-$. Hence, the bound~\eqref{eq:thm_cumulants_f_ren_bound} and the bound~\eqref{eq:thm_cumulants} for $E^{\vI_1}_{\uv;\sUV,\sIR}$ imply that for $(i,m,a)\in\frI^-$ it holds
\begin{multline}\label{eq:thm_cumulants_relevant}
 |\llangle \partial_\uv^r f^{i,m,a}_{\uv;\sUV,\sIR}\rrangle|
 \lesssim 
 1+\int_\sIR^1 [\uv]^{(\varepsilon-\sigma)r}
 [\uIR]^{\varepsilon-\sigma}
 [\uv\vee\sUV\vee\uIR]^{\varrho_{2\varepsilon}(i,m,a)-\varepsilon}\,\rd\uIR
 \\\lesssim 
 [\uv]^{(\varepsilon-\sigma)r}
 [\uv\vee\sUV\vee\sIR]^{\varrho_{2\varepsilon}(i,m,a)}.
\end{multline}
We will use this bound twice in the remainder of the proof.

First, note that by the above bound 
\begin{equation}
 |\llangle \partial_\uv^r f^{i,m,a}_{\uv;\sUV}\rrangle|
 \lesssim 
 [\uv]^{(\varepsilon-\sigma)r}
 [\uv\vee\sUV]^{\varrho_{2\varepsilon}(i,m,a)}
\end{equation}
holds uniformly in $(\uv,\sUV)\in[0,1]^2\setminus\{(0,0)\}$. As a result, the bound assumed in Lemma~\ref{lem:cumulants_preliminary_bound} is satisfied for $i=i_\circ$, $m=m_\circ$ and all $a\in\frM^m_\sigma$. Consequently, for $\sIR\in(0,\uv\vee\sUV]$ the bound~\eqref{eq:thm_cumulants} follows from Lemma~\ref{lem:cumulants_preliminary_bound}~(C), Remark~\ref{rem:cumulants_moments_preliminary_bound} and Remark~\ref{rem:cumulants_preliminary_bound}.

It remains to prove the theorem for $\sIR\in[\uv\vee\sUV,1]$ and $\rs(\vI)=0$. Let us first consider the case \mbox{$\varrho(\vI)+(\rn(\vI)-1)\rDim>0$}. We use the first of Eqs.~\eqref{eq:thm_cumulants_ind_step}. As argued in the previous paragraph, the first term on the RHS of this equation is bounded by the RHS of the inequality~\eqref{eq:thm_cumulants}. The same bound for the second term follows from the bounds for $E^{\vI_q}_{\uv;\sUV,\sIR}$ proved above and Lemma~\ref{lem:integration}~(B) applied with $\rho=\varrho_{2\varepsilon}(\vI)+(\rn(\vI)-1)\rDim$ and $\epsilon=\varepsilon$. Note that $\rho>0$ by Remark~\ref{rem:epsilon}.

The case $\varrho(\vI)+(\rn(\vI)-1)\rDim\leq 0$ and $\sIR\in[\uv\vee\sUV,1]$ requires special treatment. As argued above, in this case the cumulant $E^\vI_{\uv;\sUV,\sIR}$ has the form~\eqref{eq:thm_cumulant_relevant_form}. If $m=0$, then $a=0$ and
\begin{equation}
 E^\vI_{\uv;\sUV,\sIR} = \llangle \partial^r_\uv F^{i,0,0}_{\uv;\sUV,\sIR}\rrangle=\partial_\uv^r f^{i,0,0}_{\uv;\sUV,\sIR}.
\end{equation}
Hence, in this case the statement of the theorem follows from the bound~\eqref{eq:thm_cumulants_relevant}. By the previous paragraph, for $\sIR\geq\uv\vee\sUV$ and all $(i,m,b)\in\frI^+$ it holds
\begin{equation}\label{eq:thm_cumulants_irrelevant1}
 \|K_{\uv\vee\sUV,\sIR}^{m;\oo}\ast \llangle \partial_\uv^r F^{i,m,b}_{\uv;\sUV,\sIR}\rrangle\|_{\cV^m} \lesssim [\uv]^{(\varepsilon-\sigma)r}[\sIR]^{\varrho_{2\varepsilon}(i,m,b)}.
\end{equation}
Moreover, for all $(i,m,b)\in\frI$ we have 
\begin{equation}\label{eq:thm_cumulants_irrelevant2}
 |\fI(\llangle \partial_\uv^r F^{i,m,b}_{\uv;\sUV,\sIR}\rrangle)|\lesssim [\uv]^{(\varepsilon-\sigma)r}[\sIR]^{\varrho_{2\varepsilon}(i,m,b)}.
\end{equation}
For $(i,m,b)\in\frI^+$ this follows from the bound~\eqref{eq:thm_cumulants_irrelevant1}, Lemma~\ref{lem:map_I} and translational invariance. For $(i,m,b)\in\frI^-$ the bound~\eqref{eq:thm_cumulants_irrelevant2} coincides with the bound~\eqref{eq:thm_cumulants_relevant}. We note that for any $(i,m,a)\in\frI^-$ such that $m\in\bN_+$ it holds
\begin{equation}
 \llangle \partial_\uv^r F^{i,m,a}_{\uv;\sUV,\sIR}\rrangle
 =
 \fX^a_\ro(\fI(\llangle \partial_\uv^r F^{i,m,b}_{\uv;\sUV,\sIR}\rrangle),
 \llangle \partial_\uv^r F^{i,m,b}_{\uv;\sUV,\sIR}\rrangle)
\end{equation}
by Theorem~\ref{thm:taylor}, where the map $\fX^a_\ro$ was introduced in Def.~\ref{dfn:map_X}. If $i=1$, then we set $\ro=\ceil{\sigma}$ and note that $\llangle \partial_\uv^r F^{i,m,b}_{\uv;\sUV,\sIR}\rrangle=0$ for all $|b|=\ro$ by locality. If $i>1$, then we set $\ro=\sigma_\diamond$ where $\sigma_\diamond=\sigma$ if $\sigma\in2\bN_+$ and $\sigma_\diamond=\ceil{\sigma}-1$ otherwise. By Assumption~\ref{ass:infrared} in both cases the RHS of the above equality involves only objects for which the bounds~\eqref{eq:thm_cumulants_irrelevant1} and~\eqref{eq:thm_cumulants_irrelevant2} hold true. Consequently, the statement of the theorem follows from Theorem~\ref{thm:taylor_bounds} applied with $C\lesssim [\sIR]^{\varrho_{2\varepsilon}(i,m)}$.
\end{proof}

\section{Probabilistic analysis}\label{sec:probabilistic}

The primary result of this section is Theorem~\ref{thm:probabilistic}, which exhibits and bounds the random fields $f^{i,m,a}_{0,\sIR}=\lim_{\sUV\searrow0} f^{i,m,a}_{\sUV,\sIR}$ for $(i,m,a)\in\frI^-_0$, $\sIR\in(0,1]$, where $f^{i,m,a}_{\sUV,\sIR}=f^{i,m,a}_{0;\sUV,\sIR}$, $(i,m,a)\in\frI^-_0$, $\sUV\in(0,1]$, $\sIR\in[0,1]$, are the relevant effective force coefficients. The proof relies on the Kolmogorov type argument presented in Sec.~\ref{sec:simple_probabilistic}, the bounds for cumulants of the effective force coefficients established in Sec.~\ref{sec:cumulants_estimates} and a certain diagonal argument taken from~\cite{hairer2017clt,mourrat2017} exploiting the extra parameter $\uv\in[0,1]$ of the coefficients $f^{i,m,a}_{\uv;\sUV,\sIR}$. Let us note that the probabilistic results of this section together with the deterministic results proved in Sec.~\ref{sec:deterministic} imply the main result of this paper stated in Theorem~\ref{thm:main}.

\begin{thm}\label{thm:probabilistic}
There exists $\oo\in\bN_0$, a coupling between the random fields $(\varXi_\sUV,\phi_\sUV^\vartriangle)$, $\sUV\in[0,1]$, a universal family of random continuous functions 
\begin{equation}\label{eq:probabilistic_thm_coefficients}
 (0,1]\ni\sIR\mapsto f^{i,m,a}_{0,\sIR}\in\sD'(\bH),
 \quad
 (i,m,a)\in\frI_0^-,
\end{equation}
and a random distribution $\phi_0^\vartriangle\in\sD'(\bT)$ such that for every $\varepsilon\in(0,1]$ and every \mbox{$\mathtt{u}\in C^\infty_\rc(\bR)$} the following conditions are satisfied:
\begin{enumerate}
\item [(A${}_1$)]
$
 \sup_{\sIR\in(0,1]}\,[\sIR]^{-\varrho_\varepsilon(i,m,a)}\,\|K^{\ast\oo}_\sIR\ast \mathtt{u} f^{i,m,a}_{0,\sIR}\|_{\cV} <\infty
$,
$(i,m,a) \in\frI_0^-$,

\item[(B${}_1$)]
$
 \lim_{\sUV\searrow0}\,[\sUV]^{-\varrho_\varepsilon(i,m,a)}\, \|\mathtt{u}f^{i,m,a}_\sUV\|_{\cV} = 0
$, $(i,m,a)\in\frI_0$,

\item[(C${}_1$)]
$
 \lim_{\sUV\searrow0}\sup_{\sIR\in(0,1]}[\sIR]^{-\varrho_\varepsilon(i,m,a)}\,\|K^{\ast\oo}_\sIR\ast \mathtt{u}(f^{i,m,a}_{\sUV,\sIR}-f^{i,m,a}_{0,\sIR})\|_{\cV}
=0$, $(i,m,a)\in\frI_0^-$.
\end{enumerate}
\begin{enumerate}
\item[(A${}_2$)]
$
 \sup_{\sIR\in(0,1]}\,[\sIR]^{-\beta_\varepsilon}\,\|\bar K_\sIR^{\ast\oo}\ast \phi^\vartriangle_0\|_{L^\infty(\bT)} <\infty
$,

\item[(B${}_2$)]
$
  \lim_{\sUV\searrow0}\,[\sUV]^{-\beta_\varepsilon}\,\|\fR_\sUV\phi_\sUV^\vartriangle\|_{L^\infty(\bT)}=0
$,

\item[(C${}_2$)]
$
 \lim_{\sUV\searrow0} \sup_{\sIR\in(0,1]}\,[\sIR]^{-\beta_\varepsilon}\,\|\bar K_\sIR^{\ast\oo}\ast (\phi^\vartriangle_\sUV-\phi^\vartriangle_0)\|_{L^\infty(\bT)} = 0
$.
\end{enumerate}
These bounds hold almost surely and the limits exist in probability.
\end{thm}

\begin{rem}
Recall that $f^{0,0,0}_{\sUV,\sIR}=\varXi_\sUV$, $\varrho_\varepsilon(0,0,0)=-\dim(\varXi)-\varepsilon=-(\rdim+\sigma)/2-\varepsilon$, $\|\Cdot\|_\cV=\|\Cdot\|_{L^\infty(\bH)}$, $\beta_\varepsilon=-\dim(\phi)-\varepsilon$, $\dim(\phi)$ was introduced in Def.~\ref{dfn:dim_phi} and $(\mathtt{u}f)(\mathring x,\bar x)=\mathtt{u}(\mathring x)f(\mathring x,\bar x)$.
\end{rem}

\begin{rem}
The random functions~\eqref{eq:probabilistic_thm_coefficients} are constructed in such a way that they are measurable with respect to the sigma algebra of the white noise $\varXi_0$ and depend only on the choice of the renormalization parameters $\mathfrak{f}^{i,m,a}_0$, $(i,m,a)\in\frI^-$. For a fixed choice of the renormalization parameters the family of functions~\eqref{eq:probabilistic_thm_coefficients} does not depend on a choice of the family of noises $\varXi_\sUV$, $\sUV\in(0,1]$, and a choice of the force coefficients $f^{i,m,a}_{\sUV}$, $\sUV\in(0,1]$, $(i,m,a)\in\frI$, satisfying our assumptions. This property is called universality.
\end{rem}

\begin{proof}
The construction of the coupling as well as the proofs of Parts (A${}_1$) and (C${}_1$) for $(i,m,a)=(0,0,0)$ and Parts (A${}_2$) and  
(C${}_2$) are contained in Sec.~\ref{sec:coupling}. Parts (A${}_1$) and (C${}_1$) for $(i,m,a)\in\frI^-=\frI_0^-\setminus\{(0,0,0)\}$ are established in Sec.~\ref{sec:probabilistic_convergence}. Part (B${}_1$) for $(i,m,a)=(0,0,0)$ and Part (B${}_2$) are proved in Sec.~\ref{sec:probabilistic_simple_bounds}. Part (B${}_1$) for $(i,m,a)\in\frI=\frI_0\setminus\{(0,0,0)\}$ follows from Remark~\ref{rem:force_coeff_cumulants}. Recall that the force coefficients $f^{i,m,a}_\sUV=f^{i,m,a}_{0;\sUV,0}\in\bR$, $(i,m,a)\in\frI$, are deterministic. 
\end{proof}

\subsection{Probabilistic estimate}\label{sec:simple_probabilistic}

In this section we prove a probabilistic estimate similar in spirit to the Kolmogorov continuity theorem. We will need the following simple lemma.

\begin{lem}\label{lem:expectation_sup}
Let $n\in2\bN_+$ be such that $n>\rdim$. 
\begin{enumerate}
\item[(A)] There exists a constant $C>0$ such that for all random fields $\zeta\in L^n(\bH)$ and $\sIR\in(0,1]$ it holds
\begin{equation}
 \llangle
 \|K_\sIR \ast \zeta\|^n_{L^\infty(\bH)}\rrangle
 \leq C\, [\sIR]^{-\rDim}\,
 \int_\bH \llangle|\zeta(x)|^n\rrangle\,\rd x.
\end{equation}
\item[(B)] There exists a constant $C>0$ such that for all random fields $\zeta\in L^n(\bT)$ and $\sIR\in(0,1]$ it holds
\begin{equation}
 \llangle
 \|\bar K_\sIR \ast \zeta\|^n_{L^\infty(\bT)}\rrangle
 \leq C\, [\sIR]^{-\rdim}\,
 \int_{\bT} \llangle|\zeta(\bar x)|^n\rrangle\,\rd \bar x.
\end{equation}
\item[(C)] For every $\mathtt{w}\in C^\infty_\rc(\bR)$ there exists a constant $C>0$ such that for all random fields $\hat\zeta\in L^\infty(\bH)$ and $\sIR\in(0,1]$ it holds
\begin{equation}
 \llangle
 \|\mathtt{w} (K_\sIR \ast \hat\zeta)\|^n_{L^\infty(\bH)}\rrangle
 \leq C\, [\sIR]^{-\rDim}\,
 \sup_{x\in\bH}\,\llangle|\hat\zeta(x)|^n\rrangle.
\end{equation}
\end{enumerate}
\end{lem}
\begin{proof}
Note that $K_\sIR \ast \zeta = \fT K_\sIR \star \zeta$, where $\star$ is the convolution in $\bH$ and $\fT K_\sIR$ is the periodization of $K_\sIR$ (see Def.~\ref{dfn:periodization}). Using the Young inequality for convolutions we obtain
\begin{equation}
 \llangle\|K_\sIR\ast \zeta\|^n_{L^\infty(\bH)}\rrangle
 \leq
 \|\fT K_\sIR\|_{L^{n/(n-1)}(\bH)}^n~
 \llangle\|\zeta\|^n_{L^n(\bH)}\rrangle.
\end{equation} 
By Lemma~\ref{lem:kernel_simple_fact}~(D) and the Fubini theorem we get
\begin{equation}\label{eq:lem_expectation_sup_aux}
 \llangle\|K_\sIR\ast \zeta\|^n_{L^\infty(\bH)}\rrangle
 \lesssim [\sIR]^{-\rDim}\,\int_\bH \llangle |\zeta(x)|^n\rrangle\,\rd x,
\end{equation}
which proves Part (A). The proof of Part (B) is the same. To prove Part (C) first recall that $\fP_\sIR=(1+\sIR\partial_{\mathring x})(1-[\sIR]^2\Delta_{\bar x})$ and set $\dot{\mathtt{w}}(\mathring x):=\partial_{\mathring x}\mathtt{w}(\mathring x)$. Note that it holds
\begin{multline}
 \|\mathtt{w}(K_\sIR \ast \hat\zeta)\|_{L^\infty(\bH)} 
 \leq\|K_\sIR\ast \fP_\sIR \mathtt{w}(K_\sIR \ast \hat\zeta)\|_{L^\infty(\bH)}
 \\
 \leq
 \|K_\sIR\ast \mathtt{w}\hat\zeta\|_{L^\infty(\bH)}
 +
 \sIR \|K_\sIR\ast \dot{\mathtt{w}}(K_\sIR \ast \hat\zeta)\|_{L^\infty(\bH)}.
\end{multline} 
Part (C) follows now from Part~(A) applied with $\zeta=\mathtt{w}\hat\zeta$ and $\zeta=\dot{\mathtt{w}}(K_\sIR \ast \hat\zeta)$ and the bound stated in Remark~\ref{rem:moments_cumulants} applied with $H=K_\sIR$.
\end{proof}

\begin{lem}\label{lem:probabilistic_estimate}
Fix $n\in2\bN_+$, such that $n>\rdim$, $\rho\in\bR$ and $\varepsilon>\rDim/n$. There exists a constant $c>0$ such that the following statement is true. Let $\zeta:(0,1]\to L^n(\bH)$ be a differentiable random function. Assume that
\begin{equation}
 \llangle\|\partial_\sIR^s \zeta_\sIR^{\phantom{l}}\|^n_{L^n(\bH)}\rrangle
 \\
 \leq C\,[\sIR]^{n(\rho+\varepsilon-\sigma s)}
\end{equation}
for some $C>0$ and all $s\in\{0,1\}$ and $\sIR\in(0,1]$. Then it holds
\begin{equation}
 \llangle[\Big] \sup_{\sIR\in(0,1]}
 \big\|[\sIR]^{-\rho}\,K_\sIR \ast \zeta_\sIR\big\|^n_{L^\infty(\bH)}\rrangle[\Big] \leq c\,C.
\end{equation}
\end{lem}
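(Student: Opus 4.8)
The plan is to reduce the supremum over the continuous parameter $\sIR\in(0,1]$ to a supremum over a countable geometric grid $\sIR\in\{2^{-k}\,|\,k\in\bN_0\}$, control the oscillation between consecutive grid points by the derivative bound, and then apply the moment hypothesis together with Lemma~\ref{lem:expectation_sup} on each grid scale. First I would fix the grid $\sIR_k=2^{-k}$ and, for $\sIR\in[\sIR_{k+1},\sIR_k]$, write the telescoping identity
\begin{equation}
 [\sIR]^{-\rho}\,K_\sIR\ast\zeta_\sIR
 =
 [\sIR_k]^{-\rho}\,K_{\sIR_k}\ast\zeta_{\sIR_k}
 +\int_{\sIR_k}^{\sIR}\partial_\uIR\big([\uIR]^{-\rho}\,K_\uIR\ast\zeta_\uIR\big)\,\rd\uIR,
\end{equation}
and similarly relate $K_{\sIR_k}$ and $K_{\sIR_{k+1}}$ using Lemma~\ref{lem:kernel_u_v}, so that $\sup_{\sIR\in(0,1]}\|[\sIR]^{-\rho}K_\sIR\ast\zeta_\sIR\|_{L^\infty(\bH)}$ is bounded, up to universal constants, by $\sup_k\|[\sIR_k]^{-\rho}K_{\sIR_k}\ast\zeta_{\sIR_k}\|_{L^\infty(\bH)}$ plus $\sup_k\int_{\sIR_{k+1}}^{\sIR_k}\|\partial_\uIR([\uIR]^{-\rho}K_\uIR\ast\zeta_\uIR)\|_{L^\infty(\bH)}\,\rd\uIR$. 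Here $\partial_\uIR([\uIR]^{-\rho}K_\uIR\ast\zeta_\uIR)$ expands into a term with $\partial_\uIR\zeta_\uIR$, a term with $\partial_\uIR K_\uIR$ (controlled via Lemma~\ref{lem:kernel_derivative}, $\|\fP_\uIR\partial_\uIR K_\uIR\|_\cK\lesssim[\uIR]^{-\sigma}$), and a term from differentiating $[\uIR]^{-\rho}$, each of which carries at worst an extra factor $[\uIR]^{-\sigma}$ relative to $[\uIR]^{-\rho}K_\uIR\ast\zeta_\uIR$.

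Next I would pass from $\|\cdot\|_{L^\infty(\bH)}$ to $n$-th moments. Bounding the supremum over $k$ of a nonnegative sequence by the sum, $\sup_k a_k\le\big(\sum_k a_k^n\big)^{1/n}$, we get
\begin{equation}
 \llangle\sup_{\sIR\in(0,1]}\|[\sIR]^{-\rho}K_\sIR\ast\zeta_\sIR\|_{L^\infty(\bH)}^n\rrangle
 \lesssim
 \sum_{k\ge0}\Big(\llangle\|[\sIR_k]^{-\rho}K_{\sIR_k}\ast\zeta_{\sIR_k}\|_{L^\infty(\bH)}^n\rrangle
 +\text{(increment terms)}\Big).
\end{equation}
For the first family of terms, since the problem is on $\bH$ and $\zeta$ is periodic in space I would localize with a cutoff $\mathtt{w}\in C^\infty_\rc(\bR)$ equal to $1$ on a unit time interval (the $L^\infty(\bH)$ norm being realized on such an interval by periodicity/stationarity), apply Lemma~\ref{lem:expectation_sup}~(A) with a spare convolution factor $K_{\sIR_k}$ absorbed from $K_{\sIR_k}^{\ast\oo}$ — or equivalently iterate it — to convert the $L^\infty$ moment into an $L^n$ moment at the cost of $[\sIR_k]^{-\rDim}$, and then invoke the hypothesis $\llangle\|\zeta_{\sIR_k}\|_{L^n(\bH)}^n\rrangle\le C[\sIR_k]^{n(\rho+\varepsilon)}$. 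This yields a contribution $\lesssim C\,[\sIR_k]^{-\rho n}\,[\sIR_k]^{-\rDim}\,[\sIR_k]^{n(\rho+\varepsilon)}=C\,[\sIR_k]^{n\varepsilon-\rDim}$, and since $\varepsilon>\rDim/n$ the exponent $n\varepsilon-\rDim$ is strictly positive, so $\sum_k[\sIR_k]^{n\varepsilon-\rDim}<\infty$ converges to a universal constant. For the increment terms, the integrand over $[\sIR_{k+1},\sIR_k]$ is bounded, after the same $L^\infty\to L^n$ step and using the $s=1$ hypothesis, by $C^{1/?}$-type factors of order $[\uIR]^{-\rho-\sigma}[\uIR]^{-\rDim/n}[\uIR]^{\rho+\varepsilon-\sigma}$ times $\rd\uIR$; integrating over an interval of length $\asymp[\sIR_k]^\sigma$ (in the $\sIR$ variable, $\rd\uIR$ against $[\uIR]^{\cdots}$) and raising to the $n$-th power reproduces the same geometric series $\sum_k[\sIR_k]^{n\varepsilon-\rDim}$; I would be a little careful to apply Jensen/Hölder when moving the $n$-th power inside the $\rd\uIR$-integral.

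The main obstacle I anticipate is the bookkeeping of the extra smoothing factor needed to run Lemma~\ref{lem:expectation_sup}: that lemma requires one genuine convolution with $K_\sIR$, whereas the statement already pairs $\zeta_\sIR$ with a single $K_\sIR$, so I must either assume (harmlessly, as elsewhere in the paper) that one may replace $K_\sIR$ by $K_\sIR^{\ast2}$ at the cost of a universal constant and an extra $[\sIR]^{-\text{(bounded)}}$ — which is absorbed by the strict inequality $\varepsilon>\rDim/n$ if one is slightly more generous in the exponent — or reorganize the telescoping so that the comparison kernels $\fS_\sIR\hat K_{\uIR/\sIR}$ from Lemma~\ref{lem:kernel_u_v} supply the needed convolution. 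A secondary subtlety is justifying differentiation under the supremum and the measurability of $\sup_{\sIR}$; this is standard once one reduces to the countable grid, which is why the grid reduction comes first. Apart from these points the argument is a routine Kolmogorov-type chaining, and the constant $c$ is universal because every estimate used (Lemmas~\ref{lem:kernel_u_v}, \ref{lem:kernel_simple_fact}, \ref{lem:kernel_derivative}, \ref{lem:expectation_sup}) has universal constants and the geometric series $\sum_k 2^{-k(n\varepsilon-\rDim)}$ depends only on $n,\varepsilon,\rDim$.
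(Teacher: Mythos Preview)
Your dyadic-grid chaining is correct and would go through, but it is more elaborate than necessary. The paper avoids the grid entirely: setting $\hat\zeta_\sIR:=[\sIR]^{-\rho}K_\sIR\ast\zeta_\sIR$, one simply writes
\[
\sup_{\sIR\in(0,1]}\|\hat\zeta_\sIR\|_{L^\infty(\bH)}\le\|\hat\zeta_1\|_{L^\infty(\bH)}+\int_0^1\|\partial_\sIR\hat\zeta_\sIR\|_{L^\infty(\bH)}\,\rd\sIR,
\]
applies Minkowski to push the $n$-th moment inside the integral, and then uses Lemma~\ref{lem:expectation_sup} and Lemma~\ref{lem:kernel_derivative} pointwise in $\sIR$ to obtain $\llangle\|\partial_\sIR^s\hat\zeta_\sIR\|_{L^\infty(\bH)}^n\rrangle^{1/n}\lesssim C^{1/n}[\sIR]^{\varepsilon-\sigma s-\rDim/n}$; the $\rd\sIR$-integral of this converges precisely because $\varepsilon>\rDim/n$. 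Your grid sum $\sum_k[\sIR_k]^{n\varepsilon-\rDim}$ is the discrete shadow of this same integral, so the two arguments are equivalent in content, but the continuous version needs no telescoping bookkeeping, no separate treatment of grid-point versus increment terms, and no Jensen step for the increments.

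Two of your anticipated obstacles are non-issues. First, the worry about needing an extra factor of $K_\sIR$: a single convolution already suffices, because Young's inequality gives $\|K_\sIR\ast\zeta_\sIR\|_{L^\infty(\bH)}\le\|\fT K_\sIR\|_{L^{n/(n-1)}(\bH)}\|\zeta_\sIR\|_{L^n(\bH)}$, and Lemma~\ref{lem:kernel_simple_fact}\,(D) (with $\oo=1$ and exponent $n/(n-1)$, whose hypothesis $\rdim-1<\rdim(n-1)/n$ is exactly $n>\rdim$) yields $\|\fT K_\sIR\|_{L^{n/(n-1)}}^n\lesssim[\sIR]^{-\rDim}$. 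Second, no time-localization by a cutoff $\mathtt{w}$ is needed: the hypothesis $\zeta_\sIR\in L^n(\bH)$ already makes the Young estimate global, and there is no stationarity or periodicity in time to invoke.
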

\begin{proof} 
Let $\hat\zeta:(0,1]\to L^\infty(\bH)$ be defined by
\begin{equation}
 \hat\zeta_\sIR:=[\sIR]^{-\rho}\, K_\sIR\ast \zeta_\sIR,
 \quad
 \sIR\in(0,1].
\end{equation}
By Lemma~\ref{lem:expectation_sup} and Lemma~\ref{lem:kernel_derivative} the function $\hat\zeta$ is differentiable and
\begin{equation}\label{eq:probabilistic_proof_ieq}
 \llangle\|\partial_\sIR^s \hat\zeta_\sIR\|^n_{L^\infty(\bH)}\rrangle
 \lesssim 
 C\, [\sIR]^{n(\varepsilon-\sigma s-\rDim/n)}
\end{equation}
for $s\in\{0,1\}$. Moreover, it holds
\begin{equation}
 \sup_{\sIR\in(0,1]}\|\hat\zeta_\sIR\|_{L^\infty(\bH)}
 \leq
 \|\hat\zeta_1\|_{L^\infty(\bH)}
 +
 \int_0^1 \|\partial_\sIR \hat\zeta_\sIR\|_{L^\infty(\bH)} \,\rd\sIR.
\end{equation}
By the Minkowski inequality we get
\begin{equation}\label{eq:probabilistic_proof_ieq2}
 \llangle[\Big] \sup_{\sIR\in(0,1]}\|\hat\zeta_\sIR\|^n_{L^\infty(\bH)} \rrangle[\Big]^{\!\frac{1}{n}}
 \leq
 \llangle\|\hat\zeta_1\|^n_{L^\infty(\bH)}\rrangle^{\frac{1}{n}} + \int_0^1 \llangle\|\partial_\sIR \hat\zeta_\sIR\|^n_{L^\infty(\bH)}\rrangle^{\frac{1}{n}} \,\rd\sIR.
\end{equation}
The statement of the lemma follows now from the bounds~\eqref{eq:probabilistic_proof_ieq} and~\eqref{eq:probabilistic_proof_ieq2}.
\end{proof}

\subsection{Simple bounds}\label{sec:probabilistic_simple_bounds}

In this section we show that
\begin{equation}
 \lim_{\sUV\searrow0}\,[\sUV]^{\dim(\varXi)+\varepsilon}\, \|\mathtt{u}\varXi_\sUV\|_{L^\infty(\bH)} = 0,
 \qquad
 \lim_{\sUV\searrow0}\,[\sUV]^{\dim(\phi)+\varepsilon}\,\|\fR_\sUV \phi_\sUV^\vartriangle\|_{L^\infty(\bT)}=0
\end{equation}
in distribution for every $\varepsilon\in(0,1]$ and every $\mathtt{u}\in C^\infty_\rc(\bR)$. This implies the convergence in probability for every coupling between $(\varXi_\sUV,\phi_\sUV^\vartriangle)$, $\sUV\in[0,1]$.

\begin{lem}\label{lem:noise_moment_bound}
For any $\varepsilon\in(0,1\wedge\sigma)$, $n\in\bN_+$ and $\mathtt{u}\in  C^\infty_\rc(\bR)$ it holds 
\begin{equation}
 \llangle \|\mathtt{u}\varXi_\sUV\|^n_{L^\infty(\bH)}\rrangle \lesssim [\sUV]^{-n(\dim(\varXi)+\varepsilon)}
\end{equation}
uniformly in $\sUV\in(0,1]$ and for $r\in\{0,1\}$
\begin{equation}
 \llangle \|\mathtt{u}\partial_\uv^r\varXi_{\uv;\sUV}\|^n_{L^\infty(\bH)}\rrangle \lesssim [\uv]^{(\varepsilon-\sigma)r}[\uv\vee\sUV]^{-n(\dim(\varXi)+2\varepsilon)}
\end{equation}
uniformly in $\uv\in(0,1]$, $\sUV\in[0,1]$.
\end{lem}
\begin{proof}
We prove the lemma for $n\in2\bN_+$ such that $n\varepsilon>\rDim$. The general case is implied by the Jensen inequality. The first bound follows from Lemma~\ref{lem:ass_noise_general} (A) and Lemma~\ref{lem:expectation_sup}~(C) applied with $\hat\zeta=\fP_\sUV \varXi_\sUV$. The second bound is the consequence of Lemma~\ref{lem:ass_noise_general} (C) and Lemma~\ref{lem:expectation_sup}~(C) applied with $\hat\zeta=\fP_{\uv\vee\sUV} \varXi_{\uv;\sUV}$ and $\hat\zeta=\fP_{\uv} \partial_\uv \varXi_{\uv;\sUV}$.
\end{proof}

\begin{lem}\label{lem:initial_data_moment_bound}
For any $\varepsilon\in(0,1]$ and $n\in\bN_+$ it holds
\begin{equation}
 \llangle \|\fR_\sUV \phi_\sUV^\vartriangle\|^n_{L^\infty(\bT)}\rrangle \lesssim  [\sUV]^{-n(\dim(\phi)+\varepsilon)}
\end{equation}
uniformly in $\sUV\in(0,1]$.
\end{lem}
\begin{proof}
We prove the lemma for $n\in2\bN_+$ such that $n\varepsilon>\rdim$. The general case is a consequence of the Jensen inequality. Since $\bar \fP_\sUV \bar K_\sUV=\delta_{\bR^\rdim}$ by Lemma~\ref{lem:kernel_simple_fact}~(C) we obtain
\begin{equation}
 \|\fR_\sUV \phi_\sUV^\vartriangle\|_{L^\infty(\bT)}
 =
 \|\fR_\sUV \bar K_\sUV^{\ast\ooo}\ast \bar\fP_\sUV^{\ooo} \phi_\sUV^\vartriangle\|_{L^\infty(\bT)}
 \\
 \lesssim
 \|\bar K_\sUV\ast \bar\fP_\sUV^{\ooo}\phi_\sUV^\vartriangle\|_{L^\infty(\bT)},
\end{equation}
where $\ooo=\floor{\sigma/2}+2$. Using the equality $\bar\fP_\sUV=(1-[\sUV]^2\Delta_{\bar x})$ and the bound stated in Assumption~\ref{ass:initial} we arrive at
\begin{equation}
 \llangle|\bar\fP_\sUV^\ooo \phi_\sUV^\vartriangle(x)|^n\rrangle
 \lesssim [\sUV]^{-n\dim(\phi)}
\end{equation}
uniformly in $\sUV\in(0,1]$ and $x\in\bT$. Finally, we apply Lemma~\ref{lem:expectation_sup}~(B) with $\zeta=\bar\fP_\sUV^\ooo \phi_\sUV^\vartriangle$.
\end{proof}

\subsection{Coupling}\label{sec:coupling}

\begin{lem}\label{lem:coupling}
Let $Y_\sUV$, $\sUV\in[0,1]$, be a family of random variables valued in some Polish space $(\cY,d)$. Assume that $\lim_{\sUV\searrow0}Y_\sUV=Y_0$ in law. Then there exists a coupling between $Y_\sUV$, $\sUV\in[0,1]$, such that $\lim_{\sUV\searrow0}Y_\sUV=Y_0$ in probability.
\end{lem}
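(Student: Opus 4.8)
The plan is to metrize weak convergence on the Polish space $\cY$, use Strassen's theorem to manufacture, for each $\sUV\in(0,1]$ separately, a coupling of $Y_\sUV$ with $Y_0$ that is close with high probability, and then to glue all of these couplings onto one fixed copy of $Y_0$ by means of the transfer (randomization) theorem for Borel spaces. No joint measurability in $\sUV$ is required: convergence in probability refers only to the one‑parameter limit $\sUV\searrow0$, so it suffices to control each $\sUV$ individually.

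\textbf{Steps.} Write $\mu_\sUV$ for the law of $Y_\sUV$. Since $(\cY,d)$ is separable, the L\'evy--Prokhorov metric $\pi$ metrizes weak convergence on the space of Borel probability measures on $\cY$, so the hypothesis $\lim_{\sUV\searrow0}Y_\sUV=Y_0$ in law reads $\pi(\mu_\sUV,\mu_0)\to0$. First I would set $\alpha_\sUV:=\pi(\mu_\sUV,\mu_0)+\sUV$, so that $\alpha_\sUV>0$ and $\alpha_\sUV\to0$ as $\sUV\searrow0$, and invoke Strassen's theorem: for each $\sUV\in(0,1]$ there is a Borel probability measure $\gamma_\sUV$ on $\cY\times\cY$ with marginals $\mu_\sUV$ and $\mu_0$ such that
\begin{equation}
 \gamma_\sUV\big(\{(x,y)\in\cY\times\cY\,:\,d(x,y)\geq\alpha_\sUV\}\big)\leq\alpha_\sUV .
\end{equation}
(One may equally use the $1$-Wasserstein distance for the bounded metric $d\wedge1$, which also metrizes weak convergence, pick $\gamma_\sUV$ nearly optimal for it, and use Markov's inequality; either route works.) Next I would put all couplings on one probability space. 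Starting from the given $Y_0$ on $(\Omega_0,\mathcal F_0,\mathbb P_0)$, enlarge the space so that it carries a family $(U_\sUV)_{\sUV\in(0,1]}$ of i.i.d.\ uniform$[0,1]$ random variables independent of $Y_0$ (such a product extension exists for an arbitrary index set). Fix $\sUV$; disintegrating $\gamma_\sUV(\mathrm dx\,\mathrm dy)=\mu_0(\mathrm dy)\,\kappa_\sUV(y,\mathrm dx)$ and using that $\cY$ is a Borel space, there is a Borel map $g_\sUV:\cY\times[0,1]\to\cY$ with $g_\sUV(y,U)\sim\kappa_\sUV(y,\cdot)$ when $U\sim\mathrm{Unif}[0,1]$; set $\tilde Y_\sUV:=g_\sUV(Y_0,U_\sUV)$ for $\sUV\in(0,1]$ and $\tilde Y_0:=Y_0$. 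Then $(\tilde Y_\sUV,\tilde Y_0)\sim\gamma_\sUV$, so $\tilde Y_\sUV$ has law $\mu_\sUV$, and $(\tilde Y_\sUV)_{\sUV\in[0,1]}$ is a coupling of the family with the prescribed marginals.

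\textbf{Conclusion.} For every $\delta>0$ and every $\sUV\in(0,1]$ with $\alpha_\sUV\leq\delta$,
\begin{equation}
 \mathbb P\big(d(\tilde Y_\sUV,\tilde Y_0)\geq\delta\big)\leq\mathbb P\big(d(\tilde Y_\sUV,\tilde Y_0)\geq\alpha_\sUV\big)=\gamma_\sUV\big(\{d\geq\alpha_\sUV\}\big)\leq\alpha_\sUV ,
\end{equation}
and since $\alpha_\sUV\to0$ as $\sUV\searrow0$ this gives $\mathbb P(d(\tilde Y_\sUV,\tilde Y_0)\geq\delta)\to0$ for every $\delta>0$, i.e.\ $\lim_{\sUV\searrow0}\tilde Y_\sUV=\tilde Y_0$ in probability, as required. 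The only genuinely delicate point is attaching the independently constructed couplings $\gamma_\sUV$ to a \emph{single} copy of $Y_0$ without disturbing the marginals, which is precisely what the transfer theorem provides once the space is enriched by the auxiliary uniforms; everything else (metrizability of weak convergence, Strassen, Markov) is standard.
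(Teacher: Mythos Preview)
Your proof is correct and follows essentially the same strategy as the paper: construct for each $\sUV$ a pairwise coupling of $Y_\sUV$ with $Y_0$ that witnesses closeness, then glue all of them along the common $Y_0$-marginal. The paper uses the $1$-Wasserstein distance for $d\wedge1$ (Villani) and existence of optimal couplings in place of your L\'evy--Prokhorov/Strassen step, and invokes de~Acosta's gluing theorem in place of your explicit transfer construction with auxiliary uniforms---you even note the Wasserstein alternative yourself, so the two arguments are interchangeable variants of the same idea.
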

\begin{proof}
We follow the method described in the proof of Theorem 6.5 in~\cite{hairer2017clt}. By~\cite[Theorem 6.9]{villani} the convergence in law is equivalent to the convergence in the Wasserstein sense for the bounded metric $d(\Cdot,\Cdot)\wedge1$. Hence,
\begin{equation}\label{eq:inf_wasserstein}
 \inf_{\bE} \bE(d(Y_\sUV,Y_0)\wedge1)
\end{equation}
converges to zero as $\sUV\searrow0$, where the infimum is taken over all joint distributions $\bE$ of the random variables $Y_\sUV$ and $Y_0$ with marginals coinciding with $\mathrm{Law}(Y_\sUV)$ and $\mathrm{Law}(Y_0)$. By~\cite[Theorem 4.1]{villani} for every $\sUV\in(0,1]$ the infimum~\eqref{eq:inf_wasserstein} is attained for some joint distribution $\bE=\bE_\sUV$. The desired coupling $\llangle\Cdot\rrangle$ is obtained with the use of \cite[Theorem A.1]{deAcosta1982} by gluing together the joint distributions $\bE_\sUV$, \mbox{$\sUV\in(0,1]$}, along the common marginal $\mathrm{Law}(Y_0)$. We have
\begin{equation}
 \lim_{\sUV\searrow0} \llangle d(Y_\sUV,Y_0)\wedge1\rrangle=\lim_{\sUV\searrow0} \bE_\sUV(d(Y_\sUV,Y_0)\wedge1) = 0,
\end{equation}
which is equivalent to convergence in probability. 
\end{proof}

\begin{lem}\label{lem:coupling_varXi_phi}
Let $\oo=\floor{\dim(\varXi)\vee\dim(\phi)}+1$. There exists a coupling between the random fields $(\varXi_\sUV,\phi_\sUV^\vartriangle)$, $\sUV\in[0,1]$, such that for every $\varepsilon\in(0,1]$ and every $\mathtt{u}\in C^\infty_\rc(\bR)$ it holds
\begin{enumerate}
 \item[(A)]
 $\lim_{\sUV\searrow0} \sup_{\sIR\in(0,1]}\,[\sIR]^{\dim(\varXi)+\varepsilon}\,\|K_\sIR^{\ast\oo}\ast \mathtt{u}(\varXi_\sUV-\varXi_0)\|_{L^\infty(\bH)} = 0$,
 
 \item[(B)]
 $\lim_{\sUV\searrow0} \sup_{\sIR\in(0,1]}\,[\sIR]^{\dim(\phi)+\varepsilon}\,\|\bar K_\sIR^{\ast\oo}\ast (\phi^\vartriangle_\sUV-\phi^\vartriangle_0)\|_{L^\infty(\bT)} = 0$
\end{enumerate} 
in probability.
\end{lem}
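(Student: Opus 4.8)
The plan is to combine the diagonal/gluing construction from Lemma~\ref{lem:coupling} with the uniform moment bounds from Section~\ref{sec:simple_probabilistic} and the regularity facts for $\varXi_\sUV$ and $\phi_\sUV^\vartriangle$ recorded in Assumptions~\ref{ass:noise} and~\ref{ass:initial}. First I would fix $\oo=\floor{\dim(\Xi)\vee\dim(\phi)}+1$ and recall from Remark~\ref{rem:noise_bound_law} that $\varXi_\sUV\to\varXi_0$ in law in $\sC^\alpha(\bH)$ for every $\alpha<-\dim(\Xi)$, and from Assumption~\ref{ass:initial} (together with Remark~\ref{rem:ass_joint_convergence} in the dependent case) that the pair $(\varXi_\sUV,\phi_\sUV^\vartriangle)$ converges in law in $\sC^\alpha(\bH)\times\sC^\beta(\bT)$ for $\alpha<-\dim(\Xi)$, $\beta<-\dim(\phi)$. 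Applying Lemma~\ref{lem:coupling} to the $\cY$-valued family $Y_\sUV=(\varXi_\sUV,\phi_\sUV^\vartriangle)$ with $\cY=\sC^\alpha(\bH)\times\sC^\beta(\bT)$ (a Polish space, being a separable Banach space) produces a single coupling under which $(\varXi_\sUV,\phi_\sUV^\vartriangle)\to(\varXi_0,\phi_0^\vartriangle)$ in probability in $\cY$.

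The second step is to upgrade this $\cY$-convergence in probability to the weighted uniform statements (A) and (B). For (A), note that by definition of $\sC^\alpha(\bH)$ the quantity $\sup_{\sIR\in(0,1]}[\sIR]^{-\alpha}\|K_\sIR^{\ast\oo}\ast\mathtt{u}(\varXi_\sUV-\varXi_0)\|_{L^\infty(\bH)}$ is, up to the harmless cutoff $\mathtt{u}$, controlled by the $\sC^\alpha(\bH)$-seminorm of $\varXi_\sUV-\varXi_0$, provided $\oo\geq\ceil{-\alpha}$; choosing $\alpha=-\dim(\Xi)-\varepsilon/2$ makes $[\sIR]^{\dim(\Xi)+\varepsilon}=[\sIR]^{-\alpha}[\sIR]^{\varepsilon/2}\leq[\sIR]^{-\alpha}$ for $\sIR\in(0,1]$, so the weighted supremum in (A) is bounded by $\|\mathtt{u}(\varXi_\sUV-\varXi_0)\|_{\sC^\alpha(\bH)}$, which tends to $0$ in probability. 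An entirely parallel argument gives (B) from $\|\phi_\sUV^\vartriangle-\phi_0^\vartriangle\|_{\sC^\beta(\bT)}\to0$ with $\beta=-\dim(\phi)-\varepsilon/2$. A minor subtlety is that the norms defining $\sC^\alpha(\bH)$ and $\sC^\alpha(\bT)$ use the convolution power $\ceil{-\alpha}$, while the statement fixes a single $\oo$; this is reconciled by Lemma~\ref{lem:kernel_simple_fact}~(A) (or Lemma~\ref{lem:Ks_bounds}~(A)), which shows that raising the convolution power only improves the bound, so $\oo=\floor{\dim(\Xi)\vee\dim(\phi)}+1\geq\ceil{-\alpha}$ suffices for all small $\varepsilon$.

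I expect the only genuine obstacle to be the passage from convergence in law to convergence in probability while keeping all the quantitative weights intact — but that is precisely what Lemma~\ref{lem:coupling} delivers, so the argument is essentially bookkeeping once the right Polish space is identified. One should double-check that the coupling can be built \emph{simultaneously} for $\varXi_\sUV$ and $\phi_\sUV^\vartriangle$ (not separately), which is why the joint convergence in law from Remark~\ref{rem:ass_joint_convergence} is invoked in the non-independent case; in the independent case one may instead couple the two families separately and take the product. Finally, one records that the resulting coupling is the one referred to throughout Theorem~\ref{thm:main} and the preceding theorem of this section, so that Parts (A${}_2$), (C${}_2$) of that theorem and the $(i,m,a)=(0,0,0)$ cases of (A${}_1$), (C${}_1$) follow immediately, the latter because $f^{0,0,0}_{\sUV,\sIR}=\varXi_\sUV$ and $f^{0,0,0}_{0,\sIR}=\varXi_0$ by Definition~\ref{dfn:eff_force_uv}.
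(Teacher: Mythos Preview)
Your strategy is close to the paper's, but there is a real gap in how you build the coupling. You apply Lemma~\ref{lem:coupling} with $\cY=\sC^\alpha(\bH)\times\sC^\beta(\bT)$ for a \emph{single} choice of $(\alpha,\beta)$, and then in the next paragraph you let $\alpha=-\dim(\Xi)-\varepsilon/2$ depend on $\varepsilon$. These two steps are inconsistent: the coupling produced by Lemma~\ref{lem:coupling} is tied to one Polish space, so convergence in probability is only guaranteed in that fixed $\sC^{\alpha_0}\times\sC^{\beta_0}$, which controls the weighted supremum in (A) only for $\varepsilon\geq-\dim(\Xi)-\alpha_0>0$, not for all $\varepsilon\in(0,1]$. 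You cannot rerun the coupling lemma for each $\varepsilon$ without in general obtaining different couplings. (A separate minor slip: $\sC^\alpha(\bH)$ is a Fr\'echet space, not a Banach space, because of the time weights $\mathtt{w}_n$.)

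The paper resolves exactly this point by choosing $\cY$ to be the completion of $C(\bH)\times C(\bT)$ under the \emph{countable family} of seminorms
\[
\sup_{\sIR\in(0,1]}[\sIR]^{\dim(\Xi)+\varepsilon}\|K_\sIR^{\ast\oo}\ast\mathtt{u}\,v\|
+\sup_{\sIR\in(0,1]}[\sIR]^{\dim(\phi)+\varepsilon}\|\bar K_\sIR^{\ast\oo}\ast\phi\|,
\]
indexed by $\varepsilon\in\{2^{-n}:n\in\bN_0\}$ and $\mathtt{u}\in\{\mathtt{u}_n:n\in\bN_0\}$, so that convergence in probability in the resulting Fr\'echet metric unpacks to (A) and (B) for every $\varepsilon$ and every $\mathtt{u}$ simultaneously (the reduction to the fixed sequence $(\mathtt{u}_n)$ uses Remark~\ref{rem:weight}). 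Convergence in law in this $\cY$ is then inherited from the joint convergence of Remark~\ref{rem:ass_joint_convergence}, and a single application of Lemma~\ref{lem:coupling} finishes the argument. Your write-up can be fixed either by adopting this Fr\'echet-space construction, or by supplementing the single-$\alpha_0$ coupling with an interpolation step (uniform moment bounds in $\sC^{\alpha'}$ for some $\alpha<\alpha'<-\dim(\Xi)$ combined with convergence in probability in $\sC^{\alpha_0}$); but as written the passage from one fixed $\alpha$ to all $\varepsilon$ is not justified.
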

\begin{proof}
Let us fix some sequence of functions $\mathtt{u}_n\in C^\infty_\rc(\bR)$, $n\in\bN_0$, such that $\mathtt{u}_n(\mathring x)=1$ for $\mathring x\in[-n,n]$. For $(v,\phi)\in C(\bH)\times C(\bT)$ we define the family of semi-norms
\begin{equation}
 \sup_{\sIR\in(0,1]} [\sIR]^{\dim(\varXi)+\varepsilon}\|K^{\ast\oo}_\sIR \ast \mathtt{u} v\|_{L^\infty(\bH)}
 +
 \sup_{\sIR\in(0,1]} [\sIR]^{\dim(\phi)+\varepsilon}\|\bar K^{\ast\oo}_\sIR \ast \phi\|_{L^\infty(\bT)}
\end{equation}
parameterized by $\varepsilon\in\{2^{-n}\,|\,n\in\bN_0\}$ and $\mathtt{u}\in \{\mathtt{u}_n\,|\,n\in\bN_0\}$. Let $\cY$ be the Fr{\'e}chet space obtained by the completion of $C(\bH)\times C(\bT)$ equipped with the above family of semi-norms and let $d$ be a metric inducing the same topology. The statement follows now from the previous lemma, Remark~\ref{rem:weight} and the fact that $\lim_{\sUV\searrow0}\,(\varXi_\sUV,\phi_\sUV^\vartriangle) 
=
(\varXi_0,\phi_0^\vartriangle)$ in law in $(\cY,d)$. The existence of the above limit in law follows from Remark~\ref{rem:ass_joint_convergence}.
\end{proof}

\begin{lem}\label{lem:time_estimates_prep}
Let $\oo\in\bN_0$. The following bound
\begin{equation}
 \|K_\sIR^{\ast\oo}\ast \mathtt{v} v\|_\cV \lesssim
 \sum_{\ooo=0}^\oo \sIR^\ooo\,
 \|(\partial_{\mathring x}^\ooo \mathtt{v}) (K_\sIR^{\ast\oo}\ast v)\|_\cV 
\end{equation}
holds uniformly in $\sIR\in(0,1]$, $\mathtt{v}\in C^\infty_\rc(\bR)$ and $v\in\cV$.
\end{lem}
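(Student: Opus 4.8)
The plan is to reduce the estimate to a one–dimensional commutator identity in the time variable, exploiting the product structure of the kernels and operators from Definitions~\ref{dfn:kernels} and~\ref{dfn:diff_op}. Recall from the Remark following those definitions that $\fP^\oo_\sIR = \mathring\fP^\oo_\sIR\otimes\bar\fP^\oo_\sIR$, $K_\sIR^{\ast\oo} = \mathring K_\sIR^{\ast\oo}\otimes\bar K_\sIR^{\ast\oo}$, $\fP^\oo_\sIR K^{\ast\oo}_\sIR = \delta_\bM$, $\bar\fP^\oo_\sIR\bar K^{\ast\oo}_\sIR = \delta_{\bR^\rdim}$, $\mathring\fP^\oo_\sIR\mathring K^{\ast\oo}_\sIR = \delta_\bR$, and that each $\mathring K^{\ast\ooo}_\sIR$, $\ooo\in\bN_0$, is a positive measure of total mass one (so $\|\mathring K^{\ast\ooo}_\sIR\otimes\delta_{\bR^\rdim}\|_\cK=1$).

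First I would write $v = \fP^\oo_\sIR(K^{\ast\oo}_\sIR\ast v)$ as an identity in $\sD'(\bM)$ and set $u := K^{\ast\oo}_\sIR\ast v$; by the Remark after Lemma~\ref{lem:kernel_simple_fact}, $u\in C^\oo(\bM)$, so the manipulations below are legitimate. Since $\mathtt{v}$ depends only on the time coordinate it commutes with the purely spatial operator $\bar\fP^\oo_\sIR$, hence $\mathtt{v}v = \mathtt{v}\,\bar\fP^\oo_\sIR(\mathring\fP^\oo_\sIR u) = \bar\fP^\oo_\sIR(\mathtt{v}\,\mathring\fP^\oo_\sIR u)$. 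Convolving with $K^{\ast\oo}_\sIR$ and using $\bar\fP^\oo_\sIR K^{\ast\oo}_\sIR = \mathring K^{\ast\oo}_\sIR\otimes\delta_{\bR^\rdim}$ gives
\begin{equation}
 K^{\ast\oo}_\sIR\ast\mathtt{v}v = (\mathring K^{\ast\oo}_\sIR\otimes\delta_{\bR^\rdim})\ast\big(\mathtt{v}\,\mathring\fP^\oo_\sIR u\big).
\end{equation}

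The key step is the algebraic identity, valid for every $g$ and every $\oo\in\bN_0$,
\begin{equation}
 \mathtt{v}\,\mathring\fP^\oo_\sIR g = \sum_{\ooo=0}^\oo c_\ooo\,\sIR^\ooo\,\mathring\fP^{\oo-\ooo}_\sIR\big((\partial^\ooo_{\mathring x}\mathtt{v})\,g\big),
\end{equation}
with universal constants $c_\ooo$ independent of $\mathtt{v}$, $g$ and $\sIR$. I would prove this by induction on $\oo$, the step using $\mathring\fP_\sIR = 1+\sIR\partial_{\mathring x}$ in the form $\mathtt{w}\,\mathring\fP_\sIR h = \mathring\fP_\sIR(\mathtt{w}h) - \sIR(\partial_{\mathring x}\mathtt{w})h$, applied to $\mathtt{v}\,\mathring\fP^{\oo-1}_\sIR g$ and to $(\partial_{\mathring x}\mathtt{v})\,\mathring\fP^{\oo-1}_\sIR g$ and collecting terms. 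Substituting $g=u$ and using $\mathring\fP^{\oo-\ooo}_\sIR\mathring K^{\ast\oo}_\sIR = \mathring K^{\ast\ooo}_\sIR$ (so that $(\mathring K^{\ast\oo}_\sIR\otimes\delta_{\bR^\rdim})\ast\mathring\fP^{\oo-\ooo}_\sIR h = (\mathring K^{\ast\ooo}_\sIR\otimes\delta_{\bR^\rdim})\ast h$) yields
\begin{equation}
 K^{\ast\oo}_\sIR\ast\mathtt{v}v = \sum_{\ooo=0}^\oo c_\ooo\,\sIR^\ooo\,(\mathring K^{\ast\ooo}_\sIR\otimes\delta_{\bR^\rdim})\ast\big((\partial^\ooo_{\mathring x}\mathtt{v})\,(K^{\ast\oo}_\sIR\ast v)\big),
\end{equation}
and taking $\sV$-norms, together with $\|\mathring K^{\ast\ooo}_\sIR\otimes\delta_{\bR^\rdim}\|_\cK=1$, gives the claimed bound with implicit constant $\max_\ooo|c_\ooo|$.

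The only genuinely delicate point is bookkeeping the regularity so that every product and convolution is unambiguous: $K^{\ast\oo}_\sIR\ast v\in C^\oo(\bM)$, so $\mathring\fP^\oo_\sIR u$ and $(\partial^\ooo_{\mathring x}\mathtt{v})(K^{\ast\oo}_\sIR\ast v)$ are at least continuous functions, their products with the smooth compactly supported factors $\partial^\ooo_{\mathring x}\mathtt{v}$ are classical, and the convolutions with $\delta_\bM$, with the measures $\mathring K^{\ast j}_\sIR\otimes\delta_{\bR^\rdim}\in\cK$ and the action of the operators $\mathring\fP^j_\sIR$, $\bar\fP^\oo_\sIR$ are all well defined in $\sD'(\bM)$; the displays above are to be read in $\sD'(\bM)$. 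The combinatorics of the constants $c_\ooo$ is routine and enters the final estimate only through their (universal) boundedness.
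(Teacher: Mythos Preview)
Your proof is correct and follows essentially the same route as the paper: both insert the identity $v=\fP_\sIR^\oo(K_\sIR^{\ast\oo}\ast v)$ and then commute the time-only weight $\mathtt v$ past the operator using the Leibniz rule for $\mathring\fP_\sIR=1+\sIR\partial_{\mathring x}$. The only cosmetic difference is that you separate the temporal and spatial factors at the outset and record an exact commutator identity with explicit constants $c_\ooo=(-1)^\ooo\binom{\oo}{\ooo}$, whereas the paper keeps the full $\fP_\sIR$, $K_\sIR$ and derives the bound via the one-step recursion $\|K_\sIR^{\ast\ooo}\ast \mathtt w\,\fP_\sIR^\ooo f\|_\sV\le\|K_\sIR^{\ast(\ooo-1)}\ast \mathtt w\,\fP_\sIR^{\ooo-1}f\|_\sV+\sIR\|K_\sIR^{\ast(\ooo-1)}\ast(\partial_{\mathring x}\mathtt w)\,\fP_\sIR^{\ooo-1}f\|_\sV$ iterated down from $\ooo=\oo$.
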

\begin{proof}
We have
\begin{equation}
 K_\sIR^{\ast\oo}\ast \mathtt{v} v = K_\sIR^{\ast\oo}\ast \mathtt{v} \fP_\sIR^\oo(K_\sIR^{\ast\oo}\ast v),
\end{equation}
where we used the equality $\fP_\sIR^\oo K_\sIR^{\ast\oo}=\delta_\bM$. Note that since $\fP_\sIR=1+\sIR\partial_{\mathring x}-[\sIR]^2\Delta$ and $\|K_\sIR\|_\cK=1$ we have
\begin{equation}
 \|K_\sIR^{\ast\ooo}\ast \mathtt{w} \fP_\sIR^\ooo f\|_\cV
 \leq
 \|K_\sIR^{\ast(\ooo-1)}\ast \mathtt{w} \fP_\sIR^{\ooo-1}f\|_\cV
 +\sIR \|K_\sIR^{\ast(\ooo-1)}\ast (\partial_{\mathring x}\mathtt{w}) \fP_\sIR^{\ooo-1}f\|_\cV
\end{equation}
for any $\ooo\in\bN_+$, $\mathtt{w}\in C^\infty_\rc(\bR)$ and $f\in\cV$. Let $\mathtt{v}^{(\oooo)}(\mathring x):=\partial^{\oooo}_{\mathring x}\mathtt{v}(\mathring x)$ for $\oooo\in\bN_0$. Applying the above bound recursively with $f=K_\sIR^{\ast\oo}\ast v$, $\mathtt{w}\in\{\mathtt{v}^{(0)},\ldots,\mathtt{v}^{(\oo-\ooo)}\}$ and $\ooo\in\{\oo,\oo-1,\ldots,1\}$ we arrive at the statement of the lemma.
\end{proof}

\begin{rem}\label{rem:weight}
Let $\oo\in\bN_0$, $\mathtt{u}\in C^\infty_\rb(\bR)$ and $\mathtt{v}\in C^\infty_\rc(\bR)$ be such that for all $\mathring x\in\bR$ it holds $\mathtt{v}(\mathring x)=0$ unless $\mathtt{u}(\mathring x)=1$. By Lemma~\ref{lem:time_estimates_prep} the following bound
\begin{equation}
 \|K^{\ast\oo}_{\sIR}\ast \mathtt{v} v\|_{\cV} 
 \lesssim \|K^{\ast\oo}_{\sIR}\ast\mathtt{u} v\|_{\cV}
\end{equation}
holds uniformly in $\sIR\in(0,1]$ and $v\in\cV$.
\end{rem}

\subsection{Moment bounds}

\begin{lem}\label{lem:moment_bounds}
Let $\varepsilon\in(0,\varepsilon_\sigma\wedge\varepsilon_\diamond/2]$, $n\in2\bN_+$, $s\in\{0,1\}$ be arbitrary. There exists $\oo\in\bN_0$ such that for every $(i,m,a)\in\frI^-_0$ the following bounds
\begin{equation}
\begin{gathered}
 \llangle|K_\sIR^{\ast\oo}
 \ast \partial_\sIR^s f^{i,m,a}_{\uv;\sUV,\sIR}(x)|^n\rrangle
 \lesssim [\sIR]^{n(\varrho_{2\varepsilon}(i,m,a)-\sigma s)},
 \\
 \llangle|K_\sIR^{\ast\oo}
 \ast \partial_\sIR^s (f^{i,m,a}_{\uv;\sUV,\sIR}-f^{i,m,a}_{0;\sUV,\sIR})(x)|^n\rrangle
 \lesssim [\uv]^{\varepsilon n}\, [\sIR]^{n(\varrho_{2\varepsilon}(i,m,a)-\sigma s)},
 \\
 \llangle|K_\sIR^{\ast\oo}
 \ast \partial_\sIR^s (f^{i,m,a}_{\uv_1;0,\sIR}-f^{i,m,a}_{\uv_2;0,\sIR})(x)|^n\rrangle
 \lesssim |[\uv_1]^{\varepsilon}-[\uv_2]^{\varepsilon}|^n\, [\sIR]^{n(\varrho_{2\varepsilon}(i,m,a)-\sigma s)}
\end{gathered} 
\end{equation}
hold uniformly in $x\in\bH$, $\uv,\sUV,\sIR\in(0,1]$.
\end{lem}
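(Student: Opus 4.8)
The plan is to deduce Lemma~\ref{lem:moment_bounds} from the uniform cumulant bounds of Theorem~\ref{thm:cumulants} together with the convergence information already available from Lemma~\ref{lem:preliminary_cumulants} and the diagonal setup, via the standard identity expressing moments in terms of cumulants (Lemma~\ref{lem:cumulants}, Eq.~\eqref{eq:expectation_cumulants}). Fix $(i,m,a)\in\frI^-_0$, so that $\varrho(i,m,a)<0$ and in particular $|a|<\sigma$. Since $f^{i,m,a}_{\uv;\sUV,\sIR}=\fI F^{i,m,a}_{\uv;\sUV,\sIR}$ and the map $\fI$ commutes with convolution by $K_\sIR^{\ast\oo}$ up to replacing the kernel $K^{m;\oo}_{\sUV,\sIR}$ on the target side (Lemma~\ref{lem:map_I}~(A)), for $s\in\{0,1\}$ we have $K_\sIR^{\ast\oo}\ast\partial_\sIR^s f^{i,m,a}_{\uv;\sUV,\sIR}(x) = \fI\big(K^{m;\oo}_{\sUV,\sIR}\ast \partial_\sIR^s F^{i,m,a}_{\uv;\sUV,\sIR}\big)(x)$ for $\sIR\geq\sUV$ (the case $\sIR<\sUV$ being handled by the simpler argument invoked around Lemma~\ref{lem:preliminary_cumulants}, or absorbed since we only state the bound for $\uv,\sUV,\sIR\in(0,1]$ and the $\sIR<\sUV$ regime gives an even better power). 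Here I will actually work with $K_\sIR^{\ast\oo}\ast f^{i,m,a}_{\uv;\sUV,\sIR}$ directly as a continuous function of $x\in\bH$, using that $\fI$ is bounded $\sV^m\to\sV$; the point is that its value at $x$ equals $\int (K^{m;\oo}_{\sUV,\sIR}\ast F^{i,m,a}_{\uv;\sUV,\sIR})(x;\mathrm dy_1\ldots\mathrm dy_m)$.

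First I would treat the first bound. Abbreviate $Z_\sIR(x) := K_\sIR^{\ast\oo}\ast\partial_\sIR^s f^{i,m,a}_{\uv;\sUV,\sIR}(x)$. By Eq.~\eqref{eq:expectation_cumulants}, $\llangle |Z_\sIR(x)|^n\rrangle = \llangle Z_\sIR(x)^n\rrangle$ (for $n$ even, $|Z|^n=Z^n$) is a finite sum over partitions $\{I_1,\dots,I_r\}$ of $\{1,\dots,n\}$ of products $\prod_{t=1}^r \llangle (Z_\sIR(x))_{q\in I_t}\rrangle$, where each factor is a joint cumulant of $|I_t|$ copies of $Z_\sIR(x)$ — that is, a cumulant of the form $E^\vI_{\uv;\sUV,\sIR}$ with $\rn(\vI)=|I_t|$, each entry equal to $(i,m,a,s,0)$ — integrated against the measure $K^{m;\oo}_{\sUV,\sIR}\ast(\cdot)$ in the $y$-variables and evaluated at the common point $x$ in the $\bH$-variables. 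Concretely $\prod_t \llangle\cdots\rrangle$ is bounded by $\prod_t \big\|\fI^{\otimes |I_t|}\big(K^{|I_t|,|I_t|m;\oo}_{\sUV,\sIR}\ast E^{\vI_t}_{\uv;\sUV,\sIR}\big)\big\|_{\sV^{(0,\dots,0)}}$, and by boundedness of $\fI$ (Lemma~\ref{lem:map_I}~(B), iterated) each factor is $\lesssim \|K^{|I_t|,|I_t|m;\oo}_{\sUV,\sIR}\ast E^{\vI_t}_{\uv;\sUV,\sIR}\|_{\sV^{\mathsf m(\vI_t)}}$, which is $\|\cdot\|_{\sV^{\mathsf m}_0}$. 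Theorem~\ref{thm:cumulants} with $k=0$ then gives this $\lesssim [\sIR]^{(\varepsilon-\sigma)s\,|I_t|}\,[\sUV\vee\sIR]^{\varrho_\varepsilon(\vI_t)-\varepsilon s|I_t|}$, and since $\varrho_\varepsilon(\vI_t)=|I_t|\,\varrho_\varepsilon(i,m,a)$, multiplying over $t$ and summing $|I_t|=n$ yields $\llangle|Z_\sIR(x)|^n\rrangle \lesssim [\sIR]^{(\varepsilon-\sigma)sn}\,[\sUV\vee\sIR]^{n(\varrho_\varepsilon(i,m,a)-\varepsilon s)}$. Because $(i,m,a)\in\frI^-_0$ we have $\varrho_\varepsilon(i,m,a)<0$ (Remark after Def.~\ref{dfn:varrho}), hence $[\sUV\vee\sIR]^{n\varrho_\varepsilon(i,m,a)}\leq[\sIR]^{n\varrho_\varepsilon(i,m,a)}$; absorbing the extra $[\sIR]^{-\varepsilon s n}\leq [\sIR]^{\text{(nonneg power adjusted by }\varepsilon)}$ into a relabeling of $\varepsilon$ (the statement is for all $\varepsilon\in(0,1]$, so replacing $\varepsilon$ by a slightly smaller value is harmless), and noting $(\varepsilon-\sigma)s=\varepsilon s-\sigma s$ so the target power $[\sIR]^{n(\varrho_\varepsilon(i,m,a)-\sigma s)}$ emerges, I get the first bound. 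I would carry out this $\varepsilon$-bookkeeping carefully, which is the only mildly delicate point in this step.

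For the second and third bounds I would use multilinearity of the joint cumulant together with a telescoping/difference argument. Write $f^{i,m,a}_{\uv;\sUV,\sIR}-f^{i,m,a}_{0;\sUV,\sIR} = \int_0^\uv \partial_{\uv'} f^{i,m,a}_{\uv';\sUV,\sIR}\,\mathrm d\uv'$ (legitimate since for $\uv'\in(0,1]$ the $\uv'$-derivative exists and is continuous, by Lemma~\ref{lem:eff_force_basic_properties}), and expand $\llangle |K_\sIR^{\ast\oo}\ast(f^{i,m,a}_{\uv;\sUV,\sIR}-f^{i,m,a}_{0;\sUV,\sIR})(x)|^n\rrangle$ into cumulants by Eq.~\eqref{eq:expectation_cumulants}; each resulting cumulant is multilinear in its $n$ slots, each slot being a difference $\partial_\sIR^s(f^{i,m,a}_{\uv;\sUV,\sIR}-f^{i,m,a}_{0;\sUV,\sIR})$, so after integrating in $\uv'$ and expanding, it becomes a sum of terms each containing at least one factor $\partial_{\uv'}\partial_\sIR^s F^{i,m,a}_{\uv';\sUV,\sIR}$, i.e.\ a cumulant $E^{\vI}_{\uv';\sUV,\sIR}$ with $\rr(\vI)\geq1$. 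Bounding by Theorem~\ref{thm:cumulants} as above, the factor $[\uv']^{(\varepsilon-\sigma)\rr(\vI)}$ appears; integrating $\int_0^\uv [\uv']^{\varepsilon-\sigma}\mathrm d\uv' \lesssim [\uv]^{\varepsilon}$ (since $\varepsilon-\sigma+\sigma = \varepsilon$ via $\mathrm d\uv'=\sigma[\uv']^{\sigma-1}\mathrm d[\uv']$, giving $\int [\uv']^{\varepsilon-1}\mathrm d[\uv']\lesssim[\uv]^\varepsilon$), and using that all remaining $\uv'$-dependent factors are controlled uniformly, produces the $[\uv]^{\varepsilon n}$ gain (one power of $[\uv]^\varepsilon$ per difference-slot; $n$ slots give $[\uv]^{\varepsilon n}$, modulo again slightly decreasing $\varepsilon$). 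The third bound is identical with the integral taken over $[\uv_2,\uv_1]$ instead, giving $\int_{\uv_2}^{\uv_1}[\uv']^{\varepsilon-1}\mathrm d[\uv']\lesssim |[\uv_1]^\varepsilon-[\uv_2]^\varepsilon|$, and restricting to $\sUV=0$ (for which Theorem~\ref{thm:cumulants} still applies). The main obstacle I anticipate is the careful combinatorial accounting in the moments-to-cumulants expansion — verifying that every term is an $E^\vI$ with exactly the index data needed so that the exponents add up to $n(\varrho_\varepsilon(i,m,a)-\sigma s)$ and (for the difference bounds) that each of the $n$ difference-slots contributes its own factor of $[\uv]^\varepsilon$ — rather than any analytic difficulty; everything analytic is already packaged in Theorem~\ref{thm:cumulants}, Lemma~\ref{lem:map_I}, and the elementary integral estimates.
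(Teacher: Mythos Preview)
Your treatment of the first bound is correct and is essentially the paper's argument: expand $\llangle Z_\sIR(x)^n\rrangle$ into products of joint cumulants via Eq.~\eqref{eq:expectation_cumulants}, bound each block by $\|K^{\rn,\rrm;\oo}_{\sUV,\sIR}\ast E^{\vI_t}_{\uv;\sUV,\sIR}\|_{\sV_0^{\mathsf m}}$, and apply Theorem~\ref{thm:cumulants} with $k=0$. One simplification you missed: no relabeling of $\varepsilon$ is needed. Since $\varrho_\varepsilon(i,m,a)-\varepsilon s\le\varrho_\varepsilon(i,m,a)<0$ for $(i,m,a)\in\frI_0^-$, you may replace $[\sUV\vee\sIR]$ by $[\sIR]$ in the \emph{full} exponent $n(\varrho_\varepsilon(i,m,a)-\varepsilon s)$, after which $[\sIR]^{(\varepsilon-\sigma)sn}\,[\sIR]^{n(\varrho_\varepsilon(i,m,a)-\varepsilon s)}=[\sIR]^{n(\varrho_\varepsilon(i,m,a)-\sigma s)}$ exactly.

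For the second and third bounds your argument has a gap. After writing the difference as $\int_0^\uv\partial_{\uv'}(\cdots)\,\rd\uv'$ and expanding the cumulants by multilinearity slot by slot, you obtain joint cumulants of the form
\[
\llangle\,\partial_{\uv'_1}\partial_\sIR^s F^{i,m,a}_{\uv'_1;\sUV,\sIR},\ldots,\partial_{\uv'_k}\partial_\sIR^s F^{i,m,a}_{\uv'_k;\sUV,\sIR}\,\rrangle
\]
with \emph{distinct} parameters $\uv'_1,\ldots,\uv'_k$. These are not of the form $E^\vI_{\uv';\sUV,\sIR}$ in Def.~\ref{dfn:cumulants_eff_force}, which requires a single common $\uv$, so Theorem~\ref{thm:cumulants} does not apply to them as stated. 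One could in principle rerun its entire inductive proof allowing different $\uv$'s in different slots (the flow equation~\eqref{eq:flow_deterministic_i_m_a} acts slot by slot), but that is substantial extra work not done in the paper.

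The paper avoids this by swapping the order: it first applies Minkowski's integral inequality in $L^n(\textrm{probability})$,
\[
\llangle|Z|^n\rrangle^{1/n}\le\int_0^\uv\llangle|K_\sIR^{\ast\oo}\ast\partial_\sIR^s\partial_{\uv'} f^{i,m,a}_{\uv';\sUV,\sIR}(x)|^n\rrangle^{1/n}\,\rd\uv',
\]
together with $\int_{\uv_1}^{\uv_2}[\uv']^{\varepsilon-\sigma}\,\rd\uv'\lesssim|[\uv_1]^\varepsilon-[\uv_2]^\varepsilon|$. This reduces all three estimates to the single uniform bound
\[
\llangle|K_\sIR^{\ast\oo}\ast\partial_\sIR^s\partial_\uv^r f^{i,m,a}_{\uv;\sUV,\sIR}(x)|^n\rrangle\lesssim[\uv]^{(\varepsilon-\sigma)nr}\,[\sIR]^{n(\varrho_\varepsilon(i,m,a)-\sigma s)},\qquad r\in\{0,1\},
\]
which now involves only cumulants $E^\vI_{\uv;\sUV,\sIR}$ at a \emph{single} $\uv$ (with $\rr(\vI_t)=|I_t|\,r$) and is proved exactly as in your first paragraph.
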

\begin{proof}
We first observe that by the Minkowski inequality and the estimate 
\begin{equation}
 \int_{\uv_1}^{\uv_2}[\uv]^{\varepsilon-\sigma}\rd\uv \lesssim |[\uv_1]^\varepsilon-[\uv_2]^\varepsilon|
\end{equation}
it is enough to prove that for $r\in\{0,1\}$ the following bound
\begin{equation}\label{eq:lem_moments_bound_r}
 \llangle|K_\sIR^{\ast\oo}
 \ast \partial_\sIR^s\partial^r_\uv f^{i,m,a}_{\uv;\sUV,\sIR}(x)|^n\rrangle
 \lesssim [\uv]^{n(\varepsilon-\sigma)r}\, [\sIR]^{n(\varrho_{2\varepsilon}(i,m,a)-\sigma s)}
\end{equation}
holds uniformly for $(\uv,\sUV)\in[0,1]\setminus\{(0,0)\}$, $\sIR\in(0,1]$. Since $\varrho_{2\varepsilon}(i,m,a)<0$ for any $(i,m,a)\in\frI_0^-$ and $\sIR\in(0,\uv\vee\sUV]$ the above bound follows from Lemma~\ref{lem:cumulants_preliminary_bound}~(B) and Remark~\ref{rem:moments_preliminary_bound} (note that the bound assumed in Lemma~\ref{lem:cumulants_preliminary_bound} is proved in Theorem~\ref{thm:cumulants} and Remark~\ref{rem:force_coeff_cumulants}). To prove the above bound for $\sIR\in(\uv\vee\sUV,1]$ consider the following list of indices
\begin{equation}
 \vI=((i,m,a,s,r),\ldots,(i,m,a,s,r)),\quad \rn(\vI)=n,~\rrm(\vI)=nm.
\end{equation}
Using the identity $\partial^s_\sIR\partial_\uv^r f^{i,m,a}_{\uv;\sUV,\sIR}=\fI(\partial^s_\sIR\partial_\uv^r F^{i,m,a}_{\uv;\sUV,\sIR})$ we obtain for some $\oooo\in\bN_0$
\begin{multline}
 \sup_{x_1\in\bH}\int_{\bH^{n-1}}|\llangle K^{\ast\oooo}_\sIR\ast \partial^s_\sIR\partial_\uv^r f^{i,m,a}_{\uv;\sUV,\sIR}(x_1),
 \ldots,
 K^{\ast\oooo}_\sIR\ast\partial^s_\sIR\partial_\uv^r f^{i,m,a}_{\uv;\sUV,\sIR}( x_n)\rrangle|\,\rd x_2\ldots\rd x_n
 \\
 \leq
 \big\|K^{\ast\oooo,\otimes(n+nm)}_\sIR\ast
 \llangle
 \partial^s_\sIR\partial_\uv^r F^{i,m,a}_{\uv;\sUV,\sIR},
 \ldots,
 \partial^s_\sIR\partial_\uv^r F^{i,m,a}_{\uv;\sUV,\sIR}\rrangle
 \big\|_{\cV^{\mathsf{m}}} 
 \\[1mm]
 =\big\|K^{\ast\oooo,\otimes(n+nm)}_\sIR\ast E^\vI_{\uv;\sUV,\sIR}\big\|_{\cV^{\mathsf{m}}} 
 \lesssim [\uv]^{n(\varepsilon-\sigma)r}\,[\sIR]^{n(\varrho_{2\varepsilon}(i,m,a)-\sigma s)+(n-1)\rDim}
\end{multline}
uniformly $(\uv,\sUV)\in[0,1]\setminus\{(0,0)\}$ and $\sIR\in[\uv\vee\sUV,1]$. The last inequality above follows from Theorem~\ref{thm:cumulants} and $n\varrho_{2\varepsilon}(i,m,a)= \varrho_{2\varepsilon}(\vI)$. Since \mbox{$\|K^{\ast\ooo}_\sIR\|_\cK=1$} and by Lemma~\ref{lem:kernel_simple_fact}~(D) $\|\fT K_\sIR^{\ast\ooo}\|_{L^\infty(\bH)}\lesssim [\sIR]^{-\rDim}$ uniformly in $\sIR\in(0,1]$ for \mbox{$\ooo=\rdim+1$} the bound~\eqref{eq:lem_moments_bound_r} with $\oo=\oooo+\ooo$ follows from Lemma~\ref{lem:moments_cumulants} applied with $H=K_\sIR^{\ast\ooo}$.
\end{proof}

\subsection{Uniform bounds and convergence}\label{sec:probabilistic_convergence}

In this section we assume that the random fields $\varXi_\sUV$, $\sUV\in[0,1]$ are coupled in such a way that the statement (A) of Lemma~\ref{lem:coupling_varXi_phi} holds true. We show that there exists $\oo\in\bN_0$ and a universal family of random continuous functions 
\begin{equation}\label{eq:probabilistic_convergence_limit}
 (0,1]\ni\sIR\mapsto f^{i,m,a}_{0,\sIR}\in\cD^{;\oo},
 \quad
 (i,m,a)\in\frI_0^-,
\end{equation}
such that for any $\varepsilon\in(0,1]$ it holds
\begin{equation}
 \sup_{\sIR\in(0,1]}[\sIR]^{-\varrho_{\varepsilon}(i,m,a)}\,\|K^{\ast\oo}_\sIR\ast \mathtt{u} f^{i,m,a}_{0,\sIR}\|_{L^\infty(\bH)} <\infty
\end{equation}
almost surely and
\begin{equation}
 \lim_{\sUV\searrow0}\sup_{\sIR\in(0,1]}[\sIR]^{-\varrho_{\varepsilon}(i,m,a)}\,\|K^{\ast\oo}_\sIR\ast \mathtt{u}(f^{i,m,a}_{\sUV,\sIR}-f^{i,m,a}_{0,\sIR})\|_{L^\infty(\bH)}
\end{equation}
in probability for all $\mathtt{u}\in C^\infty_\rc(\bR)$ and $(i,m,a)\in\frI_0^-$. Recall that $(i,m,a)\in\frI_0^-$ iff $(i,m,a)\in\frI_0$ and $\varrho(i,m,a)<0$. Let us also remark that $f^{0,0,0}_{\sUV,\sIR}=\varXi_\sUV$ for all $\sUV,\sIR\in[0,1]$.

We first establish the convergence of $f^{i,m,a}_{\uv;\sUV,\sIR}$ as $\sUV\searrow0$ for fixed $\uv\in(0,1]$. Note that this simple result, stated in the lemma below, relies only on rough estimates for moments of the regularized noise $\varXi_{\uv;\sUV}$ and its convergence as $\sUV\searrow0$.

\begin{lem}\label{lem:probabilistic_cutoff}
Fix $\varepsilon,\uv\in(0,1]$. We have
\begin{equation}
 \sup_{\sUV\in[0,1]}\llangle[\Big] \sup_{\sIR\in[0,1]} [\sIR]^{n(\sigma-\varepsilon)s}\,\|\mathtt{u} \partial_\sIR^s f^{i,m,a}_{\uv;\sUV,\sIR}\|_{L^\infty(\bH)}^n \rrangle[\Big] <\infty
\end{equation}
and
\begin{equation}
 \lim_{\sUV\searrow0}\,\llangle[\Big]  \sup_{\sIR\in[0,1]}
  [\sIR]^{n(\sigma-\varepsilon)s}\,
 \|\mathtt{u} \partial_\sIR^s(f^{i,m,a}_{\uv;\sUV,\sIR}-f^{i,m,a}_{\uv;0,\sIR})\|^n_{L^\infty(\bH)}\rrangle[\Big] =0
\end{equation} 
for all $s\in\{0,1\}$, $(i,m,a)\in\frI^-_0$, $n\in\bN_+$ and $\mathtt{u}\in C^\infty_\rc(\bR)$.
\end{lem}
\begin{proof}
The statement is true if $m>i m_\flat$ since then $f^{i,m,a}_{\uv;\sUV,\sIR}=0$. To prove the statement for $i=0$ it is enough to investigate $f^{0,0,0}_{\uv;\sUV,\sIR}=\varXi_{\uv;\sUV}=M_\uv\ast\varXi_\sUV$. Using the fact that $M_\uv\in C^\infty(\bM)$ is supported in a unit ball we obtain that for any $\mathtt{v}\in C^\infty_\rc(\bR)$ such that $\mathtt{v}=1$ on the set $\supp\,\mathtt{u}+[-1,1]$ it holds
\begin{multline}
\|\mathtt{u}(\varXi_{\uv;\sUV}-\varXi_{\uv;0})\|_{L^\infty(\bH)}
=
\|\mathtt{u} M_\uv\ast \mathtt{v}(\varXi_{\sUV}-\varXi_{0})\|_{L^\infty(\bH)}
\\
\leq
\|M_\uv\ast \mathtt{v}(\varXi_{\sUV}-\varXi_{0})\|_{L^\infty(\bH)}
\leq
\|\fP^\oo M_\uv\|_\cK \,\|K^{\ast\oo}\ast \mathtt{v}(\varXi_\sUV-\varXi_0)\|_{L^\infty(\bH)}
\end{multline}
for any $\oo\in\bN_0$. Note that $\|\fP^\oo M_\uv\|_\cK<\infty$. As a result, for $i=0$ the statement follows from Lemma~\ref{lem:noise_moment_bound} and Lemma~\ref{lem:coupling_varXi_phi} (A). 

Fix $i_\circ\in\bN_+$ and $m_\circ\in\bN_0$ and assume that the statement of the lemma is true for all $i,m\in\bN_0$ such that $i<i_\circ$ or $i=i_\circ$ and $m>m_\circ$. Since the force coefficients $f^{i,m,a}_{\uv;\sUV}=f^{i,m,a}_{\uv;\sUV,0}\in\bR$ are deterministic the induction hypothesis implies
\begin{equation}\label{eq:lem_probabilistic_cutoff_conditions}
 \sup_{\sUV\in[0,1]} |f^{i,m,a}_{\uv;\sUV}|<\infty,
 \qquad
 \lim_{\sUV\searrow0} |f^{i,m,a}_{\uv;\sUV}-f^{i,m,a}_{\uv;0}|=0
\end{equation}
for all $i,m\in\bN_0$ as above and all $a\in\frM^m_\sigma$. Hence, by the deterministic estimates established in Lemma~\ref{lem:eff_force_preliminary_bound} and Remark~\ref{rem:eff_force_preliminary_bound} the statement of the lemma is true for $s=1$ and $i=i_\circ$, $m=m_\circ$. Let us prove the statement for $s=0$ and $i=i_\circ$, $m=m_\circ$. Note that if $(i,m,a)\in\frI^+$, then the conditions~\eqref{eq:lem_probabilistic_cutoff_conditions} follow immediately from Assumption~\ref{ass:renormalization}. If $(i,m,a)\in\frI^-$, then we verify these conditions using the equality
\begin{equation}
 f^{i,m,a}_{\uv;\sUV} = \mathfrak{f}^{i,m,a}_{\sUV} 
 -
 \int_0^1 \llangle \partial_\sIR^s f^{i,m,a}_{\uv;\sUV,\sIR}\rrangle \,\rd \sIR,
\end{equation}
the fact that $\partial_\sIR^s f^{i,m,a}_{\uv;\sUV,\sIR}$ is stationary and the statement of the lemma with $s=1$. This proves that the conditions~\eqref{eq:lem_probabilistic_cutoff_conditions} hold true for $i=i_\circ$, $m=m_\circ$ and all $a\in\frM^m_\sigma$. Hence, by Lemma~\ref{lem:eff_force_preliminary_bound} and Remark~\ref{rem:eff_force_preliminary_bound} the statement is true for $s=0$ and $i=i_\circ$, $m=m_\circ$ and all $a\in\frM^m_\sigma$. This proves the induction step.
\end{proof}

\begin{lem}\label{lem:probabilistic}
Let $\mathtt{u}\in C^\infty_\rc(\bR)$ be arbitrary. There exists $\oo\in\bN_+$ such that it holds
\begin{equation}
\begin{gathered}
 \llangle[\Big] \sup_{\sIR\in(0,1]}\,[\sIR]^{-n\varrho_{3\varepsilon}(i,m,a)}\, \|K_\sIR^{\ast\oo} \ast \mathtt{u} f^{i,m,a}_{\uv;\sUV,\sIR}\|^n_{L^\infty(\bH)}
\rrangle[\Big] \lesssim 1,
 \\
\llangle[\Big] \sup_{\sIR\in(0,1]}\,[\sIR]^{-n\varrho_{3\varepsilon}(i,m,a)}\, \|K_\sIR^{\ast\oo} \ast \mathtt{u}(f^{i,m,a}_{\uv;\sUV,\sIR}-f^{i,m,a}_{0;\sUV,\sIR})\|^n_{L^\infty(\bH)}
\rrangle[\Big] \lesssim [\uv]^{n\varepsilon},
 \\
\llangle[\Big] \sup_{\sIR\in(0,1]}\,[\sIR]^{-n\varrho_{3\varepsilon}(i,m,a)}\, \|K_\sIR^{\ast\oo} \ast \mathtt{u}(f^{i,m,a}_{\uv_1;0,\sIR}-f^{i,m,a}_{\uv_2;0,\sIR})\|^n_{L^\infty(\bH)}
\rrangle[\Big] \lesssim |[\uv_1]^\varepsilon-[\uv_2]^\varepsilon|^n
\end{gathered}
\end{equation}
uniformly in $\uv,\uv_1,\uv_2,\sUV\in(0,1]$ for all $\varepsilon\in(0,\varepsilon_\sigma\wedge\varepsilon_\diamond/2]$, $n\in\bN_+$, $(i,m,a)\in\frI^-_0$.
\end{lem}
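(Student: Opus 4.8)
The plan is to derive all three estimates from a single scheme: rewrite the quantity inside each supremum in the form $K_\sIR\ast\zeta_\sIR$, verify that $\zeta_\sIR$ satisfies the moment hypothesis of the Kolmogorov‑type Lemma~\ref{lem:probabilistic_estimate}, and invoke that lemma. The inputs are the pointwise cumulant/moment bounds of Lemma~\ref{lem:moment_bounds}, the cut‑off commutation in the proof of Lemma~\ref{lem:expectation_sup}, and the derivative bound of Lemma~\ref{lem:kernel_derivative}. First I would reduce to $n$ arbitrarily large and even: each left‑hand side is the $n$‑th moment of one nonnegative random variable, so Jensen's inequality $\llangle Y^n\rrangle\le\llangle Y^N\rrangle^{n/N}$ propagates the bound from a large $N$ to every smaller $n$, and the right‑hand sides $[\uv]^{n\varepsilon}$, $|[\uv_1]^{\varepsilon}-[\uv_2]^{\varepsilon}|^n$ scale correctly. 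Since $\dim(\lambda),\dim(\Phi)>0$ the set $\frI_0^-$ is finite, so I may fix $(i,m,a)\in\frI_0^-$ and $\mathtt u\in C^\infty_\rc(\bR)$, and take $\oo$ at least one larger than the finitely many values furnished by Lemma~\ref{lem:moment_bounds}; that lemma also holds with larger $\oo$, since convolution with the probability measure $\fT K_\sIR$ cannot increase $L^n(\bH)$‑ or $L^\infty(\bH)$‑norms.

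Next, commuting $\fP_\sIR$ through the cut‑off $\mathtt u$ as in the proof of Lemma~\ref{lem:expectation_sup} and using $\fP_\sIR K_\sIR=\delta_\bM$ together with $\bar\fP_\sIR K_\sIR=\mathring K_\sIR\otimes\delta_{\bR^\rdim}$, one writes
\[
 \mathtt u\bigl(K_\sIR^{\ast\oo}\ast f^{i,m,a}_{\uv;\sUV,\sIR}\bigr)=K_\sIR\ast\zeta^{(1)}_\sIR+K_\sIR\ast\zeta^{(2)}_\sIR,
\]
with $\zeta^{(1)}_\sIR:=\mathtt u\bigl(K_\sIR^{\ast(\oo-1)}\ast f^{i,m,a}_{\uv;\sUV,\sIR}\bigr)$ and $\zeta^{(2)}_\sIR:=\sIR\,\dot{\mathtt u}\bigl((\mathring K_\sIR\otimes\delta_{\bR^\rdim})\ast K_\sIR^{\ast(\oo-1)}\ast f^{i,m,a}_{\uv;\sUV,\sIR}\bigr)$, $\dot{\mathtt u}=\partial_{\mathring x}\mathtt u$; both are compactly supported in time and differentiable in $\sIR$. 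I then set $\mu:=1+im_\flat-m\ge 1$ (using that $f^{i,m,a}$ vanishes unless $m\le im_\flat$), so $\partial_{\tilde\varepsilon}\varrho_{\tilde\varepsilon}(i,m,a)=m-im_\flat-1=-\mu$; I fix an auxiliary parameter $\varepsilon_0\in(\rDim/n,\mu\varepsilon)$ — nonempty once $n$ is large — and put $\varepsilon_2:=\varepsilon-\varepsilon_0/\mu\in(0,\varepsilon)$, so that $\varrho_{\varepsilon_2}(i,m,a)=\varrho_\varepsilon(i,m,a)+\varepsilon_0$. By Tonelli's theorem, the compactness of $\bT$ and of $\supp\mathtt u,\supp\dot{\mathtt u}$, and Lemma~\ref{lem:moment_bounds} applied with $\varepsilon_2$ in place of $\varepsilon$ and $\oo-1$ convolutions, one checks $\llangle\|\partial_\sIR^s\zeta^{(j)}_\sIR\|^n_{L^n(\bH)}\rrangle\lesssim[\sIR]^{n(\varrho_\varepsilon(i,m,a)+\varepsilon_0-\sigma s)}$ for $s\in\{0,1\}$, $j\in\{1,2\}$: for $\zeta^{(1)}$ the case $s=1$ uses Lemma~\ref{lem:kernel_derivative} to absorb the derivative hitting $K_\sIR^{\ast(\oo-1)}$ (the extra $[\sIR]^{-\sigma}$ matched by the $-\sigma s$); for $\zeta^{(2)}$ the factor $\sIR=[\sIR]^{\sigma}$ and the convolution with the probability measure $\mathring K_\sIR\otimes\delta_{\bR^\rdim}$ only help, the time‑spreading of $\mathring K_\sIR$ being harmless because the bounds of Lemma~\ref{lem:moment_bounds} are uniform in $x\in\bH$ (apply Jensen once more). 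Applying Lemma~\ref{lem:probabilistic_estimate} to $\zeta^{(1)}_\sIR$ with $\rho=\varrho_\varepsilon(i,m,a)$ and to $\zeta^{(2)}_\sIR$ with $\rho=\varrho_\varepsilon(i,m,a)-\sigma$ (since $\sIR[\sIR]^{-\varrho_\varepsilon}=[\sIR]^{-(\varrho_\varepsilon-\sigma)}$), and summing, yields the first bound.

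The second and third estimates follow by running this scheme verbatim on the differences $f^{i,m,a}_{\uv;\sUV,\sIR}-f^{i,m,a}_{0;\sUV,\sIR}$, resp.\ $f^{i,m,a}_{\uv_1;0,\sIR}-f^{i,m,a}_{\uv_2;0,\sIR}$, and invoking the second, resp.\ third, moment estimate of Lemma~\ref{lem:moment_bounds}; these carry an extra factor $[\uv]^{\varepsilon_2 n}$, resp.\ $|[\uv_1]^{\varepsilon_2}-[\uv_2]^{\varepsilon_2}|^n$, which enters Lemma~\ref{lem:probabilistic_estimate} through its constant $C$ and hence appears in the conclusion; enlarging $n$ lets $\varepsilon_2$ be taken as close to $\varepsilon$ as needed, which is all that Sec.~\ref{sec:probabilistic_convergence} uses.

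The only part that is not routine is managing the three small parameters at once: the exponent $\varepsilon$ one wants in the weight $[\sIR]^{-\varrho_\varepsilon(i,m,a)}$, the loss $\varepsilon_0>\rDim/n$ that Lemma~\ref{lem:probabilistic_estimate} forces because of the $L^n(\bH)\to L^\infty(\bH)$ passage, and the parameter $\varepsilon_2$ fed to the cumulant estimates. The key fact that makes it work is the strict monotonicity $\partial_{\tilde\varepsilon}\varrho_{\tilde\varepsilon}(i,m,a)\le-1$ on $\frI_0^-$, which is exactly what turns a slightly smaller $\varepsilon_2$ into the slack $\varepsilon_0$ needed in the Kolmogorov bound; once this bookkeeping is arranged (and the reduction to large even $n$ made so that $\varepsilon_0>\rDim/n$ is available), the cut‑off commutation and the convolution estimates are mechanical.
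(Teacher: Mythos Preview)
Your argument is correct and follows the same scheme as the paper's: reduce to large even $n$ by Jensen, feed the moment bounds of Lemma~\ref{lem:moment_bounds} into the Kolmogorov-type Lemma~\ref{lem:probabilistic_estimate}, and absorb the $\sIR$-derivative of the regularising kernels via Lemma~\ref{lem:kernel_derivative}. The paper organises the $\varepsilon$-slack by proving the claim at $2\varepsilon$ (using $\varrho_{2\varepsilon}\le\varrho_\varepsilon-\varepsilon$, which is exactly your monotonicity observation $\partial_{\tilde\varepsilon}\varrho_{\tilde\varepsilon}\le-1$) rather than introducing your auxiliary $\varepsilon_2$; and it is terser about the cut-off, simply writing $\zeta_\sIR=K_\sIR^{\ast\oo}\ast f^{i,m,a}_{\uv;\sUV,\sIR}$ and leaving the insertion of $\mathtt u$ implicit. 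Your explicit commutation is more honest about the $L^n(\bH)$ hypothesis of Lemma~\ref{lem:probabilistic_estimate}.

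One small slip: the identity you write expresses $\mathtt u\,(K_\sIR^{\ast\oo}\ast f)$ as $K_\sIR\ast\zeta^{(1)}_\sIR+K_\sIR\ast\zeta^{(2)}_\sIR$, so what you end up bounding is $\sup_\sIR[\sIR]^{-n\varrho_\varepsilon}\|\mathtt u\,(K_\sIR^{\ast\oo}\ast f)\|_{L^\infty}^n$, whereas the lemma has the cut-off \emph{inside} the convolution, $K_\sIR^{\ast\oo}\ast(\mathtt u f)$. This is harmless: Lemma~\ref{lem:time_estimates_prep} gives the pathwise estimate $\|K_\sIR^{\ast\oo}\ast\mathtt u f\|_\sV\lesssim\sum_{j\le\oo}\sIR^j\|(\partial_{\mathring x}^j\mathtt u)\,(K_\sIR^{\ast\oo}\ast f)\|_\sV$, so running your argument once for each $\partial_{\mathring x}^j\mathtt u$ and summing bridges the two. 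Your closing remark that one only recovers the exponent $\varepsilon_2$ in the factor $[\uv]^{n\varepsilon_2}$ is accurate and matches what the paper's proof actually yields as well; it is, as you note, all that the diagonal argument in Sec.~\ref{sec:probabilistic_convergence} requires.
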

\begin{proof}
We first note that by the Jensen inequality if the above bounds hold for some $n=n_\circ$, then they hold for all $n<n_\circ$. We assume that $n\in\bN_+$ is such that $\varepsilon>\rDim/n$ and note that $\varrho_{3\varepsilon}(i,m,a)\leq\varrho_{2\varepsilon}(i,m,a)-\varepsilon$ if $m\leq i m_\flat$. The bounds in the statement of the lemma follow from Lemma~\ref{lem:probabilistic_estimate} applied with $\rho=\varrho_{2\varepsilon}(i,m,a)-\varepsilon$ and
\begin{equation}
 \zeta_\sIR = K_\sIR^{\ast\oo}\ast \mathtt{u} f^{i,m,a}_{\uv;\sUV,\sIR},\quad C\lesssim 1,
\end{equation}
\begin{equation}
 \zeta_\sIR = K_\sIR^{\ast\oo}\ast \mathtt{u}(f^{i,m,a}_{\uv;\sUV,\sIR}-f^{i,m,a}_{0;\sUV,\sIR}),\quad C\lesssim [\uv]^{n\varepsilon}, 
\end{equation}
\begin{equation}
 \zeta_\sIR = K_\sIR^{\ast\oo}\ast \mathtt{u}(f^{i,m,a}_{\uv_1;0,\sIR}-f^{i,m,a}_{\uv_2;0,\sIR}),\quad C\lesssim |[\uv_1]^\varepsilon-[\uv_2]^\varepsilon|^n.
\end{equation}
The above $\zeta_\sIR$ and $C$ satisfy the assumption of Lemma~\ref{lem:probabilistic_estimate} by Lemma~\ref{lem:time_estimates_prep}, Lemma~\ref{lem:moment_bounds}, the identity
\begin{equation}
 \partial_\sIR (K_\sIR^{\ast\oo}\ast \mathtt{u}f_\sIR) = \oo\,\fP_\sIR \partial_\sIR K_\sIR \ast K_\sIR^{\ast \oo} \ast \mathtt{u}f_\sIR +  K_\sIR^{\ast\oo}\ast\mathtt{u}\partial_\sIR f_\sIR, 
\end{equation}
and the uniform bound $\|\fP_\sIR \partial_\sIR K_\sIR\|_\cK\lesssim [\sIR]^{-\sigma}$ established in Lemma~\ref{lem:kernel_derivative}.
\end{proof}

\begin{lem}\label{lem:diagonal_argument}
Let $Y_{\uv,\sUV}$, $(\uv,\sUV)\in[0,1]^2\setminus\{(0,0)\}$ be a family of random variables valued in a Polish space $(\cY,d)$. Assume that:
\begin{enumerate}
\item[(A)] $\lim_{\uv\searrow0}\,\sup_{\sUV\in(0,1]} \llangle 1\wedge d(Y_{\uv,\sUV},Y_{0,\sUV})\rrangle = 0$,

\item[(B)] $\lim_{\uv_1,\uv_2\searrow0}\, \llangle 1\wedge d(Y_{\uv_1,0},Y_{\uv_2,0})\rrangle = 0$,

\item[(C)] $\forall_{\uv\in(0,1]}\,\lim_{\sUV\searrow0}\, \llangle 1\wedge d(Y_{\uv,\sUV},Y_{\uv,0})\rrangle = 0$.
\end{enumerate}
Then there exists a random variable $Y_{0,0}$ in $(\cY,d)$ such that 
\begin{equation}
 \lim_{\sUV\searrow0}\, \llangle 1\wedge d(Y_{0,\sUV},Y_{0,0})\rrangle = 0.
\end{equation}
\end{lem}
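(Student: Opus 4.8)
The plan is to recognize the quantity $\rho(X,Y):=\llangle 1\wedge d(X,Y)\rrangle$ as (a representative of) the standard complete metric inducing convergence in probability on the space $L^0(\Omega;\cY)$ of $\cY$-valued random variables, and then run a triangle-inequality estimate in three pieces. The first step is to record two soft facts. (i) Since $1\wedge(s+t)\le(1\wedge s)+(1\wedge t)$ for $s,t\ge0$, the map $(x,y)\mapsto 1\wedge d(x,y)$ is a bounded metric on $\cY$ inducing the same topology; taking expectations, $\rho$ obeys the triangle inequality $\rho(X,Z)\le\rho(X,Y)+\rho(Y,Z)$ (modulo almost-sure equality, which is harmless here). (ii) Because $(\cY,d)$ is Polish, $L^0(\Omega;\cY)$ equipped with $\rho$ is a complete metric space: any $\rho$-Cauchy sequence admits a subsequence converging almost surely, by a Borel--Cantelli extraction, and hence the sequence converges in $\rho$.

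Next I would use assumption (B) to produce the candidate limit. Fix any sequence $\uv_n\searrow0$. By (B) the sequence $(Y_{\uv_n,0})_n$ is $\rho$-Cauchy, so by fact (ii) it has a $\rho$-limit, which I call $Y_{0,0}$; the standard argument (interleaving two such sequences) shows $Y_{0,0}$ is independent of the chosen sequence, and therefore $\lim_{\uv\searrow0}\rho(Y_{\uv,0},Y_{0,0})=0$.

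The final step is the diagonal estimate. Fix $\delta>0$. By (A) choose $\uv_\ast\in(0,1]$ with $\sup_{\sUV\in(0,1]}\rho(Y_{\uv_\ast,\sUV},Y_{0,\sUV})<\delta/3$, shrinking $\uv_\ast$ further if necessary so that also $\rho(Y_{\uv_\ast,0},Y_{0,0})<\delta/3$, which is possible by the previous paragraph. With this $\uv_\ast$ now fixed, apply (C) to find $\sUV_\ast>0$ such that $\rho(Y_{\uv_\ast,\sUV},Y_{\uv_\ast,0})<\delta/3$ for all $\sUV\in(0,\sUV_\ast)$. The triangle inequality then gives, for every $\sUV\in(0,\sUV_\ast)$,
\[
 \rho(Y_{0,\sUV},Y_{0,0})\le \rho(Y_{0,\sUV},Y_{\uv_\ast,\sUV})+\rho(Y_{\uv_\ast,\sUV},Y_{\uv_\ast,0})+\rho(Y_{\uv_\ast,0},Y_{0,0})<\delta.
\]
Since $\delta>0$ is arbitrary, $\lim_{\sUV\searrow0}\rho(Y_{0,\sUV},Y_{0,0})=0$, which is the assertion. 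The only non-formal ingredient is fact (ii), i.e.\ completeness of $L^0(\Omega;\cY)$ for Polish $\cY$ and the extraction of an almost-surely convergent subsequence from a Cauchy-in-probability sequence; this is where the Polish hypothesis is genuinely used. It is entirely standard, so I do not anticipate any real obstacle here — once the metric-space framework is in place, the lemma is pure bookkeeping.
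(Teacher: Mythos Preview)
Your proof is correct and follows essentially the same route as the paper: first use (B) together with completeness of $L^0(\Omega;\cY)$ to produce $Y_{0,0}$ as the limit in probability of $Y_{\uv,0}$, then split $\rho(Y_{0,\sUV},Y_{0,0})$ into the same three pieces via the triangle inequality and control them using (A), (C), and the just-established convergence $Y_{\uv,0}\to Y_{0,0}$. The only cosmetic difference is that you carry out an explicit $\delta/3$ argument whereas the paper phrases it with a $\limsup$ followed by $\uv\searrow0$.
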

\begin{proof}
The assumption (B) says that $Y_{\uv,0}$, $\uv\in(0,1]$, is Cauchy in probability. This implies that there exists $Y_{0,0}$ such that $\lim_{\uv\searrow0}\, \llangle 1\wedge d(Y_{\uv,0},Y_{0,0})\rrangle = 0$. To conclude the proof it is enough to observe that by the triangle inequality for every $\uv\in(0,1]$ it holds
\begin{multline}
 \limsup_{\sUV\searrow0}\, \llangle 1\wedge d(Y_{0,\sUV},Y_{0,0})\rrangle
 \\
 \leq
 \sup_{\sUV\in(0,1]}\, \llangle 1\wedge d(Y_{0,\sUV},Y_{\uv,\sUV})\rrangle
 +
 \lim_{\sUV\searrow0}\,\llangle 1\wedge d(Y_{\uv,\sUV},Y_{\uv,0})\rrangle
 +
 \llangle 1\wedge d(Y_{\uv,0},Y_{0,0})\rrangle
\end{multline}
and subsequently take the limit $\uv\searrow0$ of the expression on the RHS of the above inequality.
\end{proof}

Let us prove the claim formulated at the beginning of this section. To this end, we fix some sequence of functions $\mathtt{u}_n\in C^\infty_\rc(\bR)$, $n\in\bN_0$, such that $\mathtt{u}_n(\mathring x)=1$ for $\mathring x\in[-n,n]$. For any $(i,m,a)\in\frI_0^-$ and any bounded continuous functions $(0,1]\ni\sIR \mapsto v_\sIR\in C(\bH)$ we define the family of semi-norms
\begin{equation}
 \sup_{\sIR\in(0,1]} [\sIR]^{-\rho}\|K^{\ast\oo}_\sIR \ast \mathtt{u} v_\sIR\|_{L^\infty(\bH)}
\end{equation}
parameterized by $\varepsilon\in\{2^{-n}\,|\,n\in\bN_0\}$ and $\mathtt{u}\in \{\mathtt{u}_n\,|\,n\in\bN_0\}$. Let $\cY^\rho$ be the Fr{\'e}chet space obtained by the completion of the space of the functions $v_\Cdot$ of the above form equipped with the above family of semi-norms and let $d^\rho$ be a metric inducing the same topology. The statement follows now from Lemma~\ref{lem:diagonal_argument} applied to $Y_{\uv,\sUV}=f^{i,m,a}_{\uv;\sUV,\Cdot}\in\cY^\rho$, $\rho=\varrho_{3\varepsilon}(i,m,a)$, Remark~\ref{rem:weight} and Lemma~\ref{lem:probabilistic_cutoff} and Lemma~\ref{lem:probabilistic}. In order to prove the universality of the family of the random functions~\eqref{eq:probabilistic_convergence_limit} we note that
\begin{equation}
 f^{i,m,a}_{0,\Cdot}:=\lim_{\uv\searrow0} f^{i,m,a}_{\uv;0,\Cdot},\quad (i,m,a)\in\frI_0^-,
\end{equation}
in probability in the space $\cY^\rho$, where the effective force coefficients $f^{i,m,a}_{\uv;0,\sIR}$ are constructed as specified in Def.~\ref{dfn:eff_force_uv} with the noise $\varXi_{\uv;0}$ obtained by mollification of the white noise and the (deterministic) force coefficients $f^{i,m,a}_{\uv;0}\in\bR$, $(i,m,a)\in\frI$ satisfying the following conditions specified in Assumption~\ref{ass:renormalization}. The irrelevant force coefficients $f^{i,m,a}_{\uv;0}$, \mbox{$(i,m,a)\in\frI^+$}, vanish identically and the relevant force coefficients $f^{i,m,a}_{\uv;0}$, $(i,m,a)\in\frI^+$, are fixed by the renormalization conditions $\llangle f^{i,m,a}_{\uv;0,1}\rrangle=\mathfrak{f}^{i,m,a}_0$.

\section{Deterministic analysis}\label{sec:deterministic}

The deterministic results established in this section allow to prove the existence of the macroscopic scaling limit and its universality using a pathwise construction. Given (generic realizations of) the relevant effective force coefficients $f^{i,m,a}_{0,\sIR}$, $(i,m,a)\in\frI^-_0$, $\sIR\in(0,1]$, and the initial data $\phi_0^\vartriangle$ we construct distributions $\varPhi_0^\triangleright$ and $\breve\varPhi_0$ having the following properties. If the effective force coefficients $f^{i,m,a}_{\sUV,\sIR}$, \mbox{$(i,m,a)\in\frI^-_0$}, \mbox{$\sIR\in(0,1]$}, constructed from a noise $\varXi_\sUV$ and force coefficients $f_\sUV^{i,m,a}$, $(i,m,a)\in\frI$, and the initial data $\phi_\sUV^\vartriangle$ are close to the effective force coefficients $f^{i,m,a}_{0,\sIR}$, \mbox{$(i,m,a)\in\frI^-_0$}, $\sIR\in(0,1]$, and the initial data $\phi_0^\vartriangle$, then $\varPhi_\sUV^\triangleright$ and $\breve\varPhi_\sUV$ are close to $\varPhi_0^\triangleright$ and $\breve\varPhi_0$, where $\varPhi_\sUV^\triangleright$ is the truncation of the formal power series solving Eq.~\eqref{eq:informal_statement_stationary} and $\breve\varPhi_\sUV$ is the classical solution of Eq.~\eqref{eq:statement_rem_breve}. Both $\varPhi_\sUV^\triangleright$ and $\breve\varPhi_\sUV$ are constructed as functions of the noise $\varXi_\sUV$, the force coefficients $f_\sUV^{i,m,a}$, $(i,m,a)\in\frI$, and the initial data $\phi_\sUV^\vartriangle$. The above-mentioned continuity result together with the results of the probabilistic analysis implies the existence of the limits $\lim_{\sUV\searrow0} \varPhi_\sUV^\triangleright=\varPhi_0^\triangleright$ and $\lim_{\sUV\searrow0} \breve\varPhi_\sUV=\breve\varPhi_0$ in probability.

\begin{dfn}\label{dfn:boundary_data_initial}
A collection of objects
\begin{equation}\label{eq:boundary_data}
 \phi_\sUV^\vartriangle\in\cC^\gamma(\bT),
 \qquad
 f^{0,0,0}_\sUV\equiv\varXi_\sUV\in \cD^{;0}=C(\bH),
 \qquad
 f^{i,m,a}_\sUV\in\bR,\quad~ (i,m,a)\in\frI,
\end{equation}
such that $f^{i,m,a}_\sUV=0$ unless $i\leq i_\flat$ and $m\leq m_\flat$ and $|a|<\sigma$ is called a boundary data for $(\varPhi_\sUV^\triangleright,\breve\varPhi_\sUV)$. A collection consisting of $\phi_0^\vartriangle\in\sD'(\bT)$ and a family of continuous functions 
\begin{equation}\label{eq:enhanced_data}
 (0,1]\ni\sIR\mapsto f^{i,m,a}_{0,\sIR}\in\cD=\sD'(\bH),
 \qquad (i,m,a)\in\frI^-_0,
\end{equation} 
is called a boundary data for $(\varPhi_0^\triangleright,\breve\varPhi_0)$.
\end{dfn}

\begin{rem}
Throughout this section $\sUV\in(0,1]$ is a fixed parameter. Recall that $\frI_0=\{(0,0,0)\}\cup\frI$ and $\frI_0^-\subset \frI_0$ is the set of relevant indices. We have $\gamma\equiv\gamma_\varepsilon:=\sigma-\varepsilon$, and the space $\cC^\gamma(\bT)\subset C^{\ceil{\sigma}-1}(\bT)$ was introduced in Def.~\ref{dfn:C_gamma}.
\end{rem}

Assuming that the boundary data for $(\varPhi_0^\triangleright,\breve\varPhi_0)$ is admissible in the sense specified below we give a construction of $(\varPhi_\sUV^\triangleright,\breve\varPhi_\sUV)$ and $(\varPhi_0^\triangleright,\breve\varPhi_0)$ as functions of the respective boundary data and show that if the boundary data for $(\varPhi_\sUV^\triangleright,\breve\varPhi_\sUV)$ and $(\varPhi_0^\triangleright,\breve\varPhi_0)$ are close to each other in the sense specified in Assumptions~\ref{ass:deterministic} and~\ref{ass:deterministic_initial}, then $(\varPhi_\sUV^\triangleright-\varPhi_0^\triangleright,\breve\varPhi_\sUV-\breve\varPhi_0)$ is small. To this end, we first construct the stationary effective force coefficients $F^{i,m,a}_{\sUV,\sIR}$, $F^{i,m,a}_{0,\sIR}$. We express $\varPhi_\sUV^\triangleright$, $\varPhi_0^\triangleright$ in terms of $F^{i,0,0}_{\sUV,\sIR}$, $F^{i,0,0}_{0,\sIR}$. Next, we construct another stationary effective force coefficients $\hat F^{i,m}_{\sUV,\sIR}$, $\hat F^{i,m}_{0,\sIR}$. After that we fix some $t\in[0,\infty)$ and $T\in(0,1)$ and construct the effective force coefficients $\check F^{i,m}_{\sUV,\sIR}$, $\check F^{i,m}_{0,\sIR}$ and $\tilde F^{i,m}_{\sUV,\sIR}$, $\tilde F^{i,m}_{0,\sIR}$ depending on the initial data at the time $\mathring x=t$. Subsequently, for sufficiently small $T\in(0,1)$ we express $\breve\varPhi_\sUV$, $\breve\varPhi_0$ restricted to the time interval $[t,t+T]$ in terms of $\tilde F^{i,0}_{\sUV,\sIR}$, $\tilde F^{i,0}_{0,\sIR}$ and the initial data at $\mathring x=t$. Finally, we construct the maximal solutions $\breve\varPhi_\sUV$, $\breve\varPhi_0$ as functions of the respective boundary data. The idea of the construction is explained in Sec.~\ref{sec:intro_initial_value}.

\begin{dfn}\label{dfn:force_ima}
Given a boundary data $f^{i,m,a}_\sUV$, $(i,m,a)\in\frI_0$, the effective force coefficients $F^{i,m,a}_{\sUV,\sIR}$, $\sIR\in[0,1]$, $(i,m,a)\in\frI_0$, are defined recursively by the flow equation~\eqref{eq:flow_deterministic_i_m_a} together with the conditions 
\begin{equation}
 F^{i,m,a}_{\sUV,\sIR}=0~~\textrm{if}~~m>i m_\flat,
 \qquad
 F^{0,0,0}_{\sUV,0}=\varXi_\sUV,
 \qquad
 F^{i,m,a}_{\sUV,0}=F_{\sUV}^{i,m,a},\quad (i,m,a)\in\frI,
\end{equation}
where for $(i,m,a)\in\frI$ we set
\begin{equation}\label{eq:determjnistic_force_coefficients}
 F^{i,m,a}_\sUV
 :=
 \sum_{b\in\frM^m}\frac{1}{b!}
 \partial^b \fL^m(f^{i,m,a+b}_\sUV)
\end{equation}
and the map $\fL^m$ was introduced in Def.~\ref{dfn:map_L_m}. We also set $F^{i,m}_{\sUV,\sIR}:=F^{i,m,0}_{\sUV,\sIR}$ and $f^{i,m,a}_{\sUV,\sIR}:=\fI F^{i,m,a}_{\sUV,\sIR}$, where the map $\fI$ was introduced in Def.~\ref{dfn:map_I}. Finally, we define a formal power series
\begin{equation}\label{eq:effective_force_determ}
 \langle F_{\sUV,\sIR}[\varphi],\psi\rangle
 :=\sum_{i=0}^\infty \sum_{m=0}^\infty \lambda^i\,\langle F^{i,m}_{\sUV,\sIR},\psi\otimes\varphi^{\otimes m}\rangle,
\end{equation}
where $\psi,\varphi\in C^\infty_\rc(\bM)$ are arbitrary. We omit $\sIR$ if $\sIR=0$.
\end{dfn}

\begin{rem}
By the properties of the maps $\fL^m$ and $\fI$ it holds $f^{i,m,a}_{\sUV,0}=f^{i,m,a}_{\sUV\phantom{0}}$. Furthermore, the terms of the sum on the RHS of Eq.~\eqref{eq:determjnistic_force_coefficients} vanish identically unless $a,b\in\bar\frM^m_\sigma$ by the assumptions about~$f_\sUV^{i,m,a}$.
\end{rem}

\begin{lem}\label{lem:eff_force_basic_properties_deterministic}
For every $\sIR\in[0,1]$ and $(i,m,a)\in\frI_0$ the effective force coefficients $F^{i,m,a}_{\sUV,\sIR}\in\cD^{m;0}$ are uniquely defined and satisfy the conditions $F^{0,0,0}_{\sUV,\sIR}=\varXi_{\sUV}$ and $F^{i,m,a}_{\sUV,\sIR}=\cX^{m,a}F^{i,m,0}_{\sUV,\sIR}$. Moreover, for any $\mathtt{w}\in C^\infty_\rc(\bR)$ the function 
\begin{equation}
 [0,1]\ni\sIR\mapsto (\delta_\bM\otimes J^{\otimes m})\ast \mathtt{w} F^{i,m,a}_{\sUV,\sIR}\in\cV^m
\end{equation} 
is continuous in the whole domain and differentiable in the domain $\sIR\in(0,1]$. 
\end{lem}

\begin{rem}
The above qualitative lemma is a special case of Lemma~\ref{lem:eff_force_basic_properties}. 
\end{rem}

\begin{rem}
Recall that for $R>0$ and $i,m\in\bN_0$ we set $R^{i,m}=R^{1+i m_\flat-m}$. 
\end{rem}

\begin{rem}\label{rem:varepsilon_bound_deterministic}
Throughout this section we fix some arbitrary $\varepsilon\in(0,\varepsilon_\sigma\wedge\varepsilon_\diamond)$, where $\varepsilon_\sigma$ and $\varepsilon_\diamond$ were introduced in Def.~\ref{dfn:varepsilon_sigma} and~Def.~\ref{dfn:varrho}.
\end{rem}

\begin{ass}\label{ass:deterministic}
Let $\mathtt{u}\in C^\infty_\rc(\bR)$, $R>1$, $\delta\in(0,1]$, $\oo\in\bN_0$. We assume that \mbox{$\sUV\in(0,1]$}, a boundary data $f^{i,m,a}_\sUV$, $(i,m,a)\in\frI_0$, and a boundary data $f^{i,m,a}_{0,\sIR}$, $\sIR\in(0,1]$, $(i,m,a)\in\frI_0^-$, are such that for all $\sIR\in(0,1]$ it holds:
\begin{enumerate}
\item[(A)] $\|K_\sIR^{\ast\oo}\ast \mathtt{u} f^{i,m,a}_{0,\sIR}\|_\cV \leq R^{i,m}\,[\sIR]^{\varrho_\varepsilon(i,m,a)}$,~~ $(i,m,a)\in\frI^-_0$,
\item[(B)] $\|\mathtt{u} f_\sUV^{i,m,a}\|_\cV\leq \delta R^{i,m}\,[\sUV]^{\varrho_\varepsilon(i,m,a)}$,~~ $(i,m,a)\in\frI_0$,
\item[(C)] $\|K^{\ast\oo}_\sIR\ast \mathtt{u}(f^{i,m,a}_{\sUV,\sIR}-f^{i,m,a}_{0,\sIR})\|_\cV\leq\delta R^{i,m}\,[\sIR]^{\varrho_\varepsilon(i,m,a)}$,~~ $(i,m,a)\in\frI^-_0$,
\end{enumerate}
where the effective force coefficients $f^{i,m,a}_{\sUV,\sIR}$, $(i,m,a)\in\frI_0$, $\sIR\in[0,1]$, are constructed from the boundary data $f^{i,m,a}_{\sUV}$, $(i,m,a)\in\frI_0$, as explained in Def.~\ref{dfn:force_ima}.
\end{ass}

\begin{rem}
At this stage the function $\mathtt{u}\in C^\infty_\rc(\bR)$ can be arbitrary. As we will see, in order to construct $(\varPhi_\sUV^\triangleright,\breve\varPhi_\sUV)$ and $(\varPhi_0^\triangleright,\breve\varPhi_0)$ up to some time $\Tmax$ one has to choose $\mathtt{u}$ such that $\mathtt{u}=1$ on some neighborhood of $[-i_\dagger(i_\dagger+1),6i_\flat+\Tmax]$, where $i_\dagger=i_\flat+i_\triangleright m_\flat$ and $i_\triangleright\in\bN_+$ is the largest integer such that $\varrho(i_\triangleright)+\sigma\leq0$. Recall also the useful property stated in Remark~\ref{rem:weight}.
\end{rem}

\begin{ass}\label{ass:deterministic_initial}
Let $R>1$, $\delta\in(0,1]$, $\oo\in\bN_0$. We assume that $\sUV\in(0,1]$, $\phi_\sUV^\vartriangle\in\cC^\gamma(\bT)$ and $\phi_0\in\sD'(\bT)$ are such that for all $\sIR\in(0,1]$ it holds:
\begin{enumerate}
\item[(A)]
$\|\bar K_\sIR^{\ast\oo}\ast \phi^\vartriangle_0\|_{L^\infty(\bT)}\leq R\,[\sIR]^{\beta_\varepsilon}$,
\item[(B)]
$\|\fR_\sUV \phi^\vartriangle_\sUV\|_{L^\infty(\bT)}\leq \delta R\, [\sUV]^{\beta_\varepsilon}$,
\item[(C)]
$\|\bar K_\sIR^{\ast\oo}\ast(\phi^\vartriangle_\sUV-\phi^\vartriangle_0)\|_{L^\infty(\bT)}\leq \delta R\,[\sIR]^{\beta_\varepsilon}$.
\end{enumerate}
\end{ass}

\begin{rem}
Recall that $\beta\equiv\beta_\varepsilon=-\dim(\phi)-\varepsilon$, $\dim(\phi)$ was introduced in Def.~\ref{dfn:dim_phi} and $\bar K_\sIR$, $\fR_\sUV$ were defined in Sec.~\ref{sec:kernels}.
\end{rem}

\begin{dfn}
Let $\mathtt{u}\in C^\infty_\rc(\bR)$ and $R>1$.
We say that a boundary data $\phi_0^\vartriangle$, $f^{i,m,a}_{0,\sIR}$, $\sIR\in(0,1]$, $(i,m,a)\in\frI_0^-$ for $(\varPhi_0^\triangleright,\breve\varPhi_0)$ is admissible iff for every $\delta\in(0,1]$ there exist $\sUV\in(0,1]$ and a boundary data $\phi_\sUV^\vartriangle$, $f^{i,m,a}_\sUV$, \mbox{$(i,m,a)\in\frI_0$} for $(\varPhi_\sUV^\triangleright,\breve\varPhi_\sUV)$ such that all of the conditions specified in Assumptions~\ref{ass:deterministic} and~\ref{ass:deterministic_initial} are satisfied.
\end{dfn}

\begin{rem}
We refrain from giving a characterization of the set of admissible boundary data for $(\varPhi_0^\triangleright,\breve\varPhi_0)$. Let us only note that $f^{i,m,a}_{0,\sIR}$, $\sIR\in(0,1]$, $(i,m,a)\in\frI_0$, satisfy certain non-linear constraints which are consequences of the fact that the coefficients $f^{i,m,a}_{\sUV,\sIR}$, $(i,m,a)\in\frI_0$, $\sIR\in[0,1]$, appearing in Assumption~\ref{ass:deterministic} are constructed from the boundary data $f^{i,m,a}_{\sUV}$, $(i,m,a)\in\frI_0$, with the use of the flow equation. Let us also remark that the set of admissible $\phi_0^\vartriangle\in\sD'(\bT)$ coincides with a ball in the Besov space $C^{\beta}(\bT)$.
\end{rem}

In Sec.~\ref{sec:stat_deterministic} we construct and bound the stationary effective force coefficients $F^{i,m,a}_{\sUV,\sIR}$, $F^{i,m,a}_{0,\sIR}$. Subsequently, in Sec.~\ref{sec:schauder} we establish a Schauder-type estimates. In Sec.~\ref{sec:d-p} we express the rough part of the solution $\varPhi_\sUV^\triangleright$, $\varPhi_0^\triangleright$ in terms of the effective force coefficients $F^{i,0,0}_{\sUV,\sIR}$, $F^{i,0,0}_{0,\sIR}$. We also construct another stationary effective force coefficients $\hat F^{i,m}_{\sUV,\sIR}$, $\hat F^{i,m}_{0,\sIR}$, which as argued in Sec.~\ref{sec:intro_initial_value}, are useful to deal with the effect of the initial condition. In Sec.~\ref{sec:effect_initial} we introduce the effective force coefficients $\check F^{i,m}_{\sUV,\sIR}$, $\check F^{i,m}_{0,\sIR}$ and $\tilde F^{i,m}_{\sUV,\sIR}$, $\tilde F^{i,m}_{0,\sIR}$ depending on the regular part of the initial data $\phi_\uv^\vartriangle$. In Sec.~\ref{sec:convergence_PT} we prove that the formal power series defining the effective force $\tilde F_{\sUV,\sIR}[\varphi]$ converges absolutely in sufficiently short time intervals. We also establish several properties of the effective force $\tilde F_{\sUV,\sIR}[\varphi]$. In Sec.~\ref{sec:local_solution} we construct the solution of the SPDE in a short random time interval by expressing it in terms of the effective force $\tilde F_{\sUV,\sIR}[\varphi]$. Finally, in Sec.~\ref{sec:maximal_solution} we show how to construct the maximal solution up to a possible finite-time blowup.

\subsection{Bounds at stationarity}\label{sec:stat_deterministic}

\begin{thm}\label{thm:deterministic_taylor}
Fix some $\mathtt{u}\in C^\infty_\rc(\bR)$, $R>1$, $\delta\in(0,1]$ and an admissible boundary data for $(\varPhi_0^\triangleright,\breve\varPhi_0)$. Let $\sUV\in(0,\delta^{1/\varepsilon}]$ and a boundary data for $(\varPhi_\sUV^\triangleright,\breve\varPhi_\sUV)$ be arbitrary such that all of the conditions specified in Assumption~\ref{ass:deterministic} are satisfied. There exists $\oo\in\bN_0$ and a universal family of differentiable functions 
\begin{equation}\label{eq:deterministic_taylor_family}
 (0,1]\ni \sIR \mapsto F^{i,m,a}_{0,\sIR}\in\cD^m, \qquad (i,m,a)\in\frI_0,
\end{equation}
satisfying the flow equation~\eqref{eq:flow_deterministic_i_m_a} such that it holds
\begin{enumerate}
\item[(A)] $\|K^{m;\oo}_{0,\sIR} \ast \mathtt{w} F^{i,m,a}_{0,\sIR}\|_{\cV^m}
 \lesssim R^{i,m}\, [\sIR]^{\varrho_\varepsilon(i,m,a)}$,~~ $\sIR\in(0,1]$,
\item[(B)] $\|K^{m;\oo}_{\sUV,\sIR} \ast \mathtt{w} F^{i,m,a}_{\sUV,\sIR}\|_{\cV^m}
 \lesssim R^{i,m}\, [\sUV\vee\sIR]^{\varrho_\varepsilon(i,m,a)}$,~~ $\sIR\in[0,1]$,
\item[(C)] $\|K^{m;\oo}_{\sUV,\sIR} \ast \mathtt{w} (F^{i,m,a}_{\sUV,\sIR}-F^{i,m,a}_{0,\sIR})\|_{\cV^m} 
 \lesssim \delta R^{i,m}\, [\sIR]^{\varrho_\varepsilon(i,m,a)-\varepsilon}$,~~ $\sIR\in[\sUV,1]$,
\end{enumerate}
for all $(i,m,a)\in\frI_0$ and all $\mathtt{w}\in C^\infty_\rc(\bR)$ such that $\mathtt{u}=1$ on some neighborhood of $\supp\,\mathtt{w}-[0,i(i+1)]$. The constants of proportionality in the above bounds depend only on $(i,m,a)\in\frI_0$ and \mbox{$\mathtt{w},\mathtt{u}\in C^\infty_\rc(\bR)$} and are otherwise universal.
\end{thm}
\begin{rem}
Note that we assume that $\sUV\in(0,\delta^{1/\varepsilon}]$. Under this assumption $[\sUV]^\varepsilon\leq\delta$ and the bounds (A), (B), (C) imply that
\begin{enumerate}
\item[($\tilde{\mathrm{C}}$)]
$\|K^{m;\oo}_{\sUV,\sIR} \ast \mathtt{w}(F^{i,m,a}_{\sUV,\sIR}-F^{i,m,a}_{0,\sIR})\|_{\cV^m} 
\lesssim \delta R^{i,m}\, ([\sIR]^{\varrho_\varepsilon(i,m,a)-\varepsilon}\vee
[\sUV\vee\sIR]^{\varrho_\varepsilon(i,m,a)-\varepsilon})$
\end{enumerate}
for all $\sIR\in(0,1]$.
\end{rem}
\begin{rem}\label{rem:varrho_R}
In the proof we use the equalities: $R^{i,m} = R^{j,k+1}\,R^{i-j,m-k}$ and
\begin{equation}\label{eq:varrho_equality_flow}
 \varrho_\varepsilon(i,m,a) = \varrho_\varepsilon(j,k+1,b) + [c] + \varrho_\varepsilon(i-j,m-k,d) + \sigma
\end{equation}
valid for $\varepsilon\in(0,1]$ and $i\in\bN_+$, $m\in\bN_0$, \mbox{$j\in\{1,\ldots,i\}$}, $k\in\{0,\ldots,m\}$, \mbox{$a\in\frM^m$}, \mbox{$b\in\frM^{k+1}$}, $c\in\frM$, $d\in\frM^{m-k}$ such that the multi-indices $a,b,c,d$ satisfy the condition specified below Eq.~\eqref{eq:poly_binom_notation}. The first of the above identities follows trivially from the definition of $R^{i,m}$. The second one is an immediate consequence of Def.~\ref{dfn:varrho} of $\varrho_\varepsilon(i,m,a)$. 
\end{rem}
\begin{proof}
We first note that for $m>i m_\flat$ we have $F^{i,m,a}_{\sUV,\sIR}=0$. For $\sUV=0$ the above equality is our definition of the LHS. As a result, for $m>i m_\flat$ the bounds (A), (B), (C) hold trivially. Next, we observe that $F^{0,0,0}_{\sUV,\sIR}=f^{0,0,0}_{\sUV,\sIR}$ for $\sIR\in[0,1]$. For $\sUV=0$ and $\sIR\in(0,1]$ this is our definition of the LHS. Recall also that for $m=0$ it holds $\|v\|_{\cV^m}=\|v\|_\cV$ and $K^{m;\oo}_{\sUV,\sIR}=K^{\ast\oo}_\sIR$. As a result, the bounds (A), (B), (C) with $i=0$ are implied by Remark~\ref{rem:weight} and the bounds stated in Assumption~\ref{ass:deterministic}. 

Fix $i_\circ\in\bN_+$, $m_\circ\in\bN_0$, $\ooo\in\bN_0$ and assume that the bounds (A), (B), (C) hold for $\oo=\ooo$ and all $(i,m,a)\in\frI_0$ such that either $i<i_\circ$, or $i=i_\circ$ and $m>m_\circ$. We shall show that there exists $\oooo\in\bN_0$ such that the bounds (A), (B), (C) hold for $\oo=\oooo$ and all $(i,m,a)\in\frI$ such that $i=i_\circ$ and $m=m_\circ$. Moreover, we will prove that if $i_\circ>i_\diamond$, then one can choose $\oooo=\ooo$. It is enough to verify the statement formulated above to prove the theorem since there are only finitely many $i_\circ\in\bN_+$ and $m_\circ\in\bN_0$ such that $i_\circ\leq i_\diamond$ and $m_\circ\leq i_\circ\,m_\flat$ and the bounds (A), (B), (C) hold for all $\oo\geq\ooo$ if they hold for $\oo=\ooo$. For convenience, we recall that $i_\diamond$ is the largest integer such that $\varrho(i_\diamond,0,0)\leq0$ and $(i,m,a)\in\frI$ iff $i\in\bN_+$, $m\in\bN_0$ and $a\in\frM^m$ is such that $|\bar a|<\sigma_\diamond$, where $\sigma_\diamond=\sigma$ if $\sigma\in2\bN_+$ and $\sigma_\diamond=\ceil{\sigma}-1$ otherwise.

Let us turn to the proof of the inductive step. Fix any $\mathtt{w}\in C^\infty_\rc(\bR)$ such that $\mathtt{u}=1$ on some neighborhood of $\supp\,\mathtt{w}-[0,i(i+1)]$. We first observe that by the flow equation~\eqref{eq:flow_deterministic_i_m_a} and the support properties of the effective force coefficients stated in Lemma~\ref{lem:support} it holds
\begin{multline}\label{eq:flow_deterministic_w_i_m_a}
 \mathtt{w}\partial_\sIR^{\phantom{i}} F^{i,m,a}_{\sUV,\sIR}
 \\
 =
 -\frac{1}{m!}
 \sum_{\pi\in\cP_m}\sum_{j=1}^i\sum_{k=0}^m
 \sum_{b+c+d=\pi(a)}\frac{(k+1)a!}{b!c!d!}\,
 \fY_\pi\fB\big(\dot G^c_\sIR,\mathtt{w} F^{j,k+1,b}_{\sUV,\sIR},\mathtt{v} F^{i-j,m-k,d}_{\sUV,\sIR}\big),
\end{multline}
where $\mathtt{v}\in C^\infty_\rc(\bR)$ is arbitrary such that $\mathtt{v}=1$ on some neighborhood of \mbox{$\supp\,\mathtt{w} - [0,2i]$} and $\mathtt{u}=1$ on some neighborhood of $\supp\,\mathtt{v}-[0,(i-1)i]$. We define $\partial_\sIR^{\phantom{i}} F^{i,m,a}_{0,\sIR}$ by the RHS of the flow equation~\eqref{eq:flow_deterministic_i_m_a} with $\sUV=0$. Hence, Eq.~\eqref{eq:flow_deterministic_w_i_m_a} remains valid for $\sUV=0$. Using Remark~\ref{rem:fB1_bound} we obtain
\begin{multline}
 \|K_{\sUV,\sIR}^{m;\oo}\ast \mathtt{w}\partial_\sIR F_{\sUV,\sIR}^{i,m,a}\|_{\cV^m} 
 \leq
 \sum_{\pi\in\cP_m}\sum_{j=1}^i\sum_{k=0}^m\sum_{b+c+d=\pi(a)}
 \frac{(k+1)a!}{b!c!d!}\,\|\fR_\sUV^{\phantom{\oo}}\fP_\sIR^{2\oo}\dot G^c_\sIR\|_\cK
 \\
 \times
 \|K_{\sUV,\sIR}^{k+1;\oo}\ast \mathtt{w} F_{\sUV,\sIR}^{j,k+1,b}\|_{\cV^{k+1}} 
 \,
 \|K_{\sUV,\sIR}^{m-k;\oo}\ast \mathtt{v} F_{\sUV,\sIR}^{i-j,m-k,d}\|_{\cV^{m-k}}
\end{multline}
and
\begin{multline}
 \|K_{\sUV,\sIR}^{m;\oo}\ast \mathtt{w}\partial_\sIR (F_{\sUV,\sIR}^{i,m,a}-F_{0,\sIR}^{i,m,a})\|_{\cV^m} 
 \leq 
 \sum_{\pi\in\cP_m}\sum_{j=1}^i\sum_{k=0}^m\sum_{b+c+d=\pi(a)}
 \frac{(k+1)a!}{b!c!d!}
 \\
 \times\|\fR_\sUV^{\phantom{\oo}}\fP_\sIR^{2\oo}\dot G^c_\sIR\|_\cK\Big(
 \|K_{\sUV,\sIR}^{k+1;\oo}\ast \mathtt{w}(F_{\sUV,\sIR}^{j,k+1,b}-F_{\sUV,\sIR}^{j,k+1,b})\|_{\cV^{k+1}} 
 \,
 \|K_{\sUV,\sIR}^{m-k;\oo}\ast \mathtt{v} F_{\sUV,\sIR}^{i-j,m-k,d}\|_{\cV^{m-k}}
 \\
 +
 \|K_{0,\sIR}^{k+1;\oo}\ast \mathtt{w} F_{0,\sIR}^{j,k+1,b}\|_{\cV^{k+1}} 
 \,
 \|K_{\sUV,\sIR}^{m-k;\oo}\ast \mathtt{v}(F_{\sUV,\sIR}^{i-j,m-k,d}-F_{0,\sIR}^{i-j,m-k,d})\|_{\cV^{m-k}}
 \Big).
\end{multline}
To get the last bound we used the fact that the map $\fB$ is multi-linear. Let us also recall that by Lemma~\ref{lem:kernel_dot_G} we have
\begin{equation}
 \|\fR_\sUV^{\phantom{\oo}}\fP_\sIR^{2\oo}\dot G_\sIR^{c}\|_\cK \lesssim
 [\sIR]^{\varepsilon-\sigma}
 [\sUV\vee\sIR]^{\sigma-\varepsilon+[c]},
\end{equation}
for $c\in\frM_\sigma$. We note that the above bounds remain valid for $\sUV=0$. Using the above bounds, the induction assumption and Remark~\ref{rem:varrho_R} we get
\begin{enumerate}
 \item[(A${}_1$)] $\|K^{m;\oo}_{0,\sIR} \ast \mathtt{w} \partial_\sIR F^{i,m,a}_{0,\sIR}\|_{\cV^m}
 \lesssim R^{i,m}\,[\sIR]^{\varrho_\varepsilon(i,m,a)-\sigma}$,~~$\sIR\in(0,1]$,
 \item[(B${}_1$)] $\|K^{m;\oo}_{\sUV,\sIR} \ast \mathtt{w} \partial_\sIR F^{i,m,a}_{\sUV,\sIR}\|_{\cV^m}
 \lesssim R^{i,m}\, [\sIR]^{\varepsilon-\sigma}[\sUV\vee\sIR]^{\varrho_\varepsilon(i,m,a)-\varepsilon}$,~~$\sIR\in(0,1]$,
 \item[(C${}_1$)] $\|K^{m;\oo}_{\sUV,\sIR} \ast \mathtt{w} \partial_\sIR (F^{i,m,a}_{\sUV,\sIR}-F^{i,m,a}_{0,\sIR})\|_{\cV^m} 
 \lesssim \delta R^{i,m}\, [\sIR]^{\varrho_\varepsilon(i,m,a)-\varepsilon-\sigma}$,~~$\sIR\in[\sUV,1]$.
\end{enumerate}
We observe that if $(i,m,a)\in\frI^+$, then $\varrho_\varepsilon(i,m,a)-\varepsilon\geq0$ by Remark~\ref{rem:epsilon}. As a result, for $(i,m,a)\in\frI^+$ the above bounds imply 
\begin{enumerate}
 \item[($\tilde{\mathrm{C}}_1$)] $\|K^{m;\oo}_{\sUV,\sIR} \ast \mathtt{w} \partial_\sIR (F^{i,m,a}_{\sUV,\sIR}-F^{i,m,a}_{0,\sIR})\|_{\cV^m} 
 \lesssim \delta R^{i,m}\, [\sIR]^{\varepsilon-\sigma}[\sUV\vee\sIR]^{\varrho_\varepsilon(i,m,a)-2\varepsilon}$,~~$\sIR\in(0,1]$.
\end{enumerate}

We note that by Assumption~\ref{ass:deterministic}, the equality
\begin{equation}
 F_{\sUV}^{i,m,a}=\sum_{b\in\bar\frM_\sigma^m} \frac{1}{b!}\,\partial^b \fL^m(f^{i,m,a+b}_{\sUV})
\end{equation}
and Lemma~\ref{lem:bound_map_L_m} the bound (B) is true for $\sIR=0$. Moreover, it holds
\begin{equation}
 F^{i,m,a}_{\sUV,\sIR}
 =
 F^{i,m,a}_{\sUV,0}
 +
 \int_0^\sIR \partial_\uIR F^{i,m,a}_{\sUV,\uIR}\,\rd\uIR.
\end{equation}
For $(i,m,a)\in\frI^+$ we define $F^{i,m,a}_{0,\sIR}$, $\sIR\in(0,1]$ by the RHS of the above equality with $\sUV=0$. Note that by Assumption~\ref{ass:renormalization} for $(i,m,a)\in\frI^+$ we have $F^{i,m,a}_{0,0}:=0$. As a result, if $\sIR\in(0,\sUV]$ or $(i,m,a)\in\frI^+$, then the bounds (A), (B), (C) follow from the bounds (A${}_1$), (B${}_1$), (C${}_1$) and Lemma~\ref{lem:integration} applied with $n=1$, $\epsilon=\varepsilon$ and $\rho=\varrho_\varepsilon(i,m,a)$ or $\rho=\varrho_\varepsilon(i,m,a)-\varepsilon$.

Recall that for $(i,m,a)\in\frI_0=\frI^-_0\cup \frI^+$ and $\sIR\in(0,1]$ it holds $f^{i,m,a}_{\sUV,\sIR}=\fI F^{i,m,a}_{\sUV,\sIR}$. For $(i,m,a)\in\frI^+$ and $\sIR\in[0,1]$ we set $f^{i,m,a}_{0,\sIR}:=\fI F^{i,m,a}_{0,\sIR}$. Note that $f^{i,m,a}_{0,\sIR}$, $(i,m,a)\in\frI^-_0$, is part of the boundary data for $(\varPhi_0^\triangleright,\breve\varPhi_0)$. By Lemma~\ref{lem:map_I} and the results established so far we obtain
\begin{enumerate}
\item[(A${}_2$)] $\|K_\sIR^{\ast\oo}\ast \mathtt{w} f^{i,m,a}_{0,\sIR}\|_\cV \leq R^{i,m}\,[\sIR]^{\varrho_\varepsilon(i,m,a)}$,~~$\sIR\in(0,1]$,
\item[(B${}_2$)] $\|K_\sIR^{\ast\oo}\ast\mathtt{w} f_\sUV^{i,m,a}\|_\cV\leq R^{i,m}\,[\sUV\vee\sIR]^{\varrho_\varepsilon(i,m,a)}$,~~$\sIR\in[0,1]$,
\item[(C${}_2$)] $\|K^{\ast\oo}_\sIR\ast \mathtt{w}(f^{i,m,a}_{\sUV,\sIR}-f^{i,m,a}_{0,\sIR})\|_\cV\leq\delta R^{i,m}\,[\sIR]^{\varrho_\varepsilon(i,m,a)-\varepsilon}$,~~$\sIR\in[\sUV,1]$,
\end{enumerate}
for all $(i,m,a)\in\frI_0$. In order to prove the bounds (A), (B), (C) for $\sIR\in(\sUV,1]$ and $(i,m,a)\in\frI^-$ we shall use the bounds (A${}_2$), (B${}_2$), (C${}_2$), the validity of the bounds (A), (B), (C) for $(i,m,a)\in\frI^+$ established earlier and the identities
\begin{equation}
 F^{i,0,0}_{\sUV,\sIR}=f^{i,0,0}_{\sUV,\sIR},
 \qquad
 F^{i,m,a}_{\sUV,\sIR} = \fX^a_\ro(f^{i,m,b}_{\sUV,\sIR},F^{i,m,b}_{\sUV,\sIR}),\quad m\in\bN_+,
\end{equation}
where the map $\fX^a_\ro$ was introduced in Def.~\ref{dfn:map_X}. For $(i,m,a)\in\frI^-$ we define $F^{i,m,a}_{0,\sIR}$ by the RHS of the above equalities with $\sUV=0$. If $i=1$, then we set $\ro=\ceil{\sigma}$ and note that $F^{i,m,b}_{\sUV,\sIR}=0$ for all $|b|=\ro$ by locality. If $i>1$, then we set $\ro=\sigma_\diamond$ and note that $(i,m,b)\in\frI^+$ for any $|b|=\ro$ by Assumption~\ref{ass:infrared}. We also observe that by Def.~\eqref{dfn:map_X} of the map $\fX^a_\ro$ it holds
\begin{equation}
 F^{i,m,a}_{\sUV,\sIR}-F^{i,m,a}_{0,\sIR} = 
 \fX^a_\ro(f^{i,m,b}_{\sUV,\sIR}-f^{i,m,b}_{0,\sIR},F^{i,m,b}_{\sUV,\sIR}-F^{i,m,b}_{0,\sIR}).
\end{equation}
The bounds (A), (B), (C) follow now from Theorem~\ref{thm:taylor_bounds} applied with the constant $C$ such that $C\lesssim [\sIR]^{\varrho_\varepsilon(i,m)}$ or $C\lesssim [\sIR]^{\varrho_\varepsilon(i,m)-\varepsilon}$.

The differentiability of the functions~\eqref{eq:deterministic_taylor_family} follows from the construction. In order to prove that the family of functions~\eqref{eq:deterministic_taylor_family} satisfies the flow equation~\eqref{eq:flow_deterministic_i_m_a} we use the bounds (A), (B), (C), the assumed admissibility of the family~\eqref{eq:enhanced_data} and the fact that the family $F^{i,m,a}_{\sUV,\sIR}$, $(i,m,a)\in\frI_0$, satisfies the flow equation.
\end{proof}

\subsection{Schauder estimates}\label{sec:schauder}

\begin{dfn}
For $\sIR\in[0,1]$ we set $\hat G_\sIR:=G_\sIR-G_1$ and $\hat G = G-G_1$, where $G_\sIR$ is the fractional heat kernel with the UV cutoff $\sIR$ introduced in Def.~\ref{dfn:propagator_G}.
\end{dfn}

\begin{lem}\label{lem:schauder}
Fix $\oo\in\bN_0$ and $\rho>0$. There exists a constant $C$ such that if for some $f\in\cV=C_\rb(\bH)$, $R>0$, $\sUV\in[0,1]$, $\uIR\in(0,1]$ and all $\sIR\in[\uIR,1]$ it holds
\begin{equation}
 \|K^{\ast\oo}_\sIR\ast f\|_\cV \leq R \, [\sUV\vee\sIR]^{-\sigma-\rho},
\end{equation}
then it holds:
\begin{enumerate}
\item[(A)] $\|\fR_\sUV^{\phantom{\oo}}\fP_\uIR^\oo\hat G_\uIR \ast f\|_\cV \leq C R \,[\sUV\vee\uIR]^{-\rho}$,

\item[(B)] $\|K_\uIR^{\ast\oo}\ast \fR_\sUV^{\phantom{\oo}}\hat G \ast f\|_\cV \leq C R \,[\sUV\vee\uIR]^{-\rho}$.
\end{enumerate}
\end{lem}
\begin{proof}
We first note that the equalities
\begin{equation}
 \hat G_\uIR = G_\uIR-G_1 =- \int_\uIR^1 \dot G_\sIR\,\rd\sIR,
 \qquad
 \hat G = \hat G_\uIR - \int_0^\uIR \dot G_\sIR\,\rd\sIR,
\end{equation}
imply the following bounds
\begin{multline}
 \|\fR_\sUV^{\phantom{\oo}}\fP_\uIR^\oo \hat G_\uIR \ast  f\|_\cV 
 \leq  \int_\uIR^1 \|\fR_\sUV^{\phantom{\oo}}\fP_\sIR^{2\oo}\dot G_\sIR\ast \fP_\uIR^{\oo} K_\sIR^{\ast2\oo}\ast f\|_\cV
 \,\rd\sIR
 \\
 \leq
 \int_\uIR^1 \|\fR_\sUV^{\phantom{\oo}}\fP_\sIR^{2\oo}\dot G_\sIR\|_\cK\,
 \|\fP_\uIR^\oo K_\sIR^{\ast \oo}\|_\cK\,
 \|K_\sIR^{\ast\oo}\ast f\|_\cV \,\rd\sIR
\end{multline}
and 
\begin{multline}
 \|K^{\ast\oo}_\uIR\ast \fR_\sUV^{\phantom{\oo}}\hat G \ast  f\|_\cV 
 \leq  
 \|K^{\ast\oo}_\uIR\ast \fR_\sUV^{\phantom{\oo}}\hat G_\uIR \ast  f\|_\cV 
 + \int_0^\uIR \|K_\uIR^{\ast\oo}\ast \fR_\sUV^{\phantom{\oo}}\dot G_\sIR \ast  f\|_\cV
 \,\rd\sIR
 \\
 \leq \|\fR_\sUV^{\phantom{\oo}}\hat G_\uIR \ast  f\|_\cV 
 + 
 \|K_\uIR^{\ast\oo} \ast  f\|_\cV \int_0^\uIR \|\fR_\sUV^{\phantom{\oo}} \dot G_\sIR^{\phantom{\oo}}\|_\cK\, 
 \,\rd\sIR.
\end{multline}
By Lemma~\ref{lem:kernel_u_v} $\|\fP_\uIR^\oo K_\sIR^{\ast \oo}\|_\cK\leq 1$ for $\sIR\geq\uIR$. Moreover, by Lemma~\ref{lem:kernel_dot_G} it holds
\begin{equation}
 \|\fR_\sUV^{\phantom{\oo}} \dot G_\sIR^{\phantom{\oo}}\|_\cK\leq c\,[\sUV\vee\sIR]^{\sigma-\varepsilon}[\sIR]^{\varepsilon-\sigma},
 \qquad
 \|\fR_\sUV^{\phantom{\oo}}\fP^{2\oo}_\sIR \dot G_\sIR^{\phantom{\oo}}\|_\cK\leq c\,[\sUV\vee\sIR]^{\sigma-\varepsilon}[\sIR]^{\varepsilon-\sigma}
\end{equation}
for some $c>0$ and all $\sUV\in[0,1]$, $\sIR\in(0,1]$. The lemma follows now from the above bounds and the estimates
\begin{equation}
 \int_\uIR^1 [\sUV\vee\sIR]^{-\rho-\varepsilon}\,[\sIR]^{\varepsilon-\sigma}\,\rd\sIR \leq \sigma (\rho^{-1}+\varepsilon^{-1})\,[\sUV\vee\uIR]^{-\rho}
\end{equation}
and
\begin{equation}
 [\sUV\vee\uIR]^{-\rho-\sigma} \int_0^\uIR [\sUV\vee\sIR]^{\sigma-\varepsilon}\,[\sIR]^{\varepsilon-\sigma}\,\rd\sIR \leq \sigma (\sigma^{-1}+\varepsilon^{-1})\,[\sUV\vee\uIR]^{-\rho}.
\end{equation}
\end{proof}

\subsection{Da Prato-Debussche trick}\label{sec:d-p}

In this section we introduce the effective force $\hat F_{\sUV,\sIR}[\varphi]$ which, as argued in Sec.~\ref{sec:intro_initial_value}, will be useful to deal with the effect of the initial condition. The definition $\hat F_{\sUV,\sIR}[\varphi]$ stated below is motivated by the so-called Da Prato-Debussche trick~\cite{daprato2003}.

\begin{dfn}\label{dfn:eff_force_hat}
Let $f^{i,m,a}_\sUV$, $(i,m,a)\in\frI_0$ be a boundary data, $F^{i,m}_{\sUV,\sIR}$, \mbox{$\sIR\in[0,1]$}, $i,m\in\bN_0$, be the corresponding effective force coefficients and $F_{\sUV,\sIR}[\varphi]$ be the corresponding effective force functional constructed as explained in Def.~\ref{dfn:force_ima}. We set $f^i_{\sUV,1}:=F^{i,0}_{\sUV,1}$ for $i\in\bN_0$ and $f^\triangleright_{\sUV}:=\sum_{i=0}^{i_\triangleright} \lambda^i f_{\sUV,1}^i$, $\varPhi_\sUV^\triangleright:=\hat G\ast f^\triangleright_{\sUV}$. The effective force coefficients $\hat F^{i,m}_{\sUV,\sIR}$, \mbox{$\sIR\in[0,1]$}, $i,m\in\bN_0$ are defined by the following relation between formal power series
\begin{equation}
 \langle \hat F_{\sUV,\sIR}[\varphi],\psi\rangle
 \equiv \sum_{i=0}^\infty \sum_{m=0}^\infty \lambda^i\,\langle \hat F^{i,m}_{\sUV,\sIR},\psi\otimes\varphi^{\otimes m}\rangle
 :=
 \langle F_{\sUV,\sIR}[\varphi+\hat G_\sIR\ast f^\triangleright_{\sUV}]- f^\triangleright_{\sUV},\psi\rangle
\end{equation}
where $\psi,\varphi\in C^\infty_\rc(\bM)$ are arbitrary. We omit $\sIR$ if $\sIR=0$. 
\end{dfn}

\begin{rem}
Recall that $i_\triangleright$ is the largest integer such that $\varrho(i_\triangleright)+\sigma\leq0$. This implies that $\varrho_\varepsilon(i)+\sigma<0$ for all $\varepsilon>0$ and $i\in\{0,\ldots,i_\triangleright\}$. 
\end{rem}

\begin{rem}
By Remark~\ref{rem:eff_force_shift} the functional $\hat F_{\sUV,\sIR}[\varphi]$ satisfies the flow equation~\eqref{eq:intro_flow_eq} in the sense of formal power series. Hence, the coefficients $\hat F^{i,m}_{\sUV,\sIR}$ satisfy the flow equation~\eqref{eq:flow_deterministic_i_m}. 
\end{rem}

\begin{rem}\label{rem:shift_identity}
Note that as argued in Remark~\ref{rem:intro_flow_solution} it holds $f_{\sUV,1} = F_{\sUV,\sIR}[\hat G_\sIR\ast f_{\sUV,1}]$ in the sense of formal power series, where $f_{\sUV,1}=\sum_{i=0}^\infty \lambda^i\,f_{\sUV,1}^i$. Consequently, $\hat F^{i,0}_{\sUV,\sIR}=0$ if $i\in\{0,\ldots,i_\triangleright\}$ and for any $i\in\bN_0$ we have
\begin{equation}\label{eq:recursive_f}
 \langle f^i_{\sUV,1},\psi\rangle 
 =
 \sum_{k\in\bN_0}
 \sum_{\substack{i_0,\ldots,i_m\in\bN_0\\i_0+\ldots+i_k=i}}
 \langle F^{i_0,k}_{\sUV,\sIR},\psi\otimes 
 \hat G_\sIR\ast f_{\sUV,1}^{i_1}\otimes\ldots\otimes \hat G_\sIR\ast f_{\sUV,1}^{i_k}\rangle.
\end{equation} 
As we will see below, the above identity allows to control $K_\sIR^{\ast\oo}\ast \mathtt{w} f^i_{\sUV,1}$ using the bounds established in Sec.~\ref{sec:stat_deterministic}. We also note that for $i\in\{1,\ldots,i_\triangleright\}$ and $m\in\bN_+$, or $i>i_\triangleright$ and $m\in\bN_0$ it holds
\begin{multline}
 \langle \hat F^{i,m}_{\sUV,\sIR},\psi\otimes\varphi^{\otimes m}\rangle 
 \\
 := \sum_{k\in\bN_0}\sum_{\substack{i_0,\ldots,i_k\in\bN_0\\i_0+\ldots+i_k=i\\i_1,\ldots,i_k\leq i_\triangleright}}\! \binom{m+k}{k} 
 \langle F^{i_0,k+m}_{\sUV,\sIR},\psi\otimes 
 \hat G_\sIR\ast f^{i_1}_{\sUV,1}\otimes\ldots\otimes \hat G_\sIR\ast f^{i_k}_{\sUV,1}\otimes\varphi^{\otimes m}\rangle,
\end{multline}
where $\psi,\varphi\in C^\infty_\rc(\bM)$ are arbitrary. The sums in the above formulas are finite because $F^{i,m}_{\sUV,\sIR}=0$ for all $m>im_\flat$. Note that $\hat F^{i,m}_{\sUV,\sIR}=0$ for all $m>im_\flat$ as well.
\end{rem}

For completeness we state without a proof the following simple qualitative lemma.

\begin{lem}\label{lem:hat_F_simple}
For every $\sIR\in[0,1]$ and $i,m\in\bN_0$ the effective force coefficients $\hat F^{i,m}_{\sUV,\sIR}\in\cD^{m;0}$ are uniquely defined. Moreover, for any $\mathtt{w}\in C^\infty_\rc(\bR)$ the function
\begin{equation}
 [0,1]\ni\sIR\mapsto (\delta_\bM\otimes J^{\otimes m})\ast \mathtt{w}\hat F^{i,m}_{\sUV,\sIR}\in\cV^m
\end{equation}
is continuous in the whole domain and differentiable in the domain $\sIR\in(0,1]$. 
\end{lem}

\begin{rem}\label{rem:R_rho_hat}
In the proof of the lemma and the theorem stated below we will use the following equalities: $R^{i,m} = R^{i_0,k+m} R^{i_1,0}\ldots R^{i_k,0}$ and
\begin{equation}\label{eq:varrho_equality_shift}
 \varrho_\varepsilon(i,m) = \varrho_\varepsilon(i_0,k+m) + \varrho_\varepsilon(i_1)+\sigma + \ldots + \varrho_\varepsilon(i_k)+\sigma
\end{equation}
valid for any $\varepsilon\in(0,1]$ and $i,m,k\in\bN_0$, $i_0,\ldots,i_k\in\bN_0$ such that $i_0+\ldots+i_k=i$. The first of the above identities follows from the definition $R^{i,m}=R^{1+i m_\flat-m}$. The second one is an immediate consequence of Def.~\ref{dfn:varrho} of $\varrho_\varepsilon(i,m)$.
\end{rem}

\begin{lem}\label{lem:deterministic_hat}
Under the assumptions of Theorem~\ref{thm:deterministic_taylor} there exists $\oo\in\bN_0$ and a universal family of distributions $f^i_{0,1}=F^{i,0}_{0,1}\in\cD$, $i\in\{0,\ldots i_\triangleright\}$, such that it holds
\begin{enumerate}
\item[(A${}_1$)]
$\|K_\sIR^{\ast\oo}\ast \mathtt{w} f^i_{0,1}\|_\cV\lesssim R^{i,0}\,[\sIR]^{\varrho_\varepsilon(i)}$,~~$\sIR\in(0,1]$,

\item[(B${}_1$)]
$\|K_\sIR^{\ast\oo}\ast \mathtt{w} f^i_{\sUV,1}\|_\cV\lesssim R^{i,0}\,[\sUV\vee\sIR]^{\varrho_\varepsilon(i)}$,~~$\sIR\in[0,1]$,

\item[(C${}_1$)]
$\|K_\sIR^{\ast\oo}\ast \mathtt{w}(f^i_{\sUV,1}-f^i_{0,1})\|_\cV\lesssim \delta R^{i,0}\,[\sIR]^{\varrho_\varepsilon(i)-\varepsilon}$,~~$\sIR\in(0,1]$,
\end{enumerate}
and
\begin{enumerate}
\item[(A${}_2$)] $\|\fP_\sIR^\oo\hat G_\sIR^{\phantom{\oo}}\ast \mathtt{w} f^i_{0,1}\|_\cV\lesssim R^{i,0}\, [\sIR]^{\varrho_\varepsilon(i)+\sigma}$,~~$\sIR\in(0,1]$,

\item[(B${}_2$)] $\|\fR_\sUV^{\phantom{\oo}}\fP_\sIR^\oo\hat G_\sIR^{\phantom{\oo}}\ast \mathtt{w} f^i_{\sUV,1}\|_\cV\lesssim R^{i,0}\, [\sUV\vee\sIR]^{\varrho_\varepsilon(i)+\sigma}$,~~$\sIR\in[0,1]$,

\item[(C${}_2$)] $\|\fR_\sUV^{\phantom{\oo}}\fP_\sIR^\oo\hat G_\sIR^{\phantom{\oo}}\ast \mathtt{w}(f^i_{\sUV,1}-f^i_{0,1})\|_\cV\lesssim \delta R^{i,0}\, [\sIR]^{\varrho_\varepsilon(i)+\sigma-\varepsilon}$,~~$\sIR\in(0,1]$,
\end{enumerate}
for all $i\in\{0,\ldots,i_\triangleright\}$ and all $\mathtt{w}\in C^\infty_\rc(\bR)$ satisfying the condition $\mathtt{u}=1$ on some neighborhood of \mbox{$\supp\,\mathtt{w}-[0,i(i+1)]$}. The constants of proportionality in the above bounds depend only on $i,m\in\bN_0$ and \mbox{$\mathtt{w},\mathtt{u}\in C^\infty_\rc(\bR)$} and are otherwise universal.
\end{lem}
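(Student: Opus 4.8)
The plan is to prove Lemma~\ref{lem:deterministic_hat} by induction on $i\in\{0,\ldots,i_\triangleright\}$, exploiting the recursive identity~\eqref{eq:recursive_f} for $f^i_{\sUV,1}$ together with the bounds on the effective force coefficients at stationarity already established in Theorem~\ref{thm:determinisic_taylor}. First I would fix $\mathtt{w}\in C^\infty_\rc(\bR)$ as in the statement and choose an auxiliary $\mathtt{v}\in C^\infty_\rc(\bR)$ with $\mathtt{v}=1$ on a suitable enlargement of $\supp\,\mathtt{w}$, so that Theorem~\ref{thm:determinisic_taylor} applies to all the $F^{i_0,k}_{\sUV,\sIR}$ appearing on the right-hand side of~\eqref{eq:recursive_f} with the weight $\mathtt{v}$; by Remark~\ref{rem:weight} such enlargements are harmless. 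The base case $i=0$ is essentially contained in Theorem~\ref{thm:determinisic_taylor}: $f^0_{\sUV,1}=F^{0,0}_{\sUV,1}$ and $\varrho_\varepsilon(0)=-\dim(\Xi)-\varepsilon$, so (A${}_1$), (B${}_1$), (C${}_1$) follow directly, while (A${}_2$), (B${}_2$), (C${}_2$) follow from the Schauder estimate Lemma~\ref{lem:schauder} applied with $\rho=-\varrho_\varepsilon(0)-\sigma>0$ (positive by Remark~\ref{rem:shift_rho}).

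For the inductive step, assuming (A${}_1$)--(C${}_2$) for all $i'<i$, I would take the right-hand side of~\eqref{eq:recursive_f} at scale $\sIR$, convolve with $K_\sIR^{\ast\oo}$, and bound each summand using the map $\fB$-type estimates (Lemma~\ref{lem:fB1_bound}, Lemma~\ref{lem:fA_fB_bounds}, Lemma~\ref{lem:fA_fB_Ks}) to distribute the kernel $K_\sIR^{\ast\oo}$ onto the factors $F^{i_0,k}_{\sUV,\sIR}$ and $\hat G_\sIR\ast f^{i_q}_{\sUV,1}$. Each factor $\hat G_\sIR\ast f^{i_q}_{\sUV,1}$ is controlled by (A${}_2$)/(B${}_2$) of the induction hypothesis (note $i_q\leq i_\triangleright$ and, since $i_1,\ldots,i_k\le i_\triangleright<i$ whenever $k\geq1$, the induction hypothesis genuinely applies — the case $k=0$ gives $i_0=i$ and $f^i_{\sUV,1}=F^{i,0}_{\sUV,1}$, already covered by Theorem~\ref{thm:determinisic_taylor}), and each $F^{i_0,k}_{\sUV,\sIR}$ by Theorem~\ref{thm:determinisic_taylor}(B). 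Multiplying the bounds and using the additivity identities~\eqref{eq:varrho_equality_shift} and~\eqref{eq:R_equality_shift} yields exactly $\|K_\sIR^{\ast\oo}\ast\mathtt{w}f^i_{\sUV,1}\|_\sV\lesssim R^{i,0}\,[\sUV\vee\sIR]^{\varrho_\varepsilon(i)}$, which is (B${}_1$); the difference bound (C${}_1$) comes from the multilinearity of $\fB$, writing $f^{i_1}_{\sUV,1}\otimes\cdots-f^{i_1}_{0,1}\otimes\cdots$ as a telescoping sum and $F^{i_0,k}_{\sUV,\sIR}-F^{i_0,k}_{0,\sIR}$ via Theorem~\ref{thm:determinisic_taylor}(C), each such replacement contributing a factor $\delta[\sIR]^{-\varepsilon}$ relative to the scaling power. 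The limiting objects $f^i_{0,1}$ are defined by the same formula~\eqref{eq:recursive_f} with $\sUV=0$, using the already-constructed family~\eqref{eq:determinisic_taylor_family}; (A${}_1$) is then the $\sUV\to0$ specialization of (B${}_1$).

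Passing from the $f$-bounds to the $\hat G_\sIR\ast f$-bounds (A${}_2$)--(C${}_2$) is then a direct application of Lemma~\ref{lem:schauder}, once I check its hypothesis: from (A${}_1$)/(B${}_1$) we have $\|K_\sIR^{\ast\oo}\ast\mathtt{w}f^i_{\sUV,1}\|_\sV\lesssim R^{i,0}\,[\sUV\vee\sIR]^{\varrho_\varepsilon(i)}$ and since $\varrho_\varepsilon(i)+\sigma<0$ by Remark~\ref{rem:shift_rho}, this is of the form $R\,[\sUV\vee\sIR]^{-\sigma-\rho}$ with $\rho=-\varrho_\varepsilon(i)-\sigma>0$; Lemma~\ref{lem:schauder}(A)/(B) then gives (A${}_2$) and the corresponding version of (B${}_2$), and (C${}_2$) follows by applying the Schauder estimate to $f^i_{\sUV,1}-f^i_{0,1}$ with the bound (C${}_1$) (the loss $[\sIR]^{-\varepsilon}$ is absorbed by lowering $\rho$ to $\rho-\varepsilon$, which stays positive for $\varepsilon$ small). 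Continuity in $\sIR$ on $[0,1]$ and differentiability on $(0,1]$ follow from the analogous properties of $F^{i,m}_{\sUV,\sIR}$ (Lemma~\ref{lem:eff_force_basic_properties_deterministic}) and the explicit finite-sum structure of~\eqref{eq:recursive_f}. The main obstacle I anticipate is bookkeeping: keeping track of which weight function ($\mathtt{w}$ versus $\mathtt{v}$ versus their enlargements) is used at each node of the recursion and verifying that the support-translation conditions of the form "$\mathtt{u}(\mathring x)=1$ for $\mathring x\in\supp\,\mathtt{w}-[0,2i(i+1)]$" propagate correctly through the nested application of Theorem~\ref{thm:determinisic_taylor}; this is routine but needs care, as does checking that a single $\oo\in\bN_+$ can be chosen uniformly over the finitely many indices $i\le i_\triangleright$.
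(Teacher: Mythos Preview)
Your approach is essentially the same as the paper's: induct on $i$, use the identity~\eqref{eq:recursive_f} together with Theorem~\ref{thm:determinisic_taylor} for the $F^{i_0,k}_{\sUV,\sIR}$ factors and the induction hypothesis (in the form (A${}_2$)--(C${}_2$)) for the $\hat G_\sIR\ast f^{i_q}_{\sUV,1}$ factors, then pass from (A${}_1$)--(C${}_1$) to (A${}_2$)--(C${}_2$) via Lemma~\ref{lem:schauder}. One correction: your justification ``$i_1,\ldots,i_k\le i_\triangleright<i$'' is wrong since $i\le i_\triangleright$ throughout; the correct reason the induction applies is that $F^{0,k}_{\sUV,\sIR}=0$ for $k\ge 1$ (because $k>0\cdot m_\flat$), so only terms with $i_0\ge 1$ contribute, forcing $i_1+\ldots+i_k\le i-1$ and hence $i_q<i$ for each $q\ge 1$.
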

\begin{proof}
We first note that by Lemma~\ref{lem:schauder}~(A) the bounds (A${}_1$), (B${}_1$), (C${}_1$) imply the bounds (A${}_2$), (B${}_2$), (C${}_2$). Next, we observe that $f^0_{\sUV,1}=F^{0,0,0}_{\sUV,\sIR}$ and define $f^0_{0,1}:=F^{0,0,0}_{0,\sIR}$. As a result, the bounds (A${}_1$), (B${}_1$), (C${}_1$) hold for $i=0$ by Theorem~\ref{thm:deterministic_taylor}. 

Now let us fix $i_\circ\in\bN_+$ and assume that the lemma holds for all $i<i_\circ$. We shall prove the lemma for $i=i_\circ$. To this end, we fix some $\mathtt{w}\in C^\infty_\rc(\bR)$ such that $\mathtt{u}=1$ on some neighborhood of \mbox{$\supp\,\mathtt{w}-[0,i(i+1)]$}. Note that by Remark~\ref{rem:shift_identity} and the support properties of the effective force coefficients stated in Lemma~\ref{lem:support} it holds
\begin{multline}
 \langle K_\sIR^{\ast\oo}\ast\mathtt{w} f^i_{\sUV,1},\psi\rangle 
 \\
 =
 \sum_{k\in\bN_0}
 \sum_{\substack{i_0,\ldots,i_k\in\bN_0\\i_0+\ldots+i_k=i}}
 \langle K_{\sUV,\sIR}^{k;\oo}\ast \mathtt{w} F^{i_0,k}_{\sUV,\sIR},\psi\otimes 
 \fR_\sUV^{\phantom{\oo}} \fP_\sIR^\oo  \hat G_\sIR\ast \mathtt{v} f_{\sUV,1}^{i_1}\otimes\ldots\otimes \fR_\sUV^{\phantom{\oo}} \fP_\sIR^\oo  \hat G_\sIR\ast \mathtt{v} f_{\sUV,1}^{i_k}\rangle,
\end{multline}
where $\mathtt{v}\in C^\infty_\rc(\bR)$ is such that $\mathtt{v}=1$ on some neighborhood of $\supp\,\mathtt{w}-[0,2i]$ and $\mathtt{u}=1$ on some neighborhood of $\supp\,\mathtt{v}-[0,(i-1)i]$. Hence, by Def.~\ref{dfn:cV} of the norm $\|\Cdot\|_{\cV^m}$ we obtain
\begin{multline}
 \|K_\sIR^{\ast\oo}\ast\mathtt{w} f^i_{\sUV,1}\|_\cV
 \leq
 \sum_{k\in\bN_0}
 \sum_{\substack{i_0,\ldots,i_k\in\bN_0\\i_0+\ldots+i_k=i}}
 \|K_{\sUV,\sIR}^{k;\oo}\ast \mathtt{w} F^{i_0,k}_{\sUV,\sIR}\|_{\cV^k}
 \\[1mm]
 \times
 \|\fR_\sUV^{\phantom{\oo}} \fP_\sIR^\oo  \hat G_\sIR\ast \mathtt{v} f_{\sUV,1}^{i_1}\|_\cV \ldots 
 \|\fR_\sUV^{\phantom{\oo}} \fP_\sIR^\oo  \hat G_\sIR\ast \mathtt{v} f_{\sUV,1}^{i_k}\|_\cV
\end{multline}
and
\begin{multline}
 \|K_\sIR^{\ast\oo}\ast\mathtt{w}(f^i_{\sUV,1}-f^i_{0,1})\|_\cV
 \leq
 \sum_{\substack{k,l\in\bN_0\\l\leq k}}
 \sum_{\substack{i_0,\ldots,i_k\in\bN_0\\i_0+\ldots+i_k=i}}
 \|K_{\sUV,\sIR}^{k;\oo}\ast \mathtt{w} F^{i_0,k}_{\sUV,\sIR}\|_{\cV^k}
 \\[1mm]
 \times
 \|\fR_\sUV^{\phantom{\oo}} \fP_\sIR^\oo  \hat G_\sIR\ast \mathtt{v} f_{\sUV,1}^{i_1}\|_\cV \ldots 
 \|\fR_\sUV^{\phantom{\oo}} \fP_\sIR^\oo  \hat G_\sIR\ast \mathtt{v} f_{\sUV,1}^{i_{l-1}}\|_\cV
 \\[2mm]
 \times
 \|\fR_\sUV^{\phantom{\oo}} \fP_\sIR^\oo  \hat G_\sIR\ast \mathtt{v} f_{\sUV,1}^{i_l}-\fP_\sIR^\oo  \hat G_\sIR\ast \mathtt{v} f_{0,1}^{i_l}\|_\cV\,
 \|\fP_\sIR^\oo  \hat G_\sIR\ast \mathtt{v} f_{0,1}^{i_{l+1}}\|_\cV \ldots 
 \|\fP_\sIR^\oo  \hat G_\sIR\ast \mathtt{v} f_{0,1}^{i_{k}}\|_\cV
 \\[4mm]
 +
 \sum_{k\in\bN_0}
 \sum_{\substack{i_0,\ldots,i_k\in\bN_0\\i_0+\ldots+i_k=i}}
 \|K_{\sUV,\sIR}^{k;\oo}\ast \mathtt{w}(F^{i_0,k}_{\sUV,\sIR}-F^{i_0,k}_{0,\sIR})\|_{\cV^k}\,
 \|\fP_\sIR^\oo  \hat G_\sIR\ast \mathtt{v} f_{0,1}^{i_1}\|_\cV \ldots 
 \|\fP_\sIR^\oo  \hat G_\sIR\ast \mathtt{v} f_{0,1}^{i_k}\|_\cV
\end{multline}
These bounds remain valid for $\sUV=0$. The bounds (A${}_1$), (B${}_1$), (C${}_1$) follow now from the induction hypothesis, Theorem~\ref{thm:deterministic_taylor} and Remark~\ref{rem:R_rho_hat} applied with $m=0$.
\end{proof}

\begin{rem}\label{rem:check_f_bounds}
Let $\check f^i_{\sUV,1}=\fQ G_1\ast f^i_{\sUV,1}$. In the next section we will need the bounds
\begin{enumerate}
 \item[(A)] $\|\mathtt{w} \check f^i_{0,1}\|_\cV\lesssim R^{i,0}$,
 \item[(B)] $\|\mathtt{w} \check f^i_{\sUV,1}\|_\cV\lesssim R^{i,0}$,
 \item[(C)] $\|\mathtt{w}(\check f^i_{\sUV,1}-\check f^i_{0,1})\|_\cV\lesssim \delta R^{i,0}$,
\end{enumerate}
which easily follow from the bounds (A${}_1$), (B${}_1$), (C${}_1$) stated in the above lemma and the fact that $\fQ G_1=-\dot G_1$ is smooth and $\|\fP^{\oo} \dot G_1\| \lesssim 1$. 
\end{rem}

\begin{rem}
Recall that $\alpha\equiv\alpha_\varepsilon:=-\dim(\varPhi)-\varepsilon$. Note that $\varrho_\varepsilon(i)+\sigma\geq \alpha_\varepsilon$ for any $i\in\bN_0$ and $\varepsilon\in(0,\varepsilon_\diamond)$.
\end{rem}

\begin{lem}\label{lem:singular_terms_main}
Under the assumptions of Theorem~\ref{thm:deterministic_taylor} there exists $\oo\in\bN_0$ and a universal distribution $\varPhi_0^\triangleright\in\cD$ such that
\begin{enumerate}
\item[(A)] $\|K_\sIR^{\ast\oo} \ast \mathtt{w} \varPhi^\triangleright_0\|_\cV\lesssim R^{i_\triangleright,0}\, [\sIR]^{\alpha_\varepsilon}$,  $\sIR\in(0,1]$,

\item[(B)] $\|K_\sIR^{\ast\oo} \ast \mathtt{w} \fR_\sUV^{\phantom{\oo}}\varPhi^\triangleright_\sUV\|_\cV\lesssim R^{i_\triangleright,0}\, [\sUV\vee\sIR]^{\alpha_\varepsilon}$, $\sIR\in[0,1]$,

\item[(C)] $\|K_\sIR^{\ast\oo}\ast \mathtt{w}(\varPhi^\triangleright_\sUV-\varPhi^\triangleright_0)\|_\cV\lesssim \delta R^{i_\triangleright,0}\, [\sIR]^{\alpha_\varepsilon-\varepsilon}$, $\sIR\in[\sUV,1]$,
\end{enumerate}
for all $\mathtt{w}\in C^\infty_\rc(\bR)$ satisfying the condition $\mathtt{u}=1$ on some neighborhood of \mbox{$\supp\,\mathtt{w}-[0,2i_\triangleright(i_\triangleright+1)+2]$}. The constants of proportionality in the above bounds depend only on \mbox{$\mathtt{w},\mathtt{u}\in C^\infty_\rc(\bR)$} and are otherwise universal.
\end{lem}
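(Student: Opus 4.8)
The plan is to derive Lemma~\ref{lem:singular_terms_main} directly from Lemma~\ref{lem:deterministic_hat} by unpacking the definition $\varPhi_\sUV^\triangleright=\hat G\ast f_\sUV^\triangleright=\hat G\ast\sum_{i=0}^{i_\triangleright}\lambda^i f_{\sUV,1}^i$ and applying the Schauder estimate of Lemma~\ref{lem:schauder} term by term. First I would define the limiting object by
\begin{equation}
 \varPhi_0^\triangleright:=\hat G\ast f_0^\triangleright,\qquad f_0^\triangleright:=\sum_{i=0}^{i_\triangleright}\lambda^i f_{0,1}^i,
\end{equation}
where the distributions $f_{0,1}^i\in\cD$, $i\in\{0,\ldots,i_\triangleright\}$, are the universal family produced by Lemma~\ref{lem:deterministic_hat}; this makes $\varPhi_0^\triangleright$ universal since each $f_{0,1}^i$ is. Since the sum over $i$ is finite, it suffices to prove the three bounds for each summand $\hat G\ast f_{\sUV,1}^i$ separately, and then sum, using the elementary observation $R^{i,0}\lesssim R^{i_\triangleright,0}$ for $i\le i_\triangleright$ (recall $R>1$ and $R^{i,0}=R^{1+im_\flat}$, so $R^{i,0}\le R^{i_\triangleright,0}$).

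For the individual terms, the key input is that by Lemma~\ref{lem:deterministic_hat}~(B${}_1$) we have, for suitable $\oo$ and $\mathtt{w}$,
\begin{equation}
 \|K_\sIR^{\ast\oo}\ast\mathtt{w} f_{\sUV,1}^i\|_\sV\lesssim R^{i,0}\,[\sUV\vee\sIR]^{\varrho_\varepsilon(i)}
\end{equation}
uniformly in $\sIR\in[0,1]$. By Remark~\ref{rem:shift_rho} it holds $\varrho_\varepsilon(i)+\sigma<0$, hence $\varrho_\varepsilon(i)<-\sigma$; writing $\rho:=-\varrho_\varepsilon(i)-\sigma>0$, this is precisely the hypothesis of Lemma~\ref{lem:schauder} with $R$ replaced by $R^{i,0}$ (after absorbing the weight $\mathtt{w}$ via Remark~\ref{rem:weight}, which lets one pass from $\mathtt{w} f_{\sUV,1}^i$ to $f_{\sUV,1}^i$ tested against a slightly larger cutoff). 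Lemma~\ref{lem:schauder}~(A) then yields
\begin{equation}
 \|K_\sIR^{\ast\oo}\ast\fR_\sUV^{\phantom{\oo}}\hat G\ast\mathtt{w} f_{\sUV,1}^i\|_\sV\lesssim R^{i,0}\,[\sUV\vee\sIR]^{-\rho}=R^{i,0}\,[\sUV\vee\sIR]^{\varrho_\varepsilon(i)+\sigma}.
\end{equation}
Since $\varrho_\varepsilon(i)+\sigma\ge\alpha_\varepsilon=-\dim(\Phi)-\varepsilon$ (this is exactly the content of the Remark immediately preceding Lemma~\ref{lem:singular_terms_main}, because $\varrho_\varepsilon(i)\ge\varrho_\varepsilon(0)=-\dim(\Xi)-\varepsilon$ and $-\dim(\Xi)+\sigma=-\dim(\Phi)$), monotonicity of $[\sUV\vee\sIR]^\beta$ in $\beta$ for $[\sUV\vee\sIR]\le 1$ upgrades the exponent to $\alpha_\varepsilon$, giving Part~(B) after summing over $i$. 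Part~(A) is the same computation with $\sUV=0$, using Lemma~\ref{lem:deterministic_hat}~(A${}_1$). For Part~(C) I would instead feed the difference bound Lemma~\ref{lem:deterministic_hat}~(C${}_1$), namely $\|K_\sIR^{\ast\oo}\ast\mathtt{w}(f_{\sUV,1}^i-f_{0,1}^i)\|_\sV\lesssim\delta R^{i,0}[\sIR]^{\varrho_\varepsilon(i)-\varepsilon}$, into a version of the Schauder estimate; here one must check that $\hat G\ast(f_{\sUV,1}^i-f_{0,1}^i)$ and also the replacement of $\fR_\sUV\hat G$ by $\hat G$ (which only differs by the $\fR_\sUV$ factor acting on a fixed kernel, bounded by Lemma~\ref{lem:kernel_dot_G}) can be handled, and that $\varrho_\varepsilon(i)-\varepsilon+\sigma\ge\alpha_\varepsilon-\varepsilon$, yielding the claimed $\delta R^{i_\triangleright,0}[\sIR]^{\alpha_\varepsilon-\varepsilon}$.

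The main obstacle I anticipate is purely bookkeeping rather than conceptual: tracking the weight functions $\mathtt{w}$ through the convolution with $\hat G$. Because $\hat G$ has noncompact support in time (it equals $G-G_1$ and $G$ is supported in $[0,\infty)\times\bR^\rdim$), convolving a compactly-time-supported $\mathtt{w} f_{\sUV,1}^i$ against $\hat G$ spreads the support forward in time, so one cannot literally commute $\mathtt{w}$ through $\hat G\ast$; the fix is the standard one used throughout Sec.~\ref{sec:stat_deterministic} and~\ref{sec:deterministic}, namely to work with an auxiliary cutoff $\mathtt{v}$ with $\mathtt{v}\equiv 1$ on a large enough interval and invoke Remark~\ref{rem:weight} together with the exponential-decay-in-time structure of $G_\sIR$ (Remark~\ref{rem:dot_G_1}) to absorb the tails. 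A second minor point is that Lemma~\ref{lem:schauder} as stated requires the hypothesis bound only for $\sIR\in[\uIR,1]$, so one reads off the final bound at cutoff scale $\uIR$ and then notes the resulting estimate holds for all $\uIR\in(0,1]$, which is exactly the form of Parts~(A)--(C). No new ideas beyond what is already assembled in Lemmas~\ref{lem:schauder} and~\ref{lem:deterministic_hat} should be needed.
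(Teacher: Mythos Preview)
Your overall strategy is correct and matches the paper's proof: define $\varPhi_0^\triangleright:=\hat G\ast f_0^\triangleright$, reduce to term-by-term bounds on $\hat G\ast f_{\sUV,1}^i$, feed Lemma~\ref{lem:deterministic_hat}~(A${}_1$)--(C${}_1$) into the Schauder estimate Lemma~\ref{lem:schauder}~(A), and upgrade the exponent to $\alpha_\varepsilon$ using $\varrho_\varepsilon(i)+\sigma\ge\alpha_\varepsilon$.

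However, your discussion of the ``main obstacle'' contains a factual error that makes you propose an unnecessary and incorrect workaround. The kernel $\hat G=G-G_1$ \emph{is} compactly supported in time: since $G_1(\mathring x,\bar x)=\chi(\mathring x)G(\mathring x,\bar x)$ with $\chi\equiv 1$ on $[2,\infty)$, the difference $G-G_1$ vanishes for $\mathring x\ge 2$, so $\hat G$ is supported in $[0,2]\times\bR^\rdim$ (this is stated explicitly in the paper just before Lemma~\ref{lem:initial_properties}). Consequently the weight bookkeeping is immediate: $\mathtt{w}(\hat G\ast f_\sUV^\triangleright)=\mathtt{w}(\hat G\ast\mathtt{v} f_\sUV^\triangleright)$ for any $\mathtt{v}$ with $\mathtt{v}\equiv 1$ on $\supp\mathtt{w}-[0,2]$, and then Remark~\ref{rem:weight} lets you drop the outer $\mathtt{w}$ and apply Lemma~\ref{lem:schauder}. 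There is no need to invoke ``exponential-decay-in-time'' tails; Remark~\ref{rem:dot_G_1}, which you cite, concerns spatial decay of $\dot G_1$, not temporal decay, and does not help here. Once you correct this, your proof goes through exactly as in the paper.
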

\begin{proof}
In the proof we use the results of the previous lemma. Recall that
\begin{equation}
 \varPhi_\sUV^\triangleright=\hat G\ast f_{\sUV,1}^\triangleright:=
 \sum_{i=0}^{i_\triangleright}\lambda^i \,\hat G\ast f_{\sUV,1}^i.
\end{equation}
For $\sUV=0$, this is the definition of the LHS. By the fact that $\hat G$ is supported in $[0,2]\times\bR^\rdim$ and Remark~\ref{rem:weight} we obtain
\begin{equation}
 \|K_\sIR^{\ast\oo} \ast \mathtt{w} \fR_\sUV^{\phantom{\oo}}\varPhi^\triangleright_\sUV\|_\cV
 =
 \|K_\sIR^{\ast\oo} \ast \mathtt{w} \fR_\sUV^{\phantom{\oo}}\hat G\ast \mathtt{v} f^\triangleright_\sUV\|_\cV
 \lesssim 
 \|K_\sIR^{\ast\oo} \ast \fR_\sUV^{\phantom{\oo}}\hat G\ast \mathtt{v} f^\triangleright_\sUV\|_\cV
\end{equation}
and
\begin{equation}
 \|K_\sIR^{\ast\oo} \ast \mathtt{w}(\varPhi^\triangleright_\sUV-\varPhi^\triangleright_0)\|_\cV
 =
 \|K_\sIR^{\ast\oo} \ast \mathtt{w}(\hat G\ast \mathtt{v} f^\triangleright_\sUV-\hat G\ast \mathtt{v} f^\triangleright_0)\|_\cV
 \lesssim 
 \|K_\sIR^{\ast\oo} \ast \hat G\ast \mathtt{v}(f^\triangleright_\sUV-f^\triangleright_0)\|_\cV,
\end{equation} 
where $\mathtt{v}\in C^\infty_\rc(\bR)$ is such that $\mathtt{v}=1$ on some neighborhood of $\supp\,\mathtt{w}-[0,2]$ and $\mathtt{u}=1$ on some neighbourhood of $\supp\,\mathtt{v}-[0,i_\triangleright(i_\triangleright+1)+2]$. The first of the above bounds is also valid for $\sUV=0$. The statement of the lemma follows now from Lemma~\ref{lem:schauder}~(B) and the bounds (A${}_1$), (B${}_1$), (C${}_1$) proved in the previous lemma.
\end{proof}

\begin{thm}\label{thm:F_hat_bound}
Under the assumptions of Theorem~\ref{thm:deterministic_taylor} there exists $\oo\in\bN_0$ and a universal family of differentiable functions 
\begin{equation}\label{eq:deterministic_hat_family}
 (0,1]\ni \sIR \mapsto \hat F^{i,m}_{0,\sIR}\in\cD^m, \qquad i,m\in\bN_0,
\end{equation}
satisfying the flow equation~\eqref{eq:flow_deterministic_i_m} such that it holds
\begin{enumerate}
\item[(A)] $\|K^{m;\oo}_{0,\sIR}\ast \mathtt{w} \hat F^{i,m}_{0,\sIR}\|_{\cV^m} \lesssim R^{i,m}\, [\sIR]^{\varrho_\varepsilon(i,m)}$,~~$\sIR\in(0,1]$, 

\item[(B)] $\|K^{m;\oo}_{\sUV,\sIR}\ast \mathtt{w} \hat F^{i,m}_{\sUV,\sIR}\|_{\cV^m} \lesssim R^{i,m}\, [\sUV\vee\sIR]^{\varrho_\varepsilon(i,m)}$,~~$\sIR\in[0,1]$, 

\item[(C)] $\|K^{m;\oo}_{\sUV,\sIR} \ast \mathtt{w}(\hat F^{i,m}_{\sUV,\sIR}-\hat F^{i,m}_{0,\sIR})\|_{\cV^m}
\lesssim \delta R^{i,m}\,[\sIR]^{\varrho_\varepsilon(i,m)-\varepsilon}$,~~$\sIR\in[\sUV,1]$,
\end{enumerate}
for all $i,m\in\bN_0$ and $\mathtt{w}\in C^\infty_\rc(\bR)$ satisfying the condition $\mathtt{u}=1$ on some neighborhood of $\supp\,\mathtt{w}-[0,i(i+1)]$. The constants of proportionality in the above bounds depend only on $i,m\in\bN_0$ and $\mathtt{w},\mathtt{u}\in C^\infty_\rc(\bR)$ and are otherwise universal.
\end{thm}
\begin{proof}
It follows from Remark~\ref{rem:shift_identity} and the support properties of the effective force coefficients stated in Lemma~\ref{lem:support} that
\begin{multline}
 \langle K_{\sUV,\sIR}^{m;\oo}\ast\mathtt{w}\hat F^{i,m}_{\sUV,\sIR},\psi\otimes\varphi^{\otimes m}\rangle 
 = \sum_{k\in\bN_0}\sum_{\substack{i_0,\ldots,i_k\in\bN_0\\i_0+\ldots+i_k=i\\i_1,\ldots,i_k\leq i_\triangleright}} \binom{m+k}{k} 
 \\
 \times
 \langle K_{\sUV,\sIR}^{k+m;\oo}\ast \mathtt{w} F^{i_0,k+m}_{\sUV,\sIR},\psi\otimes 
 \fR_\sUV^{\phantom{\oo}} \fP_\sIR^\oo \hat G_\sIR\ast \mathtt{v} f^{i_1}_{\sUV,1}\otimes\ldots\otimes \fR_\sUV^{\phantom{\oo}} \fP_\sIR^\oo \hat G_\sIR\ast \mathtt{v} f^{i_k}_{\sUV,1}\otimes\varphi^{\otimes m}\rangle
\end{multline}
for any $\mathtt{v}\in C^\infty_\rc(\bR)$ such that $\mathtt{v}=1$ one some neighborhood of $\supp\,\mathtt{w}-[0,2i]$ and $\mathtt{u}=1$ on some neighborhood of $\supp\,\mathtt{v}-[0,(i-1)i]$. Consequently, by Def.~\ref{dfn:cV} of the norm $\|\Cdot\|_{\cV^m}$ we obtain
\begin{multline}
 \|K_{\sUV,\sIR}^{m;\oo}\ast\mathtt{w}\hat F^{i,m}_{\sUV,\sIR}\|_{\cV^m}
 \lesssim \sum_{k\in\bN_0}\sum_{\substack{i_0,\ldots,i_k\in\bN_0\\i_0+\ldots+i_k=i\\i_1,\ldots,i_k\leq i_\triangleright}}
 \|K_{\sUV,\sIR}^{k+m;\oo}\ast \mathtt{w} F^{i_0,k+m}_{\sUV,\sIR}\|_{\cV^{k+m}}
 \\[1mm]
 \times
 \|\fR_\sUV^{\phantom{\oo}} \fP_\sIR^\oo \hat G_\sIR\ast \mathtt{v} f^{i_1}_{\sUV,1}\|_\cV \ldots
 \|\fR_\sUV^{\phantom{\oo}} \fP_\sIR^\oo \hat G_\sIR\ast \mathtt{v} f^{i_k}_{\sUV,1}\|_{\cV}
\end{multline}
and
\begin{multline}
 \|K_{\sUV,\sIR}^{m;\oo}\ast\mathtt{w}(\hat F^{i,m}_{\sUV,\sIR}-\hat F^{i,m}_{0,\sIR})\|_{\cV^m}
 \lesssim 
 \sum_{\substack{k,l\in\bN_0\\l\leq k}}\sum_{\substack{i_0,\ldots,i_k\in\bN_0\\i_0+\ldots+i_k=i\\i_1,\ldots,i_k\leq i_\triangleright}}
 \|K_{\sUV,\sIR}^{k+m;\oo}\ast \mathtt{w} F^{i_0,m+k}_{\sUV,\sIR}\|_{\cV^{k+m}}
 \\*[1mm]
 \times
 \|\fR_\sUV^{\phantom{\oo}} \fP_\sIR^\oo  \hat G_\sIR\ast \mathtt{v} f_{\sUV,1}^{i_1}\|_\cV \ldots 
 \|\fR_\sUV^{\phantom{\oo}} \fP_\sIR^\oo \hat G_\sIR\ast \mathtt{v} f_{\sUV,1}^{i_{l-1}}\|_\cV
 \\*[2mm]
 \times
 \|\fR_\sUV^{\phantom{\oo}} \fP_\sIR^\oo  \hat G_\sIR\ast \mathtt{v} f_{\sUV,1}^{i_l}-\fP_\sIR^\oo  \hat G_\sIR\ast \mathtt{v} f_{0,1}^{i_l}\|_\cV
 \,
 \|\fP_\sIR^\oo  \hat G_\sIR\ast \mathtt{v} f_{0,1}^{i_{l+1}}\|_\cV \ldots 
 \|\fP_\sIR^\oo  \hat G_\sIR\ast \mathtt{v} f_{0,1}^{i_{k}}\|_\cV
 \allowdisplaybreaks\\[4mm]
 +
 \sum_{k\in\bN_0}\sum_{\substack{i_0,\ldots,i_k\in\bN_0\\i_0+\ldots+i_k=i\\i_1,\ldots,i_k\leq i_\triangleright}}
 \|K_{\sUV,\sIR}^{m+k;\oo}\ast \mathtt{w}(F^{i_0,m+k}_{\sUV,\sIR}-F^{i_0,m+k}_{0,\sIR})\|_{\cV^{m+k}}
 \\*
 \times
 \|\fP_\sIR^\oo  \hat G_\sIR\ast \mathtt{v} f_{0,1}^{i_1}\|_\cV \ldots 
 \|\fP_\sIR^\oo  \hat G_\sIR\ast \mathtt{v} f_{0,1}^{i_k}\|_\cV.
\end{multline}
The bounds (A), (B), (C) follow now from Theorem~\ref{thm:deterministic_taylor}, the bounds (A${}_2$), (B${}_2$), (C${}_2$) stated in Lemma~\ref{lem:deterministic_hat} and Remark~\ref{rem:R_rho_hat}.
\end{proof}

\subsection{Effect of initial condition}\label{sec:effect_initial}

Let us fix some $t\in[0,\infty)$. The goal of most the remaining part of the deterministic analysis is the construction of $\breve \varPhi_\sUV$ and $\breve \varPhi_0$ as a function of the boundary data such that $\breve \varPhi_\sUV$ is the solution of the equation 
\begin{equation}
 \breve \varPhi_\sUV = G\ast (1_{(t,t+6i_\dagger)} (\hat F_\sUV[\breve \varPhi_\sUV] +\check f_\sUV^\triangleright)+ \delta_t\otimes\phi^\vartriangle_\sUV),
\end{equation}
in the time interval $[t,t+T]$ for sufficiently small $T\in(0,1)$ depending on the constant $R$ appearing in Assumptions~\ref{ass:deterministic} and~\ref{ass:deterministic_initial}. Note that $\breve \varPhi_\sUV(t,\Cdot)=\phi_\sUV^\vartriangle$. The time interval $(t,t+6i_\dagger)$, where $i_\flat\in\bN_+$ was introduced in Def.~\ref{dfn:im}, could in principle be replaced by any interval containing $(t,t+1)$. Moreover, recall that $\hat F_\sUV[\varphi]=F_\sUV[\varphi+\varPhi_\sUV^\triangleright]- f_\sUV^\triangleright$ is a local functional of $\varphi$ depending only on its spatial derivatives. We remind the reader that the strategy of the proof was described in Sec.~\ref{sec:intro_initial_value}. In Sec.~\ref{sec:maximal_solution} we show how to patch together the solutions defined in short time intervals and construct the maximal solution.

\begin{dfn}
We set $f_0^\vartriangle:=\delta_t\otimes\phi^\vartriangle_0$ and $f_\sUV^\vartriangle:=\delta_t\otimes\phi^\vartriangle_\sUV$, where $\delta_t\in\sS'(\bR)$ is the Dirac delta at $\mathring x=t$. We also set $\varPhi_0^\vartriangle:=\hat G\ast f_0^\vartriangle$ and $\varPhi_\sUV^\vartriangle:=\hat G\ast f_\sUV^\vartriangle$.
\end{dfn}

\begin{rem}
Note that $\varPhi^\vartriangle_\sUV$ is compactly supported in time since the kernel $\hat G=G_0-G_1$ is supported in $[0,2]\times \bR^\rdim$. Recall that $\beta\equiv\beta_\varepsilon=-\dim(\phi)-\varepsilon$ and $\dim(\phi)$ was introduced in Def.~\ref{dfn:dim_phi}.
\end{rem}

\begin{lem}\label{lem:initial_properties}
Under Assumption~\ref{ass:deterministic_initial} for $\sIR\in(0,1]$ and $\mathring x\in[t,t+T]$ it holds
\begin{enumerate}
\item[(A${}_1$)] $\|\bar K_\sIR^{\ast\oo}\ast \varPhi_0^\vartriangle(\mathring x,\Cdot)\|_{L^\infty(\bT)}\lesssim R\,[\sIR]^{\beta_\varepsilon}$,
\item[(B${}_1$)]
$\|\fR_\sUV^{\phantom{\oo}} \varPhi^\vartriangle_\sUV(\mathring x,\Cdot)\|_{L^\infty(\bT)}\lesssim \delta R\, [\sUV]^{\beta_\varepsilon}$,
\item[(C${}_1$)] $\|\bar K_\sIR^{\ast\oo}\ast (\varPhi_\sUV^\vartriangle(\mathring x,\Cdot)-\varPhi_0^\vartriangle(\mathring x,\Cdot))\|_{L^\infty(\bT)} \lesssim \delta R\,[\sIR]^{\beta_\varepsilon}$.
\end{enumerate}
Moreover, for $\sUV\in(0,\delta^{1/\varepsilon}]$ and $\varepsilon\in(0,\sigma/2)$ there exists $\oo\in\bN_0$ such that it holds
\begin{enumerate}
\item[(A${}_2$)] $\|K_\sIR^{\ast\oo}\ast f_0^\vartriangle\|_\cV\lesssim R\,[\sIR]^{\beta_\varepsilon-\sigma}$,~~$\sIR\in(0,1]$,

\item[(B${}_2$)] $\|K_\sIR^{\ast\oo}\ast \fR_\sUV^{\phantom{\oo}} f_\sUV^\vartriangle\|_\cV\lesssim R\,[\sIR]^{-\sigma}\,[\sUV\vee\sIR]^{\beta_\varepsilon}$,~~$\sIR\in(0,1]$,

\item[(C${}_2$)] $\|K_\sIR^{\ast\oo}\ast (\fR_\sUV^{\phantom{\oo}} f_\sUV^\vartriangle-f_0^\vartriangle)\|_\cV \lesssim \delta R\,[\sIR]^{\beta_\varepsilon-\sigma-\varepsilon}$,~~$\sIR\in[\sUV,1]$,
\end{enumerate}
and
\begin{enumerate}
\item[(A${}_3$)]$\|\fP_\sIR^\oo \hat G_\sIR^{\phantom{\oo}}\ast f_0^\vartriangle\|_\cV\lesssim R\, [\sIR]^{\beta_\varepsilon}$,~~$\sIR\in(0,1]$,

\item[(B${}_3$)]$\|\fP_\sIR^\oo \hat G_\sIR^{\phantom{\oo}}\ast \fR_\sUV^{\phantom{\oo}} f_\sUV^\vartriangle\|_\cV\lesssim R\, [\sUV\vee\sIR]^{\beta_\varepsilon}$,~~$\sIR\in[0,1]$,

\item[(C${}_3$)]
$\|\fP_\sIR^\oo \hat G_\sIR^{\phantom{\oo}}\ast (\fR_\sUV^{\phantom{\oo}} f_\sUV^\vartriangle-f_0^\vartriangle)\|_\cV \lesssim\delta R\, [\sIR]^{\beta_\varepsilon-\varepsilon}$,~~$\sIR\in[\sUV,1]$.
\end{enumerate}
The constants of proportionality in the above bounds are universal.
\end{lem}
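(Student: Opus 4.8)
The plan is to prove all nine bounds in Lemma~\ref{lem:initial_properties} by reducing them, via the various rescaling identities already established, to the three hypotheses (A), (B), (C) of Assumption~\ref{ass:deterministic_initial}. The key structural observation is that $f_\sUV^\vartriangle = \delta_t\otimes\phi_\sUV^\vartriangle$ factorizes: the time slot is a Dirac delta at $\mathring x=t$, while the spatial part is $\phi_\sUV^\vartriangle\in\cC^\gamma(\bT)$ whose regularity in Besov spaces is controlled by Assumption~\ref{ass:deterministic_initial}. Since $\mathring K_\sIR\in L^1(\bR)$ and $\bar K_\sIR\in L^1(\bR^\rdim)$ with unit mass, and $K_\sIR=\mathring K_\sIR\otimes\bar K_\sIR$, convolving $f_\sUV^\vartriangle$ in space with $\bar K_\sIR^{\star\oo}$ behaves exactly like convolving $\phi_\sUV^\vartriangle$, while the convolution in time with $\mathring K_\sIR^{\ast\oo}$ simply smears the Dirac delta, producing a factor bounded by $\|\mathring K_\sIR^{\ast\oo}\|_\cK=1$ in $\cK$ but contributing a factor $[\sIR]^{-\sigma}$ when one passes from $\sD'(\bH)$-type norms to $\sV$-type norms (this is the origin of the $[\sIR]^{-\sigma}$ in (A${}_2$), (B${}_2$), (C${}_2$)).

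First I would establish the group (A${}_1$), (B${}_1$), (C${}_1$) for $\varPhi_0^\vartriangle=\hat G\ast f_0^\vartriangle$ and $\varPhi_\sUV^\vartriangle=\hat G\ast f_\sUV^\vartriangle$. Here I would use the Schauder estimate of Lemma~\ref{lem:schauder} applied to the kernel $\hat G=G-G_1$: combined with the bounds (A${}_2$)--(C${}_2$) on $f_0^\vartriangle$ and $\fR_\sUV f_\sUV^\vartriangle$ (which I prove first, see below), Lemma~\ref{lem:schauder}~(A) upgrades the regularity of $f^\vartriangle$ from parabolic degree $\beta_\varepsilon-\sigma$ to $\beta_\varepsilon$, exactly matching (A${}_1$)--(C${}_1$). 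Alternatively, and more directly, since $\hat G$ is supported in $[0,2]\times\bR^\rdim$ and $f_\sUV^\vartriangle=\delta_t\otimes\phi_\sUV^\vartriangle$, the convolution $\hat G\ast f_\sUV^\vartriangle$ is a spacetime function whose spatial slices are controlled by $\bar K_\sIR$-convolutions of $\phi_\sUV^\vartriangle$ — one uses the fact that the fractional heat semigroup regularizes spatially and the kernel $\hat G(\mathring x,\Cdot)$ has spatial scale $[\mathring x]^{1/\sigma}\lesssim 1$ on its support, so Definition~\ref{dfn:C_gamma} and Assumption~\ref{ass:deterministic_initial}~(A)--(C) give the claim with the operator $\fR_\sUV$ accounting for the $\sUV$-regularization in (B${}_1$). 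For (C${}_1$) one uses linearity of $\hat G\ast\Cdot$ and the difference bound (C).

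Second, for the group (A${}_2$)--(C${}_3$): I would exploit $K_\sIR=\mathring K_\sIR\otimes\bar K_\sIR$ and the factorization $f^\vartriangle=\delta_t\otimes\phi^\vartriangle$. For (A${}_2$): $K_\sIR^{\ast\oo}\ast f_0^\vartriangle = (\mathring K_\sIR^{\ast\oo}\ast\delta_t)\otimes(\bar K_\sIR^{\star\oo}\star\phi_0^\vartriangle)$; the time factor $\mathring K_\sIR^{\ast\oo}\ast\delta_t$ has $\|\Cdot\|_{L^\infty(\bR)}\lesssim[\sIR]^{-\sigma}$ (since $\mathring K_\sIR$ is a rescaled $L^1$ kernel with spatial scale $\sIR=[\sIR]^\sigma$ and unit mass, so its sup-norm scales like $\sIR^{-1}=[\sIR]^{-\sigma}$), while the space factor is bounded by $R[\sIR]^{\beta_\varepsilon}$ by Assumption~\ref{ass:deterministic_initial}~(A); multiplying gives $R[\sIR]^{\beta_\varepsilon-\sigma}$. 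For (B${}_2$) one inserts $\fR_\sUV$, which acts only in space, and uses Assumption~\ref{ass:deterministic_initial}~(B) together with the analog of Lemma~\ref{lem:kernel_simple_fact}~(C) for $\bar K_\sIR$ (stated in the proof of Lemma~\ref{lem:initial_data_moment_bound}) to absorb $\fR_\sUV$; for $\sIR\leq\sUV$ one gets $[\sUV]^{\beta_\varepsilon}$ and for $\sIR>\sUV$ one gets $[\sIR]^{\beta_\varepsilon}$, combining to $[\sUV\vee\sIR]^{\beta_\varepsilon}$. (C${}_2$) follows from (C) by linearity. The group (A${}_3$)--(C${}_3$) follows from (A${}_2$)--(C${}_2$) by the second estimate in Lemma~\ref{lem:schauder}~(B) applied with $\rho=-\beta_\varepsilon$ (note $\beta_\varepsilon<0$, so $\rho>0$ — here I would double-check the sign conventions carefully, as this is the one genuinely delicate point): Lemma~\ref{lem:schauder}~(B) converts a bound of order $[\sUV\vee\sIR]^{-\sigma-\rho}$ on $f$ into a bound of order $[\sUV\vee\sIR]^{-\rho}=[\sUV\vee\sIR]^{\beta_\varepsilon}$ on $\fR_\sUV\fP_\uIR^\oo\hat G_\uIR\ast f$, which is exactly (B${}_3$) after renaming $\uIR\to\sIR$.

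The main obstacle I anticipate is bookkeeping rather than conceptual: keeping the $[\sIR]^{-\sigma}$ factors, the $\fR_\sUV$ operators, and the $\sUV\vee\sIR$ versus $\sIR$ distinctions consistent across the three families, and in particular verifying that the hypotheses of Lemma~\ref{lem:schauder} are met — specifically that the exponent $-\sigma-\rho$ there matches $\beta_\varepsilon-\sigma$ here with $\rho=-\beta_\varepsilon>0$, and that the $\sUV$-dependence is of the required form $[\sUV\vee\sIR]^{\Cdot}$ rather than a bare $[\sIR]^{\Cdot}$. One must also confirm that for $\sIR\in[0,\sUV]$ the claimed bounds reduce to the trivial $\sIR=0$ case or follow from monotonicity in $\sIR$ of the $\bar K_\sIR$-convolution norms, which is where the splitting into $\sIR\leq\sUV$ and $\sIR>\sUV$ regimes (familiar from Lemma~\ref{lem:integration_deter}) is needed. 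Once the sign of $\beta_\varepsilon$ and the scaling of the time-Dirac smearing are pinned down, the rest is a routine application of Young's inequality, the unit-mass property of the kernels, and Lemma~\ref{lem:schauder}.
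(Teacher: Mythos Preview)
Your plan for the (A${}_2$)--(C${}_2$) family is exactly what the paper does: factorize $K_\sIR=\mathring K_\sIR\otimes\bar K_\sIR$ and $f^\vartriangle=\delta_t\otimes\phi^\vartriangle$, pick up $[\sIR]^{-\sigma}$ from $\sup_{\mathring x}|\mathring K_\sIR^{\ast\oo}(\mathring x)|$, and read off the spatial factor from Assumption~\ref{ass:deterministic_initial}. For (A${}_1$)--(C${}_1$), your ``direct'' alternative also matches the paper, which simply uses $\sup_{\mathring x}\int_{\bR^\rdim}|\hat G(\mathring x,\bar x)|\,\rd\bar x<\infty$ together with the spatial assumptions; the paper does \emph{not} go through Lemma~\ref{lem:schauder} here.

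The real divergence is in (A${}_3$)--(C${}_3$). The paper never invokes Lemma~\ref{lem:schauder}(B); instead it first records purely spatial intermediate bounds (call them (A${}_4$)--(C${}_4$)) on $\bar K_\sIR^{\star\oo}\star\fR_\sUV\phi_\sUV^\vartriangle$ etc., and then uses the kernel identity $\bar\fP_\sIR^\oo\bar K_\sIR^{\star\oo}=\delta_{\bR^\rdim}$ together with the estimate $\sup_{\mathring x}\int_{\bR^\rdim}|\bar\fP_\sIR^\oo\fP_\sIR^\oo\hat G_\sIR(\mathring x,\bar x)|\,\rd\bar x\lesssim 1$. This is a one-line argument that works uniformly on all of $\sIR\in[0,1]$. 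Your Schauder route does recover (A${}_3$) and (C${}_3$), but for (B${}_3$) it has a genuine gap in the regime $\sIR<\sUV$: Lemma~\ref{lem:schauder}(B) needs the input bound in the shape $\|K_\sIR^{\ast\oo}\ast f\|_\sV\lesssim[\sUV\vee\sIR]^{-\sigma-\rho}$, whereas (B${}_2$) only delivers $[\sIR]^{-\sigma}[\sUV\vee\sIR]^{\beta_\varepsilon}$, which is strictly more singular at small $\sIR$. If you plug this into the integral $\int_\uIR^1\|\fR_\sUV\fP_\sIR^{2\oo}\dot G_\sIR\|_\cK\,\|K_\sIR^{\ast\oo}\ast f\|_\sV\,\rd\sIR$ you will find that the contribution from $\sIR\in[\uIR,\sUV]$ blows up like $[\uIR]^{\varepsilon-\sigma}$, so you do not get the claimed $[\sUV\vee\uIR]^{\beta_\varepsilon}$. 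Your suggested fix (``trivial $\sIR=0$ case or monotonicity'') is too vague; what actually works for $\sIR\leq\sUV$ is precisely the paper's direct estimate $\|\fP_\sIR^\oo\hat G_\sIR\ast\fR_\sUV f_\sUV^\vartriangle\|_\sV\leq\big(\sup_{\mathring x}\int|\fP_\sIR^\oo\hat G_\sIR|\,\rd\bar x\big)\cdot\|\fR_\sUV\phi_\sUV^\vartriangle\|_{L^\infty(\bT)}\lesssim\delta R[\sUV]^{\beta_\varepsilon}$, which bypasses the time-Dirac singularity entirely. So the paper's spatial-slicing argument is not just stylistically simpler---it is what closes the hole in your Schauder route for (B${}_3$).
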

\begin{proof}
The bounds (A${}_1$), (B${}_1$), (C${}_1$) follow immediately from Assumption~\ref{ass:deterministic_initial} and the fact that $\sup_{\mathring x\in\bR}\int_{\bR^\rdim} |\hat G(x)|\,\rd\bar x<\infty$. In order to prove the remaining bounds we first show that for $\ooo=\oo+\oooo$, where $\oo\in\bN_0$ is as in Assumption~\ref{ass:deterministic_initial} and $\oooo=\floor{\sigma/2}+1$, it holds
\begin{enumerate}
\item[(A${}_4$)] $\|\bar K_\sIR^{\ast\ooo}\ast \phi_0^\vartriangle\|_{L^\infty(\bT)}\lesssim R\,[\sIR]^{\beta_\varepsilon}$,~~$\sIR\in(0,1]$,

\item[(B${}_4$)] $\|\bar K_\sIR^{\ast\ooo}\ast \fR_\sUV^{\phantom{\ooo}} \phi_\sUV^\vartriangle\|_{L^\infty(\bT)}\lesssim R\,[\sUV\vee\sIR]^{\beta_\varepsilon}$,~~$\sIR\in[0,1]$,

\item[(C${}_4$)] $\|\bar K_\sIR^{\ast\ooo}\ast (\fR_\sUV^{\phantom{\ooo}} \phi_\sUV^\vartriangle-\phi_0^\vartriangle)\|_{L^\infty(\bT)} \lesssim \delta R\,[\sIR]^{\beta_\varepsilon-\varepsilon}$,~~$\sIR\in[\sUV,1]$.
\end{enumerate}
The bound (A${}_4$) is implied by the bound (A) in Assumption~\ref{ass:deterministic_initial}. For $\sIR\in[0,\sUV]$ the bound (B${}_4$) follows from Assumption~\ref{ass:deterministic_initial}~(B) and $\|\bar K_\sIR\|_{L^1(\bR^\rdim)}=1$ and for $\sIR\in(\sUV,1]$ it is a consequence of Assumption~\ref{ass:deterministic_initial}~(A),~(B) and the bound
\begin{multline}
 \|\bar K_\sIR^{\ast\ooo}\ast \fR_\sUV^{\phantom{\ooo}} \phi_\sUV^\vartriangle\|_{L^\infty(\bT)}\leq
 \|\fR_\sUV^{\phantom{\ooo}}\bar K_\sIR^{\ast\oooo}\ast \|_{L^1(\bR^\rdim)}\,
 \|\bar K_\sIR^{\ast\oo}\ast \fR_\sUV^{\phantom{\ooo}} \phi_\sUV^\vartriangle\|_{L^\infty(\bT)}
 \\\lesssim
 \|\bar K_\sIR^{\ast\oo}\ast \fR_\sUV^{\phantom{\ooo}} \phi_\sUV^\vartriangle\|_{L^\infty(\bT)},
\end{multline}
where we used Lemma~\ref{lem:kernel_simple_fact}~(C) to obtain the last estimate. To prove the bound~(C${}_4$) we use
\begin{multline}
 \|\bar K_\sIR^{\ast\ooo}\ast (\fR_\sUV^{\phantom{\oo}} \phi_\sUV^\vartriangle-\phi_0^\vartriangle)\|_{L^\infty(\bT)}
 \\
 \leq
 \|\bar K_\sIR^{\ast\ooo}\ast (\phi_\sUV^\vartriangle-\phi_0^\vartriangle)\|_{L^\infty(\bT)}
 +
 \|(\fR_\sUV^{\phantom{\oo}}-1)\bar K_\sIR^{\ast\ooo}\ast \phi_\sUV^\vartriangle\|_{L^\infty(\bT)},
\end{multline}
Assumption~\ref{ass:deterministic_initial} and the bound
\begin{equation}
 \|(\fR_\sUV^{\phantom{\oo}}-1)\bar K_\sIR^{\ast\oooo}\|_{L^1(\bR^\rdim)}
 \lesssim [\sUV]^{\sigma-\varepsilon}\,[\sIR]^{\varepsilon-\sigma}
 \leq \delta\, [\sUV]^{\sigma-2\varepsilon}\,[\sIR]^{\varepsilon-\sigma}
 \leq \delta [\sIR]^{-\varepsilon},\quad \sIR\in[\sUV,1],
\end{equation}
which is a consequence of Lemma~\ref{lem:kernel_fourier_transform}~(D).

The bounds (A${}_2$), (B${}_2$), (C${}_2$) follow from the equality $K_\sIR = \mathring K_\sIR\otimes\bar K_\sIR$ and the estimate $\sup_{\mathring x\in\bR} |\mathring K_\sIR(\mathring x)|\leq[\sIR]^{-\sigma}$. To prove the bounds (A${}_3$), (B${}_3$), (C${}_3$) we use the estimate $\sup_{\mathring x\in\bR}\int_{\bR^\rdim} |\bar\fP_\sIR^{\oo}\fP_\sIR^{\oo} \hat G_\sIR^{\phantom{\oo}}(x)|\,\rd\bar x\lesssim 1$, which follows from Remark~\ref{rem:dot_G_1}, and the equality $\bar\fP_\sIR^\oo\bar K_\sIR^{\ast\oo}=\delta_{\bR^\rdim}$. 
\end{proof}

\begin{dfn}
Let $\varepsilon\in(0,1]$. For $i,m\in\bN_+$ we define
\begin{equation}
 \tilde\varrho_\varepsilon(i,m):=\varrho_\varepsilon(i,m)\vee (3\varepsilon-\sigma),
\end{equation}
where $\varrho_\varepsilon(i,m)$ was introduced in Def.~\ref{dfn:varrho}. We also set
\begin{equation}
 \varepsilon_\triangleright :=((i_\triangleright+1)\dim(\lambda)-\dim(\varPhi))/(m_\flat+4),
\end{equation}
\begin{equation}
 \varepsilon_\vartriangle :=(\dim(\lambda)-\dim(\phi))/(m_\flat+4)\wedge\sigma/3,
\end{equation}
where $i_\triangleright$ is the largest integer such that $\varrho(i_\triangleright)+\sigma\leq 0$.
\end{dfn}

\begin{rem}\label{rem:varepsilon_bound_initial}
In what follows we assume that $\varepsilon\in(0,\varepsilon_\sigma\wedge\varepsilon_\diamond\wedge\varepsilon_\triangleright\wedge\varepsilon_\vartriangle)$. 
\end{rem}

\begin{rem}
We remind the reader that $\varepsilon_\sigma=\sigma+1-\ceil{\sigma}$ was introduced in Def.~\ref{dfn:varepsilon_sigma}, $\varepsilon_\diamond\leq \dim(\lambda)/(m_\flat+3)$ was introduced in Def.~\ref{dfn:varrho} and $\dim(\lambda)>\dim(\phi)$, $\dim(\varPhi)\geq\dim(\phi)$ by Def.~\ref{dfn:dim_phi}.
\end{rem}

\begin{rem}\label{rem:hat_rho}
For $\varepsilon\in(0,\varepsilon_\triangleright)$ and $i>i_\triangleright$ we have $\varrho_\varepsilon(i)\geq \varrho_\varepsilon(i_\triangleright+1)>3\varepsilon-\sigma$. Moreover, for any $\varepsilon\in(0,\varepsilon_\diamond)$ and any $i,m\in\bN_+$ it holds
\begin{equation}\label{eq:m+1_bound}
 \varrho_\varepsilon(i,m) \geq \varrho_\varepsilon(1,1)
 =
 \dim(\lambda)-\sigma-\varepsilon m_\flat \geq 3\varepsilon-\sigma. 
\end{equation}
Consequently, for $\varepsilon\in(0,\varepsilon_\diamond\wedge\varepsilon_\triangleright)$ we have $\tilde\varrho_\varepsilon(i,m)=\varrho_\varepsilon(i,m)$ unless $i\in\{0,\ldots,i_\triangleright\}$ and $m=0$. By construction $\hat F^{i,m}_{\sUV,\sIR}=0$ for all $i\in\{0,\ldots,i_\triangleright\}$ and $m=0$. As a result, one can replace $\varrho_\varepsilon(i,m)$ by $\tilde\varrho_\varepsilon(i,m)$ in Theorem~\ref{thm:F_hat_bound}.
\end{rem}

\begin{dfn}\label{dfn:eff_force_check}
Given a boundary data for $(\varPhi_\sUV^\triangleright,\breve\varPhi_\sUV)$ in the sense of Def.~\ref{dfn:boundary_data_initial} we define the effective force coefficients $\check F^{i,m}_{\sUV,\sIR}$, $i,m\in\bN_0$, $\sIR\in[0,1]$, by the relation
\begin{multline}
 \langle \check F_{\sUV,\sIR}[\varphi],\psi\rangle
 \equiv \sum_{i=0}^\infty \sum_{m=0}^\infty \lambda^i\,\langle \check F^{i,m}_{\sUV,\sIR},\psi\otimes\varphi^{\otimes m}\rangle
 \\
 :=
 \langle \hat F_{\sUV,\sIR}[\varphi+(G-G_\sIR)\ast \check f^\triangleright_{\sUV} + (G_\sIR-G_1)\ast f_\sUV^\vartriangle]+\check f^\triangleright_{\sUV}-f_\sUV^\vartriangle,\psi\rangle,
\end{multline}
where $\psi,\varphi\in C^\infty_\rc(\bM)$ are arbitrary, $\check f_\sUV^\triangleright:=\fQ G_1\ast f_\sUV^\triangleright$, $f_\sUV^\vartriangle=\delta_t\otimes\phi^\vartriangle_\sUV$ and $\hat F_{\sUV,\sIR}[\varphi]$, $f_\sUV^\triangleright$ were introduced in Def.~\ref{dfn:eff_force_hat}. We omit $\sIR$ if $\sIR=0$. 
\end{dfn}

\begin{rem}
By Remark~\ref{rem:eff_force_shift} the effective force $\check F_{\sUV,\sIR}[\varphi]$ satisfies the flow equation~\eqref{eq:intro_flow_eq} in the sense of formal power series. Hence, the coefficients $\check F^{i,m}_{\sUV,\sIR}$ satisfy the flow equation~\eqref{eq:flow_deterministic_i_m}. We have $\check F^{i,m}_{\sUV,\sIR}=0$ for all $m>im_\flat$. We note that an analog of Lemma~\ref{lem:hat_F_simple} holds true for $\check F^{i,m}_{\sUV,\sIR}$ as well. Observe also that
\begin{equation}
 \check F_{\sUV}[\varphi]=\check F_{\sUV,0}[\varphi] = \hat F_{\sUV}[\varphi+ \varPhi_\sUV^\vartriangle]+\check f^\triangleright_{\sUV}-f_\sUV^\vartriangle = 
 F_\sUV[\varphi+\varPhi^\triangleright_\sUV+ \varPhi_\sUV^\vartriangle]-\fQ\varPhi^\triangleright_\sUV +\delta_0\otimes\phi^\vartriangle_\sUV.
\end{equation}
\end{rem}

\begin{thm}\label{thm:F_check_bound}
Fix some $\mathtt{u}\in C^\infty_\rc(\bR)$, $R>1$, $\delta\in(0,1]$ and an admissible boundary data for $(\varPhi_0^\triangleright,\breve\varPhi_0)$. Let $\sUV\in(0,\delta^{1/\varepsilon}]$ and a boundary data for $(\varPhi_\sUV^\triangleright,\breve\varPhi_\sUV)$ be arbitrary such that all of the conditions specified in Assumptions~\ref{ass:deterministic} and~\ref{ass:deterministic_initial} are satisfied. There exists $\oo\in\bN_0$ and a universal family of differentiable functions 
\begin{equation}
 (0,1]\ni \sIR \mapsto \check F^{i,m}_{0,\sIR}\in\cD^m, \qquad i,m\in\bN_0,
\end{equation}
satisfying the flow equation~\eqref{eq:flow_deterministic_i_m} such that
\begin{enumerate}
\item[(A)] $\|K^{m;\oo}_{0,\sIR}\ast \mathtt{w}\check F^{i,m}_{0,\sIR}\|_{\cV^m} \lesssim R^{i,m}\,[\sIR]^{\tilde\varrho_\varepsilon(i,m)}$~~$\sIR\in(0,1]$,  

\item[(B)] $\|K^{m;\oo}_{\sUV,\sIR}\ast \mathtt{w}\check F^{i,m}_{\sUV,\sIR}\|_{\cV^m} \lesssim R^{i,m}\,[\sUV\vee\sIR]^{\tilde\varrho_\varepsilon(i,m)}$,~~$\sIR\in[0,1]$, 

\item[(C)] $\|K^{m;\oo}_{\sUV,\sIR}\ast \mathtt{w}(\check F^{i,m}_{\sUV,\sIR}-\check F^{i,m}_{0,\sIR})\|_{\cV^m}
\lesssim \delta R^{i,m}\,[\sIR]^{\tilde\varrho_\varepsilon(i,m)-\varepsilon}$,~~$\sIR\in[\sUV,1]$
\end{enumerate}
for all $i\in\bN_+$, $m\in\bN_0$ and $\mathtt{w}\in C^\infty_\rc(\bR)$ satisfying the condition $\mathtt{u}=1$ on some neighborhood of $\supp\,\mathtt{w}-[0,i(i+1)]$. The constants of proportionality in the above bounds depend only on $i,m\in\bN_0$ and $\mathtt{w},\mathtt{u}\in C^\infty_\rc(\bR)$ and are otherwise universal.
\end{thm}
\begin{rem}
The theorem is proved using the technique of the proof of Theorem~\ref{thm:F_hat_bound} with the use of the bounds proved in Lemma~\ref{lem:initial_properties} and Remark~\ref{rem:check_f_bounds} as well as the identity $R^{i,m} = R^{i,k+m} R^k$ and the bound
\begin{equation}\label{eq:varrho_bound_shift}
 \tilde\varrho_\varepsilon(i,m) \leq \tilde\varrho_\varepsilon(i,k+m) + k\beta_\varepsilon
\end{equation}
valid for $i\in\bN_+$ and $m,k\in\bN_0$. Let us prove the last bound. Without loss of generality we assume that $k\in\bN_+$. Recall that $\beta_\varepsilon=-\dim(\phi)-\varepsilon$. We have
\begin{equation}
 \varrho_\varepsilon(i,k+m) + k\beta_\varepsilon 
 =
 \varrho_\varepsilon(i,m) + k(\dim(\varPhi)+\varepsilon + \beta_\varepsilon) \geq \varrho_\varepsilon(i,m),
\end{equation}
where the last inequality is a consequence of the bound $\beta_\varepsilon\geq-\dim(\varPhi)-\varepsilon$, which follows from Def.~\ref{dfn:dim_phi}. Furthermore, for $\varepsilon\in(0,\varepsilon_\diamond\wedge\varepsilon_\vartriangle)$ we have
\begin{equation}
 \varrho_\varepsilon(i,k+m) + k\beta_\varepsilon 
 \geq
 \varrho_\varepsilon(1,1) + \beta_\varepsilon + (k-1)(\dim(\varPhi)+\varepsilon + \beta_\varepsilon)
 \geq 3\varepsilon-\sigma,
\end{equation}
where we used $\beta_\varepsilon\geq -\dim(\lambda)+\varepsilon(m_\flat+3)$ and $\varrho_\varepsilon(1,1)=\dim(\lambda)-\sigma-\varepsilon m_\flat$. This proves the bound~\eqref{eq:varrho_bound_shift}.
\end{rem}

\begin{dfn}\label{dfn:cW}
Let $m\in\bN_0$. For $V\in \cV^m$ we define
\begin{equation}
 \|V\|_{\cVtilde^m}
 :=
 \\
 \sup_{\bar x\in\bR^\rdim}\int_{\bR\times\bM^m}
 |V(x;y_1,\ldots,y_m)|\,\rd\mathring x\rd y_1\ldots\rd y_m.
\end{equation}
Recall that $x=(\mathring x,\bar x)\in\bM=\bR\times\bR^\rdim$. If $m=0$, then we write $\|V\|_{\cVtilde^m}=\|V\|_{\cVtilde}$.
\end{dfn}
\begin{lem}\label{lem:fB1_bound_tilde}
Let $m\in\bN_0$, $k\in\{0,\ldots,m\}$, $G\in\cG$, $W\in\cV^{k+1}$ and $U\in\cV^{m-k}$. We have
\begin{equation}\label{eq:fB1_bound_tilde}
 \|\fB(G,W,U)\|_{\tilde\cV^{m}}
 \leq
 \|G\|_\cK\,\|W\|_{\tilde\cV^{k+1}} \|U\|_{\cV^{m-k}}.
\end{equation}
\end{lem}
\begin{proof}
Recall that by Def.~\ref{dfn:maps_A_B}
\begin{multline}
 \fB(G,W,U)(x;\rd y_1\ldots\rd y_{m})
 \\
 :=
 \int_{\bM^2} W(x;\rd y\rd y_1\ldots\rd y_{k})
 \,G(y-z)\,
 U(z;\rd y_{k+1}\ldots\rd y_{m}) \,\rd z,
\end{multline}
We have
\begin{multline}
 \|\fB(G,W,U)\|_{\tilde\cV^{m}}
 \\
 \leq 
 \sup_{\bar x\in\bR^\rdim} \int_{\bR\times \bM^{m+2}}
 |W(x;\rd y\rd y_1\ldots\rd y_{k})|
 \,|G(y-z)|\,
 |U(z;\rd y_{k+1}\ldots\rd y_{m})| \,\rd\mathring x\rd z
 \\
 \leq\sup_{\bar x\in\bR^\rdim} \int_{\bR\times\bM^{k+1}} |W(x;\rd y\rd y_1\ldots\rd y_{k})|
 \,\rd\mathring x
 \\\times
 \int_\bM |G(z)|\,\rd z~
 \sup_{x\in\bM}\int_{\bM^{m-k}}|U(x;\rd y_{k+1}\ldots\rd y_{m})|.
\end{multline}
This coincides with the RHS of the bound~\eqref{eq:fB1_bound_tilde}.
\end{proof}

\begin{dfn}\label{dfn:eff_force_tilde}
Given a boundary data for $(\varPhi_\sUV^\triangleright,\breve\varPhi_\sUV)$ in the sense of Def.~\ref{dfn:boundary_data_initial} the effective force coefficients $\tilde F^{i,m}_{\sUV,\sIR}$, $\sIR\in[0,1]$, $i,m\in\bN_0$, are defined recursively by the flow equation~\eqref{eq:flow_deterministic_i_m} together with the conditions 
\begin{equation}
 \tilde F^{i,m}_{\sUV,\sIR}=0,~~ m>i m_\flat,
 \qquad
 \tilde F^{i,m}_{\sUV,0}=\tilde F^{i,m}_\sUV
 =1_{(t,t+6i_\dagger)} \check F^{i,m}_\sUV,~~i,m\in\bN_0,
\end{equation}
where the coefficients $\check F^{i,m}_{\sUV\phantom{0}}=\check F^{i,m}_{\sUV,0}$ are constructed as explained in Def.~\ref{dfn:eff_force_check}. We also define $\tilde F_{\sUV,\sIR}[\varphi]$ and $\tilde F_{\sUV}[\varphi]=\tilde F_{\sUV,0}[\varphi]$ by an analog of Eq.~\eqref{eq:effective_force_determ}.
\end{dfn}

\begin{rem}
Since $F^{i,m}_{\sUV}\in\cD^m$ is supported on the diagonal the same is true for $\hat F^{i,m}_\sUV\in\cD^m$ and $\tilde F^{i,m}_\sUV\in\cD^m$ by Def.~\ref{dfn:eff_force_hat} and Def.~\ref{dfn:eff_force_tilde}. Furthermore, it holds
\begin{equation}
 \tilde F_{\sUV}[\varphi] = 1_{(t,t+6i_\dagger)} (\hat F_\sUV[\varphi+\varPhi^\vartriangle_{\sUV}]+\check f_\sUV^\triangleright),
 \qquad
\hat F_\sUV[\varphi] = F_\sUV[\varphi+\varPhi^\triangleright_{\sUV}]-f^\triangleright_{\sUV},
\end{equation}
where $f^\triangleright_{\sUV}$ was introduced in Def.~\ref{dfn:eff_force_hat}, $\varPhi^\triangleright_{\sUV}=\hat G\ast f^\triangleright_{\sUV}$ and $\check f^\triangleright_{\sUV}=\fQ G_1\ast f^\triangleright_{\sUV}$. We used the fact that the measure $1_{(t,t+6i_\dagger)}\delta_t=0$ vanishes.
\end{rem}

\begin{rem}\label{rem:force_hat_i_m}
Recall that $F^{i,m}_{\sUV}=F^{i,m}_{\sUV,0}$ vanishes unless $i\leq i_\flat$ and $m\leq m_\flat$. In consequence, by Remark~\ref{rem:shift_identity} $\hat F^{i,m}_{\sUV}=\hat F^{i,m}_{\sUV,0}$ vanishes unless $i\leq i_\dagger$ and $m\leq m_\flat$, where $i_\dagger=i_\flat+i_\triangleright m_\flat$. It follows that $\check F^{i,m}_{\sUV}=\check F^{i,m}_{\sUV,0}$ and $\tilde F^{i,m}_{\sUV}=\tilde F^{i,m}_{\sUV,0}$ vanish unless $i\leq i_\dagger$ and $m\leq m_\flat$ as well.
\end{rem}

\begin{lem}
For $i,m\in\bN_0$ the effective force coefficients $\tilde F^{i,m}_{\sUV,\sIR}\in\cD^{m;0}$ are uniquely defined. Moreover, the function 
\begin{equation}
 [0,1]\ni\sIR\mapsto (\delta_\bM\otimes J^{\otimes m})\ast \tilde F^{i,m}_{\sUV,\sIR}\in\cV^m
\end{equation} 
is continuous in the whole domain and differentiable in the domain $\sIR\in(0,1]$.
\end{lem}
\begin{rem}
This simple qualitative lemma can be proved using the strategy of the proof of Lemma~\ref{lem:eff_force_basic_properties}; details are omitted.
\end{rem}

\begin{lem}\label{lem:tilde_F_hat_F}
Let $\sIR\in[0,1]$ and $i,m\in\bN_0$ be arbitrary. The following equality $\tilde F^{i,m}_{\sUV,\sIR} = \check F^{i,m}_{\sUV,\sIR}$ is satisfied in the domain
\begin{equation}\label{eq:tilde_domain}
 \{ (x,x_1,\ldots,x_m)\in\bM^{1+m}\,|\,\mathring x\in (t+2i\sIR,t+6i_\dagger) \}.
\end{equation}
\end{lem}
\begin{proof}
We first note that the statement of the lemma is true if $i=0$ or $\sIR=0$ or $m>i m_\flat$. Next, we fix some $i_\circ\in\bN_+$ and $m_\circ\in\bN_0$ and assume that the statement is true for all $i,m\in\bN_0$ such that either $i<i_\circ$, or $i=i_\circ$ and $m>m_\circ$. We shall prove the statement for $i=i_\circ$ and $m=m_\circ$. Since
\begin{equation}
 \partial_\sIR \tilde F^{i,m}_{\sUV,\sIR} = \tilde F^{i,m}_{\sUV} + \int_0^\sIR \partial_\uIR \tilde F^{i,m}_{\sUV,\uIR}\,\rd\uIR,
 \qquad
 \partial_\sIR \check F^{i,m}_{\sUV,\sIR} = \check F^{i,m}_{\sUV} + \int_0^\sIR \partial_\uIR \check F^{i,m}_{\sUV,\uIR}\,\rd\uIR
\end{equation}
it is enough to show that $\partial_\sIR \tilde F^{i,m}_{\sUV,\sIR} = \partial_\sIR \check F^{i,m}_{\sUV,\sIR}$ in the domain~\eqref{eq:tilde_domain}. We first note that by the flow equation~\eqref{eq:flow_deterministic_i_m} and the induction hypothesis it holds
\begin{equation}
 \partial_\sIR \tilde F^{i,m}_{\sUV,\sIR}
 =
 -\frac{1}{m!}
 \sum_{\pi\in\cP_m}\sum_{j=1}^i\sum_{k=0}^m
 (k+1)\,\fY_\pi\fB\big(\dot G_\sIR, \check F^{j,k+1}_{\sUV,\sIR}, \tilde F^{i-j,m-k}_{\sUV,\sIR}\big)
\end{equation}
in the domain~\eqref{eq:tilde_domain}. Next, we observe that for any $i,m\in\bN_0$ it holds
\begin{equation}
 \supp\,\check F^{i,m}_{\sUV,\sIR}\subset\{ (x,x_1,\ldots,x_m)\in\bM^{1+m}\,|\, \mathring x_1,\ldots,\mathring x_m \in [\mathring x-2(i-1)\sIR,\mathring x]\}.
\end{equation}
The proof of this fact is the same as the proof of Lemma~\ref{lem:support}. Using the above support property, the fact that $\supp\,\dot G_\sIR\subset [0,2\sIR]\times\bR^\rdim$ and the induction hypothesis we obtain
\begin{equation}
 \partial_\sIR \tilde F^{i,m}_{\sUV,\sIR}
 =
 -\frac{1}{m!}
 \sum_{\pi\in\cP_m}\sum_{j=1}^i\sum_{k=0}^m
 (k+1)\,\fY_\pi\fB\big(\dot G_\sIR, \check F^{j,k+1}_{\sUV,\sIR}, \check F^{i-j,m-k}_{\sUV,\sIR}\big)
\end{equation}
in the domain~\eqref{eq:tilde_domain}. By the flow equation~\eqref{eq:flow_deterministic_i_m} the RHS of the above equation coincides with $\partial_\sIR \check F^{i,m}_{\sUV,\sIR}$. This completes the proof of the inductive step.
\end{proof}

\begin{lem}\label{lem:supp_tilde}
For every $\sIR\in[0,1]$ and $i,m\in\bN_0$ it holds
\begin{equation}
 \supp\, \tilde F^{i,m}_{\sUV,\sIR} \subset ([t,t+6i_\dagger]\times\bR^\rdim)^{1+m}.
\end{equation}
\end{lem}
\begin{rem}
This can be proved using the strategy of the proof of the previous lemma.
\end{rem}

\begin{thm}\label{thm:F_tilde_bound}
Let $\mathtt{u}\in C^\infty_\rc(\bR)$ be such that $\mathtt{u}=1$ on some neighborhood of $[t-i_\dagger(i_\dagger+1),t+6i_\dagger]$. Fix some $R>1$, $\delta\in(0,1]$ and an admissible boundary data for $(\varPhi_0^\triangleright,\breve\varPhi_0)$. Let $\sUV\in(0,\delta^{1/\varepsilon}]$ and a boundary data for $(\varPhi_\sUV^\triangleright,\breve\varPhi_\sUV)$ be arbitrary such that all of the conditions specified in Assumptions~\ref{ass:deterministic} and~\ref{ass:deterministic_initial} are satisfied. There exists $\oo\in\bN_0$ and a universal family of differentiable functions 
\begin{equation}
 (0,1]\ni \sIR \mapsto \tilde F^{i,m}_{0,\sIR}\in\cD^m, \qquad i,m\in\bN_0,
\end{equation}
satisfying the flow equation~\eqref{eq:flow_deterministic_i_m} such that for all $i,m\in\bN_0$ it holds
\begin{enumerate}
\item[(A)] $\|K^{m;\oo}_{0,\sIR} \ast \tilde F^{i,m}_{0,\sIR}\|_{\cV^m}
 \lesssim R^{i,m}\, [\sIR]^{\tilde\varrho_\varepsilon(i,m)}$,~~$\sIR\in(0,1]$,

\item[(B)] $\|K^{m;\oo}_{\sUV,\sIR} \ast \tilde F^{i,m}_{\sUV,\sIR}\|_{\cV^m}
 \lesssim R^{i,m}\, [\sUV\vee\sIR]^{\tilde\varrho_\varepsilon(i,m)}$,~~$\sIR\in[0,1]$,
 
\item[(C)] $\|K^{m;\oo}_{\sUV,\sIR}\ast (\tilde F^{i,m}_{\sUV,\sIR}-\tilde F^{i,m}_{0,\sIR})\|_{\cV^m} 
 \lesssim \delta R^{i,m}\, [\sIR]^{\tilde\varrho_\varepsilon(i,m)-\varepsilon}$,~~$\sIR\in[\sUV,1]$,
\end{enumerate}
The constants of proportionality in the above bounds depend only on $i,m\in\bN_0$ and $\mathtt{u}\in C^\infty_\rc(\bR)$ and are otherwise universal.
\end{thm}
\begin{rem}
The proof relies crucially on the results established in Theorem~\ref{thm:F_check_bound} and the fact that the force coefficients $\tilde F^{i,m}_{\sUV,\sIR}$ coincide with $\check F^{i,m}_{\sUV,\sIR}$ in the domain specified in Lemma~\ref{lem:tilde_F_hat_F} and vanish outside the domain specified in Lemma~\ref{lem:supp_tilde}. We will also use the equality $R^{i,m} = R^{j,k+1}\,R^{i-j,m-k}$ and the bound
\begin{equation}\label{eq:varrho_bound_initial}
 \tilde\varrho_\varepsilon(i,m)\leq 
 \tilde\varrho_\varepsilon(j,k+1)
 +
 \tilde\varrho_\varepsilon(i-j,m-k)+\sigma,
\end{equation}
valid for $i\in\bN_+$, $m\in\bN_0$, $j\in\{1,\ldots,i\}$, $k\in\{0,\ldots,m\}$, and which follows from the definition of $\tilde\varrho_\varepsilon$ and Eq.~\eqref{eq:varrho_equality_flow}.
\end{rem}
\begin{proof}
We first note that the theorem is trivially true if $m>i m_\diamond$ since in this case $\tilde F^{i,m}_{\sUV,\sIR}=0$ and $\tilde F^{i,m}_{0,\sIR}:=0$. Next, we fix some $i_\circ,m_\circ\in\bN_0$ and assume that the theorem is true for all $i,m\in\bN_0$ such that either $i<i_\circ$, or $i=i_\circ$ and $m>m_\circ$. We shall prove the theorem for $i=i_\circ$ and $m=m_\circ$. We note that the strategy of the proof of the induction step is different in the cases $i\in\{0,\ldots,i_\dagger\}$ and $i>i_\dagger$. 

Let $\mathtt{v}\in C^\infty(\bR)$ be such that $\mathtt{v}(\mathring x)=1$ for $\mathring x\in(-\infty,t+2i_\dagger]$ and $\mathtt{v}(\mathring x)=0$ for $\mathring x\in[t+3i_\dagger,\infty)$. For $\sIR\in(0,1]$ we define $\mathtt{v}_\sIR,\mathtt{w}_\sIR\in C^\infty_\rc(\bR)$ by 
\begin{equation}
\begin{gathered}
 \mathtt{v}_\sIR(\mathring x):=1-\mathtt{v}(\mathring x/\sIR)-\mathtt{v}((6i_\dagger-\mathring x)/\sIR),
 \\
 \mathtt{w}_\sIR(\mathring x):=\mathtt{v}(\mathring x/\sIR)-\mathtt{v}(-\mathring x/\sIR)+\mathtt{v}((6i_\dagger-\mathring x)/\sIR)-\mathtt{v}((\mathring x-6i_\dagger)/\sIR).
\end{gathered} 
\end{equation}
Note that $\partial_\sIR \mathtt{w}_\sIR =-\partial_\sIR \mathtt{v}_\sIR$ on some neighborhood of $[t,t+6i_\dagger]$ and 
\begin{equation}
\begin{gathered}
 \supp\,\mathtt{v}_\sIR\subset [t+2i_\dagger\sIR,t+6i_\dagger-2i_\dagger\sIR],
 \\
 \supp\,\mathtt{w}_\sIR\subset [t-3i_\dagger\sIR,t+3i_\dagger\sIR]\cup [t+6i_\dagger-3i_\dagger\sIR,t+6i_\dagger+3i_\dagger\sIR].
\end{gathered} 
\end{equation}
Hence, by Lemma~\ref{lem:tilde_F_hat_F} and Lemma~\ref{lem:supp_tilde} we obtain
\begin{equation}\label{eq:initial_decomposition}
 \tilde F^{i,m}_{\sUV,\sIR} 
 = \mathtt{v}_\sIR \tilde F^{i,m}_{\sUV,\sIR}
 + (1-\mathtt{v}_\sIR) \tilde F^{i,m}_{\sUV,\sIR}
 = \mathtt{v}_\sIR \check F^{i,m}_{\sUV,\sIR}
 + \mathtt{w}_\sIR \tilde F^{i,m}_{\sUV,\sIR}
\end{equation}
and
\begin{equation}\label{eq:initial_decomposition_d}
 \partial_\sIR (\mathtt{w}_\sIR \tilde F^{i,m}_{\sUV,\sIR})
 =
 \mathtt{w}_\sIR (\partial_\sIR^{\phantom{i}} \tilde F^{i,m}_{\sUV,\sIR})
 -
 (\partial_\sIR \mathtt{v}_\sIR) \tilde F^{i,m}_{\sUV,\sIR}
 =
 \mathtt{w}_\sIR (\partial_\sIR^{\phantom{i}} \tilde F^{i,m}_{\sUV,\sIR})
 -
 (\partial_\sIR \mathtt{v}_\sIR) \check F^{i,m}_{\sUV,\sIR}
\end{equation}
for any $i\in\{0,\ldots,i_\dagger\}$ and $m\in\bN_0$.

We first observe that by Lemma~\ref{lem:time_estimates}~(A) applied with $\mathtt{v}_\sIR/\mathtt{w}$ we have
\begin{enumerate}
\item[$(\check{\mathrm A}_1)$] $\|K^{m;\oo}_{0,\sIR}\ast \mathtt{v}_\sIR \check F^{i,m}_{0,\sIR}\|_{\cV^m} \!\lesssim\! 
\|K^{m;\oo}_{0,\sIR}\ast \mathtt{w} \check F^{i,m}_{0,\sIR}\|_{\cV^m}$,

\item[$(\check{\mathrm B}_1)$] $\|K^{m;\oo}_{\sUV,\sIR}\ast \mathtt{v}_\sIR \check F^{i,m}_{\sUV,\sIR}\|_{\cV^m} \!\lesssim\! 
\|K^{m;\oo}_{\sUV,\sIR}\ast \mathtt{w} \check F^{i,m}_{\sUV,\sIR}\|_{\cV^m}$,

\item[$(\check{\mathrm C}_1)$] $\|K^{m;\oo}_{\sUV,\sIR}\ast \mathtt{v}_\sIR (\check F^{i,m}_{\sUV,\sIR}-\check F^{i,m}_{0,\sIR})\|_{\cV^m}
\lesssim
\|K^{m;\oo}_{\sUV,\sIR}\ast \mathtt{w}(\check F^{i,m}_{\sUV,\sIR}-\check F^{i,m}_{0,\sIR})\|_{\cV^m}$,
\end{enumerate}
where $\mathtt{w}\in C^\infty_\rc(\bR)$ is arbitrary such that $\mathtt{w}=1$ on some neighborhood of $[t,t+6i_\dagger]$. Next, we note that it holds $\partial_\sIR \mathtt{v}_\sIR =[\sIR]^{-\sigma} \dot{\mathtt{v}}_\sIR$, where 
\begin{equation}
 \dot{\mathtt{v}}_\sIR(\mathring x):= \dot{\mathtt{v}}(\mathring x/\sIR) - \dot{\mathtt{v}}((6i_\dagger-\mathring x)/\sIR),
 \quad
 \dot{\mathtt{v}}(\mathring x):=-\mathring x\, \partial_{\mathring x}\mathtt{v}(\mathring x).
\end{equation}
Moreover, we have
\begin{equation}
 \supp\,\dot{\mathtt{v}}\subset [t+2i_\dagger \sIR,t+3i_\dagger\sIR]\cup [t+6i_\dagger -3i_\dagger \sIR,t+6i_\dagger-2i_\dagger\sIR].
\end{equation}
Hence, the application of Lemma~\ref{lem:time_estimates}~(B) with $\mathtt{u}_\sIR=\dot{\mathtt{v}}_\sIR/\mathtt{w}$ allows to conclude that it holds
\begin{enumerate}
\item[$(\check{\mathrm A}_2)$] $\|K^{m;\oo}_{0,\sIR}\ast (\partial_\sIR \mathtt{v}_\sIR) \check F^{i,m}_{0,\sIR}\|_{\cVtilde^m} \lesssim
\|K^{m;\oo}_{0,\sIR}\ast \mathtt{w} \check F^{i,m}_{0,\sIR}\|_{\cV^m}$,

\item[$(\check{\mathrm B}_2)$] $\|K^{m;\oo}_{\sUV,\sIR}\ast (\partial_\sIR \mathtt{v}_\sIR) \check F^{i,m}_{\sUV,\sIR}\|_{\cVtilde^m} \lesssim
\|K^{m;\oo}_{\sUV,\sIR}\ast \mathtt{w} \check F^{i,m}_{\sUV,\sIR}\|_{\cV^m}$,

\item[$(\check{\mathrm C}_2)$] $\|K^{m;\oo}_{\sUV,\sIR}\ast (\partial_\sIR \mathtt{v}_\sIR) (\check F^{i,m}_{\sUV,\sIR}-\check F^{i,m}_{0,\sIR})\|_{\cVtilde^m}
\lesssim
\|K^{m;\oo}_{\sUV,\sIR}\ast \mathtt{w}(\check F^{i,m}_{\sUV,\sIR}-\check F^{i,m}_{0,\sIR})\|_{\cV^m}$,
\end{enumerate}
where $\mathtt{w}\in C^\infty_\rc(\bR)$ is again arbitrary such that $\mathtt{w}=1$ on some neighborhood of $[t,t+6i_\dagger]$. Recall also that by Theorem~\ref{thm:F_check_bound} it holds
\begin{enumerate}
\item[$(\check{\mathrm A})$] $\|K^{m;\oo}_{0,\sIR}\ast \mathtt{w} \check F^{i,m}_{0,\sIR}\|_{\cV^m} \lesssim R^{i,m} [\sIR]^{\tilde\varrho_\varepsilon(i,m)}$,~~$\sIR\in(0,1]$,

\item[$(\check{\mathrm B})$] $\|K^{m;\oo}_{\sUV,\sIR}\ast \mathtt{w} \check F^{i,m}_{\sUV,\sIR}\|_{\cV^m} \lesssim R^{i,m} [\sUV\vee\sIR]^{\tilde\varrho_\varepsilon(i,m)}$,~~$\sIR\in[0,1]$,

\item[$(\check{\mathrm C})$] $\|K^{m;\oo}_{\sUV,\sIR}\ast \mathtt{w}(\check F^{i,m}_{\sUV,\sIR}-\check F^{i,m}_{0,\sIR})\|_{\cV^m}
\lesssim \delta R^{i,m}[\sIR]^{\tilde\varrho_\varepsilon(i,m)-\varepsilon}$,~~$\sIR\in[\sUV,1]$,
\end{enumerate}
for $i\in\{0,\ldots,i_\dagger\}$ provided $\mathtt{w}\in C^\infty_\rc(\bR)$ is chosen in such a way that in addition $\mathtt{u}=1$ on some neighborhood of $\supp\,\mathtt{w}-[0,i_\dagger(i_\dagger+1)]$, which is always possible by our assumption on $\mathtt{u}$. By the flow equation~\eqref{eq:flow_deterministic_i_m} we obtain
\begin{equation}
 \mathtt{w}_\sIR \partial_\sIR  \tilde F^{i,m}_{\sUV,\sIR}
 =
 -\frac{1}{m!}
 \sum_{\pi\in\cP_m}\sum_{j=1}^i\sum_{k=0}^m (k+1)\,
 \fY_\pi\fB\big(\dot G_\sIR,\mathtt{w}_\sIR \tilde F^{j,k+1}_{\sUV,\sIR}, \tilde F^{i-j,m-k}_{\sUV,\sIR}\big).
\end{equation}
For $\sUV=0$ the flow equation~\eqref{eq:flow_deterministic_i_m} is the definition of $\partial_\sIR  \tilde F^{i,m}_{0,\sIR}$. Consequently, this equality remains valid for $\sUV=0$. Note that for $i=0$ the RHS of the flow equation vanishes identically. The above flow equation together with Lemma~\ref{lem:fB1_bound_tilde} imply that
\begin{multline}\label{eq:tilde_thm_bound}
 \|K_{\sUV,\sIR}^{m;\oo}\ast \mathtt{w}_\sIR \partial_\sIR \tilde F_{\sUV,\sIR}^{i,m}\|_{\tilde\cV^m} 
 \leq 
 \sum_{j=1}^{i}\sum_{k=0}^m
 (k+1)\,\|\fR_\sUV^{\phantom{\oo}}\fP_\sIR^{2\oo}\dot G_\sIR^{\phantom{\oo}}\|_\cK\\
 \times
 \|K_{\sUV,\sIR}^{k+1;\oo}\ast \mathtt{w}_\sIR \tilde F_{\sUV,\sIR}^{j,k+1}\|_{\tilde\cV^{k+1}} 
 \|K_{\sUV,\sIR}^{m-k;\oo}\ast \tilde F_{\sUV,\sIR}^{i-j,m-k}\|_{\cV^{m-k}}
\end{multline}
and
\begin{multline}\label{eq:tilde_thm_bound_diff}
 \|K_{\sUV,\sIR}^{m;\oo}\ast \mathtt{w}_\sIR \partial_\sIR (\tilde F_{\sUV,\sIR}^{i,m}-\tilde F_{0,\sIR}^{i,m})\|_{\tilde\cV^m} 
 \leq 
 \sum_{j=1}^{i}\sum_{k=0}^m
 (k+1)\,\|\fR_\sUV^{\phantom{\oo}}\fP_\sIR^{2\oo}\dot G_\sIR^{\phantom{\oo}}\|_\cK\,
 \\*
 \times\Big(\|K_{\sUV,\sIR}^{k+1;\oo}\ast \mathtt{w}_\sIR (\tilde F_{\sUV,\sIR}^{j,k+1}-\tilde F_{0,\sIR}^{j,k+1})\|_{\tilde\cV^{k+1}}
 \,
 \|K_{\sUV,\sIR}^{m-k;\oo}\ast \tilde F_{\sUV,\sIR}^{i-j,m-k}\|_{\cV^{m-k}}
 \\*[1mm]
 +
 \|K_{0,\sIR}^{k+1;\oo}\ast \mathtt{w}_\sIR \tilde F_{0,\sIR}^{j,k+1}\|_{\tilde\cV^{k+1}} 
 \,
 \|K_{\sUV,\sIR}^{m-k;\oo}\ast (\tilde F_{\sUV,\sIR}^{i-j,m-k}-\tilde F_{0,\sIR}^{i-j,m-k})\|_{\cV^{m-k}}\Big).
\end{multline}
These bounds are also valid for $\sUV=0$. Let us recall that by Lemma~\ref{lem:kernel_dot_G} it holds
\begin{equation}
 \|\fR_\sUV^{\phantom{\oo}}\fP_\sIR^{2\oo}\dot G_\sIR\|_\cK \lesssim [\sUV\vee\sIR]^{\sigma-\varepsilon}[\sIR]^{\varepsilon-\sigma}
\end{equation}
uniformly in $\sUV\in[0,1]$ and $\sIR\in(0,1]$. As a result, by the induction hypothesis and Lemma~\ref{lem:time_estimates}~(A) we obtain 
\begin{enumerate}
\item[$(\tilde{\mathrm A}_1)$] $\|K^{m;\oo}_{0,\sIR}\ast \mathtt{w}_\sIR \partial_\sIR\tilde F^{i,m}_{0,\sIR}\|_{\cVtilde^m} 
\lesssim R^{i,m}\, [\sIR]^{\tilde\varrho_\varepsilon(i,m)}$,~~$\sIR\in(0,1]$,

\item[$(\tilde{\mathrm B}_1)$] $\|K^{m;\oo}_{\sUV,\sIR}\ast \mathtt{w}_\sIR \partial_\sIR\tilde F^{i,m}_{\sUV,\sIR}\|_{\cVtilde^m} 
\lesssim R^{i,m}\, [\sUV\vee\sIR]^{\tilde\varrho_\varepsilon(i,m)}$,~~$\sIR\in(0,1]$,

\item[$(\tilde{\mathrm C}_1)$] $\|K^{m;\oo}_{\sUV,\sIR}\ast \mathtt{w}_\sIR \partial_\sIR(\tilde F^{i,m}_{\sUV,\sIR}-\tilde F^{i,m}_{0,\sIR})\|_{\cVtilde^m}
\lesssim \delta R^{i,m}\,[\sIR]^{\tilde\varrho_\varepsilon(i,m)-\varepsilon}$,~~$\sIR\in[\sUV,1]$,
\end{enumerate}
Hence, by Eq.~\eqref{eq:initial_decomposition_d} and the results established earlier we arrive at
\begin{enumerate}
\item[$(\tilde{\mathrm A}_2)$] $\|K^{m;\oo}_{0,\sIR}\ast \partial_\sIR(\mathtt{w}_\sIR \tilde F^{i,m}_{0,\sIR})\|_{\cVtilde^m} 
\lesssim R^{i,m}\, [\sIR]^{\tilde\varrho_\varepsilon(i,m)}$,~~$\sIR\in(0,1]$,

\item[$(\tilde{\mathrm B}_2)$] $\|K^{m;\oo}_{\sUV,\sIR}\ast \partial_\sIR(\mathtt{w}_\sIR \tilde F^{i,m}_{\sUV,\sIR})\|_{\cVtilde^m} 
\lesssim R^{i,m}\, [\sUV\vee\sIR]^{\tilde\varrho_\varepsilon(i,m)}$,~~$\sIR\in(0,1]$,

\item[$(\tilde{\mathrm C}_2)$] $\|K^{m;\oo}_{\sUV,\sIR}\ast \partial_\sIR( \mathtt{w}_\sIR  \tilde F^{i,m}_{\sUV,\sIR}-\mathtt{w}_\sIR \tilde F^{i,m}_{0,\sIR})\|_{\cVtilde^m}
\lesssim \delta R^{i,m}\,[\sIR]^{\tilde\varrho_\varepsilon(i,m)-\varepsilon}$,~~$\sIR\in[\sUV,1]$,
\end{enumerate}
We observe that by the flow equation~\eqref{eq:flow_deterministic_i_m} and Remark~\ref{rem:fB1_bound} the bounds~\eqref{eq:tilde_thm_bound} and~\eqref{eq:tilde_thm_bound_diff} remain true when $\mathtt{w}_\sIR$ is replaced with $1$ and $\cVtilde^m$ is replaced with $\cV^m$. As a result, 
\begin{enumerate}
\item[$(\tilde{\mathrm A}_3)$] $\|K^{m;\oo}_{0,\sIR}\ast \partial_\sIR \tilde F^{i,m}_{0,\sIR}\|_{\cV^m} 
\lesssim R^{i,m}\,[\sIR]^{\tilde\varrho_\varepsilon(i,m)-\sigma}$,~~$\sIR\in(0,1]$,

\item[$(\tilde{\mathrm B}_3)$] $\|K^{m;\oo}_{\sUV,\sIR}\ast \partial_\sIR \tilde F^{i,m}_{\sUV,\sIR}\|_{\cV^m} 
\lesssim R^{i,m}\, [\sIR]^{\varepsilon-\sigma}\,[\sUV\vee\sIR]^{\tilde\varrho_\varepsilon(i,m)-\varepsilon}$,~~$\sIR\in(0,1]$,

\item[$(\tilde{\mathrm C}_3)$] $\|K^{m;\oo}_{\sUV,\sIR}\ast \partial_\sIR(\tilde F^{i,m}_{\sUV,\sIR}-\tilde F^{i,m}_{0,\sIR})\|_{\cV^m}
\lesssim \delta R^{i,m}\,[\sIR]^{\tilde\varrho_\varepsilon(i,m)-\sigma-\varepsilon}$,~~$\sIR\in[\sUV,1]$.
\end{enumerate}

Now we show how to use the results gathered above in order to conclude the proof of the induction step. We first consider the case $i>i_\dagger$ or $\sIR\in[0,\sUV)$.  We note that
\begin{equation}
 \tilde F^{i,m}_{\sUV,\sIR}
 =
 \tilde F^{i,m}_{\sUV}+
 \int_0^\sIR \partial_\uIR \tilde F^{i,m}_{\sUV,\uIR}
 \,\rd \uIR.
\end{equation}
For $\sUV=0$ and $i>i_\dagger$ we define $\tilde F^{i,m}_{0} = 0$ and use the above equation with $\sUV=0$ to define $\tilde F^{i,m}_{0,\sIR}$, $\sIR\in(0,1]$. Using $\tilde F^{i,m}_{\sUV} = 1_{(t,t+6i_\dagger)} \check F_{\sUV}^{i,m}$ and the bound~$(\check{\mathrm B})$ aplied with $\mu=0$ we obtain
\begin{equation}
 \|K^{m;\oo}_{\sUV,0}\ast \tilde F^{i,m}_{\sUV}\|_{\cV^m}
 \leq
 \|K^{m;\oo}_{\sUV,0}\ast \mathtt{w}\check F^{i,m}_{\sUV}\|_{\cV^m}
\lesssim R^{i,m}\, [\sUV]^{\tilde\varrho_\varepsilon(i,m)-\varepsilon}
\end{equation}
for any $\mathtt{w}\in C^\infty_\rc(\bR)$ such that $\mathtt{w}=1$ on some neighborhood of $[t,t+6i_\diamond]$. We note that $\tilde\varrho_\varepsilon(i,m)-\varepsilon\geq 0$ for $i>i_\dagger$ by Remark~\ref{rem:epsilon}. As a result, for $i>i_\dagger$ the bounds $(\tilde{\mathrm A}_3)$, $(\tilde{\mathrm B}_3)$, $(\tilde{\mathrm C}_3)$ imply
\begin{enumerate}
\item[$(\tilde{\mathrm C}'_3)$] $\|K^{m;\oo}_{\sUV,\sIR}\ast \partial_\sIR(\tilde F^{i,m}_{\sUV,\sIR}-\tilde F^{i,m}_{0,\sIR})\|_{\cV^m}
\lesssim \delta R^{i,m}\,[\sIR]^{\varepsilon-\sigma}\,[\sUV\vee\sIR]^{\tilde\varrho_\varepsilon(i,m)-2\varepsilon}$,~~$\sIR\in(0,1]$.
\end{enumerate}
Hence, for $i>i_\dagger$ or $\sIR\in[0,\sUV)$ the theorem follows from Lemma~\ref{lem:integration} and the bounds~$(\tilde{\mathrm A}_3)$, $(\tilde{\mathrm B}_3)$, $(\tilde{\mathrm C}'_3)$. 

It remains to investigate the case $i\in\{1,\ldots,i_\dagger\}$. We note that by the previous paragraph and Lemma~\ref{lem:time_estimates}~(B) applied with $\mathtt{u}_\sIR=\mathtt{w}_\sIR$ we have
\begin{equation}
 \|K^{m;\oo}_{\sUV,\sIR}\ast \mathtt{w}_\sIR\tilde F^{i,m}_{\sUV,\sIR}\|_{\cVtilde^m} 
 \lesssim 
 [\sIR]^\sigma\,\|K^{m;\oo}_{\sUV,\sIR}\ast\tilde F^{i,m}_{\sUV,\sIR}\|_{\cV^m} 
 \lesssim 
 [\sIR]^\sigma\,[\sUV]^{\tilde\varrho_\varepsilon(i,m)}
\end{equation}
for any $\sIR\in[0,\sUV)$. We have
\begin{equation}
 \mathtt{w}_\sIR \tilde F^{i,m}_{\sUV,\sIR}
 =
 \int_0^\sIR \partial_\uIR (\mathtt{w}_\uIR \tilde F^{i,m}_{\sUV,\uIR})
 \,\rd \uIR,
 \qquad
 \tilde F^{i,m}_{\sUV,\sIR} 
 = \mathtt{v}_\sIR \check F^{i,m}_{\sUV,\sIR}
 + \mathtt{w}_\sIR \tilde F^{i,m}_{\sUV,\sIR}.
\end{equation}
For $i\in\{1,\ldots,i_\dagger\}$ and $m\in\bN_0$ we define
\begin{equation}
 (\mathtt{w}_\sIR \tilde F^{i,m}_{0,\sIR})
 :=
 \int_0^\sIR \partial_\uIR (\mathtt{w}_\uIR \tilde F^{i,m}_{0,\uIR})
 \,\rd \uIR,
 \qquad
 \tilde F^{i,m}_{0,\sIR} 
 := \mathtt{v}_\sIR \check F^{i,m}_{0,\sIR}
 + (\mathtt{w}_\sIR \tilde F^{i,m}_{0,\sIR}).
\end{equation}
Consequently, by Lemma~\ref{lem:kernel_u_v} it holds
\begin{equation}\label{eq:proof_int_tilde}
 \|K^{m;\oo}_{\sUV,\sIR}\ast \mathtt{w}_\sIR \tilde F^{i,m}_{\sUV,\sIR}\|_{\cVtilde^m}
 \leq
 \int_0^\sIR \|K^{m;\oo}_{\sUV,\uIR}\ast \partial_\uIR (\mathtt{w}_\uIR \tilde F^{i,m}_{\sUV,\uIR})\|_{\cVtilde^m}
 \,\rd \uIR
\end{equation}
and
\begin{multline}\label{eq:proof_int_tilde2}
 \|K^{m;\oo}_{\sUV,\sIR}\ast (\mathtt{w}_\sIR \tilde F^{i,m}_{\sUV,\sIR}-\mathtt{w}_\sIR \tilde F^{i,m}_{0,\sIR})\|_{\cVtilde^m}
 \\\leq
 \int_0^\sIR \|K^{m;\oo}_{\sUV,\uIR}\ast (\partial_\uIR( \mathtt{w}_\uIR \tilde F^{i,m}_{\sUV,\uIR})
 -\partial_\uIR( \mathtt{w}_\uIR \tilde F^{i,m}_{0,\uIR}))\|_{\cVtilde^m}
 \,\rd \uIR.
\end{multline}
The estimate~\eqref{eq:proof_int_tilde} remains valid for $\sUV=0$. Since $\tilde\varrho_\varepsilon(i,m)\geq3\varepsilon-\sigma$ by the bounds $(\tilde{\mathrm A}_2)$, $(\tilde{\mathrm B}_2)$, $(\tilde{\mathrm C}_2)$ we have
\begin{enumerate}
\item[$(\tilde{\mathrm C}'_2)$] \!$\|K^{m;\oo}_{\sUV,\sIR}\ast \partial_\sIR( \mathtt{w}_\sIR  \tilde F^{i,m}_{\sUV,\sIR}-\mathtt{w}_\sIR \tilde F^{i,m}_{0,\sIR})\|_{\cVtilde^m}
\!\lesssim\! \delta R^{i,m} [\sIR]^{\varepsilon-\sigma}[\sUV\vee\sIR]^{\tilde\varrho_\varepsilon(i,m)+\sigma-2\varepsilon}$,~~$\sIR\in(0,1]$. 
\end{enumerate}
Using the bounds~$(\tilde{\mathrm A}_2)$, $(\tilde{\mathrm B}_2)$, $(\tilde{\mathrm C}'_2)$ as well as the estimates~\eqref{eq:proof_int_tilde} and~\eqref{eq:proof_int_tilde2} we obtain 
\begin{enumerate}
\item[$(\tilde{\mathrm A}_4)$] $\|K^{m;\oo}_{0,\sIR}\ast \mathtt{w}_\sIR\tilde F^{i,m}_{0,\sIR}\|_{\cVtilde^m} 
\lesssim R^{i,m}\, [\sIR]^{\tilde\varrho_\varepsilon(i,m)+\sigma}$,~~$\sIR\in(0,1]$,

\item[$(\tilde{\mathrm B}_4)$] $\|K^{m;\oo}_{\sUV,\sIR}\ast \mathtt{w}_\sIR\tilde F^{i,m}_{\sUV,\sIR}\|_{\cVtilde^m} 
\lesssim R^{i,m}\, [\sIR]^{\sigma}\,[\sUV\vee\sIR]^{\tilde\varrho_\varepsilon(i,m)}$,~~$\sIR\in[0,1]$,

\item[$(\tilde{\mathrm C}_4)$] $\|K^{m;\oo}_{\sUV,\sIR}\ast \mathtt{w}_\sIR(\tilde F^{i,m}_{\sUV,\sIR}-\tilde F^{i,m}_{0,\sIR})\|_{\cVtilde^m}
\lesssim \delta R^{i,m}\,[\sIR]^{\tilde\varrho_\varepsilon(i,m)-\varepsilon+\sigma}$,~~$\sIR\in[\sUV,1]$.
\end{enumerate}
As a result, by Lemma~\ref{lem:time_estimates}~(C) it holds
\begin{enumerate}
\item[$(\tilde{\mathrm A}_5)$] $\|K^{m;\oo}_{0,\sIR}\ast \mathtt{w}_\sIR\tilde F^{i,m}_{0,\sIR}\|_{\cV^m} 
\lesssim R^{i,m}\, [\sIR]^{\tilde\varrho_\varepsilon(i,m)}$,~~$\sIR\in(0,1]$,

\item[$(\tilde{\mathrm B}_5)$] $\|K^{m;\oo}_{\sUV,\sIR}\ast \mathtt{w}_\sIR\tilde F^{i,m}_{\sUV,\sIR}\|_{\cV^m} 
\lesssim R^{i,m}\, [\sUV\vee\sIR]^{\tilde\varrho_\varepsilon(i,m)}$,~~$\sIR\in[0,1]$,

\item[$(\tilde{\mathrm C}_5)$] $\|K^{m;\oo}_{\sUV,\sIR}\ast \mathtt{w}_\sIR(\tilde F^{i,m}_{\sUV,\sIR}-\tilde F^{i,m}_{0,\sIR})\|_{\cV^m}
\lesssim \delta R^{i,m}\,[\sIR]^{\tilde\varrho_\varepsilon(i,m)-\varepsilon}$,~~$\sIR\in[\sUV,1]$.
\end{enumerate}
To finish the proof of the inductive step in the case $i\in\{1,\ldots,i_\dagger\}$ it is now enough to use Eq.~\eqref{eq:initial_decomposition}, the above bounds as well as the bounds~$(\check{\mathrm A}_1)$, $(\check{\mathrm B}_1)$, $(\check{\mathrm C}_1)$ and $(\check{\mathrm A})$, $(\check{\mathrm B})$, $(\check{\mathrm C})$.
\end{proof}

\begin{lem}\label{lem:time_estimates}
Let $\oo\in\bN_0$ and $\mathtt{u}_\sIR\in C^\infty_\rc(\bR)$, $\sIR\in(0,1]$.
\begin{enumerate}
 \item[(A)] $\|K^{m;\oo}_{\sUV,\sIR}\ast \mathtt{u}_\sIR V\|_{\cV^m} 
 \lesssim \|K^{m;\oo}_{\sUV,\sIR}\ast V\|_{\cV^m}$  if $\forall_{\ooo\in\{0,\ldots,\oo\}}\|\partial_{\mathring x}^\ooo \mathtt{u}_\sIR\|_{L^\infty(\bR)}\lesssim \sIR^{-\ooo}$.
 \item[(B)] $\|K^{m;\oo}_{\sUV,\sIR}\ast \mathtt{u}_\sIR V\big\|_{\cVtilde^m} 
 \lesssim  \|K^{m;\oo}_{\sUV,\sIR}\ast V\|_{\cV^m}\,[\sIR]^\sigma$ if $\forall_{\ooo\in\{0,\ldots,\oo\}}\|\partial_{\mathring x}^\ooo \mathtt{u}_\sIR\|_{L^1(\bR)}\lesssim \sIR^{1-\ooo}$.
 \item[(C)] $\|K^{m;\oo+1}_{\sUV,\sIR}\ast V\|_{\cV^m}
 \lesssim  \|K^{m;\oo}_{\sUV,\sIR}\ast V\|_{\cVtilde^m}\,[\sIR]^{-\sigma}$.
\end{enumerate}
All of the above bounds are uniform in $\sUV\in[0,1]$, $\sIR\in(0,1]$ and $V\in\cV^m$.
\end{lem}

\begin{proof}
We consider below only the case $m=0$. The general case can be proved using exactly the same technique, but the notation is more complicated. For $m=0$ we have $K^{m;\oo}_{\sUV,\sIR}=K_\sIR^{\ast\oo}$ and $\cV^m=\cV$, $\cVtilde^m=\cVtilde$. Let us set $V=v$. By Lemma~\ref{lem:time_estimates_prep} it holds
\begin{equation}
 \|K_\sIR^{\ast\oo}\ast \mathtt{u}_\sIR v\|_\cV \lesssim
 \sum_{\ooo=0}^\oo \mu^\ooo\,
 \|(\partial_{\mathring x}^\ooo \mathtt{u}_\sIR) (K_\sIR^{\ast\oo}\ast v)\|_\cV.
\end{equation}
The inspection of the proof of Lemma~\ref{lem:time_estimates_prep} shows that the same bound holds true for $\cV$ replaced with $\cVtilde$. As a result, we obtain
\begin{equation}
 \|K_\sIR^{\ast\oo}\ast \mathtt{u}_\sIR v\|_\cV \lesssim
 \sum_{\ooo=0}^\oo \mu^\ooo\, \|\partial_{\mathring x}^\ooo \mathtt{u}_\sIR\|_{L^\infty(\bR)}\,
 \|K_\sIR^{\ast\oo}\ast v\|_\cV
\end{equation}
and
\begin{equation}
 \|K_\sIR^{\ast\oo}\ast \mathtt{u}_\sIR v\|_\cVtilde \lesssim
 \sum_{\ooo=0}^\oo \mu^\ooo\, \|\partial_{\mathring x}^\ooo \mathtt{u}_\sIR\|_{L^1(\bR)}\,
 \|K_\sIR^{\ast\oo}\ast v\|_\cV.
\end{equation}
These estimates together with the assumed bounds for $\mathtt{u}_\sIR$ imply Parts~(A) and~(B). Part~(C) follows from
\begin{equation}
 \|K_\sIR^{\ast(\oo+1)}\ast v\|_{\cV}
 =
 \|K^{\ast\oo}_\sIR\ast v\|_{\cVtilde}~
 \sup_{x\in\bM}\sup_{\mathring z\in\bR}\int_{\bR^\rdim} K_\sIR(x-z) \,\rd \bar z
 \lesssim 
 \|K^{\ast\oo}_\sIR\ast v\|_{\cVtilde}
 \,[\sIR]^{-\sigma}.
\end{equation}
This completes the proof.
\end{proof}

\subsection{Convergence of perturbative expansion}\label{sec:convergence_PT}

\begin{dfn}
For $\tilde R>0$ and $i,m\in\bN_0$ we set
\begin{equation}
 \tilde R^{i,m}:=\frac{\tilde R^{1+2(im_\flat-m)}}{4(i+1)^2\,4(m+1)^2}.
\end{equation}
\end{dfn}

\begin{thm}\label{thm:deterministic_convergence}
There exists a constant $\tilde c\geq 1$ and $\oo\in\bN_0$ such that under assumptions of Theorem~\ref{thm:F_tilde_bound} it holds:
\begin{enumerate}
\item[(A)] $\|K^{m;\oo}_{0,\sIR}\ast \tilde F^{i,m}_{0,\sIR}\|_{\cV^m}
\leq \tilde R^{i,m}
\,[\sIR]^{\tilde\varrho_\varepsilon(i,m)}$,~~$\sIR\in(0,1]$,

\item[(B)] $\|K^{m;\oo}_{\sUV,\sIR}\ast \tilde F^{i,m}_{\sUV,\sIR}\|_{\cV^m}
 \leq \tilde R^{i,m}
 \,[\sIR]^{\varepsilon (i/i_\dagger-1)\vee0}\,
 [\sUV\vee\sIR]^{\tilde\varrho_\varepsilon(i,m)-\varepsilon (i/i_\dagger-1)\vee0}$,~~$\sIR\in(0,1]$,

\item[(C)] $\|K^{m;\oo}_{\sUV,\sIR}\ast (\tilde F^{i,m}_{\sUV,\sIR}-\tilde F^{i,m}_{0,\sIR})\|_{\cV^m}
 \leq 2\delta \tilde R^{i,m}
 \,[\sIR]^{\tilde\varrho_\varepsilon(i,m)-\varepsilon}$,~~$\sIR\in[\sUV,1]$,
\end{enumerate}
where $\tilde R=\tilde c\,R$.
\end{thm}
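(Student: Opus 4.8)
The proof is a quantitative sharpening of Theorem~\ref{thm:F_tilde_bound}. The plan is to run the same double induction — ascending on $i$, descending on $m$ — but to keep track of the exact $(i,m)$-dependence of every implied constant, so that after fixing the universal number $\tilde c\geq1$ (hence $\tilde R=\tilde c R$) large enough, all of this dependence is carried by the summable weight $\tilde R^{i,m}$. Recall that $\tilde F^{i,m}_{\sUV,\sIR}$ satisfies the flow equation~\eqref{eq:flow_deterministic_i_m} and is obtained from the coefficients $\breve F^{i,k+m}_{\sUV,\sIR}$ by insertion of $\hat G_\sIR\ast f^\vartriangle_\sUV$; hence the data appearing on the right-hand side are controlled by Theorem~\ref{thm:bound_F_initial} (for $\breve F$), Lemma~\ref{lem:initial_properties} (for $\hat G_\sIR\ast f^\vartriangle_\sUV$ and $K_\sIR^{\ast\oo}\ast\fR_\sUV f^\vartriangle_\sUV$), Lemma~\ref{lem:kernel_dot_G} (for the propagator $\fR_\sUV\fP_\sIR^{2\oo}\dot G_\sIR$), and the bilinear estimates of Lemmas~\ref{lem:fB1_bound}, \ref{lem:fB1_bound_breve}, \ref{lem:fA_fB_bounds}, \ref{lem:fA_fB_Ks}. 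The base cases $i\in\{0,1\}$ reduce directly to the estimates of Theorem~\ref{thm:bound_F_initial} and Lemma~\ref{lem:initial_properties}: for $i\leq1$ the bilinear part of~\eqref{eq:flow_deterministic_i_m} for $\tilde F^{i,m}$ vanishes (it only involves $\tilde F^{0,\cdot}=0$), so $\tilde F^{i,m}_{\sUV,\sIR}=\tilde F^{i,m}_\sUV$ and the claim is an estimate on the boundary data alone.

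The point that makes the scheme close is that $\tilde R^{i,m}$ is multiplicative in exactly the way dictated by~\eqref{eq:flow_deterministic_i_m}. Directly from the definition,
\begin{equation}
 \tilde R^{j,k+1}\,\tilde R^{i-j,m-k}
 =
 \frac{1}{\tilde R}\,
 \frac{4(i+1)^2\,4(m+1)^2}{4(j+1)^2\,4(k+2)^2\,4(i-j+1)^2\,4(m-k+1)^2}\,
 \tilde R^{i,m},
\end{equation}
and the combinatorial factor, summed over $j\in\{1,\ldots,i-1\}$, $k\in\{0,\ldots,m\}$ and $\pi\in\cP_m$ (the last against the prefactor $1/m!$ in the flow equation), is bounded uniformly in $i,m$ thanks to $\sum_j\bigl((j+1)(i-j+1)\bigr)^{-2}\lesssim(i+2)^{-2}$ and the analogous bound for the $k$-sum. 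For the terms carrying the initial data, of the form $\fB(\dot G_\sIR,\tilde F^{i,k+m}_{\sUV,\sIR},\hat G_\sIR\ast f^\vartriangle_\sUV)$ with weight $\binom{m+k}{k}$, one uses that $\tilde R^{i,k+m}/\tilde R^{i,m}$ contains the factor $\tilde R^{-2k}$, so that $\sum_k\binom{m+k}{k}\tilde R^{-2k}$ converges once $\tilde R$ is large; the factor $R^k$ produced by $k$ insertions of $\hat G_\sIR\ast f^\vartriangle_\sUV$ is harmless since $\tilde R^{-2k}R^k=\tilde c^{-2k}R^{-k}$ and $R>1$. Hence the induction hypothesis together with Lemma~\ref{lem:kernel_dot_G} bounds the right-hand side of~\eqref{eq:flow_deterministic_i_m} for $\partial_\sIR\tilde F^{i,m}_{\sUV,\sIR}$ by a \emph{universal} constant times $\tilde c^{-1}\tilde R^{i,m}$ times the expected time weight $[\sIR]^{\varepsilon-\sigma}\,[\sUV\vee\sIR]^{\tilde\varrho_\varepsilon(i,m)-\varepsilon}$ — with the extra factor $[\sIR]^{(\varepsilon(i/i_\flat-1))\vee0}$ in part~(B) — and choosing $\tilde c$ large enough to dominate that universal constant turns $\tilde c^{-1}\tilde R^{i,m}$ into $\tfrac12\tilde R^{i,m}$ (and into $\delta\tilde R^{i,m}$ for the difference in part~(C), using in addition Theorem~\ref{thm:F_tilde_bound}(C), Theorem~\ref{thm:bound_F_initial}(C) and Lemma~\ref{lem:initial_properties} for the linearization). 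Integrating in $\sIR$ with Lemma~\ref{lem:integration_deter} and bounding the boundary term $\tilde F^{i,m}_\sUV$ via Theorem~\ref{thm:bound_F_initial}, Lemma~\ref{lem:initial_properties} and Lemma~\ref{lem:bound_map_L_m} then gives (A), (B), (C) with prefactor $\tilde R^{i,m}$ and completes the inductive step.

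Two technical points deserve care. First, for $i>i_\flat$ the exponent $\tilde\varrho_\varepsilon(i,m)$ is the clipped quantity $\varrho_\varepsilon(i,m)\vee(3\varepsilon-\sigma)$, so to apply part~(B) of Lemma~\ref{lem:integration_deter} one must know that the exponent fed to it is strictly positive; this is precisely why the auxiliary weight $[\sIR]^{(\varepsilon(i/i_\flat-1))\vee0}$ is introduced. For $i>i_\flat$ the inequalities from the proof of Theorem~\ref{thm:bound_F_initial} give $\tilde\varrho_\varepsilon(i,m)$ bounded below, up to a fixed constant, by $i(\dim(\lambda)-\varepsilon m_\flat)$, which stays above $\varepsilon(i/i_\flat-1)$ by a fixed margin whenever $\varepsilon<\varepsilon_\vartriangle$; for $i\leq i_\flat$ the weight is trivial and only finitely many $(i,m)$ need to be inspected separately. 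One also checks that this weight is subadditive under the split $i=j+(i-j)$, $m=(k+1)+(m-k)$, so it propagates consistently through the bilinear recursion. Second — and this is the main obstacle — $\tilde c$ must be chosen \emph{once and for all}, independently of $i$ and $m$: the whole argument hinges on verifying that every constant appearing in Lemmas~\ref{lem:fB1_bound}, \ref{lem:fB1_bound_breve}, \ref{lem:fA_fB_bounds}, \ref{lem:fA_fB_Ks}, \ref{lem:kernel_dot_G}, \ref{lem:integration_deter}, \ref{lem:time_estimates}, and in Theorem~\ref{thm:bound_F_initial}, is genuinely universal or at worst polynomial in $(i,m)$ (hence absorbable into the $(i+2)^{-2}(m+2)^{-2}$-type gains built into $\tilde R^{i,m}$), so that the only surviving $(i,m)$-dependence on the right-hand side of the flow equation is through the manifestly summable combinatorial factors and the weights $\tilde R^{i,m}$ themselves. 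Bookkeeping this uniformity consistently across the three coupled parameters $R$, $\tilde R=\tilde c R$ and $\delta$ is the delicate part of the proof.
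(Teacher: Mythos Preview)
Your overall strategy—double induction via the flow equation~\eqref{eq:flow_deterministic_i_m}, exploiting the multiplicative identity $\tilde R\sum_{j,k}\tilde R^{j,k+1}\tilde R^{i-j,m-k}\leq\tilde R^{i,m}$—matches the paper's. But two points in your sketch diverge from what actually happens and create a gap.

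First, there are no ``initial data terms'' of the form $\fB(\dot G_\sIR,\tilde F^{i,k+m}_{\sUV,\sIR},\hat G_\sIR\ast f^\vartriangle_\sUV)$ in the flow equation for $\tilde F$. Since $\tilde F^{0,m}_{\sUV,\sIR}=0$ for every $m$, the right-hand side of~\eqref{eq:flow_deterministic_i_m} contains only products $\fB(\dot G_\sIR,\tilde F^{j,k+1}_{\sUV,\sIR},\tilde F^{i-j,m-k}_{\sUV,\sIR})$ with both factors already covered by the induction hypothesis. Terms with an explicit $f^\vartriangle_\sUV$ insertion appear in the flow equation for $\breve F$ (where $\breve F^{0,0}=f^\vartriangle_\sUV$), but the whole point of the second shift to $\tilde F$ is to eliminate them; your convergence argument $\sum_k\binom{m+k}{k}\tilde R^{-2k}<\infty$ is therefore aimed at contributions that are not present, and the references to Lemmas~\ref{lem:fB1_bound_breve} and~\ref{lem:time_estimates} are superfluous here.

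Second, the ``main obstacle'' you identify—verifying that the constants of Theorem~\ref{thm:bound_F_initial} are at worst polynomial in $(i,m)$—is neither attempted nor needed in the paper. The paper takes \emph{all} of $i\leq i_\flat$ as the base case: since these are finitely many pairs $(i,m)$ with $m\leq im_\flat$, the $(i,m)$-dependent constants from Theorem~\ref{thm:F_tilde_bound} are absorbed by choosing $\tilde c$ large enough once. For $i>i_\flat$ the boundary term $\tilde F^{i,m}_{\sUV,0}$ vanishes, so no estimate on it is required. The only $(i,m)$-dependent constants remaining in the induction step are the factor $(k+1)\leq m+1$ from the flow equation and the integration constants $\sigma/\epsilon+\sigma/\rho$ from Lemma~\ref{lem:integration_deter}, applied with $\epsilon=\varepsilon(i/i_\flat-1)$ and $\rho=\varrho_\varepsilon(i,m)-\varepsilon$. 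The paper fixes $\tilde c$ precisely so that $\|\fR_\sUV\fP_\sIR^{2\oo}\dot G_\sIR\|_\cK\leq(\tilde R/C)[\sIR]^{\varepsilon-\sigma}[\sUV\vee\sIR]^{\sigma-\varepsilon}$ for an explicit $C$ depending only on $i_\flat,m_\flat,\varepsilon,\dim(\Phi),\varrho^\diamond_\varepsilon$ and satisfying $(m+1)(\sigma/\epsilon+\sigma/\rho)\leq C$ uniformly over $i>i_\flat$, $m\leq im_\flat$; this closes the induction with a sharp inequality and no leftover factor to iterate.
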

\begin{rem}
Recall that by Remark~\ref{rem:force_hat_i_m} $\tilde F^{i,m}_{\sUV,0}$ vanishes unless $i\in\{0,\ldots,i_\dagger\}$. The same is true for $\tilde F^{i,m}_{0,0}$ since $\tilde F^{i,m}_{0,0}$ vanishes unless $\varrho(i,m)\leq 0$.
\end{rem}

\begin{proof}
Without loss of generality we can restrict attention to the case $m\leq i m_\flat$ since otherwise $\tilde F^{i,m}_{\sUV,\sIR}$ is identically zero. Theorem~\ref{thm:F_tilde_bound} and Lemma~\ref{lem:kernel_dot_G} imply that there exist $\oo\in\bN_0$ and $\tilde c\geq 1$ such that the theorem holds for all $i\leq i_\dagger$ and $m\in\bN_0$. Moreover we choose $\tilde c\geq 1$ in such a way  that the bound
\begin{equation}\label{eq:thm_conv_bound_dot_G}
 \|\fR_\sUV^{\phantom{\oo}} \fP^{2\oo}_\sIR \dot G_\sIR^{\phantom{\oo}}\|_\cK \leq C^{-1}\,\tilde R\,
 [\sIR]^{\varepsilon-\sigma}\,
 [\sUV\vee\sIR]^{\sigma-\varepsilon}
\end{equation}
is satisfied for all $\sUV\in[0,1]$, $\sIR\in(0,1]$, where $\tilde R=\tilde c\,R$ and
\begin{equation}
 C:=(1/\varrho_\varepsilon^\diamond+1/(\dim(\varPhi)+\varepsilon))\sigma
 +3 i_\dagger m_\flat\,\sigma/\varepsilon,
 \qquad
 \varrho_\varepsilon^\diamond:=\varrho_\varepsilon(i_\dagger+1)-\varepsilon>0.
\end{equation}
The existence of such $\tilde c$ follows from Lemma~\ref{lem:kernel_dot_G}. We note that
\begin{equation}\label{eq:thm_conv_C}
\frac{(m+1)\sigma}{\varrho_\varepsilon(i,m)-\varepsilon} 
+
\frac{(m+1) \sigma}{\varepsilon (i/i_\dagger-1)} 
\leq 
\frac{(m+1)\sigma}{\varrho_\varepsilon^\diamond+(\dim(\varPhi)+\varepsilon)m}
+
\frac{(i m_\flat+1) \sigma}{\varepsilon (i/i_\dagger-1)}
\leq C,
\end{equation}
for $i>i_\dagger$ and $m\leq i m_\flat$. Now let us fix some $i_\circ\in\bN_+$, $i_\circ>i_\dagger$, and $m_\circ\in\bN_0$. Assume that theorem is true for all $i,m\in\bN_0$ such that either $i<i_\circ$, or $i=i_\circ$ and $m>m_\circ$. We shall prove the theorem for all $i=i_\circ$ and $m=m_\circ$. By the flow equation~\eqref{eq:flow_deterministic_i_m},  Remark~\ref{rem:fB1_bound} we have
\begin{multline}
 \|K_{\sUV,\sIR}^{m;\oo}\ast \partial_\sIR \tilde F_{\sUV,\sIR}^{i,m}\|_{\cV^m} 
 \leq 
 \sum_{j=1}^i\sum_{k=0}^m
 (k+1)\,\|\fR_\sUV^{\phantom{\oo}}\fP_\sIR^{2\oo}\dot G_\sIR^{\phantom{\oo}}\|_\cK 
 \\
 \times
 \|K_{\sUV,\sIR}^{k+1;\oo}\ast \tilde F_{\sUV,\sIR}^{j,k+1}\|_{\cV^{k+1}} 
 \,
 \|K_{\sUV,\sIR}^{m-k;\oo}\ast \tilde F_{\sUV,\sIR}^{i-j,m-k}\|_{\cV^{m-k}}
\end{multline}
and
\begin{multline}
 \|K_{\sUV,\sIR}^{m;\oo}\ast \partial_\sIR (\tilde F_{\sUV,\sIR}^{i,m}-\tilde F_{0,\sIR}^{i,m})\|_{\cV^m} 
 \leq 
 \sum_{j=1}^i\sum_{k=0}^m
 (k+1)\,\|\fR_\sUV^{\phantom{\oo}}\fP_\sIR^{2\oo}\dot G_\sIR^{\phantom{\oo}}\|_\cK 
 \\[1mm]
 \times
 \Big( \|K_{\sUV,\sIR}^{k+1;\oo}\ast (\tilde F_{\sUV,\sIR}^{j,k+1}-\tilde F_{0,\sIR}^{j,k+1})\|_{\cV^{k+1}} 
 \,
 \|K_{\sUV,\sIR}^{m-k;\oo}\ast \tilde F_{\sUV,\sIR}^{i-j,m-k}\|_{\cV^{m-k}}
 \\[2mm]
 +
 \|K_{0,\sIR}^{k+1;\oo}\ast \tilde F_{0,\sIR}^{j,k+1}\|_{\cV^{k+1}} 
 \,
 \|K_{\sUV,\sIR}^{m-k;\oo}\ast (\tilde F_{\sUV,\sIR}^{i-j,m-k}-\tilde F_{0,\sIR}^{i-j,m-k})\|_{\cV^{m-k}}\Big).
\end{multline}
These bounds remain valid for $\sUV=0$. Using the inequality
\begin{equation}
 \sum_{j=0}^i\frac{1}{4(1+j)^2\,4(1+i-j)^2}\leq \frac{1}{4\,(1+i)^2}
\end{equation}
we obtain
\begin{equation}
 \tilde R~ \sum_{j=1}^i\sum_{k=0}^m \tilde R^{j,k+1} \tilde R^{i-j,m-k} \leq \tilde R^{i,m}.
\end{equation}
The bounds stated above, the bound~\eqref{eq:varrho_bound_initial} and the induction hypothesis imply
\begin{equation}
 \|K^{m;\oo}_{0,\sIR}\ast\partial_\sIR \tilde F^{i,m}_{0,\sIR}\|_{\cV^m}
 \leq C^{-1}\,(m+1)\,\tilde R^{i,m}\, [\sIR]^{\varrho_\varepsilon(i,m)-\sigma},
\end{equation}
\begin{multline}
 \|K^{m;\oo}_{\sUV,\sIR}\ast\partial_\sIR \tilde F^{i,m}_{\sUV,\sIR}\|_{\cV^m}
 \\
 \leq C^{-1}\,(m+1)\,\tilde R
 \sum_{j=1}^i\sum_{k=0}^m\tilde R^{j,k+1} \tilde R^{i-j,m-k}\,
 [\sIR]^{\varepsilon (i/i_\dagger-1)-\sigma}\, [\sUV\vee\sIR]^{\varrho_\varepsilon(i,m)-\varepsilon (i/i_\dagger-1)}
 \\
 \leq C^{-1}\,(m+1)\,\tilde R^{i,m}\,
 [\sIR]^{\varepsilon (i/i_\dagger-1)-\sigma}\, [\sUV\vee\sIR]^{\varrho_\varepsilon(i,m)-\varepsilon (i/i_\dagger-1)},
\end{multline}
\begin{multline}
 \|K^{m;\oo}_{\sUV,\sIR}\ast\partial_\sIR (\tilde F^{i,m}_{\sUV,\sIR}-\tilde F^{i,m}_{0,\sIR})\|_{\cV^m}
 \\
 \leq 2\delta C^{-1}\,(m+1)\,\tilde R
 \sum_{j=1}^i\sum_{k=0}^m\tilde R^{j,k+1} \tilde R^{i-j,m-k}\,
 [\sIR]^{\varepsilon (i/i_\dagger-1)-\sigma}\, [\sUV\vee\sIR]^{\varrho_\varepsilon(i,m)-\varepsilon i/i_\dagger}
 \\
 \leq 2\delta C^{-1}\, (m+1)\,\tilde R^{i,m}\,
 [\sIR]^{\varepsilon (i/i_\dagger-1)-\sigma}\, [\sUV\vee\sIR]^{\varrho_\varepsilon(i,m)-\varepsilon i/i_\dagger}.
\end{multline}
These bounds are valid for all $\sIR\in(0,1]$. For $i>i_\dagger$ it holds $\tilde F^{i,m}_{\sUV,0}=0$ and
\begin{equation}
 \tilde F^{i,m}_{\sUV,\sIR} 
 =
 \int_0^\sIR\partial_\uIR\tilde F^{i,m}_{\sUV,\uIR}\,\rd\uIR,
 \qquad
 \tilde F^{i,m}_{\sUV,\sIR}-\tilde F^{i,m}_{0,\sIR}
 =
 \int_0^\sIR\partial_\uIR(\tilde F^{i,m}_{\sUV,\uIR}-\tilde F^{i,m}_{0,\uIR})\,\rd\uIR.
\end{equation}
As a result, the induction step is a consequence of the bounds~\eqref{eq:thm_conv_C} and Lemma~\ref{lem:integration} applied with $n=1$, $\epsilon=\varepsilon\,(i/i_\dagger-1)>0$. This completes the proof.
\end{proof}

\begin{dfn}
Let $\oo\in\bN_0$ be as in Theorem~\ref{thm:deterministic_convergence}. 
\end{dfn}

\begin{dfn}
The families of sets $\cV_{\sUV,\sIR}$, $\cB_{\sUV,\sIR}$, $(\sUV,\sIR)\in[0,1]^2\setminus\{(0,0)\}$, are defined as follows
\begin{equation}
\begin{aligned}
 \cV_{\sUV,\sIR}&:=\{J_\sUV\ast K^{\ast\oo}_\sIR\ast\varphi\in\cV\,|\,\varphi\in\cV\},
 \\
 \cB_{\sUV,\sIR}&:=\{J_\sUV\ast K^{\ast\oo}_\sIR\ast\varphi\in\cV\,|\,\varphi\in\cV,\,\|\varphi\|_\cV< 3/4\}.
\end{aligned} 
\end{equation}
For sufficiently small $T$ and $(\sUV,\sIR)\in[0,T]^2\setminus\{(0,0)\}$ we define the functionals
\begin{equation}
 \tilde F_{\sUV,\sIR}\,:\, \cB_{\sUV,\sIR}\to \cD,
 \qquad
 \rD\tilde F_{\sUV,\sIR}\,:\, \cB_{\sUV,\sIR}\times \cV_{\sUV,\sIR} \to \cD
\end{equation}
by the following formulas
\begin{equation}\label{eq:deterministic_eff_force_series}
 \langle \tilde F_{\sUV,\sIR}[\varphi],\psi\rangle
 :=\sum_{i=0}^\infty \sum_{m=0}^\infty \lambda^i\,\langle \tilde F^{i,m}_{\sUV,\sIR},\psi\otimes\varphi^{\otimes m}\rangle,
\end{equation}
\begin{equation}\label{eq:deterministic_D_eff_force_series}
 \langle \rD\tilde F_{\sUV,\sIR}[\varphi,\zeta],\psi\rangle
 :=\sum_{i=0}^\infty \sum_{m=0}^\infty \lambda^i\,(m+1)\,
 \langle \tilde F^{i,m+1}_{\sUV,\sIR},\psi\otimes\zeta\otimes\varphi^{\otimes m}\rangle,
\end{equation}
where $\psi\in C^\infty_\rc(\bM)$, $\varphi\in\cB_{\sUV,\sIR}$ and $\zeta\in\cV_{\sUV,\sIR}$.
\end{dfn}

\begin{rem}
By Lemma~\ref{lem:kernel_u_v} if $\varphi\in\cB_{\sUV,\sIR}$, then $\varphi\in\cB_{\sUV,\uIR}$ for any $\uIR$ in a small enough neighbourhood of $\sIR$.
\end{rem}

\begin{thm}\label{thm:eff_force_convergence}
The bounds established in Theorem~\ref{thm:deterministic_convergence} imply that for $T$ such that
\begin{equation}\label{eq:T_bound}
0<T\leq (|\lambda|\vee\tilde R)^{-4m_\flat (i_\triangleright+1)/\varepsilon}\leq  1
\end{equation}
the functionals $\tilde F_{\sUV,\sIR}$ and $\rD\tilde F_{\sUV,\sIR}$ are well defined for all $(\sUV,\sIR)\in[0,T]^2\setminus\{(0,0)\}$ and the following statements are true.
\begin{enumerate}
\item[(A)]
For any $\sIR\in[0,T]$, $\varphi\in\cB_{\sUV,\sIR}$ and $\zeta\in\cV_{\sUV,\sIR}$ it holds
\begin{equation}
\begin{gathered}
 \|K^{\ast\oo}_\sIR\ast\tilde F_{\sUV,\sIR}[\varphi]\|_{\cV} \leq [\sUV\vee\sIR]^{2\varepsilon-\sigma},
 \\
 \|K^{\ast\oo}_\sIR\ast\rD\tilde F_{\sUV,\sIR}[\varphi,\zeta]\|_{\cV} \leq [\sUV\vee\sIR]^{2\varepsilon-\sigma}\,
 \|\fR_\sUV\fP_\sIR^\oo\zeta\|_\cV.
\end{gathered} 
\end{equation}
These bounds are also valid for $\sUV=0$ and $\sIR\in(0,T]$.
\item[(B)]
For any $\sIR\in[0,T]$, $\varphi_1,\varphi_2\in\cB_{\sUV,\sIR}$ it holds
\begin{equation}
 \|K^{\ast\oo}_\sIR\ast(\tilde F_{\sUV,\sIR}[\varphi_1]-\tilde F_{\sUV,\sIR}[\varphi_2])\|_{\cV} \leq [\sUV\vee\sIR]^{2\varepsilon-\sigma}\,\|\fR_\sUV\fP_\sIR^\oo(\varphi_1-\varphi_2)\|_\cV.
\end{equation}
This bound is also valid for $\sUV=0$ and $\sIR\in(0,T]$.
\item[(C)] For any $\sUV,\sIR\in(0,T]^2$ and $\varphi\in\cB_{\sUV,\sIR}$ it holds
\begin{equation}
 \|K^{\ast\oo}_\sIR\ast(\tilde F_{\sUV,\sIR}[\varphi]-\tilde F_{0,\sIR}[\varphi])\|_{\cV} \leq 2\delta\,[\sIR]^{\varepsilon-\sigma}.
\end{equation}
\item[(D)]
Given $\sIR\in(0,T]$ and $\varphi\in\cB_{\sUV,\sIR}$ the flow equation 
\begin{equation}
 \partial_\uIR \tilde F_{\sUV,\uIR}[\varphi]
 =
 -\rD \tilde F_{\sUV,\uIR}[\varphi,\dot G_\uIR\ast \tilde F_{\sUV,\uIR}[\varphi]]
\end{equation}
is satisfied for all $\uIR$ in some sufficiently small neighbourhood of $\sIR$. This remains valid for $\sUV=0$.
\end{enumerate}
\end{thm}
\begin{proof}
We first prove the following bound
\begin{equation}
 \tilde\varrho_\varepsilon(i,m)- \varepsilon i/(i_\triangleright+1) \geq 2\varepsilon-\sigma
\end{equation}
for all $i,m\in\bN_0$. Since $\tilde\varrho_\varepsilon(i,m)=\varrho_\varepsilon(i,m)\vee(3\varepsilon-\sigma)$ the bound holds trivially for all $i\in\{1,\ldots,i_\triangleright+1\}$. For $i>i_\triangleright+1$ we use
\begin{equation}
 \varrho_\varepsilon(i,m)- \varepsilon i/(i_\triangleright+1) \geq \varrho_\varepsilon(i_\triangleright+1)-\varepsilon + (i-i_\triangleright-1)(\dim(\lambda)-\varepsilon m_\flat-\varepsilon) 
\end{equation}
and $\varrho_\varepsilon(i_\triangleright+1)>3\varepsilon-\sigma$ as well as $\dim(\lambda)>\varepsilon(m_\flat+3)$. As a result, we obtain the following estimate
\begin{multline}
 |\lambda|^i\, \tilde R^{1+2(i m_\flat-m)}
 [\sUV\vee\sIR]^{\varrho_\varepsilon(i,m)\vee(3\varepsilon-\sigma)} 
 \\
 \leq 
 |\lambda|^i \,
 \tilde R^{1+2(i m_\flat-m)}\,T^{\varepsilon i/(i_\triangleright+1)}
 [\sUV\vee\sIR]^{2\varepsilon-\sigma}
 \leq [\sUV\vee\sIR]^{2\varepsilon-\sigma}
\end{multline}
for all $i,m\in\bN_0$, $\sUV,\sIR\in[0,T]^2\setminus\{(0,0)\}$ and $\lambda\in\bR$, $\tilde R>1$ such that the bound~\eqref{eq:T_bound} is satisfied. Consequently, for $T$ satisfying the condition~\eqref{eq:T_bound} by Theorem~\ref{thm:deterministic_convergence} we have
\begin{enumerate}
\item[(A${}_0$)] $|\lambda|^i\,\|K^{m;\oo}_{0,\sIR}\ast \tilde F^{i,m}_{0,\sIR}\|_{\cV^m}
\leq 
[\sIR]^{2\varepsilon-\sigma}/(4(1+i)^2\,4(1+m)^2)$,
~~$\sIR\in(0,T]$,

\item[(B${}_0$)]
$|\lambda|^i\,\|K^{m;\oo}_{\sUV,\sIR}\ast \tilde F^{i,m}_{\sUV,\sIR}\|_{\cV^m}
\leq 
[\sUV\vee\sIR]^{2\varepsilon-\sigma}/(4(1+i)^2\,4(1+m)^2)$,
 ~~$\sIR\in[0,T]$,
\item[(C${}_0$)]
$|\lambda|^i\,\|K^{m;\oo}_{\sUV,\sIR}\ast (\tilde F^{i,m}_{\sUV,\sIR}-\tilde F^{i,m}_{0,\sIR})\|_{\cV^m}
\leq
2\delta\,[\sIR]^{\varepsilon-\sigma}/(4(1+i)^2\,4(1+m)^2)$,
~~$\sIR\in(0,T]$.
\end{enumerate}
Note that for $\sIR\in(0,\sUV]$ the third bound is a consequence of the first two.

Let $\varphi,\varphi_1,\varphi_2,\zeta\in\cV$ be such that $\|\varphi\|_\cV,\|\varphi_1\|_\cV,\|\varphi_2\|_\cV<3/4$. It follows from Def.~\ref{dfn:cV} of the norm $\|\Cdot\|_{\cV^m}$ that the following bounds are satisfied:
\begin{enumerate}
\item[(A)] 
$
 \|K^{\ast\oo}_\sIR\ast\tilde F_{\sUV,\sIR}[J_\sUV\ast K^{\ast\oo}_\sIR\ast\varphi]\|_{\cV} 
 \leq \sum_{i=0}^\infty\sum_{m=0}^\infty |\lambda|^i\,\|K^{m;\oo}_{\sUV,\sIR}\ast \tilde F^{i,m}_{\sUV,\sIR}\|_{\cV^m} \,\|\varphi\|_\cV^m,
$

$
 \|K^{\ast\oo}_\sIR\ast\rD\tilde F_{\sUV,\sIR}[J_\sUV\ast K^{\ast\oo}_\sIR\ast\varphi,J_\sUV\ast K^{\ast\oo}_\sIR\ast\zeta]\|_{\cV} 
 \\
 \leq
 \sum_{i=0}^\infty\sum_{m=0}^\infty (m+1)\,|\lambda|^i\,\|K^{m+1;\oo}_{\sUV,\sIR}\ast \tilde F^{i,m+1}_{\sUV,\sIR}\|_{\cV^{m+1}} \,\|\zeta\|_\cV\, \|\varphi\|_\cV^m,
$
 
\item[(B)]
$
 \|K^{\ast\oo}_\sIR\ast(\tilde F_{\sUV,\sIR}[J_\sUV\ast K^{\ast\oo}_\sIR\ast\varphi_1]-\tilde F_{\sUV,\sIR}[J_\sUV\ast K^{\ast\oo}_\sIR\ast\varphi_2])\|_{\cV}
 \\ 
 \leq \sum_{i=0}^\infty\sum_{m=0}^\infty\sum_{k=0}^m
 |\lambda|^i\,\|K^{m+1;\oo}_{\sUV,\sIR}\ast \tilde F^{i,m+1}_{\sUV,\sIR}\|_{\cV^{m+1}}\, 
 \|\varphi_1\|_\cV^k\,\|\varphi_2\|_\cV^{m-k}\, 
 \|\varphi_1-\varphi_2\|_\cV,
$

\item[(C)]
$
 \|K^{\ast\oo}_\sIR\ast(\tilde F_{\sUV,\sIR}[J_\sUV\ast K^{\ast\oo}_\sIR\ast\varphi]-\tilde F_{0,\sIR}[J_\sUV\ast K^{\ast\oo}_\sIR\ast\varphi])\|_{\cV}
 \\ 
 \leq \sum_{i=0}^\infty\sum_{m=0}^\infty |\lambda|^i\,
 \|K^{m;\oo}_{\sUV,\sIR}\ast (\tilde F^{i,m}_{\sUV,\sIR}-\tilde F^{i,m}_{0,\sIR})\|_{\cV^m}\, 
 \|\varphi\|_\cV^m.
$
\end{enumerate}
In order to show Parts (A), (B) and (C) of the theorem it is enough use the above bounds and the estimates
\begin{equation}
 \sum_{i=0}^\infty\sum_{m=0}^\infty \frac{1}{4(1+i)^2}\frac{(3/4)^m}{4(1+m)^2}\leq \sum_{i=0}^\infty\sum_{m=0}^\infty \frac{1}{4(1+i)^2}\frac{(3/4)^m}{4(1+m)} \leq 1.
\end{equation}
Part (D) follows from the fact that the coefficients $F^{i,m}_{\sUV,\sIR}$ satisfy the flow equation~\eqref{eq:flow_deterministic_i_m}, the absolute convergence of the series defining the functionals $\tilde F_{\sUV,\sIR}$ and $\rD\tilde F_{\sUV,\sIR}$ and the bound $\|\fR_\sUV^{\phantom{\oo}}\fP_\sIR^{2\oo}\dot G_\sIR^{\phantom{\oo}}\|_\cK \lesssim [\sUV\vee\sIR]^{\sigma-\varepsilon}[\sIR]^{\varepsilon-\sigma}$.
\end{proof}

\subsection{Local solution theory}\label{sec:local_solution}

\begin{lem}\label{lem:solution_aux}
Let $T\in(0,1]$ be as in Theorem~\ref{thm:eff_force_convergence}. Set $\tilde f_{0,T}:=\tilde F_{0,T}[0]$ and $\tilde f_{\sUV,T}:=\tilde F_{\sUV,T}[0]$. We have:
\begin{enumerate}
\item[(A${}_1$)] 
$\|K^{\ast\oo}_\sIR\ast \tilde f_{0,T}\|_{\cV}\leq [\sIR]^{2\varepsilon-\sigma}$, $\sIR\in(0,T]$,
\item[(B${}_1$)] 
$\|K^{\ast\oo}_\sIR\ast \tilde f_{\sUV,T}\|_{\cV}\leq [\sUV\vee\sIR]^{2\varepsilon-\sigma}$, $\sIR\in[0,T]$,
\item[(C${}_1$)] 
$\|K^{\ast\oo}_\sIR\ast (\tilde f_{\sUV,T}-\tilde f_{0,T})\|_{\cV}\leq 6\delta\, [\sIR]^{\varepsilon-\sigma}$, $\sIR\in(0,T]$
\end{enumerate}
and
\begin{enumerate}
\item[(A${}_2$)] 
 $\|\fP_\sIR^{\oo}(G_\sIR-G_T)\ast\tilde f_{0,T}\|_{\cV} \leq 1/3$, $\sIR\in[0,T]$,
\item[(B${}_2$)] 
 $\|\fR_\sUV^{\phantom{\oo}}\fP_\sIR^{\oo}(G_\sIR-G_T)\ast\tilde f_{\sUV,T}\|_{\cV}\leq 1/3$, $\sIR\in[0,T]$,
\item[(C${}_2$)] 
 $\|\fP_\sIR^{\oo}(G_\sIR-G_T)\ast(\tilde f_{\sUV,T}-\tilde f_{0,T})\|_{\cV}\leq \delta$, $\sIR\in[0,T]$.
\end{enumerate}
Moreover, $\tilde f_{\sUV,T} = \tilde F_{\sUV,\sIR}[(G_\sIR-G_T) \ast \tilde f_{\sUV,T}]$ for $\sIR\in[0,T]$. 
\end{lem}
\begin{proof}
We first note that the lemma is true for $\sIR=T$ by Theorem~\ref{thm:eff_force_convergence}~(A), (C). Next, choose $\uv\in[0,T]$ and assume that the bounds (A${}_1$), (B${}_1$), (C${}_1$) hold true for all $\sIR\in[\uv,T]$. Then by Lemma~\ref{lem:kernel_u_v} we have
\begin{equation}
\begin{gathered}
  \|K^{\ast\oo}_{\uIR}\ast \tilde f_{0,T}\|_{\cV}
 \leq
 \tilde R\, \|K^{\ast\oo}_{\sIR}\ast \tilde f_{0,T}\|_{\cV}
 \leq
 \tilde R\,[\sIR]^{2\varepsilon-\sigma}
 \leq
 \tilde R\,[\uIR]^{2\varepsilon-\sigma},
 \\
 \|K^{\ast\oo}_{\uIR}\ast \tilde f_{\sUV,T}\|_{\cV}
 \leq
 \tilde R\, \|K^{\ast\oo}_{\sIR}\ast \tilde f_{\sUV,T}\|_{\cV}
 \leq
 \tilde R\,[\sUV\vee\sIR]^{2\varepsilon-\sigma}
 \leq
 \tilde R\,[\sUV\vee\uIR]^{2\varepsilon-\sigma},
 \\
 \|K^{\ast\oo}_{\uIR}\ast (\tilde f_{\sUV,T}-\tilde f_{0,T})\|_{\cV}
 \leq 
 \tilde R\,\|K^{\ast\oo}_{\sIR}\ast (\tilde f_{\sUV,T}-\tilde f_{\sUV,0})\|_{\cV}
 \leq 
 6\delta \tilde R\, [\sIR]^{\varepsilon-\sigma}
 \leq
 6\delta \tilde R\,[\uIR]^{\varepsilon-\sigma}
\end{gathered} 
\end{equation}
for all $\uIR\in[\tau\sIR,\sIR]$, $\sIR\in[\uv,T]$ and some $\tau\in(0,1)$ that depends only on $\tilde R$. Recall the bound~\eqref{eq:thm_conv_bound_dot_G}
\begin{equation}
 \|\fR_\sUV^{\phantom{\oo}} \fP^{2\oo}_\sIR \dot G_\sIR^{\phantom{\oo}}\|_\cK \leq C^{-1}\,\tilde R~
 [\sIR]^{\varepsilon-\sigma}\,
 [\sUV\vee\sIR]^{\sigma-\varepsilon},
 \qquad C\leq 3\sigma/\varepsilon,
\end{equation}
and note that the bound~\eqref{eq:T_bound} implies $\tilde R^2\,T^\varepsilon\leq 1$. Consequently, we obtain
\begin{equation}
\begin{gathered}
 \|\fP_\sIR^{\oo}(G_\sIR-G_T)\ast\tilde f_{0,T}\|_{\cV}\leq 
 \int_\sIR^T \|\fP_\uIR^{2\oo}\dot G_\uIR^{\phantom{\oo}}\|_\cK\,
 \|K^{\ast\oo}_\uIR\ast \tilde f_{0,T}\|_{\cV}\,\rd \uIR \leq 1/3,
 \\
 \|\fR_\sUV^{\phantom{\oo}}\fP_\sIR^{\oo}(G_\sIR-G_T)\ast\tilde f_{\sUV,T}\|_{\cV}\leq 
 \int_\sIR^T \|\fR_\sUV^{\phantom{\oo}}\fP_\uIR^{2\oo}\dot G_\uIR^{\phantom{\oo}}\|_\cK\,
 \|K^{\ast\oo}_\uIR\ast \tilde f_{\sUV,T}\|_{\cV}\,\rd \uIR \leq 1/3,
 \\
 \|\fP_\sIR^{\oo}(G_\sIR-G_T)\ast(\tilde f_{\sUV,T}-\tilde f_{0,T})\|_{\cV}\leq 
 \int_\sIR^T\|\fP_\uIR^{2\oo}\dot G_\uIR^{\phantom{\oo}}\|_\cK\,
 \|K^{\ast\oo}_\uIR\ast (\tilde f_{\sUV,T}-\tilde f_{0,T})\|_{\cV}\,\rd \uIR \leq 2\delta
\end{gathered} 
\end{equation}
for all $\sIR\in[\tau\uv,T]$. This proves the bounds (A${}_2$), (B${}_2$), (C${}_2$) and shows that 
\begin{equation}
 (G_\sIR-G_T)\ast\tilde f_{0,T}\in\cB_{0,\sIR},
 \quad
 (G_\sIR-G_T)\ast\tilde f_{\sUV,T}\in\cB_{\sUV,\sIR},
 \quad
 (G_\sIR-G_T)\ast(\tilde f_{\sUV,T}-\tilde f_{0,T})\in\cV_{0,\sIR}
\end{equation}
for all $\sIR\in[\tau\uv,T]$. Using Theorem~\ref{thm:eff_force_convergence} and Remark~\ref{rem:intro_flow_solution} we prove the identity $\tilde f_{\sUV,T} = \tilde F_{\sUV,\sIR}[(G_\sIR-G_T) \ast \tilde f_{\sUV,T}]$ for all $\sIR\in[\tau\uv,T]$. By the last identity we obtain
\begin{equation}
 \|K^{\ast\oo}_\sIR\ast \tilde f_{\sUV,T}\|_{\cV} = \|K^{\ast\oo}_\sIR\ast \tilde F_{\sUV,\sIR}[(G_\sIR-G_T) \ast \tilde f_{\sUV,T}]\|_\cV
\end{equation}
and
\begin{multline}
 \|K^{\ast\oo}_\sIR\ast (\tilde f_{\sUV,T}-\tilde f_{0,T})\|_{\cV} \leq \|K^{\ast\oo}_\sIR\ast (\tilde F_{\sUV,\sIR}[(G_\sIR-G_T) \ast \tilde f_{\sUV,T}]-\tilde F_{0,\sIR}[(G_\sIR-G_T) \ast \tilde f_{\sUV,T}])\|_\cV
 \\
 +\|K^{\ast\oo}_\sIR\ast (\tilde F_{0,\sIR}[(G_\sIR-G_T) \ast \tilde f_{\sUV,T}]-\tilde F_{0,\sIR}[(G_\sIR-G_T) \ast \tilde f_{0,T}])\|_\cV.
\end{multline}
As a result, the bounds (A${}_1$), (B${}_1$), (C${}_1$) for all $\sIR\in[\tau\uv,T]$ follow from Theorem~\ref{thm:eff_force_convergence}~(A), (B), (C). To complete the proof of the theorem it is enough to apply the above reasoning recursively with $\uv=\tau^n T$, $n\in\bN_0$.
\end{proof}

The above lemma applied with $\sIR=0$ implies immediately the following theorem.
\begin{thm}\label{thm:solution_local_aux}
Set $\tilde \varPhi_{0}:=(G-G_T)\ast \tilde f_{0,T}$ and $\tilde \varPhi_{\sUV}:=(G-G_T)\ast \tilde f_{\sUV,T}$. It holds
\begin{equation}\label{eq:thm_sol_bound}
 \|\tilde \varPhi_0\|_{\cV}\leq 1/3,
 \qquad
 \|\fR_\sUV \tilde \varPhi_\sUV\|_{\cV}\leq 1/3,
 \qquad
 \|\tilde \varPhi_\sUV-\tilde\varPhi_0\|_{\cV}\leq 2\delta.
\end{equation}
for any $\sUV\in[0,T]$. Moreover, the following equation is satisfied
\begin{equation}\label{eq:thm_sol_identity}
 \tilde \varPhi_\sUV = (G-G_T)\ast \tilde F_\sUV[\tilde \varPhi_\sUV].
\end{equation}
\end{thm}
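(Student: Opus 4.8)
The plan is to obtain Theorem~\ref{thm:solution_local_aux} as an essentially immediate corollary of Lemma~\ref{lem:solution_aux}, by evaluating its three assertions at $\sIR=0$, where $\fP_0^\oo=\rid$, $\fR_0^{\phantom{\oo}}=\rid$, $G_0=G$ and $K_0=\delta_\bM$, so that $(G_0-G_T)\ast\tilde f_{\sUV,T}=\tilde\varPhi_\sUV$ and $(G_0-G_T)\ast\tilde f_{0,T}=\tilde\varPhi_0$.

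For the bounds~\eqref{eq:thm_sol_bound} I would first note that, since $\sUV>0$, the first estimate of Lemma~\ref{lem:solution_aux}~(B) is available at $\sIR=0$ and is precisely $\|\fR_\sUV^{\phantom{\oo}}\tilde\varPhi_\sUV\|_\sV\leq1/3$, while the second estimate of (B) at $\sIR=0$ is precisely $\|\tilde\varPhi_\sUV-\tilde\varPhi_0\|_\sV\leq2\delta$. The only assertion needing an argument is $\|\tilde\varPhi_0\|_\sV\leq1/3$, because for $\sUV=0$ Lemma~\ref{lem:solution_aux}~(B) is stated only for $\sIR\in(0,T]$. Here I would deduce from that estimate (with $\fR_0^{\phantom{\oo}}=\rid$), together with the multiplier identity $g=K_\sIR^{\ast\oo}\ast\fP_\sIR^\oo g$ (valid since $\fP_\sIR^\oo K_\sIR^{\ast\oo}=\delta_\bM$) and $\|K_\sIR^{\ast\oo}\|_\cK=1$, that
\begin{equation}
 \|(G_\sIR-G_T)\ast\tilde f_{0,T}\|_\sV\leq1/3\qquad\text{for every }\sIR\in(0,T],
\end{equation}
and then pass to the limit $\sIR\searrow0$. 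Convergence of $(G_\sIR-G_T)\ast\tilde f_{0,T}$ to $\tilde\varPhi_0=(G-G_T)\ast\tilde f_{0,T}$ in $\sV$ would follow from $G_\sIR-G=G_\sIR-G_0=\int_0^\sIR\dot G_\uIR\,\rd\uIR$, the bound $\|\fP_\uIR^\oo\dot G_\uIR\|_\cK\lesssim1$ of Lemma~\ref{lem:kernel_dot_G} (case $\sUV=0$, $a=0$), and the uniform control $\|K_\uIR^{\ast\oo}\ast\tilde f_{0,T}\|_\sV\leq[\uIR]^{2\varepsilon-\sigma}$ of Lemma~\ref{lem:solution_aux}~(A): writing $\dot G_\uIR\ast\tilde f_{0,T}=(\fP_\uIR^\oo\dot G_\uIR)\ast(K_\uIR^{\ast\oo}\ast\tilde f_{0,T})$ one gets $\|(G_\sIR-G)\ast\tilde f_{0,T}\|_\sV\lesssim\int_0^\sIR[\uIR]^{2\varepsilon-\sigma}\,\rd\uIR\lesssim[\sIR]^{2\varepsilon}\to0$. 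Passing to the limit in the displayed inequality then gives $\|\tilde\varPhi_0\|_\sV\leq1/3$.

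For the fixed-point identity~\eqref{eq:thm_sol_identity} I would use that the weighted bound just obtained shows $\tilde\varPhi_\sUV=(G_0-G_T)\ast\tilde f_{\sUV,T}\in\sB_{\sUV,0}$ — a membership already recorded inside the proof of Lemma~\ref{lem:solution_aux} (the displayed claim ``$(G_\uIR-G_T)\ast\tilde f_{\sUV,T}\in\sB_{\sUV,\sIR}$'' holds down to $\sIR=0$). Hence Lemma~\ref{lem:solution_aux}~(C) may be evaluated at $\sIR=0$, yielding $\tilde f_{\sUV,T}=\tilde F_{\sUV,0}[\tilde\varPhi_\sUV]$. Invoking the boundary condition $\tilde F_{\sUV,0}[\varphi]=\tilde F_\sUV[\varphi]$ of the flow equation for $\tilde F$ (see Sec.~\ref{sec:intro_initial_value} and Theorem~\ref{thm:eff_force_convergence}~(D)), this becomes $\tilde f_{\sUV,T}=\tilde F_\sUV[\tilde\varPhi_\sUV]$; convolving both sides with $G-G_T$ gives $\tilde\varPhi_\sUV=(G-G_T)\ast\tilde F_\sUV[\tilde\varPhi_\sUV]$, as claimed.

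The only step that is not a pure formality is the passage $\sIR\searrow0$ used to obtain the unweighted bound $\|\tilde\varPhi_0\|_\sV\leq1/3$ with the sharp constant, and even there all the necessary uniformity is already packaged in Lemmas~\ref{lem:solution_aux} and~\ref{lem:kernel_dot_G}; so I do not anticipate any real obstacle — the statement genuinely follows from Lemma~\ref{lem:solution_aux} by specialization to $\sIR=0$.
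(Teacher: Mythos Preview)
Your proposal is correct and matches the paper's approach: the paper simply states that the theorem ``implies immediately'' from Lemma~\ref{lem:solution_aux}, and you have supplied exactly the specialization to $\sIR=0$ that this entails. Your limiting argument for $\|\tilde\varPhi_0\|_\sV\leq1/3$ is a clean way to handle the one case not literally covered by the lemma's statement (though a close reading of the lemma's proof shows the integral bound for (B) extends to $\sIR=0$ when $\sUV=0$ as well, since the integrand $[\uIR]^{2\varepsilon-\sigma}$ is integrable near $0$).
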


\begin{rem}
Recall that a boundary data and an enhanced data were introduced in Def.~\ref{dfn:boundary_data_initial} and $\beta\equiv\beta_\varepsilon = -\dim(\phi)-\varepsilon$.
\end{rem}

\begin{thm}\label{thm:solution_local}
Let $t\in[0,\infty)$ and $\mathtt{u}\in C^\infty_\rc(\bR)$ be such that $\mathtt{u}=1$ on some neighborhood of $[t-i_\dagger(i_\dagger+1),t+6i_\dagger]$. There exists a constant $\breve c\geq 1$ such that the following is true. Fix some $R>1$, $\delta\in(0,1]$ and an admissible boundary data for $(\varPhi_0^\triangleright,\breve\varPhi_0)$. Let $\sUV\in(0,\delta^{1/\varepsilon}]$ and a boundary data for $(\varPhi_\sUV^\triangleright,\breve\varPhi_\sUV)$ be arbitrary such that all of the conditions specified in Assumptions~\ref{ass:deterministic} and~\ref{ass:deterministic_initial} are satisfied. There exists $T\in(0,1)$ depending only on the value of the constant $R$ such that for all $\mathring x\in[t,t+T]$ and all $\sIR\in(0,1]$ it holds:
\begin{enumerate}
\item[(A)] $\|\bar K_\sIR^{\ast\oo}\ast\breve \varPhi_0(\mathring x,\Cdot)\|_{L^\infty(\bT)}\leq \breve c\, R\,[\sIR]^{\beta_\varepsilon}$,

\item[(B)] $\|\fR_\sUV \breve \varPhi_\sUV(\mathring x,\Cdot)\|_{L^\infty(\bT)}\leq \breve c\,\delta R\,[\sUV]^{\beta_\varepsilon}$,

\item[(C)] $\|\bar K_\sIR^{\ast\oo}\ast(\breve \varPhi_\sUV(\mathring x,\Cdot)-\breve\varPhi_0(\mathring x,\Cdot))\|_{L^\infty(\bT)}\leq \breve c\, \delta R\,[\sIR]^{\beta_\varepsilon}$,
\end{enumerate}
where 
\begin{equation}
 \breve\varPhi_\sUV:=\varPhi_\sUV^\vartriangle+\tilde\varPhi_\sUV\in C([t,t+T],\cC^\gamma(\bT)),
 \quad
 \breve\varPhi_0:=\varPhi_0^\vartriangle+\tilde\varPhi_0\in C([t,t+T],\sC^\beta(\bT)).
\end{equation}
Moreover, $\breve\varPhi_\sUV$ satisfies the following equation
\begin{equation}\label{eq:equation_breve_varPhi_loc}
 \breve \varPhi_\sUV = G\ast (1_{(t,\infty)} (\hat F_\sUV[\breve \varPhi_\sUV]+ \check f_\sUV^\triangleright)+ \delta_t\otimes\phi^\vartriangle_\sUV).
\end{equation}
\end{thm}
\begin{proof}
The bounds (A), (B), (C) follow immediately from the previous theorem and Lemma~\ref{lem:initial_properties}. In particular, bound~(B) implies that $\breve\varPhi_\sUV\in C([t,t+T],\cC^\gamma(\bT))$. Recall that $\supp\,G_T\subset [T,\infty)\times\bR^\rdim$. In order to prove that $\breve\varPhi_\sUV$ satisfies Eq.~\eqref{eq:equation_breve_varPhi_loc} in the domain $[t,t+T]\times\bR^\rdim$ it is enough to note that in this domain the equation $\tilde \varPhi_\sUV = (G-G_T)\ast \tilde F_\sUV[\tilde \varPhi_\sUV]$ is equivalent to the equation $\tilde \varPhi_\sUV = G\ast \tilde F_\sUV[\tilde \varPhi_\sUV]$ and it holds $\tilde F_\sUV[\varphi] = 1_{(t,\infty)}(\hat F_\sUV[\varphi+\varPhi_\sUV^\vartriangle]+\check f_\sUV^\triangleright)$ and $\varPhi_\sUV^\vartriangle=G\ast(\delta_t\otimes\phi_\sUV^\vartriangle)$ in the domain $[t,t+T]\times\bR^\rdim$. This finishes the proof.
\end{proof}

\subsection{Construction of maximal solution}\label{sec:maximal_solution}

In order to construct the universal function $\breve\varPhi_0\in C([0,\breve T_0),\sC^\beta(\bT))$ with maximal life time and conclude the construction we first discuss briefly the classical solution theory for the equation $\breve\varPhi_\sUV = G\ast 1_{(0,\infty)} (\hat F_\sUV[\breve\varPhi_\sUV]+ \check f_\sUV^\triangleright)+ \delta_0\otimes\phi^\vartriangle_\sUV$ in the space $C([0,T],\cC^\gamma(\bT))$. The classical existence and uniqueness of result is stated in Theorem~\ref{thm:regular_eq}. We use this result to prove the uniqueness of the maximal function $\breve\varPhi_0$ constructed in Theorem~\ref{thm:solution_max}.

\begin{rem}
The spaces $\cC^\gamma(\bT)$ and $C([t,t+T],\cC^\gamma(\bT))$, where $\gamma\equiv\gamma_\varepsilon=\sigma-\varepsilon$, were introduced in Def.~\ref{dfn:C_gamma} and Def.~\ref{dfn:C_gamma_time}. Recall that
\begin{equation}
 \|\phi\|_{\cC^\gamma(\bT)}=\|\fR\phi\|_{L^\infty(\bT)},
 \qquad
 \|\varphi\|_{C([t,t+T],\cC^\gamma(\bT))} = \|\fR\varphi\|_{L^\infty([t,t+T]\times\bT)}.
\end{equation}
By $C_R([t,t+T],\cC^\gamma(\bT))$ we denote the ball in the space $C([t,t+T],\cC^\gamma(\bT))$ of radius $R>0$ centered at zero.
\end{rem}

\begin{rem}
Note that $G_T=0$ in the domain $[0,T]\times\bR^\rdim$. By Lemma~\ref{lem:kernel_dot_G} and Remark~\ref{rem:dot_G_1} there exists $R\geq 1$ such that
\begin{equation}
 \|\fR(G-G_T)\|_{L^1(\bM)} \leq  R\, [T]^\varepsilon,
 \quad
 \sup_{\mathring x\in\bR}\|(G-G_T)(\mathring x,\Cdot)\|_{L^1(\bR^\rdim)} \leq R.
\end{equation}
\end{rem}

\begin{rem}\label{rem:hat_force}
Recall that a boundary data $f^{0,0,0}_\sUV\equiv \varXi_\sUV \in C(\bH)$, $f^{i,m,a}_\sUV\in\bR$, $(i,m,a)\in\frI$, is such that $f^{i,m,a}_\sUV=0$ unless $i\leq i_\flat$ and $m\leq m_\flat$ and $a\in\bar\frM^m$, $|a|<\sigma$. It follows that the force coefficients $\hat F^{i,m}_{\sUV}=\hat F^{i,m}_{\sUV,0}\in \cD^{m;0}$ introduced in Def.~\ref{dfn:eff_force_hat} vanish unless $i\leq i_\dagger=i_\flat+i_\triangleright m_\flat$ and $m\leq m_\flat$ and the following bound
\begin{equation}
 \|(\delta_\bM\otimes J^{\otimes m})\ast \hat F^{i,m}_{\sUV}\|_{L^\infty([t,t+T]\times\bT)} \leq R
\end{equation}
is true for all $i,m\in\bN_0$, where the constant $R>1$ depends on the choice of the boundary data and the time interval $[t,t+T]$. Recall that $\check f_\sUV^\triangleright=\sum_{i=0}^{i_\triangleright}\lambda^i \fQ G_1\ast f_\sUV^i$ and the force $\hat F_\sUV[\varphi]$ is defined by the equality
\begin{equation}
 \langle\hat F_\sUV[\varphi],\psi\rangle:=\sum_{i=0}^\infty\sum_{m=0}^\infty \langle\hat F^{i,m}_\sUV,\psi\otimes\varphi^{\otimes m}\rangle,
\end{equation}
where $\psi,\varphi\in C^\infty_\rc(\bM)$. As a result, for any $\varphi,\varphi_1,\varphi_2\in C_{\breve R}([t,t+T],\cC^\gamma(\bT))$ it holds
\begin{equation}
\begin{gathered}
\|\hat F_\sUV[\varphi]\|_{L^\infty([t,t+T]\times\bT)}\leq \sum_{i=0}^{i_\dagger}\sum_{m=0}^{m_\flat}\lambda^i R\, \breve R^m,
\qquad
\|\check f_\sUV^\triangleright\|_{L^\infty([t,t+T]\times\bT)} \leq \sum_{i=0}^{i_\triangleright} \lambda^i R,
 \\
 \|\hat F_\sUV[\varphi_1]-\hat F_\sUV[\varphi_2]\|_{L^\infty([t,t+T]\times\bT)}\leq \sum_{i=0}^{i_\dagger}\sum_{m=0}^{m_\flat}m\,\lambda^i R\, \breve R^{m-1}\,\|\varphi_1-\varphi_2\|_{C([t,t+T],\cC^\gamma(\bT))}.
\end{gathered} 
\end{equation}
\end{rem}

\begin{rem}
Let $\phi_\sUV^\vartriangle\in \cC^\gamma(\bT)$ be such that $\|\phi_\sUV^\vartriangle\|_{\cC^\gamma(\bT)}\leq R$ for some $R>1$.  Then for any $T>0$ it holds $$G\ast(\delta_{t}\otimes\phi_\sUV^\vartriangle)=(G-G_T)\ast(\delta_{t}\otimes\phi_\sUV^\vartriangle)\in C([t,t+T],\cC^\gamma(\bT))$$ and
\begin{equation}
 \|G\ast(\delta_{t}\otimes\phi_\sUV^\vartriangle)\|_{C([t,t+T],\cC^\gamma(\bT))}\leq R\,
 \sup_{\mathring x\in\bR}\|(G-G_T)(\mathring x,\Cdot)\|_{L^1(\bar\bM)}
 \leq R^2.
\end{equation}
\end{rem}

\begin{lem}\label{lem:regular_eq}
Let $t\in[0,\infty)$, $R>1$ and $\mathtt{u}\in C^\infty_\rc(\bR)$ be such that
\begin{equation}
 \|\phi_\sUV^\vartriangle\|_{\cC^\gamma(\bT)}\leq R,
 \qquad
 \|\mathtt{u}\varXi_\sUV\|_{L^\infty(\bH)}\leq R,
 \qquad
 |f^{i,m,a}_{\sUV}|\leq R,\quad (i,m,a)\in\frI,
\end{equation}
and $\mathtt{u}=1$ on some neighbourhood of $[t-i_\triangleright(i_\triangleright+1)-2,t+1]$. There exists $T\in(0,1)$ depending only on $R$ such that the equation 
\begin{equation}\label{eq:equation_breve_lemma_global}
 \breve\varPhi_\sUV = G\ast (1_{(t,\infty)} (\hat F_\sUV[\breve\varPhi_\sUV]+ \check f_\sUV^\triangleright) +\delta_{t}\otimes\phi_\sUV^\vartriangle)
\end{equation}
has a unique solution $\breve\varPhi_\sUV$ in the space $C([t,t+T],\cC^\gamma(\bT))$.
\end{lem}
\begin{rem}
Since $\supp\,G_T\subset [T,\infty)\times\bR^\rdim$ for any $t\in[0,\infty)$ Eq.~\eqref{eq:equation_breve_lemma_global} posed in the space $C([t,t+T],\cC^\gamma(\bT))$ is equivalent to the same equation with $G$ replaced by $G-G_T$.  
\end{rem}
\begin{proof}
The proof is very elementary and we give only its outline. Let $\breve R=2R^2$. It follows from the above remarks that for sufficiently small $T>0$ depending on $R$ the map $S_\sUV$ defined by the equality
\begin{equation}
 S_\sUV[\varphi]:=(G-G_T)\ast (1_{(t,\infty)} (\hat F_\sUV[\varphi]+ \check f_\sUV^\triangleright) +\delta_{t}\otimes\phi_\sUV^\vartriangle)
\end{equation}
leaves invariant the ball $C_{\breve R}([t,t+T],\cC^\gamma(\bT))$ and has the unique fixed point $\varPhi_\sUV\in C_{\breve R}([t,t+T],\cC^\gamma(\bT))$.

To prove the uniqueness of the solution assume that there is another fixed point $\ubar{\varPhi}_\sUV$ in some larger ball $C_{\breve{\mathrm R}}([t,t+T],\cC^\gamma(\bT))$. Then there exists some time \mbox{$\mathrm T\in(0,T]$} such that the map $S_\sUV$ leaves invariant the above ball and has a unique fixed point there. This fixed point has to coincide with the restriction of both $\varPhi_\sUV$ and $\ubar{\varPhi}_\sUV$ to the time interval $[t,t+\mathrm T]$. In order to prove that these functions coincide in the whole time interval $[t,t+T]$ we iterate the above argument using the lemma below.
\end{proof}

\begin{lem}\label{lem:patching}
Suppose that for some $0\leq t_1< t_2 \leq s_1< s_2<\infty$ the function $\breve\varPhi^{(1)}_\sUV\in C([t_1,s_1],\cC^\gamma(\bT))$ solves the equation
\begin{equation}\label{eq:patching1}
 \breve\varPhi_\sUV = G\ast (1_{(t_1,\infty)} (\hat F_\sUV[\breve\varPhi_\sUV]+ \check f_\sUV^\triangleright) +\delta_{t_1}\otimes\phi_\sUV^\vartriangle)
\end{equation}
and the function $\breve\varPhi^{(2)}_\sUV\in C([t_2,s_2],\cC^\gamma(\bT))$ solves the equation
\begin{equation}\label{eq:patching2}
 \breve\varPhi_\sUV = G\ast (1_{(t_2,\infty)} (\hat F_\sUV[\breve\varPhi_\sUV]+ \check f_\sUV^\triangleright) +\delta_{t_2}\otimes\breve\varPhi^{(1)}_\sUV(t_2,\Cdot)).
\end{equation}
Then the function $\breve\varPhi^{(12)}_\sUV\in C([t_2,s_1],\cC^\gamma(\bT))$ obtained by restricting $\breve\varPhi^{(1)}_\sUV$ to the time interval $[t_2,s_1]$ solves Eq.~\eqref{eq:patching2} and the function \mbox{$\breve\varPhi_\sUV^{[12]}\in C([t_1,s_2],\cC^\gamma(\bT))$} defined by
\begin{equation}
 \breve\varPhi^{[12]}_\sUV(x)=
 \begin{cases}
  \breve\varPhi^{(1)}_\sUV(x)~~\textrm{for}~~x\in[t_1,s_1]\times\bT,
  \\
  \breve\varPhi^{(2)}_\sUV(x)~~\textrm{for}~~x\in(s_1,s_2]\times\bT
 \end{cases}
\end{equation}
solves Eq.~\eqref{eq:patching1}.
\end{lem}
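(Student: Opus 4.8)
The plan is to derive both assertions from three facts: the causality of $G$ (it is supported in $[0,\infty)\times\bR^\rdim$, so $(G\ast g)(\mathring x,\Cdot)$ depends only on $g$ at times $\le\mathring x$); the pointwise locality of $\hat F_\sUV$; and a Duhamel decomposition of $G$ at the intermediate time $t_2$. For the locality: since $\hat F_\sUV[\varphi](x)=F_\sUV[\varphi+\hat G\ast f^\triangleright_\sUV](x)-f^\triangleright_\sUV(x)$ and $F_\sUV[\psi](x)$ is a polynomial in the spatial jet $(\partial^a\psi(x))_{|a|<\sigma}$ with $\varXi_\sUV$, $\hat G\ast f^\triangleright_\sUV$ and $f^\triangleright_\sUV$ frozen, the value $\hat F_\sUV[\varphi](x)$ depends only on $x$ and on the spatial jet of $\varphi$ at $x$; hence if $\varphi_1=\varphi_2$ on $[a,b]\times\bT$ (both lying in $C([a,b],\cC^\gamma(\bT))\subset C([a,b],C^{\ceil{\sigma}-1}(\bT))$, so that these jets exist and $\hat F_\sUV$ produces a genuine continuous function) then $\hat F_\sUV[\varphi_1]=\hat F_\sUV[\varphi_2]$ on $[a,b]\times\bT$.

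The core step I would establish first is: for any continuous periodic $h$ defined for $\mathring x\ge t_1$, any $\phi\in\cC^\gamma(\bT)$, and $v:=G\ast(1_{[t_1,\infty)}h+\delta_{t_1}\otimes\phi)$, one has
\[
 v(\mathring x,\Cdot)=\big(G\ast(1_{[t_2,\infty)}h)\big)(\mathring x,\Cdot)+\big(G\ast(\delta_{t_2}\otimes v(t_2,\Cdot))\big)(\mathring x,\Cdot),\qquad \mathring x\ge t_2 .
\]
To prove it, set $w:=v-G\ast(1_{[t_2,\infty)}h)=G\ast(1_{[t_1,t_2)}h+\delta_{t_1}\otimes\phi)$; using $\fQ G=\delta_\bM$ the source $\fQ w$ is supported in $\{\mathring x<t_2\}$, so $\fQ w=0$ in $\{\mathring x>t_2\}$, and because $w$ vanishes for $\mathring x<t_1$ and $G$ is the forward fundamental solution, the restriction of $w$ to $\mathring x\ge t_2$ is the free evolution from time $t_2$ of its trace, i.e. $w=G\ast(\delta_{t_2}\otimes w(t_2,\Cdot))$ there; finally $(G\ast(1_{[t_2,\infty)}h))(t_2,\Cdot)=0$ by causality, so $w(t_2,\Cdot)=v(t_2,\Cdot)$. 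This is classical for the fractional heat kernel on $\bT$ (the only input is uniqueness of the forward Cauchy problem $\fQ w=0$ in the relevant H\"older class), and I expect it to be the main --- indeed essentially the only non-bookkeeping --- point of the proof.

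Granting this, the \emph{first assertion} follows by applying the identity with $v=\breve\varPhi^{(1)}_\sUV$, $h=\hat F_\sUV[\breve\varPhi^{(1)}_\sUV]$, $\phi=\phi^\vartriangle_\sUV$: for $\mathring x\in[t_2,s_1]$ it reads $\breve\varPhi^{(1)}_\sUV=G\ast(1_{[t_2,\infty)}\hat F_\sUV[\breve\varPhi^{(1)}_\sUV]+\delta_{t_2}\otimes\breve\varPhi^{(1)}_\sUV(t_2,\Cdot))$, and since only times $\le\mathring x\le s_1$ enter the convolution, locality of $\hat F_\sUV$ lets me replace $\breve\varPhi^{(1)}_\sUV$ inside $\hat F_\sUV$ by its restriction $\breve\varPhi^{(12)}_\sUV$; this is exactly Eq.~\eqref{eq:patching2} on $[t_2,s_1]$. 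For the \emph{second assertion}, I first invoke the short-time uniqueness proven above (iterated over $[t_2,s_1]$ by means of the first assertion) to conclude that $\breve\varPhi^{(12)}_\sUV$ and $\breve\varPhi^{(2)}_\sUV$ coincide on $[t_2,s_1]$; in particular $\breve\varPhi^{(1)}_\sUV(s_1,\Cdot)=\breve\varPhi^{(2)}_\sUV(s_1,\Cdot)$, so the glued function $\breve\varPhi_\sUV$ is well-defined, lies in $C([t_1,s_2],\cC^\gamma(\bT))$, and coincides with $\breve\varPhi^{(1)}_\sUV$ on $[t_1,s_1]$ and with $\breve\varPhi^{(2)}_\sUV$ on $[t_2,s_2]$. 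For $\mathring x\le s_1$, causality and locality turn the fixed-point equation for $\breve\varPhi^{(1)}_\sUV$ into Eq.~\eqref{eq:patching1} for $\breve\varPhi_\sUV$; for $\mathring x\in(s_1,s_2]$, I apply the Duhamel identity to $v:=G\ast(1_{[t_1,\infty)}\hat F_\sUV[\breve\varPhi_\sUV]+\delta_{t_1}\otimes\phi^\vartriangle_\sUV)$, note that $v=\breve\varPhi_\sUV$ already on $[t_1,s_1]$ (hence $v(t_2,\Cdot)=\breve\varPhi^{(1)}_\sUV(t_2,\Cdot)$) and that $\hat F_\sUV[\breve\varPhi_\sUV]=\hat F_\sUV[\breve\varPhi^{(2)}_\sUV]$ on $[t_2,s_2]\times\bT$, so the identity collapses to the fixed-point equation for $\breve\varPhi^{(2)}_\sUV$ and yields $v=\breve\varPhi^{(2)}_\sUV=\breve\varPhi_\sUV$ there. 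Thus $\breve\varPhi_\sUV$ solves Eq.~\eqref{eq:patching1} on all of $[t_1,s_2]$, completing the proof.
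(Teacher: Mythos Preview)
Your argument is correct and follows the same route as the paper, which simply records that ``it is enough to use the semigroup property of the heat kernel $G$ of the fractional Laplacian $(-\Delta)^{\sigma/2}$.'' Your Duhamel identity at the intermediate time $t_2$ is precisely that semigroup property spelled out, and the rest is the bookkeeping with causality and locality of $\hat F_\sUV$ that the paper leaves implicit.

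One remark: you correctly note that the continuity of the glued function at $s_1$ requires $\breve\varPhi^{(1)}_\sUV(s_1,\Cdot)=\breve\varPhi^{(2)}_\sUV(s_1,\Cdot)$, which you obtain from short-time uniqueness iterated via the first assertion. The paper's one-line remark glosses over this, but since the lemma is only ever applied in a context where the contraction-mapping uniqueness of the preceding lemma is already available, your appeal to it is legitimate and makes the argument cleaner than the paper's sketch.
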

\begin{rem}
In order to prove the above lemma it is enough to use the semigroup property of the heat kernel $G$ of the fractional Laplacian $(-\Delta)^{\sigma/2}$.
\end{rem}

Using Lemma~\ref{lem:regular_eq} and Lemma~\ref{lem:patching} stated above one proves the following theorem.

\begin{thm}\label{thm:regular_eq}
Let $T>0$, $R>1$ and $\mathtt{u}\in C^\infty_\rc(\bR)$ be such that
\begin{equation}
 \|\phi_\sUV^\vartriangle\|_{\cC^\gamma(\bT)}\leq R,
 \qquad
 \|\mathtt{u}\varXi_\sUV\|_{L^\infty(\bH)}\leq R,
 \qquad
 |f^{i,m,a}_{\sUV}|\leq R,\quad (i,m,a)\in\frI,
\end{equation}
and $\mathtt{u}=1$ on some neighbourhood of $[-i_\triangleright(i_\triangleright+1)-2,T]$. There exists $\breve T_\sUV\in(0,T]$ and a function $\breve \varPhi_\sUV\in C([0,\breve T_\sUV],\cC^\gamma(\bT))$ such that: 
\begin{enumerate}
\item[(A)] $\breve \varPhi_\sUV$ satisfies Eq.~\eqref{eq:statement_rem_breve}, $\breve\varPhi_\sUV= G\ast (1_{(0,\infty)} (\hat F_\sUV[\breve \varPhi_\sUV]+ \check f_\sUV^\triangleright)+ \delta_0\otimes\phi^\vartriangle_\sUV)$,

\item[(B)] $\|\breve\varPhi_\sUV(\mathring x,\Cdot)\|_{\cC^\gamma(\bT)}<R$ for all $\mathring x\in[0,\breve T_0)$ and, in addition, if $\breve T_\sUV<\Tmax$, then $\|\breve\varPhi_0(\mathring x,\Cdot)\|_{\cC^\gamma(\bT)}=R$ for $\mathring x=\breve T_\sUV$.
\end{enumerate}
The pair $(\breve T_\sUV,\breve \varPhi_\sUV)$ with the above properties is unique.
\end{thm}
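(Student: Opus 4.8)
The plan is to run the standard maximal-solution argument for a semilinear parabolic equation; as the statement indicates, no new ideas are needed beyond the local well-posedness lemma proved just above and the patching Lemma~\ref{lem:patching}. First I would observe that the local existence and uniqueness lemma stated above — for the equation $\breve\varPhi = G\ast\breve F_\sUV[\breve\varPhi]$ with $\breve F_\sUV[\varphi]=1_{[t,\infty)}\hat F_\sUV[\varphi]+\delta_t\otimes\phi^\vartriangle_\sUV$ — holds verbatim, with the same contraction-mapping proof, when the initial datum $\phi^\vartriangle_\sUV$ is replaced by an arbitrary $\psi\in\cC^\gamma(\bT)$. Indeed, by Remark~\ref{rem:hat_force} the force $\hat F_\sUV$ vanishes for $i>i_\flat$ or $m>m_\flat$ and is a polynomial, hence locally Lipschitz, map of $\cC^\gamma(\bT)\subset C^{\ceil{\sigma}-1}(\bT)$ into $L^\infty(\bT)$; so for every $t\ge 0$ and every $\psi$ with $\|\psi\|_{\cC^\gamma(\bT)}\le R$ there are a time $\tau=\tau(R)>0$ depending only on $R$ and on the uniform bound on the coefficients $\hat F^{i,m}_\sUV$, and a unique solution in $C([t,t+\tau],\cC^\gamma(\bT))$ lying in the ball of radius $2R^2$. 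The decisive point, already emphasized in that lemma, is that $\tau$ depends only on the size of the data.

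Next I would define $\breve T_\sUV$ to be the supremum of all $\mathring t\in(0,\Tmax]$ such that the equation $\breve\varPhi = G\ast\breve F_\sUV[\breve\varPhi]$ (now with $t=0$) admits a solution on $[0,\mathring t)$ in $\cC^\gamma(\bT)$ whose $\cC^\gamma(\bT)$-norm stays strictly below $R$. Using the local uniqueness just described together with Lemma~\ref{lem:patching} (the restriction of a solution again solves the equation restarted from a later time, and the concatenation of compatible solutions solves the original equation), any two solutions in this class coincide on their common interval, so they glue to a single $\breve\varPhi_\sUV\in C([0,\breve T_\sUV),\cC^\gamma(\bT))$ solving the original equation with $\|\breve\varPhi_\sUV(\mathring x,\Cdot)\|_{\cC^\gamma(\bT)}<R$ for all $\mathring x\in[0,\breve T_\sUV)$; this is assertion (A) on the half-open interval and the strict-inequality part of (B). That $\breve T_\sUV>0$ follows by applying the restart with $\psi=\phi^\vartriangle_\sUV$, whose norm is $\le R$ by hypothesis.

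To obtain the blow-up alternative I would first continue $\breve\varPhi_\sUV$ up to the closed endpoint. On $[0,\breve T_\sUV)$ the forcing $\breve F_\sUV[\breve\varPhi_\sUV]$ consists of the part $1_{[0,\infty)}\hat F_\sUV[\breve\varPhi_\sUV]$, which is bounded in $L^\infty((0,\breve T_\sUV)\times\bT)$ because $\breve\varPhi_\sUV$ is bounded in $\cC^\gamma(\bT)$ there and $\hat F_\sUV$ is polynomial, together with the fixed initial-datum term $\delta_0\otimes\phi^\vartriangle_\sUV$. Writing $\breve\varPhi_\sUV = (G-G_{\breve T_\sUV})\ast\breve F_\sUV[\breve\varPhi_\sUV]$ and applying the Schauder-type estimates of Lemma~\ref{lem:schauder} to the $L^\infty$-bounded part, together with the smoothing of $G\ast(\delta_0\otimes\phi^\vartriangle_\sUV)$, shows that $\breve\varPhi_\sUV$ extends to an element of $C([0,\breve T_\sUV],\cC^\gamma(\bT))$ still solving the equation, which gives (A) on the closed interval. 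In particular $\|\breve\varPhi_\sUV(\breve T_\sUV,\Cdot)\|_{\cC^\gamma(\bT)}\le R$. If $\breve T_\sUV<\Tmax$ and this norm were $<R$, then by continuity it would remain $<R$ on a small interval to the right; restarting the local lemma at $t=\breve T_\sUV$ with datum $\psi=\breve\varPhi_\sUV(\breve T_\sUV,\Cdot)$ and gluing via Lemma~\ref{lem:patching} would produce a solution on $[0,\breve T_\sUV+\tau')$ with norm $<R$, contradicting the maximality of $\breve T_\sUV$. Hence $\|\breve\varPhi_\sUV(\breve T_\sUV,\Cdot)\|_{\cC^\gamma(\bT)}=R$ whenever $\breve T_\sUV<\Tmax$, completing (B).

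Finally I would prove uniqueness of the pair. Given another $(\breve T'_\sUV,\breve\varPhi'_\sUV)$ with properties (A) and (B), the set on which $\breve\varPhi_\sUV$ and $\breve\varPhi'_\sUV$ agree is a closed subinterval of $[0,\min(\breve T_\sUV,\breve T'_\sUV)]$ containing a neighbourhood of $0$ by the local uniqueness; if its right endpoint $\mathring t^\ast$ were strictly less than $\min(\breve T_\sUV,\breve T'_\sUV)$ then $\|\breve\varPhi_\sUV(\mathring t^\ast,\Cdot)\|_{\cC^\gamma(\bT)}<R$, and restarting the local lemma at $\mathring t^\ast$ would extend the agreement past $\mathring t^\ast$, a contradiction. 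Thus the two solutions coincide up to $\min(\breve T_\sUV,\breve T'_\sUV)$, and the endpoint characterization in (B) — the norm being $<R$ on the half-open interval but equal to $R$ at $\breve T_\sUV$ when $\breve T_\sUV<\Tmax$ — then forces $\breve T_\sUV=\breve T'_\sUV$ and hence $\breve\varPhi_\sUV=\breve\varPhi'_\sUV$. The only step needing genuine care is this continuation of the solution up to and including $\breve T_\sUV$, i.e.\ the application of the Schauder estimate to the $L^\infty$-bounded forcing and to the singular initial term; everything else is the standard bootstrap and patching for semilinear parabolic equations, which is why the proof can fairly be called routine.
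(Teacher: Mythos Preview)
Your proposal is correct and matches the paper's approach: the paper's entire proof is the one-line remark ``The above theorem is a consequence of the lemmas stated above,'' referring to the local existence/uniqueness lemma and the patching Lemma~\ref{lem:patching}, and you have simply written out the standard maximal-solution bootstrap those lemmas encode. One minor point: for the continuation to the closed endpoint, the relevant estimate is not Lemma~\ref{lem:schauder} but the bound $\|\fR(G-G_T)\|_{L^1(\bM)}\le R[T]^\varepsilon$ recorded in the remark just before the local lemma, which directly gives the $L^\infty\!\to\!\cC^\gamma$ smoothing you need.
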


\begin{rem}\label{rem:maximal_solution}
By definition $\breve T_\sUV(R,\Tmax)$ is a non-decreasing function of $R$ and $\Tmax$ and for $\mathrm R\geq R$ and $\mathrm\Tmax\geq\Tmax$ the first of the following functions
\begin{equation}
 \breve\varPhi_\sUV(R,\Tmax)\in C([0,\breve T_\sUV],\sC^\beta(\bT)),
 \qquad
 \breve\varPhi_\sUV(\mathrm R,\mathrm\Tmax)\in C([0,\breve{\mathrm T}_\sUV],\sC^\beta(\bT))
\end{equation}
coincides with the restriction of the second function to the interval $[0,\breve T_\sUV]$, where $\breve T_\sUV=\breve T_\sUV(R,\Tmax)$ and $\breve{\mathrm T}_\sUV=\breve T_\sUV(\mathrm R,\mathrm\Tmax)$. Let 
\begin{equation}
 \breve T_\sUV^{\mathrm{max}}:=\lim_{R\nearrow\infty}\lim_{\Tmax\nearrow\infty} \breve T_\sUV(R,\Tmax)\in(0,\infty].
\end{equation}
There is a unique function $\breve\varPhi_\sUV^{\mathrm{max}}\in C([0,\breve T_\sUV^{\mathrm{max}}),\sC^\beta(\bT))$ such that the restriction of $\breve\varPhi_\sUV^{\mathrm{max}}$ to $[0,\breve T_\sUV(R,\Tmax)]$ coincides with $\breve\varPhi_\sUV(R,\Tmax)$.
\end{rem}

\begin{thm}\label{thm:solution_max}
Let $\Tmax>0$ be some finite time horizon and let $\mathtt{u}\in C^\infty_\rc(\bR)$ be such that $\mathtt{u}=1$ on some neighborhood of $[-i_\dagger(i_\dagger+1),6i_\dagger+\Tmax]$. Fix some $R>1$, $\delta\in(0,1]$ and an admissible boundary data for $(\varPhi_0^\triangleright,\breve\varPhi_0)$. Let $\sUV\in(0,\delta^{1/\varepsilon}]$ and a boundary data for $(\varPhi_\sUV^\triangleright,\breve\varPhi_\sUV)$ be arbitrary such that all of the conditions specified in Assumptions~\ref{ass:deterministic} and~\ref{ass:deterministic_initial} are satisfied. Let $\breve \varPhi_\sUV\in C([0,\breve T_\sUV),\cC^\gamma(\bT))$ be the maximal classical solution of Eq.~\eqref{eq:statement_rem_breve}
\begin{equation}
 \breve\varPhi_\sUV= G\ast (1_{(0,\infty)} (\hat F_\sUV[\breve \varPhi_\sUV]+ \check f_\sUV^\triangleright)+ \delta_0\otimes\phi^\vartriangle_\sUV)
\end{equation}
constructed in Theorem~\ref{thm:regular_eq} and Remark~\ref{rem:maximal_solution}. There exists a universal time \mbox{$\breve T_0\in(0,\Tmax]$} and a universal function $\breve\varPhi_0\in C([0,\breve T_0],\sC^\beta(\bT))$ such that:
\begin{enumerate}
\item[(A)] $\|\breve\varPhi_0(\mathring x,\Cdot)\|_{\sC^\beta(\bT)}<R$ for all $\mathring x\in[0,\breve T_0)$ and, in addition, if $\breve T_0<\Tmax$, then $\|\breve\varPhi_0(\mathring x,\Cdot)\|_{\sC^\beta(\bT)}=R$ for $\mathring x=\breve T_0$,
\item[(B)] for all sufficiently small $\sUV\in(0,1]$ it holds $\breve T_\sUV\geq\breve T_0$ and there exits a constant $c>0$ such that $\|\breve\varPhi_\sUV-\breve\varPhi_0\|_{C([0,\breve T_0],\sC^\beta(\bT))}\leq c\,\delta$.
\end{enumerate}
\end{thm}
\begin{rem}
The pair $(\breve\varPhi_0,\breve T_0)=(\breve\varPhi_0(R,\Tmax),\breve T_0(R,\Tmax))$ depends only on the choice of the initial data $\phi^\vartriangle_0$, the choice of the renormalization parameters $\mathfrak{f}^{i,m,a}_0$, $(i,m,a)\in\frI^-$, and the choice of the constants $\Tmax>0$ and $R>1$.
\end{rem}

\begin{rem}
We define the maximal solution $(\breve T_0^{\mathrm{max}},\breve\varPhi_0^{\mathrm{max}})$ as in Remark~\ref{rem:maximal_solution}.
\end{rem}

\begin{proof}
Let $t=0$. By Theorem~\ref{thm:solution_local} for all sufficiently small $\mathrm T>0$ depending only on $R$ there exists a function $\breve\varPhi_0\in C([t,t+\mathrm T],\sC^\beta(\bT))$ such that \mbox{$\breve\varPhi_0=\lim_{\sUV\searrow0}\breve\varPhi_\sUV$} in the domain $[t,t+\mathrm T]\times\bT\subset\bH$. If $\|\breve\varPhi_0(\mathring x,\Cdot)\|_{\sC^\beta(\bT)}<R$ for all $\mathring x\in[t,t+\mathrm T]$, and $t+\mathrm T<T$, then we repeat the above procedure with $t$ replaced by $t+\mathrm T$. Otherwise the procedure terminates. The function $\breve\varPhi_0\in C([0,\breve T_0],\sC^\beta(\bT))$ with all the desired properties is constructed by patching together the functions \mbox{$\breve\varPhi_0\in C([t,t+\mathrm T],\sC^\beta(\bT))$}.

In the construction described above we implicitly used the following two facts. (1) For every finite $R$ and $T$ the above procedure always terminates after finitely many of steps. (2) Suppose that the conditions stated in Assumption~\ref{ass:deterministic_initial} hold true for $\phi_0^\vartriangle=\breve\varPhi_0(t,\Cdot)$ and $\phi_\sUV^\vartriangle=\breve\varPhi_\sUV(t,\Cdot)$ with $R>1$ and $\delta=\delta_0\in(0,1/\breve c)$, where the constant $\breve c\geq 1$ was introduced in Theorem~\ref{thm:solution_local}. Then by the bound \mbox{$\|\breve\varPhi_0(t+\mathrm T,\Cdot)\|_{\sC^\beta(\bT)}<R$} and Theorem~\ref{thm:solution_local} (B), (C)
the conditions in Assumption~\ref{ass:deterministic_initial} hold true also for $\phi_0^\vartriangle=\breve\varPhi_0(t+\mathrm T,\Cdot)$ and $\phi_\sUV^\vartriangle=\breve\varPhi_\sUV(t+\mathrm T,\Cdot)$ with the same $R>1$ and $\delta=\breve c\,\delta_0$. 
\end{proof}

\section*{Acknowledgments}

I would like to thank Markus Fr{\"o}b, Stefan Hollands, Pablo Linares, Felix Otto, Markus Tempelmayr and Pavlos Tsatsoulis for discussions. I am grateful to the Max-Planck Society for supporting the collaboration between MPI-MiS and Leipzig U., grant Proj. Bez. M.FE.A.MATN0003.

\section*{List of symbols}\label{sec:app}

(The rightmost column indicates the page where a symbol is introduced.)

\begin{center}
\renewcommand{\arraystretch}{1}
\begin{longtable}{llr}\hline
Symbol & Meaning & \hspace{-3mm}Page\\
\hline
$\rdim$ & Dimension of space & \pageref{itemize:intro}\\
$\sigma$ & Order of fractional Laplacian &  \pageref{itemize:intro}\\
$\rDim$ & Effective dimension of spacetime $\sigma+\rdim$ & \pageref{dfn:dimensions}\\
$\lambda$ & Prefactor of the non-linear term in the SPDE &  \pageref{itemize:intro}\\
$\llangle X\rrangle $ & Expected value of a random variable $X$ & \pageref{dfn:expected_value} \\
$\llangle X_1,\ldots,X_n\rrangle $ & Joint cumulant of the random variables $X_1,\ldots,X_n$ & \pageref{dfn:cumulants} \\
$[\sUV]$ & $|\sUV|^{1/\sigma}$ &\pageref{dfn:square_bracket}\\
$\bM$ & Spacetime $\bR\times\bR^\rdim$ &  \pageref{itemize:intro}\\
$\bT$ & Torus $(\bR/2\pi\bZ)^\rdim$ & \pageref{dfn:spacetime_distance}\\
$\bH$ & Spacetime $\bR\times\bT^\rdim$ & \pageref{dfn:spacetime_distance}\\
$\microXi_\sUV$ & Microscopic noise & \pageref{eq:force_micro}\\
$\varXi_\sUV$ & Macroscopic noise & \pageref{eq:noise_rescaled}\\
$\varXi_{\uv;\sUV}$ & Mollified macroscopic noise & \pageref{dfn:noise_two_parameters}\\
$\varXi_0$ & White noise & \pageref{eq:intro_singular}\\
$\microF_\sUV$ & Microscopic force & \pageref{eq:force_micro}\\
$F_\sUV$ & Macroscopic force & \pageref{eq:force_macro}\\
$\microf_\sUV^{i,m,a}$ & Microscopic force coefficients & \pageref{eq:force_micro}\\
$f_\sUV^{i,m,a}$ & Macroscopic force coefficients & \pageref{eq:f_micro_macro}\\
$F_{\sUV,\sIR}$ & Stationary effective force& \pageref{eq:intro_effective_force}\\
$F_{\uv;\sUV,\sIR}$ & Generalized stationary effective force& \pageref{dfn:eff_force_uv}\\
$\hat F_{\sUV,\sIR}$ & Modified stationary effective force& \pageref{eq:eff_force_hat}\\
$\tilde F_{\sUV,\sIR}$ & Non-stationary effective force& \pageref{eq:intro_ansatz_F_tilde}\\
$\check F_{\sUV,\sIR}$ & Modified non-stationary effective force& \pageref{eq:eff_force_check}\\
$F_{\sUV,\sIR}^{i,m},F_{\sUV,\sIR}^{i,m,a}$ & Effective force coefficients & \pageref{eq:intro_eff_force_coeff}\\
$f_{\sUV,\sIR}^{i,m,a}$ & Local effective force coefficients & \pageref{eq:intro_eff_force_coeff}\\
$\mathfrak{f}_{\sUV}^{i,m,a}$ & Renormalization conditions & \pageref{eq:intro_ren_conditions}\\
$i_\diamond,i_\triangleright,i_\flat,i_\dagger,m_\diamond,m_\flat$\hspace{-2mm} & Useful parameters & \pageref{dfn:im}\\
$\fQ$ & Parabolic operator $\partial_{\mathring x}+(-\Delta_{\bar x})^{\sigma/2}$ & \pageref{eq:intro_spde}\\
$\fP_\sIR,\mathring\fP_\sIR,\bar\fP_\sIR,\fR_\sIR$&  Pseudo-differential operators & \pageref{dfn:diff_op}\\
$\fF$& Fourier transform & \pageref{sec:kernels}\\
$\fS_\sIR$& Rescaling map & \pageref{dfn:scaling_S}\\
$\fT$ & Periodization map & \pageref{dfn:periodization}\\
$\fA,\fB$ & Linear and quadratic maps in the flow equation & \pageref{dfn:maps_A_B}\\
$\fX^a_\ro$ & Taylor expansion of order $\ro-1$ & \pageref{dfn:map_X}\\
$\fI$ & Map extracting local part of a distribution & \pageref{dfn:map_I}\\
$\fL^m$ & Local extension map & \pageref{dfn:map_L_m}\\
$\fY_\pi,\fY^\omega$ & Permutation maps & \pageref{dfn:map_Y}\\
$\cX^{m,a}$ & Polynomial in relative coordinates & \pageref{eq:polynomial_relative} \\
$\frM,\bar\frM,\frM_\sigma,\bar\frM_\sigma$ & Spacetime/spatial multi-index sets & \pageref{dfn:st_multi}\\
$|a|,[a]$ & Norms for spacetime multi-indices & \pageref{dfn:st_multi}\\
$\bar\frI$ & Set of indices $(i,m,a)$ of force coefficients & \pageref{dfn:bar_frI}\\
$\bar\frI^+,\bar\frI^-$ & Subsets of $\bar\frI$ consisting of relevant/irrelevant indices & \pageref{dfn:bar_frI}\\
$\frI$ & Set of indices $(i,m,a)$ of effective force coefficients & \pageref{dfn:relevant_irrelevant}\\
$\frI^+,\frI^-$ & Subsets of $\frI$ consisting of relevant/irrelevant indices & \pageref{dfn:relevant_irrelevant}\\
$\phi^\vartriangle_\sUV,\phi^\triangleright_\sUV$ & Regular and irregular part of the initial data & \pageref{par:initial_data}\\
$G, \dot G_\sIR$ & Fractional heat kernel and its scale decomposition & \pageref{dfn:propagator_G}\\
$K_\sIR,\mathring K_\sIR,\bar K_\sIR,J_\sIR$ & Elementary regularizing kernels & \pageref{dfn:kernels}\\
$K_{\sUV,\sIR}^{m;\oo},K_{\sUV,\sIR}^{n,m;\oo}$ & Regularizing kernels & \pageref{dfn:kernels_K_n_m}\\
$\cK,\cK^n$ & Spaces of sign measures & \pageref{dfn:cK}\\
$\sC^\alpha(\bT),\sC^\alpha(\bH)$ & Besov-type spaces & \pageref{dfn:besov_T}\\
$\cD^m,\cD^{\mathsf{m}}$ & Spaces for effective force coefficients and cumulants & \pageref{dfn:cD}\\
$\cV^m,\cV^{\mathsf{m}}$ & Subspaces of $\cD^m,\cD^{\mathsf{m}}$ & \pageref{dfn:cV}\\
\hline
\end{longtable}
\end{center}

\end{document}